\documentclass{memo-l}
\usepackage[active]{srcltx}

\def\blu{} 

\usepackage{graphicx}
\usepackage{amsmath}
\usepackage{amssymb}
\usepackage{hyperref}
\usepackage{bbm}
\usepackage{longtable}
\usepackage{enumerate}

\newtheorem{thm}{Theorem}[chapter]
\newtheorem{lem}[thm]{Lemma}%
\newtheorem{prop}[thm]{Proposition}%
\newtheorem{cor}[thm]{Corollary}%

\theoremstyle{definition}
\newtheorem{definition}{Definition}[chapter]

\theoremstyle{remark}
\newtheorem{remark}{Remark}[chapter] %
\newtheorem{ex}{Example}[chapter]

\theoremstyle{plain}

\numberwithin{section}{chapter}
\numberwithin{equation}{chapter}

\def\EE{{\mathbb E}}

\def\NN{{\mathbb N}}
\def\PP{{\mathbb P}}

\def\RR{{\mathbb R}}
\def\TT{{\mathbb T}}

\def\ZZ{{\mathbb Z}}
\def\mm{{\mathsf m}}

\def\C{\operatorname{C}}
\def\undef{\operatorname{undef}}
\def\out{\operatorname{out}}

\def\pr{\operatorname{pr}}

\def\intl{{\operatorname{int}}}

\def\g{\operatorname{g}}
\def\UB{{\scrB_1^{d-1}}}

\def\US{{\S_1^{d-1}}}

\def\veca{{\text{\boldmath$a$}}}
\def\vecb{{\text{\boldmath$b$}}}
\def\vecB{{\text{\boldmath$B$}}}

\def\vece{{\text{\boldmath$e$}}}

\def\vech{{\text{\boldmath$h$}}}
\def\veck{{\text{\boldmath$k$}}}
\def\vecK{{\text{\boldmath$K$}}}

\def\vecell{{\text{\boldmath$\ell$}}}
\def\vecm{{\text{\boldmath$m$}}}

\def\vecq{{\text{\boldmath$q$}}}

\def\vecp{{\text{\boldmath$p$}}}
\def\vecr{{\text{\boldmath$r$}}}
\def\uvecs{\widehat{\text{\boldmath$s$}}}
\def\vecs{{\text{\boldmath$s$}}}

\def\vect{{\text{\boldmath$t$}}}
\def\vecu{{\text{\boldmath$u$}}}
\def\vecv{{\text{\boldmath$v$}}}
\def\vecV{{\text{\boldmath$V$}}}

\def\vecvp{{\text{\boldmath$v$}}_{+}}
\def\vecw{{\text{\boldmath$w$}}}

\def\vecx{{\text{\boldmath$x$}}}

\def\vecy{{\text{\boldmath$y$}}}
\def\vecz{{\text{\boldmath$z$}}}
\def\vecalf{{\text{\boldmath$\alpha$}}}
\def\vecbeta{{\text{\boldmath$\beta$}}}

\def\vecpsi{{\text{\boldmath$\psi$}}}

\def\vecomega{{\text{\boldmath$\omega$}}}

\def\vecxi{{\text{\boldmath$\xi$}}}

\def\vecnull{{\text{\boldmath$0$}}}

\def\scrA{{\mathcal A}}
\def\scrB{{\mathcal B}}

\def\scrD{{\mathcal D}}
\def\scrE{{\mathcal E}}
\def\scrF{{\mathcal F}}

\def\scrJ{{\mathcal J}}
\def\scrK{{\mathcal K}}
\def\scrL{{\mathcal L}}
\def\scrM{{\mathcal M}}
\def\scrN{{\mathcal N}}

\def\scrQ{{\mathcal Q}}
\def\scrP{{\mathcal P}}

\def\scrS{{\mathcal S}}

\def\scrU{{\mathcal U}}
\def\scrV{{\mathcal V}}
\def\scrW{{\mathcal W}}
\def\scrX{{\mathcal X}}

\def\fC{{\mathfrak C}}
\def\fD{{\mathfrak D}}

\def\fG{{\mathfrak G}}
\def\fg{{\mathfrak g}}

\def\fU{{\mathfrak U}}
\def\fw{{\mathfrak w}}
\def\fW{{\mathfrak W}}

\def\fZ{{\mathfrak Z}}

\def\e{\mathrm{e}}
\def\i{\mathrm{i}}

\def\diag{\operatorname{diag}}
\def\diam{\operatorname{diam}}
\def\dim{\operatorname{dim}}

\DeclareMathOperator*{\esssup}{ess\,sup}

\def\L{\operatorname{L{}}}
\def\Lloc{\operatorname{L_{\operatorname{loc}}^1}}

\def\GL{\operatorname{GL}}

\def\S{\operatorname{S{}}}

\def\SL{\operatorname{SL}}
\def\ASL{\operatorname{ASL}}

\def\SO{\operatorname{SO}}

\def\O{\operatorname{O{}}}
\def\T{\operatorname{T{}}}

\def\supp{\operatorname{supp}}

\def\vol{\operatorname{vol}}

\def\GamG{\Gamma\backslash G}

\def\Pac{P_{\mathrm{ac}}}

\def\Onder#1#2#3#4#5{#1 \setbox0=\hbox{$#1$}\setbox1=\hbox{$#2$}
       \dimen0=.5\wd0 \dimen1=\dimen0 \dimen2=\dp0 \dimen3=\dimen2
       \advance\dimen0 by .5\wd1 \advance\dimen0 by -#4
       \advance\dimen1 by -.5\wd1 \advance\dimen1 by -#4
       \advance\dimen2 by -#3 \advance\dimen2 by \ht1
       \advance\dimen2 by 0.3ex \advance\dimen3 by #5
        \kern-\dimen0\raisebox{-\dimen2}[0ex][\dimen3]{\box1}
       \kern\dimen1}

\newcommand{\of}{{\overline{f}}}

\newcommand{\vs}{\varsigma}
\newcommand{\tvs}{\widetilde\varsigma}
\newcommand{\tF}{\widetilde\scrF}

\newcommand{\TF}{\widetilde{F}}

\newcommand{\tfU}{\widetilde\fU}
\newcommand{\tfw}{\widetilde\fw}
\newcommand{\hK}{\widehat K}
\newcommand{\hs}{\widehat\vecs}

\newcommand{\oK}{\overline K}
\newcommand{\tB}{\widetilde B}

\newcommand{\tQ}{\widetilde\scrQ}
\newcommand{\oQ}{\overline\scrQ}

\newcommand{\tLambda}{\widetilde\Lambda}
\newcommand{\oXi}{\overline\Xi}
\newcommand{\tP}{\widetilde\scrP}
\newcommand{\tp}{\widetilde{p}}
\newcommand{\tPP}{\widetilde{\psi}}
\newcommand{\hXi}{\widehat\Xi}
\newcommand{\hR}{\widehat{R}}
\newcommand{\hlambda}{\widehat{\lambda}}

\newcommand{\tJ}{\widetilde J}

\newcommand{\tPhi}{\widetilde{\Phi}}

\newcommand{\hbeta}{\widehat{\vecbeta}}
\newcommand{\oxi}{\overline\xi}
\newcommand{\bs}{\backslash}

\newcommand{\hJ}{\widehat{J}}

\newcommand{\tOmega}{\widetilde\Omega}

\newcommand{\hZ}{\widehat{\Z^n}}

\newcommand{\tH}{\widetilde H}
\newcommand{\tlambda}{\widetilde\lambda}
\newcommand{\tmu}{\widetilde\mu}
\newcommand{\omu}{\overline\mu}
\newcommand{\hmu}{\widehat\mu}

\newcommand{\tX}{\widetilde X}
\newcommand{\hnu}{\hat\nu}

\newcommand{\tw}{\widetilde\vecw}

\newcommand{\tW}{\widetilde\scrW}
\newcommand{\oW}{\overline\scrW}

\newcommand{\Q}{\mathbb{Q}}
\newcommand{\R}{\mathbb{R}}
\newcommand{\Z}{\mathbb{Z}}

\renewcommand{\mod}{\text{ mod }}

\newcommand{\col}{\: : \:}

\newcommand{\bn}{\mathbf{0}}

\newcommand{\ttau}{{\widetilde{\tau}}}
\newcommand{\htau}{{\widehat{\tau}}}
\newcommand{\trho}{{\widetilde{\rho}}}

\newcommand{\hTheta}{{\widehat{\Theta}}}

\newcommand{\hrho}{{\widehat{\rho}}}
\newcommand{\tbe}{\widetilde{\vecbeta}}

\newcommand{\tv}{{\widetilde{\vecv}}}
\newcommand{\hq}{{\widehat{\vecq}}}

\newcommand{\tf}{\widetilde{f}}

\newcommand{\tq}{{\widetilde{\vecq}}}

\newcommand{\tD}{\widetilde{D}}

\newcommand{\ve}{\varepsilon}

\newcommand{\matr}[4]{\left( \begin{matrix} #1 & #2 \\ #3 & #4 \end{matrix} \right) }

\newcommand{\smatr}[4]{\left( \begin{smallmatrix} #1 & #2 \\ #3 & #4 \end{smallmatrix} \right) }

\makeindex

\begin{document}

\frontmatter

\title{Kinetic Theory for the Low-Density Lorentz Gas}

\author{Jens Marklof}
\address{School of Mathematics, University of Bristol, Bristol BS8 1TW, United Kingdom}
\email{j.marklof@bristol.ac.uk}

\author{Andreas Str\"ombergsson}
\address{Department of Mathematics, Box 480, Uppsala University,
SE-75106 Uppsala, Sweden}
\email{astrombe@math.uu.se}

\thanks{The research leading to these results has received funding from the European Research Council under the European Union's Seventh Framework Programme (FP/2007-2013) / ERC Grant Agreement n. 291147. 
A.S.\ is supported by the Swedish Research Council Grant 2016-03360.}

\date{14 June 2021}

\subjclass[2020]{82C40; 60G55; 35Q20}

\keywords{Lorentz gas, Boltzmann-Grad limit, Boltzmann equation, random flight process, point process, quasicrystal}

\begin{abstract}
The Lorentz gas is one of the simplest and most widely-studied models for particle transport in matter. It describes a cloud of non-interacting gas particles in an infinitely extended array of identical spherical scatterers. The model was introduced by Lorentz in 1905 who, following the pioneering ideas of Maxwell and Boltzmann, postulated that
in the limit of low scatterer density,
the macroscopic transport properties of the model should be governed by a linear Boltzmann equation. The linear Boltzmann equation has since proved a useful tool in the description of various phenomena, including semiconductor physics and radiative transfer. A rigorous derivation of the linear Boltzmann equation from the underlying particle dynamics was given, for random scatterer configurations, in three seminal papers by Gallavotti, Spohn and Boldrighini-Bunimovich-Sinai. The objective of the present study is to develop an approach for a large class of deterministic scatterer configurations, including various types of quasicrystals. We prove the convergence of the particle dynamics to transport processes that are in general (depending on the scatterer configuration) not described by the linear Boltzmann equation. This was previously understood only in the case of the periodic Lorentz gas through work of Caglioti-Golse and Marklof-Str\"ombergsson. Our results extend beyond the classical Lorentz gas with hard sphere scatterers, and in particular hold for general classes of spherically symmetric finite-range potentials. We employ a rescaling technique that randomises the point configuration given by the scatterers' centers. The limiting transport process is then expressed in terms of a point process that arises as the limit of the randomised point configuration under a certain volume-preserving one-parameter linear group action. 
\end{abstract}

\maketitle

\tableofcontents

\mainmatter

\chapter{Introduction}
\label{INTROsec}

The Lorentz gas describes the dynamics of a cloud of point-particles in an array of spherical scatterers in $\RR^d$ ($d\geq2$), each of radius $\rho>0$, which are centered at a given infinite point set $\scrP$. Each particle moves with constant speed along straight lines in between scattering events. The point particles do not interact with each other, and their interaction with the scatterers is defined by specular reflection (as in Lorentz' orginal setting) or by scattering potentials. The main challenge posed by Lorentz' 1905 paper \cite{Lorentz} is, whether or not in the limit $\rho\to 0$ the dynamics of a macroscopic particle cloud is approximated by a solution of the linear Boltzmann equation.

Since the gas particles are assumed to be non-interacting, the problem is equivalent to the study of the one-particle dynamics. In this framework the initial particle density in phase space is interpreted as the probability density of the {\em random} initial condition of the single particle. Although the dynamics is governed by Hamilton's equations and therefore deterministic, the random initial condition means that the particle trajectory is now expressed as a random flight process, the {\em Lorentz process}. The question is, under which assumptions on the scatterer configuration the Lorentz process converges, as $\rho\to 0$, to a limiting process. This of course requires a suitable rescaling of time and space units in terms of the mean collision time and free path length, respectively. The Kolmogorov forward equation (Fokker-Planck-Kolmogorov equation) of the limiting process describes the macroscopic transport of the initial particle cloud, and the key question is whether this equation coincides with the linear Boltzmann equation, as postulated by Lorentz. 

There are two non-trivial instances where the problem is fully understood. The first is the case when $\scrP$ is a fixed realisation of a Poisson point process. Here Boldrighini, Bunimovich and Sinai \cite{Boldrighini83} proved that the Lorentz process converges to a limit that is consistent with the linear Boltzmann equation. Their work is preceded by two important papers by Gallavotti \cite{Gallavotti69}, who established convergence on average over the point configuration, and Spohn \cite{Spohn78}, who considered more general random point configurations (still on average) and scattering potentials. Although the paper \cite{Boldrighini83} is restricted to dimension $d=2$ and hard sphere scatterers, we will show here that its results generalise to general dimensions and ``soft'' potentials. 

The second instance is when the scatterer configuration $\scrP$ is given by a Euclidean lattice $\scrL$ of full rank, for example $\scrP=\ZZ^d$. Here Marklof and Str\"ombergsson \cite{partI,partII,partIII,partIV} proved convergence of the Lorentz process to a random flight process which, perhaps surprisingly, is independent of the choice of $\scrL$. The limit process is Markovian only on an extended phase space which, in addition to position and momentum, also includes the impact parameter and distance to the next collision. The corresponding transport equation is in particular not consistent with the linear Boltzmann equation. This new transport equation was obtained independently in dimension $d=2$ for $\scrP=\ZZ^2$ by Caglioti and Golse \cite{Caglioti08,Caglioti10}, subject to a heuristic assumption that was proved (in any dimension) in \cite{partII}. The fact that the linear Boltzmann equation must fail in the periodic setting already follows from the heavy, power-law tail of the distribution of free path lengths \cite{Dahlqvist97,Bourgain98,Golse00,Caglioti03,Boca07,partIV}, as pointed out by Golse \cite{Golse06,Golse08}. In the periodic setting, the limit transport process in fact satisfies a superdiffusive central limit theorem \cite{MarklofToth}, with a mean-square displacement proportional to $t\log t$ (where $t$ is time measured in units of the mean collision time), rather than the standard linear scaling which appears in the case of random scatterer configurations. 

In the present paper we develop a general framework which, under suitable hypotheses on the scatterer configuration $\scrP$ (see Section \ref{sec:outline}), allows the proof of convergence to a limiting transport process. The latter will in general depend on the choice of $\scrP$. Admissible choices of $\scrP$ include the Poisson and lattice setting discussed above, as well as new examples including more general periodic point sets and certain classes of quasicrystals. Our theory applies not only to the classical case of hard sphere scatterers (Section \ref{sec:classical}), but also radial potentials with compact support (Section \ref{sec:soft}). The assumption of compact support is crucial for our work, as is the assumption that there are no external force fields, which ensures that in-between collisions the particles move along straight lines. We refer the reader to Section \ref{sec:open} for a survey of open questions that naturally follow on from this study.

\section{Outline of assumptions on the scatterer configuration}\label{sec:outline}

The scatterers are centered at the points of a locally finite subset $\scrP$ of $\R^d$ \label{scrPdef}. We assume that $\scrP$ has constant asymptotic density $c_\scrP>0$. This means that for any bounded subset $B \subset\R^d$ with boundary of Lebesgue measure zero,
\begin{align}\label{ASYMPTDENSITY0}
\lim_{T\to\infty}\frac{\#(\scrP\cap TB)}{T^d}=c_\scrP\vol(B).
\end{align}
Rather than following the particle trajectory in a coordinate system in which the environment (i.e., the scatterer configuration) remains static, we will use the particle's coordinate frame in which the particle is at rest at the origin, with direction of travel along the first coordinate axis, and the environment is changing. As we will see, lengths in the direction of travel are naturally measured in units of $\rho^{1-d}$ (which is proportional to the mean free path length); the natural length scale perpendicular to the direction of travel is $\rho$, the radius of a scatterer. 
Let $\US$ \label{unitsphere} be the unit sphere in $\R^d$ centered at the origin. 
If $\vecq\in\RR^d$ and $\vecv\in\US$ are the particle position and direction of travel in physical space,
then in the particle frame the scattering configuration appears as the translated, rotated and rescaled point set\footnote{We identify points in $\RR^d$ with row vectors, and linear transformations are represented by matrix multiplication from the right.}
\begin{align}\label{XIRHOqv000}
(\scrP-\vecq)\,R(\vecv)\,D_\rho
\end{align}
with the diagonal matrix \label{Drhodef}
\begin{align*}
D_\rho= %
\diag(\rho^{d-1},\rho^{-1},\cdots,\rho^{-1})\in\SL(d,\R) 
\end{align*}
providing the required rescaling of length units, and a map $R:\US\to\SO(d)$ which rotates the direction of travel $\vecv$ to the unit vector $\vece_1=(1,0,\ldots,0)$ \label{unitvecdef}; that is $\vecv R(\vecv)=\vece_1$ for all $\vecv\in\US$. We furthermore assume that $R$ is continuous when restricted to $\US$ minus one point; the choice of $R$ is otherwise arbitrary but will remain the same in all subsequent statements. \label{Rdef} 
We write $\omega:=\vol_{\S_1^{d-1}}$
\label{omegadef}
for the Lebesgue measure on $\US$,
and $\omega_1:=\omega(\US)^{-1}\, \omega$ for
the uniform probability measure on $\US$.
We also let $\Pac(\US)$ \label{acBpm} be the space of Borel probability measures on $\US$ that are absolutely continuous
with respect to $\omega$.

Our proof will require the understanding of the point set \eqref{XIRHOqv000} for $\vecq\in\scrP$ (this corresponds to the case when the particle has just hit a scatterer at position $\vecq$), and $\vecv$ randomly distributed according to a Borel probability measure $\lambda$ on $\US$ which is absolutely continuous with respect to the uniform measure on $\US$. The point set \eqref{XIRHOqv000} is therefore a {\em random} point set, and a natural assumption is that it converges in distribution for $\rho\to 0$ to a limiting random point set.\footnote{We will provide a precise framework for the notion of random sets and their convergence via the theory of point processes in Section \ref{ASSUMPTsec}.} The limit will in general 
depend on $\vecq$, and we need to require some regularity in its dependence on $\vecq$, as well as some uniformity in the convergence. 

To this end we equip $\scrP$ with a {\em marking} as follows. Let ${\vs}$ be a map from $\scrP$ to a compact metric space $\Sigma$,
\label{Sigmadef}
and set
\begin{align*}
\scrX=\R^d\times\Sigma , \qquad \tP=\{(\vecp,{\vs}(\vecp))\col\vecp\in\scrP\}\subset\scrX.
\end{align*}
\label{scrXdef2}\label{tPdef}We refer to 
$\vs$ as a {\em marking}, and $\Sigma$, $\tP$ as the corresponding {\em space of marks} and {\em marked point set}, respectively; $\scrP$ will be called the {\em underlying point set}.

{\blu 
In order to provide a first intuition for the concept of marking, 
let us already here mention some key examples:
If $\scrP$ is a realisation of a Poisson point process with constant intensity,
then the marking can be taken to be trivial, i.e.\ $\Sigma$ can be taken to be a singleton set
(cf.\ Section \ref{PoissonSEC}).
The same is true if $\scrP$ is a Euclidean lattice;
however, if $\scrP$ is a more general \textit{periodic} point set in $\R^d$,
i.e.\ a finite union of translates of a fixed lattice $\scrL$,
then it is typically necessary to use a nontrivial marking,
and the natural choice is to let the space of marks $\Sigma$ 
contain one element for each translate of $\scrL$,
with the marking of each point $\vecq\in\scrP$ indicating which translate of $\scrL$ it belongs to
(cf.\ Section \ref{periodicpointsetsSEC}).
Finally, in the more general case when $\scrP$ is a \textit{quasicrystal} of cut-and-project type,
the natural choice is to let $\Sigma$ equal the closure of the \textit{window set}
used in the cut-and-project construction;
see Section \ref{QCexsec} below for details.}


 For $\vecx\in\RR^d$, $T\in\RR$ and $A\in\GL(d,\RR)$ we extend the natural action on $\RR^d$ to $\scrX$ by setting $(\vecw,\vs)+\vecx:=(\vecw+\vecx,\vs)$, $T(\vecw,\vs):=(T\vecw,\vs)$ and $(\vecw,\vs)A:=(\vecw A,\vs)$. 
Thus in particular
\begin{equation*}
\tP A +\vecx =\{(\vecp A+\vecx ,{\vs}(\vecp))\col\vecp\in\scrP\} .
\end{equation*}
For $\vecq\in\scrP$, we furthermore define $(\tP-\vecq)^*= \{(\vecp,{\vs}(\vecp))\col\vecp\in(\scrP-\vecq)\setminus\{\vecnull\} \}$.

The main assumption on the scatterer configuration $\scrP$ in the present work is that there is a marking $\vs$ and a Borel
probability measure $\mm$ on $\Sigma$ such that:
\begin{enumerate}[{\bf [P1]}]
\item {\em Uniform density:} The marks of the points in $\tP$ are asymptotically equidistributed in $(\Sigma,\mm)$.  That is,
for any bounded $B \subset\scrX$ with $\mu_\scrX(\partial B)=0$, we have
\begin{align}\label{ASYMPTDENSITY100}
\lim_{T\to\infty}\frac{\#(\tP\cap TB)}{T^d}=c_\scrP\mu_\scrX(B)
\end{align}
where $TB=\{(T\vecw,\vs)\col(\vecw,\vs)\in B\}$
and \label{mmdef} $\mu_\scrX=\vol\times\mm$.
Relation \eqref{ASYMPTDENSITY100} thus generalizes \eqref{ASYMPTDENSITY0}.
\item {\em Spherical equidistribution:} There exist
{\blu a subset $\scrE\subset\scrP$ of asymptotic density zero,
and a continuous family $\{ \Xi_\vs \col \vs\in\Sigma\}$ of random marked point sets,
such that for any $\vecq\in\scrP\setminus\scrE$} and $\lambda\in\Pac(\US)$,\label{Edef}
the sequence
$((\tP-\vecq)^*\,R(\vecv)\,D_\rho)_{\rho>0}$, with $\vecv$ random according to $\lambda$, converges in distribution to $\Xi_{\vs(\vecq)}$ as $\rho\to 0$; the convergence is uniform for $\vecq$ in a ball of radius $T\rho^{1-d}$ for any fixed $T\geq 1$.
\item {\em No escape of mass:} For every bounded Borel set $B\subset\RR^d$,
\begin{align*}%
\lim_{\xi\to\infty}\limsup_{\rho\to0}\hspace{7pt}
[\vol\times\omega]\bigl(\bigl\{(\vecq,\vecv)\in B\times\US\col
(\scrP-\rho^{1-d}\vecq)\,R(\vecv)\,D_\rho \cap \fZ_\xi=\emptyset\bigr\}\bigr)=0,
\end{align*}
with the open cylinder\label{fZxidef} 
$\fZ_\xi=\{\vecx=(x_1,\ldots,x_d)\col 0<x_1<\xi,\: x_2^2+\cdots+x_d^2<1\}$. 
\end{enumerate}

{\blu Among these three assumptions [P1-3], the \textit{key} assumption is [P2].
It postulates, in a more precise form,
the convergence discussed above; namely, 
the convergence in distribution for $\rho\to0$ of the random point set in \eqref{XIRHOqv000}.
This condition [P2] is also the one which, by far, is the most difficult to verify,
at least for every example of $\scrP$ satisfying [P1-3] which we are aware of.

Note that in [P2] we allow a subset $\scrE\subset\scrP$ of ``exceptional'' 
points for which the convergence may fail.
We remark here that if $\scrP$ is a periodic point set in $\R^d$,
then [P2] in fact holds with $\scrE=\emptyset$,
and the same is true if $\scrP$ is a quasicrystal of cut-and-project type
(cf.\ Sections \ref{periodicpointsetsSEC}--\ref{QCexsec}).
However, if $\scrP$ is a realisation of a Poisson point process with constant intensity,
then [P2] does \textit{not} hold with $\scrE=\emptyset$,
but we will prove in Section \ref{PoissonSEC} that
[P2] holds when $\scrE$ is chosen as the set of points $\vecq$ in $\scrP$
which, in an appropriate sense, lie exceptionally near some other point in $\scrP$
(cf.\ Proposition \ref{POISSONprop} and Remark \ref{POISSONENOTEMPTYrem}).

The condition [P3] will be used in one single, important,  
step of our development,
namely in the derivation 
of a \textit{macroscopic} analogue of the spherical equidistribution [P2],
i.e.\ a version of [P2] 
for initial conditions of the form
$(\vecq,\vecv)=(\rho^{1-d}\vecq',\vecv)$ 
with $(\vecq',\vecv)$ random according to a probability measure on $\T^1(\R^d)$
which is absolutely continuous with respect to Liouville measure, $\vol\times\omega$
(cf.\ Section \ref{GENLIMITsec}).}


\vspace{5pt}

We will furthermore require the following invariance and regularity assumptions on $\Xi_\vs$. Denote by $\Xi_\vs'$ the underlying random point set in $\RR^d$. $\scrB^d(\vecx,R)$ \label{scrBdef} denotes the open ball of radius $R$ centered at $\vecx$, and we use the notation $\SO(d-1)$ \label{SOd-1} for the subgroup of $K\in\SO(d)$ such that $\vece_1 K=\vece_1$.

\begin{enumerate}[{\bf [Q1]}]
\item {\em $\SO(d-1)$-invariance:} $\Xi_\vs$ and $\Xi_\vs K$ have the same distribution for every $K\in \SO(d-1)$.
\item {\em Coincidence-free first coordinates:} For every $\vs\in\Sigma$, the probability that [$\Xi_\vs'$ has two (or more) points with the same first coordinate] is zero.
\item {\em Small probability of large voids:}  For every $\ve>0$ there exists $R>0$ such that the probability that [$\Xi_\vs'$ has no point in $\scrB^d(\vecx,R)$] is less than $\ve$, uniformly for all $\vecx\in\RR^d$ and $\vs\in\Sigma$.
\end{enumerate}

The above hypotheses will be restated in a more concise measure-theoretic form in Section~\ref{ASSUMPTLISTsec}. In the following, a locally finite set $\scrP\subset\RR^d$ is called {\em admissible} if there exists a marking such that [P1-3] and [Q1-3] hold. Examples of admissible sets include realisations of Poisson point processes, locally finite periodic point sets, and Euclidean model sets. These examples are discussed in detail in Section \ref{EXAMPLESsec}.

\section{The Lorentz process for hard sphere scatterers} \label{sec:classical}

The setting of the classical Lorentz gas is that of a point particle moving with constant velocity $\vecv\in\RR^d\setminus\{\vecnull\}$ in an array of hard-sphere scatterers of radius $\rho$, where the scattering is given by specular reflection: The incoming and outgoing particle trajectories are contained in the same two-dimensional plane through the center of the scatterer, and the angle of incidence equals the angle of reflection. The scattering map is independent of the particle speed $\|\vecv\|$, and a simple scaling argument shows that we therefore may assume without loss of generality that $\|\vecv\|=1$. 
With this convention, the scattering process is defined by the map
\begin{equation}\label{scatmap}
\Psi:\scrS_-\to\scrS_+, \qquad (\vecv,\vecb)\mapsto (\vecv_+,\vecb)=(\vecv-2(\vecv\cdot\vecb)\vecb,\vecb),
\end{equation}
with the set of \textit{incoming data}\label{scrSmdef}
\begin{align*}
\scrS_-:=\{(\vecv,\vecb)\in\US\times\US\col\vecv\cdot\vecb<0\}
\end{align*}
describing the velocity and position (measured in units of the radius $\rho$) with which the particle enters the interaction region, 
and the corresponding set of \textit{outgoing data} 
\begin{align*}
\scrS_+:=\{(\vecv,\vecb)\in\US\times\US\col\vecv\cdot\vecb>0\}.
\end{align*}
Note that 
\begin{equation*}%
\vecb = \frac{\vecv_+-\vecv}{\|\vecv_+-\vecv\|}.
\end{equation*}

Let $\UB$ be the open unit ball in $\R^{d-1}$ centered at the origin.
The {\em impact parameter} $\vecw\in\UB$ is defined as the projection of $\vecb$ onto the hyperplane perpendicular to the incoming velocity $\vecv$. The {\em differential cross section} $\sigma(\vecv,\vecv_+)$ is defined as the Jacobian of the inverse of the map 
\begin{equation*}
\UB \to \US, \qquad \vecw \mapsto \vecv_+  \qquad \text{($\vecv$ fixed),}
\end{equation*}
so that 
$d\vecw=d\!\vol(\vecw)=\sigma(\vecv,\vecv_+)\,d\vecv_+$, where $d\vecv_+:=d\omega(\vecv_+)$.
The {\em total} scattering cross section is thus given by the volume $v_{d-1}=\vol(\scrB_1^{d-1})$ \label{DscatCS} of the unit ball in $\RR^{d-1}$. In the present setting we have the explicit formula
\begin{equation}\label{SIGMAforrefl}
\sigma(\vecv,\vecv_+) = \frac14\, \| \vecv-\vecv_+ \|^{3-d},
\end{equation}
see \eqref{dcsformula} below. %

The configuration space for the dynamics is given by
\begin{align}\label{Krhodef}
\scrK_\rho=\R^d\setminus\bigcup_{\vecp\in\scrP}\scrB^d(\vecp,\rho),
\end{align}
where $\scrB^d(\vecp,\rho)$ denotes the open ball with center $\vecp$ and radius $\rho$. 
For definiteness, at the time of any collision, we will consider the particle to be in 
\textit{outgoing} position, i.e.\ belong to the set \label{T1Krhooutdef}
\begin{align*}
\T^1(\partial\scrK_\rho)_{\out}:= \bigl\{(\vecq,\vecv)\in\partial\scrK_\rho\times\US\col
[\forall\vecp\in\scrP\col
\|\vecq-\vecp\|=\rho
\Rightarrow
(\vecq-\vecp)\cdot\vecv\geq0]\bigr\}.
\end{align*}
If two or more balls overlap then it is often unclear how to continue the particle path 
if it hits an intersection point.
In this situation we agree that the particle gets trapped and stays motionless for all future time.
Let $\Phi_t=\Phi_t^{(\rho)}$ \label{Phi101} be the billiard flow on $\T^1(\scrK_\rho):=\scrK_\rho\times\US$ thus defined.
\label{T1Krhodef}
It is standard to verify that the set 
of initial conditions in $\T^1(\scrK_\rho)$
for which the particle 
at some time point either in the past or in the future
collides with an intersection point of two or more scatterer boundaries,
has measure zero with respect to Liouville measure, $\vol\times\omega$.
Furthermore, for a dispersing billiard such as the Lorentz gas considered here, 
the number of collisions in any finite time interval is finite 
\cite[Thm.\ 1.1]{BFK}\footnote{This theorem applies to our situation since $\scrP$ is locally finite.}.

Set
\begin{align}\label{wini}
\fw(\rho)=\T^1(\scrK_\rho^\circ) \cup \T^1(\partial\scrK_\rho)_{\out};
\end{align}
this is the set of points that are not trapped.
Indeed, by our conventions,
if $(\vecq,\vecv)\in\fw(\rho)$ then
$\Phi_t(\vecq,\vecv)=(\vecq+t\vecv,\vecv)$ for all sufficiently small $t>0$,
whereas if $(\vecq,\vecv)\in\T^1(\scrK_\rho)\setminus\fw(\rho)$
then $\Phi_t(\vecq,\vecv)=(\vecq,\vecv)$ for all $t>0$.

For $(\vecq_0,\vecv_0)\in\fw(\rho)$,
let $\tau_1(\vecq_0,\vecv_0;\rho)$ be the first time at which the particle starting at $(\vecq_0,\vecv_0)$
hits a scatterer, i.e.
\begin{align}
\tau_1(\vecq_0,\vecv_0;\rho)=\inf\{t>0\col\vecq_0+t\vecv_0\notin\scrK_\rho\}.
\end{align}
Note that $\tau_1$ may equal $\infty$,
and we also define
$\tau_1(\vecq_0,\vecv_0;\rho)=\infty$ for all trapped points,
i.e.\ for all
$(\vecq_0,\vecv_0)\in\T^1(\scrK_\rho)\setminus\fw(\rho)$.
For $j\geq1$ we denote by 
\begin{equation*}
(\vecq_j,\vecv_j)=(\vecq_j(\vecq_0,\vecv_0;\rho),\vecv_j(\vecq_0,\vecv_0;\rho)) \in  \T^1(\partial\scrK_\rho)_{\out}
\end{equation*}
the position and outgoing velocity of the particle at the $j$th collision,
and let $\tau_{j+1}=\tau_{j+1}(\vecq_0,\vecv_0;\rho)$ be the time 
\label{taujdef}
which it travels between the $j$th and the $(j+1)$st collision.
Thus
\begin{align*}%
(\vecq_j,\vecv_j)=\Phi_{\tau_1(\vecq_{j-1},\vecv_{j-1};\rho)}(\vecq_{j-1},\vecv_{j-1})
\quad\text{and}\quad
\tau_{j+1}(\vecq_0,\vecv_0;\rho)=\tau_1(\vecq_j,\vecv_j;\rho).
\end{align*}
If $\tau_j(\vecq_0,\vecv_0;\rho)=\infty$ for some $j$
(meaning either that 
$(\vecq_{j-1},\vecv_{j-1})\in\T^1(\scrK_\rho)\setminus\fw(\rho)$ or
else $\vecq_{j-1}+t\vecv_{j-1}\in\scrK_\rho$ for all $t>0$)
then for definiteness we set
$\tau_{j+1}=\tau_{j+2}=\cdots=\infty$,
and also %
$\vecq_i=\vecq_{j-1}$ and $\vecv_i=\vecv_{j-1}$ for all $i\geq j$.

Let us denote by $n_t=n_t(\vecq_0,\vecv_0;\rho)$ the number of collisions within time $t$, i.e.,
\begin{equation}\label{eq:nT}
n_t = \max \big\{ n \in\ZZ_{\geq 0} : T_n \leq t \big\}  , \qquad T_n:=\sum_{j=1}^n \tau_j .
\end{equation}
Note that for all $t\geq0$ such that $\Phi_t(\vecq_0,\vecv_0)\in\fw(\rho)$, we have
\begin{equation*}
\Phi_t(\vecq_0,\vecv_0) = 
(\vecq_{n_t} + (t-T_{n_t}) \vecv_{n_t} ,\vecv_{n_t}).
\end{equation*}
It will be convenient to extend the billiard flow trivially to all of $\T^1(\RR^d)$ by 
also taking all points in $\T^1(\R^d)\setminus\T^1(\scrK_\rho)$ to be trapped,
i.e.\ by defining 
\begin{equation*}
\Phi_t(\vecq_0,\vecv_0) = 
(\vecq_0  ,\vecv_0)  \quad \text{if $(\vecq_0,\vecv_0)\in \T^1(\R^d)\setminus\fw(\rho)$.}
\end{equation*}
This is purely for notational reasons, since the relative measure of points not in $\fw(\rho)$ tends to zero as $\rho\to 0$.

Let us now describe the Boltzmann-Grad limit of the particle dynamics, where the point set $\scrP$ describing the scatterer configuration is fixed, and the radius $\rho$ of each scatterer tends to zero. As we will explain in more detail in Section \ref{TRANSKERRELsec}, the mean free path length, i.e.\ the mean time between consecutive collisions, is asymptotically given by $\oxi \rho^{1-d}$ with 
\begin{align}\label{OXIFORMULAG}
\oxi=\frac1{v_{d-1}c_\scrP}.
\end{align}
This implies that, in order to see non-trivial dynamical phenomena emerge as $\rho\to 0$, a rescaling of length and time units is necessary. The Boltzmann-Grad scaling, which we consider in this paper, considers length and time in units of the mean free path length. That is, we consider the rescaled flow 
\begin{equation}\label{tPhidef}
\widetilde \Phi_t^{(\rho)}=s_\rho\circ\Phi_{\rho^{1-d} t}^{(\rho)}\circ s_\rho^{-1}
\end{equation}
where
\begin{align}\label{Rrhodef}
s_\rho:\T^1(\R^d)\to\T^1(\R^d),\qquad
s_\rho(\vecq,\vecv)=(\rho^{d-1}\vecq,\vecv).
\end{align}

In this scaling, one expects a random flight process in the limit $\rho\to 0$. This is confirmed by our central result which is stated below as Theorem \ref{thm:M1}. On larger time and spacial scales one would expect diffusive or indeed superdiffusive limits.
Currently this is only understood in the case of random and lattice scatterer configurations,
when taking first the Boltzmann-Grad limit $\rho\to 0$,
and then the long time limit \cite{MarklofToth}.

For the purposes of the present study, we define a {\em random flight process} as a stochastic process of the form 
\begin{equation}\label{eq:RFP}
\Theta :  t \mapsto \Theta(t) = \bigg( \vecq_0 + \sum_{n=1}^{n_t} \xi_j \vecv_{j-1} + (t-T_{n_t}) \vecv_{n_t} ,\vecv_{n_t} \bigg) 
\end{equation}
with random $(\vecq_0,\vecv_0)\in\T^1(\RR^d)$, $\langle\xi_j\rangle_{j=1}^\infty\in (\RR_{\geq 0}\cup\{\infty\})^\NN$ and $\langle\vecv_j\rangle_{j=1}^\infty\in (\US)^\NN$. The quantities $n_t$, $T_n$ are defined as in \eqref{eq:nT} with $\tau_j$ replaced by $\xi_j$. We do not assume any independence in the above. With this, we may view
\begin{equation}\label{ThetarhoDEF}
\Theta^{(\rho)}: t \mapsto \Theta^{(\rho)}(t)= \widetilde \Phi_t^{(\rho)} (\vecq_0,\vecv_0) 
\end{equation}
as a random flight process, for random $(\vecq_0,\vecv_0)$ distributed according to a fixed Borel probability measure on $\T^1(\RR^d)$, and the random processes $\langle\xi_j^{(\rho)}\rangle_{j=1}^\infty$ and $\langle\vecv_j^{(\rho)}\rangle_{j=1}^\infty$ are defined through the deterministic functions $\xi_j^{(\rho)}=\rho^{d-1} \tau_j(\rho^{1-d}\vecq_0,\vecv_0;\rho)$ and $\vecv_j=\vecv_j(\rho^{1-d}\vecq_0,\vecv_0;\rho)$ of the random initial data $(\vecq_0,\vecv_0)$. This means they are highly correlated but nevertheless well-defined point processes.

We denote by $\Pac(\T^1(\RR^d))$ \label{acBpm2} the space of Borel probability measures on $\T^1(\RR^d)$ that are absolutely continuous with respect to Liouville measure, $\vol\times\omega$.

\begin{thm}\label{thm:M1}
Let $\scrP$ be admissible. Then, for any $\Lambda\in\Pac(\T^1(\RR^d))$, there is a random flight process $\Theta$ 
with $\PP(\xi_j=\infty)=0$ for all $j$,
such that $\Theta^{(\rho)}$ converges to $\Theta$ in distribution, as $\rho\to 0$.
\end{thm}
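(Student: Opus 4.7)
The plan is to prove convergence of the random flight process $\Theta^{(\rho)}$ by first establishing joint convergence in distribution of the finite collection of collision data $(\xi_1^{(\rho)},\vecv_1^{(\rho)},\ldots,\xi_N^{(\rho)},\vecv_N^{(\rho)})$ for every $N\geq 1$, and then passing to the process level using tail estimates on $n_t$. Since the random flight process is determined by this data together with $(\vecq_0,\vecv_0)$, the limit $\Theta$ is constructed directly from the limiting collision data. The assumptions [Q1--Q3] and the scattering map \eqref{scatmap} guarantee that this data defines a well-posed random flight process.

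\textbf{Base step: first collision.} First I would establish a \emph{macroscopic} version of [P2] (anticipated in Section \ref{GENLIMITsec}): for any $\Lambda\in\Pac(\T^1(\RR^d))$, the random marked point set $(\tP-\rho^{1-d}\vecq_0)R(\vecv_0)D_\rho$, with $(\vecq_0,\vecv_0)$ distributed according to $\Lambda$, converges in distribution to some limiting random marked point set. This should follow by decomposing the $\rho^{1-d}$-rescaled initial position into ``near $\vecp$'' contributions over $\vecp\in\scrP$, applying [P2] uniformly on balls of radius $T\rho^{1-d}$, integrating via Fubini against the absolutely continuous density, and invoking [P1] to identify the averaged limit; [P3] controls the error from far-away contributions (no escape of mass). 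Extracting the first-hit point in the cylinder $\fZ_\xi$ yields convergence of $(\xi_1^{(\rho)},\vecw_1^{(\rho)})$: uniqueness of the minimizer is provided by [Q2], $\xi_1^{(\rho)}<\infty$ holds in the limit by [Q3], and continuity of $\Psi$ from \eqref{scatmap} then gives convergence of $\vecv_1^{(\rho)}$. The conditional distribution of $\vecw_1$ turns out to be absolutely continuous on $\UB$, hence that of $\vecv_1$ on $\US$ by \eqref{SIGMAforrefl}.

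\textbf{Inductive step.} After the $j$th collision we have $\vecq_j=\vecp_j+\rho\vecb_j$ with $\vecp_j\in\scrP$ and $\vecb_j\in\US$. The relevant point set at the particle is $(\tP-\vecp_j)^*R(\vecv_j)D_\rho$ shifted by the bounded vector $-\vecb_j R(\vecv_j)D_\rho=O(1)$ (since $D_\rho$ contracts the perpendicular directions to order $\rho$ but the first coordinate is of order $\rho^d$). Provided $\vecp_j\notin\scrE$, [P2] applies to give convergence to $\Xi_{\vs(\vecp_j)}$; the uniformity in [P2] over balls of radius $T\rho^{1-d}$ is crucial because the collision position $\vecp_j$ is itself random and lies at a distance of order $\rho^{1-d}$ from earlier collisions. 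Combining with [P1], which forces the marks of the collision scatterers to become equidistributed in $(\Sigma,\mm)$ in the limit, yields the next pair $(\xi_{j+1},\vecv_{j+1})$ by the same cylinder-argument as in the base step. Iteration gives finite-dimensional convergence.

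\textbf{Main obstacles and conclusion.} The principal difficulty is controlling the exceptional events that must be shown to have vanishing probability: (i) that some collision scatterer $\vecp_j$ lies in the exceptional set $\scrE$; this is dispatched using the asymptotic density zero of $\scrE$ together with the uniformity in [P2], reducing to a statement that conditioning on hitting $\scrE$ has zero limit probability; (ii) the contribution of trapped initial conditions in $\T^1(\RR^d)\setminus\fw(\rho)$, whose Liouville measure vanishes as $\rho\to 0$ by standard dispersing-billiard arguments \cite{BFK}; (iii) passage from finite-dimensional to process convergence, which requires a uniform in $\rho$ bound $\PP(n_t>N)\to 0$ as $N\to\infty$, supplied by [Q3] (small probability of large voids ensures $\xi_j$ has uniformly bounded distribution from below away from $0$ is \emph{not} needed, but rather that the limit $\xi_j$'s have finite expectation, so $n_t$ is tight). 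Finally, $\PP(\xi_j=\infty)=0$ is immediate from [Q3] applied to the limiting point process. Combining all finite-dimensional limits with tightness in the Skorokhod-type topology on trajectories yields the claimed convergence $\Theta^{(\rho)}\Rightarrow\Theta$.
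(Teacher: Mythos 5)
Your outline follows the same high-level strategy as the paper: establish a macroscopic version of [P2] (the paper's Theorem \ref{GENLIMITthm}), extract the first-collision limit from the cylinder construction, close an induction over collisions by exploiting the uniformity of [P2] in $\vecq\in\scrP_T(\rho)$, and then upgrade to Skorohod convergence via a tightness estimate on $n_t$. You also correctly flag the need for the conditional law of the post-collision velocity to remain absolutely continuous, and the dispatch of the exceptional set $\scrE$ via density zero and uniformity.

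There is, however, a genuine gap in your inductive step: you do not control \emph{grazing collisions}. After the $j$th collision, the effective starting point is $\vecq_{\rho,\vecbeta}(\vecv_j)$ where $\vecbeta=\vecbeta^+_{\vecv_{j-1}}(\cdot)$ is the exit-parameter map of the scattering map $\Psi$. Your induction implicitly requires that these $\vecbeta$'s live in an \emph{admissible compact} family (relatively compact in $\C_b(U,\R^d)$ with uniformly bounded derivative), because the uniform first-collision limit theorem (the paper's Theorem \ref{unifmodThm2}) is stated exactly for such families, and uniformity is the mechanism by which you average over the random $\vecp_j$. But the derivative $\|D_\vech\vecbeta^{\pm}_\vecv\|$ blows up as the impact parameter approaches the boundary of $\UB$ (i.e.\ $\vecv_+$ approaches $\partial\scrV_\vecv$, grazing incidence), so the family of exit maps arising from a generic collision is \emph{not} admissible without a cutoff. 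The paper resolves this with Proposition \ref{ETAGRAZINGprop} (and its macroscopic variant Proposition \ref{ETAGRAZINGgenprop}), which show that the probability of an $\eta$-grazing path is uniformly small, allowing the induction to be run on a compactly truncated family $\scrV_\vecv^\eta$ at the cost of an $O(\ve)$ error. This is a nontrivial geometric estimate (it also bounds the probability of near-tangency with \emph{other} scatterers along the free flight, which is needed to prevent the next collision from being with an unseparated or wrong scatterer), and its absence means your induction does not close as written.

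A smaller imprecision: your tightness claim for $n_t$ is attributed to ``$\xi_j$ has finite expectation.'' In the Markovian setting this is not sufficient; what makes $n_t$ tight uniformly in $\rho$ is the pointwise bound $k,k^{\g}\leq c_\scrP v_{d-1}$ on the transition kernels (the paper's \eqref{kdef}, \eqref{kgdef}, coming from the intensity bound Lemma \ref{GROUNDINTENSITYlem2}), which yields $\PP\bigl(\sum_{j\leq n}\xi_j\leq t\bigr)\leq (c_\scrP v_{d-1} t)^n/n!$ and hence summable tail probabilities for $n_t$. You should invoke that bound rather than finite expectation. Relatedly, the statement that [P1] ``forces the marks of the collision scatterers to become equidistributed in $(\Sigma,\mm)$'' is not correct at the inductive step: the marks of \emph{hit} scatterers are distributed according to the transition kernel $k$, which is generally not $\mm$-uniform; [P1] is used only to identify the limit of spatial averages over all of $\scrP$, as in the macroscopic first step.
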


The key step in the proof of Theorem \ref{thm:M1} is to establish the joint limit distribution for 
\begin{equation}\label{eq:JTdist} 
\big\langle\tau_j(\rho^{1-d}\vecq_0,\vecv_0;\rho), \vecv_j(\rho^{1-d}\vecq_0,\vecv_0;\rho)\big\rangle_{j=1}^\infty,
\end{equation}
with $(\vecq_0,\vecv_0)$ random according to $\Lambda$. One of the central outcomes of our study is that we obtain the Markov property for the limit distribution, if we consider the joint distribution of \eqref{eq:JTdist} {\em and} the sequence of markings $\langle {\vs}_j(\rho^{1-d}\vecq_0,\vecv_0;\rho) \rangle_{j=1}^\infty$. Here ${\vs}_j(\rho^{1-d}\vecq_0,\vecv_0;\rho)=\vs(\vecp_j)\in\Sigma$ is the marking of the centre $\vecp_j\in\scrP$ of the scatterer involved in the $j$th collision; if this is not well-defined because scatterers overlap, choose any marking.

\begin{thm}\label{thm:M2}
Let $\scrP$ be admissible, and suppose $\Lambda\in\Pac(\T^1(\RR^d))$. If $(\vecq_0,\vecv_0)\in\T^1(\RR^d)$ is distributed according to $\Lambda$, then the random process
\begin{align*}
\NN & \to (\RR_{> 0}\cup\{+\infty\})\times\Sigma\times\US \notag \\ 
j & \mapsto \big( \rho^{d-1} \tau_j(\rho^{1-d}\vecq_0,\vecv_0;\rho), {\vs}_j(\rho^{1-d}\vecq_0,\vecv_0;\rho),\vecv_j(\rho^{1-d}\vecq_0,\vecv_0;\rho) \big) 
\end{align*}
converges in distribution to the second-order Markov process
\begin{equation}\label{eq:JTdist2} 
j\mapsto \big( \xi_j, {\vs}_j , \vecv_j \big) ,
\end{equation}
where for any Borel set $A\subset\R_{\geq0}\times\Sigma\times\US$,
\begin{equation*}
\PP\Big( (\xi_1, {\vs}_1 , \vecv_1) \in A \,\Big|\, (\vecq_0,\vecv_0) \Big) = \int_{A} p(\vecv_0;\xi,\vs,\vecv)\,  d\xi\,d\mm({\vs})\,d\vecv,
\end{equation*}
and for $j\geq 2$,
\begin{multline*}
\PP\Big( (\xi_j, {\vs}_j , \vecv_j) \in A \,\Big|\, (\vecq_0,\vecv_0),\; \big\langle (\xi_i,{\vs}_i,\vecv_i)\big\rangle_{i=1}^{j-1} \Big) \\
= \int_{A} p_\bn(\vecv_{j-2},\vs_{j-1},\vecv_{j-1};\xi,\vs,\vecv)\,  d\xi\,d\mm({\vs})\,d\vecv.
\end{multline*}
The functions $p,p_\bn$ are defined in Section \ref{COLLKERsec};
they depend on $\scrP$ but are independent of $\Lambda$,
and for any fixed $\vecv_0,\vs,\vecv$ both
$p(\vecv_0\:;\:\cdot\:)$ and $p_\bn(\vecv_0,\vs,\vecv\:;\:\cdot\:)$ are probability densities on $\R_{\geq0}\times\Sigma\times\US$.
In particular $\PP(\xi_j=\infty)=0$ for all $j$.
\end{thm}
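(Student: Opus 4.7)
The plan is to establish convergence of all finite-dimensional marginals and then promote this to process convergence on $\NN$ by standard arguments. Proceed by induction on $N$: show that, jointly with $(\vecq_0,\vecv_0)\sim\Lambda$, the tuple $\langle(\rho^{d-1}\tau_j,\vs_j,\vecv_j)\rangle_{j=1}^N$ converges in distribution to the target measure on $((\R_{>0}\cup\{+\infty\})\times\Sigma\times\US)^N$ described by $p$ and $p_\bn$. For $N=1$, the data $(\tau_1,\vs_1,\vecv_1)$ is read off from the first point of the rescaled marked point set $(\tP-\rho^{1-d}\vecq_0)R(\vecv_0)D_\rho$ meeting a cylinder $\fZ_\xi$ with $\xi\to\infty$: its first coordinate gives $\rho^{d-1}\tau_1$, its mark gives $\vs_1$, and its transverse coordinates give the impact parameter $\vecb$, from which $\vecv_1$ is obtained via the specular reflection map \eqref{scatmap}. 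The key input is the macroscopic analogue of [P2] (promised in Section \ref{GENLIMITsec}), which, by combining [P1], [P2] and [P3], delivers convergence in distribution of this rescaled point set when $(\vecq_0,\vecv_0)$ is $\Lambda$-distributed; assumptions [Q2] and [Q3] then guarantee an almost surely unique, finite-distance first point.

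For the inductive step, condition on the first $N-1$ collisions. The particle is positioned at a scatterer centre $\vecp_{N-1}\in\scrP$ with outgoing velocity $\vecv_{N-1}$ and marking $\vs_{N-1}=\vs(\vecp_{N-1})$. The next collision data is determined by the first point of $(\tP-\vecp_{N-1})^*R(\vecv_{N-1})D_\rho$ in the cylinder, with the previously visited scatterer's contribution excluded (as its image would otherwise be the nearest point, by construction). Because $\vecp_{N-1}\in\scrP$ and $\|\vecp_{N-1}\|=O(\rho^{1-d})$, assumption [P2] applies with uniformity in $\vecq$ in a ball of radius $T\rho^{1-d}$, and its convergence statement converts convergence of distributions on $\US$ into convergence of the shifted random point set. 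The remaining requirement is that $\vecv_{N-1}$ be absolutely continuous on $\US$ conditional on the past; this propagates inductively, since the scattering map \eqref{scatmap} is a diffeomorphism $\UB\to\US$ with Jacobian given by \eqref{SIGMAforrefl}, turning a conditionally absolutely continuous impact parameter into an absolutely continuous outgoing velocity. The possibility of $\vecp_{N-1}$ landing in the exceptional set $\scrE$ is ruled out in the limit because $\scrE$ has zero asymptotic density and, together with [P1], the number of scatterers visited up to collision $N-1$ is $O(1)$ in the Boltzmann--Grad scaling.

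The second-order Markov structure emerges as follows. Conditional on the past up to the $(N-1)$st collision, the limiting local point set at $\vecp_{N-1}$ is $\Xi_{\vs_{N-1}}$ with a single point -- the image of the previously visited scatterer $\vecp_{N-2}$ -- deleted. In the frame centred at $\vecp_{N-1}$ and aligned by $R(\vecv_{N-1})D_\rho$, this deleted point sits at an essentially fixed location determined by the impact parameter $\vecb_{N-1}$ at the $(N-1)$st collision, which is in turn recovered from the pair $(\vecv_{N-2},\vecv_{N-1})$ via the inverse of $\Psi$. Thus the only past data entering the transition kernel for $(\xi_N,\vs_N,\vecv_N)$ are $\vecv_{N-2},\vs_{N-1},\vecv_{N-1}$; in particular $\xi_{N-1}$ and earlier data drop out, yielding precisely the claimed density $p_\bn$, while the continuity of the family $\{\Xi_\vs\}$ postulated in [P2] ensures measurable dependence on $\vs_{N-1}$.

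I expect the principal obstacle to be the uniformity of the limit in [P2] across all the simultaneous conditioning on past data, together with careful handling of the conditional absolute continuity as it is carried through the iteration; in addition, the coincidence-free property [Q2] will be crucial to ensure that the deletion of the single point corresponding to $\vecp_{N-2}$ leaves the distribution of the first remaining point unaffected, so that the limiting transition kernel is unambiguous.
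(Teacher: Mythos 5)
There is a genuine gap, and it concerns precisely the point your proposal treats most casually: where the second-order memory comes from. You attribute the dependence on $\vecv_{j-2}$ to the deletion of the image of the previously visited scatterer $\vecp_{j-2}$ from $\Xi_{\vs_{j-1}}$, and then invoke [Q2] to argue that this deletion ``leaves the distribution of the first remaining point unaffected''. These two claims are in tension: if the past acted only through removing a single point whose removal is harmless, the limit would be a first-order Markov process, contradicting the statement (and the known lattice case). In fact the point removed from the configuration is the \emph{current} scatterer $\vecq^{(j-1)}$ (this is what $(\tP-\vecq)^*$ in [P2] encodes), not $\vecp_{j-2}$. The memory of $\vecv_{j-2}$ enters through the exit position: after the $(j-1)$st collision the particle sits at $\vecq^{(j-1)}+\rho\vecbeta^+_{\vecv_{j-2}}(\vecv_{j-1})$, and in the limit this shifts the point process $\Xi_{\vs_{j-1}}$ by $\bigl(\vecbeta^+_{\vecv_{j-2}}(\vecv_{j-1})R(\vecv_{j-1})\bigr)_\perp$, which is exactly the first argument of the kernel $k$ in \eqref{pbndefG}. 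Note also that [P2] as stated does not even cover a particle starting at $\vecq+\rho\vecbeta(\vecv)$ rather than at $\vecq$; one needs the upgraded convergence with a $\vecbeta$-shift, uniform over compact families of functions $\vecbeta$ (Lemma \ref{BETAUNIFCONVlem}, Theorems \ref{Thm2gen} and \ref{unifmodThm2} in the paper). Without this ingredient your induction cannot produce the kernel $p_\bn$ at all.

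The second gap is the conditioning step itself, which you correctly flag as the obstacle but do not resolve. ``Conditional absolute continuity propagates'' is not usable as stated: the conditional law of $\vecv_{j-1}$ given the $\rho$-dependent past is itself a $\rho$-dependent family of measures and is not a fixed element of $\Pac(\US)$ to which [P2] applies; moreover the starting point on the new scatterer depends on the random incoming direction. The paper's resolution is to prove, by induction, a statement with built-in uniformity over the scatterer center $\vecq\in\scrP_T(\rho)$, over an equismooth family of velocity measures, over an equicontinuous family of test functions, and over an admissible family of exit-position functions $\vecbeta$ (Theorem \ref{MAINTECHNTHM2A}); the inductive step partitions the integral according to the scatterer $\vecq'$ hit first and a small velocity cell $D_\ell$, compares the exact law of the outgoing direction on that cell with $\nu_{\ell,\vecs_1}$ via Lemma \ref{SHINELEM1}, feeds the resulting exit map $\vecB^+$ back in as the new $\vecbeta$, and controls near-grazing paths and hits on $\scrE$ (Propositions \ref{ETAGRAZINGprop} and \ref{Thm2genaddlem}). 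Some mechanism of this kind — a quantitative, uniform-in-families formulation replacing naive conditioning — is indispensable, and it is absent from your outline.
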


This theorem is restated in non-probabilistic notation
and for general scattering maps
as Theorem \ref{MAINTECHNthm2G} below.
As we will see, the extension of the state space to include marking is in general\footnote{In the case of random (resp.\ periodic) scatterer configurations considered in previous studies, the process $j\mapsto (\xi_j,\vecv_j)$ is first-order (resp.\ second-order) Markovian and an extension of the state space as in \eqref{eq:JTdist2}  is not necessary.} necessary to obtain the Markov property. 

The collision kernels $p,p_\bn$ can be written as
\begin{align}\label{pgendef001}
p\bigl(\vecv;\xi,{\vs}_+,\vecvp\bigr)=\frac{\sigma(\vecv,\vecvp)}{v_{d-1}}
\,k^{\g}\bigl(\xi,(\vecw,{\vs}_+)\bigr),
\end{align}
\begin{align}\label{pbndefG001}
p_\bn\bigl(\vecv_0,{\vs},\vecv;\xi,{\vs}_+,\vecvp\bigr)=
\frac{\sigma(\vecv,\vecvp)}{v_{d-1}}
k\bigl((\vecw',{\vs}),
\xi,(\vecw,{\vs}_+)\bigr),
\end{align}
where $k^{\g}$ and $k$ are {\em transition} kernels that quantify the probability of hitting the next scatterer at distance $\xi$ with impact parameter $\vecw$ (which is a function of $\vecv$ and $\vecv_+$). The kernel $k^{\g}$ corresponds to the case of generic initial data, and $k((\vecw',\vs),\,\cdot\,)$ to the case of an initial condition relative to previous scattering event with marking $\vs$ and exit parameter $\vecw'$. The exit parameter can be viewed as the time-reversed impact parameter and thus is a function of $\vecv_0$ and $\vecv$. The transition kernels are central to our work and will be discussed in detail in Section \ref{FIRSTCOLLISIONsec}. 
The collision kernels have the following important properties: 
\begin{equation}\label{s-inv1}
p\bigl(\vecv K ;\xi,{\vs}_+,\vecvp K\bigr)=p\bigl(\vecv;\xi,{\vs}_+,\vecvp\bigr) \quad \forall K\in\SO(d),
\end{equation}
\begin{equation}\label{s-inv2}
p_\bn\bigl(\vecv_0 K,{\vs},\vecv K;\xi,{\vs}_+,\vecvp K\bigr) = p_\bn\bigl(\vecv_0,{\vs},\vecv;\xi,{\vs}_+,\vecvp\bigr) \quad \forall K\in\SO(d) ,
\end{equation}
\begin{equation}\label{intqe}
p\bigl(\vecv;\xi,{\vs}_+,\vecvp\bigr) = c_{\scrP} \int_{[\xi,\infty)\times\Sigma\times\US}  \sigma(\vecv_0,\vecv)\, p_\bn\bigl(\vecv_0,{\vs},\vecv;\xi',{\vs}_+,\vecvp\bigr) \, d\xi' \,d\mm({\vs}) \,d\vecv_0 ,
\footnote{This relation requires that angular momentum is preserved (or reversed) by the scattering map, which is the case for specular reflection and potential scattering, {\blu but which need not hold for the more general scattering maps
which we introduce in Section \ref{SCATTERINGMAPS} (cf.\ Remark \ref{COND3completecondrem}).}}
\end{equation}
\begin{equation}\label{kkg}
p\bigl(\vecv;\xi,{\vs}_+,\vecvp\bigr) \leq  c_{\scrP}\, \sigma(\vecv,\vecvp), \qquad 
p_\bn\bigl(\vecv_0,{\vs},\vecv;\xi,{\vs}_+,\vecvp\bigr) \leq   c_{\scrP}\, \sigma(\vecv,\vecvp) ,
\end{equation}
and
\begin{equation}\label{intqe00}
p\bigl(\vecv;0,{\vs}_+,\vecvp\bigr) = \lim_{\xi\to 0} p\bigl(\vecv;\xi,{\vs}_+,\vecvp\bigr) =  c_{\scrP} \, \sigma(\vecv,\vecvp) .
\end{equation}
Relations \eqref{s-inv1}--\eqref{intqe} follow from the corresponding properties of the transition kernels, and are established in Section \ref{COLLKERRELsec}. The bounds in \eqref{kkg} follow directly from definitions \eqref{kdef} and \eqref{kgdef}, and identity \eqref{intqe00}
from Corollary \ref{kgfromkCORRO}.

We can furthermore extend $\Theta$ to a Markov process $\hTheta$ by setting\footnote{The Markov property of $\Theta$ holds for Poisson scatterer configurations, but fails for all other examples discussed in this paper, including the periodic setting.}
\begin{equation}\label{eq:RFP2}
\hTheta :  t \mapsto \hTheta(t) = \big( \vecq(t) , \vecv(t) , \xi(t) ,\vs(t) ,\vecv_+(t) \big)  ,
\end{equation}
where
\begin{equation*}
\begin{split}
& \vecq(t)  = \vecq_0 + \sum_{n=1}^{n_t} \xi_j \vecv_{j-1} + (t-T_{n_t}) \vecv_{n_t} \qquad \text{(position at time $t$),}\\
& \vecv(t)  = \vecv_{n_t} \qquad \text{(velocity at time $t$),}\\
& \xi(t)  = T_{n_t+1} - t  \qquad \text{(distance at time $t$ to next scattering),}\\
& \vs(t)  =\vs_{n_t+1} \qquad \text{(marking of next scatterer),}\\
& \vecv_+(t)  = \vecv_{n_t+1}  \qquad \text{(velocity after next scattering).}
\end{split}
\end{equation*}
Recall that $T_n=\xi_1+\ldots +\xi_n$.
The Markov property of $\hTheta$ follows from the Markov property of \eqref{eq:JTdist2}, see Section \ref{KINETICEQsec} for details.

\section{The Lorentz process for potentials} \label{sec:soft}

In addition to the classic setting of hard spheres, our results will also apply to ``soft'' scatterers described by a Hamiltonian flow with a compactly supported potential. The Hamiltonian is
\begin{equation*}
H(\vecq,\vecxi)=\tfrac12 \|\vecxi\|^2 +  V_\rho(\vecq)
\end{equation*}
with position $\vecq\in\RR^d$, momentum $\vecxi\in\RR^d$, and potential
\begin{equation}\label{Vrhodef}
V_\rho(\vecq)=\sum_{\vecp\in\scrP^\circ} W\bigg(\frac{\vecq-\vecp}{\rho}\bigg) ,
\end{equation}
which is a superposition of translated and scaled copies of a single potential $W\in\C(\RR^d\setminus\{\vecnull\})$ 
which vanishes outside the unit ball.
Here $\scrP^\circ=\scrP^\circ(\rho)$ \label{Pcirc} is an arbitrary choice of a maximal subset of $\scrP$ 
subject to the property that $\|\vecq-\vecq'\|>2\rho$ for all $\vecq\neq\vecq'\in\scrP^\circ$. %

We use $\scrP^\circ$ 
in place of $\scrP$ in \eqref{Vrhodef} for simplicity of presentation, %
as this ensures %
that the
flow $\Phi_t$ introduced below is well-defined without having to exclude any singular trajectories,
for example %
trajectories for which the particle escapes to infinity in finite time.
We will see that the probability of the particle hitting a scatterer which is not separated from
all other scatterers tends to zero in the Boltzmann-Grad limit;
cf.\ Remarks \ref{unifmodThmREM}  and \ref{MAINTECHNthm2Grem} below.
Therefore our main results %
hold independently of
which convention is used to define
the flow $\Phi_t$ for such particle trajectories.

We assume that $W$ is spherically symmetric
and define (by a slight abuse of notation) $W\in\C(\RR_{>0})$ \label{SSP} %
by %
$W(\vecq)=W(r)$ with $r=\|\vecq\|$;
we will always assume $\liminf_{r\to0}r^2W(r)\geq0$ and that the restriction of $W(r)$ to $(0,1]$ is $\C^2$;
however we allow $W'(r)$ and $W''(r)$ to have discontinuities of the first kind at the point $r=1$.
The Hamiltonian flow is defined through Hamilton's equations
\begin{equation}\label{Hamilton-eq}
\dot\vecq = \nabla_\vecxi H, \qquad  \dot\vecxi = -\nabla_\vecq H .
\end{equation}
The total energy $H(\vecq,\vecxi)=E$ is a constant of motion, and by adjusting the potential by a scalar multiplier, we may assume without loss of generality that $E=\frac12$; this corresponds to a particle speed $\|\vecxi\|=1$ outside the support of $V_\rho$. Under this constraint, the accessible phase space is
\begin{equation}\label{phasespace}
\bigl\{(\vecq,\vecxi)\in {\blu \T(\RR^d)}  
\col \|\vecxi\|^2 +  2 V_\rho(\vecq) = 1\text{ and }[\vecxi=\bn\Rightarrow\nabla V_\rho(\vecq)\neq\bn]\bigr\},
\end{equation}
{\blu where $\T(\RR^d)=\RR^d\times\RR^d$ is the tangent bundle of $\RR^d$.}
Let us also assume $$\limsup_{r\to0}W(r)\neq\frac12.$$
Now for any initial data $(\vecq_0,\vecxi_0)$ in \eqref{phasespace}, 
the solution to \eqref{Hamilton-eq} is well defined for all times. 
For $\vecxi\neq\bn$, define the direction of travel by
\begin{equation*}
\vecv = \|\vecxi\|^{-1}\vecxi = (1- 2 V_\rho(\vecq))^{-1/2} \vecxi ,
\end{equation*}
and the accessible phase space of position and direction by 
\begin{multline}\label{tfwdef}
\tfw(\rho)=\bigl\{ (\vecq,\vecv)\in\T^1(\RR^d) : V_\rho(\vecq) < \tfrac12\text{ or }\\ [V_\rho(\vecq)=\tfrac12\text{ and } 
-\nabla V_\rho(\vecq)\in\R_{>0}\cdot\vecv]\bigr\}.
\end{multline}
The map $\vecxi\mapsto\vecv$ provides a bijection from \eqref{phasespace} onto $\tfw(\rho)$, and the Hamiltonian flow induces a flow on $\tfw(\rho)$ which we denote by $\Phi_t=\Phi_t^{(\rho)}$.\footnote{The last condition in \eqref{tfwdef} means that, 
by convention, we select the outgoing position when the speed is zero;
this means that the orbits of $\Phi_t$ are right continuous but not necessarily left continuous.
However a discontinuity can only occur in the case when a particle hits a scatterer with exactly vanishing impact parameter.}
As in the classical Lorentz gas, we extend its definition to $\T^1(\RR^d)$ by setting $\Phi_t(\vecq,\vecv) = 
(\vecq  ,\vecv)$ if $(\vecq,\vecv)\notin\tfw(\rho)$, and again define the rescaled flow by
\begin{equation}\label{tildePhidef2}
\widetilde \Phi_t^{(\rho)}(\vecq,\vecv) = s_\rho\circ\Phi_{\rho^{1-d} t}^{(\rho)}\circ s_\rho^{-1},
\end{equation} 
with $s_\rho$ as in \eqref{Rrhodef}.
For random initial data $(\vecq,\vecv)\in\T^1(\RR^d)$ distributed according to $\Lambda\in\Pac(\T^1(\RR^d))$, the quantity
$\Theta^{(\rho)}(t) = \widetilde \Phi_t^{(\rho)} (\vecq,\vecv)$ defines a continuous-time random process.\footnote{Note that
the typical orbits of $\Theta^{(\rho)}$ are continuous curves in $\T^1(\RR^d)$.
This stands in contrast to the random flight process in \eqref{ThetarhoDEF}.}

In our theorem %
we need to impose further conditions on the potential $W$,
which ensure that the scattering map is dispersing;
we discuss %
these in %
Section \ref{AppA1}.
The following counterpart of Theorem \ref{thm:M1} shows that in the Boltzmann-Grad limit,
$\Theta^{(\rho)}$ converges, as in the case of the classical Lorentz gas, to a random flight process.
\begin{thm}\label{thm:M1V}
Let $\scrP$ be admissible, and assume that the potential $W$ is dispersing in the sense of 
Definition \ref{DISPERSINGDEF} in Section \ref{AppA1}. 
Then, for any $\Lambda\in\Pac(\T^1(\RR^d))$, there is a random flight process $\Theta$ such that $\Theta^{(\rho)}$ converges to $\Theta$ in distribution, as $\rho\to 0$.
\end{thm}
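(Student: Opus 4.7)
The plan is to reduce Theorem \ref{thm:M1V} to Theorem \ref{thm:M1} (or rather its underlying technical version Theorem \ref{MAINTECHNthm2G}) by verifying that the dynamics generated by the soft potential $V_\rho$ admits, in the Boltzmann-Grad scaling, the same skeleton description as the hard sphere gas: straight-line motion between collisions, interspersed with instantaneous scattering events governed by a map whose relevant properties mirror those of specular reflection. Since $V_\rho$ vanishes outside $\bigcup_{\vecp\in\scrP^\circ}\scrB^d(\vecp,\rho)$ and the balls in this union are pairwise disjoint by construction of $\scrP^\circ$, the particle follows a straight line outside these balls, exactly as in Section \ref{sec:classical}.

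First I would isolate the single-scatterer scattering map. Fix $\vecp\in\scrP^\circ$ and suppose the particle enters $\scrB^d(\vecp,\rho)$ with incoming velocity $\vecv\in\US$ and impact parameter determined by $\vecb\in\US$, $\vecv\cdot\vecb<0$. Changing to microscopic coordinates $\vecy=(\vecq-\vecp)/\rho$ and rescaled time $\tau=t/\rho$, Hamilton's equations \eqref{Hamilton-eq} become autonomous with Hamiltonian $\tfrac12\|\vecxi\|^2+W(\vecy)$, so the resulting scattering map $\Psi_W:(\vecv,\vecb)\mapsto(\vecv_+,\vecb)$ is $\rho$-independent. Spherical symmetry of $W$ confines the orbit to the 2-plane $\Span(\vecv,\vecb)$ and conserves angular momentum, so the outgoing velocity is determined by the impact parameter $\vecw$. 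The dispersing hypothesis from Section \ref{AppA1} then guarantees that the map $\vecw\mapsto\vecv_+$ (for fixed $\vecv$) is a diffeomorphism from $\UB$ minus a null set to $\US$ minus the incoming direction, and hence gives a well-defined differential cross section $\sigma(\vecv,\vecv_+)$ analogous to \eqref{SIGMAforrefl}. The transit time through a single scatterer is $O(\rho)$ in physical time units, hence $O(\rho^d)$ after the macroscopic rescaling \eqref{tildePhidef2}, so it contributes nothing to the limit.

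With the scattering map identified, I would apply the general framework of Theorem \ref{MAINTECHNthm2G}: the discrete-time process $\langle(\xi_j^{(\rho)},\vs_j^{(\rho)},\vecv_j^{(\rho)})\rangle_{j\geq 1}$ is defined exactly as in Section \ref{sec:classical}, only with $\vecv_j$ produced by $\Psi_W$ rather than by specular reflection. The joint convergence to a second-order Markov process of the form \eqref{eq:JTdist2} then follows verbatim from the spherical equidistribution [P2] and the limiting point processes $\Xi_{\vs}$, because the argument uses only the fact that the $j$th impact parameter has a conditional distribution coming from the limit point set, together with an $\SO(d-1)$-invariant, dispersing scattering law. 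The random flight process $\Theta$ of Theorem \ref{thm:M1V} is then reconstructed from this skeleton in the same way as in the proof of Theorem \ref{thm:M1}, with continuity of the trajectories inside the potential wells being automatic because the transit-time contribution vanishes uniformly.

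The main obstacle is controlling the negligible-but-delicate pathologies. First, one must show that the set of initial data leading to collisions with scatterers in $\scrP\setminus\scrP^\circ$ has vanishing $\Lambda$-measure in the limit; this is addressed by combining [P3] with the fact that, under the admissibility hypotheses, $\scrP\setminus\scrP^\circ$ has asymptotic density zero, so that (after rescaling) balls of radius $O(\rho)$ around such points contribute negligibly. Second, one must handle initial data whose first collision has impact parameter in the singular set where $\Psi_W$ fails to be a diffeomorphism, or where the scattering angle is so small that the particle effectively grazes the scatterer without turning; the dispersing condition ensures this set has small measure and that $\sigma$ is integrable near the forward-scattering singularity $\vecv_+=\vecv$. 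Third, the bookkeeping showing that $\rho^{d-1}\tau_j\to\xi_j$ despite the nonzero transit time requires a uniform bound on the transit time across all allowed impact parameters, which again follows from the dispersing hypothesis of Section \ref{AppA1}.
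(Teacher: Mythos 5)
Your overall reduction is the same as the paper's: reconstruct $\Theta$ and $\Theta^{(\rho)}$ from the discrete-time skeleton of Theorem \ref{MAINTECHNthm2G}, with the scattering map $\Psi_W$ satisfying the abstract conditions (i)--(iii) of Section \ref{SCATTERINGMAPS} by Lemma \ref{DEFLANGLElem} and Remark \ref{COND3completecondrem}. The obstacles you list also match the ones the paper resolves. However, there is one concrete misstatement that would break your version of the bookkeeping.

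You write that the conversion from $\rho^{d-1}\tau_j$ to $\xi_j$ ``requires a uniform bound on the transit time across all allowed impact parameters, which again follows from the dispersing hypothesis.'' This is not what the dispersing hypothesis gives, and it is not what the paper relies on. The only bound available is \eqref{Twbound}, $T(w)<\bigl(\pi-\theta(w)\bigr)/w$, which together with $\theta'(0)\neq0$ controls $T(w)$ only when $w$ is bounded away from $0$; near $w=0$ the bound degenerates, and in fact the footnote preceding the definition of $J_\rho$ asserts finiteness of $T_{\vecv_-,\vecv_+}$ \emph{only} when $\vecbeta_{\vecv_-}^-(\vecv_+)\neq-\vecv_-$. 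The paper's proof therefore does not try to bound the transit time uniformly: it defines $J_\rho(x)$ to be a dummy value whenever some $\vecv_j=s_\Psi\cdot\vecv_{j-1}$, introduces the restricted event $\fW_T(n;\rho)$ in \eqref{fWtnDEF} which excludes vanishing impact parameters, and shows in \eqref{thm:M1Vpf5} that the excluded set has measure zero under $\nu_\Lambda\circ\pr_n^{-1}$. Lemma \ref{JrhotendtotJLEM} then asserts continuity of $J_\rho\to\tJ$ only at points $z$ with $\vecv_{j-1}(z)\neq s_\Psi\cdot\vecv_j(z)$ for all $j$, which is exactly what the continuous mapping theorem needs.

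So to repair your argument you should replace the claimed uniform transit-time bound by this measure-theoretic exclusion: define the map from skeleton data to $D_{\T^1(\R^d)}[0,T]$ so that it collapses to a dummy value on the null set of skeletons with a vanishing impact parameter; verify that the exclusion has limiting probability zero (this uses \eqref{dcsformula} and the fact that $\sigma(\vecv,\cdot)$ gives $\{s_\Psi\vecv\}$ zero mass, as in \eqref{thm:M1Vpf5}); and invoke the continuous mapping theorem \cite[Thm.\ 4.27]{kallenberg02} on the good set. Your remaining two obstacles (scatterers in $\scrP\setminus\scrP^\circ$, grazing collisions) are handled in the paper exactly as you suggest, via Remarks \ref{unifmodThmREM}, \ref{MAINTECHNthm2Grem} and the $\eta$-grazing estimates of Section \ref{bndgrazingprobSEC}, so those parts of your outline are fine.
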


The proof of Theorem \ref{thm:M1V} reduces to a statement analogous to Theorem \ref{thm:M2}, where the elastic reflection \eqref{scatmap} is replaced by a map $\scrS_-\to\scrS_+$ defined by the scattering at the given potential $W$. We will in fact establish Theorem  \ref{thm:M2} for a more general class of scattering maps, which include elastic reflections as well as potential scattering. 
Relations \eqref{pgendef001}--\eqref{intqe00} for the collision kernels $p,p_\bn$ remain valid in the present context. Note in particular that the transition kernels $k^{\g},k$ are independent of the choice of scattering process, and therefore the only dependence of the collision kernels on the choice of scattering potential (within the class considered here) is via the differential cross section in \eqref{pgendef001} and \eqref{pbndefG001}.

\section{The linear Boltzmann equation and generalisations} \label{sec:linear}

Let us now explain how the existence of the limiting random flight process $\Theta$ yields information on the macroscopic time evolution of an initial particle density $f_0\in\L^1(\T^1(\RR^d))$. 
We will use the shorthand notation $d\vecq=d\!\vol(\vecq)$ for $\vecq\in\R^d$ 
and, as before, $d\vecv=d\omega(\vecv)$ for $\vecv\in\US$.
For fixed $\rho>0$, the evolution of the microscopic density under the rescaled flow $\widetilde\Phi_t^{(\rho)}$ is given by the linear operator $L_t^{(\rho)}:\L^1(\T^1(\RR^d))\to\L^1(\T^1(\RR^d))$
defined by
\begin{align}\label{Ltrhodef}
\int_A \bigl[L_t^{(\rho)}\!f_0\bigr](\vecq,\vecv)\,d\vecq\,d\vecv
=\int_{\tPhi_{-t}^{(\rho)}(A)} f_0(\vecq,\vecv)\,d\vecq\,d\vecv
\end{align}
for every $f_0\in\L^1(\T^1(\R^d))$ and every Borel set $A\subset\T^1(\R^d)$.
To justify this definition, %
one should note that the flow $\tPhi_t^{(\rho)}$
preserves the measure $a\cdot\vol\times\omega$ %
on $\T^1(\R^d)$,
where $a\equiv1$ in the case of hard sphere scattering, while
\begin{align*}
a(\vecq,\vecv)=\begin{cases}\bigl(\tfrac12-V_\rho(\rho^{1-d}\vecq)\bigr)^{(d-2)/2}
&\text{if }\:(\rho^{1-d}\vecq,\vecv)\in\tfw(\rho),
\\
1&\text{otherwise,}
\end{cases}
\end{align*}
in the case of potential scattering\footnote{This is closely related to the fact that $\tPhi^{(\rho)}_t$
is a time change of the geodesic flow on the unit tangent bundle of the region 
$\{\vecq\in\R^d\col V_\rho(\rho^{1-d}\vecq)<\frac12\}$
equipped with the Riemannian metric $(\frac12-V_\rho(\rho^{1-d}\vecq))^{1/2}\,ds$;
cf.\ \cite[\mbox{p.\ 247}]{Arnold}.}.
Now since the two measures $a\cdot\vol\times\omega$ and $\vol\times\omega$
are equivalent, it follows that
push-forward by $\tPhi_{t}^{(\rho)}$ preserves 
the family of signed Borel measures on $\T^1(\R^d)$
which are absolutely continuous with respect to $\vol\times\omega$;
and the content of 
\eqref{Ltrhodef} is that
$L_t^{(\rho)}f_0$ equals the density (wrt.\ $\vol\times\omega$)
of the push-forward by $\tPhi_{t}^{(\rho)}$ of the measure $f_0\cdot\vol\times\omega$.
In fact $L_t^{(\rho)}f_0$ can be expressed by the following explicit, pointwise formula:
\begin{align*}
L_t^{(\rho)}f_0=a\cdot \Bigl(\Bigl(\frac{f_0}a\Bigr)\circ\tPhi_{-t}^{(\rho)}\Bigr).
\end{align*}
Note also that 
$\| L_t^{(\rho)} f_0\|_{\L^1}=\| f_0\|_{\L^1}$ for all $f_0\in\L^1(\T^1(\RR^d))$.

The following corollary of Theorem \ref{thm:M1} and Theorem \ref{thm:M1V} affirms the weak convergence of $L_t^{(\rho)}$ to a limit $L_t$.
\begin{cor}\label{weakconvcor}
Let $\scrP$ be admissible.
There is a family of linear operators $L_t:\L^1(\T^1(\RR^d))\to\L^1(\T^1(\RR^d))$ such that for any $f_0\in\L^1(\T^1(\RR^d))$, $A\subset\T^1(\RR^d)$ bounded with boundary of Lebesgue measure zero, and $t>0$,
\begin{equation*}
\lim_{\rho\to 0} \int_{A}  L_t^{(\rho)}f_0(\vecq,\vecv) \, d\vecq\,d\vecv =
 \int_{A}  L_t f_0(\vecq,\vecv) \, d\vecq\,d\vecv .
\end{equation*}
\end{cor}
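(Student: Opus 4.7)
The approach is to identify $L_t$ as the operator that sends $f_0$ to the density (with respect to $\vol\times\omega$) of the law of $\Theta(t)$, when the initial data $(\vecq_0,\vecv_0)$ is distributed according to $\Lambda=f_0\cdot(\vol\times\omega)$. By linearity and the Jordan decomposition $f_0=f_0^+-f_0^-$, it suffices to treat $f_0\geq 0$ with $\|f_0\|_{\L^1}=1$, so that $\Lambda\in\Pac(\T^1(\RR^d))$. For such $f_0$, the definition \eqref{Ltrhodef} combined with the scaling \eqref{tPhidef} gives
\[
\int_A L_t^{(\rho)} f_0(\vecq,\vecv)\,d\vecq\,d\vecv
\;=\; \Lambda\bigl(\widetilde\Phi_{-t}^{(\rho)}(A)\bigr)
\;=\; \PP\bigl(\Theta^{(\rho)}(t)\in A\bigr).
\]
Theorem \ref{thm:M1} (resp.\ Theorem \ref{thm:M1V}) asserts $\Theta^{(\rho)}\to\Theta$ in distribution; a continuous-mapping argument (evaluation at a fixed $t>0$ is a.s.\ continuous on the path space of $\Theta$, since no collision time of the limit process coincides with a given $t$ almost surely) promotes this to $\Theta^{(\rho)}(t)\to\Theta(t)$ in distribution. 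Writing $\mu_t$ for the law of $\Theta(t)$, define $L_tf_0:=d\mu_t/d(\vol\times\omega)$; this makes sense once the key step, $\mu_t\ll\vol\times\omega$, is established.

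\textbf{Absolute continuity of $\mu_t$.} Decompose the limiting random flight \eqref{eq:RFP} as $\Theta(t)=(\vecq_0+D,\,V)$ with $D:=\sum_{j=1}^{n_t}\xi_j\vecv_{j-1}+(t-T_{n_t})\vecv_{n_t}$ and $V:=\vecv_{n_t}$. By Theorem \ref{thm:M2}, conditional on $\vecv_0$ the flight $(\xi_j,\vs_j,\vecv_j)_{j\geq1}$, and hence $(D,V)$, is independent of $\vecq_0$. For any Borel $A\subset\T^1(\RR^d)$, with $A_\vecv:=\{\vecq\col(\vecq,\vecv)\in A\}$, the change of variable $\vecq_0\mapsto\vecq=\vecq_0+D$ yields
\[
\mu_t(A)\;=\;\EE\!\left[\int_{\RR^d} f_0(\vecq-D,\vecv_0)\,\mathbbm{1}_{A_V}(\vecq)\,d\vecq\right].
\]
If $(\vol\times\omega)(A)=0$, then Fubini gives $\vol(A_\vecv)=0$ for $\omega$-a.e.\ $\vecv$, so the inner integral vanishes unless $V$ lands in this $\omega$-null exceptional set. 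Thus $\mu_t(A)=0$ provided the law of $V=\vecv_{n_t}$ is absolutely continuous with respect to $\omega$. But Theorem \ref{thm:M2} exhibits $p(\vecv_0;\cdot)$ and $p_\bn$ as densities with respect to $d\xi\,d\mm(\vs)\,d\omega(\vecv)$, so induction on the collision index shows $\vecv_n\ll\omega$ for every $n\geq 1$; together with $\vecv_0\ll\omega$ (the $\US$-marginal of $\Lambda$), the mixture $V=\vecv_{n_t}$ over the random index $n_t$ is absolutely continuous with respect to $\omega$.

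\textbf{Conclusion and linearity.} Given $\mu_t\ll\vol\times\omega$, every bounded Borel $A$ with $(\vol\times\omega)(\partial A)=0$ is a continuity set of $\mu_t$, so the portmanteau theorem applied to $\Theta^{(\rho)}(t)\to\Theta(t)$ gives
$\PP(\Theta^{(\rho)}(t)\in A)\to\mu_t(A)=\int_A L_tf_0(\vecq,\vecv)\,d\vecq\,d\vecv$, which is the required limit. For signed $f_0\in\L^1(\T^1(\RR^d))$, set $L_tf_0:=L_tf_0^+-L_tf_0^-$; linearity of $f_0\mapsto\int_A L_t^{(\rho)}f_0$ in $f_0$ ensures that $L_t$ extends to a well-defined linear operator on $\L^1(\T^1(\RR^d))$, and the integrals over a sufficiently rich class of continuity sets determine $L_tf_0$ uniquely as an $\L^1$ function.

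\textbf{Main obstacle.} The delicate point is the absolute continuity of $\mu_t$. One must combine the absolute continuity of the initial position $\vecq_0$ (which smooths out the deterministic shift $D$) with the density structure of the limiting collision kernels in the velocity variable (which prevents $V=\vecv_{n_t}$ from concentrating mass on an $\omega$-null set). The conditional independence of $(D,V)$ from $\vecq_0$ given $\vecv_0$ in the limit process, which is implicit in Theorem \ref{thm:M2}, is what makes this factorisation possible; note that this independence is a genuinely new feature of the limit and need not hold at the microscopic level for $\rho>0$.
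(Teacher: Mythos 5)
Your proof is correct and follows essentially the same route as the paper: identify $L_t f_0$ with the density of the law of $\Theta(t)$ when the initial condition is distributed according to $f_0\cdot(\vol\times\omega)$, and deduce the convergence of integrals over continuity sets from Theorems \ref{thm:M1}/\ref{thm:M1V} via the portmanteau theorem. The paper states this in two lines and leaves two things implicit which you supply explicitly: (i) that evaluation at a fixed time $t$ is an a.s.\ continuous functional on the Skorohod path space under the law of $\Theta$ (because the $\xi_j$ have densities, so $\PP(T_n=t)=0$ for all $n$), which promotes $\Theta^{(\rho)}\to\Theta$ to $\Theta^{(\rho)}(t)\to\Theta(t)$; and (ii) the absolute continuity of the law of $\Theta(t)$ with respect to $\vol\times\omega$, without which the second displayed identity in the paper's proof would be meaningless and $L_t$ would not be well-defined as an operator into $\L^1$. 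Your argument for (ii) — decomposing $\Theta(t)=(\vecq_0+D,V)$ and using that conditionally on $\vecv_0$ the pair $(D,V)$ is independent of $\vecq_0$, so that absolute continuity in $\vecq$ comes from convolving $f_0(\cdot,\vecv_0)$ with the law of $D$, while absolute continuity in $\vecv$ comes from the density structure of the collision kernels and of the $\US$-marginal of $\Lambda$ — is correct and is the genuinely non-trivial content that the paper's phrase ``the statement follows'' compresses.
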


To see why this holds, assume (without loss of generality) that $f_0\geq 0$ and that it is normalized as a probability density. Then, with the choice $\Lambda(d\vecq \, d\vecv)= f_0(\vecq,\vecv) \, d\vecq\, d\vecv$ we have 
\begin{equation*}
 \int_{A}  L_t^{(\rho)} f_0(\vecq,\vecv) \, d\vecq\,d\vecv = \PP(\Theta^{(\rho)}(t)\in A),\qquad 
 \int_{A}  L_t f_0(\vecq,\vecv) \, d\vecq\,d\vecv = \PP(\Theta(t)\in A),
\end{equation*}
and the statement follows from Theorem \ref{thm:M1} (resp.\ Therem \ref{thm:M1V}). Since $\Theta$ is in general not Markovian, we cannot expect the limiting operators $L_t$ to form a linear semi-group, and thus $L_t f_0$ cannot be written as the solution of a transport equation. This issue is resolved by considering the Markov process $\hTheta$ in \eqref{eq:RFP2}. There exists a corresponding evolution operator $K_t:\L^1(X)\to \L^1(X)$ on the extended phase space $X=\T^1(\RR^d)\times\R_{>0}\times\Sigma\times\US$, \label{XXdef} such that
\begin{equation*}
\int_{A}  K_t f_0(\vecq,\vecv,\xi,\vs,\vecv_+) \, d\vecq\,d\vecv\, d\xi\,d\mm(\vs)\,d\vecv_+ = \PP(\hTheta(t)\in A) ,
\end{equation*}
for functions of the form
\begin{equation*}
f_0(\vecq,\vecv,\xi,\vs,\vecv_+) \, d\vecq\,d\vecv=\Lambda(d\vecq \, d\vecv)\,p(\vecv;\xi,\vs,\vecv_+).
\end{equation*}
Since $\hTheta(t)$ is Markovian, the family $(K_t)_{\geq 0}$ forms a semigroup, and the function
\begin{equation*}
f(t,\vecq,\vecv,\xi,\vs,\vecv_+)=K_t f_0(\vecq,\vecv,\xi,\vs,\vecv_+)
\end{equation*}
is the solution of the Cauchy problem (see Section \ref{KINETICEQsec} for details) for the forward Kolmogorov equation (or Fokker-Planck-Kolmogorov equation) of $\hTheta$,
\begin{multline}\label{KINETICEQ00}
\bigl(\partial_t+\vecv\cdot\nabla_\vecq-\partial_{\xi}\bigr)f(t,\vecq,\vecv,\xi,\vs,\vecv_+) \\
= \int_{\Sigma\times\US} f\bigl(t,\vecq,\vecv_0,0,\vs',\vecv\bigr)
 p_\bn(\vecv_0,\vs',\vecv;\xi,\vs,\vecv_+)\,d\mm(\vs')\,d\vecv_0,
\end{multline}
subject to the initial condition 
\begin{equation*}
f(0,\vecq,\vecv,\xi,\vs,\vecvp) = f_0(\vecq,\vecv,\xi,\vs,\vecvp)  .
\end{equation*}
The particle density $f(t,\vecq,\vecv)=L_t f_0(\vecq,\vecv)$ in the original phase space is recovered by integrating over the auxiliary variables, i.e.,
\begin{equation*}
f(t,\vecq,\vecv) = \int_{\RR_{>0}\times\Sigma\times\US} f(t,\vecq,\vecv,\xi,\vs,\vecv_+) \,d\xi\,d\mm(\vs)\,d\vecv_+.
\end{equation*}
We note that, in view of \eqref{intqe} and \eqref{intqe00}, a stationary solution of \eqref{KINETICEQ00} is given by
\begin{equation*}
f(t,\vecq,\vecv,\xi,\vs,\vecv_+) = p(\vecv;\xi,\vs,\vecv_+) .
\end{equation*}

Let us suppose for a moment that the limiting process has exponentially distributed flight times $\xi$, i.e.\ the collision kernel is of the form
\begin{equation*}
p_\bn(\vecv_0,\vs',\vecv;\xi,\vs,\vecv_+) =  \oxi^{-1} p_\bn(\vecv_0,\vs',\vecv;\vs,\vecv_+)\, \e^{-\xi/\oxi} ,
\end{equation*}
with $\oxi$ the mean free path \eqref{OXIFORMULAG}.
Then the ansatz 
\begin{equation*}
f(t,\vecq,\vecv,\xi,\vs,\vecv_+) =  \oxi^{-1} f(t,\vecq,\vecv,\vs,\vecv_+) \, \e^{-\xi/\oxi}
\end{equation*}
reduces \eqref{KINETICEQ00} to
\begin{multline}\label{KINETICEQ007}
\bigl(\partial_t+\vecv\cdot\nabla_\vecq+\oxi^{-1} \bigr) f(t,\vecq,\vecv,\vs,\vecv_+) \\
=\oxi^{-1} \int_{\Sigma\times\US}  f\bigl(t,\vecq,\vecv_0,\vs',\vecv\bigr)
  p_\bn(\vecv_0,\vs',\vecv;\vs,\vecv_+)\,d\mm(\vs')\,d\vecv_0  .
\end{multline}
In case of a Poisson scatterer configuration, we have in fact that 
\begin{equation*}
p_\bn(\vecv_0,\vs',\vecv;\vs,\vecv_+)=\frac{\sigma(\vecv,\vecv_+)}{v_{d-1}}.
\end{equation*}
In this case \eqref{KINETICEQ007} reduces further, with the ansatz $$f(t,\vecq,\vecv,\vs,\vecv_+)=v_{d-1}^{-1}  f(t,\vecq,\vecv) \sigma(\vecv,\vecv_+),$$ to 
\begin{equation*}%
\bigl(\partial_t+\vecv\cdot\nabla_\vecq+\oxi^{-1}\bigr) f(t,\vecq,\vecv) \\
=c_\scrP  \int_{\US}  f\bigl(t,\vecq,\vecv_0) \, \sigma(\vecv_0,\vecv)\,d\vecv_0  ,
\end{equation*}
which can be written in the standard form of the linear Boltzmann equation,
\begin{equation}\label{KINETICEQ008}
\bigl(\partial_t+\vecv\cdot\nabla_\vecq\bigr) f(t,\vecq,\vecv) \\
=c_\scrP  \int_{\US}  \Bigl( f\bigl(t,\vecq,\vecv_0) -  f(t,\vecq,\vecv) \Bigr) \, \sigma(\vecv_0,\vecv)\,d\vecv_0 .
\end{equation}%
This illustrates that the transport equation \eqref{KINETICEQ00} may indeed be viewed as a generalisation of the linear Boltzmann equation \eqref{KINETICEQ008}. In contrast to random scatterer configurations, we will see that other examples discussed in this study lead to transport equations of the form \eqref{KINETICEQ00} that do not reduce to \eqref{KINETICEQ008} or even \eqref{KINETICEQ007}.

\section{Outline of the paper}

The assumptions on the scatterer configuration $\scrP$ are stated above in terms of convergence properties of random point sets. Section \ref{ASSUMPTsec} provides the measure-theoretic background for a rigorous formulation of these assumptions. In particular, we explain how to identify point sets in $\RR^d$ with counting measures, i.e., locally finite Borel measures that are superpositions of Dirac masses. The space $M(\scrX)$ of locally finite Borel measures on $\scrX$ (with $\scrX=\RR^d$ in this instance) is equipped with the vague topology, which in turn allows us to define Borel probability measures on $M(\scrX)$, and thus define the notion of a {\em random counting measure}, which is synonymous with {\em random point process}. This, as well as the extension to marked point sets and point processes (where $\scrX=\RR^d\times\Sigma$ in the above), is explained in Section \ref{sec:PP}, following a technical discussion of uniform convergence properties of families of general Borel probability measures in Section \ref{UNIFCONVsec}. 
Section \ref{ASSUMPTLISTsec} then proceeds to translate the assumptions of Section \ref{sec:outline} on the scattering configuration $\scrP$ into the language of random counting measures. 
Section \ref{FIRSTCONSsec} provides a number of immediate consequences of the assumptions made in Section \ref{ASSUMPTLISTsec} through a series of technical lemmas. The assumptions on $\scrP$ are stated in terms of point processes $\Xi_\vs$ that are constructed relative to points $\vecq\in\scrP$. Section \ref{GENLIMITsec} constructs a new point process $\Xi$ relative to almost all points $\vecq\in\RR^d$, which will be relevant for the particle dynamics in the case of  {\em macroscopic} initial conditions (in contrast to {\em microscopic} initial data on or near a scatterer). The properties of $\Xi$ are further analysed in Section \ref{XiPropSec}.

Section \ref{FIRSTCOLLISIONsec} provides the first milestone in understanding the Boltzmann-Grad limit of the Lorentz process. It establishes limit theorems for the time and location of the first collision for a particle with random initial velocity, and a given (deterministic) initial point either (a) on or near a scatterer (Theorem \ref{Thm2gen}), or (b) in generic position outside a scatterer (Theorem \ref{Thm2macr}). 
The preparatory Section \ref{TRANSKERsec} defines the {\em transition kernel}, which provides the joint limit distribution of the first hitting time and impact parameter. The limit theorems are stated and proved in Section \ref{FIRSTCOLLsec}. Invariance properties and relations between the transition kernels for on-scatterer vs.\ generic initial data are derived in Section \ref{TRANSKERRELsec}.
The discussion then turns to the velocity {\em after} the first collision, which of course depends on the choice of scattering map at an individual scatter. Our hypotheses on the scattering map include spherical symmetry and differentiability, and are listed in Section \ref{SCATTERINGMAPS}. They are sufficiently general to allow for elastic hard-sphere scattering (specular reflection) as in the orginal Lorentz gas, but also scattering by a general class of spherically symmetric potentials, which are discussed in detail in Section \ref{AppA1}. The limit distribution for the post-collision velocities are expressed in terms of {\em collision kernels}, which are defined in Section \ref{COLLKERsec} and further analysed in Section \ref{COLLKERRELsec}. The corresponding limit theorems are stated in Section \ref{NEXTVELOCITYsec} as Theorems \ref{unifmodThm2} and \ref{unifmodThm2macr}, for near-scatterer and macroscopic initial conditions, respectively. In preparation for the proof of the convergence of the full Lorentz process, we furthermore need to bound the probability of near-grazing collisions and other singular trajectories. This is carried out in Section \ref{bndgrazingprobSEC}.

The key results of this work, the convergence of the Lorentz process to a random flight process, are stated and proved in Section \ref{MAINRESsec}. We first establish the corresponding results in the discrete-time setting, where time is measured in terms of the number of collisions. This is captured in Theorem \ref{MAINTECHNTHM2A} in Section \ref{MAINTECHNthm2aSEC}, with 
Section \ref{scatmapsmore} and Section \ref{MAINTECHNthm2apfSEC} dedicated to its proof. Theorem \ref{MAINTECHNTHM2A} assumes initial data near a scatterer, and the analogous result for macroscopic initial conditions, stated as Theorem \ref{MAINTECHNthm2G}, is derived in Section \ref{MAINRESMACRO} as a consequence of Theorem \ref{MAINTECHNTHM2A}.
The extension of these results to the continuous-time setting follows from a number of technical  estimates, which are given in Section \ref{LIMITFLIGHTsec}. This completes the proof of the main results of this work, which are stated in the introduction as Theorems \ref{thm:M1} and \ref{thm:M1V}. 
Section \ref{KINETICEQsec} shows that the limiting random flight process has a Markovian extension, and that the transport equation \eqref{KINETICEQ00} is indeed the forward Kolmogorov equation of that Markov process. Theorem \ref{FPK} states the existence and uniqueness of the solution to the Cauchy problem, under the assumption that the collision kernel is continuous. 

The final part of this paper, Section \ref{EXAMPLESsec}, provides a detailed discussion of point sets for which the assumptions on the scatterer configuration $\scrP$ (as stated in Section \ref{ASSUMPTsec}) are satisfied. Section \ref{PoissonSEC} explores the case when $\scrP$ is the realisation of a Poisson process with constant intensity. Even in this classic setting, checking the validity of the required assumptions is not straightforward. Section \ref{periodicpointsetsSEC} confirms the required assumptions in the case of general locally finite periodic point sets $\scrP$. The convergence of the Lorentz process was, in the periodic setting, previously known only for Euclidean lattices. The most interesting new examples to which the results of the present study apply, are Euclidean model sets (also known as cut-and-project sets), which are discussed in Section \ref{QCexsec}. Model sets are point sets 
which are often aperiodic, and they serve as mathematical models for quasicrystals. 
Section \ref{AppA1} discusses %
the relationship
between the scattering potential %
and the scattering map and differential cross section.
In particular  
Lemma \ref{genokscatterercondLEM} 
describes a general class of repulsive potentials 
for which the assumptions in Section \ref{SCATTERINGMAPS}
are satisfied.
Section \ref{AppA2} gives an outline of %
how the methods of Sections \ref{FIRSTCOLLISIONsec} and \ref{MAINRESsec}
can be extended to deal with more general potentials,
for which the scattering map does not satisfy the assumptions in 
Section \ref{SCATTERINGMAPS}.
Finally, Section \ref{sec:open} comprises a selection of open questions and directions for future work.

\vspace{10pt}

\noindent
{\blu\textbf{Acknowledgment.}
We would like to thank the referee for carefully reading the paper 
and valuable suggestions on improvements of the exposition.}

\chapter{Point sets, point processes and key assumptions}
\label{ASSUMPTsec}

The aim of this section is to state and discuss the list of assumptions on the point set $\scrP$ in a more precise and general form, compared to the outline in Section \ref{sec:outline}. We will require a notion of uniform weak convergence of random point sets. The most natural framework for this is to identify point sets with counting measures, and define a random counting measure (point process) in a suitable probability space. We thus need to deal with probability measures on spaces of locally finite Borel measures and their convergence. Section \ref{UNIFCONVsec} explains the concept of uniform weak convergence on general topological spaces, which we then specialise to point processes and marked point processes in Section \ref{sec:PP}. The main assumptions of this paper are stated in full generality in Section \ref{ASSUMPTLISTsec}. They are explored in detail in Section \ref{FIRSTCONSsec}.

\section{Uniform convergence of families of probability measures}
\label{UNIFCONVsec}

For $S$ any topological space, we write $P(S)$ for the set of Borel probability measures on $S$,
\label{PSdef}
equipped with the weak topology.
From now on we will always assume that $S$ is separable and metrizable.
Then $P(S)$ is also metrizable
\cite[pp.\ 72-73]{billingsley99}.

An important notion for us will be a certain general version of \textit{uniform} convergence in $P(S)$.
The setting is as follows. %

Let $J$ be a fixed index set, and let $C$ be a compact subset of $P(S)$.
For each $0<\rho<1$,
let $J(\rho)$ be a subset of $J$,
and let $\{\mu_{j,\rho}\}_{j\in J(\rho)}$ be a family of probability measures in $P(S)$. Let $\{\nu_{j}\}_{j\in J}$  be a family of probability measures contained in $C$.
Then we say that 
\begin{align}\label{UNIFCONVdef1}
\textit{$\mu_{j,\rho}$ converges weakly to $\nu_{j}$ ($\mu_{j,\rho}\xrightarrow[]{\textup{ w }}\nu_{j}$) as $\rho\to0$, uniformly over $j\in J(\rho)$},
\end{align}
if, for some metric $d$ on $P(S)$ realizing %
the weak topology, we have
\begin{align}\label{UNIFCONVdef2}
\forall \ve>0\col
\exists\rho_0\in(0,1)\col
\forall\rho\in(0,\rho_0)\col
\forall j\in J(\rho)\col
d(\mu_{j,\rho},\nu_{j})<\ve.
\end{align}

Note that this definition is independent of the choice of $d$:
If \eqref{UNIFCONVdef2} holds for one metric $d$ realizing %
the weak topology of $P(S)$, 
then it holds for all such metrics.
This is a consequence of the following lemma.

\begin{lem}\label{METRSPACECPTlem}
If $d_1,d_2$ are two metrics on a set $M$ inducing the same topology,
and $C\subset M$ is a compact set with respect to that topology,
then for any $\ve>0$ there is some $\ve'>0$ such that
\begin{align*}
\forall x\in M,\: y\in C\col
d_1(x,y)<\ve'\Rightarrow d_2(x,y)<\ve.
\end{align*}
\end{lem}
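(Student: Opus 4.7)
The plan is to argue by contradiction using sequential compactness of $C$. Since $d_1$ and $d_2$ both induce the same metrizable topology on $M$, compactness and sequential compactness coincide on $C$, so we can extract convergent subsequences from sequences in $C$.

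First I would negate the conclusion: assume there is some $\ve > 0$ such that for every $n \in \N$ we can find $x_n \in M$ and $y_n \in C$ with $d_1(x_n, y_n) < 1/n$ but $d_2(x_n, y_n) \geq \ve$. By compactness of $C$ (hence sequential compactness, since $C$ is metrizable), pass to a subsequence along which $y_n \to y$ for some $y \in C$; since $d_1$ realizes the topology, this means $d_1(y_n, y) \to 0$.

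Next, combining $d_1(x_n, y_n) < 1/n$ with $d_1(y_n, y) \to 0$ via the triangle inequality gives $d_1(x_n, y) \to 0$, so $x_n \to y$ in the common topology. Because $d_2$ also realizes this topology, both $d_2(x_n, y) \to 0$ and $d_2(y_n, y) \to 0$; another triangle inequality then yields $d_2(x_n, y_n) \to 0$, contradicting $d_2(x_n, y_n) \geq \ve$. This completes the argument.

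The proof is essentially routine; there is no real obstacle beyond being careful that the identity of topologies (not just the continuity of the identity map in one direction) is what allows both $d_1$- and $d_2$-convergence to be read off from a single topological convergence statement. Compactness of $C$ is what turns the pointwise equivalence of the two metrics into the required uniform estimate over $y \in C$.
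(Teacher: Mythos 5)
Your proof is correct and follows essentially the same argument as the paper's: negate the conclusion to extract sequences $x_n \in M$, $y_n \in C$ with $d_1(x_n,y_n)\to 0$ yet $d_2(x_n,y_n)\geq\ve$, use sequential compactness of $C$ to pass to a convergent subsequence $y_n\to y$, and then apply the triangle inequality in $d_1$ and $d_2$ (using that both metrics realize the same topology) to force $d_2(x_n,y_n)\to 0$, a contradiction.
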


\begin{proof}
Let $\ve>0$ be given,
and assume that there does not exist any corresponding $\ve'>0$.
Then there are sequences $x_1,x_2,\ldots$ in $M$ and $y_1,y_2,\ldots$ in $C$ such that
$d_1(x_n,y_n)\to0$ but $d_2(x_n,y_n)\geq\ve$ for all $n$.
By passing to a subsequence we may assume that there is $y\in C$ such that 
$y_n\to y$.
This notion is independent of the choice of metric,
i.e.\ we have both $d_1(y_n,y)\to0$ and $d_2(y_n,y)\to0$.
Now $d_1(x_n,y_n)\to0$ and  $d_1(y_n,y)\to0$ imply
$d_1(x_n,y)\to0$, i.e.\ $x_n\to y$, and thus
$d_2(x_n,y)\to0$.
Combined with $d_2(y_n,y)\to0$ this implies
$d_2(x_n,y_n)\to0$, contradicting the fact that $d_2(x_n,y_n)\geq\ve$ for all $n$.
\end{proof}

We next give some criteria for the uniform convergence in \eqref{UNIFCONVdef1} to hold.

\begin{lem}\label{UNIFPORTMlemC}
The uniform convergence in \eqref{UNIFCONVdef1} holds
if and only if the following condition is satisfied:
For any sequence $P=\{\rho_n\}\subset(0,1)$, $\rho_n\to0$,
and any choice of $j(\rho)\in J(\rho)$ for $\rho\in P$,
if there is some $\nu\in C$ such that
$\nu_{j(\rho)}\xrightarrow[]{\textup{ w }}\nu$ as $\rho\to0$ through $P$,
then also $\mu_{j(\rho),\rho}\xrightarrow[]{\textup{ w }}\nu$ as $\rho\to0$ through $P$.
\end{lem}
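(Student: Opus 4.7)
The plan is to prove both implications by working with a fixed metric $d$ realizing the weak topology on $P(S)$; by Lemma \ref{METRSPACECPTlem} and the discussion preceding it, the uniform convergence condition is independent of this choice, so there is no loss of generality.

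For the forward direction, assume the uniform convergence \eqref{UNIFCONVdef1} holds. Fix a sequence $P = \{\rho_n\} \subset (0,1)$ with $\rho_n \to 0$, a selection $j(\rho_n) \in J(\rho_n)$, and suppose $\nu_{j(\rho_n)} \xrightarrow[]{\textup{w}} \nu$ for some $\nu \in C$. By \eqref{UNIFCONVdef2} applied with $j(\rho_n) \in J(\rho_n)$, we have $d(\mu_{j(\rho_n),\rho_n}, \nu_{j(\rho_n)}) \to 0$. The triangle inequality then gives
\begin{equation*}
d(\mu_{j(\rho_n),\rho_n}, \nu) \leq d(\mu_{j(\rho_n),\rho_n}, \nu_{j(\rho_n)}) + d(\nu_{j(\rho_n)}, \nu) \longrightarrow 0,
\end{equation*}
so $\mu_{j(\rho_n),\rho_n} \xrightarrow[]{\textup{w}} \nu$ as desired.

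For the converse, I argue by contrapositive. Suppose \eqref{UNIFCONVdef2} fails. Then there exists $\ve > 0$ such that for every $n \in \N$ we can find $\rho_n \in (0, 1/n)$ and $j(\rho_n) \in J(\rho_n)$ with $d(\mu_{j(\rho_n),\rho_n}, \nu_{j(\rho_n)}) \geq \ve$. Set $P = \{\rho_n\}$. Since each $\nu_{j(\rho_n)}$ lies in the compact set $C \subset P(S)$, after passing to a subsequence we may assume $\nu_{j(\rho_n)} \xrightarrow[]{\textup{w}} \nu$ for some $\nu \in C$. The assumed sequential condition, applied along this subsequence, forces $\mu_{j(\rho_n),\rho_n} \xrightarrow[]{\textup{w}} \nu$ as well; but then
\begin{equation*}
d(\mu_{j(\rho_n),\rho_n}, \nu_{j(\rho_n)}) \leq d(\mu_{j(\rho_n),\rho_n}, \nu) + d(\nu, \nu_{j(\rho_n)}) \longrightarrow 0,
\end{equation*}
contradicting $d(\mu_{j(\rho_n),\rho_n}, \nu_{j(\rho_n)}) \geq \ve$ for all $n$.

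The only real subtlety is the extraction of the convergent subsequence of $\{\nu_{j(\rho_n)}\}$, which is precisely where the hypothesis $C \subset P(S)$ is compact (and hence sequentially compact, since $P(S)$ is metrizable) is used. Note that when we pass to a subsequence, the new sequence of $\rho_n$'s still tends to $0$, so the sequential hypothesis applies to it as a valid choice of $P$ and $j(\rho)$. This is the mechanism by which compactness converts a pointwise-along-sequences statement into genuine uniformity.
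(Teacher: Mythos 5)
Your proof is correct and follows essentially the same route as the paper's: the forward direction is a direct triangle-inequality argument from \eqref{UNIFCONVdef2}, and the converse proceeds by contradiction, extracting a ``bad'' sequence, passing to a subsequence on which $\nu_{j(\rho_n)}$ converges by compactness of $C$, invoking the sequential hypothesis, and deriving a contradiction with the triangle inequality. Your remark about why the passage to a subsequence is legitimate (the subsequence is itself a valid choice of $P$ and $j(\rho)$) makes explicit a point the paper leaves implicit.
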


\begin{proof}
The uniform convergence in \eqref{UNIFCONVdef1} clearly implies the stated condition
(by using \eqref{UNIFCONVdef2} and the triangle inequality).
Now assume that the stated condition holds, but 
the uniform convergence in \eqref{UNIFCONVdef1} does not hold.
Then there exist $c>0$, a sequence $P=\{\rho_n\}\subset(0,1)$, $\rho_n\to0$,
and for each $\rho\in P$ some $j(\rho)\in J(\rho)$,
such that 
\begin{align}\label{UNIFPORTMlemCPF2}
d(\mu_{j(\rho),\rho},\nu_{j(\rho)})>c\qquad\text{for each }\:\rho\in P.
\end{align}
Since $\nu_{j(\rho)}\in C$ for all $\rho\in P$,
after replacing $P$ with an appropriate subsequence
we may also assume that 
there is some $\nu\in C$ such that
$\nu_{j(\rho)}\xrightarrow[]{\textup{ w }}\nu$ as $\rho\to0$ through $P$.
Hence by our assumption we must also have
$\mu_{j(\rho),\rho}\xrightarrow[]{\textup{ w }}\nu$ as $\rho\to0$ through $P$.
These together imply 
$d(\mu_{j(\rho),\rho},\nu_{j(\rho)})\to0$ as $\rho\to0$ through $P$,
contradicting \eqref{UNIFPORTMlemCPF2}.
\end{proof}

\begin{lem}\label{UNIFPORTMlemB}
The uniform convergence in \eqref{UNIFCONVdef1} holds
if and only if, for every fixed $f\in \C_b(S)$,
we have $\mu_{j,\rho}(f)\to\nu_{j}(f)$ as $\rho\to0$, uniformly over $j\in J(\rho)$.
\end{lem}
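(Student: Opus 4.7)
The plan is to prove both directions using Lemma \ref{UNIFPORTMlemC} as the main tool, together with the compactness of $C$ and the elementary fact that weak convergence $\mu_n\xrightarrow[]{\textup{ w }}\mu$ is equivalent to $\mu_n(f)\to\mu(f)$ for every $f\in\C_b(S)$.

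For the ``only if'' direction (uniform weak convergence implies uniform convergence of $\mu_{j,\rho}(f)$ to $\nu_j(f)$ for each fixed $f\in\C_b(S)$), I would argue by contradiction. Assume there exist $f\in\C_b(S)$, $c>0$, a sequence $\rho_n\to0$ and choices $j_n\in J(\rho_n)$ with $|\mu_{j_n,\rho_n}(f)-\nu_{j_n}(f)|>c$ for all $n$. Since $\{\nu_{j_n}\}\subset C$ and $C$ is compact, I can pass to a subsequence along which $\nu_{j_n}\xrightarrow[]{\textup{ w }}\nu$ for some $\nu\in C$. The assumed uniform weak convergence, together with the triangle inequality $d(\mu_{j_n,\rho_n},\nu)\le d(\mu_{j_n,\rho_n},\nu_{j_n})+d(\nu_{j_n},\nu)$ in any metric $d$ realizing the weak topology on $P(S)$, forces $\mu_{j_n,\rho_n}\xrightarrow[]{\textup{ w }}\nu$ as well. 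Evaluating both against the fixed $f$ gives $\mu_{j_n,\rho_n}(f)\to\nu(f)$ and $\nu_{j_n}(f)\to\nu(f)$, so their difference tends to $0$, contradicting the lower bound $c$.

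For the ``if'' direction I would invoke the sequential characterization in Lemma \ref{UNIFPORTMlemC}. Let $P=\{\rho_n\}\subset(0,1)$ with $\rho_n\to0$, pick $j(\rho)\in J(\rho)$ for $\rho\in P$, and suppose $\nu_{j(\rho)}\xrightarrow[]{\textup{ w }}\nu$ for some $\nu\in C$ as $\rho\to0$ through $P$. I must verify that $\mu_{j(\rho),\rho}\xrightarrow[]{\textup{ w }}\nu$. For any fixed $f\in\C_b(S)$, the assumed uniform convergence specialised to the chosen sequence yields $\mu_{j(\rho),\rho}(f)-\nu_{j(\rho)}(f)\to0$, while weak convergence of $\nu_{j(\rho)}$ to $\nu$ gives $\nu_{j(\rho)}(f)\to\nu(f)$. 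Adding these shows $\mu_{j(\rho),\rho}(f)\to\nu(f)$ for every $f\in\C_b(S)$, hence $\mu_{j(\rho),\rho}\xrightarrow[]{\textup{ w }}\nu$. Lemma \ref{UNIFPORTMlemC} then delivers the uniform weak convergence in \eqref{UNIFCONVdef1}.

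The main subtlety lies in the ``only if'' direction: one cannot simply transfer uniform control in the metric $d$ into uniform control of $\mu\mapsto\mu(f)$, since this functional is only continuous, not uniformly continuous, on $P(S)$. The compactness of $C$ is the decisive input, as it lets us extract a convergent subsequence of the targets $\nu_{j_n}$ and thereby reduce the uniform statement to the ordinary (non-uniform) Portmanteau theorem applied to a single weakly convergent sequence.
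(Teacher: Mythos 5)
Your proposal is correct and follows essentially the same route as the paper: the ``if'' direction is the same application of Lemma \ref{UNIFPORTMlemC} after passing the convergence of integrals along the chosen sequence, and your ``only if'' argument (contradiction, compactness of $C$ to extract a weakly convergent subsequence of the $\nu_{j_n}$, then evaluation against the fixed $f$) is the same mechanism as the paper's contrapositive, with the triangle-inequality step carried out inline rather than quoted from the easy half of Lemma \ref{UNIFPORTMlemC}. Your closing remark correctly identifies compactness of $C$ as the decisive input, exactly as in the paper.
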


Here ``$\mu_{j,\rho}(f)\to\nu_{j}(f)$ as $\rho\to0$, uniformly over $j\in J(\rho)$''
is the standard notion of uniform convergence in $\R$:
 $\forall\ve>0$: $\exists \rho_0\in(0,1)$:
$\forall\rho\in(0,\rho_0)$: $\forall j\in J(\rho)$:
$|\mu_{j,\rho}(f)-\nu_{j}(f)|<\ve$.

\begin{proof}
Assume that for every fixed $f\in \C_b(S)$
we have $\mu_{j,\rho}(f)\to\nu_{j}(f)$ as $\rho\to0$, uniformly over $j\in J(\rho)$.
Consider any sequence $P=\{\rho_n\}\subset(0,1)$, $\rho_n\to0$ and any choice of
$j(\rho)\in J(\rho)$ for $\rho\in P$
such that $\nu_{j(\rho)}\xrightarrow[]{\textup{ w }}\nu\in C$ as $\rho\to0$ through $P$.
Then for every fixed $f\in \C_b(S)$ we have $\nu_{j(\rho)}(f)\to\nu(f)$ as $\rho\to0$ through $P$,
and combined with our assumption this implies
$\mu_{j(\rho),\rho}(f)\to\nu(f)$ as $\rho\to0$ through $P$.
Hence $\mu_{j(\rho),\rho}\xrightarrow[]{\textup{ w }}\nu$ as $\rho\to0$ through $P$,
and in view of Lemma \ref{UNIFPORTMlemC} it follows that \eqref{UNIFCONVdef1} holds.

Conversely, assume now that there is some $f\in \C_b(S)$ for which 
$\mu_{j,\rho}(f)\to\nu_{j}(f)$ does \textit{not} hold uniformly over $j\in J(\rho)$ as $\rho\to0$.
Then there exist $c>0$, a sequence $P=\{\rho_n\}\subset(0,1)$, $\rho_n\to0$,
and for each $\rho\in P$ some $j(\rho)\in J(\rho)$,
such that 
\begin{align}\label{UNIFPORTMlemBPF1}
\bigl|\mu_{j(\rho),\rho}(f)-\nu_{j(\rho)}(f)\bigr|>c\qquad\text{for each }\:\rho\in P.
\end{align}
Replacing $P$ with an appropriate subsequence
we may also assume that 
there is some $\nu\in C$ such that
$\nu_{j(\rho)}\xrightarrow[]{\textup{ w }}\nu$ as $\rho\to0$ through $P$.
Then $\nu_{j(\rho)}(f)\to\nu(f)$ as $\rho\to0$ through $P$,
and together with \eqref{UNIFPORTMlemBPF1} this implies that 
$\mu_{j(\rho),\rho}(f)\not\to\nu(f)$ as $\rho\to0$ through $P$.
Hence we do not have $\mu_{j(\rho),\rho}\xrightarrow[]{\textup{ w }}\nu$ as $\rho\to0$ through $P$,
and by Lemma \ref{UNIFPORTMlemC}, \eqref{UNIFCONVdef1} does not hold.
\end{proof}

\begin{remark}\label{UNIFPORTMlemBrem}
The proof of Lemma \ref{UNIFPORTMlemB} immediately extends to show that 
if %
\eqref{UNIFCONVdef1} holds,
and if $f:S\to\R$ is a bounded Borel measurable function 
whose set of discontinuities has measure zero with respect to each $\nu\in C$,
then $\mu_{j,\rho}(f)\to\nu_{j}(f)$ as $\rho\to0$, uniformly over $j\in J(\rho)$.
In particular, letting $f$ be a characteristic function,
it follows that if $B\subset S$ is any Borel set satisfying
$\nu(\partial B)=0$ for all $\nu\in C$,
then $\mu_{j,\rho}(B)\to\nu_{j}(B)$ as $\rho\to0$, uniformly over $j\in J(\rho)$.
The reverse implication is covered by the following lemma.
\end{remark}

\begin{lem}\label{UNIFPORTMlem}
In the above setting, let $\scrB$ be a family of Borel subsets of $S$ such that
$\nu(\partial B)=0$ for all $B\in\scrB$, $\nu\in C$,
and also, for any sequence $\nu_1,\nu_2,\ldots\in P(S)$ and any $\nu\in C$,
if $\nu_n(B)\to\nu(B)$ for every $B\in\scrB$ then $\nu_n\xrightarrow[]{\textup{ w }}\nu$.
Then a sufficient condition for the uniform convergence in \eqref{UNIFCONVdef1} to hold 
is that for every $B\in\scrB$,
we have $\mu_{j,\rho}(B)\to\nu_{j}(B)$ as $\rho\to0$, uniformly over $j\in J(\rho)$.
\end{lem}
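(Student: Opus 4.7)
The plan is to reduce to the sequential criterion provided by Lemma \ref{UNIFPORTMlemC}. So I fix a sequence $P=\{\rho_n\}\subset(0,1)$ with $\rho_n\to 0$, and for each $\rho\in P$ a choice $j(\rho)\in J(\rho)$ such that $\nu_{j(\rho)}\xrightarrow[]{\textup{ w }}\nu$ as $\rho\to 0$ through $P$, for some $\nu\in C$. My task is then to show that $\mu_{j(\rho),\rho}\xrightarrow[]{\textup{ w }}\nu$ as $\rho\to 0$ through $P$, at which point Lemma \ref{UNIFPORTMlemC} delivers the conclusion.

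Next I would apply the Portmanteau theorem to each $B\in\scrB$: since $\nu(\partial B)=0$ by the first hypothesis on $\scrB$, weak convergence $\nu_{j(\rho)}\xrightarrow[]{\textup{ w }}\nu$ gives $\nu_{j(\rho)}(B)\to\nu(B)$ as $\rho\to 0$ through $P$. On the other hand, by the standing hypothesis on the $\mu_{j,\rho}$, we have $\mu_{j,\rho}(B)-\nu_{j}(B)\to 0$ as $\rho\to 0$ uniformly in $j\in J(\rho)$; in particular, specialising to $j=j(\rho)$, this gives $\mu_{j(\rho),\rho}(B)-\nu_{j(\rho)}(B)\to 0$ as $\rho\to 0$ through $P$. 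Combining these two limits yields $\mu_{j(\rho),\rho}(B)\to\nu(B)$ for every $B\in\scrB$.

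Finally, I invoke the second hypothesis on $\scrB$, which states precisely that for any sequence in $P(S)$ whose values on all sets $B\in\scrB$ converge to those of a member of $C$, the sequence converges weakly to that member. Applied to the sequence $\{\mu_{j(\rho),\rho}\}_{\rho\in P}$ (indexed by $\rho$ through the countable set $P$) and the limit $\nu\in C$, this gives $\mu_{j(\rho),\rho}\xrightarrow[]{\textup{ w }}\nu$ as $\rho\to 0$ through $P$. Lemma \ref{UNIFPORTMlemC} then yields the desired uniform convergence in \eqref{UNIFCONVdef1}.

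The proof is essentially a two-step Portmanteau-plus-triangle-inequality argument, and I do not foresee any real obstacle; the only point that requires mild care is to notice that the hypothesis ``$\mu_{j,\rho}(B)\to\nu_j(B)$ uniformly over $j\in J(\rho)$'' is \emph{stronger} than pointwise convergence, which is exactly what is needed to handle the \emph{moving} index $j(\rho)$ when specialising.
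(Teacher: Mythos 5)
Your proof is correct and follows essentially the same route as the paper's: Portmanteau gives $\nu_{j(\rho)}(B)\to\nu(B)$, the uniform hypothesis handles the moving index $j(\rho)$, and the convergence-determining property of $\scrB$ upgrades set-wise convergence to weak convergence. The only difference is stylistic: you cleanly invoke Lemma~\ref{UNIFPORTMlemC}, whereas the paper inlines the corresponding subsequence-extraction-and-contradiction argument (using compactness of $C$ to produce the limit $\nu$); the mathematical content is the same.
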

\begin{proof}
Let $d$ be a metric on $P(S)$ which induces the weak topology.
Assume that for every $B\in\scrB$
we have $\mu_{j,\rho}(B)\to\nu_{j}(B)$ as $\rho\to0$ uniformly over $j\in J(\rho)$,
but that the uniform convergence in \eqref{UNIFCONVdef1} does \textit{not} hold.
Then there exist $c>0$, a sequence $P=\{\rho_n\}\subset(0,1)$, $\rho_n\to0$,
and for each $\rho\in P$ some $j(\rho)\in J(\rho)$,
such that 
\begin{align}\label{UNIFPORTMlemPF2}
d(\mu_{j(\rho),\rho},\nu_{j(\rho)})>c\qquad\text{for each }\:\rho\in P.
\end{align}
Since $\nu_{j(\rho)}\in C$ for all $\rho\in P$, there exist $\nu\in C$ and an infinite subsequence $P'\subset P$
such that $\nu_{j(\rho)}\xrightarrow[]{\textup{ w }}\nu$ as $\rho\to0$ through $P'$.
Hence for every $B\in\scrB$ we have
$\nu_{j(\rho)}(B)\to\nu(B)$ as $\rho\to0$ through $P'$, since $\nu(\partial B)=0$
\cite[Thm.\ 4.25]{kallenberg02};
and combining this with our assumption we conclude $\mu_{j(\rho),\rho}(B)\to\nu(B)$ as $\rho\to0$ through $P'$.
Because of the convergence determining property of $\scrB$ assumed in the statement of the lemma,
this implies $\mu_{j(\rho),\rho}\xrightarrow[]{\textup{ w }}\nu$  as $\rho\to0$ through $P'$.
Now $\nu_{j(\rho)}\xrightarrow[]{\textup{ w }}\nu$ and $\mu_{j(\rho),\rho}\xrightarrow[]{\textup{ w }}\nu$ 
together imply that $d(\mu_{j(\rho),\rho},\nu_{j(\rho)})\to0$ as $\rho\to0$ through $P'$,
contradicting \eqref{UNIFPORTMlemPF2}.
\end{proof}

\section{Point processes and marked point processes}\label{sec:PP}

Given a locally compact second countable Hausdorff (lcscH) space $\scrX$,
\label{scrXdef1}
we let $M(\scrX)$ be the set of locally finite Borel measures on $\scrX$.
\label{MXdef}
Recall that a Borel measure $\mu$ on $\scrX$ is said to be locally finite if
$\mu B<\infty$ for every relatively compact Borel set $B\subset\scrX$.
We equip $M(\scrX)$ with the vague topology.
Then $M(\scrX)$ is a Polish space
(i.e.\ separable and has a complete metrization).
We write $\scrM$ for the Borel $\sigma$-algebra of $M(\scrX)$.
\label{scrMdef}
We let $N(\scrX)$ be the set of \textit{counting measures} in $M(\scrX)$,
\label{NXdef}
and let $N_s(\scrX):=\{ \nu\in N(\scrX) \col \sup_{x\in\scrX} \nu\{x\}\leq 1\}$ be the subset of \textit{simple} counting measures.\label{NsXdef}
Then $N(\scrX)$ is a closed subset of $M(\scrX)$ (hence also Polish),
and $N_s(\scrX)$ is a Borel subset of $N(\scrX)$.
Define $\scrN=\{B\cap N(\scrX)\col B\in\scrM\}$, which yields the Borel $\sigma$-algebra of $N(\scrX)$.
\label{scrNdef}
The elements of $N_s(\scrX)$ may be identified with 
the family of locally finite subsets of $\scrX$
through $\nu\mapsto\supp(\nu)$. The inverse map is $\{x_i\}\mapsto\sum_i\delta_{x_i}$.
We will use this identification between point sets and simple counting measures throughout this work, often using the same notation for point set and counting measure.

A \textit{point process} is,
by definition,
a random element $\xi$ in $(N(\scrX),\scrN)$.
It is called \textit{simple} if $\xi\in N_s(\scrX)$ almost surely.
We identify $P(N_s(\scrX))$ with the set of probability measures $\nu\in P(N(\scrX))$ with 
$\nu(N_s(\scrX))=1$.
Then a point process $\xi$ is simple if and only if its law is in $P(N_s(\scrX))$.
The \textit{intensity measure} of $\xi$, $\EE\xi$, is the Borel measure on $\scrX$ given by
$(\EE\xi)B=\EE(\xi B)$ for any Borel set $B\subset\scrX$.
\label{intensitydef}
By abuse of notation, for $\nu\in P(N(\scrX))$,
we call ``the intensity of $\nu$'' the intensity measure of any point process whose distribution is $\nu$;
i.e.\ the Borel measure $B\mapsto\int_{N(\scrX)}\eta B\,d\nu(\eta)$ on $\scrX$.

From Section \ref{ASSUMPTLISTsec} onwards we will make the choice $\scrX=\R^d\times\Sigma$,
with $\Sigma$ a compact metric space.
A point process $\xi$ in $\scrX=\R^d\times\Sigma$ can be thought of as a 
\textit{marked} point processes 
with locations in $\R^d$ and marks in $\Sigma$.
Let $p_1$ be the projection map $\scrX\to\R^d$.
\label{p1def}
Note that since $\Sigma$ is compact, the ``ground process'',
$p_{1*}\xi$, automatically becomes a point process in $\R^d$.
We call $\xi$ \textit{simple as a marked point process}
if the ground process $p_{1*}\xi$ is simple.
We refer the reader to \cite[Ch.\ 6.4]{DaleyVereJones2003} for further background.

\vspace{5pt}

The following lemma gives a criterion for uniform convergence of sequences in
$P(S\times N(\scrX))$, for $S$ a lcscH space,
which will be useful for us. %
We will need it later in the case $S=\US$.
Let $J$ be a fixed index set,
and for each $0<\rho<1$,
let $J(\rho)$ be a subset of $J$,
\begin{lem}\label{PPUNICCONVCONCRETElemA}
Let $S$ be a lcscH space, let $S'=S\times N(\scrX)$, and let $C$ be a compact subset of $P(S')$.
Define
\begin{align}\label{Ffgdef}
F_{f,g}(\mu)=\int_{S'}g(p,\eta(f))\,d\mu(p,\eta)\qquad
(f\in\C_c(\scrX),\:
g\in\C_b(S\times\R),\:
\mu\in P(S')).
\end{align}
Let $\{\mu_{j,\rho}\}_{j\in J(\rho)}$ and $\{\nu_{j}\}_{j\in J}$ 
be families of probability measures in $P(S')$,
such that $\nu_{j}\in C$ for all $j\in J$.
Then the following two conditions are equivalent:
\\
(a) $\mu_{j,\rho}\xrightarrow[]{\textup{ w }}\nu_{j}$ as $\rho\to0$, uniformly over $j\in J(\rho)$;
\\[5pt]
(b) for any $f\in\C_c(\scrX)$ and $g\in \C_c(S\times\R)$,
$F_{f,g}(\mu_{j,\rho})\to F_{f,g}(\nu_{j})$
as $\rho\to0$, uniformly over $j\in J(\rho)$.
\end{lem}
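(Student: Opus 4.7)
My plan is to prove the two implications separately, using Lemmas \ref{UNIFPORTMlemB} and \ref{UNIFPORTMlemC} as the bridges between uniform weak convergence of measures and uniform convergence of scalar integrals.

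\textbf{(a)$\Rightarrow$(b).} Given $f\in\C_c(\scrX)$ and $g\in\C_c(S\times\R)$, I would introduce $h(p,\eta):=g(p,\eta(f))$ and check that $h\in\C_b(S')$. Boundedness is immediate from $\|h\|_\infty\leq\|g\|_\infty$, while continuity follows because the evaluation $\eta\mapsto\eta(f)$ is continuous on $N(\scrX)$ for the vague topology whenever $f\in\C_c(\scrX)$, and $g$ is continuous on $S\times\R$. Then Lemma \ref{UNIFPORTMlemB} applied to $h$ produces precisely the uniform convergence $F_{f,g}(\mu_{j,\rho})\to F_{f,g}(\nu_j)$ over $j\in J(\rho)$.

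\textbf{(b)$\Rightarrow$(a).} I would apply Lemma \ref{UNIFPORTMlemC}: take any sequence $\{\rho_n\}\subset(0,1)$ with $\rho_n\to0$ and any selection $j_n:=j(\rho_n)\in J(\rho_n)$ for which $\nu_{j_n}\xrightarrow[]{\textup{ w }}\nu$ for some $\nu\in C$, and the goal is to deduce $\mu_{j_n,\rho_n}\xrightarrow[]{\textup{ w }}\nu$. Since each $F_{f,g}$ is continuous on $P(S')$ (by the argument in the first step), $F_{f,g}(\nu_{j_n})\to F_{f,g}(\nu)$, which combined with hypothesis (b) yields
\begin{align*}
F_{f,g}(\mu_{j_n,\rho_n})\longrightarrow F_{f,g}(\nu)\qquad\text{for every }f\in\C_c(\scrX),\;g\in\C_c(S\times\R).
\end{align*}

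The remaining task is to upgrade this to weak convergence $\mu_{j_n,\rho_n}\to\nu$ in $P(S')$. My plan is to verify the joint Laplace-functional criterion: for every $\phi\in\C_c(S)$ and every non-negative $f\in\C_c(\scrX)$, show that
\begin{align*}
L(\mu_{j_n,\rho_n})\longrightarrow L(\nu),\qquad L(\mu):=\int_{S'}\phi(p)\,\e^{-\eta(f)}\,d\mu(p,\eta).
\end{align*}
To do this I would choose a cutoff $\chi_R\in\C_c(\R)$ with $0\leq\chi_R\leq1$ and $\chi_R\equiv1$ on $[0,R]$, and set $g_R(p,x):=\phi(p)\,\e^{-x}\chi_R(x)\in\C_c(S\times\R)$. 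Because $\eta(f)\geq0$, the pointwise bound
\begin{align*}
\bigl|\phi(p)\,\e^{-\eta(f)}-g_R(p,\eta(f))\bigr|\leq\|\phi\|_\infty\,\e^{-R}
\end{align*}
holds, so $|L(\mu)-F_{f,g_R}(\mu)|\leq\|\phi\|_\infty\e^{-R}$ uniformly in $\mu\in P(S')$. Inserting this error bound at both endpoints and using the already-established convergence $F_{f,g_R}(\mu_{j_n,\rho_n})\to F_{f,g_R}(\nu)$ would give $\limsup_n|L(\mu_{j_n,\rho_n})-L(\nu)|\leq 2\|\phi\|_\infty\e^{-R}$, and sending $R\to\infty$ completes the Laplace convergence. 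Since the functionals $L$, as $\phi\in\C_c(S)$ and $f\in\C_c(\scrX)^+$ vary, form a convergence-determining class on $P(S\times N(\scrX))$ (the joint Laplace criterion for marked point processes, cf.\ \cite{kallenberg02}), this establishes $\mu_{j_n,\rho_n}\xrightarrow[]{\textup{ w }}\nu$ and Lemma \ref{UNIFPORTMlemC} closes the argument.

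The principal obstacle is the second implication: one must promote convergence on the restricted test class $\{F_{f,g}\col g\in\C_c(S\times\R)\}$ to full weak convergence on $P(S')$. Because $N(\scrX)$ is not locally compact, no direct tightness argument for $\{\mu_{j_n,\rho_n}\}$ on $S'$ is available; the exponential cutoff trick above circumvents this by furnishing an integrand bound that is uniform over all probability measures simultaneously, removing the need for a separate tightness step.
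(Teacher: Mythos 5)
Your direction (a)$\Rightarrow$(b) matches the paper's: you both observe that $(p,\eta)\mapsto g(p,\eta(f))$ is a bounded continuous function on $S'$ (the paper writes it as $g\circ T_f$ with $T_f(p,\eta)=(p,\eta(f))$) and invoke Lemma \ref{UNIFPORTMlemB}. For (b)$\Rightarrow$(a) you both correctly reduce via Lemma \ref{UNIFPORTMlemC} to showing that $F_{f,g}(\mu_{j_n,\rho_n})\to F_{f,g}(\nu)$ for all $f,g$ forces $\mu_{j_n,\rho_n}\xrightarrow{w}\nu$, but you then diverge: the paper first upgrades to weak convergence of the pushforwards $T_{f*}(\mu_{j_n,\rho_n})\xrightarrow{w}T_{f*}(\nu)$ on $P(S\times\R)$ for every $f\in\C_c(\scrX)$ (using that $g$ is arbitrary in $\C_c(S\times\R)$ is enough for weak convergence of probability measures on a lcscH space) and then appeals to a joint extension of Kallenberg's Thm 16.16 (ii)$\Rightarrow$(i), which characterizes vague convergence in distribution of random measures by convergence in distribution of the evaluations $\eta(f)$; you instead go directly after the joint Laplace-type functionals $\int\phi(p)\e^{-\eta(f)}\,d\mu$ via the exponential-cutoff trick, which cleanly converts your $\C_c$-class convergence into Laplace convergence with an error that is uniform over all of $P(S')$. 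Both routes land on essentially the same final step, and both leave some of it implicit: the paper says ``a simple extension of'' Kallenberg; you say ``the joint Laplace criterion for marked point processes''. The latter phrase is a little misleading --- a random element of $S\times N(\scrX)$ is not a marked point process in the usual sense (that would be a point process on $\scrX\times K$) --- and I don't think the exact statement you need appears verbatim in Kallenberg. It can be assembled (set $f=0$ to get weak convergence of the $S$-marginals, let $\phi$ exhaust $S$ to get Laplace convergence of the $N(\scrX)$-marginals and hence their weak convergence by the one-dimensional Kallenberg theorem, deduce joint tightness from tightness of both marginals, and identify the subsequential limit using that $(\phi,f)\mapsto\int\phi(p)\e^{-\eta(f)}\,d\mu$ determines $\mu$), but you should either spell that out or soften the citation. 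Your cutoff trick is a nice self-contained way to avoid invoking a separate tightness step, and the approach is arguably more elementary once the determining-class point is settled; the paper's pushforward formulation sits closer to the random-measure literature and makes the appeal to Kallenberg more direct.
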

\begin{proof}
For $f\in\C_c(\scrX)$ we define the map
$T_f:S'\to S\times\R$ by $T_f(p,\eta)=(p,\eta(f))$;
then $F_{f,g}(\mu)=\mu(g\circ T_{f})$ %
for any $g\in\C_b(S\times\R)$, $\mu\in P(S')$.
The map $T_f$ is %
continuous;
hence $g\circ T_f\in \C_b(S')$ for any $f,g$.
Now the implication (a)$\Rightarrow$(b) follows from Lemma \ref{UNIFPORTMlemB}.

Conversely, assume (b).
In order to prove (a),
by Lemma \ref{UNIFPORTMlemC} it suffices to prove that 
for a given sequence $P=\{\rho_n\}\subset(0,1)$ with $\rho_n\to0$,
and given $\nu\in C$ and $j(\rho)\in J(\rho)$ ($\rho\in P$)
subject to $\nu_{j(\rho)}\xrightarrow[]{\textup{ w }}\nu$ as $\rho\to0$ through $P$,
we have $\mu_{j(\rho),\rho}\xrightarrow[]{\textup{ w }}\nu$ as $\rho\to0$ through $P$.
For any $f\in\C_c(\scrX)$ and $g\in \C_c(S\times\R)$ we have
$\nu_{j(\rho)}(g\circ T_{f})\to\nu(g\circ T_{f})$ as $\rho\to0$ through $P$;
combined with (b),
this implies that %
$\mu_{j(\rho),\rho}(g\circ T_{f})\to\nu(g\circ T_{f})$, as $\rho\to0$ through $P$.
But $g$ is arbitrary in $\C_c(S\times\R)$;
hence we conclude
$T_{f*}(\mu_{j(\rho),\rho})\xrightarrow[]{\textup{ w }}T_{f*}(\nu)$ as $\rho\to0$ through $P$
(cf., e.g., \cite[Prop.\ 3.4.4]{sEtK86}).
The fact that this holds for all $f\in\C_c(\scrX)$ implies,
via a simple extension of 
\cite[Thm.\ 16.16 (ii)$\Rightarrow$(i)]{kallenberg02},
that $\mu_{j(\rho),\rho}\xrightarrow[]{\textup{ w }}\nu$ as $\rho\to0$ through $P$.
Hence (a) holds.
\end{proof}

\begin{lem}\label{PPUNICCONVCONCRETElem3}
Let $C$ be a compact subset of $P(N_s(\scrX))$
such that every $\nu\in C$ has the same intensity $\tmu$
(a fixed locally finite Borel measure on $\scrX$).
Let $\{\mu_{j,\rho}\}_{j\in J(\rho)}$ and $\{\nu_{j}\}_{j\in J}$ 
be families of probability measures in $P(N(\scrX))$,
such that $\nu_{j}\in C$ for all $j\in J$.
Assume that, for any relatively compact Borel set $B\subset\scrX$ with $\tmu(\partial B)=0$ 
and any $r\in\Z^+$,
\begin{align*}%
\mu_{j,\rho}(\{\eta\in N(\scrX)\col \eta B\geq r\})\to
\nu_{j}(\{\eta\in N(\scrX)\col \eta B\geq r\})
\end{align*}
as $\rho\to0$, uniformly over $j\in J(\rho)$.
Then $\mu_{j,\rho}\xrightarrow[]{\textup{ w }}\nu_{j}$ as $\rho\to0$, uniformly over $j\in J(\rho)$.
\end{lem}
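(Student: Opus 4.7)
The plan is to invoke Lemma \ref{UNIFPORTMlemC}. Fix a sequence $P=\{\rho_n\}\subset(0,1)$ with $\rho_n\to 0$, a selection $j(\rho)\in J(\rho)$ for $\rho\in P$, and $\nu\in C$ such that $\nu_{j(\rho)}\xrightarrow[]{\textup{ w }}\nu$ as $\rho\to 0$ through $P$; the goal is to deduce $\mu_{j(\rho),\rho}\xrightarrow[]{\textup{ w }}\nu$ through $P$. The route is a standard tightness--uniqueness argument: establish tightness of $\{\mu_{j(\rho),\rho}\}_{\rho\in P}$ in $P(N(\scrX))$, extract any subsequential weak limit $\tilde\nu$, and show that $\tilde\nu=\nu$.

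For tightness, compactness of $C$ implies tightness of $C$ in $P(N(\scrX))$, so for every relatively compact Borel $B\subset\scrX$ with $\tmu(\partial B)=0$ we have $\sup_{\nu'\in C}\nu'(\eta B\geq M)\to 0$ as $M\to\infty$. The uniform convergence hypothesis transfers this control to $\{\mu_{j(\rho),\rho}\}_{\rho\in P}$, which is the usual characterisation of tightness in $P(N(\scrX))$. By Prokhorov, a subsequence $P'\subset P$ produces $\mu_{j(\rho),\rho}\xrightarrow[]{\textup{ w }}\tilde\nu$ for some $\tilde\nu\in P(N(\scrX))$.

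The central step is to match the one-dimensional marginals of $\tilde\nu$ and $\nu$. For every relatively compact Borel $B$ with $\tmu(\partial B)=0$ and every $r\in\Z^+$, the set $\{\eta\col\eta B\geq r\}$ is a $\nu$-continuity set (since $\nu$ has intensity $\tmu$, so $\nu$-a.s.\ $\eta(\partial B)=0$), and therefore $\nu_{j(\rho)}(\eta B\geq r)\to\nu(\eta B\geq r)$; combined with the uniform hypothesis this gives $\mu_{j(\rho),\rho}(\eta B\geq r)\to\nu(\eta B\geq r)$ along $P'$. When $B$ is open, $\eta\mapsto\eta B$ is lower semicontinuous in the vague topology, so $\{\eta\col\eta B\geq r\}$ is open and portmanteau forces $\tilde\nu(\eta B\geq r)\leq\nu(\eta B\geq r)$. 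Summing in $r$ yields $\EE_{\tilde\nu}[\eta B]\leq\tmu(B)$, whence the intensity of $\tilde\nu$ is dominated by $\tmu$; in particular $\EE_{\tilde\nu}[\eta(\partial B)]=0$ and so $\{\eta\col\eta B\geq r\}$ is also a $\tilde\nu$-continuity set, which upgrades the portmanteau inequality to the equality $\tilde\nu(\eta B\geq r)=\nu(\eta B\geq r)$ for every such $B$ and every $r$.

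Finally I would upgrade $\tilde\nu$ to a simple point process and conclude via the classical uniqueness theorem for simple point processes. Fix $B$ as above and take Borel partitions $\{B_{n,i}\}_i$ of $B$ with $\tmu(\partial B_{n,i})=0$ and $\max_i\diam B_{n,i}\to 0$. The counts $X_n:=\sum_i\mathbf{1}[\eta B_{n,i}\geq 2]$ satisfy $X_n\leq\eta B$; simplicity of $\nu$ forces $X_n\to 0$ $\nu$-a.s., and $\EE_\nu[\eta B]=\tmu(B)<\infty$ then gives $\EE_\nu[X_n]\to 0$ by dominated convergence. The marginal equality yields $\EE_{\tilde\nu}[X_n]=\EE_\nu[X_n]\to 0$, while any $\tilde\nu$-sample possessing a multi-point in the interior of $B$ would force $X_n\geq 1$ for all large $n$; hence by Fatou, $\tilde\nu$-a.s.\ $\eta$ has no multi-point in $B$. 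Varying $B$ gives $\tilde\nu\in P(N_s(\scrX))$, and since both $\tilde\nu$ and $\nu$ are simple with matching avoidance functions on the dissecting class of relatively compact Borel sets $B$ with $\tmu(\partial B)=0$, the classical uniqueness theorem for simple point processes forces $\tilde\nu=\nu$. The main obstacle I anticipate is precisely this passage from single-set marginal convergence to full weak convergence, for which simplicity of the elements of $C$ is essential: it collapses the joint structure into the avoidance function and lets the dominated-convergence argument transfer simplicity from $\nu$ to $\tilde\nu$.
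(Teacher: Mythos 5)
Your proof is correct, but it takes a genuinely different route from the paper's. The paper uses no tightness or subsequential-limit argument at all: it forms the family $\scrB$ of sets $A=\{\eta\col \eta B\geq r\}$ with $B$ relatively compact and $\tmu(\partial B)=0$, checks via the intensity identity that $\nu(\partial A)\leq\int\eta(\partial B)\,d\nu(\eta)=\tmu(\partial B)=0$ for every $\nu\in C$, outsources the whole implication ``convergence of the count probabilities on such $B$ implies weak convergence to the simple limit $\nu$'' to \cite[Thm.\ 16.16 (iv)$\Rightarrow$(i)]{kallenberg02}, and then gets the uniformity from Lemma \ref{UNIFPORTMlem}. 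You instead reduce the uniformity through Lemma \ref{UNIFPORTMlemC} and reprove the Kallenberg-type implication by hand: tightness (which, incidentally, follows even more directly from Markov's inequality, since every $\nu'\in C$ has intensity $\tmu$ and hence $\nu'(\eta B\geq M)\leq\tmu(B)/M$, with no appeal to compactness of $C$), Prokhorov, matching of the count distributions of the subsequential limit $\tilde\nu$ with those of $\nu$, the bound $\EE_{\tilde\nu}[\eta B]\leq\tmu(B)$ to manufacture the needed $\tilde\nu$-continuity sets, transfer of simplicity by the dissecting-partition argument, and finally the classical (R\'enyi-type) uniqueness theorem for simple point processes. What your route buys is a self-contained argument that does not lean on the specific point-process convergence theorem; what it costs is length and a few standard verifications left implicit: that relatively compact Borel sets with $\tmu$-null boundary form a ring containing a dissecting system (needed both for the uniqueness theorem and for the existence of your mesh-decreasing null-boundary partitions, which should be finite for the interchange $\EE[X_n]=\sum_i\PP(\eta B_{n,i}\geq2)$); that the intensity bound, proved from \emph{open} null-boundary sets, extends to give $\EE_{\tilde\nu}[\eta(\partial B)]=0$ for general such $B$ (an outer approximation of the compact set $\partial B$ by $\delta$-neighbourhoods with $\tmu$-null boundary); and that the finitely many ``large'' $\rho$ in a given sequence are absorbed separately in the tightness step. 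None of these is a gap; they are routine, and the overall scheme is sound.
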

\begin{proof}
Let $S=N(\scrX)$
and let $\scrB$ be the family of Borel subsets of $S$ of the form
$A=\{\eta\in S\col \eta B\geq r\}$,
where $r\in\Z^+$ and $B$ is a relatively compact Borel subset of $\scrX$ with $\tmu(\partial B)=0$.
Note that $\partial A\subset\{\eta\in S\col \eta(\partial B)\geq1\}$;
thus $\nu(\partial A)\leq\int_S\eta(\partial B)\,d\nu(\eta)=\tmu(\partial B)=0$
for each $\nu\in C$.
Furthermore, if $\nu\in C$ and $B$ is a relatively compact Borel set in $\scrX$ satisfying
$\eta(\partial B)=0$ for $\nu$-a.e.\ $\eta\in N(\scrX)$,
then also 
$\tmu(\partial B)=\int_{N(\scrX)}\eta(\partial B)\,d\nu(\eta)=0$.
Hence by \cite[\mbox{Thm 16.16 (iv)$\Rightarrow$(i)}]{kallenberg02},
for any sequence $\nu_1,\nu_2,\ldots\in P(S)$ and any $\nu\in C$,
if $\nu_n(A)\to\nu(A)$ for every $A\in\scrB$ then $\nu_n\xrightarrow[]{\textup{ w }}\nu$.
Hence Lemma \ref{UNIFPORTMlem} applies,
and shows the desired implication.
\end{proof}

\section{The list of assumptions}
\label{ASSUMPTLISTsec}

As in Section \ref{sec:outline}, let $\scrP$ be a fixed locally finite subset of $\R^d$ ($d\geq2$)
with constant asymptotic density $c_\scrP$. Recall also the definitions of $\scrX=\R^d\times\Sigma$, $\mu_\scrX=\vol\times\mm$
and
\begin{align*} %
\tP=\{(\vecp,{\vs}(\vecp))\col\vecp\in\scrP\}\subset\scrX
\end{align*}
from Section \ref{sec:outline}. Furthermore, for any $\vecq\in\R^d$, $\vecv\in\US$ and $0<\rho<1$, we set\footnote{In the notation of the introduction, $\tP_\vecq-\vecq=(\tP-\vecq)^*$.}
\begin{equation}\label{Pqdef}
\tP_\vecq=
\begin{cases}
\tP\setminus\{(\vecq,\vs(\vecq))\} & (\vecq\in\scrP)\\
\tP & (\vecq\notin\scrP) 
\end{cases}
\end{equation}
and
\begin{align}\label{XIRHOqv}
\scrQ_\rho(\vecq,\vecv)=(\tP_\vecq-\vecq)\,R(\vecv)\,D_\rho .
\end{align}

Given any $\lambda\in P(\US)$,
if we take $\vecv$ random in $(\US,\lambda)$ then $\scrQ_\rho(\vecq,\vecv)$ becomes a random point set.
We write $\mu_{\vecq,\rho}^{(\lambda)}\in P(N_s(\scrX))$ for the distribution\label{muqrholambdaDEF}
of the corresponding point process.
In other words, $\mu_{\vecq,\rho}^{(\lambda)}$ is the push-forward of $\lambda$ by 
the map
\begin{equation*}
\US\to N(\scrX), \qquad \vecv\mapsto \sum_{\vecp\in\scrQ_\rho(\vecq,\vecv)} \delta_\vecp.
\end{equation*}

The following are our hypotheses on $\scrP$. These will generalise and make precise the outline assumptions from Section \ref{sec:outline} (we will use the same labelling).

Our standing assumption is that there exists a continuous map ${\vs}\mapsto\mu_{\vs}$ from $\Sigma$ to $P(N(\scrX))$\label{muvsdef}, a Borel probability measure $\mm$ on $\Sigma$ 
with the following properties:
\begin{enumerate}[{\bf [P1]}]
\item {\em Uniform and zero density:} For any bounded $B \subset\scrX$ with $\mu_\scrX(\partial B)=0$, we have
\begin{align}\label{ASYMPTDENSITY1b}
\lim_{T\to\infty}\frac{\#(\tP\cap TB)}{T^d}=c_\scrP\mu_\scrX(B).
\end{align}
\item {\em Spherical equidistribution:} 
{\blu There exists a subset $\scrE\subset\scrP$ of asymptotic density zero such that}
for any fixed $T\geq1$ and $\lambda\in \Pac(\US)$, we have\footnote{For uniform convergence in $P(N(\scrX))$,
use \eqref{UNIFCONVdef1}--\eqref{UNIFCONVdef2}
with $\vecq$, $\scrP$, $\scrP_T(\rho)$ in place of $j,J,J(\rho)$.
Indeed, $\{\mu_{\vs}\col{\vs}\in\Sigma\}$ is a compact subset of $P(N(\scrX))$,
since it is the continuous image of the compact set $\Sigma$. \label{foot6}} 
\begin{align}\label{ASS:KEY}
\mu_{\vecq,\rho}^{(\lambda)}\xrightarrow[]{\textup{ w }}\mu_{{\vs}(\vecq)}
\quad\text{as }\:\rho\to0,\:\text{ uniformly for $\vecq\in\scrP_T(\rho):=\scrP\cap\scrB^d_{T\rho^{1-d}}\setminus\scrE$.}
\end{align}
\item {\em No escape of mass:} For every bounded Borel set $B\subset\RR^d$,
\begin{align*}%
\lim_{\xi\to\infty}\limsup_{\rho\to0}\hspace{7pt}
[\vol\times\omega]\bigl(\bigl\{(\vecq,\vecv)\in B\times\US\col
\scrQ_{\rho}(\rho^{1-d}\vecq,\vecv)\cap(\fZ_\xi\times\Sigma)=\emptyset\bigr\}\bigr)=0.
\end{align*}
\end{enumerate}

We will explain the assumption [P3] further in
Section \ref{GENLIMITsec},
where we prove the existence of a limit 
of $\scrQ_\rho(\rho^{1-d}\vecq,\vecv)$ 
for $(\vecq,\vecv)$ random in $\T^1(\R^d)$ with respect to an arbitrary
absolutely continuous probability measure.

Furthermore, we impose the following assumptions on the limiting distributions $\mu_{\vs}$:

\begin{enumerate}[{\bf [Q1]}]
\item {\em $\SO(d-1)$-invariance:} For every $\vs\in\Sigma$, 
\begin{equation*}
\text{$\mu_{\vs}$ is invariant under the action of $\SO(d-1):=\{k\in\SO(d)\col\vece_1k=\vece_1\}$.} \label{ASS:sodm1inv}
\end{equation*}
\item {\em Coincidence-free first coordinates:} For every $\vs\in\Sigma$, 
\begin{equation*}%
\mu_{\vs}(\{\nu\in N(\scrX)\col \exists x_1\in\R\text{ s.t.\ }\nu(\{x_1\}\times\R^{d-1}\times\Sigma)>1\})=0 . 
\end{equation*}
\item {\em Small probability of large voids:}  For every $\ve>0$ there exists $R>0$ such that for all $\vs\in\Sigma$ and $\vecx\in\RR^d$ we have 
\begin{equation*} %
\mu_{\vs}\bigl(\bigl\{\nu\in N(\scrX)\col \nu\bigl(\scrB^d(\vecx,R)\times\Sigma\bigr)=0\bigr\}\bigr)<\ve .
\end{equation*}
\end{enumerate}

Note that the assumption [Q1] is content-free for $d=2$.
For general $d$, %
[Q1] is equivalent to
requiring that the convergence in [P2] %
does not depend on our choice of the map 
$R:\US\to\SO(d)$;
cf.\ Remark \ref{ASS:sodm1invREM} below.
We will denote by $\Xi_{\vs}$ a point process in $\scrX$ with distribution $\mu_{\vs}$. 
\label{Xivsdef} $\Xi_{\vs}$ corresponds precisely to the family of random sets in the list of assumptions outlined in Section \ref{sec:outline}. 
Assumption [Q2] %
says that
almost surely, the points of $\Xi_{\vs}$ have pairwise distinct $\vece_1$-coordinates.
In particular $\Xi_\vs$ is simple as a marked point process,
viz., the ground process $p_{1*}(\Xi_\vs)$ in $\R^d$ is simple.
The assumption [Q3] %
says that the probability of $\Xi_{\vs}$ having empty intersection
with a large ball, at arbitrary position, is uniformly small.

\section{First consequences of the assumptions}
\label{FIRSTCONSsec}

We here derive some first %
consequences of the assumptions
imposed on the point set $\scrP$ in Section \ref{ASSUMPTLISTsec}.

The following is an immediate consequence of [P1].
\begin{lem}\label{ASYMPTDENSITY1CONSlem}
Let $f:\scrX\to\R$ be a bounded measurable function of compact support
whose set of discontinuities has measure zero with respect to $\mu_\scrX$.
Then
\begin{align}\label{ASYMPTDENSITY1CONSlemres}
\lim_{T\to\infty}T^{-d}\sum_{\vecy\in\tP}f(T^{-1}\vecy)=c_\scrP\int_{\scrX}f\,d\mu_\scrX.
\end{align}
\end{lem}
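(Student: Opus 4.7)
The plan is to upgrade the indicator-set statement [P1] to the class of functions in the lemma by a standard Darboux/sandwich argument. Splitting $f=f^+-f^-$ and using linearity I may assume $f\geq0$, and after enlarging the support I may fix a closed box $K=Q_0\times\Sigma\subset\scrX$, with $Q_0\subset\R^d$ a large closed cube, containing $\supp f$.

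The first, and only mildly technical, step is to produce for each $h>0$ a finite Borel partition $\{B_{ij}\}$ of $K$ whose parts have diameter at most a constant times $h$ and $\mu_\scrX$-null boundary. For the $\R^d$ factor I use a standard half-open cube decomposition of $Q_0$ into cubes $Q_i$ of side $h$, whose Euclidean boundaries are Lebesgue-null. For the $\Sigma$ factor, which is a compact metric space, I invoke the observation that for any $\sigma\in\Sigma$ only countably many radii $r>0$ can make the sphere of radius $r$ about $\sigma$ carry positive $\mm$-mass. Hence I can cover $\Sigma$ by finitely many closed balls of radius $<h$ whose boundaries are $\mm$-null, then disjointify them into a finite Borel partition $\{A_j\}$ of $\Sigma$ with $\mm(\partial A_j)=0$ and $\diam A_j\leq 2h$. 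The products $B_{ij}:=Q_i\times A_j$ then satisfy $\mu_\scrX(\partial B_{ij})=0$, since $\partial(Q_i\times A_j)\subset(\partial Q_i\times\overline{A_j})\cup(\overline{Q_i}\times\partial A_j)$.

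Next, define Darboux-type step functions $g_h^{\pm}:=\sum_{i,j}c_{ij}^{\pm}\mathbf{1}_{B_{ij}}$ with $c_{ij}^{-}:=\inf_{B_{ij}}f$ and $c_{ij}^{+}:=\sup_{B_{ij}}f$, so that $0\leq g_h^{-}\leq f\leq g_h^{+}$ pointwise and all three vanish outside $K$. For any $x\in K$ at which $f$ is continuous, the oscillation of $f$ over the (unique) cell containing $x$ tends to $0$ as $h\to0$, so $(g_h^{+}-g_h^{-})(x)\to0$. Since the discontinuity set of $f$ has $\mu_\scrX$-measure zero and $g_h^{+}-g_h^{-}$ is uniformly bounded and supported in the compact set $K$, dominated convergence yields
\begin{equation*}
\int (g_h^{+}-g_h^{-})\,d\mu_\scrX\longrightarrow 0\qquad\text{as }h\to0.
\end{equation*}

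Finally, for each fixed $h$ the assumption [P1] applied to every $B_{ij}$, combined with linearity, gives
\begin{equation*}
\lim_{T\to\infty}T^{-d}\sum_{\vecy\in\tP}g_h^{\pm}(T^{-1}\vecy)=c_\scrP\int g_h^{\pm}\,d\mu_\scrX.
\end{equation*}
Sandwiching $f$ between $g_h^{-}$ and $g_h^{+}$ then produces
\begin{multline*}
c_\scrP\int g_h^{-}\,d\mu_\scrX\leq\liminf_{T\to\infty}T^{-d}\sum_{\vecy\in\tP}f(T^{-1}\vecy)\\
\leq\limsup_{T\to\infty}T^{-d}\sum_{\vecy\in\tP}f(T^{-1}\vecy)\leq c_\scrP\int g_h^{+}\,d\mu_\scrX,
\end{multline*}
and letting $h\to 0$ squeezes both extremes to $c_\scrP\int f\,d\mu_\scrX$, giving \eqref{ASYMPTDENSITY1CONSlemres}. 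The only real obstacle in the argument is the construction of the partition of $\Sigma$ with $\mm$-null boundaries; everything else is essentially bookkeeping.
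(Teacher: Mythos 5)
Your proof is correct, but it takes a genuinely different route from the paper's. You run a classical Darboux sandwich: build a partition of a compact slab $K\subset\scrX$ into Borel cells $B_{ij}$ of small diameter with $\mu_\scrX$-null boundary (half-open cubes in the $\R^d$ factor, balls with null boundary obtained via the ``only countably many bad radii'' argument in the compact metric factor $\Sigma$), squeeze $f$ between step functions $g_h^\pm$, apply [P1] cell by cell, and let the mesh go to zero using dominated convergence on the discontinuity set. The paper instead packages the problem probabilistically: it takes $R$ with $\supp f\subset\scrX_R=\scrB_R^d\times\Sigma$, lets $X_T=T^{-1}\vecy$ with $\vecy$ uniform over $\tP\cap\scrX_{RT}$, shows via [P1] and the counting asymptotics $T^{-d}N_T\to c_\scrP\mu_\scrX(\scrX_R)$ that $X_T$ converges in distribution to the normalized restriction of $\mu_\scrX$ to $\scrX_R$, and then invokes the Portmanteau theorem (which precisely handles bounded measurable test functions whose discontinuity set is null for the limit law) to conclude $\EE f(X_T)\to\EE f(X_\infty)$, equivalent to the claim. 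The two arguments are morally the same --- the Portmanteau theorem is usually proved by exactly your kind of sandwich --- but the paper's version is shorter and avoids the explicit partition construction (in particular the mildly fiddly step of producing balls in $\Sigma$ with $\mm$-null boundary), at the cost of invoking standard weak-convergence machinery. Your version is more self-contained and elementary; the one thing worth spelling out a bit more in a write-up is that disjointifying the covering balls of $\Sigma$ preserves both the diameter bound and the null-boundary property (since $\partial(B_j\setminus C)\subset\partial B_j\cup\partial C$), but as stated your sketch covers this and there is no gap.
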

\begin{proof}
Take $R>0$ so that $\supp f\subset\scrX_R:=\scrB_R^d\times\Sigma$, where $\scrB_R^d$ \label{BRd} is the open ball of radius $R$ centered at the origin.
For $T>0$, let $N_T:=\#(\scrP\cap\scrB_{RT}^d)$;
then $T^{-d}N_T\to c_\scrP\mu_\scrX(\scrX_R)$ as $T\to\infty$.
For $T>0$ large (thus $N_T>0$) we let $X_T:=T^{-1}\vecy$ where $\vecy$ is chosen uniformly at random among the $N_T$
points in $\tP\cap\scrX_{RT}$; then $X_T$ is a random point in $\scrX$, and 
using $T^{-d}N_T\to c_\scrP\mu_\scrX(\scrX_R)$ and [P1] it follows that 
\begin{align*}
\lim_{T\to\infty}\PP(X_T\in B)=\frac{\mu_\scrX(B)}{\mu_\scrX(\scrX_R)}
\end{align*}
for any $B\subset\scrB_R^d\times\Sigma$ with $\mu_\scrX(\partial B)=0$.
Hence if we let $X_\infty$ be a random point in $\scrX_R$ with distribution
$\mu_\scrX(\scrX_R)^{-1}\mu_{\scrX\mid\scrX_R}$ then 
$X_T$ tends in distribution to $X_\infty$ as $T\to\infty$,
and so by the Portmanteau Theorem, $\lim_{T\to\infty}\EE f(X_T)=\EE f(X_\infty)$.
Again using $T^{-d}N_T\to c_\scrP\mu_\scrX(\scrX_R)$,
the last relation is seen to be equivalent with \eqref{ASYMPTDENSITY1CONSlemres}.
\end{proof}

{\blu From now on we \textit{fix, once and for all, a choice of a subset $\scrE\subset\scrP$
as in [P2].}
We also set}
\begin{align}\label{SIGMAp}
\Sigma'=\overline{\{{\vs}(\vecq)\col\vecq\in\scrP\setminus\scrE\}}.
\end{align}
This is clearly a closed subset of $\Sigma$, hence compact.
Once the basic results of the present section are established,
it will transpire that we may assume without loss of generality that $\Sigma'=\Sigma$;
cf.\ Remark \ref{SIGMAeqpREM} below.

\begin{lem}\label{SIGMApfullmeaslem}
$\mm(\Sigma')=1.$
\end{lem}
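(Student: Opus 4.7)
The plan is to argue by contradiction: assume $\mm(\Sigma\setminus\Sigma')>0$ and derive a contradiction with the fact that the exceptional set $\scrE$ has asymptotic density zero.

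First I would exploit that $\Sigma'$ is closed, so $\Sigma\setminus\Sigma'$ is open. If $\mm(\Sigma\setminus\Sigma')>0$, pick any point $\vs_0$ in the support of $\mm$ restricted to $\Sigma\setminus\Sigma'$, and let $\delta>0$ be small enough that the open ball $B_\Sigma(\vs_0,\delta)$ is contained in $\Sigma\setminus\Sigma'$. Since the boundary spheres $\{\partial B_\Sigma(\vs_0,r)\}_{r>0}$ are pairwise disjoint, at most countably many have positive $\mm$-measure; thus we may choose $r\in(0,\delta)$ so that the set
\begin{equation*}
U:=B_\Sigma(\vs_0,r)\subset\Sigma\setminus\Sigma'
\end{equation*}
satisfies $\mm(U)>0$ and $\mm(\partial U)=0$.

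Next I would apply the uniform density assumption [P1] to the bounded set $B=\scrB_1^d\times U\subset\scrX$. Its boundary is contained in $(\partial\scrB_1^d\times\overline{U})\cup(\overline{\scrB_1^d}\times\partial U)$, so $\mu_\scrX(\partial B)=0$ since $\partial\scrB_1^d$ has Lebesgue measure zero and $\mm(\partial U)=0$. Therefore [P1] gives
\begin{equation*}
\lim_{T\to\infty}\frac{\#(\tP\cap TB)}{T^d}=c_\scrP\,\vol(\scrB_1^d)\,\mm(U)>0.
\end{equation*}

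The main (and only substantive) step is now to bound the left-hand side by something of density zero. By definition of $\tP$, a point $(\vecp,\vs(\vecp))$ lies in $TB=T\scrB_1^d\times U$ iff $\vecp\in\scrP\cap T\scrB_1^d$ and $\vs(\vecp)\in U$. But $U\subset\Sigma\setminus\Sigma'$, while $\vs(\vecq)\in\Sigma'$ for every $\vecq\in\scrP\setminus\scrE$ by the definition \eqref{SIGMAp} of $\Sigma'$. Hence $\vs(\vecp)\in U$ forces $\vecp\in\scrE$, giving
\begin{equation*}
\#(\tP\cap TB)\leq\#(\scrE\cap T\scrB_1^d).
\end{equation*}
Since $\scrE$ has asymptotic density zero by [P2], the right-hand side is $o(T^d)$, contradicting the positive limit above. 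Therefore $\mm(\Sigma\setminus\Sigma')=0$, i.e.\ $\mm(\Sigma')=1$.

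I do not expect any real obstacle here; the only mildly nontrivial point is the standard measure-theoretic trick of choosing $r$ so that $\mm(\partial U)=0$, which is needed to legitimately invoke [P1].
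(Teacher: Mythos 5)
Your proof is correct and follows essentially the same route as the paper's: produce an open ball $U\subset\Sigma\setminus\Sigma'$ with $\mm(U)>0$ and $\mm(\partial U)=0$, apply [P1] to $\scrB_1^d\times U$ to get a positive-density count, and then observe that every point of $\tP$ with mark in $U$ must have its $\R^d$-coordinate in $\scrE$, which has density zero. The only cosmetic difference is that the paper adjusts the radius via a nested interval $[r',r]$ while you appeal directly to the countability of radii with positive-measure boundary spheres; both are standard and equivalent.
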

\begin{proof}
Assume the opposite, i.e.\ $\mm(U)>0$ where $U:=\Sigma\setminus\Sigma'$.
Let $d$ be the metric in $\Sigma$.
Then there is some open ball $B=B_\Sigma({\vs},r)=\{{\vs}'\in\Sigma\col d({\vs}',{\vs})<r\}$ 
satisfying $B\subset U$ and $\mm(B)>0$.
Take $0<r'<r$ so that also $\mm(B_\Sigma({\vs},r'))>0$.
Note that the boundaries $\partial B_\Sigma({\vs},r'')$ for $r''\in[r',r]$
are pairwise disjoint; hence there is some $r''\in(r',r)$
with $\mm(\partial B_\Sigma({\vs},r''))=0$.
Set $B':=B_\Sigma({\vs},r'')$;
thus $B'\subset U$, $\mm(B')>0$ and $\mm(\partial B')=0$.
Hence, by [P1], %
\begin{align*}
\lim_{R\to\infty}R^{-d}\#(\tP\cap(\scrB_R^d\times B'))
=c_\scrP\vol(\scrB_1^d)\mm(B')>0.
\end{align*}
Using also {\blu the fact that $\scrE$ has asymptotic density zero (cf.\ [P2])}, it follows that 
$\{\vecq\in\scrP\cap\scrB_R^d\setminus\scrE\col \vs(\vecq)\in B'\}$
must be nonempty for all sufficiently large $R$.
In particular $\Sigma'\cap B'\neq\emptyset$,
contradicting $B'\subset U=\Sigma\setminus\Sigma'$.
\end{proof}

\begin{lem}\label{DIAGINVlem}
For every ${\vs}\in\Sigma'$,
$\mu_{\vs}$ is invariant under the action of $D_r$, for all $r>0$.
\end{lem}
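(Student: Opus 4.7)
The plan is to exploit the identity $D_\rho D_r = D_{\rho r}$ and to take limits in [P2] in two different ways, comparing the limits along the geometric-progression parameter families $\{\rho\}$ and $\{r\rho\}$.

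First, I would observe that for any $\vecq\in\R^d$, $\vecv\in\US$ and $\rho,r>0$, directly from the definition \eqref{XIRHOqv} we have
\begin{equation*}
\scrQ_\rho(\vecq,\vecv)\,D_r
=(\tP_\vecq-\vecq)\,R(\vecv)\,D_\rho D_r
=(\tP_\vecq-\vecq)\,R(\vecv)\,D_{\rho r}
=\scrQ_{\rho r}(\vecq,\vecv).
\end{equation*}
Denoting by $T_r:N(\scrX)\to N(\scrX)$ the pushforward map $\eta\mapsto\eta D_r$ (where the right-action extends to $\scrX$ as in the excerpt), this translates into the identity of probability measures $T_{r*}\mu_{\vecq,\rho}^{(\lambda)}=\mu_{\vecq,\rho r}^{(\lambda)}$ for every $\lambda\in\Pac(\US)$. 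Note that $T_r$ is continuous with respect to the vague topology, since for any $f\in\C_c(\scrX)$ one has $(T_r\eta)(f)=\eta(f\circ(\cdot D_r))$ with $f\circ(\cdot D_r)\in\C_c(\scrX)$; consequently $T_{r*}$ is a continuous self-map of $P(N(\scrX))$ in the weak topology.

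Next, fix any $\vecq\in\scrP\setminus\scrE$ and any $\lambda\in\Pac(\US)$. Then $\vecq\in\scrP_T(\rho)$ for all sufficiently small $\rho$, once $T$ is chosen so that $|\vecq|<T\rho^{1-d}$; hence [P2] (in its precise form \eqref{ASS:KEY}) yields both
\begin{equation*}
\mu_{\vecq,\rho}^{(\lambda)}\xrightarrow[]{\textup{ w }}\mu_{\vs(\vecq)}
\qquad\text{and}\qquad
\mu_{\vecq,\rho r}^{(\lambda)}\xrightarrow[]{\textup{ w }}\mu_{\vs(\vecq)}
\end{equation*}
as $\rho\to 0$. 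Applying $T_{r*}$ to the first convergence and using its continuity together with the identity above gives $\mu_{\vecq,\rho r}^{(\lambda)}=T_{r*}\mu_{\vecq,\rho}^{(\lambda)}\xrightarrow[]{\textup{ w }}T_{r*}\mu_{\vs(\vecq)}$. Comparing with the second convergence and invoking uniqueness of weak limits, we conclude $T_{r*}\mu_{\vs(\vecq)}=\mu_{\vs(\vecq)}$. Thus $D_r$-invariance holds for every $\vs$ of the form $\vs(\vecq)$ with $\vecq\in\scrP\setminus\scrE$.

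Finally, to extend to arbitrary $\vs\in\Sigma'$, I would use the definition \eqref{SIGMAp} of $\Sigma'$ as a closure: choose a sequence $\vecq_n\in\scrP\setminus\scrE$ with $\vs(\vecq_n)\to\vs$. By the (given) continuity of $\vs\mapsto\mu_\vs$, we have $\mu_{\vs(\vecq_n)}\xrightarrow[]{\textup{ w }}\mu_\vs$, and applying the continuous map $T_{r*}$ yields $T_{r*}\mu_{\vs(\vecq_n)}\xrightarrow[]{\textup{ w }}T_{r*}\mu_\vs$. But the left-hand side equals $\mu_{\vs(\vecq_n)}$ for every $n$ by the previous paragraph, and therefore converges to $\mu_\vs$; uniqueness of the weak limit now forces $T_{r*}\mu_\vs=\mu_\vs$, completing the proof. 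There is no real obstacle here beyond ensuring that the continuity of $T_{r*}$ is properly recorded; the content of the lemma is really just the self-similarity $D_\rho D_r=D_{\rho r}$ combined with the convergence statement in [P2].
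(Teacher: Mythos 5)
Your proof is correct and follows essentially the same route as the paper's: both rest on the scaling identity $\scrQ_{r\rho}(\vecq,\vecv)=\scrQ_\rho(\vecq,\vecv)D_r$, two applications of [P2] (at scales $\rho$ and $r\rho$), continuity of the pushforward by $Y\mapsto YD_r$, and uniqueness of weak limits. The only difference is organizational: you first get invariance of $\mu_{\vs(\vecq)}$ for each fixed $\vecq\in\scrP\setminus\scrE$ (so only the pointwise form of [P2] is needed) and then pass to all of $\Sigma'$ via the assumed continuity of $\vs\mapsto\mu_\vs$, whereas the paper runs a single diagonal sequence $(\vecq_n,\rho_n)$, implicitly exploiting the uniformity in [P2].
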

\begin{proof}
Fix $r>0$.
Take any sequence $(\vecq_n)\subset\scrP\setminus\scrE$
such that ${\vs}(\vecq_n)\to{\vs}$;
then take $(\rho_n)\subset(0,1)$ such that
$\rho_n\to0$ and $\vecq_n\in\scrP_1(\rho_n)$ for each $n$.
Fix any $\lambda\in \Pac(\US)$.
Then $\mu_{\vecq_n,\rho_n}^{(\lambda)}\xrightarrow[]{\textup{ w }}\mu_{{\vs}}$
as $n\to\infty$, by [P2]. %
We also have $\vecq_n\in\scrP_T(r\rho_n)$ for each $n$,
where $T=\max(1,r^{d-1})$, and hence
$\mu_{\vecq_n,r\rho_n}^{(\lambda)}\xrightarrow[]{\textup{ w }}\mu_{{\vs}}$
as $n\to\infty$.
But note that 
$\scrQ_{r\rho}(\vecq,\vecv)=\scrQ_\rho(\vecq,\vecv)D_r$, 
and hence
$\mu_{\vecq_n,r\rho_n}^{(\lambda)}=\mu_{\vecq_n,\rho_n}^{(\lambda)}\circ D_r^{-1}$,
where ``$D_r^{-1}$'' denotes the continuous map
$Y\mapsto YD_r^{-1}$ from $N_s(\scrX)$ to itself.
Hence $\mu_{\vecq_n,r\rho_n}^{(\lambda)}\xrightarrow[]{\textup{ w }}\mu_{{\vs}}\circ D_r^{-1}$ as $n\to\infty$,
and so $\mu_{{\vs}}\circ D_r^{-1}=\mu_{\vs}$. %
\end{proof}

Next we will show that since our key convergence assumption,
[P2],
is required to hold for \textit{all} $\lambda\in \Pac(\US)$,
it can in fact be upgraded to a convergence statement concerning the joint distribution of
$\vecv$ and $\scrQ_\rho(\vecq,\vecv)$.
Given $\lambda\in P(\US)$, we write $\tmu_{\vecq,\rho}^{(\lambda)}\in P(\US\times N(\scrX))$ for
the distribution of $(\vecv,\scrQ_\rho(\vecq,\vecv))$ %
for $\vecv$ random in $(\US,\lambda)$.
\label{tmuqrholambdaDEF}
\begin{lem}\label{PRODUNIFCONVlem}
Let $T\geq1$ and $\lambda\in \Pac(\US)$.
Then $\tmu_{\vecq,\rho}^{(\lambda)}\xrightarrow[]{\textup{ w }}\lambda\times\mu_{{\vs}(\vecq)}$
as $\rho\to0$, uniformly over all $\vecq\in\scrP_T(\rho)$.
\end{lem}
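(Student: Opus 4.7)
The plan is to invoke the criterion of Lemma \ref{PPUNICCONVCONCRETElemA} with $S=\US$: it suffices to show that for every $f\in\C_c(\scrX)$ and $g\in\C_c(\US\times\R)$, the quantity
\begin{equation*}
I_\rho(\vecq):=\int_{\US}g\bigl(\vecv,\scrQ_\rho(\vecq,\vecv)(f)\bigr)\,d\lambda(\vecv)
\end{equation*}
converges, as $\rho\to0$, to $I(\vecq):=\int_{\US\times N(\scrX)}g(\vecv,\eta(f))\,d\lambda(\vecv)\,d\mu_{\vs(\vecq)}(\eta)$, uniformly over $\vecq\in\scrP_T(\rho)$. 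The key idea is that the product structure of the limit is produced by a localization argument: if we condition $\vecv$ to lie in a small piece of $\US$, the conditional law is still absolutely continuous, so [P2] applies with the \emph{same} limit $\mu_{\vs(\vecq)}$, giving the factorization.

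Concretely, fix $f$ and $g$ and let $\ve>0$. Since $g$ is uniformly continuous, there exists $\delta>0$ such that $|g(\vecv,z)-g(\vecv',z)|<\ve$ whenever $d(\vecv,\vecv')<\delta$. Partition $\US$ into finitely many Borel sets $U_1,\ldots,U_N$ of diameter less than $\delta$ and with $\lambda(\partial U_i)=0$ (possible because boundaries of small balls in $\US$ can be arranged to have $\lambda$-measure zero, using that $\lambda\in\Pac(\US)$). Discard those $U_i$ with $\lambda(U_i)=0$ and pick $\vecv_i\in U_i$ for the rest. Define the conditional measures $\lambda_i:=\lambda(U_i)^{-1}\lambda|_{U_i}\in\Pac(\US)$ and the functions $g_i(z):=g(\vecv_i,z)\in\C_c(\R)$. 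Then
\begin{equation*}
\Bigl|\,I_\rho(\vecq)-\sum_i\lambda(U_i)\int_{\US}g_i\bigl(\scrQ_\rho(\vecq,\vecv)(f)\bigr)\,d\lambda_i(\vecv)\,\Bigr|<\ve,
\end{equation*}
and likewise
\begin{equation*}
\Bigl|\,I(\vecq)-\sum_i\lambda(U_i)\int_{N(\scrX)}g_i(\eta(f))\,d\mu_{\vs(\vecq)}(\eta)\,\Bigr|<\ve.
\end{equation*}

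The next step is to apply [P2] for each of the finitely many $\lambda_i$. For each $i$, [P2] gives $\mu_{\vecq,\rho}^{(\lambda_i)}\xrightarrow[]{\textup{ w }}\mu_{\vs(\vecq)}$ uniformly for $\vecq\in\scrP_T(\rho)$. Since $\{\mu_{\vs}\col\vs\in\Sigma\}$ is a compact subset of $P(N(\scrX))$ (see footnote \ref{foot6}) and $\eta\mapsto g_i(\eta(f))$ is a bounded continuous function on $N(\scrX)$, Lemma \ref{UNIFPORTMlemB} gives
\begin{equation*}
\int_{\US}g_i\bigl(\scrQ_\rho(\vecq,\vecv)(f)\bigr)\,d\lambda_i(\vecv)=\mu_{\vecq,\rho}^{(\lambda_i)}(g_i\circ\textup{ev}_f)\longrightarrow\int_{N(\scrX)}g_i(\eta(f))\,d\mu_{\vs(\vecq)}(\eta)
\end{equation*}
as $\rho\to0$, uniformly for $\vecq\in\scrP_T(\rho)$, where $\textup{ev}_f(\eta):=\eta(f)$. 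Taking the finite sum over $i$ (there are only $N$ indices, so the uniform rate survives) and combining with the two $\ve$-bounds above yields $\limsup_{\rho\to0}\sup_{\vecq\in\scrP_T(\rho)}|I_\rho(\vecq)-I(\vecq)|\leq 2\ve$. Letting $\ve\to0$ completes the verification of condition (b) in Lemma \ref{PPUNICCONVCONCRETElemA}, so (a) holds, which is exactly the claim.

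The main obstacle is the manipulation in the first step, namely producing a Borel partition of $\US$ into small pieces whose boundaries are $\lambda$-null. Once this is available, all remaining work is a routine piecewise-constant approximation. Absolute continuity of $\lambda$ is essential in two ways: it allows the partition boundaries to be arranged null, and it guarantees that each conditional $\lambda_i$ lies in $\Pac(\US)$, so [P2] is applicable. The uniformity in $\vecq$ comes directly from [P2], and is preserved under the finite sum because the partition (depending only on $\ve$, $g$, $\lambda$) is independent of $\rho$ and $\vecq$.
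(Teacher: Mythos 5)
Your proof is correct and follows essentially the same route as the paper: you verify criterion (b) of Lemma \ref{PPUNICCONVCONCRETElemA} by partitioning $\US$ into finitely many Borel pieces on which $g(\cdot,y)$ oscillates by less than $\ve$, apply [P2] to each normalized restriction $\lambda_i\in\Pac(\US)$ (via Lemma \ref{UNIFPORTMlemB}, which is equivalent to the paper's use of Lemma \ref{PPUNICCONVCONCRETElemA} with a singleton $S$), and sum over the finitely many pieces to keep the uniformity in $\vecq$. The only difference is harmless: your extra requirement $\lambda(\partial U_i)=0$ on the partition pieces, which you single out as the main obstacle, is never actually used — a plain Borel partition into small-diameter sets suffices, exactly as in the paper.
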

\begin{proof}
Let $f\in \C_c(N(\scrX))$ and $g\in\C_c(\US\times\R)$ be given.
By Lemma \ref{PPUNICCONVCONCRETElemA} it suffices to prove that 
$F_{f,g}(\tmu_{\vecq,\rho}^{(\lambda)})-F_{f,g}(\lambda\times\mu_{{\vs}(\vecq)})\to0$
as $\rho\to0$, uniformly over all $\vecq\in\scrP_T(\rho)$.
Let $\ve>0$ be given.
Since $g$ is continuous with compact support,
there is a partition of $\US$ into Borel subsets $S_1,\ldots,S_r$
such that for each $j\in\{1,\ldots,r\}$ we have
\begin{align}\label{PRODUNIFCONVlempf1}
|g(\vecv,y)-g(\vecv',y)|<\ve,
\qquad\forall y\in\R,\: \vecv,\vecv'\in S_j.
\end{align}
Let $J$ be the set of $j\in\{1,\ldots,r\}$ with $\lambda(S_j)>0$.
For each $j\in J$ we set $\lambda_j=\lambda(S_j)^{-1}\lambda_{|S_j}\in P(\US)$,
fix a point $\vecv_j\in S_j$,
and define $g_j\in\C_c(\R)$ through
$g_j(y)=g(\vecv_j,y)$.
Applying [P2] %
for $\lambda_j$,
together with Lemma \ref{PPUNICCONVCONCRETElemA} (with $S$ as a ``dummy'' singleton set),
we see that
$F_{f,g_j}(\mu_{\vecq,\rho}^{(\lambda_j)})-F_{f,g_j}(\mu_{{\vs}(\vecq)})\to0$
as $\rho\to0$, uniformly over all $\vecq\in\scrP_T(\rho)$.
(Here $F_{f,g_j}(\mu)=\int_{N(\scrX)}g_j(\eta(f))\,d\mu(\eta)$ for any $\mu$ in $P(N(\scrX))$.)
Hence there is some $\rho_0\in(0,1)$ such that
\begin{align}\label{PRODUNIFCONVlempf2}
\bigl|F_{f,g_j}(\mu_{\vecq,\rho}^{(\lambda_j)})-F_{f,g_j}(\mu_{{\vs}(\vecq)})\bigr|<\ve,
\qquad\forall\rho\leq\rho_0,\:\vecq\in\scrP_T(\rho), j\in J.
\end{align}
Now note that by definition,
\begin{align*}
F_{f,g_j}(\mu_{\vecq,\rho}^{(\lambda_j)})
=%
\int_{S_j}g\biggl(\vecv_j,\sum_{\vecy\in\scrQ_\rho(\vecq,\vecv)}f(\vecy)\biggr)\,d\lambda_j(\vecv)
\end{align*}
and
\begin{align*}
F_{f,g}(\tmu_{\vecq,\rho}^{(\lambda_j)})
=\int_{S_j}g\biggl(\vecv,\sum_{\vecy\in\scrQ_\rho(\vecq,\vecv)}f(\vecy)\biggr)\,d\lambda_j(\vecv);
\end{align*}
hence using \eqref{PRODUNIFCONVlempf1} we have
\begin{align*}
\bigl|F_{f,g_j}(\mu_{\vecq,\rho}^{(\lambda_j)})
-F_{f,g}(\tmu_{\vecq,\rho}^{(\lambda_j)})\bigr|\leq\ve.
\end{align*}
Multiplying this inequality by $\lambda(S_j)$ and adding over all $j$ we obtain
\begin{align}\label{PRODUNIFCONVlempf3}
\Bigl|F_{f,g}(\tmu_{\vecq,\rho}^{(\lambda)})
-\sum_{j\in J}\lambda(S_j)F_{f,g_j}(\mu_{\vecq,\rho}^{(\lambda_j)})\Bigr|\leq\ve,
\qquad\forall\rho\in(0,1),\:\vecq\in\scrP.
\end{align}
By a similar argument we also have
\begin{align}\label{PRODUNIFCONVlempf4}
\Bigl|F_{f,g}(\lambda\times\mu_{\vs})-\sum_{j\in J}\lambda(S_j)F_{f,g_j}(\mu_{\vs})\Bigr|\leq\ve,
\qquad\forall\rho\in(0,1),\:{\vs}\in\Sigma.
\end{align}
Using 
\eqref{PRODUNIFCONVlempf2},
\eqref{PRODUNIFCONVlempf3} and
\eqref{PRODUNIFCONVlempf4},
we conclude that
\begin{align*}
\bigl|F_{f,g}(\tmu_{\vecq,\rho}^{(\lambda)})-
F_{f,g}(\lambda\times\mu_{{\vs}(\vecq)})\bigr|\leq3\ve,
\qquad\forall\rho\leq\rho_0,\:\vecq\in\scrP_T(\rho).
\end{align*}
Since $\ve>0$ is arbitrary, this establishes the desired uniform convergence.
\end{proof}

\begin{remark}\label{PRODUNIFCONVspecrem}
Let us take $\lambda$ to be normalized Lebesgue measure,
i.e. $\lambda=\omega_1=\omega(\US)^{-1}\omega$.
In this case, Lemma \ref{PRODUNIFCONVlem} says
\begin{align}\label{PRODUNIFCONVspec}
\tmu_{\vecq,\rho}^{(\omega_1)}\xrightarrow[]{\textup{ w }}\omega_1\times\mu_{{\vs}(\vecq)}
\qquad
\text{as $\rho\to0$, uniformly over all $\vecq\in\scrP_T(\rho)$.}
\end{align}
Let us note that the convergence stated in Lemma \ref{PRODUNIFCONVlem}
for a \textit{general} $\lambda\in \Pac(\US)$,
is in fact a simple consequence of the special case \eqref{PRODUNIFCONVspec}: it follows from the fact that $\C(\US)$ is dense in $\L^1(\US,\omega)$. Of course also the convergence in Lemma \ref{PRODUNIFCONVlem}
implies the convergence assumed in [P2]. %
Hence \eqref{PRODUNIFCONVspec} is an \textit{equivalent reformulation} of the assumption [P2]. %
\end{remark}

{\blu For later use,} we will next prove that
the convergence in
Lemma \ref{PRODUNIFCONVlem}
can be upgraded
by including a ``$\vecbeta$-shift''.
For any open subset $U\subset\US$,
we let $\C_b(U,\R^d)$ be the space of all bounded continuous functions $\vecbeta:U\to\RR^{d}$,
provided with the supremum norm.
\label{CbURddef}
For any $\vecq\in\scrP$, $\vecbeta\in\C_b(U,\R^d)$, $\vecv\in U$ and $0<\rho<1$, we set
\begin{align}\label{XIrhoqvdef}
\scrQ_\rho(\vecq,\vecbeta,\vecv)=(\tP_\vecq-\vecq-\rho\vecbeta(\vecv))\,R(\vecv)\,D_\rho.
\end{align}
{\blu Thus $\scrQ_\rho(\vecq,\vecbeta,\vecv)$ gives the scattering configuration when viewed in the particle
frame of a particle at the point $\vecq+\rho\vecbeta(\vecv)$ in direction $\vecv$
(cf.\ the discussion at the beginning of Section~\ref{sec:outline}).
We will ultimately be interested in the case when 
$\vecbeta(\vecv)\in\US$ and
$\vecbeta(\vecv)\cdot\vecv>0$
for all $\vecv$.
In this case, considering a fixed $\vecq\in\scrP$
and random $\vecv$ means that we are considering a particle
which is just about to leave the boundary of the scatterer
$\vecq+\scrB_\rho^d$ in a random direction, 
with $\vecbeta$ specifying how the exact starting position 
depends on the random direction.}
%
%
%

Given $\lambda\in P(\S_1^{d-1})$ with $\lambda(U)=1$, let us write
$\tmu_{\vecq,\rho}^{(\vecbeta,\lambda)}\in P(\US\times N(\scrX))$ for the distribution of 
$(\vecv,\scrQ_\rho(\vecq,\vecbeta,\vecv))$ for $\vecv$ random in $(\US,\lambda)$
(equivalently, for $\vecv$ random in $(U,\lambda_{|U})$).
\label{tmuqrhobetalambdaDEF}
We define the projection map $\vecx\mapsto\vecx_\perp$ on $\R^d$ by
\begin{align}\label{Xperpdef1}
\vecx_\perp:=(0,x_2,\ldots,x_d)
\qquad\text{for }\: \vecx=(x_1,x_2,\ldots,x_d)\in\R^d.
\end{align}
The limit of $\scrQ_\rho(\vecq,\vecbeta,\vecv)$ as $\rho\to0$
will turn out to be the point process 
$\Xi_{\vs}-(\vecbeta(\vecv)R(\vecv))_\perp$, 
with $\vecv$ random in $(\US,\lambda)$ and independent from $\Xi_{\vs}$.
Let $\mu_{{\vs}}^{(\vecbeta,\lambda)}\in P(N(\scrX))$ be the distribution of this point process,
\label{muvsbetalambdaDEF}
and let $\tmu_{{\vs}}^{(\vecbeta,\lambda)}\in P(\US\times N(\scrX))$
be the distribution of $(\vecv,\Xi_{\vs}-(\vecbeta(\vecv)R(\vecv))_\perp)$.
Thus for any measurable function $f\geq0$ on $\US\times N(\scrX)$,
\begin{align}\label{MUsigmabetalambdadef}
\int_{\US\times N(\scrX)}f\,d\tmu_{{\vs}}^{(\vecbeta,\lambda)}
=\int_U\int_{N_s(\scrX)}f\bigl(\vecv,{Y}-(\vecbeta(\vecv)R(\vecv))_\perp\bigr)\,d\mu_{\vs}({Y})\,d\lambda(\vecv),
\end{align}
and in case $f(\vecv,Y)$ is independent of $\vecv$ this also equals $\int_{N(\scrX)}f\,d\mu_{{\vs}}^{(\vecbeta,\lambda)}$.
In particular $\Xi_{\vs}-(\vecbeta(\vecv)R(\vecv))_\perp$ is a simple (marked) point process,
just as $\Xi_{\vs}$.
It will be useful for us to prove a limit statement which is uniform both over $\vecbeta$ in compacta
and over $\vecq$ in $\scrP_T(\rho)$.
\begin{lem}\label{BETAUNIFCONVlem}
Let $U$ be an open subset of $\US$,
and let $\lambda\in\Pac(\US)$ be such that $\lambda(U)=1$.
Then for any $T\geq1$ and any compact subset $K\subset\C_b(U,\R^d)$,
we have $\tmu_{\vecq,\rho}^{(\vecbeta,\lambda)}\xrightarrow[]{\textup{ w }}\tmu_{{\vs}(\vecq)}^{(\vecbeta,\lambda)}$
as $\rho\to0$, uniformly over all $\vecq\in\scrP_T(\rho)$ and all $\vecbeta\in K$.
\end{lem}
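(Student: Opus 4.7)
The plan is to reduce the claim to Lemma~\ref{PRODUNIFCONVlem} by treating the $\vecbeta$-shift as a continuous perturbation. The starting algebraic identity is
\[
\scrQ_\rho(\vecq,\vecbeta,\vecv)=\scrQ_\rho(\vecq,\vecv)-\gamma^{(\vecbeta)}_\rho(\vecv),\qquad \gamma^{(\vecbeta)}_\rho(\vecv):=\rho\vecbeta(\vecv)R(\vecv)D_\rho,
\]
the subtraction acting on the $\R^d$-component of $\scrX$. Writing $\vecbeta(\vecv)R(\vecv)=(\alpha_1(\vecv),\ldots,\alpha_d(\vecv))$ and applying $D_\rho$ gives $\gamma^{(\vecbeta)}_\rho(\vecv)=(\rho^d\alpha_1(\vecv),\alpha_2(\vecv),\ldots,\alpha_d(\vecv))$, which converges to $\gamma^{(\vecbeta)}(\vecv):=(\vecbeta(\vecv)R(\vecv))_\perp$ as $\rho\to 0$. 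Since $K\subset\C_b(U,\R^d)$ is in particular uniformly bounded, the convergence is uniform over $\vecv\in U$ and $\vecbeta\in K$.

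I would then invoke Lemma~\ref{UNIFPORTMlemC} with the index set $J=\scrP\times K$ and $J(\rho)=\scrP_T(\rho)\times K$. The set $C:=\{\tmu_{\vs}^{(\vecbeta,\lambda)}\col \vs\in\Sigma,\, \vecbeta\in K\}$ is the continuous image of $\Sigma\times K$ under $(\vs,\vecbeta)\mapsto\tmu_{\vs}^{(\vecbeta,\lambda)}$ and hence compact; continuity of this map follows from that of $\vs\mapsto\mu_{\vs}$ together with the continuous dependence of the shift $Y\mapsto Y-\gamma^{(\vecbeta)}(\vecv)$ on $\vecbeta$. By Lemma~\ref{UNIFPORTMlemC}, the uniform convergence claimed in the lemma reduces to the following subsequential statement: for any sequences $\rho_n\to 0$, $\vecq_n\in\scrP_T(\rho_n)$, $\vecbeta_n\in K$ along which $\tmu_{\vs(\vecq_n)}^{(\vecbeta_n,\lambda)}$ converges weakly to some $\nu\in C$, one also has $\tmu_{\vecq_n,\rho_n}^{(\vecbeta_n,\lambda)}\xrightarrow[]{\textup{ w }}\nu$. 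By passing to a further subsequence one may assume $\vs(\vecq_n)\to\vs\in\Sigma'$ and $\vecbeta_n\to\vecbeta\in K$, and then $\nu=\tmu_{\vs}^{(\vecbeta,\lambda)}$.

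To verify this subsequential limit I would apply Lemma~\ref{PPUNICCONVCONCRETElemA}, taking test functions $f\in\C_c(\scrX)$ and $g\in\C_c(\US\times\R)$. Setting $f_\vecy(\vecx,\vs):=f(\vecx-\vecy,\vs)$ and
\[
h_n(\vecv,Y):=g\bigl(\vecv,\, Y(f_{\gamma^{(\vecbeta_n)}_{\rho_n}(\vecv)})\bigr),\qquad h_\infty(\vecv,Y):=g\bigl(\vecv,\, Y(f_{\gamma^{(\vecbeta)}(\vecv)})\bigr),
\]
one has $F_{f,g}(\tmu_{\vecq_n,\rho_n}^{(\vecbeta_n,\lambda)})=\int h_n\, d\tmu_{\vecq_n,\rho_n}^{(\lambda)}$ and $F_{f,g}(\tmu_{\vs}^{(\vecbeta,\lambda)})=\int h_\infty\, d(\lambda\times\mu_{\vs})$. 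Lemma~\ref{PRODUNIFCONVlem} yields $\tmu_{\vecq_n,\rho_n}^{(\lambda)}\xrightarrow[]{\textup{ w }}\lambda\times\mu_{\vs}$, and since $h_\infty\in\C_b(\US\times N(\scrX))$ this gives $\int h_\infty\, d\tmu_{\vecq_n,\rho_n}^{(\lambda)}\to\int h_\infty\, d(\lambda\times\mu_{\vs})$. The remaining task is to prove $\int(h_n-h_\infty)\, d\tmu_{\vecq_n,\rho_n}^{(\lambda)}\to 0$.

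This final step is the main obstacle. The pointwise bound
\[
|h_n(\vecv,Y)-h_\infty(\vecv,Y)|\leq \omega_g\bigl(\omega_f(\|\gamma^{(\vecbeta_n)}_{\rho_n}-\gamma^{(\vecbeta)}\|_\infty)\cdot Y(B)\bigr),
\]
with $\omega_f,\omega_g$ the moduli of continuity and $B$ any fixed compact subset of $\scrX$ containing the supports of all $f_{\gamma^{(\vecbeta_n)}_{\rho_n}(\vecv)}$ and $f_{\gamma^{(\vecbeta)}(\vecv)}$ (which exists because $\gamma^{(\vecbeta_n)}_{\rho_n}$ and $\gamma^{(\vecbeta)}$ are uniformly bounded), does not give a uniform sup-bound because $Y(B)$ is unbounded over $N(\scrX)$. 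The remedy is tightness: by Prokhorov, the weakly convergent sequence $\tmu_{\vecq_n,\rho_n}^{(\lambda)}$ is tight, so for every $\ve>0$ there is a compact set $\scrK\subset\US\times N(\scrX)$ with $\tmu_{\vecq_n,\rho_n}^{(\lambda)}(\scrK)\geq 1-\ve$ for all $n$. Relative compactness in the vague topology forces $\sup\{Y(B)\col (\vecv,Y)\in\scrK\}<\infty$, so $h_n\to h_\infty$ uniformly on $\scrK$; splitting the integral into $\scrK$ and $\scrK^c$ and bounding the latter by $2\|g\|_\infty\ve$ then concludes the proof.
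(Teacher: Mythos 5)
Your proof is correct, but it takes a genuinely different route from the paper's. Both proofs begin by passing to a subsequential statement via Lemma~\ref{UNIFPORTMlemC}, i.e.\ reducing to showing $\tmu_{\vecq_n,\rho_n}^{(\vecbeta_n,\lambda)}\xrightarrow[]{\textup{ w }}\tmu_\vs^{(\vecbeta,\lambda)}$ for fixed convergent sequences $\rho_n\to 0$, $\vs(\vecq_n)\to\vs$, $\vecbeta_n\to\vecbeta$. From there the paper packages the $\vecbeta$-shift as a sequence of maps $F_n(\vecv,Y)=(\vecv,Y-\rho_n\vecbeta_n(\vecv)R(\vecv)D_{\rho_n})$ converging a.e.\ to $F(\vecv,Y)=(\vecv,Y-(\vecbeta(\vecv)R(\vecv))_\perp)$, verifies the sequential continuity condition, and invokes the extended continuous mapping theorem \cite[Thm.\ 4.27]{kallenberg02} in one stroke. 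You instead unwind this into a direct test-function computation: you decompose $F_{f,g}(\tmu_{\vecq_n,\rho_n}^{(\vecbeta_n,\lambda)})-F_{f,g}(\tmu_\vs^{(\vecbeta,\lambda)})$ into a term handled by the weak convergence $\tmu_{\vecq_n,\rho_n}^{(\lambda)}\xrightarrow[]{\textup{ w }}\lambda\times\mu_\vs$ (Lemma~\ref{PRODUNIFCONVlem}), and a remainder $\int(h_n-h_\infty)\,d\tmu_{\vecq_n,\rho_n}^{(\lambda)}$ killed by uniform tightness plus the fact that vague relative compactness bounds $Y(B)$ on compacta. This is essentially an ab initio reproof of the relevant case of the extended continuous mapping theorem; it is longer but more self-contained, whereas the paper's version is shorter by outsourcing the tightness/approximation bookkeeping to Kallenberg's theorem. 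Two small technical points the paper addresses that you should fold in: (1) $R$ may be discontinuous at one point $\vecv_0\in\US$, so $h_\infty$ is not literally in $\C_b(\US\times N(\scrX))$; the paper replaces $U$ by $U\setminus\{\vecv_0\}$ at the outset (harmless since $\lambda$ is absolutely continuous), and in your version you would need the Portmanteau extension to bounded $h_\infty$ whose discontinuity set has $\lambda\times\mu_\vs$-measure zero. (2) $\vecbeta$ is only defined on $U$, so you need to extend it to $\US$ (e.g.\ by $\bn$ off $U$, as the paper does) before writing $h_\infty$ as a function on $\US\times N(\scrX)$. Neither affects the substance of your argument.
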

\begin{remark}\label{BETAUNIFCONVlemrem1}
The uniform convergence in the lemma takes place in $P(\US\times N(\scrX))$, recall \eqref{UNIFCONVdef1}.
Note that $\{\tmu_{\vs}^{(\vecbeta,\lambda)}\col{\vs}\in\Sigma,\vecbeta\in K\}$ is a compact subset of $P(\US\times N(\scrX))$,
since it is the continuous image of the compact set $\Sigma\times K$; cf.\ footnote \ref{foot6}.
\end{remark}
\begin{proof}
Take $\vecv_0\in\US$ so that $R$ is continuous on $\US\setminus\{\vecv_0\}$.
Note that $\lambda(U\setminus\{\vecv_0\})=\lambda(U)=1$, since $\lambda$ is absolutely continuous
with respect to $\omega$;
thus we may replace $U$ by $U\setminus\{\vecv_0\}$ without affecting the content of the 
statement of the lemma.
Hence from now on we may assume that $R$ is continuous on $U$.
Let $\rho_n\in(0,1)$, $\vecq_n\in\scrP_T(\rho_n)$, $\vecbeta_n\in\C_b(U,\R^d)$
for $n=1,2,\ldots$,
and assume that $\rho_n\to0$,
${\vs}(\vecq_n)\to{\vs}$ and $\vecbeta_n\to\vecbeta$
as $n\to\infty$, with ${\vs}\in\Sigma$ and $\vecbeta\in K$.
We then claim that 
$\tmu_{\vecq_n,\rho_n}^{(\vecbeta_n,\lambda)}\xrightarrow[]{\textup{ w }}\tmu_{{\vs}}^{(\vecbeta,\lambda)}$
as $n\to\infty$.
By the same argument as in Lemma~\ref{UNIFPORTMlemC}, this will imply the lemma.

We extend $\vecbeta$ and each $\vecbeta_n$ to all $\US$ by setting
$\vecbeta(\vecv)=\vecbeta_n(\vecv)=\bn$ (say) for all $\vecv\in\US\setminus U$.
Consider the maps
\begin{align*}
F_n:\US\times N_s(\scrX)\to\US\times N_s(\scrX),
\qquad 
F_n(\vecv,Y)=(\vecv,Y-\rho_n\vecbeta_n(\vecv)R(\vecv)D_{\rho_n})
\end{align*}
and
\begin{align*}
F:\US\times N_s(\scrX)\to\US\times N_s(\scrX),
\qquad
F(\vecv,Y)=(\vecv,Y-(\vecbeta(\vecv)R(\vecv))_\perp).
\end{align*}
Using $\lambda(U)=1$ we have
$\tmu_{\vecq_n,\rho_n}^{(\vecbeta_n,\lambda)}=\tmu_{\vecq_n,\rho_n}^{(\lambda)}\circ F_{n}^{-1}$
and $\tmu_{\vs}^{(\vecbeta,\lambda)}=(\lambda\times\mu_{\vs})\circ F^{-1}$.
Now for any points $(\vecv,Y),(\vecv_1,Y_1),(\vecv_2,Y_2),\ldots\in\US\times N_s(\scrX)$
subject to $(\vecv_n,Y_n)\to(\vecv,Y)$ and $\vecv\in U$,
we have $F_n(\vecv_n,Y_n)\to F(\vecv,Y)$ as $n\to\infty$.
Furthermore
$\tmu_{\vecq_n,\rho_n}^{(\lambda)}\xrightarrow[]{\textup{ w }}\lambda\times\mu_{\vs}$, %
by Lemma \ref{PRODUNIFCONVlem}.
Hence %
$\tmu_{\vecq_n,\rho_n}^{(\lambda)}\circ F_n^{-1}
\xrightarrow[]{\textup{ w }}(\lambda\times\mu_{\vs})\circ F^{-1}$ as $n\to\infty$
(cf.\ \cite[Thm.\ 4.27]{kallenberg02}),
and the lemma is proved.
\end{proof}

{\blu The following lemma shows that, in the presence of [Q1], the assumption [P2] is independent of the choice of the map $R$.}

\begin{lem}\label{RINDEPlem}
Suppose that $\hR:\US\to\SO(d)$ is any map satisfying the same conditions as $R$,
i.e.\ $\vecv \hR(\vecv)=\vece_1$ for all $\vecv\in\US$,
and $\hR$ is continuous when restricted to $\US$ minus one point.
Define $\hXi_\rho(\vecq,\vecv)=(\tP_\vecq-\vecq)\,\hR(\vecv)\,D_\rho$,
and write $\hmu_{\vecq,\rho}^{(\lambda)}\in P(N(\scrX))$ for the distribution of 
$\hXi_\rho(\vecq,\vecv)$ for $\vecv$ random in $(\US,\lambda)$.
Then for any fixed $T\geq1$ and $\lambda\in\Pac(\US)$,
we have $\hmu_{\vecq,\rho}^{(\lambda)}\xrightarrow[]{\textup{ w }}\mu_{{\vs}(\vecq)}$
as $\rho\to0$, uniformly over all $\vecq\in\scrP_T(\rho)$.
\end{lem}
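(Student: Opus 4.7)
The plan is to relate the two point processes $\hXi_\rho(\vecq,\vecv)$ and $\scrQ_\rho(\vecq,\vecv)$ through a fiberwise $\SO(d-1)$-action and then invoke the invariance assumption [Q1].

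First I would define $K:\US\to\SO(d)$ by $K(\vecv):=R(\vecv)^{-1}\hR(\vecv)$. Using $\vecv R(\vecv)=\vecv\hR(\vecv)=\vece_1$ one checks $\vece_1 K(\vecv)=\vece_1$, so in fact $K(\vecv)\in\SO(d-1)$. The map $K$ is continuous on $\US$ minus at most two points (the points of discontinuity of $R$ and $\hR$). The crucial observation is that every element of $\SO(d-1)$ commutes with $D_\rho$, because $D_\rho=\diag(\rho^{d-1},\rho^{-1},\ldots,\rho^{-1})$ acts as a scalar on the hyperplane $\vece_1^\perp$. Hence
\[
\hXi_\rho(\vecq,\vecv)=(\tP_\vecq-\vecq)R(\vecv)K(\vecv)D_\rho=(\tP_\vecq-\vecq)R(\vecv)D_\rho K(\vecv)=\scrQ_\rho(\vecq,\vecv)\,K(\vecv).
\]

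Next I would introduce the map
\[
\Psi:\US\times N_s(\scrX)\to N_s(\scrX),\qquad \Psi(\vecv,Y)=Y\,K(\vecv),
\]
so that $\hmu_{\vecq,\rho}^{(\lambda)}=\Psi_*\tmu_{\vecq,\rho}^{(\lambda)}$. The set of discontinuities of $\Psi$ is contained in $D\times N_s(\scrX)$, where $D\subset\US$ consists of at most two points; since $\lambda\in\Pac(\US)$ we have $\lambda(D)=0$, so $\Psi$ is continuous almost surely with respect to every measure of the form $\lambda\times\mu$ with $\mu\in P(N_s(\scrX))$. Moreover, for each $\vs\in\Sigma$, by Fubini and [Q1] (which says $\mu_\vs$ is $\SO(d-1)$-invariant), we get
\[
\Psi_*(\lambda\times\mu_\vs)=\int_{\US}\bigl(\mu_\vs\circ K(\vecv)^{-1}\bigr)\,d\lambda(\vecv)=\mu_\vs.
\]

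Finally I would promote this to the uniform statement. By Lemma \ref{PRODUNIFCONVlem}, $\tmu_{\vecq,\rho}^{(\lambda)}\xrightarrow[]{\mathrm{w}}\lambda\times\mu_{\vs(\vecq)}$ as $\rho\to0$, uniformly for $\vecq\in\scrP_T(\rho)$. To deduce the corresponding uniform convergence of $\hmu_{\vecq,\rho}^{(\lambda)}$, I would apply the criterion of Lemma \ref{UNIFPORTMlemC}: given any sequence $\rho_n\to0$ with choices $\vecq_n\in\scrP_T(\rho_n)$ such that $\mu_{\vs(\vecq_n)}\to\mu_\vs$ (which forces $\lambda\times\mu_{\vs(\vecq_n)}\to\lambda\times\mu_\vs$ in the compact set $\{\lambda\times\mu_\vs:\vs\in\Sigma\}$), the (non-uniform) weak convergence $\tmu_{\vecq_n,\rho_n}^{(\lambda)}\xrightarrow[]{\mathrm{w}}\lambda\times\mu_\vs$ combined with the continuous mapping theorem (applied to $\Psi$, which is a.e.\ continuous with respect to the limit) yields $\hmu_{\vecq_n,\rho_n}^{(\lambda)}=\Psi_*\tmu_{\vecq_n,\rho_n}^{(\lambda)}\xrightarrow[]{\mathrm{w}}\Psi_*(\lambda\times\mu_\vs)=\mu_\vs$. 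Since the limits $\{\mu_\vs\}_{\vs\in\Sigma}$ form a compact set (cf.\ footnote~\ref{foot6}), Lemma \ref{UNIFPORTMlemC} gives the desired uniform convergence.

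The one minor point requiring care — and the only place where any subtlety arises — is the almost-sure continuity of $\Psi$. This is handled cleanly by the absolute continuity of $\lambda$, so I do not expect serious difficulty; the argument is essentially an extension-of-scalars version of the identity $\hR=RK$.
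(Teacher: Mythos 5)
Your proposal is correct and follows essentially the same route as the paper's proof: define $K(\vecv)=R(\vecv)^{-1}\hR(\vecv)\in\SO(d-1)$, use its commutation with $D_\rho$ to write $\hXi_\rho=\scrQ_\rho(\vecq,\vecv)K(\vecv)$, push forward via the map $(\vecv,Y)\mapsto YK(\vecv)$, and combine Lemma \ref{PRODUNIFCONVlem}, the continuous mapping theorem (with a.e.\ continuity coming from $\lambda\in\Pac(\US)$), [Q1], and the subsequence criterion of Lemma \ref{UNIFPORTMlemC}. No gaps.
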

\begin{proof}
Define $K:\US\to\SO(d)$ by
$K(\vecv)=R(\vecv)^{-1}\hR(\vecv)$.
Then $\vece_1 K(\vecv)=\vece_1$, i.e.\ 
$K(\vecv)\in\SO(d-1)$ for each $\vecv\in\US$.
It follows that $K(\vecv)$ commutes with $D_\rho$, and so
\begin{align*}
\hXi_\rho(\vecq,\vecv)=\scrQ_\rho(\vecq,\vecv)\,K(\vecv).
\end{align*}
We introduce the map
\begin{align*}
F:\US\times N_s(\scrX)\to N_s(\scrX),
\qquad F(\vecv,Y)=YK(\vecv),
\end{align*}
and note that $\hmu_{\vecq,\rho}^{(\lambda)}=\tmu_{\vecq,\rho}^{(\lambda)}\circ F^{-1}$.

Let $\rho_n\in(0,1)$ and $\vecq_n\in\scrP_T(\rho_n)$ for $n=1,2,\ldots$,
and assume that $\rho_n\to0$ and ${\vs}(\vecq_n)\to{\vs}\in\Sigma$ as $n\to\infty$.
We then claim that 
$\hmu_{\vecq_n,\rho_n}^{(\lambda)}\xrightarrow[]{\textup{ w }}\mu_{{\vs}}$
as $n\to\infty$.
This will complete the proof of the lemma, by the same argument as in Lemma \ref{UNIFPORTMlemC}.
We have $\tmu_{\vecq_n,\rho_n}^{(\lambda)}\xrightarrow[]{\textup{ w }}\lambda\times\mu_{\vs}$, %
by Lemma \ref{PRODUNIFCONVlem}.
By the assumptions on $R$ and $\hR$,
there exist points $\vecv_0,\vecv_0'\in\US$ such that
$K$ is continuous on $\US\setminus\{\vecv_0,\vecv_0'\}$.
Now for any sequence of points $(\vecv,Y),(\vecv_1,Y_1),(\vecv_2,Y_2),\ldots\in\US\times N_s(\scrX)$
subject to $(\vecv_n,Y_n)\to(\vecv,Y)$ and $\vecv\notin\{\vecv_0,\vecv_0'\}$,
we have $F(\vecv_n,Y_n)\to F(\vecv,Y)$ as $n\to\infty$.
Hence by \cite[Thm.\ 4.27]{kallenberg02},
$\hmu_{\vecq,\rho}^{(\lambda)}=\tmu_{\vecq,\rho}^{(\lambda)}\circ F^{-1}
\xrightarrow[]{\textup{ w }}(\lambda\times\mu_{\vs})\circ F^{-1}$
as $n\to\infty$.
Finally $(\lambda\times\mu_{\vs})\circ F^{-1}=\mu_{\vs}$ by [Q1], %
and the proof is complete.
\end{proof}

\begin{remark}\label{ASS:sodm1invREM}
Note that Lemma \ref{RINDEPlem} was proved using only the key convergence assumption
[P2] %
together with the assumption that each $\Xi_{\vs}$ is $\SO(d-1)$-invariant,
[Q1]. %
Conversely it is easy to see that \textit{assuming} Lemma \ref{RINDEPlem},
the $\SO(d-1)$-invariance [Q1] %
follows as a consequence, for each ${\vs}\in\Sigma'$.
\end{remark}

The next lemma is a simple variant of the previous one.
\begin{lem}\label{RINDEPlem2}
Let $T\geq1$ and let $\nu$ be the (left and right) Haar measure on $\SO(d)$, normalized to be a probability measure.
Let $\mu_{\vecq,\rho}\in P(N_s(\scrX))$ be the distribution of $(\tP_\vecq-\vecq)KD_\rho$ 
for $K$ random in $(\SO(d),\nu)$.
Then  $\mu_{\vecq,\rho}\xrightarrow[]{\textup{ w }}\mu_{{\vs}(\vecq)}$
as $\rho\to0$, uniformly over all $\vecq\in\scrP_T(\rho)$.
\end{lem}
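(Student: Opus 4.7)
The strategy is to disintegrate the Haar measure on $\SO(d)$ along the fibration $K\mapsto\vece_1K^{-1}$, reduce to Lemma~\ref{PRODUNIFCONVlem} (with $\lambda=\omega_1$), and then absorb the leftover $\SO(d-1)$ factor via the invariance [Q1]. There is no substantial obstacle; the lemma is a short corollary of these two inputs.

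For $K\in\SO(d)$ set $\vecv(K):=\vece_1K^{-1}\in\US$ and $K_0:=R(\vecv(K))^{-1}K$. Since $\vecv(K)\,K=\vece_1=\vecv(K)\,R(\vecv(K))$, we see that $K_0$ fixes $\vece_1$ under the right action, i.e.\ $K_0\in\SO(d-1)$, and $K=R(\vecv)K_0$. Off the (null) point where $R$ is discontinuous, the map $(\vecv,K_0)\mapsto R(\vecv)K_0$ is a bijection onto $\SO(d)$ which, by the uniqueness of Haar measure and the fact that $K\mapsto \vece_1 K$ pushes $\nu$ forward to $\omega_1$, identifies $\nu$ with $\omega_1\times\nu_{d-1}$, where $\nu_{d-1}$ denotes normalized Haar measure on $\SO(d-1)$. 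Since every $K_0\in\SO(d-1)$ commutes with $D_\rho$, we obtain
\begin{align*}
(\tP_\vecq-\vecq)KD_\rho=(\tP_\vecq-\vecq)R(\vecv)K_0D_\rho=(\tP_\vecq-\vecq)R(\vecv)D_\rho K_0=\scrQ_\rho(\vecq,\vecv)K_0.
\end{align*}
Thus $\mu_{\vecq,\rho}$ is the distribution of $\scrQ_\rho(\vecq,\vecv)\,K_0$ with $\vecv$ and $K_0$ independent and distributed as $\omega_1$ and $\nu_{d-1}$ respectively.

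By Lemma~\ref{PRODUNIFCONVlem} applied with $\lambda=\omega_1$, $\tmu_{\vecq,\rho}^{(\omega_1)}\xrightarrow[]{\textup{ w }}\omega_1\times\mu_{\vs(\vecq)}$ as $\rho\to0$, uniformly over $\vecq\in\scrP_T(\rho)$. Taking the product with $\nu_{d-1}$ on an additional coordinate and pushing forward by the continuous map
\begin{align*}
F:\US\times N_s(\scrX)\times\SO(d-1)\to N_s(\scrX),\qquad F(\vecv,Y,K_0)=YK_0,
\end{align*}
the uniform weak convergence is preserved. This last step is justified exactly as in the proof of Lemma~\ref{BETAUNIFCONVlem}: given any subsequence $\rho_n\to0$ and any $\vecq_n\in\scrP_T(\rho_n)$ along which $\vs(\vecq_n)\to\vs$, one has $\tmu_{\vecq_n,\rho_n}^{(\omega_1)}\times\nu_{d-1}\xrightarrow[]{\textup{ w }}\omega_1\times\mu_\vs\times\nu_{d-1}$, hence by \cite[Thm.\ 4.27]{kallenberg02} also the $F$-pushforwards converge, and Lemma~\ref{UNIFPORTMlemC} upgrades this to the required uniform statement (using that the limiting set $\{\mu_\vs\times\nu_{d-1}\circ F^{-1}\col\vs\in\Sigma\}$ is compact, being the continuous image of $\Sigma$).

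We conclude that $\mu_{\vecq,\rho}$ converges weakly, uniformly over $\vecq\in\scrP_T(\rho)$, to the distribution of $\Xi_{\vs(\vecq)}K_0$, where $\Xi_{\vs(\vecq)}\sim\mu_{\vs(\vecq)}$ is independent of $K_0\sim\nu_{d-1}$. But by the $\SO(d-1)$-invariance [Q1], the conditional law of $\Xi_{\vs(\vecq)}K_0$ given $K_0$ is $\mu_{\vs(\vecq)}$, so the mixed law equals $\mu_{\vs(\vecq)}$ as well, completing the proof.
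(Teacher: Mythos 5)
Your proof is correct and follows essentially the same route as the paper: both rest on the decomposition of Haar measure $\nu$ on $\SO(d)$ as the push-forward of $\omega_1\times\nu_{d-1}$ under $(\vecv,K_0)\mapsto R(\vecv)K_0$, the commutation $K_0D_\rho=D_\rho K_0$ for $K_0\in\SO(d-1)$, the [P2]-based convergence (you via Lemma~\ref{PRODUNIFCONVlem}, the paper via Lemma~\ref{RINDEPlem}, which is built on the same input), and the $\SO(d-1)$-invariance [Q1]. The only difference is packaging: the paper fixes $K_0$, applies Lemma~\ref{RINDEPlem} pointwise and then integrates over $\SO(d-1)$ by dominated convergence, whereas you average over $K_0$ at the level of product measures and invoke the continuous mapping theorem, applying [Q1] at the end — both are fine.
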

\begin{proof}
Let $\rho_n\in(0,1)$ and $\vecq_n\in\scrP_T(\rho_n)$ for $n=1,2,\ldots$,
and assume that $\rho_n\to0$ and ${\vs}(\vecq_n)\to{\vs}\in\Sigma$ as $n\to\infty$.
We then claim that 
$\mu_{\vecq_n,\rho_n}\xrightarrow[]{\textup{ w }}\mu_{{\vs}}$
as $n\to\infty$;
as usual this will complete the proof of the lemma.
Let $f\in\C_b(N_s(\scrX))$.
For any $K\in\SO(d-1)$ we have
\begin{align}\label{RINDEPlem2PF1}
\int_{\US}f((\tP_{\vecq_n}-\vecq_n)R(\vecv)KD_{\rho_n})\,d\omega_1(\vecv)\to\mu_{\vs}(f),
\qquad\text{as }\: n\to\infty,
\end{align}
by Lemma \ref{RINDEPlem}
(or directly from [P2] and [Q1], %
using $KD_\rho=D_\rho K$).
Let $\nu_1$ be the normalized Haar measure on $\SO(d-1)$.
By Lebesgue's Dominated Convergence Theorem,
it follows from \eqref{RINDEPlem2PF1} that
\begin{align*}
\int_{\SO(d-1)}\int_{\US}f((\tP_{\vecq_n}-\vecq_n)R(\vecv)KD_{\rho_n})\,d\omega_1(\vecv)\,d\nu_1(K)\to\mu_{\vs}(f),
\qquad\text{as }\: n\to\infty.
\end{align*}
However here the left hand side equals $\mu_{\vecq_n,\rho_n}(f)$,
since the push-forward of the measure $\omega_1\times\nu_1$ on $\US\times\SO(d-1)$ under the map
$\langle\vecv,K\rangle\mapsto R(\vecv)K$ equals $\nu$
(cf., e.g., \cite[Thm.\ 8.36]{knapp02}).
Hence the proof is complete.
\end{proof}

A symmetry related to the $\SO(d-1)$ invariance is the following.
\begin{lem}\label{ODM1NDlem}
Fix $K\in\SO(d)$ with $\vece_1 K=-\vece_1$.
Then for each ${\vs}\in\Sigma'$, $\mu_{\vs}$ is $K$-invariant.
\end{lem}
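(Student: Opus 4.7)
My plan is to deduce $K$-invariance of $\mu_\vs$ from Lemma \ref{RINDEPlem2} combined with the bi-invariance of Haar measure on $\SO(d)$, using the fact that any $K$ with $\vece_1K=-\vece_1$ commutes with the diagonal matrix $D_\rho$.

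First I would verify the commutation $KD_\rho=D_\rho K$. Since $K\in\SO(d)$ preserves inner products (for the right action on row vectors) and sends $\vece_1$ to $-\vece_1$, it follows that $K$ also preserves $\vece_1^\perp$; indeed, for $\vecv\in\vece_1^\perp$ one has $(\vecv K)\cdot\vece_1=-(\vecv K)\cdot(\vece_1 K)=-\vecv\cdot\vece_1=0$. Hence $K$ has the block form $\diag(-1,K')$ with $K'\in\O(d-1)$, and since $D_\rho=\diag(\rho^{d-1},\rho^{-1}I_{d-1})$ has the same block decomposition, $K$ and $D_\rho$ commute.

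Next, let $\nu$ denote normalized Haar measure on $\SO(d)$, and let $\mu_{\vecq,\rho}\in P(N_s(\scrX))$ be as in Lemma \ref{RINDEPlem2}, i.e., the distribution of $(\tP_\vecq-\vecq)LD_\rho$ for $L$ random in $(\SO(d),\nu)$. Using the commutation and the right-invariance of $\nu$, we have
\begin{align*}
(\tP_\vecq-\vecq)LD_\rho K=(\tP_\vecq-\vecq)(LK)D_\rho,
\end{align*}
and since $L\mapsto LK$ preserves $\nu$, the point set on the right has the same distribution as $(\tP_\vecq-\vecq)LD_\rho$. Consequently $\mu_{\vecq,\rho}\circ K^{-1}=\mu_{\vecq,\rho}$ for every $\vecq\in\scrP$ and $\rho\in(0,1)$, where as usual ``$K^{-1}$'' denotes the continuous map $Y\mapsto YK^{-1}$ on $N_s(\scrX)$.

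Finally, fix $\vs\in\Sigma'$. By the definition \eqref{SIGMAp} of $\Sigma'$, choose a sequence $\vecq_n\in\scrP\setminus\scrE$ with $\vs(\vecq_n)\to\vs$, and then $\rho_n\in(0,1)$ with $\rho_n\to0$ and $\vecq_n\in\scrP_1(\rho_n)$ (possible by simply taking $\rho_n$ small enough that $\|\vecq_n\|<\rho_n^{1-d}$). By Lemma \ref{RINDEPlem2} we have $\mu_{\vecq_n,\rho_n}\xrightarrow[]{\textup{ w }}\mu_{\vs}$, and hence also $\mu_{\vecq_n,\rho_n}\circ K^{-1}\xrightarrow[]{\textup{ w }}\mu_{\vs}\circ K^{-1}$, since $K^{-1}$ acts continuously on $N_s(\scrX)$. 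But the left-hand sides agree for every $n$, so we conclude $\mu_\vs\circ K^{-1}=\mu_\vs$, as desired. The main (minor) subtlety is simply verifying the commutation $KD_\rho=D_\rho K$; once that is in hand, the result is a clean application of Lemma \ref{RINDEPlem2} and the invariance of Haar measure.
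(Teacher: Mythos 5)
Your proof is correct, and it takes a route that differs from the one in the paper in a way worth noting. You work with the fully rotation-averaged distributions of Lemma \ref{RINDEPlem2}: after checking that any $K\in\SO(d)$ with $\vece_1K=-\vece_1$ has the block form $\diag(-1,K')$ and hence commutes with $D_\rho$, the identity $(\tP_\vecq-\vecq)LD_\rho K=(\tP_\vecq-\vecq)(LK)D_\rho$ together with right-invariance of Haar measure makes the prelimit measures $\mu_{\vecq,\rho}$ \emph{exactly} $K$-invariant for every $\rho$, and the invariance then passes to the limit by continuity of $Y\mapsto YK^{-1}$ on $N_s(\scrX)$ (your appeal to Lemma \ref{RINDEPlem2} along the sequence $(\vecq_n,\rho_n)$ implicitly uses the continuity of $\vs\mapsto\mu_\vs$ to replace $\mu_{\vs(\vecq_n)}$ by $\mu_\vs$, exactly as the paper does in the proof of Lemma \ref{DIAGINVlem}, so that step is fine). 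The paper instead stays at the level of spherical averages for a single fixed $\lambda\in\Pac(\US)$: it introduces the reflected measure $\hlambda(B)=\lambda(-B)$ and the alternative rotation map $\hR(\vecv)=R(-\vecv)K$, verifies $\hmu_{\vecq,\rho}^{(\hlambda)}=\mu_{\vecq,\rho}^{(\lambda)}\circ K^{-1}$, and then plays off [P2] against Lemma \ref{RINDEPlem} (the $R$-independence lemma) to identify the two limits $\mu_\vs\circ K^{-1}$ and $\mu_\vs$. Both arguments ultimately rest on [Q1] — yours through Lemma \ref{RINDEPlem2}, whose proof invokes Lemma \ref{RINDEPlem}; the paper's through Lemma \ref{RINDEPlem} directly — so there is no circularity in either. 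What your version buys is economy: the symmetry is exact before the limit and no auxiliary $\hlambda$ or $\hR$ is needed; what the paper's version exhibits more transparently is the underlying mechanism, namely the antipodal symmetry $\vecv\mapsto-\vecv$ of the sphere combined with the freedom in choosing the rotation map $R$.
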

\begin{proof}
Let $\vs\in\Sigma'$ be given.
As in the proof of Lemma \ref{DIAGINVlem},
take sequences $(\vecq_n)\subset\scrP\setminus\scrE$ and $(\rho_n)\subset(0,1)$ so that
$\vecq_n\in\scrP_1(\rho_n)$ for each $n$,
and $\rho_n\to0$ and $\vs(\vecq_n)\to\vs$ as $n\to\infty$.
Fix any $\lambda\in\Pac(\US)$.
Define $\hlambda\in P(\US)$ 
by $\hlambda(B)=\lambda(-B)$ for any Borel set $B\subset\US$,
and define $\hR:\US\to\SO(d)$
through $\hR(\vecv)=R(-\vecv)K$.
Also let $\hmu_{\vecq,\rho}^{(\hlambda)}$ be the distribution of $(\tP_{\vecq}-\vecq)\hR(\vecv)D_\rho$
for $\vecv$ random in $(\US,\hlambda)$,
as in Lemma \ref{RINDEPlem}.
Now for each $n$, and for any Borel subset $A\subset N_s(\scrX)$,
we have, with $\vecq=\vecq_n$, $\rho=\rho_n$,
\begin{align*}
\hmu_{\vecq,\rho}^{(\hlambda)}(A)
&=\hlambda(\{\vecv\in\US\col(\tP_{\vecq}-\vecq)\hR(\vecv)D_\rho\in A\})
\\
&=\lambda(\{\vecv\in\US\col(\tP_{\vecq}-\vecq)R(\vecv)KD_\rho\in A\})
\\
&=\lambda(\{\vecv\in\US\col(\tP_{\vecq}-\vecq)R(\vecv)D_\rho\in AK^{-1}\})
=\mu_{\vecq,\rho}^{(\lambda)}(AK^{-1}).
\end{align*}
Hence $\hmu_{\vecq_n,\rho_n}^{(\hlambda)}=\mu_{\vecq_n,\rho_n}^{(\lambda)}\circ K^{-1}$ for each $n$,
where ``$K^{-1}$'' denotes the map $N_s(\scrX)\to N_s(\scrX)$, $A\mapsto AK^{-1}$.
We have $\mu_{\vecq_n,\rho_n}^{(\lambda)}\xrightarrow[]{\textup{ w }}\mu_{{\vs}}$, by [P2], %
and thus $\hmu_{\vecq_n,\rho_n}^{(\hlambda)}\xrightarrow[]{\textup{ w }}\mu_{{\vs}}\circ K^{-1}$.
On the other hand,
$\hmu_{\vecq_n,\rho_n}^{(\hlambda)}\xrightarrow[]{\textup{ w }}\mu_{{\vs}}$, by Lemma \ref{RINDEPlem}.
Hence $\mu_{{\vs}}\circ K^{-1}=\mu_{\vs}$.
\end{proof}

\vspace{10pt}

Next we prove that the intensity measure of $\Xi_{\vs}$ is bounded above by $c_\scrP\mu_\scrX$.
\begin{lem}\label{GROUNDINTENSITYlem2}
For any ${\vs}\in\Sigma'$ and any Borel set $B\subset\scrX$,
$\int_{N_s(\scrX)}\#(Y\cap B)\,d\mu_{\vs}(Y)\leq c_\scrP\mu_\scrX(B)$.
\end{lem}
\begin{remark}
The proof of Lemma \ref{GROUNDINTENSITYlem2} should be compared with the first half of the proof of
the Siegel-Veech formula in
\cite[Thm.\ 5.1]{qc}.
Cf.\ also Veech, \cite{veech}.
\end{remark}

\begin{proof}
Let ${\vs}\in\Sigma'$ be given.
It suffices to prove that for any given $f\in C_c(\scrX)$, $f\geq0$, 
we have
\begin{align}\label{GROUNDINTENSITYlem2pf1a}
\int_{N_s(\scrX)}\sum_{\vecq\in Y}f(\vecq)\,d\mu_{\vs}(Y)\leq c_\scrP\int_{\scrX} f\,d\mu_\scrX.
\end{align}
Take sequences $(\vecq_j)\subset\scrP\setminus\scrE$ and $(\rho_j)\subset(0,1)$ so that
$\vecq_j\in\scrP_1(\rho_j)$ for each $j$,
and $\rho_j\to0$ and $\vs(\vecq_j)\to\vs$ as $j\to\infty$.
By further shrinking each $\rho_j$ if necessary, we may also assume that
\begin{align}\label{GROUNDINTENSITYlem2PF3}
\rho_j^{\frac 32-d}>
\|\vecq_j\|+\sum_{\substack{\vecq\in\scrP\setminus\{\vecq_j\}\\\|\vecq\|\leq2\|\vecq_j\|}}\|\vecq-\vecq_j\|^{1-d}.
\end{align}
By Lemma \ref{RINDEPlem2} we have 
$\mu_{\vecq_j,\rho_j}\xrightarrow[]{\textup{ w }}\mu_{{\vs}}$ as $j\to\infty$,
and hence since 
$F:Y\mapsto\sum_{\vecq\in Y}f(\vecq)$ is a
nonnegative continuous %
function on $N_s(\scrX)$
(unbounded if $f\not\equiv0$),
$\mu_{{\vs}}(F)\leq\liminf_{j\to\infty}\mu_{\vecq_j,\rho_j}(F)$.
In other words, writing $\nu$ for the normalized Haar measure on $\SO(d)$,
\begin{align}\notag
\int_{N_s(\scrX)}\sum_{\vecy\in Y}f(\vecy)\,d\mu_{\vs}(Y)
&\leq
\liminf_{j\to\infty}\int_{\SO(d)}\sum_{\vecy\in(\tP_{\vecq_j}-\vecq_j)KD_{\rho_j}}f(\vecy)\,d\nu(K)
\\\notag
&=\liminf_{j\to\infty}\sum_{\vecy\in\tP_{\vecq_j}}\int_{\SO(d)}
f((\vecy-\vecq_j) KD_{\rho_j})\,d\nu(K)
\\\label{GROUNDINTENSITYlem2PF1}
&=\liminf_{j\to\infty}\rho_j^{d(d-1)}\sum_{\vecq\in\scrP\setminus\{\vecq_j\}}h_{\rho_j}(\rho_j^{d-1}\|\vecq-\vecq_j\|,{\vs}(\vecq)),
\end{align}
where $h_\rho\in\C_c(\R_{\geq0}\times\scrX)$ is given by
\begin{align}\label{GROUNDINTENSITYlem2PF4a}
h_\rho(r,{\vs}):=\rho^{-d(d-1)}\int_{\SO(d)}f(\rho^{1-d}r\vece_1 KD_\rho,{\vs})\,d\nu(K).
\end{align}

Set $B=\sup_{\scrX}f$ and take $R>0$ so that $\supp f\subset\overline{\scrB_R^d}\times\Sigma$.
Then for each $\rho\in(0,1)$, the support of $h_\rho$ is contained in $[0,R]\times\Sigma$.
Writing $\vecx=\vece_1K$ in \eqref{GROUNDINTENSITYlem2PF4a} we have
\begin{align}\notag
h_\rho(r,{\vs})
&=\rho^{-d(d-1)}\int_{\US}f(r(x_1,\rho^{-d}x_2,\ldots,\rho^{-d}x_d),{\vs})\,d\omega_1(\vecx).
\end{align}
It follows %
that %
\begin{align}\label{hrhounifconv}
h_\rho(r,{\vs})\to h_0(r,{\vs}):=\frac1{\omega(\US)}\sum_{s\in\{1,-1\}}\int_{\R^{d-1}}f(sr(1,\vecy),{\vs})\,d\vecy,
\qquad\text{as }\:\rho\to0,
\end{align}
with uniform convergence over all
$(r,{\vs})\in[\eta,R]\times\Sigma$,
for any fixed $\eta\in(0,R)$.
Note that $h_0$ by definition is a continuous function on $\R_{>0}\times\Sigma$ with support contained in
$(0,R]\times\Sigma$.
Note also that $\rho_j^{d-1}\vecq_j\to\bn$ as $j\to\infty$; cf.\ \eqref{GROUNDINTENSITYlem2PF3}.
Using [P1], Lemma \ref{ASYMPTDENSITY1CONSlem}, and the uniform convergence just pointed out,
it now follows that for any $\eta\in(0,R)$,
\begin{align*}
\lim_{j\to\infty}&\rho_j^{d(d-1)}\sum_{\vecq\in\scrP\setminus\{\vecq_j\}}
I(\|\rho_j^{d-1}\vecq\|\geq\eta)\, h_{\rho_j}(\rho_j^{d-1}\|\vecq-\vecq_j\|,{\vs}(\vecq))
\\
&=\lim_{j\to\infty}\rho_j^{d(d-1)}\sum_{\vecq\in\scrP\setminus\{\vecq_j\}}
I(\|\rho_j^{d-1}\vecq\|\geq\eta)\, h_0(\rho_j^{d-1}\|\vecq-\vecq_j\|,{\vs}(\vecq))
\\
&=\lim_{j\to\infty}\rho_j^{d(d-1)}\sum_{\vecq\in\scrP}
I(\|\rho_j^{d-1}\vecq\|\geq\eta)\, h_0(\rho_j^{d-1}\|\vecq\|,{\vs}(\vecq))
\\
&=c_\scrP\int_\scrX I(\|\vecx\|\geq\eta)\,h_0(\|\vecx\|,{\vs})\,d\mu_\scrX(\vecx,{\vs})
\\
&=c_\scrP\,\omega(\US)\int_\Sigma\int_\eta^\infty h_0(r,{\vs})\,r^{d-1}\,dr\,d\mm({\vs})
=c_\scrP\int_{\scrX\setminus((-\eta,\eta)\times\R^{d-1}\times\Sigma)} f\,d\mu_\scrX.
\end{align*}
To handle the contribution from $r\in[0,\eta]$, note that
\begin{align*}
h_\rho(r,{\vs}) & \leq B\rho^{-d(d-1)}\int_{\US}I\bigl(r\|(x_1,\rho^{-d}x_2,\ldots,\rho^{-d}x_d)\|\leq R\bigr)\,d\omega_1(\vecx) \\
& \leq CB\Bigl(\frac Rr\Bigr)^{d-1},
\end{align*}
where $C>0$ only depends on $d$.
(This bound is accurate for $(R/r)\rho^d$ small, otherwise wasteful.)
It follows that
\begin{align*}
&\limsup_{j\to\infty}\rho_j^{d(d-1)}\sum_{\vecq\in\scrP\setminus\{\vecq_j\}}
I(\|\rho_j^{d-1}\vecq\|<\eta)\, h_{\rho_j}(\rho_j^{d-1}\|\vecq-\vecq_j\|,{\vs}(\vecq))
\\
&\leq\limsup_{j\to\infty}
CBR^{d-1}\rho_j^{d-1}\sum_{\substack{\vecq\in\scrP\setminus\{\vecq_j\}\\\|\vecq\|<\eta\rho_j^{1-d}}}\|\vecq-\vecq_j\|^{1-d}
\\
&\leq\limsup_{j\to\infty}
2^{d-1}CBR^{d-1}\rho_j^{d-1}\hspace{-10pt}\sum_{\substack{\vecq\in\scrP\\2\|\vecq_j\|<\|\vecq\|<\eta\rho_j^{1-d}}}\|\vecq\|^{1-d}
\leq C'\eta,
\end{align*}
where $C'$ is a constant which only depends on $\scrP,B,R$.
Here we used \eqref{GROUNDINTENSITYlem2PF3} in the second inequality,
and for the third inequality we used [P1] and a dyadic decomposition of the relevant annulus.
Adding the two bounds it follows that the right hand side of 
\eqref{GROUNDINTENSITYlem2PF1} is bounded above by
$c_\scrP\int_\scrX f\,d\mu_\scrX+C'\eta$.
This is true for each $\eta\in(0,R)$;
hence we conclude that \eqref{GROUNDINTENSITYlem2pf1a} holds, and the lemma is proved.
\end{proof}

{\blu Next we will prove a lemma saying that for $\rho$ small,
each point in $\scrP_T(\rho)$ is well separated from all other points in $\scrP$,
when measured in the length scale of $\rho$.} 
For $\vecq\in\R^d$ we set
\begin{align}\label{dscrPdef}
d_\scrP(\vecq)=\inf\{\|\vecp-\vecq\|\col\vecp\in\scrP\setminus\{\vecq\}\}.
\end{align}
\begin{lem}\label{GOODDISTANCElem}
For any $T\geq1$,
the quantity $\inf_{\vecq\in\scrP_T(\rho)}d_\scrP(\vecq)/\rho$ tends to $\infty$ as $\rho\to0$.
\end{lem}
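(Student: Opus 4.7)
The plan is to argue by contradiction. Suppose there exist $M>0$, a sequence $\rho_n\to0$, and points $\vecq_n\in\scrP_T(\rho_n)$ together with $\vecp_n\in\scrP\setminus\{\vecq_n\}$ satisfying $r_n:=\|\vecp_n-\vecq_n\|\leq M\rho_n$. By compactness of $\Sigma$, after passing to a subsequence I may assume $\vs(\vecq_n)\to\vs_\infty$; since $\vs(\vecq_n)\in\vs(\scrP\setminus\scrE)$ and $\Sigma'$ is closed, $\vs_\infty\in\Sigma'$. Combining [P2] with $\lambda=\omega_1$, Lemma~\ref{UNIFPORTMlemC}, and the continuity of $\vs\mapsto\mu_\vs$ then yields
\begin{equation*}
\mu_{\vecq_n,\rho_n}^{(\omega_1)}\xrightarrow[]{\textup{ w }}\mu_{\vs_\infty}\qquad\text{as }\:n\to\infty.
\end{equation*}

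Next I track the image of $\vecp_n$ in the rescaled frame. Write $\vecp_n-\vecq_n=r_n\vecu_n$ with $\vecu_n\in\US$. Since $R(\vecv)\in\SO(d)$ preserves inner products and $\vecv R(\vecv)=\vece_1$, one has $(\vecu_n R(\vecv))\cdot\vece_1=\vecu_n\cdot\vecv$, so the image point $\vecy_n(\vecv):=(\vecp_n-\vecq_n)R(\vecv)D_{\rho_n}$ has first coordinate $r_n(\vecu_n\cdot\vecv)\rho_n^{d-1}$, of modulus at most $M\rho_n^d$, and transverse norm $(r_n/\rho_n)\sqrt{1-(\vecu_n\cdot\vecv)^2}\leq M$. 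For each $\delta>0$, set
\begin{equation*}
B_\delta:=\bigl\{(\vecx,\vs)\in\scrX\col|x_1|\leq\delta,\:\|\vecx_\perp\|\leq M+1\bigr\}.
\end{equation*}
Then for all $n$ sufficiently large (depending on $\delta$) and every $\vecv\in\US$, the marked point $(\vecy_n(\vecv),\vs(\vecp_n))$ lies in $B_\delta$. Choose $g_\delta\in\C_c(\scrX)$ with $0\leq g_\delta\leq1$, $g_\delta\equiv1$ on $B_\delta$, and $\supp g_\delta\subset B_{2\delta}$; the functional $\phi_\delta\col Y\mapsto\min(1,Y(g_\delta))$ is bounded and continuous on $N(\scrX)$ in the vague topology, and is identically $1$ under $\mu_{\vecq_n,\rho_n}^{(\omega_1)}$ for $n$ large. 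Weak convergence then gives $\mu_{\vs_\infty}(\phi_\delta)=1$, which forces $Y(B_{2\delta})\geq 1$ for $\mu_{\vs_\infty}$-a.e.\ $Y$.

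Taking $\delta=1/(2k)$ and intersecting over $k\in\Z^+$, a set of full $\mu_{\vs_\infty}$-measure consists of $Y$ satisfying $Y(B_{1/k})\geq1$ for every $k$. Because each $B_{1/k}$ is compact and contained in the fixed compact set $B_1$, and $Y\cap B_1$ is finite by local finiteness of $Y\in N(\scrX)$, a pigeonhole argument produces some $\vecy^{\ast}\in Y\cap B_1$ lying in $B_{1/k}$ for infinitely many (hence, by monotonicity of $(B_{1/k})$, all) $k$; this point must therefore belong to $B_0:=\{x_1=0,\:\|\vecx_\perp\|\leq M+1\}\times\Sigma$. Consequently $\mu_{\vs_\infty}(\{Y\col Y(B_0)\geq1\})=1$. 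However $B_0$ is bounded with $\mu_\scrX(B_0)=0$, so Lemma~\ref{GROUNDINTENSITYlem2} (applicable because $\vs_\infty\in\Sigma'$) gives $\int Y(B_0)\,d\mu_{\vs_\infty}(Y)=0$, contradicting the previous statement. I expect the most delicate step to be the pigeonhole passage from ``$Y(B_{1/k})\geq1$ for every $k$'' to ``$Y(B_0)\geq1$'', where local finiteness of counting measures is what allows the shrinking compact slabs to be collapsed onto the measure-zero hyperplane slab; the remaining work is essentially bookkeeping, using [P2] to transfer the microscopic bound $r_n\leq M\rho_n$ into a property of the limiting process and then invoking the intensity bound of Lemma~\ref{GROUNDINTENSITYlem2}.
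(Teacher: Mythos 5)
Your argument is correct and follows the same strategy as the paper's proof: track the rescaled image of the close neighbour $\vecp_n$, use [P2] together with Lemma~\ref{UNIFPORTMlemC} and continuity of $\vs\mapsto\mu_\vs$ to pass to the limit process $\Xi_{\vs_\infty}$, show that it almost surely has a point in arbitrarily thin slabs, and contradict the intensity bound of Lemma~\ref{GROUNDINTENSITYlem2}. The only slip is that the Urysohn function $g_\delta$ you describe cannot exist because $B_\delta$ and $B_{2\delta}$ share the transverse boundary $\|\vecx_\perp\|=M+1$ --- shrink the transverse radius of the inner slab (say to $M+\tfrac12$, which still contains the image points since they have transverse norm $\leq M$) --- and note that the pigeonhole reduction to the hyperplane slab $B_0$ is unnecessary: Lemma~\ref{GROUNDINTENSITYlem2} applied directly to $B_{1/k}$, with $k\to\infty$, already contradicts $\int \#(Y\cap B_{1/k})\,d\mu_{\vs_\infty}\geq 1$, which is exactly how the paper concludes.
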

\begin{proof}
Assume the opposite; then there exist a constant $C>0$ and 
sequences $(\rho_n)$ and $(\vecq_n)$
such that $\rho_n\to0$, $\vecq_n\in\scrP_T(\rho_n)$, and $d_\scrP(\vecq_n)<C\rho_n$ for all $n$.
Passing to a subsequence we may also assume ${\vs}(\vecq_n)\to{\vs}\in\Sigma'$.
Fix any $\lambda\in\Pac(\US)$. 
Then $\mu^{(\lambda)}_{\rho_n,\vecq_n}\xrightarrow[]{\textup{ w }}\mu_{{\vs}}$ as $n\to\infty$,
by [P2]. %
For $\ve>0$ set $B_\ve=(-\ve,\ve)\times\scrB_C^{d-1}\times\Sigma$.
Then $\mu_\scrX(\partial B_\ve)=0$, and thus
$\int_{N_s(\scrX)}\#(Y\cap \partial B_\ve)\,d\mu_{\vs}(Y)=0$,
by Lemma \ref{GROUNDINTENSITYlem2}.
Hence 
\begin{align}\label{GOODDISTANCElemPF1}
\mu^{(\lambda)}_{\rho_n,\vecq_n}\bigl(\bigl\{Y\in N_s(\scrX)\col Y\cap B_\ve=\emptyset\bigr\}\bigr)
\to\mu_{\vs}\bigl(\bigl\{Y\in N_s(\scrX)\col Y\cap B_\ve=\emptyset\bigr\}\bigr).
\end{align}
But for each $n$ there is some $\vecp_n\in\scrP\setminus\{\vecq_n\}$ with
$\|\vecp_n-\vecq_n\|<C\rho_n$;
hence if $C\rho_n^d<\ve$ then $(\vecp_n-\vecq_n)R(\vecv)D_{\rho_n}\in (-\ve,\ve)\times\scrB_C^{d-1}$ for all $\vecv\in\US$.
It follows that the left hand side of \eqref{GOODDISTANCElemPF1} is zero for all large $n$;
hence also the right hand side must be zero,
and so $\int_{N_s(\scrX)}\#(Y\cap B_\ve)\,d\mu_{\vs}(Y)\geq1$ for all $\ve>0$.
For $\ve$ small this yields a contradiction against Lemma \ref{GROUNDINTENSITYlem2}.
\end{proof}
{\blu The following lemma will be used later to show that the scatterers centered at the points in the 
exceptional set $\scrE\subset\scrP$ play a negligible role in the 
the Lorentz process for the
scatterer configuration $\scrP$, as $\rho\to0$.}
\begin{lem}\label{EXCHITUNLIKELYlem}
Let $T\geq1$, $\lambda\in\Pac(\US)$, 
and let $B\subset\R^d$ be a bounded Borel set.
Then
\begin{align}\label{EXCHITUNLIKELYlemres}
\lambda(\{\vecv\in\US\col \scrE\cap (\vecq+B D_\rho^{-1} R(\vecv)^{-1})\neq\emptyset\})\to0
\end{align}
as $\rho\to0$, uniformly over all $\vecq\in\scrP_T(\rho)$.
\end{lem}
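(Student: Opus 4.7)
The plan is to reduce via absolute continuity and a union bound to controlling the expected number of $\scrE$-points in the rescaled region, and then to exploit the asymptotic density zero of $\scrE$ using the rescaling technique from the proof of Lemma~\ref{GROUNDINTENSITYlem2}.

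First, since $\lambda\in\Pac(\US)$, splitting its density $d\lambda/d\omega$ into a bounded part and a part of arbitrarily small $L^1$-norm reduces the assertion to showing $\omega_1(\{\vecv\in\US:\scrE\cap(\vecq+BD_\rho^{-1}R(\vecv)^{-1})\neq\emptyset\})\to0$ uniformly over $\vecq\in\scrP_T(\rho)$. Fix $M>0$ with $B\subset\scrB_M^d$. The union bound reduces this in turn to estimating $\sum_{\vecp\in\scrE\setminus\{\vecq\}}\omega_1(\{\vecv:(\vecp-\vecq)R(\vecv)D_\rho\in B\})$. For each individual $\vecp$, the condition forces the angle between $\vecv$ and $\pm(\vecp-\vecq)/\|\vecp-\vecq\|$ to be $O(M\rho/\|\vecp-\vecq\|)$, and also forces $\|\vecp-\vecq\|\leq 2M\rho^{1-d}$ for $\rho$ small; combined with Lemma~\ref{GOODDISTANCElem}, each summand is at most $C(M\rho/\|\vecp-\vecq\|)^{d-1}$. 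The total bound is thus $CM^{d-1}\rho^{d-1}\,S(\vecq,\rho)$, where $S(\vecq,\rho):=\sum_{\vecp\in\scrE\setminus\{\vecq\},\,\|\vecp-\vecq\|\leq 2M\rho^{1-d}}\|\vecp-\vecq\|^{1-d}$.

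After the rescaling $\vecx\mapsto\rho^{d-1}\vecx$, the point $\tilde\vecq:=\rho^{d-1}\vecq$ ranges (for $\vecq\in\scrP_T(\rho)$) over the fixed compact set $\overline{\scrB_T^d}$, and $\rho^{d-1}S(\vecq,\rho)$ equals (up to constants) the expression $\rho^{d(d-1)}\sum_{\vecp\in\scrE}\tilde h(\|\rho^{d-1}\vecp-\tilde\vecq\|)$, with $\tilde h(r)=r^{1-d}\mathbf{1}[0<r\leq 2M]$---the analogue of the function $h_0$ appearing in the proof of Lemma~\ref{GROUNDINTENSITYlem2} for $f=\mathbf{1}_B$. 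For any cutoff $\eta>0$, the contribution from $\|\rho^{d-1}\vecp-\tilde\vecq\|\geq\eta$ tends to $0$ as $\rho\to0$ via the analogue of Lemma~\ref{ASYMPTDENSITY1CONSlem} for $\scrE$ (whose asymptotic density is zero), and uniformity over $\tilde\vecq$ in the fixed compact set $\overline{\scrB_T^d}$ is obtained through a finite $\varepsilon$-net argument combined with the uniform continuity of $\tilde h$ on $[\eta,2M]$. The contribution from $\|\rho^{d-1}\vecp-\tilde\vecq\|<\eta$ is bounded by Lemma~\ref{GOODDISTANCElem} (which ensures $\|\vecp-\vecq\|\geq c(\rho)\rho$ with $c(\rho)\to\infty$), a dyadic shell decomposition, and the asymptotic density of $\scrP$ on balls of radius $O(\eta\rho^{1-d})$; the resulting contribution is $O(\eta)$, which is rendered negligible by first sending $\rho\to0$ and then $\eta\to0$.

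The principal obstacle is the non-integrable singularity of $\tilde h$ at $r=0$: a crude bound on $S(\vecq,\rho)$ diverges near $\vecq$, and the argument closes only by simultaneously invoking the separation estimate of Lemma~\ref{GOODDISTANCElem}, a dyadic shell decomposition, and the asymptotic density of $\scrP$ (not merely of $\scrE$) on small balls to handle the contributions from $\scrE$-points close to $\vecq$.
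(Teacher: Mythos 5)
Your reductions (passing to $\lambda=\omega_1$, the union bound, the per-point cap estimate $C(M\rho/\|\vecp-\vecq\|)^{d-1}$, and the restriction $\|\vecp-\vecq\|\leq 2M\rho^{1-d}$) are fine, and your treatment of the far range $\|\vecp-\vecq\|\geq\eta\rho^{1-d}$ via the density-zero property of $\scrE$ parallels what the paper does. The genuine gap is in the near/intermediate range $d_\scrP(\vecq)\leq\|\vecp-\vecq\|<\eta\rho^{1-d}$. There you claim that Lemma \ref{GOODDISTANCElem}, a dyadic shell decomposition, and the asymptotic density of $\scrP$ give a contribution $O(\eta)$, but the dyadic argument needs a counting bound of the form $\#\bigl(\scrP\cap\scrB^d(\vecq,s)\bigr)\ll s^d$ at \emph{every} scale $s$ between $c(\rho)\rho$ and $\eta\rho^{1-d}$, uniformly over $\vecq\in\scrP_T(\rho)$. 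Assumption [P1] only controls counts in regions of size comparable to $\rho^{1-d}$ (it is an asymptotic statement about $T$-dilates of fixed sets), and Lemma \ref{GOODDISTANCElem} gives only the minimal gap, not a count; nothing in [P1]--[P3], [Q1]--[Q3] excludes $\gg s^d$ points of $\scrP$ (possibly all in $\scrE$) in a ball of radius $s\ll\rho^{1-d}$ around some non-exceptional $\vecq$, which would make $\rho^{d-1}S(\vecq,\rho)$ large. Note also that the step of Lemma \ref{GROUNDINTENSITYlem2} you are modelling this on does \emph{not} handle points near the center by a dyadic count: there the near-center contribution is killed by shrinking $\rho_j$ along a sequence (condition \eqref{GROUNDINTENSITYlem2PF3}), and the dyadic decomposition is applied only to origin-centered annuli where [P1] applies directly. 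That freedom to shrink $\rho$ is unavailable here, since the statement must hold uniformly over $\vecq\in\scrP_T(\rho)$ for all small $\rho$.

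The missing control is available, but only through [P2] rather than through deterministic counting, and this is exactly how the paper closes the argument: for a small radius $r$ with $c_\scrP\vol(\scrB_r^d)<\ve$, the combination of [P2] and the intensity bound of Lemma \ref{GROUNDINTENSITYlem2} (applied to the limit measures $\mu_\vs$, as in the proof of Lemma \ref{GOODDISTANCElem}) yields
\begin{align*}
\lambda\bigl(\{\vecv\col\scrQ_\rho(\vecq,\vecv)\cap(\scrB_r^d\times\Sigma)\neq\emptyset\}\bigr)<2\ve
\end{align*}
uniformly over $\vecq\in\scrP_{T'}(\rho)$; replacing $\rho$ by $k\rho$ with $k=2R/r$ shows that with $\lambda$-probability $\geq1-2\ve$ \emph{no} point of $\scrP\setminus\{\vecq\}$ (a fortiori no point of $\scrE$) lies in the part of $\vecq+BD_\rho^{-1}R(\vecv)^{-1}$ at distance $\ll\rho^{1-d}$ from $\vecq$, so that only points in an annulus of radii $\asymp\rho^{1-d}$ remain; for these each point contributes $\ll\rho^{d(d-1)}$ and the density-zero hypothesis on $\scrE$ finishes, as in your far-range step. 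If you replace your weighted sum over the near range by this probabilistic bound, your proof becomes essentially the paper's; as written, the near-range estimate is unsupported.
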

\begin{proof}
Enlarging $B$, we may assume $B=\scrB_R^d$ for some $R>1$.
Also by a standard approximation argument, it suffices to prove \eqref{EXCHITUNLIKELYlemres} for those $\lambda$
which have a continuous density with respect to $\omega$;
and thus in fact it suffices to prove \eqref{EXCHITUNLIKELYlemres} for the single case
$\lambda=\omega_1$,
normalized Lebesgue measure on $\US$.

Let $T\geq1$ and $\ve>0$ be given.
Take $0<r<1$ so small that $c_\scrP\vol(\scrB_r^d)<\ve$.
Using then Lemma \ref{GROUNDINTENSITYlem2},
[P2], %
and the same line of reasoning as in the proof of Lemma \ref{GOODDISTANCElem},
it follows that for each $T'\geq1$ there is some $\rho_0\in(0,1)$ such that
\begin{align}\label{EXCHITUNLIKELYlempf1}
\lambda(\{\vecv\in\US\col\scrQ_\rho(\vecq,\vecv)\cap(\scrB_r^d\times\Sigma)\neq\emptyset\})
<2\ve,
\qquad\forall \rho\in(0,\rho_0),\:\vecq\in\scrP_{T'}(\rho).
\end{align}
Set $k=2R/r>2$, $T'=k^{d-1}T$, and $\tB:=\scrB_r^dD_k^{-1}$.
Replacing $\rho$ by $k\rho$ in \eqref{EXCHITUNLIKELYlempf1}
and using the definition of $\scrQ_\rho(\vecq,\vecv)$ it follows that
for all $\rho\in(0,\rho_0/k)$ and $\vecq\in\scrP_{T'}(k\rho)=\scrP_T(\rho)$ we have
\begin{align}\label{EXCHITUNLIKELYlempf5}
\lambda(\{\vecv\in\US\col 
\scrP\cap(\vecq+\tB{D}_\rho^{-1}R(\vecv)^{-1})\setminus\{\vecq\}\neq\emptyset\})<2\ve.
\end{align}
Recall $B=\scrB_R^d$; one verifies that $|x_1|\geq k_1:=(r/2)^dR^{1-d}$
for all $\vecx\in B\setminus\tB$,
and hence 
\begin{align}\label{EXCHITUNLIKELYlempf5a}
({B}\setminus\tB){D}_\rho^{-1}\subset A(\rho):=\scrB_{R\rho^{1-d}}^d\setminus\scrB_{k_1\rho^{1-d}}^d,
\qquad\forall \rho>0.
\end{align}
Now for any $\rho\in(0,\rho_0/k)$ and $\vecq\in\scrP_{T}(\rho)$ we have,
using \eqref{EXCHITUNLIKELYlempf5}, \eqref{EXCHITUNLIKELYlempf5a} and $\scrE\subset\scrP\setminus\{\vecq\}$,
\begin{align}\label{EXCHITUNLIKELYlempf3}
\lambda(\{\vecv\in\US\col \scrE\cap (\vecq+{B}{D}_\rho^{-1} R(\vecv)^{-1})\neq\emptyset\})
\hspace{150pt}
\\\notag
<2\ve+\sum_{\vecp\in\scrE\cap(\vecq+A(\rho))}\lambda\bigl(\bigl\{
\vecv\in\US\col \vecp\in\vecq+B{D}_\rho^{-1} R(\vecv)^{-1}\bigr\}\bigr).
\end{align}
But if $\vecp\in\vecq+B{D}_\rho^{-1} R(\vecv)^{-1}$
then $\vecp$ has distance $<R\rho$ from the line $\vecq+\R\vecv$;
and if also $\vecp\in\vecq+A(\rho)$ then
the angle $\varphi(\vecv,\vecp-\vecq)$ between the vectors $\vecv$ and $\vecp-\vecq$
satisfies $\sin\varphi(\vecv,\vecp-\vecq)<(R/k_1)\rho^d$.
The measure of the set of such points $\vecv\in\US$ with respect to $\lambda=\omega_1$ is
bounded above by $C_1\rho^{d(d-1)}$,
where $C_1$ depends on $d,R,r$ but not on $\rho$ or $\vecp$.
Hence \eqref{EXCHITUNLIKELYlempf3} is
\begin{align*}
\leq2\ve+\#(\scrE\cap(\vecq+\scrB_{R\rho^{1-d}}^d))\cdot C_1\rho^{d(d-1)}
\leq2\ve+\#(\scrE\cap\scrB_{(T+R)\rho^{1-d}}^d)\cdot C_1\rho^{d(d-1)},
\end{align*}
and using {\blu the fact that $\scrE$ has asymptotic density zero (cf.\ [P2])}, 
this number is seen to be $<3\ve$
for all sufficiently small $\rho$.
\end{proof}

\begin{remark}\label{SIGMAeqpREM}
It follows from Lemma \ref{EXCHITUNLIKELYlem} that our key assumption, [P2], %
remains valid if
we change the marking of the $\scrE$-points in an arbitrary way
in the definition of $\scrQ_\rho(\vecq,\vecv)$.  %
In precise terms, %
if $\vs'$ is any map $\scrP\to\Sigma$ which has the same restriction as $\vs$ to $\scrP\setminus\scrE$,
and if
\begin{align*}
\tP'=\{(\vecp,\vs'(\vecp))\col\vecp\in\scrP\}, \quad \tP'_\vecq=\tP'\setminus(\{\vecq\}\times\Sigma), \quad
\scrQ'_\rho(\vecq,\vecv)=(\tP'_\vecq-\vecq)\,R(\vecv)\,D_\rho,
\end{align*}
and if $\mu_{\vecq,\rho}'^{(\lambda)}\in P(N_s(\scrX))$
is the distribution of $\scrQ'_\rho(\vecq,\vecv)$ for $\vecv$ random in $(\US,\lambda)$,
then 
\begin{align}\label{EREMARKOK}
\mu_{\vecq,\rho}'^{(\lambda)}\xrightarrow[]{\textup{ w }}\mu_{{\vs}(\vecq)}
\quad\text{as }\:\rho\to0,\:\text{ uniformly over all $\vecq\in\scrP_T(\rho)$.}
\end{align}
The same statement also holds if we \textit{remove} some or all $\scrE$-points in the definition of
$\tP'_\vecq$.

In particular, we may choose $\vs'$ so that $\vs'(\vecp)\in\Sigma'$ for all $\vecp\in\scrE$.
Then in fact we have $\vs'(\vecp)\in\Sigma'$ for all $\vecp\in\scrP$,
i.e.\ $\vs'$ can be viewed as a map from $\scrP$ to $\Sigma'$.
In this case, using 
Lemma~\ref{SIGMApfullmeaslem}
and \eqref{EREMARKOK} we see by direct inspection that
\textit{all the assumptions in Section \ref{ASSUMPTLISTsec}
remain true if we replace $\langle\scrP,\Sigma,\vs,\mm,\scrE\rangle$
by $\langle\scrP,\Sigma',\vs',\mm_{|\Sigma'},\scrE\rangle$.}
Note also that
$\Sigma'=\overline{\{{\vs}'(\vecq)\col\vecq\in\scrP\setminus\scrE\}}$,
since $\vs'(\vecq)=\vs(\vecq)$ for all $\vecq\in\scrP\setminus\scrE$.
In other words, after having replaced $\langle\scrP,\Sigma,\vs,\mm,\scrE\rangle$
by $\langle\scrP,\Sigma',\vs',\mm_{|\Sigma'},\scrE\rangle$, 
the following condition is satisfied:
\begin{align}\label{SIGMAeqp}
\Sigma=\Sigma'=\overline{\{{\vs}(\vecq)\col\vecq\in\scrP\setminus\scrE\}}.
\end{align}
\end{remark}

In view of Remark \ref{SIGMAeqpREM}, 
\textit{we may assume without loss of generality that \eqref{SIGMAeqp} holds.} We will make this assumption in Sections \ref{GENLIMITsec} -- \ref{KINETICEQsec}, in addition to the hypotheses [P1-3] and [Q1-3] as stated in Section \ref{ASSUMPTLISTsec}.

\section{A limiting process for macroscopic initial conditions}\label{GENLIMITsec}

As a complement to our key limit assumption [P2] %
we also need to understand the limit of
$\scrQ_\rho(\rho^{1-d}\vecq,\vecv)$ when $(\vecq,\vecv)$ is taken random in $\T^1(\R^d)$ with respect to
an arbitrary probability measure $\Lambda\in\Pac(\T^1(\R^d))$.
We will show in Theorem \ref{GENLIMITthm} that this limit exists and is independent of $\Lambda$.
To simplify notation, we will in the following use the shorthand $d\vecq=d\!\vol(\vecq)$ and $d\vecv=d\omega(\vecv)$.\label{dvdo}

We start by defining{\blu, in \eqref{GENMUdef} below,} the limit process, which is an explicit function of the point processes $\Xi_\vs$.
The construction depends on the choice of a real constant $c$ and a
nonempty bounded open set $\fD\subset\R^{d-1}$ with boundary of Lebesgue measure zero;
we consider these to be fixed once and for all in the following.
{\blu The fact that the limit process which we define in \eqref{GENMUdef} 
is independent of $c$ and $\fD$ is far from obvious,
but will follow from the proof of Theorem \ref{GENLIMITthm},
where $c$ and $\fD$ are used
in the construction of a certain decomposition of 
the distribution of $\scrQ_\rho(\rho^{1-d}\vecq,\vecv)$.}   

For $\tau>0$, let $\fC_\tau$ be the open cylinder\label{CtauDEF}
\begin{align*}
\fC_\tau=(c,c+\tau)\times\fD
\end{align*}
in $\R^d$.
For any fixed ${\vs}\in\Sigma$ and $\vecx\in\R^d$ 
we write $\oXi_{\vs}:=\Xi_{\vs}\cup\{(\bn,{\vs})\}$,
\label{oXivsDEF}
let $\omu_{\vs}$ be the distribution of $\oXi_{\vs}$,
and let $\omu_{\vs}^{(\vecx)}$ be the distribution of $\oXi_{\vs}+\vecx$.
Thus for any Borel subset $A\subset N_s(\scrX)$
we have $\omu_{\vs}^{(\vecx)}(A)=\mu_\vs(\{Y %
\col Y_{(\vecx,\vs)}\in A\})$,
where
\begin{align*}
Y_{(\vecx,\vs)}:=%
(Y+\vecx)\cup\{(\vecx,\vs)\}.
\end{align*}

From now on and throughout the rest of the paper,
it will be convenient to allow the following abuse of notation:
For any subsets $A\subset\R^d$ and $B\subset\scrX$, 
we write ``$A\cap B$'' or ``$B\cap A$'' for $B\cap(A\times\Sigma)$.
In particular in the following proposition,
``$Y\cap\fC_\tau$'' denotes %
$Y\cap(\fC_\tau\times\Sigma)$.
\begin{prop}\label{GENLIMITcor1}
\begin{align}\label{GENLIMITdisc12}
c_\scrP\int_{\Sigma}\int_0^{\infty}\int_{\fD}
\omu_{{\vs}}^{(c+\tau,\vecb)}\bigl(\bigl\{Y\in N_s(\scrX)\col Y\cap\fC_\tau=\emptyset\bigr\}\bigr)
\,d\vecb\,d\tau\,d\mm({\vs})=1.
\end{align}
\end{prop}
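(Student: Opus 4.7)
The plan is to interpret the LHS as a Palm--Mecke-type decomposition counting the ``first points'' of a would-be ambient marked point process on $\scrX$, and to verify the identity by approximation from the microscopic scatterer configuration $\scrP$.

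First, the substitution $\vecx=(c+\tau,\vecb)\in(c,\infty)\times\fD$ rewrites the LHS as
\begin{align*}
c_\scrP\int_\Sigma\int_{(c,\infty)\times\fD}\PP\bigl[\Xi_\vs\cap\bigl((c-x_1,0)\times(\fD-\vecb)\bigr)=\emptyset\bigr]\,d\vecx\,d\mm(\vs).
\end{align*}
Under $\omu_\vs^{(\vecx)}$ the distinguished point sits at $\vecx$ on the right boundary of $\fC_{x_1-c}=(c,x_1)\times\fD$, hence outside $\fC_{x_1-c}$. The integrand is thus the probability that $\vecx$ would be the smallest-$x_1$ point of the process inside the semi-infinite cylinder $(c,\infty)\times\fD\times\Sigma$, its ``first point'' from the left. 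Formally the LHS therefore counts the expected number of such first points, which should be $1$ once one knows that a first point almost surely exists.

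Second, I would realize the corresponding identity at finite $\rho$ via the tautology
\begin{align*}
\mathbf{1}\bigl[\scrQ_\rho(\vecq_0,\vecv)\cap((c,\infty)\times\fD)\neq\emptyset\bigr]=\sum_{\vecp\in\scrP\setminus\{\vecq_0\}}\mathbf{1}\bigl[(\vecp-\vecq_0)R(\vecv)D_\rho\text{ is left-most in }(c,\infty)\times\fD\bigr],
\end{align*}
valid for $\vecq_0\in\scrP\setminus\scrE$ and almost every $\vecv\in\US$. Taking expectations in $\vecv\sim\omega_1$, averaging $\vecq_0$ over $\scrP\cap\scrB_R^d$ and invoking [P1] together with Lemma \ref{ASYMPTDENSITY1CONSlem} to replace the empirical mark distribution on $\scrP$ by $\mm$, and parameterizing the sum on the right by the rescaled position $(\vecp-\vecq_0)R(\vecv)D_\rho$ and the mark $\vs(\vecp)$, I arrive at a finite-$\rho$ identity whose left-hand side asymptotically matches that of the proposition and whose right-hand side equals $\PP_\vecv[\scrQ_\rho(\vecq_0,\vecv)\cap((c,\infty)\times\fD)\neq\emptyset]$.

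Finally, I pass $\rho\to 0$: the right-hand side tends to $1$ by [P3] (applied after Lemma \ref{DIAGINVlem} converts the round cylinder $\fZ_\xi$ to cross section $\fD$ by $D_r$-scaling), while the left-hand side converges to the LHS of the proposition by Lemmas \ref{PRODUNIFCONVlem} and \ref{RINDEPlem2} (uniform weak convergence $\scrQ_\rho(\vecq_0,\vecv)\xrightarrow[]{\textup{ w }}\Xi_{\vs(\vecq_0)}$), combined with dominated convergence. The main obstacle is to justify this interchange of the $\rho\to 0$ limit with the unbounded $\tau$-integration: this requires a uniform-in-$\vs$ tail estimate on $\PP[\Xi_\vs\cap((-\tau,0)\times(\fD-\vecb))=\emptyset]$ as $\tau\to\infty$, which can be extracted from [Q3] and Lemma \ref{GROUNDINTENSITYlem2} via the $D_r$-invariance.
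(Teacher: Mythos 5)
Your Palm--Mecke interpretation of the left-hand side is correct and insightful: the identity is indeed a statement that ``the macroscopic process $\Xi$ a.s.\ has exactly one first point in a semi-infinite cylinder'', and the integrand $\omu_\vs^{(c+\tau,\vecb)}(\{Y\col Y\cap\fC_\tau=\emptyset\})$ is the Palm probability that a point of $\Xi$ at $(c+\tau,\vecb)$ with mark $\vs$ is this first point. The tail estimate you sketch at the end (using [Q3], $D_r$-invariance, and Lemma \ref{GROUNDINTENSITYlem2}) is also genuinely what is needed to control the unbounded $\tau$-integration, and your reliance on [P1]-type equidistribution, [P2]-type spherical convergence and [P3]-type non-escape names all the right hypotheses.

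The gap is in the central step, where you claim the averaged finite-$\rho$ sum ``asymptotically matches'' the proposition's left-hand side. Starting from $\vecq_0\in\scrP$ and taking $\rho\to 0$, Lemma \ref{PRODUNIFCONVlem} gives $\scrQ_\rho(\vecq_0,\vecv)\xrightarrow[]{\textup{ w }}\Xi_{\vs(\vecq_0)}$, and after averaging over $\vecq_0\in\scrP\cap\scrB_R^d$ via Lemma \ref{ASYMPTDENSITY1CONSlem} your quantity converges to $\int_\Sigma\EE[\#\{\text{first points of }\Xi_{\vs_0}\text{ in the cylinder}\}]\,d\mm(\vs_0)$. This is indeed $1$, but it is \emph{not} the left-hand side of the proposition. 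Writing your leftmost-$\vecp$ sum in $\vecp$-centered coordinates produces the Palm kernels $\nu_{\vs_0}((\vecx,\vs),\cdot)$ of $\Xi_{\vs_0}$, and the identity you would actually obtain is $c_\scrP\int_\Sigma\int_\scrX\nu_{\vs_0}((\vecx,\vs),\cdots)\,d\mu_\scrX(\vecx,\vs)\,d\mm(\vs_0)=1$, a second-order Palm statement for $\Xi$. Unless one separately proves the intertwining identity $\int_\Sigma\nu_{\vs_0}((\vecx,\vs),\cdot)\,d\mm(\vs_0)=\omu_\vs^{(\vecx)}(\cdot)$ (which is true in the Poisson case by Slivnyak's theorem, but fails for general $\scrP$, e.g.\ a Euclidean lattice), the left-hand side of your finite-$\rho$ identity converges to a genuinely different integral than the one in the proposition. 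Equivalently: the constraint $\vecq_0\in\scrP$ cannot be averaged away into the Lebesgue measure $c_\scrP\,d\vecx$; it survives the limit as an extra (zero-measure, i.e.\ Palm) conditioning.

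The paper avoids this difficulty by starting from \emph{macroscopic} initial conditions $\Lambda\in\Pac(\T^1(\R^d))$, where the particle position $\vecq$ is a genuine continuous variable. The change of variables $\vecq=\vecq'-(c+\tau,\vecb)D_\rho^{-1}R(\vecv)^{-1}$ to the leftmost scatterer $\vecq'\in\scrP$ then produces the $\int_\fD\int_0^{T_1}d\vecb\,d\tau$-integration for free, and Lemma \ref{ASS:KEYbblem2} (not just [P2]) gives convergence of the shifted processes $\oQ_\rho(\vecq',\vecv)+\vecx$ to $\omu_{\vs(\vecq')}^{(\vecx)}$. This is packaged as Proposition \ref{GENLIMITprop1}; the proof of the present proposition is then just: apply Proposition \ref{GENLIMITprop1} with $f\equiv 1$ to identify the truncated left-hand side with $\lim_\rho J_\rho(1,T_1)$, and pass $T_1\to\infty$ using Lemma \ref{ASS:bdfreepathgenlem} (which is where [P3] enters). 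If you want to keep your framework, invoke Proposition \ref{GENLIMITprop1} rather than trying to reach $\omu_\vs^{(\vecx)}$ from $\Xi_{\vs(\vecq_0)}$.
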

The proof of the proposition is given below.
We remark that the integrand in \eqref{GENLIMITdisc12} is a continuous function of
$\langle\vs,\tau,\vecb\rangle\in\Sigma\times\R_{>0}\times\fD$;
cf.\ Lemma \ref{MUSXFTCONTlem} below (applied with $f\equiv1$).

We now define $\mu\in P(N_s(\scrX))$ by
setting, for any Borel set $A\subset N_s(\scrX)$,
\begin{align}\label{GENMUdef}
\mu(A)=c_\scrP\int_{\Sigma}\int_0^{\infty}\int_{\fD}
\omu_{{\vs}}^{(c+\tau,\vecb)}\bigl(\bigl\{Y\in N_s(\scrX)\col Y\cap\fC_\tau=\emptyset\text{ and }Y\in A\bigr\}\bigr)
\,d\vecb\,d\tau\,d\mm({\vs}).
\end{align}
By Proposition \ref{GENLIMITcor1}, $\mu$ is indeed a Borel probability measure on $N_s(\scrX)$.
We denote by $\Xi$ a point process in $\scrX$ with distribution $\mu$;
{\blu we will call $\Xi$ the \textit{macroscopic limit process.}
In Proposition \ref{GENPALMprop} 
in the next section we will deduce from \eqref{GENMUdef} that 
$\Xi$ is compatible with the microscopic limit processes $\oXi_\vs$
in the sense that the
Palm distributions of $\Xi$ are given by the distributions of the appropriate translates of the $\oXi_\vs$'s.}

The following is the main result of this section.
For $\Lambda\in P(\T^1(\R^d))$, let $\mu_\rho^{(\Lambda)}$ be the distribution of
$\scrQ_\rho(\rho^{1-d}\vecq,\vecv)$ for $(\vecq,\vecv)$ random in $(\T^1(\R^d),\Lambda)$.
\label{murhoLambda}

\begin{thm}\label{GENLIMITthm}
Let $\Lambda\in \Pac(\T^1(\R^d))$.
Then $\mu_\rho^{(\Lambda)}\xrightarrow[]{\textup{ w }}\mu$ as $\rho\to0$.
\end{thm}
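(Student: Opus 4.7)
The plan is to test against bounded continuous functions $F\in\C_b(N_s(\scrX))$ and decompose the random configuration $\scrQ_\rho(\rho^{1-d}\vecq,\vecv)$ according to its ``first hit'' in the semi-infinite cylinder $(c,\infty)\times\fD$, i.e.\ the (almost surely unique) point $\vecp\in\scrP$ whose image in the particle frame has minimal $\vece_1$-coordinate exceeding $c$ subject to its transverse coordinate lying in $\fD$. The contribution from configurations for which no such $\vecp$ exists, or for which this coordinate exceeds a threshold $c+N$, can be controlled using [P3] combined with Lemma \ref{DIAGINVlem} (to rescale the cross-section $\scrB_1^{d-1}$ to $\fD$), and vanishes uniformly in $\rho$ as $N\to\infty$. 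Taking $F\equiv 1$ in this argument will yield Proposition \ref{GENLIMITcor1} as a by-product, confirming that $\mu$ is indeed a probability measure.

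For the truncated expression, I sum over the possible choices of $\vecp\in\scrP$ and change variables from $\vecq$ to $(\tau,\vecb)$ via
\begin{equation*}
\vecq=\rho^{d-1}\vecp-(c+\tau,\rho^d\vecb)R(\vecv)^{-1},\qquad(\tau,\vecb)\in(0,N]\times\fD,
\end{equation*}
with Jacobian $\rho^{d(d-1)}$. The random configuration then reads
\begin{equation*}
\scrQ_\rho(\rho^{1-d}\vecq,\vecv)=(\tP-\vecp)R(\vecv)D_\rho+(c+\tau,\vecb),
\end{equation*}
and the image of $\vecp$ itself lands at $(c+\tau,\vecb)$ with mark $\vs(\vecp)$. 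By the joint uniform convergence in Lemma \ref{PRODUNIFCONVlem}, combined with the continuous shift by $(c+\tau,\vecb)$, this configuration converges in distribution to $\oXi_{\vs(\vecp)}^{(c+\tau,\vecb)}$, uniformly for $\vecp\in\scrP_T(\rho)\setminus\scrE$ and for $(\tau,\vecb)$ in compacta; the exceptional points in $\scrE$ contribute negligibly by Lemma \ref{EXCHITUNLIKELYlem}. The ``first-hit'' indicator passes to the limit as $\mathbf{1}_{Y\cap\fC_\tau=\emptyset}$ by the Portmanteau theorem, since Lemma \ref{GROUNDINTENSITYlem2} together with $\vol(\partial\fD)=0$ and [Q2] show that the limit measure assigns probability zero to configurations having a point on $\partial\fC_\tau$.

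Under the change of variables, $f_\Lambda(\vecq,\vecv)$ becomes $f_\Lambda(\rho^{d-1}\vecp-(c+\tau)\vecv+O(\rho^d),\vecv)$. Lemma \ref{ASYMPTDENSITY1CONSlem} applied to the resulting Riemann-type sum $\rho^{d(d-1)}\sum_{\vecp\in\scrP}$ yields $c_\scrP\int_\Sigma\int_{\R^d}\cdots\,d\vecx\,d\mm(\vs)$ with $\vecx=\rho^{d-1}\vecp$. Since the limiting integrand in the $(\tau,\vecb,\vs)$ variables is $\vecv$-independent (by [Q1] and Lemma \ref{RINDEPlem}), the remaining $\vecx$- and $\vecv$-integrals factor out as $\int_\US\int_{\R^d}f_\Lambda(\vecx-(c+\tau)\vecv,\vecv)\,d\vecx\,d\vecv=1$ by translation-invariance of Lebesgue measure, and what remains is exactly the truncated version of $\mu(F)$ as defined by \eqref{GENMUdef}.

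The main technical obstacle is the interchange of the $\rho\to 0$ limit with the $\vecp$-sum: the pointwise-in-$\vecp$ convergence from [P2] must be upgraded to a uniform statement, and the ``first-hit'' indicator, which is discontinuous in $Y$, must pass cleanly through the weak limit. The uniform convergence in Lemma \ref{PRODUNIFCONVlem} handles the first issue; the intensity bound in Lemma \ref{GROUNDINTENSITYlem2} provides the dominating control needed for summability; and [Q2] together with the boundary-regularity of $\fD$ resolves the Portmanteau issue by ruling out boundary atoms in the limit.
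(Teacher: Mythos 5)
Your proposal follows essentially the same route as the paper's proof: decompose by the first hit in $(c,\infty)\times\fD$, change variables from $\vecq$ to $(\tau,\vecb)$ with Jacobian $\rho^{d(d-1)}$, control the tail via [P3] (the paper's Lemma \ref{ASS:bdfreepathgenlem}), push the shifted configurations to their limits uniformly in $\vecp$ and $(\tau,\vecb)$ (the paper packages this in Lemma \ref{ASS:KEYbblem2}), and apply Lemma \ref{ASYMPTDENSITY1CONSlem} to the resulting Riemann-type sum. A few lemma citations are off — the tail control uses a direct $D_r$-rescaling rather than Lemma \ref{DIAGINVlem}, the $\vecv$-independence of $\omu_\vs^{(c+\tau,\vecb)}(f^{(\tau)})$ is immediate from its definition rather than a consequence of [Q1] or Lemma \ref{RINDEPlem}, and for macroscopic initial data the negligibility of $\scrE$ comes directly from its zero density rather than Lemma \ref{EXCHITUNLIKELYlem} — but these are misattributions of the correct underlying facts and do not affect the argument.
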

{\blu The rest of this section is devoted to the proof of 
Proposition \ref{GENLIMITcor1} and Theorem \ref{GENLIMITthm}.
We start by explaining the main ideas 
in the proof of Theorem \ref{GENLIMITthm}.
Let
$\Lambda\in\Pac(\T^1(\R^d))$ be given, and let $\Lambda'\in\L^1(\T^1(\R^d))$ be 
the density of $\Lambda$ with respect to $d\vecq\,d\vecv$.
Our task is to prove that for any fixed $f\in\C_b(N_s(\scrX))$,
the integral
\begin{multline}\label{GENLIMITdisc1}
\int_{\T^1(\R^d)}f(\scrQ_\rho(\rho^{1-d}\vecq,\vecv))\,\Lambda'(\vecq,\vecv)\,d\vecq\,d\vecv \\
=\rho^{d(d-1)}\int_{\US}\int_{\R^d}f(\scrQ_\rho(\vecq,\vecv))\,\Lambda'(\rho^{d-1}\vecq,\vecv)\,d\vecq\,d\vecv
\end{multline}
tends to $\int_{N_s(\scrX)}f\,d\mu$ as $\rho\to0$.
In \eqref{GENLIMITdisc1}, 
we wish to express $\scrQ_\rho(\vecq,\vecv)$ in terms of
$\scrQ_\rho(\vecq',\vecv)$ for some appropriate choice of a point $\vecq'$ \textit{in $\scrP$},
since we will then have a hope of applying [P2] to get hold of the limit as $\rho\to0$.
The point $\vecq'$ must of course depend on $\vecq$.

Our way to define $\vecq'$
is similar to 
the free path length problem for the Lorentz process,
but working with the flat scatterer $-\rho(\{0\}\times\fD)R(\vecv)^{-1}$
(this is a relatively open set contained in the orthogonal complement of $\vecv$,
of size proportional to $\rho$),
and starting at the point $\vecq+c\rho^{1-d}\vecv$ instead of $\vecq$:
For each $\vecq\in\R^d$ and $\vecv\in\US$,
we define 
\begin{align}\label{taurhoqvdef}
\tau_\rho(\vecq,\vecv):=\inf\bigl\{\xi>0\col
\vecq+(c+\xi)\rho^{1-d}\vecv\in\scrP-\rho(\{0\}\times\fD)R(\vecv)^{-1}\bigr\}.
\end{align}
This is a well-defined number in $\R_{>0}\cup\{+\infty\}$.
For generic $\vecq$ and $\vecv$ we have $\tau=\tau_\rho(\vecq,\vecv)<\infty$,
and there exist unique points $\vecq'\in\scrP$ and $\vecb\in\fD$ such that
$\vecq+(c+\tau)\rho^{1-d}\vecv=\vecq'-\rho(0,\vecb)R(\vecv)^{-1}$,
or equivalently,
\begin{align}\label{qintermsofqp}
\vecq=\vecq'-(c+\tau,\vecb)D_\rho^{-1}R(\vecv)^{-1}.
\end{align}

Let us write \label{oQrhoqvDEF}
\begin{align*}
\oQ_\rho(\vecq,\vecv)=(\tP-\vecq)\,R(\vecv)\,D_\rho.
\end{align*}
(Thus $\oQ_\rho(\vecq,\vecv)\supseteq\scrQ_\rho(\vecq,\vecv)$, with equality unless $\vecq\in\scrP$.)
Now for any $\vecq$ and $\vecq'$ related by \eqref{qintermsofqp},
if $\vecq\notin\scrP$ then
\begin{align*}
\scrQ_\rho(\vecq,\vecv)
=\oQ_\rho(\vecq,\vecv)
=(\tP-\vecq)\,R(\vecv)\,D_\rho
=\oQ_\rho(\vecq',\vecv)+(c+\tau,\vecb),
\end{align*}
and one verifies that in the above construction, $\vecq'\in\scrP$ arises from a given point
$\vecq\in\R^d$ if and only if
$\oQ_\rho(\vecq',\vecv)+(c+\tau,\vecb)$ is disjoint from $\fC_\tau$.
Hence, using \eqref{qintermsofqp} to replace the integration variable $\vecq$ by $\tau$ and $\vecb$,
and ignoring effects from overlapping scatterers and the possibility that $\tau_\rho(\vecq,\vecv)=\infty$,
the integral in \eqref{GENLIMITdisc1} can be rewritten as
\begin{align}\label{GENLIMITdisc1a}
\rho^{d(d-1)}\int_{\US}\sum_{\vecq'\in\scrP}\int_0^\infty\int_{\fD}
&I\Bigl(\bigl(\oQ_\rho(\vecq',\vecv)+(c+\tau,\vecb)\bigr)\cap\fC_\tau=\emptyset\Bigr)
\\\notag
&\times f\Bigl(\oQ_\rho(\vecq',\vecv)+(c+\tau,\vecb)\Bigr)
\,\Lambda'(\rho^{d-1}\vecq,\vecv)\,d\vecb\,d\tau\,d\vecv.
\end{align}
With this, we have 
achieved the goal of expressing the integral using only
point sets $\scrQ_\rho(\vecq',\vecv)$ with $\vecq'$ in $\scrP$.
Now, after moving the integration 
over $\vecv$ to the innermost position in 
\eqref{GENLIMITdisc1a},
and recalling the above definition of $\omu_\vs^{(\vecx)}\in P(N_s(\scrX))$,
[P2] heuristically suggests that for $\rho$ small,
\eqref{GENLIMITdisc1a} should be approximately equal to
(using also
$\rho^{d-1}\vecq\approx\rho^{d-1}\vecq'-(c+\tau)\vecv$):
\begin{multline*}
\rho^{d(d-1)}\sum_{\vecq'\in\scrP}\int_0^\infty\int_{\fD}
\biggl(\int_{N_s(\scrX)}
I\bigl(Y\cap\fC_\tau=\emptyset\bigr)\,f(Y)\,d\omu_{\vs(\vecq')}^{(c+\tau,\vecb)}(Y)\biggr)\\ \times \tLambda\bigl(\rho^{d-1}\vecq'-(c+\tau)\vecv\bigr)
\,d\vecb\,d\tau
\end{multline*}
with $\tLambda(\vecy):=\int_{\US}\Lambda'(\vecy,\vecv)\,d\vecv$.
Finally, moving the summation over $\vecq'$ inside the first two integrals,
Lemma \ref{ASYMPTDENSITY1CONSlem}
suggests that as $\rho\to0$, the above expression should tend to
\begin{align*}
\int_0^\infty\int_{\fD}c_{\scrP}\int_{\Sigma}
\int_{N_s(\scrX)}
&I\bigl(Y\cap\fC_\tau=\emptyset\bigr)\,f(Y)\,d\omu_{\vs}^{(c+\tau,\vecb)}(Y)\,
d\mm(\vs)
\,d\vecb\,d\tau.
\end{align*}
This equals $\int_{N_s(\scrX)}f\,d\mu$, the desired limit!

\vspace{5pt}

It remains to make the above discussion rigorous.
We will start by proving several auxiliary results.}

For $\vecx\in\R^d$ and $\lambda\in P(\US)$, let
$\omu_{\vecq,\rho}^{(\vecx,\lambda)}$ be the distribution of $\oQ_\rho(\vecq,\vecv)+\vecx$
for $\vecv$ random in $(\US,\lambda)$.
{\blu Given any $f\in\C_b(N_s(\scrX))$ and $\tau>0$,
define the function $f^{(\tau)}:N_s(\scrX)\to\R$ through
\begin{align}\label{ASS:KEYbblem2res}
f^{(\tau)}(Y):=I\bigl(Y\cap\fC_\tau=\emptyset\bigr)\, f(Y).
\end{align}
Note that with this definition, the integrand in 
\eqref{GENLIMITdisc1a}
equals $f^{(\tau)}\bigl(\oQ_\rho(\vecq',\vecv)+(c+\tau,\vecb)\bigr)$.}

\begin{lem}\label{ASS:KEYbblem2}
Let $T\geq1$ and $\lambda\in\Pac(\US)$; 
let $C$ be a compact subset of $\R^d$,
and let $f\in\C_b(N_s(\scrX))$ and $\xi_1>0$.
Then $\omu_{\vecq,\rho}^{(\vecx,\lambda)}(f^{(\tau)})-\omu_{{\vs}(\vecq)}^{(\vecx)}(f^{(\tau)})\to0$ as $\rho\to0$,
uniformly over all $\vecq\in\scrP_T(\rho)$, $\vecx\in C$, $\tau\in[0,\xi_1]$.
\end{lem}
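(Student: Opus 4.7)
The plan is to isolate the contribution of the added origin point of $\oQ_\rho(\vecq,\vecv)$, so as to reduce the problem to applying [P2] to the point set $\scrQ_\rho(\vecq,\vecv)$ (which does not contain the origin) against a test function whose discontinuity set is negligible. Since $\vecq\in\scrP_T(\rho)\subset\scrP$, one has $\oQ_\rho(\vecq,\vecv)=\scrQ_\rho(\vecq,\vecv)\cup\{(\bn,\vs(\vecq))\}$, and because $\fC_\tau$ is \emph{open}, the added point $(\vecx,\vs(\vecq))$ lies in $\fC_\tau\times\Sigma$ precisely when $\vecx\in\fC_\tau$. This yields the factorisation
\begin{equation*}
f^{(\tau)}\bigl(\oQ_\rho(\vecq,\vecv)+\vecx\bigr)
=I(\vecx\notin\fC_\tau)\,G^{(\tau,\vecx,\vs(\vecq))}\bigl(\scrQ_\rho(\vecq,\vecv)\bigr),
\end{equation*}
where
\begin{equation*}
G^{(\tau,\vecx,\vs)}(Y):=I\bigl((Y+\vecx)\cap\fC_\tau=\emptyset\bigr)\,f\bigl((Y+\vecx)\cup\{(\vecx,\vs)\}\bigr),
\end{equation*}
and a completely analogous factorisation $\omu_{\vs(\vecq)}^{(\vecx)}(f^{(\tau)})=I(\vecx\notin\fC_\tau)\,\mu_{\vs(\vecq)}(G^{(\tau,\vecx,\vs(\vecq))})$ holds for the limit. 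Hence it suffices to prove that $\mu_{\vecq,\rho}^{(\lambda)}(G^{(\tau,\vecx,\vs(\vecq))})-\mu_{\vs(\vecq)}(G^{(\tau,\vecx,\vs(\vecq))})\to0$ uniformly over $\vecq\in\scrP_T(\rho)$, $\vecx\in C$, $\tau\in[0,\xi_1]$.

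I would argue this uniform convergence by contradiction. If it fails, one extracts sequences $\rho_n\to0$, $\vecq_n\in\scrP_T(\rho_n)$, $\vecx_n\to\vecx\in C$, $\tau_n\to\tau\in[0,\xi_1]$, $\vs(\vecq_n)\to\vs\in\Sigma$ along which the absolute value of the difference is bounded below by some $\ve>0$. From [P2] (in the form $\mu_{\vecq,\rho}^{(\lambda)}\xrightarrow[]{\textup{ w }}\mu_{\vs(\vecq)}$ uniformly) combined with the continuity $\vs\mapsto\mu_\vs$, both $\mu_{\vecq_n,\rho_n}^{(\lambda)}$ and $\mu_{\vs(\vecq_n)}$ converge weakly to $\mu_\vs$. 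I would then apply an extended continuous mapping theorem (of the type \cite[Thm.\ 4.27]{kallenberg02}) to the uniformly bounded family $G^{(\tau_n,\vecx_n,\vs(\vecq_n))}$: using continuity of $f$, joint continuity of $(Y,\vecx,\vs)\mapsto(Y+\vecx)\cup\{(\vecx,\vs)\}$, and the standard fact that vague convergence of counting measures together with convergence of the defining parameters of open boxes preserves emptiness at non-boundary points, one obtains $G^{(\tau_n,\vecx_n,\vs(\vecq_n))}(Y_n)\to G^{(\tau,\vecx,\vs)}(Y)$ whenever $Y_n\to Y$ and $Y\cap\partial(\fC_\tau-\vecx)=\emptyset$. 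By Lemma \ref{GROUNDINTENSITYlem2}, the intensity of $\Xi_\vs$ is dominated by $c_\scrP\mu_\scrX$, so the set $\{Y\col Y\cap\partial(\fC_\tau-\vecx)\neq\emptyset\}$ is $\mu_\vs$-null. Consequently both $\mu_{\vecq_n,\rho_n}^{(\lambda)}(G^{(\tau_n,\vecx_n,\vs(\vecq_n))})$ and $\mu_{\vs(\vecq_n)}(G^{(\tau_n,\vecx_n,\vs(\vecq_n))})$ tend to the common limit $\mu_\vs(G^{(\tau,\vecx,\vs)})$, contradicting the lower bound $\ve$.

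The main obstacle for a direct Portmanteau-type argument applied to $f^{(\tau)}$ is that for $\vecx\in\partial\fC_\tau$, the limit measure $\omu_\vs^{(\vecx)}$ always places the added origin point exactly on $\partial\fC_\tau$, so the discontinuity set of the indicator $I(Y\cap\fC_\tau=\emptyset)$ carries mass $1$ under $\omu_\vs^{(\vecx)}$; this rules out uniform control in $\vecx$ via a naive continuous approximation of the indicator. The factorisation above resolves this precisely because $\fC_\tau$ is open: the delicate boundary behaviour is absorbed into the deterministic factor $I(\vecx\notin\fC_\tau)$, which is consistent on $\partial\fC_\tau$, while the remaining factor $G^{(\tau,\vecx,\vs)}$ involves only the genuinely random $\scrQ_\rho(\vecq,\vecv)$, whose limit is supported on point sets disjoint from any given Lebesgue-null set.
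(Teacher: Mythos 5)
Your argument is essentially the paper's own proof: the paper likewise disposes of the case $\vecx\in\fC_\tau$ (where both sides vanish), passes to subsequences with $\vs(\vecq_n)\to\vs$, $\tau_n\to\tau$, $\vecx_n\to\vecx$, rewrites both quantities as integrals of the functions $f_n(Y)=I\bigl((Y+\vecx_n)\cap\fC_{\tau_n}=\emptyset\bigr)\,f\bigl((Y+\vecx_n)\cup\{(\vecx_n,\vs(\vecq_n))\}\bigr)$ against $\mu_{\vecq_n,\rho_n}^{(\lambda)}$ and $\mu_{\vs(\vecq_n)}$, and concludes via [P2], Lemma \ref{GROUNDINTENSITYlem2} and the generalized continuous mapping theorem. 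One correction is needed in your continuity claim: the asserted convergence $G^{(\tau_n,\vecx_n,\vs(\vecq_n))}(Y_n)\to G^{(\tau,\vecx,\vs)}(Y)$ under the sole hypothesis $Y\cap\partial(\fC_\tau-\vecx)=\emptyset$ is not true when $(\bn,\vs)\in Y$, since then the inserted point $(\vecx_n,\vs(\vecq_n))$ and the nearby point of $Y_n+\vecx_n$ merge in the vague limit into a point of multiplicity two, so the arguments of $f$ leave $N_s(\scrX)$ and continuity of $f$ on $N_s(\scrX)$ gives no control. You must therefore also require $(\bn,\vs)\notin Y$ in the continuity statement; this additional exceptional event is $\mu_\vs$-null by the same Lemma \ref{GROUNDINTENSITYlem2} (the set $\{\bn\}\times\Sigma$ has $\mu_\scrX$-measure zero), which is exactly how the paper handles it, and with this amendment the rest of your argument goes through unchanged.
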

\begin{remark}\label{ASS:KEYbblem2rem}
In particular the lemma (applied with $\tau=0$) implies that
\begin{align}\label{ASS:KEYbblem2remres}
\omu_{\vecq,\rho}^{(\vecx,\lambda)}\xrightarrow[]{\textup{ w }}\omu_{{\vs}(\vecq)}^{(\vecx)}
\qquad\text{as $\rho\to0$, uniformly over all $\vecq\in\scrP_T(\rho)$ and $\vecx\in C$.}
\end{align}
It would be easy to give a more direct proof of \eqref{ASS:KEYbblem2remres};
however in our proof of Theorem \ref{GENLIMITthm}
we need the more delicate convergence statement of Lemma \ref{ASS:KEYbblem2}.
\end{remark}
\begin{proof}
Note that $\omu_{\vecq,\rho}^{(\vecx,\lambda)}(f^{(\tau)})-\omu_{{\vs}(\vecq)}^{(\vecx)}(f^{(\tau)})=0-0=0$
whenever $\vecx\in\fC_\tau$;
hence from now on we assume $\vecx\notin \fC_\tau$.
By a standard subsequence argument, it suffices to prove that given any
$\rho_n\in(0,1)$, $\vecq_n\in\scrP_T(\rho_n)$, $\tau_n\in[0,\xi_1]$ and $\vecx_n\in C\setminus\fC_{\tau_n}$ 
($n=1,2,\ldots$)
such that $\lim_{n\to\infty}\rho_n=0$ and such that the limits
\begin{align*}
\vs:=\lim_{n\to\infty}\vs(\vecq_n)\in\Sigma,
\qquad\tau:=\lim_{n\to\infty}\tau_n\in\R,
\qquad\vecx:=\lim_{n\to\infty}\vecx_n\in C\subset\R^d
\end{align*}
all exist, %
then 
\begin{align}\label{ASS:KEYbblem2pf1}
\omu_{\vecq_n,\rho_n}^{(\vecx_n,\lambda)}(f^{(\tau_n)})-\omu_{{\vs}(\vecq_n)}^{(\vecx_n)}(f^{(\tau_n)})\to0
\qquad\text{as }\: n\to\infty.
\end{align}
Using $\vecx_n\in C\setminus\fC_{\tau_n}$
and the definitions of $\omu_{\vecq,\rho}^{(\vecx,\lambda)}$ and $\omu_\vs^{(\vecx)}$, 
\eqref{ASS:KEYbblem2pf1} is seen to be equivalent to 
\begin{align}\label{ASS:KEYbblem2pf2}
\mu_{\vecq_n,\rho_n}^{(\lambda)}(f_n)-\mu_{\vs(\vecq_n)}(f_n)\to0\qquad\text{as }\: n\to\infty,
\end{align}
where 
\begin{align*}
f_n(Y):=I\bigl((Y+\vecx_n)\cap\fC_{\tau_n}=\emptyset\bigr)\,f\bigl(Y_{(\vecx_n,\vs(\vecq_n))}\bigr).
\end{align*}
Define $F:N_s(\scrX)\to\R$ through
\begin{align*}
F(Y):=I\bigl((Y+\vecx)\cap\fC_\tau=\emptyset\bigr)\, f(Y_{(\vecx,\vs)}).
\end{align*}
Note that for any $Y,Y_1,Y_2,\ldots\in N_s(\scrX)$,
if $Y_n\to Y$,
$(\bn,\vs)\notin Y$ and 
$(Y+\vecx)\cap\partial\fC_\tau=\emptyset$, then $f_n(Y_n)\to F(Y)$ as $n\to\infty$.
Hence, using also
$\mu_{\vecq_n,\rho_n}^{(\lambda)}\xrightarrow[]{\textup{ w }}\mu_\vs$
(cf.\ [P2]) and
the fact that
\begin{align*}
\mu_\vs\bigl(\bigl\{Y\in N_s(\scrX)\col 
(\bn,\vs)\notin Y\text{ and }
(Y+\vecx)\cap\partial\fC_\tau=\emptyset\bigr\}\bigr)=1
\end{align*}
by Lemma \ref{GROUNDINTENSITYlem2},
it follows that
$\mu_{\vecq_n,\rho_n}^{(\lambda)}(f_n)\to\mu_\vs(F)$ as $n\to\infty$.
(Indeed, apply \cite[Thm.\ 4.27]{kallenberg02}
and then consider the expected value.)  %
Similarly, using $\mu_{\vs(\vecq_n)}\xrightarrow[]{\textup{ w }}\mu_\vs$,
we also have $\mu_{\vs(\vecq_n)}(f_n)\to\mu_\vs(F)$ as $n\to\infty$.
Hence \eqref{ASS:KEYbblem2pf2} holds, and the lemma is proved.
\end{proof}

\begin{lem}\label{MUSXFTCONTlem}
Fix $f\in\C_b(N_s(\scrX))$ and define $f^{(\tau)}$ as in {\blu \eqref{ASS:KEYbblem2res}}.
Then $\langle\vs,\vecx,\tau\rangle\mapsto\omu_\vs^{(\vecx)}(f^{(\tau)})$ 
is a continuous function on
$\{\langle\vs,\vecx,\tau\rangle\in\Sigma\times\R^d\times\R_{\geq0}\col\vecx\notin\fC_\tau\}$.
\end{lem}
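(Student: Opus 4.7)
The plan is to fix an arbitrary sequence $(\vs_n,\vecx_n,\tau_n)\to(\vs,\vecx,\tau)$ with $\vecx_n\notin\fC_{\tau_n}$ and $\vecx\notin\fC_\tau$, and show convergence $\omu_{\vs_n}^{(\vecx_n)}(f^{(\tau_n)})\to\omu_\vs^{(\vecx)}(f^{(\tau)})$. First I would rewrite both sides as integrals against $\mu_{\vs_n}$ and $\mu_\vs$ respectively. By definition of $\omu_\vs^{(\vecx)}$,
\begin{equation*}
\omu_{\vs_n}^{(\vecx_n)}(f^{(\tau_n)})=\int_{N_s(\scrX)}g_n(Y)\,d\mu_{\vs_n}(Y),
\qquad g_n(Y):=I\bigl(Y_{(\vecx_n,\vs_n)}\cap\fC_{\tau_n}=\emptyset\bigr)\,f(Y_{(\vecx_n,\vs_n)}),
\end{equation*}
and similarly with $g(Y):=I(Y_{(\vecx,\vs)}\cap\fC_\tau=\emptyset)\,f(Y_{(\vecx,\vs)})$. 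Because $\vecx_n\notin\fC_{\tau_n}$, the extra point $(\vecx_n,\vs_n)$ never lies in $\fC_{\tau_n}\times\Sigma$, so $g_n(Y)=I((Y+\vecx_n)\cap\fC_{\tau_n}=\emptyset)\,f(Y_{(\vecx_n,\vs_n)})$, and likewise for $g$. Also, by the standing hypothesis that $\vs\mapsto\mu_\vs$ is continuous, $\mu_{\vs_n}\xrightarrow{\textup{w}}\mu_\vs$.

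Next I would identify a set $G\subset N_s(\scrX)$ of full $\mu_\vs$-measure on which $g_n(Y_n)\to g(Y)$ whenever $Y_n\to Y$. Let $G$ consist of those $Y\in N_s(\scrX)$ satisfying (i) $(\bn,\vs)\notin Y$, and (ii) $(Y+\vecx)\cap\partial\fC_\tau=\emptyset$. Both conditions hold $\mu_\vs$-a.s.\ by Lemma~\ref{GROUNDINTENSITYlem2}: the expected number of $Y$-points in $\{(\bn,\vs)\}$ is bounded by $c_\scrP\mu_\scrX(\{(\bn,\vs)\})=0$, and the expected number of points of $Y+\vecx$ on $\partial\fC_\tau\times\Sigma$ is bounded by $c_\scrP\vol(\partial\fC_\tau)=0$, since $\fD$ has boundary of Lebesgue measure zero so that $\partial\fC_\tau$ sits inside a finite union of hyperplanes together with $[c,c+\tau]\times\partial\fD$.

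Now fix $Y\in G$ and $Y_n\to Y$ in $N_s(\scrX)$. Since $Y+\vecx$ is locally finite and misses the closed set $\partial\fC_\tau$, there exists $\eps>0$ such that the $\eps$-neighbourhood $B_\eps(\partial\fC_\tau)$ is disjoint from $Y+\vecx$; for large $n$, the symmetric difference $\fC_{\tau_n}\triangle\fC_\tau$ lies inside $B_\eps(\partial\fC_\tau)$, and by vague convergence $Y_n+\vecx_n$ has the same finite set of points in $\fC_\tau\cup B_{\eps/2}(\partial\fC_\tau)$ as $Y+\vecx$ eventually. Hence the indicators match for $n$ large, giving convergence of the first factor of $g_n(Y_n)$. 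For the second factor, condition (i) ensures $(\vecx,\vs)\notin Y+\vecx$, so the union $Y_{(\vecx_n,\vs_n)}^{n}=(Y_n+\vecx_n)\cup\{(\vecx_n,\vs_n)\}$ converges vaguely to $Y_{(\vecx,\vs)}$, and continuity of $f$ gives $f(Y_{(\vecx_n,\vs_n)}^{n})\to f(Y_{(\vecx,\vs)})$. Thus $g_n(Y_n)\to g(Y)$ for every $Y\in G$, and the sequence $(g_n)$ is uniformly bounded by $\|f\|_\infty$.

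Combining the three ingredients—$\mu_{\vs_n}\xrightarrow{\textup{w}}\mu_\vs$, uniformly bounded $g_n$, and $\mu_\vs$-a.s.\ convergence $g_n(Y_n)\to g(Y)$ along vaguely convergent sequences—and applying \cite[Thm.\ 4.27]{kallenberg02} to the distributions $\mu_{\vs_n}\circ g_n^{-1}$ yields $\mu_{\vs_n}(g_n)\to\mu_\vs(g)$, which is the desired continuity. The main technical nuisance will be condition (ii) and the handling of perturbations of the cylinder $\fC_{\tau_n}$ as $\tau_n\to\tau$; but this is settled cleanly by sandwiching $\fC_{\tau_n}$ between two fixed open sets whose boundaries avoid $Y+\vecx$, which is possible precisely because of condition (ii) combined with local finiteness.
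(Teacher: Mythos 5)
Your proof is correct in its essentials and follows exactly the route the paper has in mind: the paper's own proof consists of the single sentence ``This is an immediate modification of the proof of Lemma~\ref{ASS:KEYbblem2},'' and you have carried out that modification explicitly. The three ingredients you isolate—the weak convergence $\mu_{\vs_n}\xrightarrow{\textup{w}}\mu_\vs$ (which here comes from the standing continuity hypothesis on $\vs\mapsto\mu_\vs$, replacing the appeal to [P2] in Lemma~\ref{ASS:KEYbblem2}), the full-measure set $G$ built via Lemma~\ref{GROUNDINTENSITYlem2}, and the generalized continuous mapping theorem \cite[Thm.\ 4.27]{kallenberg02}—are precisely the ones used in the template proof, and the rewriting $\omu_{\vs_n}^{(\vecx_n)}(f^{(\tau_n)})=\mu_{\vs_n}(g_n)$ and the continuity analysis of the indicator are the same as there.

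One small point, which is glossed over equally in the paper's referenced proof and in yours: when $\tau=0$ one has $\fC_0=\emptyset$ and hence $\partial\fC_0=\emptyset$, so condition (ii) of your set $G$ becomes vacuous and the sandwiching argument (``$\fC_{\tau_n}\triangle\fC_\tau\subset B_\eps(\partial\fC_\tau)$'') breaks down for $\tau_n\searrow 0$. To cover this endpoint one should enlarge the null condition to also require $(Y+\vecx)\cap(\{c\}\times\overline{\fD})=\emptyset$; this set still has full $\mu_\vs$-measure by Lemma~\ref{GROUNDINTENSITYlem2}, and it is what $\partial\fC_\tau$ degenerates to as $\tau\to 0$, so the rest of the argument goes through unchanged. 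This is a cosmetic repair, not a different method, and does not change the conclusion that your approach coincides with the paper's.
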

\begin{proof}
This is an immediate modification of the proof of Lemma \ref{ASS:KEYbblem2}.
\end{proof}
{\blu In the following proposition, we will prove the desired limit statement for the
following truncated version of the expression 
in \eqref{GENLIMITdisc1}:
\begin{align}\label{GENLIMITdisc13}
J_\rho(f,T_1):=
\rho^{d(d-1)}\int_{\US}\int_{\R^d}I(\tau_\rho(\vecq,\vecv)<T_1)\,
f(\scrQ_\rho(\vecq,\vecv))\,\Lambda'(\rho^{d-1}\vecq,\vecv)\,d\vecq\,d\vecv,
\end{align}
for any fixed $T_1>0$.}
\begin{prop}\label{GENLIMITprop1}
For any $\Lambda\in\Pac(\T^1(\R^d))$,
$f\in\C_b(N_s(\scrX))$ and $T_1>0$,
\begin{align}\label{GENLIMITdisc11}
\lim_{\rho\to0}J_\rho(f,T_1)
=c_\scrP\int_{\Sigma}\int_0^{T_1}\int_{\fD}\int_{N_s(\scrX)}
I(Y\cap\fC_\tau=\emptyset)f(Y)\,d\omu_{{\vs}}^{(c+\tau,\vecb)}(Y)\,d\vecb\,d\tau\,d\mm({\vs}).
\end{align}
\end{prop}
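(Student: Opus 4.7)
The plan is to make rigorous the heuristic outline given after the statement of Theorem~\ref{GENLIMITthm}. First, for each fixed $\vecv \in \US$ the affine relation \eqref{qintermsofqp} parametrises (up to a Lebesgue null set) the set $\{\vecq \in \R^d \col \tau_\rho(\vecq, \vecv) < T_1\}$ by triples $(\vecq', \tau, \vecb) \in \scrP \times (0, T_1) \times \fD$ satisfying $(\oQ_\rho(\vecq', \vecv) + (c+\tau, \vecb)) \cap \fC_\tau = \emptyset$, with Jacobian $|\det D_\rho^{-1}|\cdot|\det R(\vecv)^{-1}| = 1$. Since $\scrP$ is itself Lebesgue null, $\scrQ_\rho$ may be replaced throughout by $\oQ_\rho$. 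Executing this change of variables and applying Fubini converts $J_\rho(f, T_1)$ into the truncated version of \eqref{GENLIMITdisc1a}, with the cylinder-emptiness indicator absorbed into $f^{(\tau)}$.

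Second, I reduce to the case $\Lambda' \in \C_c(\T^1(\R^d))$: the map $\Lambda \mapsto J_\rho(f, T_1)$ is linear with norm at most $\|f\|_\infty \|\Lambda'\|_{\L^1}$ uniformly in $\rho$ (via the substitution $\vecq = \rho^{1-d} \vecq''$), while the right-hand side of \eqref{GENLIMITdisc11} does not depend on $\Lambda$, so the density of $\C_c$ in $\L^1$ completes the reduction. Once $\Lambda'$ is continuous with compact support, only $\vecq' \in \scrP_T(\rho)$ for a fixed $T$ contribute, and the subsum over $\vecq' \in \scrE$ is bounded by a constant times $\rho^{d(d-1)} \#(\scrE \cap \scrB_{T\rho^{1-d}}^d)$, which is $o(1)$ since $\scrE$ has asymptotic density zero by~[P2].

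The central step, and the main obstacle, is a uniform factorisation of the inner $\vecv$-integral: for $\vecq' \in \scrP_T(\rho) \setminus \scrE$,
\begin{multline*}
\int_\US f^{(\tau)}\bigl(\oQ_\rho(\vecq', \vecv) + (c+\tau, \vecb)\bigr)\, g_{\vecq', \tau}(\vecv)\, d\vecv \\
= \omu_{\vs(\vecq')}^{(c+\tau, \vecb)}(f^{(\tau)}) \int_\US g_{\vecq', \tau}(\vecv)\, d\vecv + o(1)
\end{multline*}
uniformly over $\vecq'$, $\tau \in [0, T_1]$, and $\vecb \in \fD$, where $g_{\vecq', \tau}(\vecv) := \Lambda'(\rho^{d-1}\vecq' - (c+\tau)\vecv, \vecv)$. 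Lemma~\ref{ASS:KEYbblem2} provides such a convergence only for a single fixed $\lambda \in \Pac(\US)$, whereas the weight $g_{\vecq', \tau}$ here depends on $(\rho, \vecq', \tau)$. Since $\Lambda' \in \C_c$, however, the family $\{g_{\vecq', \tau}\}$ is uniformly bounded and equicontinuous on $\US$ (its argument is Lipschitz in $\vecv$ with constant $\leq (|c| + T_1)\|\nabla \Lambda'\|_\infty$), so by Arzel\`a--Ascoli it lies in a compact subset of $\C(\US)$; an $\epsilon$-net argument then reduces the uniform factorisation to finitely many instances of Lemma~\ref{ASS:KEYbblem2} applied to the probability measures proportional to the positive and negative parts $g_i^{\pm}\, d\omega$. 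A further replacement of the exact shift $\rho^{d-1}\vecq' - (c+\tau)\vecv - (0, \rho^d\vecb)R(\vecv)^{-1}$ by the leading term $\rho^{d-1}\vecq' - (c+\tau)\vecv$ introduces only an $O(\rho^d)$ error in the argument of $\Lambda'$, which is absorbed by the uniform continuity of $\Lambda'$.

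With the factorisation in hand, one has $J_\rho(f, T_1) = \rho^{d(d-1)} \sum_{\vecq' \in \scrP} H(\rho^{d-1}\vecq', \vs(\vecq')) + o(1)$, where $H(\vecy, \vs) := \int_0^{T_1} \int_\fD \omu_\vs^{(c+\tau, \vecb)}(f^{(\tau)}) \bigl(\int_\US \Lambda'(\vecy - (c+\tau)\vecv, \vecv)\, d\vecv\bigr)\, d\vecb\, d\tau$. By Lemma~\ref{MUSXFTCONTlem} and dominated convergence, $H \in \C_c(\R^d \times \Sigma)$, so Lemma~\ref{ASYMPTDENSITY1CONSlem} yields convergence of the sum to $c_\scrP \int_{\R^d \times \Sigma} H\, d\mu_\scrX$. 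A final Fubini swap, exploiting $\int_{\R^d} \int_\US \Lambda'(\vecz, \vecv)\, d\vecv\, d\vecz = 1$, produces exactly the right-hand side of~\eqref{GENLIMITdisc11}.
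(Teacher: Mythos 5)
Your proposal is correct and follows essentially the same route as the paper's proof: the change of variables $\vecq\mapsto(\vecq',\tau,\vecb)$ with unit Jacobian, reduction to $\Lambda'\in\C_c$, discarding the $o(1)$ contribution of $\scrE$, factorising the inner $\vecv$-integral via Lemma \ref{ASS:KEYbblem2} (your Arzel\`a--Ascoli/$\epsilon$-net step is exactly the ``standard compactness argument'' the paper invokes to get uniformity in $(\vecy,\xi)$), and concluding with Lemma \ref{MUSXFTCONTlem}, Lemma \ref{ASYMPTDENSITY1CONSlem} and $\int_{\R^d}D_\Lambda(\vecy,\xi)\,d\vecy=1$. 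The only points left implicit are routine: the bound $\#\scrP_T(\rho)\ll\rho^{-d(d-1)}$ from [P1] when summing the per-point errors, and re-inserting the $\scrE$-points into the final sum at $o(1)$ cost.
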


\begin{proof}
Without loss of generality we may assume $\Lambda'\in\C_c(\T^1(\R^d))$,
since $\C_c(\T^1(\R^d))$ is dense in $\L^1(\T^1(\R^d))$.

{\blu Let us note that the definition of $\tau_\rho(\vecq,\vecv)$,
\eqref{taurhoqvdef},
can be equivalently expressed as
\begin{align*}
\tau_\rho(\vecq,\vecv)=\inf\bigl\{\xi>0\col\oQ_\rho(\vecq,\vecv)\cap\fC_\xi\neq\emptyset\bigr\}.
\end{align*}
Thus} for any $(\vecq,\vecv)\in\T^1(\R^d)$ satisfying $\tau=\tau_\rho(\vecq,\vecv)<\infty$, there exists a point
$\vecp\in\scrP$ such that
$(\vecp-\vecq)R(\vecv)D_\rho\in\{c+\tau\}\times\fD$.
If this point is unique, we call it $\vecz_\rho(\vecq,\vecv)$.
In the remaining cases 
(viz., when $\tau=\infty$ or there are at least two points $\vecp\in\scrP$
with $(\vecp-\vecq)R(\vecv)D_\rho\in\{c+\tau\}\times\fD$)
we take $\vecz_\rho(\vecq,\vecv)$ to be undefined.
Thus for each $\rho\in(0,1)$
we have defined a function $\vecz_\rho$ from $\T^1(\R^d)$ to $\scrP\sqcup\{\undef\}$.

Let $S_\scrP$ be the set of $\vecv\in\US$ such that 
the inner products $\vecp\cdot\vecv$ for $\vecp\in\scrP$
are pairwise distinct.
Then $\omega(\US\setminus S_\scrP)=0$,
and for every $\vecv\in S_\scrP$ we have
$\vecz_\rho(\vecq,\vecv)\in\scrP$ 
for \textit{all} $\vecq\in\R^d$ with $\tau_\rho(\vecq,\vecv)<\infty$
{\blu (since $\vecv\in S_{\scrP}$ implies that the points $(\vecp-\vecq) R(\vecv)D_\rho$ for $\vecp\in\scrP$
have pairwise distinct $\vece_1$-coordinates).}
Also $\scrQ_\rho(\vecq,\vecv)=\oQ_\rho(\vecq,\vecv)$ for almost all $(\vecq,\vecv)$.
Therefore,
\begin{align}\label{GENLIMITdisc3}
J_\rho(f,T_1)=
\rho^{d(d-1)}\sum_{\vecq'\in\scrP}
\int_{\US}\int_{\R^d}
I\bigl(\tau_\rho(\vecq,\vecv)<T_1\text{ and }\vecz_\rho(\vecq,\vecv)=\vecq'\bigr)\,
\hspace{60pt}
\\\notag
\times 
f(\oQ_\rho(\vecq,\vecv))\,\Lambda'(\rho^{d-1}\vecq,\vecv)\,d\vecq\,d\vecv.
\end{align}
Recall here that $\vecz_\rho(\vecq,\vecv)=\vecq'$ implies that there is some $\vecb\in\fD$
such that $(\vecq'-\vecq)R(\vecv)D_\rho=(c+\tau,\vecb)$ with $\tau=\tau_\rho(\vecq,\vecv)$,
or equivalently:
\begin{align}\label{GENLIMITdisc4}
\vecq=\vecq'-(c+\tau,\vecb)D_\rho^{-1}R(\vecv)^{-1}
=\vecq'-\bigl(\rho^{1-d}(c+\tau),\rho\vecb\bigr)R(\vecv)^{-1}.
\end{align}
Conversely for any given $\vecq'\in\scrP$, $\vecv\in S_\scrP$, $\vecb\in\fD$, $\tau>0$ and $\rho\in(0,1)$,
if $\vecq$ is given by \eqref{GENLIMITdisc4}
then the two relations $\tau_\rho(\vecq,\vecv)=\tau$
and $\vecz_\rho(\vecq,\vecv)=\vecq'$ hold if and only if
$\oQ_\rho(\vecq,\vecv)\cap\fC_\tau=\emptyset$.
Hence, using also $\det D_\rho^{-1}R(\vecv)^{-1}=1$,
it follows that
\begin{multline}\label{GENLIMITdisc5}
J_\rho(f,T_1)=
\rho^{d(d-1)}\sum_{\vecq'\in\scrP}
\int_{\US}\int_0^{T_1}\int_{\fD}
I\bigl(\oQ_\rho(\vecq,\vecv)\cap\fC_\tau=\emptyset\bigr) \\ \times
f(\oQ_\rho(\vecq,\vecv))\,\Lambda'(\rho^{d-1}\vecq,\vecv)\,d\vecb\,d\tau\,d\vecv,
\end{multline}
with $\vecq$ as in \eqref{GENLIMITdisc4}.
Note here that
\begin{align*}
\oQ_\rho(\vecq,\vecv)=\oQ_\rho(\vecq',\vecv)+(c+\tau,\vecb).
\end{align*}
Take $R>0$ such that $\supp\Lambda'\subset\scrB_R^d\times\Sigma$, and set
\begin{align*}
T:=R+|c|+T_1+\sup_{\vecb\in\fD}\|\vecb\|.
\end{align*}
Note that \eqref{GENLIMITdisc4} implies $\|\vecq'-\vecq\|<(|c+\tau|+\sup_{\vecb\in\fD}\|\vecb\|)\rho^{1-d}$;
hence every $\vecq'\in\scrP$ which gives a nonzero contribution to the sum
in \eqref{GENLIMITdisc5} satisfies
$\|\vecq'\|<T\rho^{1-d}$.
Using {\blu the fact that $\scrE$ has asymptotic density zero (cf.\ [P2])}, 
we then see that 
restricting the sum in \eqref{GENLIMITdisc5} to $\vecq'\in\scrP_T(\rho)$
gives an error which tends to $0$ as $\rho\to0$.
Furthermore, 
$\rho^{d-1}\vecq=\rho^{d-1}\vecq'-(c+\tau,\rho^d\vecb)R(\vecv)^{-1}$
has distance $\ll\rho^d$ from $\rho^{d-1}\vecq'-(c+\tau)\vecv$;
hence using $\#\scrP_T(\rho)\ll\rho^{-d(d-1)}$ (cf.\ [P1]) and the uniform continuity of $\Lambda'$,
and writing $\vecq$ in place of $\vecq'$,
we obtain %
\begin{multline}\label{GENLIMITdisc5a}
J_\rho(f,T_1)=
\rho^{d(d-1)}\sum_{\vecq\in\scrP_T(\rho)}
\int_0^{T_1}\int_{\fD}\int_{\US}
I\bigl((\oQ_\rho(\vecq,\vecv)+(c+\tau,\vecb))\cap\fC_\tau=\emptyset\bigr)
\hspace{60pt}
\\ 
\times f(\oQ_\rho(\vecq,\vecv)+(c+\tau,\vecb))\,\Lambda'(\rho^{d-1}\vecq-(c+\tau)\vecv,\vecv)\,d\vecv\,d\vecb\,d\tau + o(1).
\end{multline}
Here $o(1)$ denotes a term that tends to zero as $\rho\to0$.

Given any $\vecq\in\scrP_T(\rho)$, $\tau\in[0,T_1]$ and $\vecb\in\fD$, %
we set %
$\vecy=\rho^{d-1}\vecq$, $\vecx=(c+\tau,\vecb)$ and $\xi=c+\tau$;
then the innermost integral in \eqref{GENLIMITdisc5a} can be expressed as
\begin{align}\label{GENLIMITdiscn1}
\int_{\US} f^{(\tau)}\bigl(\oQ_\rho(\vecq,\vecv)+\vecx\bigr)\,\Lambda'(\vecy-\xi\vecv,\vecv)\,d\vecv,
\end{align}
with $f^{(\tau)}(Y):=I\bigl(Y\cap\fC_\tau=\emptyset\bigr)\, f(Y)$
as in Lemma \ref{ASS:KEYbblem2}.
For any $\vecy\in\R^d$ and $\xi\in\R$ we write
\begin{align*}
D_\Lambda(\vecy,\xi):=\int_{\US}\Lambda'\bigl(\vecy-\xi\vecv,\vecv\bigr)\,d\vecv.
\end{align*}
Then if $D_\Lambda(\vecy,\xi)>0$,
the integral in \eqref{GENLIMITdiscn1} equals
$D_\Lambda(\vecy,\xi)\,\omu_{\vecq,\rho}^{(\vecx,\lambda)}(f^{(\tau)})$, with
$\lambda\in P(\US)$ given by
\begin{align}
d\lambda(\vecv):=D_\Lambda(\vecy,\xi)^{-1}\Lambda'(\vecy-\xi\vecv,\vecv)\,d\vecv. %
\end{align}
Hence by Lemma \ref{ASS:KEYbblem2},
for any fixed $\vecy\in\R^d$ and $\xi\in\R$, we have
\begin{align}\label{GENLIMITdisc7}
\int_{\US}f^{(\tau)}(\oQ_\rho(\vecq,\vecv)+\vecx)\,\Lambda'(\vecy-\xi\vecv,\vecv)\,d\vecv
-D_\Lambda(\vecy,\xi)\,\omu_{{\vs}(\vecq)}^{(\vecx)}(f^{(\tau)})\to0
\end{align}
as $\rho\to0$, uniformly over all $\vecq\in\scrP_T(\rho)$, $\vecx\in [c,c+T_1]\times\overline\fD$ and $\tau\in[0,T_1]$.
Of course this convergence holds also when $D_\Lambda(\vecy,\xi)=0$, trivially.
By a standard compactness argument,
using $\Lambda'\in\C_c$,
the same convergence statement is upgraded to also hold uniformly over all
$\vecy\in\R^d$ and $\xi\in[c,c+T_1]$.
Using this fact in \eqref{GENLIMITdisc5a},
and again using $\#\scrP_T(\rho)\ll\rho^{-d(d-1)}$,
we conclude
\begin{align}\label{GENLIMITdisc10}
J_\rho(f,T_1)=o(1)+\rho^{d(d-1)}\sum_{\vecq\in\scrP_T(\rho)}F(\rho^{d-1}\vecq,{\vs}(\vecq)),
\end{align}
where $F:\scrX %
\to\R$ is given by
\begin{align*}
F(\vecy,{\vs})=
\int_0^{T_1}\int_{\fD}D_\Lambda(\vecy,c+\tau)\,\omu_{{\vs}}^{(c+\tau,\vecb)}(f^{(\tau)})\,d\vecb\,d\tau.
\end{align*}
It follows from Lemma \ref{MUSXFTCONTlem} that $F$ is a continuous function on $\scrX$.
Also $F$ has compact support since $\Lambda'$ has compact support;
in fact, using $\supp\Lambda'\subset\scrB_R^d\times\Sigma$
it follows that $\supp F\subset\scrB_{R+T_1}^d\times\Sigma$,
and hence the sum in \eqref{GENLIMITdisc10} remains unchanged if we
replace the summation range by $\scrP\setminus\scrE$;
and {\blu since $\scrE$ has asymptotic density zero,} 
we can change this further to $\scrP$ at the price of an $o(1)$ error.
Hence by Lemma~\ref{ASYMPTDENSITY1CONSlem}, 
\begin{align}\label{GENLIMITdisc10a}
\lim_{\rho\to0}J_\rho(f,T_1)=c_\scrP\int_{\scrX}F\,d\mu_\scrX.
\end{align}
Finally, using $\int_{\US}\int_{\R^d}\Lambda'\,d\vecy\,d\vecv=1$ it follows that
$\int_{\R^d}D_\Lambda(\vecy,\xi)\,d\vecy=1$ for every $\xi\in\R$.
Hence
\begin{align*}
c_\scrP\int_{\scrX}F\,d\mu_\scrX
=c_\scrP\int_{\Sigma}\int_0^{T_1}\int_{\fD}\omu_{{\vs}}^{(c+\tau,\vecb)}(f^{(\tau)})\,d\vecb\,d\tau\,d\mm({\vs}),
\end{align*}
and the proposition is proved.
\end{proof}

The following lemma is a simple consequence of the assumption [P3]. %
\begin{lem}\label{ASS:bdfreepathgenlem}
For any bounded Borel set $B\subset\R^d$,
\begin{align}\label{ASS:bdfreepathgenlemres}
\lim_{\xi\to\infty}\limsup_{\rho\to0}\hspace{7pt}
[\vol\times\omega]\bigl(\bigl\{(\vecq,\vecv)\in B\times\US\col
\oQ_{\rho}(\rho^{1-d}\vecq,\vecv)\cap\fC_\xi=\emptyset\bigr\}\bigr)=0.
\end{align}
\end{lem}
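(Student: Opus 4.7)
The claim extends [P3] in two respects: from $\scrQ_\rho$ to $\oQ_\rho$, and from the standard cylinder $\fZ_\xi=(0,\xi)\times\scrB_1^{d-1}$ to the general cylinder $\fC_\xi=(c,c+\xi)\times\fD$. The plan is to reduce to [P3] by a monotonicity step, a translation of the starting position, and a $D_r$-rescaling.

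For the first reduction, note that $\scrQ_\rho(\rho^{1-d}\vecq,\vecv)$ and $\oQ_\rho(\rho^{1-d}\vecq,\vecv)$ differ at most by the origin $(\bn,\vs(\rho^{1-d}\vecq))$ (only when $\rho^{1-d}\vecq\in\scrP$), and $\bn\notin\fZ_\xi$; hence their intersections with $\fZ_\xi$ coincide and [P3] gives the same statement with $\oQ_\rho$ in place of $\scrQ_\rho$. Next, $\fD$ is open and nonempty, so it contains some open ball $\scrB_r^{d-1}(\vecb_0)$; then $\fC_\xi\supset(c,c+\xi)\times\scrB_r^{d-1}(\vecb_0)$, and emptiness of the intersection with $\fC_\xi$ implies emptiness of the intersection with this smaller cylinder. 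It therefore suffices to prove the claim with $\fC_\xi$ replaced by $(c,c+\xi)\times\scrB_r^{d-1}(\vecb_0)$.

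For the translation, set $\vecdel(\vecv):=c\vecv+\rho^d(0,\vecb_0)R(\vecv)^{-1}$ and $\vecq':=\vecq+\vecdel(\vecv)$. Using $\vecv R(\vecv)=\vece_1$, a direct computation gives $\rho^{1-d}\vecdel(\vecv)R(\vecv)D_\rho=(c,\vecb_0)$, so that $\oQ_\rho(\rho^{1-d}\vecq',\vecv)=\oQ_\rho(\rho^{1-d}\vecq,\vecv)-(c,\vecb_0)$ and the cylinder $(c,c+\xi)\times\scrB_r^{d-1}(\vecb_0)$ is transformed into $(0,\xi)\times\scrB_r^{d-1}$. For each fixed $\vecv$, $\vecq\mapsto\vecq'$ is a measure-preserving translation, and $\|\vecdel(\vecv)\|\leq|c|+\|\vecb_0\|$ uniformly in $\vecv$ for $\rho<1$; thus the emptiness measure is bounded above by the analogous measure over $(\vecq',\vecv)\in B''\times\US$, with the bounded set $B'':=B+\scrB_{|c|+\|\vecb_0\|+1}^d$.

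Finally, apply $D_r$ from the right. Since $D_\rho D_r=D_{r\rho}$, we have $\oQ_\rho(\rho^{1-d}\vecq',\vecv)D_r=\oQ_{r\rho}(\rho^{1-d}\vecq',\vecv)$, and $[(0,\xi)\times\scrB_r^{d-1}]D_r=\fZ_{\xi r^{d-1}}$. Put $\rho'':=r\rho$ and $\vecq'':=r^{d-1}\vecq'$, so that $\rho^{1-d}\vecq'=(\rho'')^{1-d}\vecq''$ and the emptiness condition becomes $\oQ_{\rho''}((\rho'')^{1-d}\vecq'',\vecv)\cap\fZ_{\xi r^{d-1}}=\emptyset$; the change of variables $\vecq'\mapsto\vecq''$ has constant Jacobian $r^{d(d-1)}$, and $\vecq''$ ranges in the bounded Borel set $r^{d-1}B''$. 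Since $\rho''\to0$ iff $\rho\to0$ and $\xi r^{d-1}\to\infty$ iff $\xi\to\infty$, applying the $\oQ_\rho$-version of [P3] with the set $r^{d-1}B''$ and cylinder $\fZ_{\xi r^{d-1}}$ yields the claim. I do not expect any real obstacle here; the only nontrivial content is the volume-preserving shift together with the $D_r$-rescaling that turns $(c,c+\xi)\times\scrB_r^{d-1}(\vecb_0)$ into $\fZ_{\xi r^{d-1}}$.
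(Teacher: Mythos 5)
Your proof is correct and takes essentially the same approach as the paper: choose a ball $\vecb_0+\scrB^{d-1}_r\subset\fD$, translate the starting point by a vector whose image under $R(\vecv)D_\rho$ (after the macroscopic rescaling) is $(c,\vecb_0)$, and then apply $D_r$ to reduce the rectangular cylinder to $\fZ_{r^{d-1}\xi}$ and invoke [P3]. The only differences are cosmetic — you make the $\oQ_\rho$/$\scrQ_\rho$ reduction and the final change of variables $\vecq'\mapsto\vecq''$ explicit, which the paper leaves implicit, and you write the shift in physical coordinates as $\vecdel(\vecv)$ rather than as $\vecx D_\rho^{-1}R(\vecv)^{-1}$ in macroscopic coordinates.
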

\begin{proof}
Fix $\vecy\in\R^{d-1}$ and $r>0$ so that $\fD$ contains $\vecy+\scrB_r^{d-1}$,
and set $\vecx=(c,\vecy)\in\R^d$.
Then $\fC_\xi$ contains $\vecx+(0,\xi)\times\scrB_r^{d-1}$.
Noticing also $\oQ_\rho(\vecq,\vecv)-\vecx=\oQ_\rho(\vecq+\vecx D_\rho^{-1}R(\vecv)^{-1},\vecv)$
it follows that the set considered in \eqref{ASS:bdfreepathgenlemres} is a subset of
\begin{align*}
\bigl\{(\vecq,\vecv)\in B\times\US\col
\oQ_\rho\bigl(\rho^{1-d}\vecq+\vecx D_\rho^{-1}R(\vecv)^{-1},\vecv\bigr)\cap((0,\xi)\times\scrB_r^{d-1})=\emptyset\bigr\}.
\end{align*}
Here $\|\vecx D_\rho^{-1}R(\vecv)^{-1}\|\leq\rho^{1-d}\|\vecx\|$;
hence by Fubini the measure considered in \eqref{ASS:bdfreepathgenlemres} is bounded above by
\begin{align*}%
M(\rho,\xi):=[\vol\times\omega]\bigl(\bigl\{(\vecq,\vecv)\in B'\times\US\col
\oQ_{\rho}(\rho^{1-d}\vecq,\vecv)\cap((0,\xi)\times\scrB_r^{d-1})=\emptyset\bigr\}\bigr),
\end{align*}
where $B'$ is the $\|\vecx\|$-neighbourhood of $B$;
this is still a bounded subset of $\R^d$.
Now note $(0,\xi)\times\scrB_r^{d-1}=\fZ_{r^{d-1}\xi} D_r^{-1}$,
and $\oQ_{\rho}(\vecq,\vecv)D_r=\oQ_{r\rho}(\vecq,\vecv)$.
Hence the assumption [P3] %
implies
$\lim_{\xi\to\infty}\limsup_{\rho\to0}M(\rho,\xi)=0$,
and the lemma is proved.
\end{proof}

\begin{proof}[Proof of Proposition \ref{GENLIMITcor1}]
Fix any $\Lambda\in P(\T^1(\R^d))$ having a density $\Lambda'\in\C_c(\T^1(\R^d))$
with respect to $d\vecq\,d\vecv$.
By Proposition \ref{GENLIMITprop1},
the left hand side of \eqref{GENLIMITdisc12} equals
$\lim_{T_1\to\infty}\lim_{\rho\to0}J_\rho(1,T_1)$.
Also by Lemma \ref{ASS:bdfreepathgenlem},
\begin{align}\label{GENLIMITdisc2}
\lim_{T_1\to\infty}\limsup_{\rho\to0}
\rho^{d(d-1)}\int_{\T^1(\R^d)}I(\tau_\rho(\vecq,\vecv)\geq{T_1})
\,\Lambda'(\rho^{d-1}\vecq,\vecv)\,d\vecq\,d\vecv=0,
\end{align}
and hence recalling \eqref{GENLIMITdisc13} we have
\begin{align*}
\lim_{T_1\to\infty}\lim_{\rho\to0}J_\rho(1,T_1)
=\lim_{\rho\to0}\rho^{d(d-1)}\int_{\T^1(\R^d)}\,\Lambda'(\rho^{d-1}\vecq,\vecv)\,d\vecq\,d\vecv
=1.
\end{align*}
\end{proof}

\begin{proof}[Proof of Theorem \ref{GENLIMITthm}]
It suffices to prove that for any given $f\in\C_b(N_s(\scrX))$,
$\mu_\rho^{(\Lambda)}(f)\to\mu(f)$,
or in other words,
\begin{align}\label{GENLIMITthmpf1}
\lim_{\rho\to0}\rho^{d(d-1)}\int_{\T^1(\R^d)}f(\scrQ_\rho(\vecq,\vecv))\,\Lambda'(\rho^{d-1}\vecq,\vecv)\,d\vecq\,d\vecv
=\mu(f).
\end{align}
Without loss of generality we may assume $\Lambda'\in\C_c(\T^1(\R^d))$.
Now \eqref{GENLIMITthmpf1} follows by taking $T_1\to\infty$ in Proposition \ref{GENLIMITprop1}
and changing the order of limits;
this is justified by \eqref{GENLIMITdisc2}.
\end{proof}

\begin{remark}\label{ASS:bdfreepathrem}
We used assumption [P3] %
for the derivation of Theorem \ref{GENLIMITthm};
cf.\ Lemma \ref{ASS:bdfreepathgenlem}.
On the other hand, let us note that \textit{if}
the statement of Theorem \ref{GENLIMITthm} holds for some fixed $\mu\in P(N_s(\scrX))$,
i.e.\ $\mu_\rho^{(\Lambda)}\xrightarrow[]{\textup{ w }}\mu$ as $\rho\to0$ for each fixed
$\Lambda\in \Pac(\T^1(\R^d))$,
and if furthermore 
$\mu(\{\emptyset\})=0$, %
then \textit{the condition [P3] %
must hold.}
Indeed, $\mu(\{\emptyset\})=0$ implies that for any $\ve>0$ there exists $R>0$ such that
\begin{align}\label{ASS:bdfreepathrempf1}
\mu(\{Y\col Y\cap((-R,R)\times\scrB_R^{d-1}\times\Sigma)=\emptyset\})<\ve.
\end{align}
Also Theorem \ref{GENLIMITthm} implies that $\mu$ is invariant under translations and under $\{D_r\}_{r>0}$
(cf.\ the proof of   
Proposition \ref{GENLIMITASLINVprop} below);
hence from \eqref{ASS:bdfreepathrempf1} it follows that
\begin{align}\label{ASS:bdfreepathrempf2}
\mu(A_\xi)<\ve
\qquad\text{with }\:
A_\xi=\{Y\col Y\cap(\fZ_\xi\times\Sigma)=\emptyset\},
\:
\xi=2R^d.
\end{align}
But $A_\xi$ is a closed subset of $N_s(\scrX)$;
hence $\mu_\rho^{(\Lambda)}\xrightarrow[]{\textup{ w }}\mu$
implies $\limsup_{\rho\to0}\mu_\rho^{(\Lambda)}(A_\xi)\leq\mu(A_\xi)<\ve$.
Applying this for $\Lambda=(\vol\times\omega)(B)^{-1}(\vol\times\omega)_{|B}$
and $\ve\to0$,
it follows that [P3] %
holds.
\end{remark}

\section{Properties of the point process $\Xi$}
\label{XiPropSec}

In this section we prove some important properties of the point process $\Xi$ with distribution $\mu$
defined by \eqref{GENMUdef}.
Our first result is that $\mu$ %
is invariant under the group generated by 
translations, $\{D_r\}$ and $\SO(d-1)$.
\begin{prop}\label{GENLIMITASLINVprop}
For any Borel subset $A\subset N_s(\scrX)$,
$\vecx\in\R^d$, $r>0$ and $k\in\SO(d-1)$,
\begin{align*}%
\mu(A)=\mu(A+\vecx)=\mu(AD_r)=\mu(Ak).
\end{align*}
\end{prop}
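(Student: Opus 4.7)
The plan is to leverage the uniqueness statement in Theorem \ref{GENLIMITthm}: for every $\Lambda\in\Pac(\T^1(\R^d))$, $\mu_\rho^{(\Lambda)}\xrightarrow[]{\textup{ w }}\mu$ as $\rho\to 0$. For each transformation $T\in\{\text{translation by }\vecx,\,D_r,\,k\in\SO(d-1)\}$, I will identify $T_*\mu_\rho^{(\Lambda)}$ (in the translation case, up to an $\L^1$-perturbation of the initial distribution that vanishes as $\rho\to 0$) with $\mu_{\rho'}^{(\Lambda')}$ for some $\Lambda'\in\Pac(\T^1(\R^d))$ and some $\rho'\to 0$. Theorem \ref{GENLIMITthm} then forces $T_*\mu_\rho^{(\Lambda)}\to\mu$, and comparing with the pushforward convergence $T_*\mu_\rho^{(\Lambda)}\to T_*\mu$ supplied by the continuous mapping theorem gives $T_*\mu=\mu$, which is equivalent to the claimed invariance.

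For $D_r$-invariance, the identity $\scrQ_\rho(\rho^{1-d}\vecq,\vecv)D_r=\scrQ_{r\rho}(\rho^{1-d}\vecq,\vecv)=\scrQ_{r\rho}\bigl((r\rho)^{1-d}(r^{d-1}\vecq),\vecv\bigr)$ (obtained from $D_\rho D_r=D_{r\rho}$), combined with the substitution $\tilde\vecq=r^{d-1}\vecq$, exhibits $(Y\mapsto YD_r)_*\mu_\rho^{(\Lambda)}=\mu_{r\rho}^{(\Lambda')}$ with $\Lambda'(\tilde\vecq,\vecv):=r^{-d(d-1)}\Lambda(r^{1-d}\tilde\vecq,\vecv)\in\Pac(\T^1(\R^d))$. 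For $\SO(d-1)$-invariance, since $k$ commutes with $D_\rho$, the map $\hat R(\vecv):=R(\vecv)k$ is a valid rotation map in the sense of Section \ref{sec:outline} (continuous off the same exceptional point, with $\vecv\hat R(\vecv)=\vece_1$), and $(Y\mapsto Yk)_*\mu_\rho^{(\Lambda)}$ is exactly the measure $\hat\mu_\rho^{(\Lambda)}$ produced by using $\hat R$ throughout. Lemma \ref{RINDEPlem} gives $\hat\mu_\vs=\mu_\vs$ for every $\vs\in\Sigma$, hence $\hat\omu_\vs^{(\vecx)}=\omu_\vs^{(\vecx)}$; the formula \eqref{GENMUdef} then yields $\hat\mu=\mu$, and Theorem \ref{GENLIMITthm} applied with $\hat R$ in place of $R$ delivers $\hat\mu_\rho^{(\Lambda)}\to\mu$.

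Translation invariance is slightly more delicate. Writing $\vecx=(x_1,\vecx_\perp)\in\R\times\R^{d-1}$, for almost every $(\vecq,\vecv)\in\T^1(\R^d)$ the identity $\scrQ_\rho(\rho^{1-d}\vecq,\vecv)+\vecx=\scrQ_\rho\bigl(\rho^{1-d}(\vecq-\vecs_\rho(\vecv)),\vecv\bigr)$ holds with $\vecs_\rho(\vecv):=\rho^{d-1}\vecx D_\rho^{-1}R(\vecv)^{-1}=(x_1,\rho^d\vecx_\perp)R(\vecv)^{-1}$, so that $\vecs_\rho(\vecv)\to x_1\vecv$ uniformly in $\vecv\in\US$ as $\rho\to 0$ (using $\vece_1 R(\vecv)^{-1}=\vecv$). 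Consequently $(Y\mapsto Y+\vecx)_*\mu_\rho^{(\Lambda)}=\mu_\rho^{(\Lambda_\rho)}$ with $\Lambda_\rho(\vecq,\vecv):=\Lambda(\vecq+\vecs_\rho(\vecv),\vecv)$, and continuity of translation in $\L^1(\R^d)$ combined with dominated convergence in $\vecv$ yields $\Lambda_\rho\to\Lambda'$ in $\L^1(\T^1(\R^d))$, where $\Lambda'(\vecq,\vecv):=\Lambda(\vecq+x_1\vecv,\vecv)\in\Pac(\T^1(\R^d))$. The trivial bound $|\mu_\rho^{(\Lambda_\rho)}(f)-\mu_\rho^{(\Lambda')}(f)|\leq\|f\|_\infty\|\Lambda_\rho-\Lambda'\|_{\L^1}$ for $f\in\C_b(N_s(\scrX))$, together with $\mu_\rho^{(\Lambda')}\to\mu$ from Theorem \ref{GENLIMITthm}, gives $\mu_\rho^{(\Lambda_\rho)}\to\mu$ and completes the proof.

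The only genuine subtlety lies in the translation case: a microscopic translation of $Y$ by a fixed $\vecx$ corresponds to an order-one macroscopic shift only in the $\vecv$-direction (by $x_1\vecv$), while the component of $\vecx$ orthogonal to $\vece_1$ contributes only an $O(\rho^d)$ macroscopic shift. This is what forces the $\L^1$-perturbation step, whereas $D_r$ and $\SO(d-1)$ admit a clean identity between $T_*\mu_\rho^{(\Lambda)}$ and $\mu_{\rho'}^{(\Lambda')}$.
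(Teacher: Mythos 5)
Your proof is correct, and for translation and $D_r$-invariance it follows the paper's argument in essentially the same way: you reproduce the paper's key identity $\scrQ_\rho(\rho^{1-d}\vecq,\vecv)+\vecx=\scrQ_\rho(\rho^{1-d}(\vecq-\vecx_\rho R(\vecv)^{-1}),\vecv)$ with $\vecx_\rho=(x_1,\rho^d\vecx_\perp)$ (your $\vecs_\rho(\vecv)$ equals the paper's $\vecx_\rho R(\vecv)^{-1}$), and the paper's $\scrQ_\rho(\rho^{1-d}\vecq,\vecv)D_r=\scrQ_{r\rho}(\rho^{1-d}\vecq,\vecv)$. The only cosmetic difference is that you pass to the limit via $\L^1$-continuity of translation and a total-variation bound, whereas the paper reduces to a compactly supported continuous density and uses uniform continuity.

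The $\SO(d-1)$ case, however, takes a genuinely different route. The paper establishes $\mu(A)=\mu(Ak)$ by a direct manipulation of the formula \eqref{GENMUdef}: it first notes that Theorem \ref{GENLIMITthm} forces $\mu$ to be independent of $c$ and $\fD$, then takes $c=0$, replaces $\fD$ by $\fD k^{-1}$, and applies [Q1] via the identity $\omu_\vs^{(\vecx)}(B)=\omu_\vs^{(\vecx k)}(Bk)$ — all without re-running any limit argument. You instead replace $R$ by $\hat R:=R\cdot k$, observe that Lemma \ref{RINDEPlem} guarantees the $\hat R$-version of [P2] has the same limit measures $\mu_\vs$, and then use the fact that \eqref{GENMUdef} is manifestly $R$-free to conclude that the $\hat R$-version of Theorem \ref{GENLIMITthm} delivers the same limit $\mu$; comparing with the continuous-mapping limit $(Y\mapsto Yk)_*\mu$ closes the argument. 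Both are sound. Your approach has the virtue of treating all three invariances with a uniform template (identify the pushforward of $\mu_\rho^{(\Lambda)}$ with a sequence that converges to $\mu$, then compare limits), whereas the paper's is more self-contained, manipulating a fixed formula rather than implicitly re-inspecting the proof of Theorem \ref{GENLIMITthm} with a different rotation map. One small inconsistency worth flagging: your opening plan promises to identify $T_*\mu_\rho^{(\Lambda)}$ with some $\mu_{\rho'}^{(\Lambda')}$, but for $k\in\SO(d-1)$ no such identification exists with the original $R$ — since $\scrQ_\rho(\cdot,\vecv)k$ cannot be rewritten as $\scrQ_\rho(\cdot,\vecv')$ for any $\vecv'$ — which is exactly what forces the switch to $\hat R$ in that case.
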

\begin{proof}
To prove translation invariance we prove that for any $f\in\C_b(N_s(\scrX))$ and $\vecx\in\R^d$,
if $f_\vecx(Y):=f(Y+\vecx)$ then $\mu(f_\vecx)=\mu(f)$.
Take any $\Lambda\in P(\T^1(\R^d))$ with $\Lambda'\in\C_c(\T^1(\R^d))$; then by Theorem \ref{GENLIMITthm},
\begin{align*}
\mu(f_\vecx)
&=\lim_{\rho\to0}\int_{\T^1(\R^d)}f\bigl(\scrQ_\rho(\rho^{1-d}\vecq,\vecv)+\vecx\bigr)\,\Lambda'(\vecq,\vecv)\,d\vecq\,d\vecv.
\end{align*}
Writing $\vecx_\rho:=(x_1,\rho^dx_2,\ldots,\rho^dx_d)$ we have
$\scrQ_\rho(\rho^{1-d}\vecq,\vecv)+\vecx=\scrQ_\rho(\rho^{1-d}(\vecq-\vecx_\rho R(\vecv)^{-1}),\vecv)$
for almost all $(\vecq,\vecv)$;
hence we get
\begin{align*}
\mu(f_\vecx)
&=\lim_{\rho\to0}\int_{\T^1(\R^d)}f\bigl(\scrQ_\rho(\rho^{1-d}\vecq,\vecv)\bigr)
\,\Lambda'(\vecq+\vecx_\rho R(\vecv)^{-1},\vecv)\,d\vecq\,d\vecv,
\\
&=\lim_{\rho\to0}\int_{\T^1(\R^d)}f\bigl(\scrQ_\rho(\rho^{1-d}\vecq,\vecv)\bigr)
\,\Lambda'(\vecq+x_1\vecv,\vecv)\,d\vecq\,d\vecv
=\mu(f),
\end{align*}
where the second equality follows using $\lim_{\rho\to0}\vecx_\rho=x_1\vece_1$ and $\Lambda'\in\C_c$,
and the third equality follows by again using Theorem \ref{GENLIMITthm}.

The invariance under $\{D_r\}$ is proved by a similar argument using Theorem \ref{GENLIMITthm}
and\linebreak
$\scrQ_\rho(\rho^{1-d}\vecq,\vecv)D_r=\scrQ_{r\rho}(\rho^{1-d}\vecq,\vecv)$.

Finally let $k\in\SO(d-1)$ and let $A$ be a Borel set in $N_s(\scrX)$.
It follows from Theorem \ref{GENLIMITthm}
that the measure $\mu$ does not depend on the choice of $\fD$ and $c$.
In particular, taking $c=0$ and replacing $\fD$ by $\fD k^{-1}$ in \eqref{GENMUdef}, we have
\begin{multline*}
\mu(A)=c_\scrP\int_{\Sigma}\int_0^{\infty}\int_{\fD k^{-1}}
\omu_{{\vs}}^{(\tau,\vecb)}\bigl(\bigl\{Y\col Y\cap((0,\tau)\times\fD k^{-1}) %
=\emptyset\text{ and }Y\in A\bigr\}\bigr)
\\ \times d\vecb\,d\tau\,d\mm({\vs}).
\end{multline*}
It follows from [Q1] %
and the definition of $\omu_{\vs}^{(\vecx)}$
that $\omu_{\vs}^{(\vecx)}(B)=\omu_{\vs}^{(\vecx k)}(Bk)$
for all Borel sets $B\subset N_s(\scrX)$ and all $\vs\in\Sigma$, $\vecx\in\R^d$.
Hence the integrand in the previous expression can be replaced by
\begin{align*}
\omu_{{\vs}}^{(\tau,\vecb k)}\bigl(\bigl\{Y\col Y\cap\fC_\tau=\emptyset\text{ and }Y\in Ak\bigr\}\bigr),
\end{align*}
and substituting now $\vecb=\vecb_{\text{new}}k^{-1}$ we obtain 
$\mu(A)=\mu(Ak)$.
\end{proof}
Also the property [Q2] %
extends to $\Xi$:
\begin{lem}\label{Xie1simplelem}
$\mu(\{Y\in N_s(\scrX)\col\exists x_1\in\R\textrm{ s.t.\ }\#(Y\cap(\{x_1\}\times\R^{d-1}))>1\})=0$.
\end{lem}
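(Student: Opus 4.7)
My plan is to show the statement fiberwise with respect to the integral representation \eqref{GENMUdef}: it suffices to prove that for every $\vs\in\Sigma$ and every $\vecx\in\R^d$,
\begin{equation*}
\omu_{\vs}^{(\vecx)}\bigl(\bigl\{Y\in N_s(\scrX)\col \exists x_1\in\R\text{ s.t.\ }\#(Y\cap(\{x_1\}\times\R^{d-1}))>1\bigr\}\bigr)=0,
\end{equation*}
since then integrating over $(\vs,\tau,\vecb)$ against $d\vecb\,d\tau\,d\mm(\vs)$ yields $\mu(E)=0$, where $E$ denotes the event in the lemma. Recall that $\oXi_\vs+\vecx=(\Xi_\vs+\vecx)\cup\{(\vecx,\vs)\}$, so a first-coordinate coincidence in $\oXi_\vs+\vecx$ can occur in exactly two ways.

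The first possibility is that two distinct points of $\Xi_\vs+\vecx$ share a first coordinate; this is equivalent to two distinct points of $\Xi_\vs$ sharing a first coordinate, which has probability zero by the assumption [Q2]. The second possibility is that some point of $\Xi_\vs+\vecx$ shares its first coordinate with the distinguished point $(\vecx,\vs)$; this is equivalent to $\Xi_\vs$ having at least one point with first coordinate equal to $0$, i.e.\ a point lying in the hyperplane $H:=\{0\}\times\R^{d-1}\times\Sigma\subset\scrX$.

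To handle this second possibility, the key step is to apply Lemma \ref{GROUNDINTENSITYlem2}, which bounds the intensity measure of $\Xi_\vs$ from above by $c_\scrP\mu_\scrX$. Since $\mu_\scrX(H)=\vol(\{0\}\times\R^{d-1})\cdot\mm(\Sigma)=0$, we get
\begin{equation*}
\int_{N_s(\scrX)}\#(Y\cap H)\,d\mu_\vs(Y)\leq c_\scrP\mu_\scrX(H)=0,
\end{equation*}
so $\Xi_\vs\cap H=\emptyset$ almost surely. Combining the two cases gives the fiberwise vanishing claimed above, and integrating completes the proof. I do not foresee any genuine obstacle; the only point worth being careful about is the second case, where one must remember that $\oXi_\vs$ contains the deterministic point at the origin in addition to the random points of $\Xi_\vs$, and thus needs the intensity bound of Lemma \ref{GROUNDINTENSITYlem2} rather than just [Q2].
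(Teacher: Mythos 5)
Your proof is correct and follows essentially the same route as the paper: reduce via \eqref{GENMUdef} to a fiberwise statement for $\omu_\vs^{(\vecx)}$, then split the event into coincidences among the random points of $\Xi_\vs$ (handled by [Q2]) versus a coincidence of a random point with the distinguished point at the origin, the latter excluded by the intensity bound of Lemma \ref{GROUNDINTENSITYlem2} applied to the measure-zero hyperplane $\{0\}\times\R^{d-1}\times\Sigma$. The only cosmetic difference is that the paper applies Lemma \ref{GROUNDINTENSITYlem2} to that hyperplane minus $\{(\bn,\vs)\}$, but applying it to the whole hyperplane as you do works just as well.
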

\begin{proof}
Let $A=\{Y\col\exists x_1\in\R\textrm{ s.t.\ }\#(Y\cap(\{x_1\}\times\R^{d-1}))>1\}$.
Then by \eqref{GENMUdef}, applied with $c=0$ for simplicity, it suffices to prove that
$\omu_\vs^{(\tau,\vecb)}(A)=0$ for all $\vs\in\Sigma$, $\tau>0$, $\vecb\in\fD$.
However it follows from the definitions of $A$ and $\omu_\vs^{(\tau,\vecb)}$ that
\begin{align*}
\omu_\vs^{(\tau,\vecb)}(A)
=\mu_\vs\bigl(\bigl\{Y\col Y\in A\:\text{ or }\:Y\cap(\{0\}\times\R^{d-1}\times\Sigma)\not\subset\{(\bn,\vs)\}\bigr\}\bigr).
\end{align*}
Hence $\omu_\vs^{(\tau,\vecb)}(A)=0$ follows as a consequence of [Q2] %
and Lemma \ref{GROUNDINTENSITYlem2}
(applied with $B=\{0\}\times\R^{d-1}\times\Sigma\setminus\{(\bn,\vs)\}$;
recall also $\Sigma'=\Sigma$; cf.\ \eqref{SIGMAeqp}).
\end{proof}

Next we prove that the probability of $\Xi$ having empty intersection with a large ball is small
(just as for $\Xi_\vs$; cf.\ [Q3]). %
\begin{lem}\label{GENnonemptylem}
For every $\ve>0$ there is some $R>0$ such that, for every $\vecx\in\R^d$,
\begin{align*}
\mu(\{Y\in N_s(\scrX)\col Y\cap\scrB^d(\vecx,R)=\emptyset\})<\ve.
\end{align*}
\end{lem}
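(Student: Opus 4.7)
The plan is to deduce this directly from the defining integral formula \eqref{GENMUdef}, combined with the translation invariance of $\mu$ (Proposition~\ref{GENLIMITASLINVprop}) and the normalisation provided by Proposition~\ref{GENLIMITcor1}. First I would reduce to the case $\vecx=\vecnull$: the set $\{Y : Y\cap\scrB^d(\vecx,R)=\emptyset\}$ is the $\vecx$-translate of $\{Y : Y\cap\scrB^d(\vecnull,R)=\emptyset\}$, and translation invariance of $\mu$ yields equal $\mu$-measure.

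The key observation is that $\omu_{\vs}^{(c+\tau,\vecb)}$ is by definition the distribution of $\oXi_{\vs}+(c+\tau,\vecb)$, and $\oXi_{\vs}=\Xi_{\vs}\cup\{(\bn,\vs)\}$. Hence any realisation $Y$ drawn from $\omu_{\vs}^{(c+\tau,\vecb)}$ almost surely contains the point $((c+\tau,\vecb),\vs)$. In particular, as soon as $(c+\tau,\vecb)\in\scrB^d(\vecnull,R)$, the event $\{Y\cap\scrB^d(\vecnull,R)=\emptyset\}$ has $\omu_{\vs}^{(c+\tau,\vecb)}$-probability zero.

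Given $\ve>0$, I would use Proposition~\ref{GENLIMITcor1} (which says the full integral defining $\mu$ equals $1$) to pick $T_0>0$ so large that the tail
\begin{align*}
c_\scrP\int_{\Sigma}\int_{T_0}^{\infty}\int_{\fD}
\omu_{\vs}^{(c+\tau,\vecb)}\bigl(\bigl\{Y\col Y\cap\fC_\tau=\emptyset\bigr\}\bigr)\,d\vecb\,d\tau\,d\mm(\vs)<\ve,
\end{align*}
and then choose $R:=|c|+T_0+\sup_{\vecb\in\overline{\fD}}\|\vecb\|+1$. With this $R$, for all $\tau\in(0,T_0]$ and $\vecb\in\fD$ the point $(c+\tau,\vecb)$ lies strictly inside $\scrB^d(\vecnull,R)$, so by the previous paragraph the integrand of \eqref{GENMUdef} applied to $A=\{Y:Y\cap\scrB^d(\vecnull,R)=\emptyset\}$ vanishes on $\tau\in(0,T_0]$. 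Bounding the contribution of $\tau>T_0$ by dropping the extra constraint $Y\cap\scrB^d(\vecnull,R)=\emptyset$ then gives $\mu(A)<\ve$, which completes the proof.

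The argument is essentially a bookkeeping step: the genuine content (in particular the use of [P3] and [Q3]) has already been packaged into the construction of $\mu$ via Theorem~\ref{GENLIMITthm} and the mass normalisation of Proposition~\ref{GENLIMITcor1}, so no significant obstacle remains beyond correctly unpacking the definition of $\omu_{\vs}^{(\vecx)}$.
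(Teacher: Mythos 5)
Your proof is correct, and it takes a slightly different route from the paper's at the key step. Both arguments begin with the same reduction to $\vecx=\bn$ via the translation invariance of $\mu$ (Proposition~\ref{GENLIMITASLINVprop}) and both ultimately lean on Proposition~\ref{GENLIMITcor1}, but they use it differently. The paper argues that, for each fixed $\langle\vs,\tau,\vecb\rangle$, the integrand $\omu_{\vs}^{(c+\tau,\vecb)}\bigl(\{Y\col Y\cap\fC_\tau=\emptyset\text{ and }Y\cap\scrB_R^d=\emptyset\}\bigr)$ tends to $0$ as $R\to\infty$ (invoking [Q3]), and then passes to the limit by dominated convergence, with the integrable majorant supplied by Proposition~\ref{GENLIMITcor1}. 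You instead exploit the deterministic atom of $\omu_{\vs}^{(c+\tau,\vecb)}$: every realisation contains the point $\bigl((c+\tau,\vecb),\vs\bigr)$, so the integrand vanishes \emph{identically} once $(c+\tau,\vecb)\in\scrB^d(\bn,R)$, i.e.\ for all $\tau\le T_0$ with your explicit choice of $R$; the range $\tau>T_0$ is then handled by the tail of the convergent integral in Proposition~\ref{GENLIMITcor1}, which plays the role the DCT majorant plays in the paper. What your version buys is that it is effectively quantitative ($R$ is explicit in terms of the tail of the $\tau$-integral) and it bypasses [Q3] at this point, relying only on results already established earlier in Section~\ref{GENLIMITsec}; what the paper's version buys is brevity of statement once [Q3] and dominated convergence are taken for granted. (In fact your atom observation also gives the paper's pointwise claim directly, so the two proofs are close in spirit; the difference is in how the limit in $R$ is organised.) One cosmetic remark: "almost surely contains the point" can be strengthened to "always contains the point", since by definition $\omu_{\vs}^{(\vecx)}$ is the law of $(\Xi_\vs+\vecx)\cup\{(\vecx,\vs)\}$.
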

\begin{proof}
By the translation invariance of $\mu$ (cf.\ Proposition \ref{GENLIMITASLINVprop}),
it suffices to prove the claim for $\vecx=\bn$.
Now by \eqref{GENMUdef}, our task is to prove
\begin{multline}\label{GENnonemptylempf1}
\lim_{R\to\infty}
\int_{\Sigma}\int_0^{\infty}\int_{\fD}
\omu_{{\vs}}^{(c+\tau,\vecb)}\bigl(\bigl\{Y\in N_s(\scrX)\col Y\cap\fC_\tau=\emptyset\text{ and }Y\cap\scrB_R^d=\emptyset
\bigr\}\bigr)\\ \times d\vecb\,d\tau\,d\mm({\vs})=0.
\end{multline}
However it follows from the definition of $\omu_{\vs}^{(\vecx)}$ and [Q3] %
that the integrand in the last expression tends pointwise to $0$ as $R\to\infty$.
Hence \eqref{GENnonemptylempf1} follows by Lebesgue's Dominated Convergence Theorem,
using the majorant coming from Proposition \ref{GENLIMITcor1}.
\end{proof}

Let $\scrN$ be the Borel $\sigma$-algebra of $N_s(\scrX)$.
The next proposition shows (when applied with $A=N_s(\scrX)$) that the intensity measure of $\Xi$ equals
$c_\scrP\cdot\mu_\scrX$,
and furthermore %
that the function $\scrX\times\scrN\to[0,1]$,
$((\vecx,\vs),A)\mapsto\omu_\vs^{(\vecx)}(A)$ gives 
the Palm distributions of $\Xi$ %
(cf.\ \cite[Ch.\ 10]{kallenberg86}).
\begin{prop}\label{GENPALMprop}
For any Borel sets $B\subset\scrX$ and $A\subset N_s(\scrX)$,
\begin{align*}%
\int_{A}\#(Y\cap B)\,d\mu(Y)=c_\scrP\int_{(\vecx,\vs)\in B} \omu_\vs^{(\vecx)}(A)\,d\vecx\,d\mm(\vs).
\end{align*}
\end{prop}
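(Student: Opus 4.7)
The plan is to establish the identity
$$I(f,A):=\int\sum_{y\in Y}f(y)\,I_A(Y)\,d\mu(Y) = c_\scrP\int_\scrX f(y)\,\omu_\vs^{(\vecx)}(A)\,d\mu_\scrX(\vecx,\vs)$$
for every $f\in\C_c(\scrX)$, $f\geq0$, and every $A$ in a $\mu$-continuity generating subclass of $\scrN$ (e.g.\ the void sets $\{Y\col Y\cap B=\emptyset\}$ for bounded Borel $B\subset\scrX$). The proposition then follows by a standard monotone-class argument together with uniqueness of $\sigma$-finite measures. Both sides of this refined Campbell identity will be computed as $\rho\to0$ limits of the analogous quantities for the prelimit measures $\mu_\rho^{(\Lambda)}$ of Theorem \ref{GENLIMITthm}, for any fixed $\Lambda\in\Pac(\T^1(\R^d))$ having a continuous, compactly supported density $\Lambda'$.

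First I would compute $I_\rho:=\int\sum_{y\in Y}f(y)I_A(Y)\,d\mu_\rho^{(\Lambda)}(Y)$ directly. Unfolding the sum over $y\in\scrQ_\rho(\rho^{1-d}\vecq,\vecv)$ as a sum over $\vecp\in\scrP$, swapping the order of integration, and, for each fixed $\vecp$, making the substitution $\vecx=(\vecp-\rho^{1-d}\vecq)R(\vecv)D_\rho$ (so that $d\vecq=\rho^{d(d-1)}\,d\vecx$ and, for generic $\vecx$, $\scrQ_\rho(\rho^{1-d}\vecq,\vecv)=\oQ_\rho(\vecp,\vecv)+\vecx$) yields
\begin{align*}
I_\rho = \rho^{d(d-1)}\sum_{\vecp\in\scrP}\int_\US\int_{\R^d} &f(\vecx,\vs(\vecp))\,I(\oQ_\rho(\vecp,\vecv)+\vecx\in A)\\
&\times\Lambda'\bigl(\rho^{d-1}\vecp-\rho^{d-1}\vecx D_\rho^{-1}R(\vecv)^{-1},\vecv\bigr)\,d\vecx\,d\vecv.
\end{align*}
Since $f\in\C_c$, only $\vecp\in\scrP_T(\rho)$ for some fixed $T$ contribute. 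Next I would pass to the limit: Lemma \ref{ASS:KEYbblem2} (the key statement $\omu_{\vecq,\rho}^{(\vecx,\lambda)}\xrightarrow{\mathrm{w}}\omu_{\vs(\vecq)}^{(\vecx)}$, uniform in $\vecq\in\scrP_T(\rho)$ and $\vecx$ in a compact set) together with $\rho^{d-1}\vecx D_\rho^{-1}R(\vecv)^{-1}\to x_1\vecv$ and the continuity of $\Lambda'$ show that, uniformly in $\vecp\in\scrP_T(\rho)$ and $\vecx$ in a compactum,
$$\int_\US I(\oQ_\rho(\vecp,\vecv)+\vecx\in A)\,\Lambda'(\cdot\,,\vecv)\,d\vecv\;\longrightarrow\;\omu_{\vs(\vecp)}^{(\vecx)}(A)\int_\US\Lambda'(\rho^{d-1}\vecp-x_1\vecv,\vecv)\,d\vecv.$$
Setting $\vecy=\rho^{d-1}\vecp$ and applying Lemma \ref{ASYMPTDENSITY1CONSlem} to the outer sum (after checking that the resulting summand is, up to an error controlled by the previous uniform convergence, a continuous function of $(\vecy,\vs)$ of compact support) gives
$$\lim_{\rho\to0}I_\rho = c_\scrP\int_\scrX f(y)\,\omu_\vs^{(\vecx)}(A)\,\bigg(\int_{\R^d}\int_\US\Lambda'(\vecy-x_1\vecv,\vecv)\,d\vecv\,d\vecy\bigg)\,d\mu_\scrX(y),$$
and the bracketed factor equals $1$ by the substitution $\vecy'=\vecy-x_1\vecv$ combined with $\iint\Lambda'=1$.

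Independently, Theorem \ref{GENLIMITthm} yields $\mu_\rho^{(\Lambda)}\xrightarrow{\mathrm{w}}\mu$, and the same prelimit formula applied with $f$ replaced by $I_{\supp f}$ and $A=N_s(\scrX)$ produces the uniform moment bound $\int\#(Y\cap\supp f)\,d\mu_\rho^{(\Lambda)}\leq c_\scrP\mu_\scrX(\supp f)+o(1)$, which together with the $\mu$-continuity of $A$ gives $I_\rho\to I(f,A)$. Matching the two expressions for $\lim_\rho I_\rho$ then yields the identity.

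The main obstacle will be the uniform passage to the limit of the nested structure ``sum over $\vecp\in\scrP_T(\rho)$, inner $\vecv$-integral of $I_A\cdot\Lambda'$''. Two issues have to be addressed: (i) the weight $\Lambda'$ depends on $\vecv$, so Lemma \ref{ASS:KEYbblem2} is not directly applicable with a fixed $\lambda$, and one must either normalise $\Lambda'$ into a family $\lambda_{\vecy,\vecx,\rho}\in\Pac(\US)$ and verify continuous dependence of the convergence in $\lambda$, or partition $\US$ into small pieces as in the proof of Lemma \ref{PRODUNIFCONVlem}; and (ii) the discontinuity of $I_A$, which is absorbed by restricting to a $\mu$-continuity generating class and then extending by monotone convergence. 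A secondary technical point is the $\rho$-dependence of the summand $G_\rho(\vecy,\vs)$ entering Lemma \ref{ASYMPTDENSITY1CONSlem}, which is handled by splitting $G_\rho=G_0+(G_\rho-G_0)$ and using [P1] to bound the error.
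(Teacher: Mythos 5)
Your approach is genuinely different from the paper's, and the gap lies not where you flag your difficulties (i) and (ii), both of which are handled in essentially the same way elsewhere in the paper, but in the second half of the ``matching'' argument: the claim that the uniform \emph{first} moment bound $\int\#(Y\cap\supp f)\,d\mu_\rho^{(\Lambda)}\leq c_\scrP\mu_\scrX(\supp f)+o(1)$, together with $\mu_\rho^{(\Lambda)}\xrightarrow{\textup{ w }}\mu$ and $\mu$-continuity of $A$, gives $I_\rho\to I(f,A)$. The functional $Y\mapsto\sum_{y\in Y}f(y)\,I_A(Y)$ is unbounded, and a uniform first moment bound is not enough to conclude uniform integrability of $\Phi(Y):=\sum_{y\in Y}f(y)$ along the family $\{\mu_\rho^{(\Lambda)}\}$; it only bounds the tail probability $\mu_\rho^{(\Lambda)}(\Phi>N)$, not the tail integral $\int_{\Phi>N}\Phi\,d\mu_\rho^{(\Lambda)}$. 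What would suffice is a uniform second moment bound, or the \emph{moment convergence} $\int\Phi\,d\mu_\rho^{(\Lambda)}\to\int\Phi\,d\mu$ (which, by a Scheff\'e-type argument, would upgrade to uniform integrability). But the latter requires you to already know $\int\Phi\,d\mu=c_\scrP\int f\,d\mu_\scrX$ — which is exactly the $A=N_s(\scrX)$ case of the proposition you are trying to prove, so the argument becomes circular. Portmanteau/Fatou only gives the one-sided bound $\int\Phi\,d\mu\leq c_\scrP\int f\,d\mu_\scrX$, and summing the analogous bounds over $A$ and $A^c$ does not close the loop without that one-sided inequality being an equality.

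The paper sidesteps all this by never comparing $\mu$ with the prelimit measures $\mu_\rho^{(\Lambda)}$ in the proof of this proposition. Instead it works directly from the explicit defining formula \eqref{GENMUdef} for $\mu$, substitutes $\vecx=(\tau,\vecb)$, reduces to the case $B\subset(0,\eta)\times\fD\times\Sigma$, and sandwiches $\int_A\#(Y\cap B)\,d\mu(Y)$ between $c_\scrP\int_B\omu_\vs^{(\vecx)}(A)\,d\vecx\,d\mm(\vs)$ plus and minus errors $O(\eta^2)$, using the intensity bound from Lemma \ref{GROUNDINTENSITYlem2} and the elementary inequality \eqref{BASICYxvsintBfact}. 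The error is then killed by translation invariance of $\mu$ (Proposition \ref{GENLIMITASLINVprop}) combined with additivity of the discrepancy $\delta(A,B)$ in $B$, which upgrades the $O(\eta^2)$ bound on thin slabs to an $O(k\eta^2)$ bound on slabs of width $k\eta$ and then to $\delta(A,B)\equiv 0$ by sending $\eta\to0$ at rate $1/k$. Your approach could in principle be salvaged by establishing a uniform second moment bound (unfolding $I_\rho$ as a double sum over $\vecp,\vecp'\in\scrP$ and applying a two-point version of the spherical equidistribution), but that is a substantial additional development not present in your outline, and it would make for a considerably heavier proof than the paper's direct argument.
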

\begin{proof}
By \eqref{GENMUdef}, applied with $c=0$, we have
\begin{align}\notag
&\int_{A}\#(Y\cap B)\,d\mu(Y) \\
& =c_\scrP\int_{\Sigma}\int_0^{\infty}\int_{\fD}\int_{A} \notag
I(Y\cap\fC_\tau=\emptyset)\#(Y\cap B)\,d\omu_{{\vs}}^{(\tau,\vecb)}(Y)\,d\vecb\,d\tau\,d\mm({\vs})
\\\label{GENPALMpropPF1}
&=c_\scrP\int_{\Sigma}\int_{\R_{>0}\times\fD}\int_{N_s(\scrX)}
I\big(Y_{(\vecx,\vs)}\in A\big)\,
I\big(Y_{(\vecx,\vs)}\cap\fC_{x_1}=\emptyset\big)\\ 
& \hspace{120pt} \times \#(Y_{(\vecx,\vs)}\cap B)\,d\mu_{{\vs}}(Y)\,d\vecx\,d\mm({\vs}). \notag
\end{align}

Now assume $B\subset(0,\eta)\times\fD\times\Sigma$ for some $\eta>0$.
Set $\fD':=\fD-\fD$ (this is an open bounded subset of $\R^{d-1}$), and
$E_\eta:=\{Y\in N_s(\scrX)\col Y\cap((-\eta,0)\times\fD')=\emptyset\}$.
Then for every $(\vecx,\vs)\in B$ and $Y\in E_\eta$,
if $Y_{(\vecx,\vs)}\in A$ then the integrand in the last expression in \eqref{GENPALMpropPF1} 
is $\geq1$.
Also $\mu_\vs(E_\eta)\geq1-c_\scrP\eta\vol(\fD')$, by Lemma \ref{GROUNDINTENSITYlem2}
(and \eqref{SIGMAeqp}).
Hence
\begin{align*}
\int_{A} &\#(Y\cap B)\,d\mu(Y)
\geq c_\scrP\int_{(\vecx,\vs)\in B}\int_{E_\eta}
I\big(Y_{(\vecx,\vs)}\in A\big)\,d\mu_\vs(Y)\,d\vecx\,d\mm(\vs)
\\
&\geq c_\scrP\int_{(\vecx,\vs)\in B}\Bigl(\mu_\vs\bigl(\bigl\{Y\col
Y_{(\vecx,\vs)}\in A\bigr\}\bigr)-c_\scrP\eta\vol(\fD')\Bigr)
\,d\vecx\,d\mm(\vs)
\\
&\geq c_\scrP\int_{(\vecx,\vs)\in B}\omu_\vs^{(\vecx)}(A)\,d\vecx\,d\mm(\vs)-c_\scrP^2\eta^2\vol(\fD')\vol(\fD).
\end{align*}
On the other hand, for any $\vecx\in\R_{>0}\times\fD$, $\vs\in\Sigma$ and $Y\in N_s(\scrX)$, note that 
\begin{align}\label{BASICYxvsintBfact}
\#(Y_{(\vecx,\vs)}\cap B)\leq I((\vecx,{\vs})\in B)+\#((Y+\vecx)\cap B).
\end{align}
Using $B\subset(0,\eta)\times\fD\times\Sigma$
we see that every point in the intersection $(Y+\vecx)\cap B$
must come from a point $\vecy\in Y$ with
$0<y_1+x_1<\eta$ and $(y_2,\ldots,y_d)\in\fD'$.
If the integrand in \eqref{GENPALMpropPF1} is non-zero then for this point $\vecy$ we also have
$\vecy+\vecx\notin\fC_{x_1}$;
thus $\vecy+\vecx\in B\setminus\fC_{x_1}$,
which forces $0<x_1<\eta$.
Hence
\begin{align*}
\int_{A}\#(Y\cap B)\,d\mu(Y)
&\leq c_\scrP\int_{(\vecx,\vs)\in B}\int_{N_s(\scrX)}
I\big(Y_{(\vecx,\vs)}\in A\big)\,d\mu_{{\vs}}(Y)\,d\vecx\,d\mm({\vs})
\\
&
\qquad +c_\scrP\int_{(0,\eta)\times\fD\times\Sigma}\int_{N_s(\scrX)}
\#((Y+\vecx)\cap B)\,d\mu_{{\vs}}(Y)\,d\vecx\,d\mm({\vs})
\\
&\leq c_\scrP\int_{(\vecx,\vs)\in B}\omu_\vs^{(\vecx)}(A)\,d\vecx\,d\mm(\vs)
+c_\scrP^2\eta^2\vol(\fD)^2,
\end{align*}
where we again used Lemma \ref{GROUNDINTENSITYlem2}.

Let us write 
\begin{align*}
\delta(A,B):=\int_{A}\#(Y\cap B)\,d\mu(Y)-c_\scrP\int_{(\vecx,\vs)\in B}\omu_\vs^{(\vecx)}(A)\,d\vecx\,d\mm(\vs).
\end{align*}
We have proved above that if $B\subset(0,\eta)\times\fD\times\Sigma$ then
\begin{align}\label{GENPALMpropPF2}
\bigl|\delta(A,B)\bigr|\leq c_\scrP^2\eta^2\vol(\fD)\vol(\fD').
\end{align}
However from the definition of $\omu_\vs^{(\vecx)}$ and the fact that $\mu$ is translation invariant
(cf.\ Prop.\ \ref{GENLIMITASLINVprop}),
it follows that for any Borel sets $B\subset\scrX$ and $A\subset N_s(\scrX)$, and any $\vecx\in\R^d$,
\begin{align}\label{GENPALMpropPF3}
\delta(A+\vecx,B+\vecx)=\delta(A,B).
\end{align}
Using this relation it follows in particular that \eqref{GENPALMpropPF2} holds 
whenever $B\subset(c,c+\eta)\times\fD\times\Sigma$ for some $c\in\R$.
Furthermore $\delta(A,B)$ is additive in the second argument,
i.e.\ $\delta(A,\cup_{j=1}^kB_j)=\sum_{j=1}^k\delta(A,B_j)$ whenever $B_1,\ldots,B_k$ are pairwise disjoint.
Combining the last two facts one shows that for any $\eta>0$ and $k\in\Z^+$,
if $B\subset(0,k\eta)\times\fD\times\Sigma$ then
\begin{align}\label{GENPALMpropPF4}
\bigl|\delta(A,B)\bigr|\leq kc_\scrP^2\eta^2\vol(\fD)\vol(\fD').
\end{align}
Now given any Borel sets $B\subset(0,1)\times\fD\times\Sigma$ and $A\subset N_s(\scrX)$,
applying \eqref{GENPALMpropPF4} with $\eta=1/k$ and $k\to\infty$ we conclude that
$\delta(A,B)=0$.
Finally this relation is extended to hold for general $B$,
again using \eqref{GENPALMpropPF3} and the additivity of $\delta(A,B)$ with respect to $B$
(which also holds for any countable collection of pairwise disjoint sets $B_1,B_2,\ldots$).
\end{proof}

\chapter{First collisions}
\label{FIRSTCOLLISIONsec}

As a first step in our proof of a limiting Markov process and limiting evolution equation,
we will prove a result %
on the limiting joint distribution of the free path length,
impact parameter and the mark of the scatterer which is hit when
starting from random initial conditions.
The precise statement is given in Theorem \ref{Thm2gen} in Section \ref{FIRSTCOLLsec} below.

Throughout this section we assume the hypotheses [P1-3] and [Q1-3] stated in
Section \ref{ASSUMPTLISTsec};
{\blu we fix once and for all, a choice of a subset $\scrE\subset\scrP$ as in [P2],
and we furthermore assume,
without loss of generality,
that \eqref{SIGMAeqp} holds.}

\section{The transition kernel}
\label{TRANSKERsec}
Our first goal is to define the transition kernel, which is the limiting density function appearing in 
Theorem \ref{Thm2gen}.
We will identify $\R^{d-1}$ with the subspace $\{0\}\times\R^{d-1}$ of $\R^d$;
in particular for $\vecx=(x_1,\ldots,x_d)\in\R^d$ we view interchangeably
the point $\vecx_\perp$ (cf.\ \eqref{Xperpdef1})
as $(0,x_2,\ldots,x_d)$ or $(x_2,\ldots,x_d)$.
We extend $\vecx\mapsto\vecx_\perp$ to a map on $\scrX$ through $(\vecx,\vs)\mapsto(\vecx,\vs)_\perp:=(\vecx_\perp,\vs)$.
\label{perpdef2}
Thus $\scrX_\perp$ \label{Xperp} can be identified with $\R^{d-1}\times\Sigma$.
We set\label{OmegaDEF}
\begin{align*}
\Omega:=\scrB_1^{d-1}\times\Sigma\subset\scrX_\perp.
\end{align*}
We endow $\scrX_\perp$ with the measure
\begin{align}\label{ppdefG}
\mu_\Omega = \frac1{v_{d-1}}\vol_{\R^{d-1}}\times\mm,
\end{align}
where $v_{d-1}=\vol(\scrB_1^{d-1})$.
\label{vdm1def}
Note that $\mu_\Omega$ restricts to a probability measure on $\Omega$.
We introduce the reflection map
\begin{align}\label{iotaDEF}
\iota:\scrX\to\scrX,\qquad \iota(x_1,\vecx,{\vs})=(x_1,-\vecx,{\vs})
\hspace{40pt} %
(x_1\in\R,\:\vecx\in\R^{d-1},\:{\vs}\in\Sigma).
\end{align}
Note that $\iota$ preserves $\scrX_\perp$ and $\Omega$,
and using our identifications we have
$\iota(\vecx,{\vs})=(-\vecx,{\vs})$ for $(\vecx,{\vs})\in\scrX_\perp$.
Recall that $\fZ_\xi=(0,\xi)\times\scrB_1^{d-1}$, where in the following $\xi\in(0,\infty]$.\label{Zinfty}
Recall also the convention introduced in Section \ref{GENLIMITsec},
that for $A\subset\R^d$ and $B\subset\scrX$, 
we write ``$A\cap B$'' or ``$B\cap A$'' for $B\cap(A\times\Sigma)$.
In a similar vein, we may often speak of a point in $\scrX$ referring just to its $\R^d$-component;
for example, for $(\vecx,\vs)\in\scrX$ we may call the number $\vecx\cdot\vece_1$
``the $\vece_1$-coordinate of $(\vecx,\vs)$''.

We now define the map
\begin{align}\label{DeltaDEF}
\vecz:N_s(\scrX)\to\Delta:=(\R_{>0}\times\Omega)\sqcup\{\undef\}  %
\end{align}
as follows.
Given ${Y}\in N_s(\scrX)$,
let $\vecz=\vecz(Y)$ be that point in $Y\cap\fZ_\infty$ which has minimal $\vece_1$-coordinate;
if there does not exist a unique such point\footnote{i.e., if $Y\cap\fZ_\infty$ is empty or if there are two or more points in
$Y\cap\fZ_\infty$ with minimal $\vece_1$-coordinate.}
then let $\vecz({Y})=\undef$.
Here ``$\undef$'' is a dummy element not in $\R_{>0}\times\Omega$ and we provide
$\Delta$ with the disjoint union topology.
{\blu The motivation for the definition of the map $\vecz$
is that when working in the particle's coordinate frame
described at the beginning of Section \ref{sec:outline},
as $\rho\to0$, 
the scatterers are thin ellipsoids which 
approach codimension one unit discs orthogonal to $\vece_1$,
centered at the points of
$\scrQ_\rho(\vecq,\vecv)$.
Hence, formally, in the limit $\rho\to0$, 
the point $\vecz(\scrQ_\rho(\vecq,\vecv))$ corresponds to 
the center of the scatterer
which the particle will next collide with.}

\begin{lem}\label{BASICkidpreplemG}
The map $\vecz$ is Borel measurable,
and $\mu_{\vs}(\{Y\in N_s(\scrX)\col \vecz(Y-\vecx)=\undef\})=0$
for all $\vecx\in\R^d$ and ${\vs}\in\Sigma$.
\end{lem}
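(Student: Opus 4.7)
The proof splits naturally into the Borel measurability of $\vecz$ and the almost-sure statement $\mu_\vs(\vecz(Y-\vecx)=\undef)=0$, and I would handle each separately.

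For measurability, the plan is to invoke the standard existence of a Borel measurable enumeration $\xi_1,\xi_2,\ldots:N_s(\scrX)\to\scrX\sqcup\{*\}$ of the points of $Y$ (available for any Polish state space, cf.\ \cite[Ch.\ 1]{kallenberg86}; the symbol $*$ is a dummy used when $Y$ has fewer than $i$ points). For each $i\in\Z^+$ define
\[
A_i=\{Y\col\xi_i(Y)\in\fZ_\infty\}\cap\bigcap_{j\neq i}\bigl(\{\xi_j(Y)\notin\fZ_\infty\}\cup\{(\xi_i(Y))_1<(\xi_j(Y))_1\}\bigr),
\]
which is the Borel event that $\xi_i(Y)$ is the unique first-coordinate-minimiser of $Y$ in $\fZ_\infty$. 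The sets $A_i$ are pairwise disjoint and Borel. Setting $\vecz(Y)=\xi_i(Y)$ on each $A_i$ and $\vecz(Y)=\undef$ on $(\bigsqcup_i A_i)^c$ then exhibits $\vecz:N_s(\scrX)\to\Delta$ as a Borel measurable map.

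For the almost-sure statement, fix $\vecx\in\R^d$ and $\vs\in\Sigma$ and decompose $\{\vecz(Y-\vecx)=\undef\}\subset E_1\cup E_2$, where $E_1=\{Y\col Y\cap(\vecx+\fZ_\infty)=\emptyset\}$ and $E_2$ is the event from [Q2] that some vertical hyperplane $\{y_1=c\}$ contains two or more points of $Y$. Directly, $\mu_\vs(E_2)=0$ by [Q2]. The crux is $\mu_\vs(E_1)=0$, and the key idea is to use the $D_r$-invariance of $\mu_\vs$ from Lemma \ref{DIAGINVlem} to fit a large Euclidean ball inside the thin infinite cylinder $\vecx+\fZ_\infty$. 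Given $\ve>0$, [Q3] furnishes $R>0$ such that $\mu_\vs(\{Y\col Y\cap\scrB^d(\vecy,R)=\emptyset\})<\ve$ uniformly in $\vecy$ and $\vs$. For any $r>R$, set $\vecw:=(R+r^{1-d}x_1+1,\,rx_2,\ldots,rx_d)$; a direct calculation verifies that $\scrB^d(\vecw,R)D_r\subset\vecx+\fZ_\infty$, since the ellipsoid $\scrB^d(\vecw,R)D_r$ has perpendicular extent $R/r<1$ centred at $\vecx_\perp$ and first-coordinate extent strictly contained in $(x_1,\infty)$. Hence
\[
\mu_\vs(E_1)\leq\mu_\vs\bigl(\{Y\col Y\cap\scrB^d(\vecw,R)D_r=\emptyset\}\bigr)=\mu_\vs\bigl(\{Y\col Y\cap\scrB^d(\vecw,R)=\emptyset\}\bigr)<\ve,
\]
where the equality uses the $D_r$-invariance of $\mu_\vs$ together with the trivial identity $Y\cap BD_r=\emptyset\iff YD_r^{-1}\cap B=\emptyset$. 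Since $\ve>0$ is arbitrary, $\mu_\vs(E_1)=0$.

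The main obstacle lies in the $E_1$ argument: [Q3] only controls void probabilities inside Euclidean balls, yet the region that must be non-empty is an unbounded cylinder of cross-sectional radius $1$, which accommodates no ball of radius exceeding $1$. The $D_r$-scaling with $r>R$ bridges this gap, stretching a large ball into an ellipsoid of perpendicular extent $R/r<1$ that fits inside the cylinder, while the invariance reduces the void probability back to the ball case. By contrast, the measurability is essentially bookkeeping with the measurable enumeration.
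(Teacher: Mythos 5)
Your proof is correct, and for the substantive half — the $\mu_\vs$-null statement — it is essentially the paper's argument: the paper likewise fits a $D_R$-scaled ball into the shifted cylinder (it uses the inclusion $\fZ_\infty+\vecx\supset\scrB^d(R\vece_1+\vecx D_R^{-1},R)D_R$, the same trick as your $\scrB^d(\vecw,R)D_r\subset\vecx+\fZ_\infty$ with $r>R$), then invokes [Q3] together with the $D_r$-invariance of Lemma \ref{DIAGINVlem}, and disposes of the multiple-minimizer case by [Q2]; note that the appeal to Lemma \ref{DIAGINVlem} for all $\vs\in\Sigma$ implicitly uses the standing normalization $\Sigma=\Sigma'$ of \eqref{SIGMAeqp}, which the paper cites explicitly. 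For measurability you take a mildly different but equally standard route: you invoke a Borel measurable enumeration of the points of $Y$ and build the events $A_i$, whereas the paper writes $\vecz^{-1}(B)$ directly as a countable union/intersection of elementary events $A[r,s,B]$ involving counts in the truncated cylinders $\fZ_{r,s}$; both are routine bookkeeping and yield the same conclusion.
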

\begin{proof}
For any Borel subset $B\subset\R_{>0}\times\Omega$ and $0<r<s$, we set
$\fZ_{r,s}:=\fZ_s\setminus\fZ_r$ and 
\begin{align*}
A[r,s,B]:=\{Y\in N_s(\scrX)\col Y\cap\fZ_r=\emptyset\text{ and } & \#(Y\cap\fZ_{r,s})=\#((Y\cap B)\cap\fZ_{r,s})=1\}.
\end{align*}
Then $\vecz^{-1}(B)=\cup_{N=1}^\infty \cap_{n=N}^\infty\cup_{k=1}^\infty A\bigl[\tfrac kn,\tfrac{k+1}n,B\bigr]$,
which is a Borel subset of $N_s(\scrX)$.
Also $\vecz^{-1}(\{\undef\})=N_s(\scrX)\setminus \vecz^{-1}(\R_{>0}\times\Omega)$.
Hence the map $\vecz$ is Borel measurable.
Next, using 
$\fZ_\infty+\vecx\supset\scrB^d\bigl(R\vece_1+\vecx D_R^{-1},R\bigr)D_R$ ($\forall R>0$)
together with [Q3], %
\eqref{SIGMAeqp} and Lemma \ref{DIAGINVlem},
it follows that $\mu_{\vs}(\{Y\col (Y-\vecx)\cap\fZ_\infty=\emptyset\})=0$
for any $\vecx\in\R^d$ and ${\vs}\in\Sigma$.
The second statement of the lemma
follows from this fact and [Q2]. %
\end{proof}

\begin{lem}\label{zXisigmmxcontLEM}
The distribution of the random point $\vecz(\Xi_{\vs}-\vecx)$ in $\R_{>0}\times\Omega$
depends continuously on $(\vecx,{\vs})\in\scrX_\perp$.
\end{lem}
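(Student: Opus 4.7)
My plan is a two-step reduction. Fix $(\vecx, \vs) \in \scrX_\perp$ with $(\vecx_n, \vs_n) \to (\vecx, \vs)$, and let $\pi_n$, $\pi$ denote the laws of $\Xi_{\vs_n} - \vecx_n$ and $\Xi_\vs - \vecx$ in $P(N_s(\scrX))$. First I show $\pi_n \to \pi$ weakly, then I push forward through $\vecz$ via the continuous mapping theorem. The first step follows directly from the standing continuity of $\vs \mapsto \mu_\vs$, together with $\vecx_n \to \vecx$ and the joint vague continuity of the translation map $(Y, \vecy) \mapsto Y - \vecy$: the product measures $\mu_{\vs_n} \times \delta_{\vecx_n}$ converge weakly to $\mu_\vs \times \delta_\vecx$, and pushing forward by the continuous translation map gives $\pi_n \to \pi$. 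By \cite[Thm.\ 4.27]{kallenberg02}, it then suffices to exhibit a Borel set $D \subset N_s(\scrX)$ with $\pi(D) = 1$ on which $\vecz : N_s(\scrX) \to \Delta$ is continuous.

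I will take
\begin{equation*}
D = \bigl\{Y \in N_s(\scrX) : \vecz(Y) \neq \undef,\; Y \cap (\R \times \partial \scrB_1^{d-1} \times \Sigma) = \emptyset,\; Y \cap (\{0\} \times \scrB_1^{d-1} \times \Sigma) = \emptyset\bigr\}.
\end{equation*}
The condition $\vecz(Y) \neq \undef$ holds $\pi$-a.s.\ by Lemma~\ref{BASICkidpreplemG}, and the other two hold $\pi$-a.s.\ by Lemma~\ref{GROUNDINTENSITYlem2}, since the excluded sets are $\mu_\scrX$-null (their $\R^d$-projections have zero Lebesgue measure). For continuity at any $Y \in D$ with $\vecz(Y) = (\xi, \vecw, \vs')$, local finiteness of $Y$ lets me pick a single $\ve > 0$ so small that $Y$ meets the open box $U_\ve := (-\ve, \xi + \ve) \times \scrB_1^{d-1} \times \Sigma$ only at $(\xi, \vecw, \vs')$, misses $V_\ve := (0, \xi - \ve) \times \scrB_1^{d-1} \times \Sigma$ entirely, and avoids the boundaries of both boxes; the two emptiness conditions in $D$ handle the cylinder wall and the zero-hyperplane, while countably many bad values of $\ve$ (corresponding to the first-coordinates of the finitely many other points of $Y$ in a compact neighbourhood) must be avoided. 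For any vague limit $Y_n \to Y$, applying the Portmanteau theorem to the continuity sets $U_\ve$ and $V_\ve$ gives that, for large $n$, $Y_n$ has a unique point $\vecp_n \in U_\ve$ with $\vecp_n \to (\xi, \vecw, \vs')$ and no points in $V_\ve$; all remaining points of $Y_n \cap \fZ_\infty$ have first coordinate at least $\xi + \ve$, forcing $\vecz(Y_n) = \vecp_n \to \vecz(Y)$ in $\Delta$.

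The principal technical step is the local construction of $\ve$ in the continuity argument. The three conditions defining $D$ are tailored to rule out the natural failure modes of $\vecz$: absence of a unique minimiser, migration of points across the cylinder wall $\R \times \partial \scrB_1^{d-1}$ under small perturbations, and appearance of new points from first-coordinate zero. Once these are excluded, the remaining boundary-avoidance reduces to choosing $\ve$ outside a countable set, which is immediate.
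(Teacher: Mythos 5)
Your argument is correct and is essentially the paper's proof: the paper likewise takes the continuity set $C=\{Y\col\vecz(Y)\neq\undef,\;Y\cap\partial\fZ_\infty=\emptyset\}$ (your $D$ is the same up to a harmless enlargement of the excluded boundary), shows it carries full measure under the laws of the translated processes via Lemmas \ref{BASICkidpreplemG} and \ref{GROUNDINTENSITYlem2} together with \eqref{SIGMAeqp}, and concludes by the continuous mapping theorem \cite[Thm.\ 4.27]{kallenberg02}. The only difference is that you spell out the $\ve$-box verification of continuity of $\vecz$ on this set, which the paper leaves as "one verifies".
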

\begin{proof}
Let $C$ be the set of all $Y\in N_s(\scrX)$ which satisfy
$\vecz(Y)\neq\undef$ and $Y\cap\partial\fZ_{\infty}=\emptyset$.
Using Lemma \ref{BASICkidpreplemG}, Lemma \ref{GROUNDINTENSITYlem2} and \eqref{SIGMAeqp},
it follows that $\mu_\vs(\{Y\col Y-\vecx\in C\})=0$ for all $(\vecx,{\vs})\in\scrX_\perp$.
Furthermore one verifies that the map $\vecz$ is continuous at each point in $C$,
i.e.\ $\vecz(Y_n)\to\vecz(Y)$ holds whenever $Y_n\to Y$ in $N_s(\scrX)$ with $Y\in C$.
In view of these observations, the lemma follows from 
the generalized continuous mapping theorem, \cite[Thm.\ 4.27]{kallenberg02}.
\end{proof}

Given $\vecomega'=(\vecx,{\vs})\in\scrX_\perp$,
let us write $\kappa(\vecomega';\cdot)$ for the distribution of the 
random point $\iota(\vecz(\Xi_{{\vs}}-\vecx))$ in $\R_{>0}\times\Omega$;
thus for any Borel set $B\subset\R_{>0}\times\Omega$,
\begin{align}\label{kappadef}
\kappa((\vecx,{\vs});B):=\mu_{\vs}(\{Y\in N_s(\scrX)\col \iota(\vecz(Y-\vecx))\in B\}).
\end{align}
By Lemma \ref{GROUNDINTENSITYlem2} and \eqref{SIGMAeqp},
$\kappa(\vecomega';B)\leq c_\scrP\mu_\scrX(B)=c_\scrP v_{d-1}\int_B\,d\xi\,d\mu_{\Omega}(\vecomega)$
for every Borel set $B\subset\R_{>0}\times\Omega$.
We define $k(\vecomega',\cdot,\cdot)$ to be the corresponding probability density; that is,
we define the function
\begin{align}\label{kdef}
k:\scrX_\perp\times\R_{>0}\times\Omega\to[0,c_\scrP v_{d-1}]
\end{align}
so that for each $\vecomega'\in\scrX_\perp$,
$k(\vecomega',\cdot,\cdot)$ is uniquely defined as an element in
$\L^1(\R_{>0}\times\Omega,d\xi\,d\mu_\Omega)$,
and $\kappa(\vecomega',B)=\int_B k(\vecomega',\xi,\vecomega)\,d\xi\,d\mu_{\Omega}(\vecomega)$
for all Borel sets $B\subset\R_{>0}\times\Omega$.

\begin{lem}\label{UNIFKBOUNDGlem1}
We have $\kappa(\vecomega';[\xi,\infty)\times\Omega)\to0$
as $\xi\to\infty$, uniformly over all $\vecomega'\in\scrX_\perp$.
\end{lem}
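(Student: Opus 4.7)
The plan is to bound $\kappa(\vecomega';[\xi,\infty)\times\Omega)$ by the probability that the point process $\Xi_\vs$ has no points in a large translated cylinder, and then use the $D_r$-invariance established in Lemma~\ref{DIAGINVlem} to rescale the cylinder into one containing a large ball, at which point [Q3] furnishes the desired uniform smallness.

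First, fix $\vecomega' = (\vecx,\vs)\in\scrX_\perp$. Since the reflection $\iota$ preserves first coordinates, the event $\iota(\vecz(Y-\vecx))\in[\xi,\infty)\times\Omega$ forces $\vecz(Y-\vecx)$ to have first coordinate $\geq\xi$, and hence in particular $(Y-\vecx)\cap\fZ_\xi=\emptyset$. Using also Lemma~\ref{BASICkidpreplemG} (which tells us $\vecz(Y-\vecx)\neq\undef$ $\mu_\vs$-a.s.), I get
\begin{align*}
\kappa((\vecx,\vs);[\xi,\infty)\times\Omega)\leq\mu_\vs\bigl(\bigl\{Y\in N_s(\scrX)\col Y\cap(\fZ_\xi+\vecx)=\emptyset\bigr\}\bigr).
\end{align*}
Here I am using $\vecx\in\R^{d-1}$ (viewed as $(0,\vecx)\in\R^d$), so that $\fZ_\xi+\vecx=(0,\xi)\times(\scrB_1^{d-1}+\vecx_\perp)$.

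Next, since $\Sigma'=\Sigma$ by our standing assumption \eqref{SIGMAeqp}, Lemma~\ref{DIAGINVlem} gives that $\mu_\vs$ is $D_r$-invariant for every $r>0$. For any Borel set $B\subset\R^d$ this yields
\begin{align*}
\mu_\vs\bigl(\bigl\{Y\col Y\cap B=\emptyset\bigr\}\bigr)=\mu_\vs\bigl(\bigl\{Y\col Y\cap BD_r^{-1}=\emptyset\bigr\}\bigr).
\end{align*}
A direct computation with $D_r^{-1}=\diag(r^{1-d},r,\ldots,r)$ shows
\begin{align*}
(\fZ_\xi+\vecx)D_r^{-1}=(0,\xi r^{1-d})\times(\scrB_r^{d-1}+r\vecx_\perp),
\end{align*}
which contains the open ball $\scrB^d(\vecy_0,s_0)$ with $\vecy_0=(\tfrac12\xi r^{1-d},r\vecx_\perp)$ and $s_0=\min(\tfrac12\xi r^{1-d},r)$. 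Choosing $r=(\xi/2)^{1/d}$ balances the two quantities and gives $s_0=(\xi/2)^{1/d}$.

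Finally, given $\ve>0$, apply [Q3] to obtain some $R>0$ such that $\mu_\vs(\{Y\col Y\cap\scrB^d(\vecy,R)=\emptyset\})<\ve$ for every $\vecy\in\R^d$ and $\vs\in\Sigma$. For any $\xi\geq 2R^d$, the construction above produces some $\vecy_0=\vecy_0(\vecx,\xi)$ with $\scrB^d(\vecy_0,R)\subset(\fZ_\xi+\vecx)D_r^{-1}$, and so
\begin{align*}
\kappa((\vecx,\vs);[\xi,\infty)\times\Omega)\leq\mu_\vs\bigl(\bigl\{Y\col Y\cap\scrB^d(\vecy_0,R)=\emptyset\bigr\}\bigr)<\ve,
\end{align*}
uniformly over all $(\vecx,\vs)\in\scrX_\perp$. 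I do not foresee any genuine obstacle here; the only point requiring a moment's care is verifying the geometric claim that $(\fZ_\xi+\vecx)D_r^{-1}$ contains a ball whose radius can be made arbitrarily large by a single choice of $r$ depending on $\xi$ alone (not on $\vecx$), which is precisely why the $D_r$-rescaling is decisive.
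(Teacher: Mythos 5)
Your argument is correct and is essentially the paper's proof made explicit: bound $\kappa(\vecomega';[\xi,\infty)\times\Omega)$ by the void probability of the translated cylinder $\fZ_\xi+\vecx$, rescale by $D_r$ using Lemma~\ref{DIAGINVlem} together with \eqref{SIGMAeqp} so that the image contains a large ball, and invoke [Q3]. The paper states the same thing in one line via the inclusion $\scrB^d\bigl(R\vece_1+\vecx D_R^{-1},R\bigr)D_R\subset\fZ_{2R^d}+\vecx$, which is exactly your inclusion $\scrB^d(\vecy_0,s_0)\subset(\fZ_\xi+\vecx)D_r^{-1}$ after setting $\xi=2R^d$, $r=R$.
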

\begin{proof}
Using $\scrB^d\bigl(R\vece_1+\vecx D_R^{-1},R\bigr)D_R 
\subset\fZ_{2R^d}+\vecx$ ($\forall R>0$)
together with [Q3], %
Lemma \ref{DIAGINVlem} and \eqref{SIGMAeqp},
we have $\mu_{\vs}(\{Y\col (Y-\vecx)\cap\fZ_\xi=\emptyset\})\to0$ as $\xi\to\infty$,
uniformly over all $\vecx\in\R^d$ and ${\vs}\in\Sigma'$.
\end{proof}

\begin{lem}\label{CONTINTEGRALlemG2}
Let $\C_b(\R_{>0}\times\Omega)$ be the space of bounded continuous functions on $\R_{>0}\times\Omega$,
equipped with the supremum norm.
The integral
\begin{align}\label{CONTINTEGRALlemG2res}
\int_{\R_{>0}\times\Omega}f(\vecy)\kappa(\vecomega'; d\vecy)
\end{align}
depends continuously on $\langle \vecomega',f\rangle$ in $\scrX_\perp\times \C_b(\R_{>0}\times\Omega)$.
\end{lem}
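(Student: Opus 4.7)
The plan is to derive this as a direct consequence of Lemma \ref{zXisigmmxcontLEM} combined with the standard joint continuity of the pairing $(\mu, g) \mapsto \int g\, d\mu$ on $P(S) \times \C_b(S)$. The first step is to observe that by \eqref{kappadef} and Lemma \ref{BASICkidpreplemG}, $\kappa(\vecomega'; \cdot)$ is a genuine Borel probability measure on $\R_{>0}\times\Omega$, namely the pushforward under the homeomorphism $\iota$ of the distribution of $\vecz(\Xi_{\vs} - \vecx)$ (writing $\vecomega' = (\vecx,\vs)$); the fact that this is a probability measure, not a sub-probability measure, uses precisely that $\vecz(\Xi_\vs-\vecx)\neq\undef$ almost surely. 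Since $\iota$ is a homeomorphism, Lemma \ref{zXisigmmxcontLEM} then yields that the map
$$\scrX_\perp \to P(\R_{>0}\times\Omega), \qquad \vecomega' \mapsto \kappa(\vecomega'; \cdot)$$
is continuous when the target is equipped with the weak topology.

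Given this, suppose $(\vecomega'_n, f_n) \to (\vecomega', f)$ in $\scrX_\perp \times \C_b(\R_{>0}\times\Omega)$, and abbreviate $\kappa_n := \kappa(\vecomega'_n; \cdot)$ and $\kappa := \kappa(\vecomega'; \cdot)$. The triangle inequality gives
$$\left|\int f_n\, d\kappa_n - \int f\, d\kappa\right| \leq \left|\int (f_n-f)\, d\kappa_n\right| + \left|\int f\, d\kappa_n - \int f\, d\kappa\right| \leq \|f_n - f\|_\infty + \left|\int f\, d\kappa_n - \int f\, d\kappa\right|,$$
using that each $\kappa_n$ is a probability measure. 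The first term tends to zero by the assumed uniform convergence $f_n \to f$, and the second tends to zero by the weak convergence $\kappa_n \to \kappa$ established in the previous paragraph, together with the fact that $f \in \C_b(\R_{>0}\times\Omega)$. This completes the proof.

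I do not foresee any genuine obstacle; the entire lemma is a routine corollary of Lemma \ref{zXisigmmxcontLEM}, with the only mild subtlety being the verification that $\kappa(\vecomega';\cdot)$ is a probability measure (rather than merely a sub-probability measure), which is exactly what Lemma \ref{BASICkidpreplemG} provides.
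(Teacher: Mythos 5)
Your proof is correct and takes essentially the same approach as the paper's one-sentence argument, which invokes Lemma \ref{zXisigmmxcontLEM} for continuity in $\vecomega'$ and then notes that the integral is a bounded linear functional of $f$ of norm at most $1$. Your triangle-inequality step and the verification via Lemma \ref{BASICkidpreplemG} that $\kappa(\vecomega';\cdot)$ is a full probability measure are simply the explicit form of what the paper leaves implicit.
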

\begin{proof}
By Lemma \ref{zXisigmmxcontLEM}, the integral in \eqref{CONTINTEGRALlemG2res}
depends continuously on $\vecomega'$ for any fixed $f\in\C_b(\R_{>0}\times\Omega)$.
Now the desired conclusion follows by also noticing that
for fixed $\vecomega'$,
the expression in \eqref{CONTINTEGRALlemG2res}
is  a bounded linear functional of $f\in\C_b(\R_{>0}\times\Omega)$ of norm at most $1$.
\end{proof}

\begin{remark}\label{CONTINTEGRALlemG2REM}
In terms of $k$, Lemma \ref{CONTINTEGRALlemG2}
says that the integral
\begin{align*}%
\int_{\R_{>0}\times\Omega}f(\xi,\vecomega)k(\vecomega',\xi,\vecomega)\,d\xi\,d\mu_{\Omega}(\vecomega)
\end{align*}
depends continuously on $\langle \vecomega',f\rangle$ in $\scrX_\perp\times \C_b(\R_{>0}\times\Omega)$.
\end{remark}

We next introduce the corresponding notions for the macroscopic limit point process $\Xi$
introduced in Section \ref{GENLIMITsec}.
Recall that we write $\mu\in P(N_s(\scrX))$ for the distribution of $\Xi$.
The result of Lemma \ref{BASICkidpreplemG}
carries over to the present situation:
\begin{lem}\label{BASICkidpreplemgen}
We have 
$\mu(\{Y\in N_s(\scrX)\col \vecz(Y %
)=\undef\})=0$.
\end{lem}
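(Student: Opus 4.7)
The plan is to decompose the event $\{\vecz(Y)=\undef\}$ according to whether $Y\cap\fZ_\infty$ is empty, and to show that each of the two resulting pieces has $\mu$-measure zero.

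First, I would show that $\mu(\{Y\col Y\cap\fZ_\infty=\emptyset\})=0$ by exploiting the $D_r$-invariance of $\mu$ from Proposition \ref{GENLIMITASLINVprop}. A direct calculation gives $\fZ_\infty D_r=(0,\infty)\times\scrB_{1/r}^{d-1}$, so for $r$ sufficiently small this cylinder contains the ball $\scrB^d((R+1)\vece_1,R)$ with $R:=1/r$. Invariance then yields
\begin{align*}
\mu(\{Y\col Y\cap\fZ_\infty=\emptyset\})=\mu(\{Y\col Y\cap\fZ_\infty D_r=\emptyset\})\leq\mu(\{Y\col Y\cap\scrB^d((R+1)\vece_1,R)=\emptyset\}),
\end{align*}
and by Lemma \ref{GENnonemptylem} the right-hand side can be made smaller than any prescribed $\ve>0$ upon taking $R$ large. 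Hence this probability vanishes.

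Next, I would handle the complementary event $E:=\{Y\col\vecz(Y)=\undef\text{ and }Y\cap\fZ_\infty\neq\emptyset\}$. Given such a $Y$, let $c:=\inf\{x_1\col\exists\vecx_\perp,\,(x_1,\vecx_\perp)\in Y\cap\fZ_\infty\}$. A preliminary step is to verify that $c$ is always attained: any sequence in $Y\cap\fZ_\infty$ with first coordinates approaching $c$ from above would eventually lie in the compact set $[0,c+1]\times\overline{\scrB_1^{d-1}}\times\Sigma$, which by local finiteness of the counting measure $Y$ contains only finitely many points of $Y$, forcing the sequence to stabilise. Consequently, for $Y\in E$ the minimum must be attained by two or more points of $Y$ sharing the same $\vece_1$-coordinate. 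This places $Y$ in the event ruled out by Lemma \ref{Xie1simplelem}, so $\mu(E)=0$.

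I do not anticipate any serious obstacle: both parts reduce to previously established facts, namely Lemma \ref{GENnonemptylem} and Lemma \ref{Xie1simplelem}, combined with the symmetries of $\mu$. The only mildly delicate point is the attainment of the infimum $c$, which is secured by the local finiteness of $Y$ together with the compactness of $\overline{\scrB_1^{d-1}}$ and $\Sigma$.
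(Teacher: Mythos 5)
Your proof is correct and follows essentially the same route as the paper's: both decompose into the cases $Y\cap\fZ_\infty=\emptyset$ and $Y\cap\fZ_\infty\neq\emptyset$, both dispatch the first case by combining the $D_r$-invariance of $\mu$ (Proposition \ref{GENLIMITASLINVprop}) with Lemma \ref{GENnonemptylem}, and both reduce the second case to Lemma \ref{Xie1simplelem}. The only cosmetic difference is the containment you use: the paper observes $\fZ_\infty\supset\scrB^d(R\vece_1,R)D_R$ for all $R>0$, whereas you compute $\fZ_\infty D_r\supset\scrB^d((R+1)\vece_1,R)$ with $R=1/r$ — after applying the invariance, these give the same bound.
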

\begin{proof}
Using $\fZ_\infty\supset\scrB^d(R\vece_1,R)D_R$ ($\forall R>0$)
together with Lemma \ref{GENnonemptylem} and Prop.\ \ref{GENLIMITASLINVprop} it follows that
$\mu(\{Y\col Y\cap\fZ_\infty=\emptyset\})=0$.
The lemma follows from this fact and Lemma~\ref{Xie1simplelem}.
\end{proof}

Let us write $\kappa^{\g}\in P(\R_{>0}\times\Omega)$ for the distribution of the random point
$\iota(\vecz(\Xi))$ in $\R_{>0}\times\Omega$.
\label{kappagDEF}
(The ``$\g$'' stands for ``generic initial condition''.)
Thus for any Borel set $B\subset\R_{>0}\times\Omega$,
\begin{align}\label{kappagdef}
\kappa^{\g}(B):=\mu(\{Y\in N_s(\scrX)\col \iota(\vecz(Y))\in B\}).
\end{align}

By Proposition \ref{GENPALMprop} (applied with $A=N_s(\scrX)$), 
$$\kappa(B)\leq c_\scrP\mu_\scrX(B)=c_\scrP v_{d-1}\int_B\,d\xi\,d\mu_{\Omega}(\vecomega)$$ 
for every Borel set $B\subset\R_{>0}\times\Omega$.
Hence as before, we can consider the corresponding probability density%
\begin{align}\label{kgdef}
k^{\g}:\R_{>0}\times\Omega\to[0,c_\scrP v_{d-1}],
\end{align}
so that $\kappa^{\g}(B)=\int_B k^{\g}(\xi,\vecomega)\,d\xi\,d\mu_{\Omega}(\vecomega)$
for all Borel sets $B\subset\R_{>0}\times\Omega$.
Note that $k^{\g}$ is uniquely defined as an element in
$\L^1(\R_{>0}\times\Omega,d\xi\,d\mu_\Omega)$.

\section{Limit theorem for the first collision}
\label{FIRSTCOLLsec}

From now on, we will say that a scatterer $\scrB^d(\vecq,\rho)$
($\vecq\in\scrP$)
is \textit{separated} if $\|\vecq'-\vecq\|>2\rho$ for all $\vecq'\in\scrP\setminus\{\vecq\}$.
Recall that
\begin{align*}
\fw(\rho)=\T^1(\scrK_\rho^\circ) \cup \T^1(\partial\scrK_\rho)_{\out}.
\end{align*}
Let $\fw(1;\rho)$ be the set of those initial conditions 
$(\vecq,\vecv)\in\fw(\rho)$ which lead to a collision 
with a separated scatterer in finite time,
viz., those $(\vecq,\vecv)\in\fw(\rho)$ which have 
$\tau_1(\vecq,\vecv;\rho)<\infty$
and for which $\vecq+\tau_1(\vecq,\vecv;\rho)\vecv$ 
lies on the boundary of a separated scatterer.
For $(\vecq,\vecv)\in\fw(1;\rho)$ we write $\vecq^{(1)}=\vecq^{(1)}(\vecq,\vecv;\rho)$ for the center of the
\label{q1scattererdef}
scatterer causing the first collision,
and let $\vecw_1=\vecw_1(\vecq,\vecv;\rho)\in\UB$ be the 
normalized impact parameter,
defined through $\vecw_1:=(\vecu_1 R(\vecv))_\perp$,
\label{w1def}
where $\vecu_1\in\US$ is the point given by
$\vecq+\tau_1(\vecq,\vecv;\rho)\vecv=\vecq^{(1)}+\rho\vecu_1$.
We then set:
\begin{align*}
\vecomega_1=\vecomega_1(\vecq,\vecv;\rho):=(\vecw_1,{\vs}(\vecq^{(1)}))\in\Omega.
\end{align*}
\label{vecomega1def}

Let $U$ be an open subset of $\US$ and $\vecbeta\in \C_b(U,\R^d)$. %
For $\vecq\in\scrP$ and $\vecv$ random in $U$, %
we will consider a point particle starting at the point
\begin{align}\label{qrhobetavDEF}
\vecq(\vecv)=\vecq_{\rho,\vecbeta}(\vecv):=\vecq+\rho\vecbeta(\vecv).
\end{align}
To avoid pathologies, we assume that
$U,\vecbeta$ are chosen such that
$(\vecbeta(\vecv)+\R_{>0}\vecv)\cap\scrB_1^d=\emptyset$ for all $\vecv\in U$.
We set 
\begin{align}\label{wqrbdef}
\fw_{\vecq,\rho}^{\vecbeta}=\{\vecv\in U\col(\vecq_{\rho,\vecbeta}(\vecv),\vecv)\in\fw(1;\rho)\}.
\end{align}
The following theorem gives the joint limit distribution of $\vecomega_1(\vecq(\vecv),\vecv;\rho)$
and the normalized free path length $\rho^{d-1}\tau_1(\vecq(\vecv),\vecv;\rho)$.
\begin{thm}\label{Thm2gen}
Let $U$ be an open subset of $\US$; let $K$ be a relatively compact subset of $\C_b(U,\R^d)$ such that 
$(\vecbeta(\vecv)+\R_{>0}\vecv)\cap\scrB_1^d=\emptyset$ for all $\vecbeta\in K$, $\vecv\in U$,
and let $\lambda\in\Pac(\US)$ be such that $\lambda(U)=1$.
Then for any $T\geq1$ and $f\in\C_b(U\times\R_{>0}\times\Omega)$,
\begin{align}\label{Thm2genres}
\int_{\fw_{\vecq,\rho}^{\vecbeta}} f\bigl(\vecv,\rho^{d-1} \tau_1(\vecq_{\rho,\vecbeta}(\vecv),\vecv;\rho),
\vecomega_1(\vecq_{\rho,\vecbeta}(\vecv),\vecv;\rho)\bigr)\,d\lambda(\vecv)
\hspace{120pt}
\\\notag
\to\int_{U}\int_0^\infty\int_\Omega
f(\vecv,\xi,\vecomega)
k\bigl(\bigl((\vecbeta(\vecv)R(\vecv))_\perp,{\vs}(\vecq)\bigr),\xi,\vecomega\bigr)
\,d\mu_{\Omega}(\vecomega)\,d\xi\,d\lambda(\vecv)
\end{align}
as $\rho\to0$, uniformly over all $\vecq\in\scrP_T(\rho)$ and $\vecbeta\in K$.
\end{thm}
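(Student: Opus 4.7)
The strategy is to reduce the theorem to the uniform convergence statement of Lemma \ref{BETAUNIFCONVlem}. The key geometric observation is that the first collision data $(\rho^{d-1}\tau_1, \vecomega_1)$ can, up to a small error, be read off from the random point set $\scrQ_\rho(\vecq,\vecbeta,\vecv)$ via the map $\iota\circ\vecz$. Indeed, if $\vecp\in\scrP\setminus\{\vecq\}$ is any scatterer center and we write $(y_1,\vecy)=(\vecp-\vecq_{\rho,\vecbeta}(\vecv))R(\vecv)$ with $y_1\in\R$, $\vecy\in\R^{d-1}$, then the particle trajectory $\vecq_{\rho,\vecbeta}(\vecv)+t\vecv$ intersects the scatterer $\scrB^d(\vecp,\rho)$ if and only if $y_1>0$ and $|\vecy|\leq\rho$, and the first hitting time is $t=y_1-\sqrt{\rho^2-|\vecy|^2}$. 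Writing $(\xi,\vecw)=(y_1,\vecy)D_\rho$, this means: in the rescaled coordinates, the hit corresponds exactly to a point of $\scrQ_\rho(\vecq,\vecbeta,\vecv)$ lying in $\fZ_\infty$, with $\rho^{d-1}\tau_1=\xi-\rho^d\sqrt{1-|\vecw|^2}$. A direct computation with the definition of $\vecu_1$ also shows $\vecw_1=-\vecw$, so that indeed
\begin{equation*}
\bigl(\rho^{d-1}\tau_1+\rho^d\sqrt{1-|\vecw_1|^2}\,,\,\vecomega_1\bigr)\;=\;\iota\bigl(\vecz(\scrQ_\rho(\vecq,\vecbeta,\vecv))\bigr),
\end{equation*}
whenever the latter is not $\undef$, the scatterer it points to is separated (so that it truly is the \emph{first} collision), and the scatterer is not the one centered at $\vecq$ itself.

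Second, I would verify that these exceptional sets of $\vecv$'s have $\lambda$-measure tending to zero, uniformly in $\vecq\in\scrP_T(\rho)$ and $\vecbeta\in K$. The assumption $(\vecbeta(\vecv)+\R_{>0}\vecv)\cap\scrB_1^d=\emptyset$ excludes immediate re-collision with the scatterer at $\vecq$. By Lemma \ref{GOODDISTANCElem}, $d_\scrP(\vecq)/\rho\to\infty$ uniformly on $\scrP_T(\rho)$, so that for $\rho$ small every scatterer that could possibly be hit in bounded rescaled time is separated; the set where the first would-be collision is with an $\scrE$-point vanishes by Lemma \ref{EXCHITUNLIKELYlem}. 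Finally, assumption [P3] (via its reformulation in terms of $\scrQ_\rho(\vecq,\vecbeta,\vecv)$ obtained by arguing as in Lemma \ref{ASS:bdfreepathgenlem}, combined with the uniform bound of Lemma \ref{UNIFKBOUNDGlem1}) ensures that the $\lambda$-measure of $\vecv$'s for which $\scrQ_\rho(\vecq,\vecbeta,\vecv)\cap\fZ_\xi=\emptyset$ can be made small by taking $\xi$ large, uniformly in $\vecq\in\scrP_T(\rho)$ and $\vecbeta\in K$.

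After these preliminaries, the remaining task is to show, uniformly for $\vecq\in\scrP_T(\rho)$ and $\vecbeta\in K$, that
\begin{equation*}
\int_U g\bigl(\vecv,\iota(\vecz(\scrQ_\rho(\vecq,\vecbeta,\vecv)))\bigr)\,d\lambda(\vecv)
\to\int_U\int_{N_s(\scrX)} g\bigl(\vecv,\iota(\vecz(Y-(\vecbeta(\vecv)R(\vecv))_\perp))\bigr)\,d\mu_{\vs(\vecq)}(Y)\,d\lambda(\vecv)
\end{equation*}
for $g\in\C_b(U\times(\R_{>0}\times\Omega))$ (suitably extended by $0$ to handle the $\undef$ case and approximating $f(\vecv,\xi-\rho^d\sqrt{1-|\vecw|^2},\vecomega)$ by $f(\vecv,\xi,\vecomega)$, which is legitimate by uniform continuity of $f$ on compacta together with the truncation to $\xi\leq\xi_0$). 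This follows from Lemma \ref{BETAUNIFCONVlem} ($\tmu_{\vecq,\rho}^{(\vecbeta,\lambda)}\xrightarrow[]{\textup{w}}\tmu_{\vs(\vecq)}^{(\vecbeta,\lambda)}$, uniformly), applied to the functional $(\vecv,Y)\mapsto g(\vecv,\iota(\vecz(Y)))$. This functional is not continuous, but the argument of Lemma \ref{zXisigmmxcontLEM}, together with Lemma \ref{GROUNDINTENSITYlem2} and [Q2], shows that its set of discontinuities has $\tmu_{\vs}^{(\vecbeta,\lambda)}$-measure zero for every $\vs\in\Sigma$, $\vecbeta\in K$; combined with Lemma \ref{UNIFPORTMlemC} and the generalized continuous mapping theorem (Kallenberg \cite[Thm.~4.27]{kallenberg02}) applied along arbitrary convergent subsequences in $\Sigma\times K$, this yields the desired uniform convergence. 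Identifying the right-hand side with the integral featuring the density $k$ is then immediate from the definition \eqref{kappadef}--\eqref{kdef} and Fubini.

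The main obstacle is the combination of \emph{uniformity} in $\vecq$ and $\vecbeta$ with the \emph{discontinuity} of $\vecz$: one cannot simply invoke continuous-mapping; one must argue along subsequences and exploit that the limit measures $\mu_\vs$ form a compact family in $P(N(\scrX))$ on which the discontinuities of $\vecz$ have zero mass. A secondary technical point is that the map $Y\mapsto\iota(\vecz(Y))$ is not proper, which necessitates the truncation $\xi\leq\xi_0$ controlled by the uniform tail estimate of Lemma \ref{UNIFKBOUNDGlem1} together with [P3].
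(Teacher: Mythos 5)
Your strategy --- reduce to the uniform weak convergence of Lemma \ref{BETAUNIFCONVlem}, then push forward by a map that is a.e.\ continuous with respect to the compact family of limit measures and argue along convergent subsequences in $\Sigma\times K$ --- is exactly the route the paper takes (Lemmas \ref{ZTAU1CONNECTIONlem}--\ref{KEYyCONVprop3} and Lemma \ref{WqrholargeLEM}). The tail truncation via [P3] and Lemma \ref{UNIFKBOUNDGlem1}, the observation that $d_\scrP(\vecq)/\rho\to\infty$ uniformly (Lemma \ref{GOODDISTANCElem}), and the identification of the limit integral via \eqref{kappadef}--\eqref{kdef} are all correctly placed.

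There is, however, a genuine gap in the geometric identity that launches the argument. You invoke the map $\vecz$, which picks the point of $Y\cap\fZ_\infty$ with \emph{minimal $\vece_1$-coordinate}, whereas the first collision is determined by minimizing the hitting time $\rho^{d-1}\tau = \xi - \rho^d\sqrt{1-\|\vecw\|^2}$. These two minimizers can disagree: a scatterer center with a slightly larger $\xi$ but a smaller transversal offset $\|\vecw\|$ can be struck strictly earlier. Your parenthetical justification --- ``the scatterer it points to is separated (so that it truly is the \emph{first} collision)'' --- is not valid: two scatterer centers can both be separated (pairwise distance $>2\rho$) while still lying within $\sqrt{5}\,\rho$ of each other, and the one with the smaller $\vece_1$-coordinate in the rescaled frame need not be the first one hit. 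The paper circumvents this by introducing the modified map $\vecz_\rho$, which minimizes $\xi_\rho(\vecx)=\inf\{\xi>0:\vecx\in\xi\vece_1+\scrB_\rho^dD_\rho\}$ (that is, exactly $\rho^{d-1}$ times the hitting time), so the analogue of your identity is exact (Lemma \ref{ZTAU1CONNECTIONlem}); it then proves in Lemma \ref{Frhozrhocontlem} that $F_{\rho_n}\circ\vecz_{\rho_n}\to\iota\circ\vecz$ pointwise on the relevant full-measure set, so that the generalized continuous mapping theorem is applied to a sequence of \emph{$\rho$-dependent} maps $H_n$. Your argument could be patched by showing that the event where the $\vecz$-minimizer and the $\tau_1$-minimizer disagree --- which is contained in the event that two points of $\scrQ_\rho(\vecq,\vecbeta,\vecv)$ in $\fZ_\infty$ have $\vece_1$-coordinates within $O(\rho^d)$ of each other --- has uniformly vanishing $\lambda$-measure via [Q2], Lemma \ref{GROUNDINTENSITYlem2} and Lemma \ref{BETAUNIFCONVlem}; but that step is absent from the proposal as written, and without it the assertion fails.
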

\begin{remark}\label{Thm2genrem}
Taking $f\equiv1$ and using 
$\int_{\R_{>0}\times\Omega}k(\vecomega',\xi,\vecomega)\,d\xi\,d\mu_{\Omega}(\vecomega)=1$
($\forall\vecomega'$),
one sees that the %
theorem implies in particular that 
$\lambda(\fw_{\vecq,\rho}^{\vecbeta})\to1$ as $\rho\to0$, uniformly over all $\vecq\in\scrP_T(\rho)$
and $\vecbeta\in K$.
\end{remark}
\begin{remark}\label{Thm2genRELCPTrem}
Let $K$ be as in Theorem \ref{Thm2gen},
and let $\oK$ be the closure of $K$ in $\C_b(U,\R^d)$;
this is a compact subset of $\C_b(U,\R^d)$, and clearly every $\vecbeta\in\oK$ satisfies 
$(\vecbeta(\vecv)+\R_{>0}\vecv)\cap\scrB_1^d=\emptyset$ for all $\vecv\in U$.
Hence when proving Theorem \ref{Thm2gen} we may just as well replace $K$ by $\oK$ from the very beginning.
Thus, in the following we will assume that \textit{$K$ is compact.}
\end{remark}

To prepare for the proof of the theorem,
we introduce a %
slightly modified version of the map $\vecz$ from
Section \ref{TRANSKERsec}.
Let $0<\rho<1$.
For each $\vecx\in\fZ_\infty$ we set
\begin{align}\label{xirhoDEF}
\xi_\rho(\vecx)=\inf\{\xi\in\R_{>0}\col\vecx\in\xi\vece_1+\scrB_\rho^dD_\rho\}\in\R_{\geq0}.
\end{align}
Note that $\scrB_\rho^dD_{\rho}$ is the ellipsoid
$\{(x_1/\rho^d)^2+x_2^2+\cdots+x_d^2<1\}$;
hence we indeed have $\xi_\rho(\vecx)\in\R_{\geq0}$
for each $\vecx\in\fZ_\infty$,
with $\xi_\rho(\vecx)=0$ if and only if $\vecx\in\fZ_\infty\cap\overline{\scrB_\rho^d}D_{\rho}$.
\begin{definition}
The map
\begin{align}\label{zrhoDEF}
\vecz_\rho:N_s(\scrX)\to\Delta=(\R_{>0}\times\Omega)\sqcup\{\undef\}
\end{align}
is defined as follows.
For given $Y\in N_s(\scrX)$,     %
if $Y\cap\fZ_\infty\cap\overline{\scrB_\rho^d}D_{\rho}=\emptyset$
and if there exists a unique point $(\vecx,\vs)$ in $Y\cap(\fZ_\infty\times\Sigma)$ which minimizes $\xi_\rho(\vecx)$,
and if furthermore this point satisfies
\begin{align}\label{BALLSEPCOND}
Y\cap(\vecx+\overline{\scrB_{2\rho}^d}D_\rho)=\{\vecx\},
\end{align}
then set $\vecz_\rho(Y):=(\vecx,\vs)$;
otherwise set $\vecz_\rho(Y)=\undef$.
\end{definition}
We prove in Lemma~\ref{zrhomeasLEM} below that $\vecz_\rho$ is measurable.
The motivation for the definition of $\vecz_\rho$ is that
both $\tau_1(\vecq_{\rho,\vecbeta}(\vecv),\vecv;\rho)$
and $\vecomega_1(\vecq_{\rho,\vecbeta}(\vecv),\vecv;\rho)$
can be expressed as simple functions of
$\vecz_\rho(\scrQ_\rho(\vecq,\vecbeta,\vecv))$,
where we recall that the point set $\scrQ_\rho(\vecq,\vecbeta,\vecv)$ was defined in \eqref{XIrhoqvdef}.
The precise statement is as follows. %
\begin{lem}\label{ZTAU1CONNECTIONlem}
Let $\vecbeta\in K$, $\vecq\in\scrP$, $\vecv\in U$, $\rho\in(0,1)$,
and assume that 
$d_\scrP(\vecq)>(1+\|\vecbeta\|)\rho$ and 
$(\vecx,\vs)=\vecz_\rho(\scrQ_\rho(\vecq,\vecbeta,\vecv))\neq\undef$.
Then $\vecv\in\fw_{\vecq,\rho}^{\vecbeta}$,
$\rho^{d-1}\tau_1(\vecq_{\rho,\vecbeta}(\vecv),\vecv;\rho)=\xi_\rho(\vecx)\in\R_{>0}$,
and $\vecomega_1(\vecq_{\rho,\vecbeta}(\vecv),\vecv;\rho) %
=(-\vecx_\perp,\vs)$.
\end{lem}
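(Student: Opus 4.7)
The proof is a direct translation between the physical frame and the rescaled particle frame, via the bijective correspondence $\vecp \leftrightarrow \vecx$ with $\vecx = (\vecp - \vecq - \rho\vecbeta(\vecv))R(\vecv)D_\rho$ for $\vecp \in \scrP \setminus \{\vecq\}$. Setting $Y := \scrQ_\rho(\vecq, \vecbeta, \vecv)$, I would organise the proof in four steps.

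First, I would extract the standing consequences of the hypotheses. The ray-avoidance condition on $K$ imposed in Theorem \ref{Thm2gen} forces $\|\vecbeta(\vecv)\| \geq 1$ for all $\vecbeta \in K$, $\vecv \in U$: otherwise $\vecbeta(\vecv)$ lies in the open unit ball and then $\vecbeta(\vecv) + t\vecv \in \scrB_1^d$ for all sufficiently small $t>0$, contradicting the assumption. In particular $\|\vecbeta\| \geq 1$, so the hypothesis $d_\scrP(\vecq) > (1+\|\vecbeta\|)\rho$ yields both $d_\scrP(\vecq) > 2\rho$ (so that $\scrB^d(\vecq, \rho)$ is separated) and $\|\vecp - \vecq - \rho\vecbeta(\vecv)\| > \rho$ strictly for every $\vecp \in \scrP \setminus \{\vecq\}$ (so the starting point $\vecq_{\rho,\vecbeta}(\vecv)$ lies strictly outside every other scatterer). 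Combined with the outgoing condition at $\partial\scrB^d(\vecq, \rho)$ (which is forced by the ray-avoidance hypothesis when $\|\vecbeta(\vecv)\|=1$), this gives $(\vecq_{\rho,\vecbeta}(\vecv), \vecv) \in \fw(\rho)$ and also shows that $Y \cap \overline{\scrB_\rho^d}D_\rho = \emptyset$ automatically.

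Second, I would rewrite the collision equation in the rescaled frame. The particle strikes $\partial \scrB^d(\vecp, \rho)$ at time $t > 0$ iff $\|\vecp - \vecq - \rho\vecbeta(\vecv) - t\vecv\|^2 = \rho^2$. Applying the isometry $R(\vecv)$ (which sends $\vecv$ to $\vece_1$) and setting $\vecy := (\vecp - \vecq - \rho\vecbeta(\vecv))R(\vecv)$, $\vecx := \vecy D_\rho$, this becomes $(x_1 - \rho^{d-1}t)^2 + \rho^{2d}|\vecx_\perp|^2 = \rho^{2d}$. Existence of a positive solution is equivalent to $\vecx \in \overline{\fZ_\infty}$, and the smaller root satisfies $\rho^{d-1} t = x_1 - \rho^d\sqrt{1 - |\vecx_\perp|^2} = \xi_\rho(\vecx)$ by the very definition \eqref{xirhoDEF}. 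Recalling that $\tau_1$ is defined using \emph{open} scatterers, so that only non-grazing scatterers (those with $\vecx \in \fZ_\infty$ rather than merely in $\overline{\fZ_\infty}$) contribute, we conclude that the unique $\xi_\rho$-minimiser in $Y \cap (\fZ_\infty \times \Sigma)$ picked out by $\vecz_\rho$ corresponds precisely to the scatterer of first collision. Thus $\rho^{d-1}\tau_1 = \xi_\rho(\vecx)$, and this is strictly positive because $Y \cap \overline{\scrB_\rho^d}D_\rho = \emptyset$ forces $\vecx \notin \overline{\scrB_\rho^d}D_\rho$.

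Third, I would compute the impact parameter and verify separation. Writing the collision point as $\vecq_{\rho,\vecbeta}(\vecv) + \tau_1\vecv = \vecp + \rho\vecu_1$ with $\vecu_1 \in \US$ and applying $R(\vecv)$ gives $\rho\,\vecu_1 R(\vecv) = \tau_1\vece_1 - \vecy$; substituting $\vecy = \vecx D_\rho^{-1}$ and the formula for $\tau_1$ from the previous step yields $\vecu_1 R(\vecv) = \bigl(-\sqrt{1 - |\vecx_\perp|^2},\, -\vecx_\perp\bigr)$, so that $\vecw_1 = (\vecu_1 R(\vecv))_\perp = -\vecx_\perp$ and hence $\vecomega_1 = (\vecw_1, \vs(\vecp)) = (-\vecx_\perp, \vs)$. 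For the separation of $\scrB^d(\vecp, \rho)$: any $\vecp' \in \scrP \setminus \{\vecq, \vecp\}$ has its corresponding $\vecx' \in Y$ satisfying $\vecx' - \vecx \in \overline{\scrB_{2\rho}^d} D_\rho$ iff $\|\vecp' - \vecp\| \leq 2\rho$, so the condition \eqref{BALLSEPCOND} coming from $\vecz_\rho(Y) \neq \undef$ excludes such $\vecp'$; the remaining candidate $\vecp' = \vecq$ is excluded by $d_\scrP(\vecq) > 2\rho$ from step one. This confirms $\vecv \in \fw_{\vecq,\rho}^{\vecbeta}$.

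The main obstacle is not conceptual but bookkeeping: keeping the strict inequalities strict (so $\xi_\rho(\vecx) > 0$ and the starting position genuinely lies outside every scatterer), executing the $R(\vecv)$- and $D_\rho$-conjugations consistently, and being careful that \eqref{BALLSEPCOND} only controls scatterer centers in $\scrP \setminus \{\vecq\}$, so that the separation from $\vecq$ itself has to be extracted from the ray-avoidance hypothesis on $K$ combined with the hypothesis on $d_\scrP(\vecq)$.
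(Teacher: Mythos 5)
Your proposal is correct and follows essentially the same route as the paper's proof: transfer the collision problem to the rescaled frame via the affine correspondence $\vecp\mapsto(\vecp-\vecq-\rho\vecbeta(\vecv))R(\vecv)D_\rho$, use the ray-avoidance condition together with $d_\scrP(\vecq)>(1+\|\vecbeta\|)\rho$ to get $(\vecq_{\rho,\vecbeta}(\vecv),\vecv)\in\fw(\rho)$ and to rule out the scatterer at $\vecq$ itself, identify the $\xi_\rho$-minimiser of $Y\cap\fZ_\infty$ with the first collision, invoke \eqref{BALLSEPCOND} (plus $d_\scrP(\vecq)>2\rho$) for separation, and compute $\vecw_1=-\vecx_\perp$. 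The only loose statement, that a positive hitting time exists iff $\vecx\in\overline{\fZ_\infty}$, is not literally true in edge cases, but these are exactly excluded by the strict exteriority $\|\vecp-\vecq-\rho\vecbeta(\vecv)\|>\rho$ you establish in your first step, so the argument stands.
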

Recall that we have provided $\C_b(U,\R^d)$ with the supremum norm; thus
$\|\vecbeta\|=\sup_{\vecu\in U}\|\vecbeta(\vecu)\|$.
\begin{proof}
The assumptions 
$(\vecbeta(\vecv)+\R_{>0}\vecv)\cap\scrB_1^d=\emptyset$ and 
$d_\scrP(\vecq)>(1+\|\vecbeta\|)\rho$ imply that
either $\vecq(\vecv)\in\scrK_\rho^\circ$ or else
$\vecq(\vecv)$ lies on the boundary of the scatterer $\scrB^d(\vecq,\rho)$, which is separated,
so that $\vecq(\vecv)\in\T^1(\partial\scrK_\rho)_{\out}$.
Hence $(\vecq(\vecv),\vecv)\in\fw(\rho)$.

Set $Y=\scrQ_\rho(\vecq,\vecbeta,\vecv)$,
so that $(\vecx,\vs)=\vecz_\rho(Y)\in\R_{>0}\times\Omega$.
Also set
\begin{align*}
\tq:=\vecx D_\rho^{-1}R(\vecv)^{-1}+\vecq(\vecv).
\end{align*}
Then $\tq\in\scrP\setminus\{\vecq\}$ and $\vs(\tq)=\vs$,
since $(\vecx,\vs)\in Y=\scrQ_\rho(\vecq,\vecbeta,\vecv)$; cf.\ \eqref{XIrhoqvdef}.
It follows from $\vecz_\rho(Y)\neq\undef$ that
the line segment $\{\xi\vece_1\col\xi\in[0,\xi_\rho(\vecx)]\}$
is disjoint from all the open ellipsoids $\vecx'+\scrB_\rho^dD_{\rho}$ for $(\vecx',\vs')\in Y$,
but $\xi\vece_1\in\vecx+\scrB_\rho^dD_{\rho}$ holds for each $\xi>\xi_\rho(\vecx)$
which lies sufficiently near $\xi_\rho(\vecx)$.
Applying the affine linear map $\vecy\mapsto\vecy D_\rho^{-1}R(\vecv)^{-1}+\vecq(\vecv)$,
using also $(\vecbeta(\vecv)+\R_{>0}\vecv)\cap\scrB_1^d=\emptyset$,
it follows that
\begin{align*}
\tau_1(\vecq(\vecv),\vecv;\rho)=\rho^{1-d}\xi_\rho(\vecx) %
\qquad\text{and}\qquad
\vecq^{(1)}(\vecq(\vecv),\vecv;\rho)=\tq.
\end{align*}
Similarly, \eqref{BALLSEPCOND} implies that the scatterer associated to $\tq$ is separated,
i.e.\ $\|\tq-\vecp\|>2\rho$ for all
$\vecp\in\scrP\setminus\{\tq\}$.
Hence $\vecv\in\fw_{\vecq,\rho}^{\vecbeta}$.
Also %
$\vecq(\vecv)+\tau_1\vecv={\vecq^{(1)}}+\rho\vecu_1$
implies $\vecu_1=\rho^{-1}(\tau_1\vecv-\vecx D_\rho^{-1}R(\vecv)^{-1})$,
and so %
$\vecw_1(\vecq(\vecv),\vecv;\rho)=(\vecu_1R(\vecv))_\perp=-\vecx_\perp$.
\end{proof}

\begin{lem}\label{zrhomeasLEM}
For each $0<\rho<1$, the map $\vecz_\rho:N_s(\scrX)\to\Delta$ is Borel measurable.
\end{lem}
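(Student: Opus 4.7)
The proof strategy parallels Lemma \ref{BASICkidpreplemG}. Recall that the Borel $\sigma$-algebra $\scrN$ on $N_s(\scrX)$ (induced from the vague topology) is generated by the maps $Y \mapsto Y(C)$ for relatively compact Borel sets $C \subset \scrX$. The function $\xi_\rho$ is continuous on $\fZ_\infty$, so the sublevel sets
\[
L_t := \{(\vecx, \vs) \in \scrX : \vecx \in \fZ_\infty,\: \xi_\rho(\vecx) \leq t\}
\]
are closed in $\fZ_\infty \times \Sigma$ and relatively compact in $\scrX$ for each $t \geq 0$. Consequently, for every Borel set $C \subset \scrX$, the map $Y \mapsto Y(L_t \cap C)$ is Borel measurable on $N_s(\scrX)$.

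For any Borel set $B \subset \R_{>0} \times \Omega$, identified as a Borel subset of $\fZ_\infty \times \Sigma$, I will write $\vecz_\rho^{-1}(B) = E_1 \cap E_2(B) \cap E_3(B)$, capturing the three conditions in the definition of $\vecz_\rho$. First, $E_1 := \{Y : Y(L_0) = 0\}$ encodes the emptiness condition $Y \cap \fZ_\infty \cap \overline{\scrB_\rho^d} D_\rho = \emptyset$ and is manifestly Borel. Second, the event that $Y \cap (\fZ_\infty \times \Sigma)$ has a unique $\xi_\rho$-minimizer lying in $B$ is
\[
E_2(B) = \bigcup_{t \in \Q_{>0}} \{Y : Y(L_t) = 1\} \cap \{Y : Y(L_t \cap B) = 1\},
\]
which is Borel. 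For the forward direction, if $Y$ has a unique minimizer of value $t_0 > 0$, local finiteness of $Y$ together with relative compactness of $L_{t_0+1}$ produces a gap $(t_0, t_1)$ below the second-smallest $\xi_\rho$-value among points of $Y \cap (\fZ_\infty \times \Sigma)$; any rational $t$ in this gap gives $Y(L_t) = 1$. Conversely, $Y(L_t) = 1$ forces the minimum to be uniquely attained.

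It remains to incorporate the separation condition $Y \cap ((\vecx + \overline{\scrB_{2\rho}^d} D_\rho) \times \Sigma) = \{(\vecx, \vs)\}$. On $E_1 \cap E_2(B)$, the minimizer defines a Borel map $Y \mapsto (\vecx(Y), \vs(Y)) \in \scrX$: enumerating $\Q_{>0} = \{t_1, t_2, \ldots\}$, let $k(Y) := \min\{k : Y(L_{t_k}) = 1\}$ and extract the unique atom of $Y|_{L_{t_{k(Y)}}}$; the preimage of a Borel set $V \subset \scrX$ under this extraction is $\bigcup_k (\{Y : k(Y) = k\} \cap \{Y : Y(L_{t_k} \cap V) = 1\})$, hence Borel. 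The joint function $h((\vecx, \vs), Y) := Y((\vecx + \overline{\scrB_{2\rho}^d} D_\rho) \times \Sigma)$ is jointly Borel measurable on $\scrX \times N_s(\scrX)$, since
\[
h((\vecx, \vs), Y) = \int_{\scrX} \mathbf 1\bigl[\vecy - \vecx \in \overline{\scrB_{2\rho}^d} D_\rho\bigr]\, dY(\vecy, \vs'),
\]
where the integrand is jointly Borel in $((\vecx, \vs), (\vecy, \vs'))$ and the dependence on $Y$ is jointly measurable by standard integration theory. Thus $E_3(B) := \{Y \in E_1 \cap E_2(B) : h((\vecx(Y), \vs(Y)), Y) = 1\}$ is Borel, so $\vecz_\rho^{-1}(B) = E_3(B)$ is Borel; taking complements yields that $\vecz_\rho^{-1}(\{\undef\})$ is also Borel, finishing the proof. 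The main technical obstacle is the joint Borel measurability of $h$ together with the Borel selection of the minimizer, both of which are routine once one reduces to finite counting measures via restriction to the relatively compact sublevel sets $L_t$.
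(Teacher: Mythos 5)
Your proof is correct but takes a genuinely different route from the paper's. The paper's argument (which closely mimics the proof of Lemma \ref{BASICkidpreplemG}) constructs $\vecz_\rho^{-1}(B)$ explicitly as a triple $\cup\cap\cup$ of elementary events $A^{(\rho)}[\frac kn,\frac{k+1}n,B]$ over rational ``slabs'' $\fZ_{k/n,(k+1)/n}^{(\rho)}$ between sublevel sets of $\xi_\rho$, and localizes the separation condition \eqref{BALLSEPCOND} using a dyadic (shrinking) partition of $\R^d$ into cubes $C_{n,\vecm}$; this keeps everything at the level of counting in fixed relatively compact sets. You instead use the sublevel sets $L_t$ directly, writing the ``unique minimizer in $B$'' event $E_2(B)$ as a single union over $t\in\Q_{>0}$ of $\{Y(L_t)=1\}\cap\{Y(L_t\cap B)=1\}$, and then handle \eqref{BALLSEPCOND} by a Borel selection of the minimizer $Y\mapsto(\vecx(Y),\vs(Y))$ followed by an appeal to the joint Borel measurability of $(\vecx,Y)\mapsto Y((\vecx+\overline{\scrB_{2\rho}^d}D_\rho)\times\Sigma)$. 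Both strategies work. Your approach is more modular and shorter, but shifts work into the joint-measurability claim for the translated-counting map; the phrase ``by standard integration theory'' is a little terse for this, and it would strengthen the write-up to indicate the mechanism (e.g.\ that $h(\vecx,Y)$ is the decreasing pointwise limit of the jointly continuous maps $(\vecx,Y)\mapsto\int\phi_n(\vecy-\vecx)\,dY(\vecy,\vs')$ for $\phi_n\in\C_c(\R^d)$ decreasing to $\mathbf 1_{\overline{\scrB_{2\rho}^d}D_\rho}$, or alternatively invoke a functional monotone class theorem). The paper's hands-on construction avoids this overhead entirely, at the cost of a more intricate combinatorial formula, and arguably requires its own careful verification that the dyadic approximation correctly captures \eqref{BALLSEPCOND}.
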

\begin{proof}
For $n\in\Z^+$ and $\vecm\in\Z^d$ we let $C_{n,\vecm}\subset\R^d$ be the cube $C_{n,\vecm}=n^{-1}(\vecm+[0,1)^d)$,
and set $C^{(\rho)}_{n,\vecm}=C_{n,\vecm}+\overline{\scrB_{2\rho}^d}D_\rho$.
For any bounded Borel set $V\subset\R^d$, let
\begin{align*}
S^{(\rho)}(V)=\cup_{N=1}^\infty\cap_{n=N}^\infty\cap_{\vecm\in\Z^d}
\{Y\in N_s(\scrX)\col Y\cap V\cap C_{n,\vecm}=\emptyset\:\text{ or }
\hspace{60pt}
\\
\#(Y\cap V\cap C_{n,\vecm})=\#(Y\cap C^{(\rho)}_{n,\vecm})\}.
\end{align*}
This is clearly a Borel set in $N_s(\scrX)$, and one verifies that
$Y$ lies in $S^{(\rho)}(V)$ if and only if
$Y\cap(\vecy+\overline{\scrB_{2\rho}^d}D_\rho)=\{\vecy\}$ for every point $\vecy\in Y\cap V$.
Next for any Borel set $B\subset\R_{>0}\times\Omega$ 
and $0<r<s$, we set $\fZ_r^{(\rho)}=\{\vecx\in\fZ_\infty\col\xi_\rho(\vecx)<r\}$,
$\fZ_{r,s}^{(\rho)}=\fZ_s^{(\rho)}\setminus\fZ_r^{(\rho)}$,
and
\begin{multline*}
A^{(\rho)}[r,s,B]:=\bigl\{Y\in N_s(\scrX)\col\hspace{15pt} 
Y\cap\fZ^{(\rho)}_r=\emptyset, \\
\#(Y\cap\fZ^{(\rho)}_{r,s})=\#((Y\cap B)\cap\fZ^{(\rho)}_{r,s})=1,
\hspace{20pt}
\text{ and }\: Y\in S^{(\rho)}(\fZ^{(\rho)}_{r,s})\bigr\}.
\end{multline*}
Then
$\vecz_\rho^{-1}(B)=\cup_{N=1}^\infty \cap_{n=N}^\infty\cup_{k=1}^\infty A^{(\rho)}\bigl[\tfrac kn,\tfrac{k+1}n,B\bigr]$.
Hence $\vecz_\rho^{-1}(B)$ is a Borel set in $N_s(\scrX)$.
Also $\vecz_\rho^{-1}(\{\undef\})=N_s(\scrX)\setminus\vecz_\rho^{-1}(\R_{>0}\times\Omega)$ is a Borel set.
Hence the lemma is proved.
\end{proof}

For $0<\rho<1$, define the map
$F_\rho:\Delta\to\Delta$ through
\begin{align}\label{Frhodef}
F_\rho(\vecz)=\begin{cases}
\iota(\xi_\rho(\vecz),\vecz_\perp)&\text{if }\: \vecz\in\R_{>0}\times\Omega
\\
\undef&\text{if }\: \vecz=\undef.
\end{cases}
\end{align}
Let $U,\vecbeta,\lambda$ be given as in Theorem \ref{Thm2gen}.
For $\vecv$ random in $(U,\lambda_{|U})$,
we let $\eta_{\vecq,\rho}^{(\vecbeta,\lambda)}\in P(U\times\Delta)$ be the distribution
of $(\vecv,[F_\rho\circ\vecz_\rho](\scrQ_\rho(\vecq,\vecbeta,\vecv)))$
\label{etaqrhobetalambdaDEF}
and let $\eta_{\vs}^{(\vecbeta,\lambda)}\in P(U\times\Delta)$ be the distribution
of $(\vecv,[\iota\circ\vecz](\Xi_{\vs}-(\vecbeta(\vecv)R(\vecv))_\perp))$,
with $\Xi_{\vs}$ independent from $\vecv$.
The key step in the proof of Theorem \ref{Thm2gen}
is to show that
$\eta_{\vecq,\rho}^{(\vecbeta,\lambda)}$ converges weakly to $\eta_{{\vs}(\vecq)}^{(\vecbeta,\lambda)}$ as $\rho\to0$,
uniformly over $\vecq\in\scrP_T(\rho)$ and $\vecbeta\in K$.
We will establish this in Lemma \ref{KEYyCONVprop3}.
As a first step, we verify in the following lemma that $\eta_{{\vs}}^{(\vecbeta,\lambda)}$
depends continuously on $\vs$ and $\vecbeta$.
\begin{lem}\label{TETACONTlem}
The map $\Sigma\times\C_b(U,\R^d)\to P(U\times\Delta)$, $({\vs},\vecbeta)\mapsto\eta_{{\vs}}^{(\vecbeta,\lambda)}$,
is continuous.
\end{lem}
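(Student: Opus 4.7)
The plan is to reduce to sequential continuity and then apply the generalized continuous mapping theorem of Kallenberg \cite[Thm.\ 4.27]{kallenberg02}. Since both $\Sigma\times \C_b(U,\R^d)$ and $P(U\times\Delta)$ are metrizable, it suffices to show that for arbitrary sequences $\vs_n\to\vs$ in $\Sigma$ and $\vecbeta_n\to\vecbeta$ in $\C_b(U,\R^d)$, we have $\eta_{\vs_n}^{(\vecbeta_n,\lambda)}\xrightarrow[]{\textup{ w }}\eta_{\vs}^{(\vecbeta,\lambda)}$.

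First I would express $\eta_\vs^{(\vecbeta,\lambda)}$ as a push-forward: define, for each $\vecbeta\in\C_b(U,\R^d)$, the Borel measurable map
\begin{align*}
G_{\vecbeta}:U\times N_s(\scrX)\to U\times\Delta,\qquad
G_{\vecbeta}(\vecv,Y)=\bigl(\vecv,\,\iota\bigl(\vecz(Y-(\vecbeta(\vecv)R(\vecv))_\perp)\bigr)\bigr),
\end{align*}
with $\iota(\undef):=\undef$. Then by definition $\eta_\vs^{(\vecbeta,\lambda)}=G_{\vecbeta*}(\lambda_{|U}\times\mu_\vs)$. The standing continuity hypothesis $\vs\mapsto\mu_\vs$ gives $\mu_{\vs_n}\xrightarrow[]{\textup{ w }}\mu_\vs$ in $P(N(\scrX))$, and hence $\lambda_{|U}\times\mu_{\vs_n}\xrightarrow[]{\textup{ w }}\lambda_{|U}\times\mu_\vs$ on $U\times N(\scrX)$. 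By \cite[Thm.\ 4.27]{kallenberg02}, it therefore suffices to exhibit a set $E\subset U\times N_s(\scrX)$ of full $\lambda_{|U}\times\mu_\vs$-measure such that $G_{\vecbeta_n}(\vecv_n,Y_n)\to G_{\vecbeta}(\vecv,Y)$ whenever $(\vecv_n,Y_n)\to(\vecv,Y)$ with $(\vecv,Y)\in E$.

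To construct $E$, let $\vecv_0\in\US$ be the single point at which $R$ may be discontinuous, and let $C\subset N_s(\scrX)$ be the set (introduced in the proof of Lemma \ref{zXisigmmxcontLEM}) of $Y$ with $\vecz(Y)\neq\undef$ and $Y\cap\partial\fZ_\infty=\emptyset$; recall that $\vecz$ is continuous at every point of $C$, and that for any fixed $\vecx\in\R^d$ the translated set $\{Y\col Y-\vecx\in C\}$ has full $\mu_\vs$-measure, by Lemmas \ref{BASICkidpreplemG} and \ref{GROUNDINTENSITYlem2} together with \eqref{SIGMAeqp}. Set
\begin{align*}
E=\bigl\{(\vecv,Y)\in (U\setminus\{\vecv_0\})\times N_s(\scrX)\col Y-(\vecbeta(\vecv)R(\vecv))_\perp\in C\bigr\}.
\end{align*}
Since $\lambda\in\Pac(\US)$ we have $\lambda(\{\vecv_0\})=0$, so Fubini yields $(\lambda_{|U}\times\mu_\vs)(E)=1$. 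For $(\vecv,Y)\in E$ and any sequence $(\vecv_n,Y_n)\to(\vecv,Y)$ in $U\times N_s(\scrX)$, continuity of $R$ at $\vecv\neq\vecv_0$ combined with uniform convergence $\vecbeta_n\to\vecbeta$ gives $(\vecbeta_n(\vecv_n)R(\vecv_n))_\perp\to(\vecbeta(\vecv)R(\vecv))_\perp$ in $\R^d$; therefore $Y_n-(\vecbeta_n(\vecv_n)R(\vecv_n))_\perp\to Y-(\vecbeta(\vecv)R(\vecv))_\perp\in C$ in $N_s(\scrX)$, and continuity of $\vecz$ on $C$, together with continuity of $\iota$, yields $G_{\vecbeta_n}(\vecv_n,Y_n)\to G_{\vecbeta}(\vecv,Y)$ in $U\times\Delta$. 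This verifies the hypothesis of \cite[Thm.\ 4.27]{kallenberg02} and completes the proof.

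The main thing to watch is the slightly unusual disjoint-union topology on $\Delta$, which forces us to treat the $\undef$-fiber as a clopen stratum; fortunately all approximations of interest happen within the $\R_{>0}\times\Omega$-stratum on the full-measure set $E$, so the only real work is checking that the continuity and full-measure statements for $\vecz$ recorded in the proof of Lemma \ref{zXisigmmxcontLEM} transfer to the shifted configurations $Y-(\vecbeta(\vecv)R(\vecv))_\perp$ used here—a direct Fubini argument, as outlined above.
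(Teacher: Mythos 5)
Your proof is correct, but it takes a structurally different route from the paper's. The paper picks a test function $f\in\C_b(U\times\Delta)$, writes $\eta_{\vs_n}^{(\vecbeta_n,\lambda)}(f)$ as an iterated integral with the $\vecv$-integration outermost, invokes Lemma \ref{zXisigmmxcontLEM} (which already packages a continuous-mapping argument for the inner layer) to obtain pointwise convergence of the inner integrals $g_n(\vecv)\to g(\vecv)$, and finishes with dominated convergence in $\vecv$. You instead collapse both layers into a single application of the continuous mapping theorem \cite[Thm.\ 4.27]{kallenberg02} on the product space $U\times N_s(\scrX)$: you realise $\eta_\vs^{(\vecbeta,\lambda)}$ as the push-forward of $\lambda_{|U}\times\mu_\vs$ by $G_\vecbeta$, invoke weak convergence of the product measures, and check sequential convergence $G_{\vecbeta_n}(\vecv_n,Y_n)\to G_\vecbeta(\vecv,Y)$ on a full-measure set $E$. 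The two routes are mathematically equivalent --- the natural proof of your intermediate claim $\lambda_{|U}\times\mu_{\vs_n}\xrightarrow[]{\textup{ w }}\lambda_{|U}\times\mu_\vs$ is itself a pointwise-plus-dominated-convergence argument, so the paper's strategy resurfaces implicitly inside yours --- but yours has the virtue of running in exact parallel with the proof of Lemma \ref{KEYyCONVprop3} immediately afterwards, which applies the same product-space machinery with $\tmu_{\vecq_n,\rho_n}^{(\vecbeta_n,\lambda)}$ in place of $\lambda\times\mu_{\vs_n}$. You also correctly read through a typo in the paper's proof of Lemma \ref{zXisigmmxcontLEM}: the displayed assertion $\mu_\vs(\{Y\col Y-\vecx\in C\})=0$ should say the \emph{complement} has measure zero, and your Fubini step rightly uses that corrected statement.
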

\begin{proof}
This is a consequence of Lemma \ref{zXisigmmxcontLEM}.
Indeed, given sequences $\vecbeta_n\to\vecbeta$ in $\C_b(U,\R^d)$ and ${\vs}_n\to{\vs}$ in $\Sigma$, 
and a function $f\in \C_b(U\times\Delta)$, our task is to prove that 
\begin{align}\label{TETACONTlempf1}
\int_{U}\int_{N_s(\scrX)}f\bigl(\vecv,[\iota\circ\vecz](Y-(\vecbeta_n(\vecv)R(\vecv))_\perp)\bigr)\,d\mu_{{\vs}_n}(Y)
\,d\lambda(\vecv)
\hspace{100pt}
\\\notag
\to\int_{U}\int_{N_s(\scrX)}f\bigl(\vecv,[\iota\circ\vecz](Y-(\vecbeta(\vecv)R(\vecv))_\perp)\bigr)\,d\mu_{{\vs}}(Y)
\,d\lambda(\vecv)
\qquad\text{as }\: n\to\infty.
\end{align}
Call the inner integral in the left hand side $g_n(\vecv)$, and the inner integral in the right hand side $g(\vecv)$;
then Lemma \ref{zXisigmmxcontLEM} implies that $g_n(\vecv)\to g(\vecv)$ for each fixed $\vecv\in U$.
Furthermore $|g_n(\vecv)|\leq\sup|f|$ and $|g(\vecv)|\leq\sup|f|$
for all $n$ and $\vecv$.
Hence \eqref{TETACONTlempf1}
follows by Lebesgue's Dominated Convergence Theorem.
\end{proof}

In the proof of Lemma \ref{KEYyCONVprop3}
we will apply the continuous mapping theorem for the maps
$F_\rho\circ\vecz_\rho:N_s(\scrX)\to\Delta$ with $\rho\to0$.
For this application we will need the following continuity fact.
\begin{lem}\label{Frhozrhocontlem}
Let $Y,Y_1,Y_2,\ldots\in N_s(\scrX)$ and $\rho_1,\rho_2,\ldots\in(0,1)$ be given
such that $Y_n\to Y$ and $\rho_n\to0$ as $n\to\infty$.
Assume furthermore that $\vecz(Y)\neq\undef$,
$Y\cap\partial\fZ_\infty=\emptyset$,
and $\vecy\cdot\vece_1\neq\vecz(Y)\cdot\vece_1$ for all $\vecy\in Y\setminus\{\vecz(Y)\}$.
Then $F_{\rho_n}(\vecz_{\rho_n}(Y_n))\to \iota(\vecz(Y))$ in $\Delta$.
\end{lem}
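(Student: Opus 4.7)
\textbf{Proof plan for Lemma \ref{Frhozrhocontlem}.}

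Write $\vecz(Y)=(z^*,\vs^*)$ with $z^*=(z_1^*,z_\perp^*)\in\fZ_\infty$ (so $z_1^*>0$ and $|z_\perp^*|<1$). The goal is to exhibit, for all sufficiently large $n$, a distinguished point $(\vecx_n,\vs_n)\in Y_n$ converging to $(z^*,\vs^*)$, and then to verify that this is exactly the point selected by $\vecz_{\rho_n}(Y_n)$; the conclusion will follow from $\xi_{\rho_n}(\vecx_n)=x_{n,1}-\rho_n^d\sqrt{1-|\vecx_{n,\perp}|^2}\to z_1^*$ together with $\iota(\vecx_n,\vs_n)\to(z_1^*,-z_\perp^*,\vs^*)$.

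The first step is to choose a bounded continuity set $E=[0,M]\times\overline{\scrB_\alpha^{d-1}}\times\Sigma$, with $M>z_1^*+1$ and $\alpha>1$, such that $Y(\partial E)=0$; this is possible since $Y$ is locally finite and the exceptional values of $M$ and $\alpha$ form a countable set. Enumerate $Y\cap E=\{(z^*,\vs^*),(\vecy^{(1)},\vs^{(1)}),\ldots,(\vecy^{(k)},\vs^{(k)})\}$. By the three hypotheses on $Y$, each $\vecy^{(i)}$ satisfies exactly one of: (a) $y^{(i)}_1>z_1^*$ and possibly $|\vecy^{(i)}_\perp|<1$ (in $\fZ_\infty$), (b) $y^{(i)}_1>z_1^*$ and $|\vecy^{(i)}_\perp|>1$, or (c) $y^{(i)}_1<z_1^*$, in which case by minimality of $z_1^*$ in $Y\cap\fZ_\infty$ together with $Y\cap\partial\fZ_\infty=\emptyset$, we necessarily have $|\vecy^{(i)}_\perp|>1$. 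Choose $\delta>0$ smaller than a quarter of the minimum of $z_1^*$, $1-|z_\perp^*|$, $\alpha-1$, all positive gaps $y^{(i)}_1-z_1^*$ from case (a), and all positive gaps $|\vecy^{(i)}_\perp|-1$ from cases (b) and (c); and small enough that $\overline{\scrB^d(z^*,3\delta)}\times\overline{B_\Sigma(\vs^*,3\delta)}$ contains only the point $(z^*,\vs^*)$ of $Y$.

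The second step invokes vague convergence of simple point measures: since $Y_n\to Y$ vaguely and $Y(\partial E)=0$, for large $n$ the $1+k$ points of $Y_n$ in $E$ match bijectively with those of $Y$, each approximant lying in the $\delta$-ball around its counterpart. Call $(\vecx_n,\vs_n)$ the approximant of $(z^*,\vs^*)$, and $(\vecy^{(i)}_n,\vs^{(i)}_n)$ the approximants of $\vecy^{(i)}$. Now, $\vecx_n\in\fZ_\infty$ and $|\vecx_{n,\perp}|<1$ for large $n$; in cases (b) and (c) we have $|\vecy^{(i)}_{n,\perp}|>1$ still, so $\vecy^{(i)}_n\notin\fZ_\infty$; in case (a), $y^{(i)}_{n,1}\geq z_1^*+2\delta$. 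Points of $Y_n$ in $\fZ_\infty$ outside $E$ must have $p_{n,1}\geq M-\delta$. Using $\xi_{\rho_n}(\vecp)\geq p_1-\rho_n^d$ and $\xi_{\rho_n}(\vecx_n)\to z_1^*$, we conclude that for large $n$ the unique point of $Y_n\cap\fZ_\infty$ minimising $\xi_{\rho_n}$ is $\vecx_n$, and that the ``no-other-point'' condition $Y_n\cap(\vecx_n+\overline{\scrB_{2\rho_n}^d}D_{\rho_n})=\{\vecx_n\}$ holds because the relevant ellipsoid has $\vece_1$-extent $O(\rho_n^d)\to 0$ while every competing point lies at $\vece_1$-distance $\geq\delta$ from $\vecx_n$; likewise $Y_n\cap\fZ_\infty\cap\overline{\scrB_{\rho_n}^d}D_{\rho_n}=\emptyset$ since $z_1^*>0$ and no $\vecy^{(i)}$ has $y^{(i)}_1=0$.

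The main obstacle, which the choice of the continuity set $E$ handles, is to rule out points of $Y_n\cap\fZ_\infty$ that are \emph{not} approximants of any point of $Y\cap\fZ_\infty$: either they arise from matching with points of $Y\setminus\fZ_\infty$ (specifically those with $|\vecy_\perp|$ only slightly larger than $1$), or they are far-away points with large first coordinate. The first situation is excluded precisely because the hypothesis $Y\cap\partial\fZ_\infty=\emptyset$ gives a \emph{uniform} perpendicular gap $|\vecy^{(i)}_\perp|-1>4\delta$ inside the bounded region $E$; the second is harmless because their $\xi_{\rho_n}$-value is then bounded below by $M-\rho_n^d$, which eventually exceeds $\xi_{\rho_n}(\vecx_n)\approx z_1^*$. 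Once these two points are established, the convergence $F_{\rho_n}(\vecz_{\rho_n}(Y_n))=(\xi_{\rho_n}(\vecx_n),-\vecx_{n,\perp},\vs_n)\to(z_1^*,-z_\perp^*,\vs^*)=\iota(\vecz(Y))$ is immediate.
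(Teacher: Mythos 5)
Your overall strategy---choosing a bounded continuity set $E$, matching the points of $Y_n$ in $E$ bijectively with those of $Y$, and then checking that the distinguished approximant $\vecx_n$ both minimises $\xi_{\rho_n}$ and satisfies \eqref{BALLSEPCOND}---is sound and is essentially the paper's strategy (the paper uses a cleverly shaped open set $V$ in place of your box $E$). But the perpendicular radius $\alpha>1$ of $E$ is too small to justify the separation condition. The closed ellipsoid $\vecx_n+\overline{\scrB_{2\rho_n}^d}D_{\rho_n}$ has perpendicular semi-axis $2$, not $O(\rho_n)$: it consists of $\vecp$ with $|p_1-x_{n,1}|\leq 2\rho_n^d$ and $|\vecp_\perp-\vecx_{n,\perp}|\leq 2$, so a competitor for \eqref{BALLSEPCOND} can have $|\vecp_\perp|$ as large as roughly $|z_\perp^*|+\delta+2<3+\delta$. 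A point of $Y$ with first coordinate close (but not equal) to $z_1^*$ and perpendicular norm between your $\alpha$ and $3$ lies outside $E$, is therefore not accounted for by your matching, and yet can give rise to a point of $Y_n$ inside the ellipsoid---exactly the configuration that would make $\vecz_{\rho_n}(Y_n)=\undef$. Your assertion that ``every competing point lies at $\vece_1$-distance $\geq\delta$ from $\vecx_n$'' is only established for approximants of points of $Y\cap E$. The remedy is to take $\alpha$ large enough (say $\alpha\geq4$), which is precisely why the paper's set $V$ includes the extra piece $(\xi-4\ve,\xi+4\ve)\times\scrB_4^{d-1}$ of radius $4$.

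A smaller but also genuine issue: the face $\{x_1=0\}$ of $\partial E$ cannot be moved by varying $M$ or $\alpha$. The hypothesis $Y\cap\partial\fZ_\infty=\emptyset$ excludes points of $Y$ at $(0,\vecy_\perp)$ only for $|\vecy_\perp|\leq1$, so $Y$ may contain a point at $x_1=0$ with $1<|\vecy_\perp|<\alpha$, making $Y(\partial E)>0$ and breaking the bijective-matching step as stated. Replacing $[0,M]$ by $[-\eta,M]$ for a generic small $\eta>0$ fixes this; the matched points with negative first coordinate never enter $\fZ_\infty$, so they do not affect the rest of the argument. Both fixes are straightforward, but as written the proof does not close.
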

\begin{proof}
Let $\xi=\vecz(Y)\cdot\vece_1>0$.
Because of the assumptions, there is some $\ve>0$ such that
\begin{align*}
Y\cap(\overline{V}\times\Sigma)=\{\vecz(Y)\},
\end{align*}
with
\begin{align*}
V:=((-4\ve,\xi+4\ve)\times\scrB_{1+\ve}^{d-1})\cup((\xi-4\ve,\xi+4\ve)\times\scrB_4^{d-1}).
\end{align*}
Since $V$ is open and $\vecz(Y)\in V\times\Sigma$, it follows that $\#(Y_n\cap (V\times\Sigma))=1$ for all large $n$, 
and furthermore if $\vecz_n$ is the unique point in $Y_n\cap (V\times\Sigma)$
then $\vecz_n\to\vecz(Y)$ %
as $n\to\infty$.
It follows that for $n$ sufficiently large we have 
$\vecz_{\rho_n}(Y_n)=\vecz_n$
(here the fact that $V$ contains $(\xi-4\ve,\xi+4\ve)\times\scrB_4^{d-1}$ is used to guarantee that
$\vecz_n$ satisfies the condition \eqref{BALLSEPCOND}).
Hence $F_{\rho_n}(\vecz_{\rho_n}(Y_n))=F_{\rho_n}(\vecz_n)\to \iota(\vecz(Y))$ as $n\to\infty$.
\end{proof}
\begin{lem}\label{KEYyCONVprop3}
We have
$\eta_{\vecq,\rho}^{(\vecbeta,\lambda)}\xrightarrow[]{\textup{ w }}\eta_{{\vs}(\vecq)}^{(\vecbeta,\lambda)}$ as $\rho\to0$,
uniformly over all $\vecq\in\scrP_T(\rho)$ and $\vecbeta\in K$.
\end{lem}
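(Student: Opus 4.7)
The plan is to deduce this from Lemma \ref{BETAUNIFCONVlem} via a continuous mapping argument applied to the $\rho$-dependent maps $(\vecv,Y)\mapsto (\vecv,F_\rho(\vecz_\rho(Y)))$. Since $\{\eta_\vs^{(\vecbeta,\lambda)}\col\vs\in\Sigma,\vecbeta\in K\}$ is the continuous image under Lemma \ref{TETACONTlem} of the compact set $\Sigma\times K$, and therefore compact in $P(U\times\Delta)$, by Lemma \ref{UNIFPORTMlemC} it is enough to prove the following subsequence statement: given $\rho_n\to0$ in $(0,1)$, $\vecq_n\in\scrP_T(\rho_n)$, and $\vecbeta_n\in K$ such that $\vs(\vecq_n)\to\vs$ in $\Sigma$ and $\vecbeta_n\to\vecbeta$ in $\C_b(U,\R^d)$ (with $\vecbeta\in K$), we have $\eta_{\vecq_n,\rho_n}^{(\vecbeta_n,\lambda)}\xrightarrow[]{\textup{ w }}\eta_\vs^{(\vecbeta,\lambda)}$ as $n\to\infty$.

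First I invoke Lemma \ref{BETAUNIFCONVlem} to conclude that $\tmu_{\vecq_n,\rho_n}^{(\vecbeta_n,\lambda)}\xrightarrow[]{\textup{ w }}\tmu_\vs^{(\vecbeta,\lambda)}$ in $P(U\times N_s(\scrX))$ (after trivially restricting from $\US$ to $U$ using $\lambda(U)=1$). Define the measurable maps
\begin{align*}
G_n:U\times N_s(\scrX)\to U\times\Delta,\qquad G_n(\vecv,Y)=(\vecv,F_{\rho_n}(\vecz_{\rho_n}(Y))),
\end{align*}
and
\begin{align*}
G:U\times N_s(\scrX)\to U\times\Delta,\qquad G(\vecv,Y)=(\vecv,\iota(\vecz(Y))),
\end{align*}
so that $\eta_{\vecq_n,\rho_n}^{(\vecbeta_n,\lambda)}=\tmu_{\vecq_n,\rho_n}^{(\vecbeta_n,\lambda)}\circ G_n^{-1}$ and $\eta_\vs^{(\vecbeta,\lambda)}=\tmu_\vs^{(\vecbeta,\lambda)}\circ G^{-1}$.

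The key step is to verify the hypothesis of the generalized continuous mapping theorem \cite[Thm.\ 4.27]{kallenberg02}, namely that there is a Borel set $C\subset U\times N_s(\scrX)$ with $\tmu_\vs^{(\vecbeta,\lambda)}(C)=1$ on which $G_n(\vecv_n,Y_n)\to G(\vecv,Y)$ whenever $(\vecv_n,Y_n)\to(\vecv,Y)$ and $(\vecv,Y)\in C$. By Lemma \ref{Frhozrhocontlem}, it suffices to take $C$ to be the set of $(\vecv,Y)$ satisfying: $\vecz(Y)\neq\undef$, $Y\cap\partial\fZ_\infty=\emptyset$, and the points of $Y\setminus\{\vecz(Y)\}$ have first coordinates distinct from that of $\vecz(Y)$. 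By definition $\tmu_\vs^{(\vecbeta,\lambda)}$ is the distribution of $(\vecv,\Xi_\vs-(\vecbeta(\vecv)R(\vecv))_\perp)$, so by Fubini it is enough that for each $\vecv\in U$ the point process $Y:=\Xi_\vs-(\vecbeta(\vecv)R(\vecv))_\perp$ satisfies the three conditions $\mu_\vs$-almost surely. The first follows from Lemma \ref{BASICkidpreplemG} applied with $\vecx=(\vecbeta(\vecv)R(\vecv))_\perp$. The second follows from Lemma \ref{GROUNDINTENSITYlem2} together with \eqref{SIGMAeqp}, since $\partial\fZ_\infty$ has $\mu_\scrX$-measure zero. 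The third is immediate from assumption [Q2] (which is preserved by translation in $\scrX_\perp$, since such translations do not alter first coordinates).

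Applying \cite[Thm.\ 4.27]{kallenberg02} yields $\tmu_{\vecq_n,\rho_n}^{(\vecbeta_n,\lambda)}\circ G_n^{-1}\xrightarrow[]{\textup{ w }}\tmu_\vs^{(\vecbeta,\lambda)}\circ G^{-1}$, which is exactly $\eta_{\vecq_n,\rho_n}^{(\vecbeta_n,\lambda)}\xrightarrow[]{\textup{ w }}\eta_\vs^{(\vecbeta,\lambda)}$. The main obstacle is this verification of the a.s.\ continuity of the limiting map; the definition of $\vecz_\rho$ via the separation condition \eqref{BALLSEPCOND} and the ellipsoidal sub-level sets of $\xi_\rho$ is precisely engineered so that Lemma \ref{Frhozrhocontlem} applies, and once those three generic conditions are in place the argument reduces cleanly to the continuous mapping theorem.
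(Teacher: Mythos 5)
Your proposal is correct and follows essentially the same route as the paper: a subsequence reduction via Lemma \ref{UNIFPORTMlemC} (with compactness from Lemma \ref{TETACONTlem}), then Lemma \ref{BETAUNIFCONVlem} combined with Lemma \ref{Frhozrhocontlem} and the continuous mapping theorem \cite[Thm.\ 4.27]{kallenberg02}, with the full-measure continuity set verified from [Q2], Lemma \ref{GROUNDINTENSITYlem2} and (via Lemma \ref{BASICkidpreplemG}) [Q3]. The only differences are cosmetic, e.g.\ working on $U\times N_s(\scrX)$ instead of extending by zero to $\US\times N_s(\scrX)$ and spelling out the Fubini step for $\tmu_\vs^{(\vecbeta,\lambda)}(C)=1$.
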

\begin{remark}
Recall that we assume that $K$ is a compact subset of $\C_b(U,\R^d)$;
cf.\ Remark \ref{Thm2genRELCPTrem};
thus $\{\eta_{\vs}^{(\vecbeta,\lambda)}\col{\vs}\in\Sigma,\vecbeta\in K\}$
is a compact subset of $P(U\times\Delta)$,
being a continuous image of the compact set $\Sigma\times K$
(cf.\ Lemma \ref{TETACONTlem}).
Hence the general notion of uniform convergence from
\eqref{UNIFCONVdef1}--\eqref{UNIFCONVdef2} applies.
\end{remark}
\begin{proof}
Let $\rho_n\in(0,1)$, $\vecq_n\in\scrP_T(\rho_n)$, $\vecbeta_n\in\C_b(U,\R^d)$
for $n=1,2,\ldots$,
and assume that $\rho_n\to0$,
${\vs}(\vecq_n)\to{\vs}$ and $\vecbeta_n\to\vecbeta$
as $n\to\infty$, with ${\vs}\in\Sigma$ and $\vecbeta\in K$.
We then claim that 
$\eta_{\vecq_n,\rho_n}^{(\vecbeta_n,\lambda)}\xrightarrow[]{\textup{ w }}\eta_{{\vs}}^{(\vecbeta,\lambda)}$
as $n\to\infty$.
By the same argument as in Lemma \ref{UNIFPORTMlemC}
(using also Lemma \ref{TETACONTlem}),
this will imply the lemma.

Consider the maps
\begin{align*}
H_n:\US\times N_s(\scrX)\to\US\times\Delta,\qquad
H_n(\vecv,Y)=(\vecv,F_{\rho_n}(\vecz_{\rho_n}(Y)))
\end{align*}
and
\begin{align*}
H:\US\times N_s(\scrX)\to\US\times\Delta,\qquad
H(\vecv,Y)=(\vecv,\iota(\vecz(Y))),
\end{align*}
and note that $\eta_{\vecq_n,\rho_n}^{(\vecbeta_n,\lambda)}=\tmu_{\vecq_n,\rho_n}^{(\vecbeta_n,\lambda)}\circ H_n^{-1}$
and $\eta_{{\vs}}^{(\vecbeta,\lambda)}=\tmu_{\vs}^{(\vecbeta,\lambda)}\circ H^{-1}$,
after extending by zero from $P(U\times\Delta)$ to $P(\US\times\Delta)$.
We have $\tmu_{\vecq_n,\rho_n}^{(\vecbeta_n,\lambda)}\xrightarrow[]{\textup{ w }}\tmu_{\vs}^{(\vecbeta,\lambda)}$
by Lemma \ref{BETAUNIFCONVlem}.
Let $C$ be the set of all $Y\in N_s(\scrX)$ satisfying
$\vecz(Y)\neq\undef$, $Y\cap\partial\fZ_\infty=\emptyset$,
and $\vecy\cdot\vece_1\neq\vecz(Y)\cdot\vece_1$ for all $\vecy\in Y\setminus\{\vecz(Y)\}$.
Then by Lemma \ref{Frhozrhocontlem},
for any $\vecv,\vecv_1,\vecv_2,\ldots\in\US$
and $Y,Y_1,Y_2,\ldots\in N_s(\scrX)$ subject to $Y\in C$ and $(\vecv_n,Y_n)\to(\vecv,Y)$ as $n\to\infty$,
we have $H_n(\vecv_n,Y_n)\to H(\vecv,Y)$ as $n\to\infty$.
Furthermore, using the definition of $\mu_{\vs}^{(\vecbeta,\lambda)}$
together with [Q2], [Q3] and Lemma~\ref{GROUNDINTENSITYlem2},
one verifies that $\mu_{\vs}^{(\vecbeta,\lambda)}(C)=1$
(cf.\ also the proof of Lemma \ref{BASICkidpreplemG}).
Now the desired convergence follows by the continuous mapping theorem,
\cite[Thm.\ 4.27]{kallenberg02}.
\end{proof}

We noted in Remark \ref{Thm2genrem} that one consequence of Theorem \ref{Thm2gen}
is that $\lambda(\fw_{\vecq,\rho}^{\vecbeta})\to1$ as $\rho\to0$,
with uniformity in $\vecq$ and $\vecbeta$.
Still, it is convenient to prove this particular fact before completing the proof of the theorem:
\begin{lem}\label{WqrholargeLEM}
$\lambda(\fw_{\vecq,\rho}^{\vecbeta})\to1$ as $\rho\to0$, uniformly over all $\vecq\in\scrP_T(\rho)$
and $\vecbeta\in K$.
\end{lem}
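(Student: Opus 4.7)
The plan is to deduce the lemma as a quick consequence of Lemma \ref{KEYyCONVprop3}, which has just been established, combined with the connection between $\vecz_\rho$ and the first collision provided by Lemma \ref{ZTAU1CONNECTIONlem}. The key observation is that the ``bad'' event $\vecv\notin\fw_{\vecq,\rho}^{\vecbeta}$ can be controlled by the event that the second coordinate of the random variable whose distribution is $\eta_{\vecq,\rho}^{(\vecbeta,\lambda)}$ equals $\undef$.

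First, by Lemma \ref{GOODDISTANCElem} and the fact that $K$ is compact in $\C_b(U,\R^d)$ (so that $M:=\sup_{\vecbeta\in K}\|\vecbeta\|<\infty$), for all sufficiently small $\rho$ we have $d_\scrP(\vecq)>(1+M)\rho\geq(1+\|\vecbeta\|)\rho$, uniformly for $\vecq\in\scrP_T(\rho)$ and $\vecbeta\in K$. Hence Lemma \ref{ZTAU1CONNECTIONlem} applies and shows that for such $\rho$,
\begin{align*}
\lambda(U\setminus\fw_{\vecq,\rho}^{\vecbeta})\leq\lambda\bigl(\bigl\{\vecv\in U\col \vecz_\rho(\scrQ_\rho(\vecq,\vecbeta,\vecv))=\undef\bigr\}\bigr)=\eta_{\vecq,\rho}^{(\vecbeta,\lambda)}(U\times\{\undef\}),
\end{align*}
using that $F_\rho(\undef)=\undef$ by definition \eqref{Frhodef}.

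Next, recall that $\Delta$ carries the disjoint union topology, so $\{\undef\}$ is a clopen subset of $\Delta$. Consequently $U\times\{\undef\}$ is a clopen subset of $U\times\Delta$ and has empty boundary. In particular $\nu(\partial(U\times\{\undef\}))=0$ for every $\nu$ in the compact set $\{\eta_\vs^{(\vecbeta,\lambda)}\col\vs\in\Sigma,\vecbeta\in K\}$. On the other hand, by the construction of $\eta_\vs^{(\vecbeta,\lambda)}$ together with Lemma \ref{BASICkidpreplemG},
\begin{align*}
\eta_\vs^{(\vecbeta,\lambda)}(U\times\{\undef\})
=\int_U\mu_\vs\bigl(\bigl\{Y\col\vecz\bigl(Y-(\vecbeta(\vecv)R(\vecv))_\perp\bigr)=\undef\bigr\}\bigr)\,d\lambda(\vecv)=0
\end{align*}
for every $\vs\in\Sigma$ and $\vecbeta\in K$.

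Finally, Lemma \ref{KEYyCONVprop3} gives the uniform weak convergence $\eta_{\vecq,\rho}^{(\vecbeta,\lambda)}\xrightarrow[]{\textup{ w }}\eta_{\vs(\vecq)}^{(\vecbeta,\lambda)}$. Applying Remark \ref{UNIFPORTMlemBrem} with the continuity set $U\times\{\undef\}$ therefore yields $\eta_{\vecq,\rho}^{(\vecbeta,\lambda)}(U\times\{\undef\})\to 0$ as $\rho\to0$, uniformly for $\vecq\in\scrP_T(\rho)$ and $\vecbeta\in K$. Combining this with the first display and $\lambda(U)=1$ gives $\lambda(\fw_{\vecq,\rho}^{\vecbeta})\to 1$ with the required uniformity. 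The only delicate point is recognizing that $\{\undef\}$ is clopen so that the uniform Portmanteau principle of Remark \ref{UNIFPORTMlemBrem} delivers the limit statement for this specific set without any additional argument.
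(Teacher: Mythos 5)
Your proof is correct and follows essentially the same route as the paper's: in both cases the key ingredients are Lemma \ref{KEYyCONVprop3} together with Remark \ref{UNIFPORTMlemBrem} applied to $B=U\times\{\undef\}$ (whose $\eta_\vs^{(\vecbeta,\lambda)}$-measure and $\eta_\vs^{(\vecbeta,\lambda)}$-boundary-measure vanish by Lemma \ref{BASICkidpreplemG}), plus the separation estimate from Lemma \ref{GOODDISTANCElem} and the compactness of $K$, then transferred back to $\fw_{\vecq,\rho}^{\vecbeta}$ via Lemma \ref{ZTAU1CONNECTIONlem}. Your explicit observation that $U\times\{\undef\}$ is clopen (hence $\partial B=\emptyset$) slightly streamlines the check that $B$ is a continuity set, but the structure and the cited lemmas are identical to the paper's argument.
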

\begin{proof}
Set $B=U\times\{\undef\}$.
Then $\eta_{{\vs}}^{(\vecbeta,\lambda)}(B)=\eta_{{\vs}}^{(\vecbeta,\lambda)}(\partial B)=0$ 
for all ${\vs}$, by Lemma \ref{BASICkidpreplemG};
hence $\eta_{\vecq,\rho}^{(\vecbeta,\lambda)}(B)\to0$
uniformly as $\rho\to0$, by Lemma \ref{KEYyCONVprop3} and 
Remark \ref{UNIFPORTMlemBrem}. 
Let $C:=\sup_{\vecbeta\in K}\|\vecbeta\|$;
this is a finite number since $K$ is compact.
By Lemma \ref{GOODDISTANCElem}, for all sufficiently small $\rho$ 
we have $d_\scrP(\vecq)>(1+C)\rho$ for all $\vecq\in\scrP_T(\rho)$.
Now the desired conclusion follows via Lemma~\ref{ZTAU1CONNECTIONlem}.
\end{proof}

\begin{proof}[Proof of Theorem \ref{Thm2gen}]
Let $f\in\C_b(U\times\R_{>0}\times\Omega)$ be given.
We extend $f$ to be zero on $U\times\{\undef\}$; then  $f\in\C_b(U\times\Delta)$.
By Lemma \ref{KEYyCONVprop3} and Lemma \ref{UNIFPORTMlemB} we have
$\eta_{\vecq,\rho}^{(\vecbeta,\lambda)}(f)-\eta_{{\vs}(\vecq)}^{(\vecbeta,\lambda)}(f)\to0$ as $\rho\to0$,
uniformly over all $\vecq\in\scrP_T(\rho)$ and $\vecbeta\in K$.
Here
\begin{align*}
\eta_{\vecq,\rho}^{(\vecbeta,\lambda)}(f)
=\int_U f\big(\vecv,[F_\rho\circ \vecz_\rho](\scrQ_\rho(\vecq,\vecbeta,\vecv))\big)\,d\lambda(\vecv).
\end{align*}
Now as in the proof of Lemma \ref{WqrholargeLEM},
$\lambda(\{\vecv\in U\col\vecz_\rho(\scrQ_\rho(\vecq,\vecbeta,\vecv))\neq\undef\})\to1$,
uniformly as $\rho\to0$;
and for $\rho$ sufficiently small,
$\vecz_\rho(\scrQ_\rho(\vecq,\vecbeta,\vecv))\neq\undef$ implies
$\vecv\in\fw_{\vecq,\rho}^{\vecbeta}$ and
\begin{align*}
[F_\rho\circ\vecz_\rho](\scrQ_\rho(\vecq,\vecbeta,\vecv))
=(\rho^{d-1}\tau_1(\vecq(\vecv),\vecv;\rho),\vecomega_1(\vecq(\vecv),\vecv;\rho)).
\end{align*}
Hence we conclude
\begin{align*}
\int_{\fw_{\vecq,\rho}^{\vecbeta}} f\bigl(\vecv,\rho^{d-1} \tau_1(\vecq(\vecv),\vecv;\rho),
\vecomega_1(\vecq(\vecv),\vecv;\rho)\bigr)\,d\lambda(\vecv)
-\eta_{{\vs}(\vecq)}^{(\vecbeta,\lambda)}(f)\to0,
\end{align*}
uniformly as $\rho\to0$.
Also,
\begin{align*}
\eta_{{\vs}}^{(\vecbeta,\lambda)}(f)
&=\int_{U}\int_{N_s(\scrX)}f(\vecv,[\iota\circ\vecz](Y-(\vecbeta(\vecv)R(\vecv))_\perp))\,d\mu_{\vs}(Y)\,d\lambda(\vecv)
\\
&=\int_{U}\int_\Omega\int_0^\infty
f(\vecv,\xi,\vecomega)k(((\vecbeta(\vecv)R(\vecv))_\perp,{\vs}),\xi,\vecomega)\,d\xi\,d\mu_{\Omega}(\vecomega)\,d\lambda(\vecv),
\end{align*}
by the definition of %
the transition kernel $k(\vecomega',\xi,\vecomega)$ in Section \ref{TRANSKERsec}.
Hence we obtain \eqref{Thm2genres}.
\end{proof}

We next give a corollary to Theorem \ref{Thm2gen} which will be useful
later when we prove 
that the transition kernel $k(\vecomega',\xi,\vecomega)$
possesses a time reversal symmetry;
cf.\ Proposition \ref{krelPROP1} below.
In order to extract information about $k(\vecomega',\xi,\vecomega)$ from 
Theorem \ref{Thm2gen} it is convenient to choose $\vecbeta$ to be the function
\begin{align}\label{BETAUdef}
\vecbeta_\vecu(\vecv)=\vecu R(\vecv)^{-1}+(1-\|\vecu\|^2)^{1/2}\vecv,
\end{align}
where $\vecu\in\overline{\UB}$ is fixed.
Note that $\vecbeta_\vecu(\vecv)\in\US$ and
$(\vecbeta_\vecu(\vecv)+\R_{>0}\vecv)\cap\scrB_1^d=\emptyset$ for all $\vecv\in\US$.
To simplify notation, we set
$\vecq_{\rho,\vecu}(\vecv)=\vecq_{\rho,\vecbeta_\vecu}(\vecv)$
and
$\fw_{\vecq,\rho}^{\vecu}=\fw_{\vecq,\rho}^{\vecbeta_\vecu}$.
\begin{cor}\label{Thm2genCOR}
For any fixed $f\in\C_c(\scrX\times\UB\times\US\times\R_{>0}\times\Omega)$ and
$\lambda\in\Pac(\US)$,
\begin{align}\notag
\rho^{d(d-1)}\sum_{\vecq\in\scrP}
\int_{\UB}\int_{\fw_{\vecq,\rho}^{\vecu}}\hspace{-1pt}
f\bigl((\rho^{d-1}\hspace{-1pt}\vecq,\vs(\vecq)),\vecu,\vecv,\rho^{d-1}\tau_1(\vecq_{\rho,\vecu}(\vecv),\vecv;\rho),
\vecomega_1(\vecq_{\rho,\vecu}(\vecv),\vecv;\rho)\bigr)
\\\notag
\times d\lambda(\vecv)\,d\vecu
\\ \label{Thm2genCORRES}
\to c_\scrP\int_\scrX\int_{\UB}\int_{\US}\int_0^\infty\int_\Omega
f(\vecp,\vecu,\vecv,\xi,\vecomega)\, k((\vecu,{\vs}(\vecp)),\xi,\vecomega)
\hspace{60pt}
\\\notag
\times d\mu_{\Omega}(\vecomega)\,d\xi\,d\lambda(\vecv)\,d\vecu\,d\mu_\scrX(\vecp)
\end{align}
as $\rho\to0$.
\end{cor}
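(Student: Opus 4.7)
The plan is to deduce Corollary \ref{Thm2genCOR} from Theorem \ref{Thm2gen} combined with Lemma \ref{ASYMPTDENSITY1CONSlem}. First, fix $T\geq 1$ such that $\supp f$ projects into $\scrB_T^d$ in the $\vecp$-variable; then only $\vecq\in\scrP\cap\scrB_{T\rho^{1-d}}^d$ contribute to the sum on the left-hand side of \eqref{Thm2genCORRES}, and the contribution from $\vecq\in\scrE$ is bounded by $\|f\|_\infty\,\omega(\US)\,v_{d-1}\,\rho^{d(d-1)}\,\#(\scrE\cap\scrB_{T\rho^{1-d}}^d)=o(1)$, because $\scrE$ has asymptotic density zero. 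Hence the sum may be restricted to $\vecq\in\scrP_T(\rho)$.

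Second, take $U=\US\setminus\{\vecv_0\}$ (with $\vecv_0$ the discontinuity point of $R$) and let $\vecbeta_\vecu$ be as in \eqref{BETAUdef}. The family $K:=\{\vecbeta_\vecu\col\vecu\in\overline{\UB}\}$ is a compact subset of $\C_b(U,\R^d)$, being the continuous image of $\overline{\UB}$ under $\vecu\mapsto\vecbeta_\vecu$. A direct computation, using $\vecv R(\vecv)=\vece_1$ and the identification $\R^{d-1}\cong\{0\}\times\R^{d-1}\subset\R^d$, gives $(\vecbeta_\vecu(\vecv)R(\vecv))_\perp=\vecu$ independently of $\vecv$.

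The main step is to prove
\begin{equation*}
\sup_{\vecq\in\scrP_T(\rho),\:\vecu\in\overline{\UB}}\bigl|I_\rho(\vecq,\vecu)-I(\rho^{d-1}\vecq,\vs(\vecq),\vecu)\bigr|\longrightarrow 0\qquad(\rho\to 0),
\end{equation*}
where $I_\rho(\vecq,\vecu)$ denotes the inner integral over $\vecv$ on the left-hand side of \eqref{Thm2genCORRES}, and
\begin{equation*}
I(\vecp,\vs,\vecu):=\int_U\int_0^\infty\int_\Omega f((\vecp,\vs),\vecu,\vecv,\xi,\vecomega)\,k((\vecu,\vs),\xi,\vecomega)\,d\mu_\Omega(\vecomega)\,d\xi\,d\lambda(\vecv).
\end{equation*}
Given $\ve>0$, uniform continuity of $f$ on its compact support allows covering $\overline{\scrB_T^d}\times\Sigma\times\overline{\UB}$ by finitely many open sets $O_j$ on each of which the oscillation of $f$ in the first three arguments (uniformly over the remaining ones) is $<\ve$. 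Fixing a representative $(\vecp_j,\vs'_j,\vecu_j)\in O_j$ and replacing $f((\rho^{d-1}\vecq,\vs(\vecq)),\vecu,\cdot)$ by $f((\vecp_j,\vs'_j),\vecu_j,\cdot)$ introduces an error of size $\leq\ve$ in $I_\rho$ and, using that $k(\vecomega',\cdot,\cdot)$ is a probability density, also in $I$. For the fixed test function $f_j(\vecv,\xi,\vecomega):=f((\vecp_j,\vs'_j),\vecu_j,\vecv,\xi,\vecomega)\in\C_b(U\times\R_{>0}\times\Omega)$, Theorem \ref{Thm2gen} applied with $\vecbeta=\vecbeta_\vecu\in K$ supplies the required convergence uniformly over $\vecq\in\scrP_T(\rho)$ and $\vecu\in\overline{\UB}$. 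Combining the three estimates yields the displayed uniform convergence, as $\ve$ was arbitrary.

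Finally, define $F(\vecp,\vs):=\int_{\UB}I(\vecp,\vs,\vecu)\,d\vecu$. By Remark \ref{CONTINTEGRALlemG2REM}, dominated convergence, and the compact support of $f$, we have $F\in\C_c(\scrX)$. The preceding step together with the bound $\#\scrP_T(\rho)=O(\rho^{-d(d-1)})$ from [P1] shows that the left-hand side of \eqref{Thm2genCORRES} equals $\rho^{d(d-1)}\sum_{\vecq\in\scrP_T(\rho)}F(\rho^{d-1}\vecq,\vs(\vecq))+o(1)$. Lemma \ref{ASYMPTDENSITY1CONSlem}, applied after extending the summation to all $\vecq\in\scrP$ at the cost of another $o(1)$ term (again using that $\scrE$ has asymptotic density zero), identifies the limit as $c_\scrP\int_\scrX F\,d\mu_\scrX$, which is precisely the right-hand side of \eqref{Thm2genCORRES}. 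The main obstacle is the uniform convergence in the third step, where the dependence of $f$ on both the slow variable $\vecp=\rho^{d-1}\vecq$ and the parameter $\vecu$ has to be decoupled from the fast variables $(\vecv,\xi,\vecomega)$ before invoking Theorem \ref{Thm2gen}.
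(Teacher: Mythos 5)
Your proof is correct and follows essentially the same route as the paper: both apply Theorem \ref{Thm2gen} with the compact family $\{\vecbeta_\vecu\}$, upgrade the convergence to be uniform over the parameters $(\vecp,\vecu)$ on which $f$ depends (you make the paper's ``standard subsequence argument'' for a compact family of test functions explicit via a finite open cover and modulus of continuity), and conclude via Remark \ref{CONTINTEGRALlemG2REM} and Lemma \ref{ASYMPTDENSITY1CONSlem}. The only blemish is cosmetic: since $\lambda$ is a probability measure, the bound on the $\scrE$-contribution should have the factor $v_{d-1}$ rather than $\omega(\US)\,v_{d-1}$, but this does not affect the $o(1)$ conclusion.
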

\begin{proof}
Take $\vecv_0\in\US$ so that the function $R$ is continuous on $U:=\US\setminus\{\vecv_0\}$.
Then $K=\{\vecbeta_{\vecu|U}\col\vecu\in\overline{\UB}\}$ is a compact subset of $\C_b(U,\R^d)$
and $(\vecbeta(\vecv)+\R_{>0}\vecv)\cap\scrB_1^d=\emptyset$ for all $\vecbeta\in K$ and $\vecv\in U$.
Furthermore, using $f\in\C_c$, %
the family of functions $\{f(\vecp,\vecu,\cdot,\cdot,\cdot)\col\vecp\in\scrX,\:\vecu\in\UB\}$
is a compact subset of $\C_b(\US\times\R_{>0}\times\Omega)$,
and by restriction we obtain a compact subset of $\C_b(U\times\R_{>0}\times\Omega)$.
By a standard subsequence argument the convergence in Theorem \ref{Thm2gen} 
is seen to be uniform also over such a compact family of test functions.
Hence, using also $(\vecbeta_\vecu(\vecv)R(\vecv))_\perp=\vecu$, we obtain
\begin{align}\label{Thm2genCORPF1}
\int_{\fw_{\vecq,\rho}^{\vecu}}
f\bigl((\rho^{d-1}\vecq,\vs(\vecq)),\vecu,\vecv,\rho^{d-1}\tau_1(\vecq_{\rho,\vecu}(\vecv),\vecv;\rho),
\vecomega_1(\vecq_{\rho,\vecu}(\vecv),\vecv;\rho)\bigr)
\,d\lambda(\vecv)
\hspace{10pt}
\\\notag
-\int_{\US}\int_0^\infty\int_\Omega
f((\rho^{d-1}\vecq,\vs(\vecq)),\vecu,\vecv,\xi,\vecomega)\, k((\vecu,{\vs}(\vecq)),\xi,\vecomega)
\,d\mu_{\Omega}(\vecomega)\,d\xi\,d\lambda(\vecv)
\to0,
\end{align}
as $\rho\to0$, uniformly over all
$\vecq\in\scrP_T(\rho)$ and $\vecu\in\UB$.
This holds for any fixed $T$;
we apply it with $T$ so large that
the support of $f$ is contained in $\scrB_T^d\times\Sigma\times\UB\times\US\times\R_{>0}\times\Omega$;
then the left hand side of \eqref{Thm2genCORPF1} is identically zero when
$\|\vecq\|\geq T\rho^{1-d}$;
hence the convergence in \eqref{Thm2genCORPF1} 
is in fact uniform over all $\vecq\in\scrP\setminus\scrE$.
Using also 
{\blu $\sup|f|<\infty$, [P1], 
and the fact that $\scrE$ has asymptotic density zero (cf.\ [P2]),}
it follows that
up to an additive error which tends to zero as $\rho\to0$,
the left hand side of \eqref{Thm2genCORRES} equals
\begin{align*}%
\rho^{d(d-1)}\sum_{\vecq\in\scrP}
\int_{\UB}\int_{\US}\int_0^\infty\int_\Omega
f((\rho^{d-1}\vecq,\vs(\vecq)),\vecu,\vecv,\xi,\vecomega)\, k((\vecu,{\vs}(\vecq)),\xi,\vecomega)
\hspace{30pt}
\\
\times d\mu_{\Omega}(\vecomega)\,d\xi\,d\lambda(\vecv)\,d\vecu.
\end{align*}
Using here Remark \ref{CONTINTEGRALlemG2REM} and Lemma \ref{ASYMPTDENSITY1CONSlem},
we obtain the limit stated in \eqref{Thm2genCORRES}.
\end{proof}

\subsection{Macroscopic initial conditions}
\label{FIRSTCOLLMACRsec}

The following is the analogue of Theorem \ref{Thm2gen} for macroscopic initial conditions.
Let us write $\fW(1;\rho)$ for the set $\fw(1;\rho)$ in macroscopic coordinates,
i.e.\ $\fW(1;\rho)=\{(\vecq,\vecv)\in\T^1(\R^d)\col\langle\rho^{1-d}\vecq,\vecv\rangle\in\fw(1;\rho)\}$.
\label{fWjrhoDEF}
\begin{thm}\label{Thm2macr}
For any $\Lambda\in\Pac(\T^1(\R^d))$ and $f\in\C_b(\T^1(\R^d)\times\R_{>0}\times\Omega)$,
\begin{align}\notag
\lim_{\rho\to0}
\int_{\fW(1;\rho)} f\bigl(\vecq,\vecv,\rho^{d-1} \tau_1(\rho^{1-d}\vecq,\vecv;\rho),
\vecomega_1(\rho^{1-d}\vecq,\vecv;\rho)\bigr)\,d\Lambda(\vecq,\vecv)
\hspace{60pt}
\\\label{Thm2macrres}
=\int_{\T^1(\R^d)}\int_0^\infty\int_\Omega
f(\vecq,\vecv,\xi,\vecomega)
k^{\g}\bigl(\xi,\vecomega\bigr)
\,d\mu_{\Omega}(\vecomega)\,d\xi\,d\Lambda(\vecq,\vecv).
\end{align}
\end{thm}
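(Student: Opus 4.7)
The proof will parallel that of Theorem \ref{Thm2gen}, with Theorem \ref{GENLIMITthm} playing the role that [P2] played there. First I would introduce the probability measure $\eta_\rho^{(\Lambda)} \in P(\T^1(\R^d)\times\Delta)$ given by the distribution of $\bigl\langle\vecq,\vecv,[F_\rho\circ\vecz_\rho](\scrQ_\rho(\rho^{1-d}\vecq,\vecv))\bigr\rangle$ for $(\vecq,\vecv)$ random in $(\T^1(\R^d),\Lambda)$, together with the candidate limit $\eta^{(\Lambda)}$, the distribution of $\bigl\langle\vecq,\vecv,\iota(\vecz(\Xi))\bigr\rangle$ with $(\vecq,\vecv)\sim\Lambda$ independent of $\Xi\sim\mu$. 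Extending $f$ by zero on $\T^1(\R^d)\times\{\undef\}$, the right-hand side of \eqref{Thm2macrres} equals $\eta^{(\Lambda)}(f)$ by the very definition \eqref{kgdef} of $k^{\g}$; the left-hand side equals $\eta_\rho^{(\Lambda)}(f)$ once we identify $[F_\rho\circ\vecz_\rho](\scrQ_\rho(\rho^{1-d}\vecq,\vecv))$ with $(\rho^{d-1}\tau_1,\vecomega_1)$ on $\fW(1;\rho)$ (see below). Hence the theorem reduces to $\eta_\rho^{(\Lambda)}\xrightarrow[]{\textup{ w }}\eta^{(\Lambda)}$.

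Next I would upgrade Theorem \ref{GENLIMITthm} to the joint weak convergence $\bigl\langle(\vecq,\vecv),\scrQ_\rho(\rho^{1-d}\vecq,\vecv)\bigr\rangle\xrightarrow[]{\textup{ w }}\bigl\langle(\vecq,\vecv),\Xi\bigr\rangle$ with the two marginals independent. Because the limit factorises, it suffices to check that $\int g(\vecq,\vecv)h(\scrQ_\rho(\rho^{1-d}\vecq,\vecv))\,d\Lambda\to\bigl(\int g\,d\Lambda\bigr)\mu(h)$ for all $g\in\C_b(\T^1(\R^d))$, $h\in\C_b(N_s(\scrX))$; this follows by applying Theorem \ref{GENLIMITthm} to the rescaled measures $g\,\Lambda/\!\int g\,d\Lambda$ (which still lie in $\Pac(\T^1(\R^d))$) for a partition of $g$ into positive and negative parts.

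The key step is then the continuous mapping argument: by Lemma \ref{Frhozrhocontlem}, the map $F_\rho\circ\vecz_\rho$ is, in the limit $\rho_n\to0$, continuous at each $Y\in N_s(\scrX)$ satisfying (i) $\vecz(Y)\neq\undef$, (ii) $Y\cap\partial\fZ_\infty=\emptyset$, and (iii) distinct $\vece_1$-coordinates. For $Y\sim\mu$, (i) holds almost surely by Lemma \ref{BASICkidpreplemgen}, (iii) by Lemma \ref{Xie1simplelem}, and (ii) by Proposition \ref{GENPALMprop} applied with $A=\{Y\col Y\cap\partial\fZ_\infty\neq\emptyset\}$ and $B=\partial\fZ_\infty\times\Sigma$: since $\vol(\partial\fZ_\infty)=0$, the right-hand side of that proposition vanishes, forcing $\mu(A)=0$. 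With (i)--(iii) verified $\mu$-a.s., an application of the generalised continuous mapping theorem (\cite[Thm.\ 4.27]{kallenberg02}) to the maps $\bigl(\langle\vecq,\vecv\rangle,Y\bigr)\mapsto\bigl(\langle\vecq,\vecv\rangle,F_{\rho}(\vecz_{\rho}(Y))\bigr)$ yields the desired weak convergence.

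Finally I would verify the identification step. For $\Lambda$-a.e.\ $(\vecq,\vecv)$ we have $\rho^{1-d}\vecq\notin\scrP$, so $\scrQ_\rho(\rho^{1-d}\vecq,\vecv)=\oQ_\rho(\rho^{1-d}\vecq,\vecv)$, and the verbatim analogue of Lemma \ref{ZTAU1CONNECTIONlem} (with $\vecbeta\equiv\bn$, and no hypothesis $d_\scrP(\vecq)>\rho$ needed since $\vecq\notin\scrP$) shows that whenever $\vecz_\rho(\scrQ_\rho(\rho^{1-d}\vecq,\vecv))\neq\undef$, the pair $(\vecq,\vecv)$ lies in $\fW(1;\rho)$ and $F_\rho\circ\vecz_\rho$ produces exactly $(\rho^{d-1}\tau_1,\vecomega_1)$. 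Applying the weak convergence established above to $f\equiv1$ together with $\eta^{(\Lambda)}(\T^1(\R^d)\times\{\undef\})=0$ gives $\Lambda(\T^1(\R^d)\setminus\fW(1;\rho))\to0$, so the integrals over $\fW(1;\rho)$ and over all of $\T^1(\R^d)$ (with the extended $f$) differ by $o(1)$, completing the proof. The only mildly delicate point is the verification that $\mu$ charges no point set meeting $\partial\fZ_\infty$; everything else is bookkeeping on the template of Theorem \ref{Thm2gen}.
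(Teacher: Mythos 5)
Your proposal is correct and follows essentially the same route as the paper: define $\eta_\rho^{(\Lambda)}$ and $\eta^{(\Lambda)}$, upgrade Theorem \ref{GENLIMITthm} to joint weak convergence of $(\vecq,\vecv,\scrQ_\rho(\rho^{1-d}\vecq,\vecv))$ to the product $\Lambda\times\mu$, apply the generalized continuous mapping theorem via Lemma \ref{Frhozrhocontlem} after verifying $\mu(C)=1$ (Lemmas \ref{Xie1simplelem}, \ref{BASICkidpreplemgen}, Proposition \ref{GENPALMprop}), and finish by the macroscopic version of Lemma \ref{ZTAU1CONNECTIONlem}. Your way of deducing the joint convergence — decomposing $g\in\C_b$ into $g^\pm$ and applying Theorem \ref{GENLIMITthm} to the renormalized measures $g^\pm\,d\Lambda$ — is a clean alternative to the paper's appeal to ``a decomposition argument similar in flavor to Lemma \ref{PRODUNIFCONVlem}''; since no uniformity in $\vecq$ is needed here, your more direct argument suffices. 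One small imprecision in your last paragraph: the hypothesis of the macroscopic analogue of Lemma \ref{ZTAU1CONNECTIONlem} is $\rho^{1-d}\vecq\in\scrK_\rho^\circ$ (starting strictly outside every scatterer), which is strictly stronger than $\rho^{1-d}\vecq\notin\scrP$; the correct remark is that the $\Lambda$-measure of $\{(\vecq,\vecv)\col\rho^{1-d}\vecq\notin\scrK_\rho^\circ\}$ tends to zero, so the discrepancy is absorbed into the $o(1)$ error, as in the paper.
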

\begin{remark}\label{Thm2macrrem}
In particular the theorem implies that
$\Lambda(\fW(1;\rho))\to1$ as $\rho\to0$.
(Indeed, take $f\equiv1$ in \eqref{Thm2macrres}.)
\end{remark}
The proof of the theorem follows the same lines as the proof of Theorem \ref{Thm2gen},
with the key input being the macroscopic limit result of
Theorem \ref{GENLIMITthm}.
In particular we use the same maps $\vecz$, $\vecz_\rho$ and $F_\rho$ as in the previous proof.
The following is the analogue of Lemma \ref{ZTAU1CONNECTIONlem}
for start from an arbitrary point in $\scrK_\rho^\circ$ (cf.\ \eqref{Krhodef}).
\begin{lem}\label{ZTAU1CONNECTIONmacrlem}
Let $\rho\in(0,1)$, $\vecq\in\scrK_\rho^\circ$, $\vecv\in\US$,
and assume that $(\vecx,\vs)=\vecz_\rho(\scrQ_\rho(\vecq,\vecv))\neq\undef$.
Then $(\vecq,\vecv)\in\fw(1;\rho)$,
$\rho^{d-1}\tau_1(\vecq,\vecv;\rho)=\xi_\rho(\vecx)>0$,
and $\vecomega_1(\vecq,\vecv;\rho) %
=(-\vecx_\perp,\vs)$.
\end{lem}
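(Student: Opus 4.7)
The plan is to follow essentially the same strategy as in the proof of Lemma \ref{ZTAU1CONNECTIONlem}, but simplified by the absence of the $\vecbeta$-shift and the replacement of the ``near-scatterer'' starting point by a generic macroscopic one. The key point is that $\scrQ_\rho(\vecq,\vecv)$ in the particle coordinate frame is precisely the image of $\tP-\vecq$ under the affine map $\vecy\mapsto\vecy R(\vecv)D_\rho$, and the thickened cylinder $\fZ_\infty$ rescaled back through this map is the set of scatterers that the trajectory starting at $\vecq$ in direction $\vecv$ actually hits.

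First I would observe that $\vecq\in\scrK_\rho^\circ$ implies $\vecq\notin\scrP$, so by \eqref{Pqdef} we have $\tP_\vecq=\tP$, and hence $\scrQ_\rho(\vecq,\vecv)=(\tP-\vecq)R(\vecv)D_\rho=\oQ_\rho(\vecq,\vecv)$. Also $(\vecq,\vecv)\in\T^1(\scrK_\rho^\circ)\subset\fw(\rho)$ directly. Set $Y=\scrQ_\rho(\vecq,\vecv)$, and let $(\vecx,\vs)=\vecz_\rho(Y)\in\R_{>0}\times\Omega$ be the given point. Define
\begin{align*}
\tq:=\vecx D_\rho^{-1}R(\vecv)^{-1}+\vecq;
\end{align*}
since $(\vecx,\vs)\in Y$ we have $\tq\in\scrP$ and $\vs(\tq)=\vs$.

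Next I would unpack what $\vecz_\rho(Y)\neq\undef$ gives us. By the definition of $\vecz_\rho$, the intersection $Y\cap\fZ_\infty\cap\overline{\scrB_\rho^d}D_\rho$ is empty, $(\vecx,\vs)$ uniquely minimizes $\xi_\rho$ on $Y\cap\fZ_\infty$, and $Y\cap(\vecx+\overline{\scrB_{2\rho}^d}D_\rho)=\{\vecx\}$. Translated geometrically, this means the line segment $\{\xi\vece_1\col\xi\in[0,\xi_\rho(\vecx)]\}$ misses every open ellipsoid $\vecx'+\scrB_\rho^d D_\rho$ attached to points $(\vecx',\vs')\in Y$, and enters the ellipsoid based at $\vecx$ precisely at $\xi=\xi_\rho(\vecx)$. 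Moreover $\xi_\rho(\vecx)>0$ because $\vecx\notin\overline{\scrB_\rho^d}D_\rho$. Applying the inverse affine map $\vecy\mapsto\vecy D_\rho^{-1}R(\vecv)^{-1}+\vecq$ converts these ellipsoids to the open balls $\vecp+\scrB_\rho^d$ for $\vecp\in\scrP$, and $\xi\vece_1$ to $\vecq+\rho^{1-d}\xi\vecv$. Hence
\begin{align*}
\tau_1(\vecq,\vecv;\rho)=\rho^{1-d}\xi_\rho(\vecx)<\infty
\qquad\text{and}\qquad \vecq^{(1)}(\vecq,\vecv;\rho)=\tq.
\end{align*}
Furthermore the separation condition \eqref{BALLSEPCOND}, transported back, reads $\|\tq-\vecp\|>2\rho$ for every $\vecp\in\scrP\setminus\{\tq\}$, so the scatterer at $\tq$ is separated and $(\vecq,\vecv)\in\fw(1;\rho)$.

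Finally I would compute the impact parameter. From $\vecq+\tau_1\vecv=\vecq^{(1)}+\rho\vecu_1$ with $\vecq^{(1)}=\tq$ and $\tau_1=\rho^{1-d}\xi_\rho(\vecx)$ we get
\begin{align*}
\vecu_1=\rho^{-1}\bigl(\tau_1\vecv-\vecx D_\rho^{-1}R(\vecv)^{-1}\bigr),
\end{align*}
and applying $R(\vecv)$ and projecting onto the orthogonal complement of $\vece_1$ yields $\vecw_1=(\vecu_1 R(\vecv))_\perp=-\vecx_\perp$, hence $\vecomega_1(\vecq,\vecv;\rho)=(-\vecx_\perp,\vs(\tq))=(-\vecx_\perp,\vs)$. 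There is no real obstacle here — the only thing one has to be a bit careful about is the identification of $\xi_\rho(\vecx)$ with the rescaled free path length via the affine correspondence between ellipsoids and scatterers, which is purely mechanical and already spelled out in the proof of Lemma \ref{ZTAU1CONNECTIONlem}.
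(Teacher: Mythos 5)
Your proof is correct and takes essentially the same route as the paper, which for this lemma simply records that the argument is ``very similar to Lemma \ref{ZTAU1CONNECTIONlem}''; you have carried out the straightforward adaptation, replacing the $\vecbeta$-shift and the hypothesis $d_\scrP(\vecq)>(1+\|\vecbeta\|)\rho$ by $\vecq\in\scrK_\rho^\circ$, and the coordinate computations all check out. The one point worth being slightly more explicit about is that $\vecq\in\scrK_\rho^\circ$ is used not only to conclude $(\vecq,\vecv)\in\fw(\rho)$ but also to guarantee $Y\cap\overline{\scrB_\rho^d}D_\rho=\emptyset$ (not merely $Y\cap\fZ_\infty\cap\overline{\scrB_\rho^d}D_\rho=\emptyset$), which is what rules out the ray $\{\xi\vece_1\col\xi>0\}$ entering an ellipsoid whose centre $\vecx'$ has $x'_1\leq0$ and $\|\vecx'_\perp\|<1$; this is implicit in your ``translated geometrically'' sentence, and the paper's own microscopic proof is equally terse on this point.
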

\begin{proof}
Very similar to Lemma \ref{ZTAU1CONNECTIONlem}.
\end{proof}
Let $\Lambda$ be as in Theorem \ref{Thm2macr}, and 
let $\Xi$ be the macroscopic 
limit point process defined in Section~\ref{GENLIMITsec}.
For $(\vecq,\vecv)$ random in $(\T^1(\R^d),\Lambda)$,
let $\eta_\rho^{(\Lambda)}\in P(\T^1(\R^d)\times\Delta)$ be the distribution of 
$(\vecq,\vecv,[F_\rho\circ\vecz_\rho](\scrQ_\rho(\rho^{1-d}\vecq,\vecv)))$,
\label{etarhoLambdaDEF}
and let $\eta^{(\Lambda)}\in P(\T^1(\R^d)\times\Delta)$ be the distribution of
$(\vecq,\vecv,[\iota\circ\vecz](\Xi))$, with $\Xi$ independent from $(\vecq,\vecv)$.
The following is the analogue of Lemma~\ref{KEYyCONVprop3}.
\begin{lem}\label{KEYyCONVprop3macr}
We have $\eta_{\rho}^{(\Lambda)}\xrightarrow[]{\textup{ w }}\eta^{(\Lambda)}$ as $\rho\to0$.
\end{lem}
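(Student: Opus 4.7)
\medskip

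\noindent\textbf{Proof plan for Lemma \ref{KEYyCONVprop3macr}.}
The strategy mirrors the proof of Lemma \ref{KEYyCONVprop3}, but with Theorem \ref{GENLIMITthm} taking the role played there by Lemma \ref{BETAUNIFCONVlem}. The structural difference is that for microscopic initial conditions the randomness of $\vecv$ is already built into the definition of the relevant distributions, whereas here the ``source'' randomness lives on $\T^1(\R^d)$ via $\Lambda$. The plan is first to bootstrap Theorem \ref{GENLIMITthm} to a joint-convergence statement on $\T^1(\R^d)\times N_s(\scrX)$, and then to transport that convergence through $F_\rho\circ\vecz_\rho$ by means of the generalized continuous mapping theorem.

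\emph{Step 1 (joint convergence).} I would first show: if $(\vecq,\vecv)$ is random in $(\T^1(\R^d),\Lambda)$, then the law of $(\vecq,\vecv,\scrQ_\rho(\rho^{1-d}\vecq,\vecv))$ on $\T^1(\R^d)\times N_s(\scrX)$ converges weakly to the product $\Lambda\times\mu$. This is proved by the same partition argument as in Lemma \ref{PRODUNIFCONVlem}: given $F\in \C_b(\T^1(\R^d)\times N_s(\scrX))$ and $\ve>0$, choose a partition of a large compact subset of the support of $\Lambda$ into Borel sets $A_1,\ldots,A_r$ of positive $\Lambda$-measure, small enough that $F(\,\cdot\,,\,\cdot\,,Y)$ oscillates in its first two arguments by less than $\ve$ on each $A_j$ uniformly in $Y$, pick representatives $(\vecq_j,\vecv_j)\in A_j$, and apply Theorem \ref{GENLIMITthm} to each conditional measure $\Lambda_j:=\Lambda(A_j)^{-1}\Lambda_{|A_j}\in\Pac(\T^1(\R^d))$ with the single test function $F(\vecq_j,\vecv_j,\cdot)\in \C_b(N_s(\scrX))$. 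Summing the $r$ resulting scalar convergences and letting $\ve\to0$ produces the desired joint convergence. This is the step I expect to require the most care, mainly to justify the uniform oscillation control for $F$ without compactness assumptions on the first two arguments; using $\Lambda\in\Pac(\T^1(\R^d))$ and approximating $\Lambda'$ by compactly supported continuous densities (as in the proof of Proposition \ref{GENLIMITprop1}) reduces us to the compactly supported case.

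\emph{Step 2 (continuous mapping).} Define
\begin{align*}
H_\rho:\T^1(\R^d)\times N_s(\scrX)\to\T^1(\R^d)\times\Delta,\qquad H_\rho(\vecq,\vecv,Y)=(\vecq,\vecv,F_\rho(\vecz_\rho(Y))),
\end{align*}
and $H(\vecq,\vecv,Y)=(\vecq,\vecv,\iota(\vecz(Y)))$. By construction $\eta_\rho^{(\Lambda)}$ is the push-forward of the law from Step~1 under $H_\rho$, while $\eta^{(\Lambda)}$ is the push-forward of $\Lambda\times\mu$ under $H$. Let $C\subset N_s(\scrX)$ be the set of $Y$ satisfying the three hypotheses of Lemma \ref{Frhozrhocontlem}, i.e.\ $\vecz(Y)\neq\undef$, $Y\cap\partial\fZ_\infty=\emptyset$, and $\vecy\cdot\vece_1\neq\vecz(Y)\cdot\vece_1$ for every $\vecy\in Y\setminus\{\vecz(Y)\}$. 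Then $\mu(C)=1$: the first condition holds by Lemma \ref{BASICkidpreplemgen}; the second holds since $\partial\fZ_\infty$ has Lebesgue measure zero and the intensity of $\Xi$ equals $c_\scrP\mu_\scrX$ (cf.\ Proposition \ref{GENPALMprop} applied with $A=N_s(\scrX)$ and $B=\partial\fZ_\infty\times\Sigma$); and the third holds by Lemma \ref{Xie1simplelem}. Consequently $(\Lambda\times\mu)(\T^1(\R^d)\times C)=1$, and Lemma \ref{Frhozrhocontlem} guarantees that whenever $\rho_n\to0$ and $(\vecq_n,\vecv_n,Y_n)\to(\vecq,\vecv,Y)$ with $Y\in C$, we have $H_{\rho_n}(\vecq_n,\vecv_n,Y_n)\to H(\vecq,\vecv,Y)$ in $\T^1(\R^d)\times\Delta$. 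Applying the generalized continuous mapping theorem \cite[Thm.\ 4.27]{kallenberg02} to the weak convergence obtained in Step~1 yields $\eta_\rho^{(\Lambda)}\xrightarrow[]{\textup{ w }}\eta^{(\Lambda)}$, completing the proof.
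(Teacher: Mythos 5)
Your proof follows essentially the same route as the paper's: there, too, one first upgrades Theorem \ref{GENLIMITthm} to the joint convergence of the law of $(\vecq,\vecv,\scrQ_\rho(\rho^{1-d}\vecq,\vecv))$ to $\Lambda\times\mu$ (the paper says this follows ``via a decomposition argument similar in flavor to the proof of Lemma \ref{PRODUNIFCONVlem}''), and then applies \cite[Thm.\ 4.27]{kallenberg02} to the maps $H_\rho$, $H$ on the set $C$, verifying $\mu(C)=1$ with the same ingredients you list (your appeal to Lemma \ref{BASICkidpreplemgen} is just the packaged form of Lemma \ref{GENnonemptylem} together with Lemma \ref{Xie1simplelem}). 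The only refinement worth noting is that in your Step 1 the uniform oscillation bound cannot hold for an arbitrary $F\in\C_b(\T^1(\R^d)\times N_s(\scrX))$; as in Lemma \ref{PPUNICCONVCONCRETElemA} (applied with $S=\T^1(\R^d)$), one should test against functions of the form $(\vecq,\vecv,Y)\mapsto g(\vecq,\vecv,Y(f))$ with $g\in\C_c$ and $f\in\C_c(\scrX)$, for which your partition argument then goes through verbatim.
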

\begin{proof}
This is similar to the proof of Lemma \ref{KEYyCONVprop3}.
For $(\vecq,\vecv)$ random in $(\T^1(\R^d),\Lambda)$,
let $\tmu_\rho^{(\Lambda)}\in P(\T^1(\R^d)\times\scrX)$ be the distribution of
$(\vecq,\vecv,\scrQ_\rho(\rho^{1-d}\vecq,\vecv))$.
Then Theorem \ref{GENLIMITthm} implies
(via a decomposition argument, similar in flavor to the proof of Lemma \ref{PRODUNIFCONVlem}) 
that 
\begin{align*}
\tmu_\rho^{(\Lambda)}\xrightarrow[]{\textup{ w }}\Lambda\times\mu
\quad\text{as }\:\rho\to0.
\end{align*}
Consider the maps
\begin{align*}
H_\rho:\T^1(\R^d)\times N_s(\scrX)\to\T^1(\R^d)\times\Delta,\qquad
H_n(\vecq,\vecv,Y)=(\vecq,\vecv,F_{\rho}(\vecz_{\rho}(Y)))
\end{align*}
and
\begin{align*}
H:\T^1(\R^d)\times N_s(\scrX)\to\T^1(\R^d)\times\Delta,\qquad
H(\vecq,\vecv,Y)=(\vecq,\vecv,\iota(\vecz(Y))),
\end{align*}
and note that $\eta_{\rho}^{(\Lambda)}=\tmu_{\rho}^{(\Lambda)}\circ H_\rho^{-1}$
and $\eta^{(\Lambda)}=(\Lambda\times\mu)\circ H^{-1}$.
Let $C$ be the set of all $Y\in N_s(\scrX)$ satisfying
$\vecz(Y)\neq\undef$, $Y\cap\partial\fZ_\infty=\emptyset$,
and $\vecy\cdot\vece_1\neq\vecz(Y)\cdot\vece_1$ for all $\vecy\in Y\setminus\{\vecz(Y)\}$.
Then by Lemma \ref{Frhozrhocontlem},
for any $\vect,\vect_1,\vect_2,\ldots\in\T^1(\R^d)$
and $Y,Y_1,Y_2,\ldots\in N_s(\scrX)$ subject to $Y\in C$ and $(\vect_n,Y_n)\to(\vect,Y)$ as $n\to\infty$,
we have $H_n(\vect_n,Y_n)\to H(\vect,Y)$ as $n\to\infty$.
Furthermore using Lemma \ref{Xie1simplelem}, Lemma \ref{GENnonemptylem}
and Proposition \ref{GENPALMprop} (with $A=N_s(\scrX)$, $B=\partial\fZ_\infty\times\Sigma$)
one verifies that $\mu(C)=1$.
Now the desired convergence follows by
\cite[Thm.\ 4.27]{kallenberg02}.
\end{proof}

\begin{lem}\label{Thm2macrremlem}
(Cf.\ Remark \ref{Thm2macrrem}.)
$\Lambda(\fW(1;\rho))\to1$ as $\rho\to0$.
\end{lem}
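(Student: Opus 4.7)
The plan is to combine Lemma \ref{KEYyCONVprop3macr} with the previously established Lemma \ref{ZTAU1CONNECTIONmacrlem} and the absolute continuity of $\Lambda$. First, observe that $\Delta = (\R_{>0}\times\Omega)\sqcup\{\undef\}$ carries the disjoint union topology, so the set $B = \T^1(\R^d)\times\{\undef\}$ is both open and closed in $\T^1(\R^d)\times\Delta$. By Lemma \ref{BASICkidpreplemgen}, $\mu(\{Y\in N_s(\scrX)\col \vecz(Y)=\undef\})=0$, and hence, by the definition of $\eta^{(\Lambda)}$, $\eta^{(\Lambda)}(B)=0$. Since $\partial B = \emptyset$, the weak convergence $\eta_\rho^{(\Lambda)}\xrightarrow{\textup{w}}\eta^{(\Lambda)}$ from Lemma \ref{KEYyCONVprop3macr} together with the Portmanteau theorem give
\begin{align*}
\Lambda\bigl(\bigl\{(\vecq,\vecv)\in\T^1(\R^d)\col \vecz_\rho(\scrQ_\rho(\rho^{1-d}\vecq,\vecv))=\undef\bigr\}\bigr)=\eta_\rho^{(\Lambda)}(B)\to 0
\end{align*}
as $\rho\to 0$.

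Next, I would show that the set where the starting point lies inside some scatterer is $\Lambda$-negligible in the limit. In macroscopic coordinates the union of scatterers is $S_\rho := \bigcup_{\vecp\in\scrP}\overline{\scrB^d(\rho^{d-1}\vecp,\rho^d)}$. For any fixed bounded Borel set $V\subset\R^d$, assumption [P1] gives $\#(\scrP\cap\rho^{1-d}V)\ll \rho^{-d(d-1)}$, so
\begin{align*}
\vol(S_\rho\cap V)\ll \rho^{d\cdot d}\cdot\rho^{-d(d-1)}=\rho^d\to 0.
\end{align*}
Combined with absolute continuity of $\Lambda$ (by approximating $\Lambda'$ by a compactly supported, bounded density in $\L^1$), this yields $\Lambda(\{(\vecq,\vecv)\col\rho^{1-d}\vecq\notin\scrK_\rho^\circ\})\to 0$ as $\rho\to 0$.

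Finally, on the intersection of the two good events---namely, $\rho^{1-d}\vecq\in\scrK_\rho^\circ$ and $\vecz_\rho(\scrQ_\rho(\rho^{1-d}\vecq,\vecv))\neq\undef$---Lemma \ref{ZTAU1CONNECTIONmacrlem} guarantees that $(\rho^{1-d}\vecq,\vecv)\in\fw(1;\rho)$, i.e.\ $(\vecq,\vecv)\in\fW(1;\rho)$. Since both good events have $\Lambda$-probability tending to $1$, so does their intersection, completing the proof. I do not anticipate any serious obstacle: the weak convergence input is already supplied by Lemma \ref{KEYyCONVprop3macr}, the target set $B$ is clopen so no boundary issues arise, and the negligibility of $\{\rho^{1-d}\vecq\notin\scrK_\rho^\circ\}$ is a straightforward volume count using [P1].
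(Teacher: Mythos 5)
Your proof is correct and follows essentially the same route as the paper: take the clopen set $B=\T^1(\R^d)\times\{\undef\}$, use Lemma \ref{BASICkidpreplemgen} to get $\eta^{(\Lambda)}(B)=0$, apply the weak convergence of Lemma \ref{KEYyCONVprop3macr} to conclude $\eta_\rho^{(\Lambda)}(B)\to0$, and then invoke Lemma \ref{ZTAU1CONNECTIONmacrlem}. Your explicit volume estimate showing that, in the limit, $\Lambda$-almost no initial position lies inside a scatterer is a detail the paper leaves implicit (Lemma \ref{ZTAU1CONNECTIONmacrlem} does require $\vecq\in\scrK_\rho^\circ$), and spelling it out is a sensible addition rather than a deviation.
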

\begin{proof}
Set $B=\T^1(\R^d)\times\{\undef\}$.
Then $\eta^{(\Lambda)}(B)=0$ by Lemma \ref{BASICkidpreplemgen};
hence $\eta_{\rho}^{(\Lambda)}(B)\to0$ as $\rho\to0$, by Lemma \ref{KEYyCONVprop3macr},
and now the desired convergence follows using Lemma \ref{ZTAU1CONNECTIONmacrlem}.
\end{proof}

\begin{proof}[Proof of Theorem \ref{Thm2macr}]
Let $f\in\C_b(\T^1(\R^d)\times\R_{>0}\times\Omega)$ be given.
Extend $f$ to be zero on $\T^1(\R^d)\times\{\undef\}$;
then $f\in\C_b(\T^1(\R^d)\times\Delta)$.
Now by Lemma \ref{KEYyCONVprop3macr},
$\eta_{\rho}^{(\Lambda)}(f)\to\eta^{(\Lambda)}(f)$ as $\rho\to0$.
Also $\Lambda(\{(\vecq,\vecv)\col\vecz_\rho(\scrQ_\rho(\rho^{1-d}\vecq,\vecv))=\undef\})\to1$,
by the proof of Lemma \ref{Thm2macrremlem}.
Using this together with Lemma \ref{ZTAU1CONNECTIONmacrlem}, the theorem follows.
\end{proof}

\section{Relations for the transition kernels}
\label{TRANSKERRELsec}

\subsection{Symmetries}

\begin{lem}\label{SOdm1symLEM}
For any fixed $\vecomega'\in\scrX_\perp$ and $R\in\SO(d-1)$,
we have 
\begin{align*}
k(\vecomega' R,\xi,\vecomega R)=k(\vecomega',\xi,\vecomega),\qquad
k^{\g}(\xi,\vecomega R)=k^{\g}(\xi,\vecomega)
\end{align*}
for almost all $(\xi,\vecomega)\in\R_{>0}\times\Omega$
with respect to the measure
$d\xi\,d\mu_\Omega(\vecomega)$.
\end{lem}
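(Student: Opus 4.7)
The plan is to lift the desired symmetries of $k$ and $k^{\g}$ from the $\SO(d-1)$-invariance of the underlying point processes $\Xi_\vs$ (assumption [Q1]) and $\Xi$ (Proposition \ref{GENLIMITASLINVprop}). Fix $R\in\SO(d-1)$, so $\vece_1 R=\vece_1$ and $R$ acts orthogonally on the last $d-1$ coordinates. I would first record three simple equivariances: (i) $R$ preserves the cylinder $\fZ_\infty$ and the first-coordinate function, hence $\vecz(YR)=\vecz(Y)R$ whenever $\vecz(Y)\neq\undef$ (and $\vecz(YR)=\undef$ otherwise); (ii) $\iota$ negates only perpendicular coordinates, so $\iota\circ R=R\circ\iota$ on $\scrX$; (iii) the measure $d\xi\,d\mu_\Omega(\vecomega)$ on $\R_{>0}\times\Omega$ is invariant under the right $R$-action (since Lebesgue measure on $\R^{d-1}$ is), and $R$ preserves $\Omega=\scrB_1^{d-1}\times\Sigma$.

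Next I would compute $\kappa(\vecomega' R;BR)$ for an arbitrary Borel set $B\subset\R_{>0}\times\Omega$ and $\vecomega'=(\vecx,\vs)\in\scrX_\perp$. Writing $Y=Y'R$ and using (i), (ii):
\begin{align*}
\kappa(\vecomega' R;BR)
&=\mu_{\vs}\bigl(\{Y:\iota(\vecz(Y-\vecx R))\in BR\}\bigr)\\
&=\mu_{\vs}\bigl(\{Y'R:\iota(\vecz((Y'-\vecx)R))\in BR\}\bigr)\\
&=\mu_{\vs}\bigl(\{Y'R:\iota(\vecz(Y'-\vecx))\in B\}\bigr).
\end{align*}
Now [Q1] says $\mu_\vs$ is invariant under the right action of $\SO(d-1)$, so this equals $\mu_\vs(\{Y':\iota(\vecz(Y'-\vecx))\in B\})=\kappa(\vecomega';B)$. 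Unwinding via the density $k$ and performing the change of variables $\vecomega\mapsto\vecomega R$ (justified by (iii)) yields
\begin{equation*}
\int_B k(\vecomega' R,\xi,\vecomega R)\,d\xi\,d\mu_\Omega(\vecomega)=\int_B k(\vecomega',\xi,\vecomega)\,d\xi\,d\mu_\Omega(\vecomega)
\end{equation*}
for all Borel $B$, and the first claim follows.

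For $k^{\g}$ I would run the identical argument with $\kappa^{\g}(B)=\mu(\{Y:\iota(\vecz(Y))\in B\})$ and no translation: writing $Y=Y'R$ and applying (i), (ii) gives $\kappa^{\g}(BR)=\mu(\{Y'R:\iota(\vecz(Y'))\in B\})$, which equals $\kappa^{\g}(B)$ by the $\SO(d-1)$-invariance of $\mu$ established in Proposition \ref{GENLIMITASLINVprop}. The density identity $k^{\g}(\xi,\vecomega R)=k^{\g}(\xi,\vecomega)$ a.e.\ then follows as above. The only point requiring minor care is the null set where $\vecz$ may be undefined; but by Lemmas \ref{BASICkidpreplemG} and \ref{BASICkidpreplemgen} this set has measure zero under $\mu_\vs$ and $\mu$ respectively, so it contributes nothing and the argument goes through without obstacle.
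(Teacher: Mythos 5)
Your argument is correct and is precisely the approach the paper intends; the paper's proof is a one-line appeal to [Q1] and Proposition \ref{GENLIMITASLINVprop}, and your write-up supplies the unstated details (the $R$-equivariance of $\vecz$, the commutation of $\iota$ with the $\SO(d-1)$-action, the $R$-invariance of $d\xi\,d\mu_\Omega$, and the change of variables $\vecomega\mapsto\vecomega R$) without changing the route.
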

\begin{proof}
The first statement follows from the definition of $k$ and the $\SO(d-1)$-invariance of $\mu_\vs$ (cf.\ [Q1]). %
The second statement follows from the definition of $k^{\g}$ and the $\SO(d-1)$-invariance of $\mu$ 
(cf.\ Proposition \ref{GENLIMITASLINVprop}).
\end{proof}
Next we prove a time reversal symmetry for $k$, using Corollary \ref{Thm2genCOR}.
\begin{prop}\label{krelPROP1}
Fix any $R\in \O(d-1)$ with $\det R=-1$.
Then 
\begin{align*}
k(\vecomega',\xi,\vecomega)=k(\vecomega R,\xi,\vecomega' R)
\end{align*}
for almost all $\langle\vecomega',\xi,\vecomega\rangle$
with respect to the measure
$d\mu_\Omega(\vecomega')\,d\xi\,d\mu_\Omega(\vecomega)$.
\end{prop}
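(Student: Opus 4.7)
The strategy is to exploit the time-reversal symmetry of Lorentz trajectories via Corollary~\ref{Thm2genCOR} applied to two matched test functions. By the joint $\SO(d-1)$-invariance of $k$ (Lemma~\ref{SOdm1symLEM}), it suffices to establish the identity for any single $R\in\O(d-1)$ with $\det R=-1$; this flexibility lets us use a $\vecv$-dependent rotation in the bijection and replace it by a fixed $R$ at the end.

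\emph{Time-reversal bijection.} For each $\vecv\in\US$, set $K_\vecv:=R(\vecv)^{-1}R(-\vecv)\in\SO(d)$; since $\vecv R(-\vecv)=-\vece_1$, we have $\vece_1 K_\vecv=-\vece_1$, so $R_\vecv:=K_\vecv|_{\vece_1^\perp}\in\O(d-1)$ with $\det R_\vecv=-1$ (and $R_{-\vecv}=R_\vecv^{-1}$). A direct computation using $\vecbeta_\vecu(\vecv)=\vecu R(\vecv)^{-1}+\sqrt{1-\|\vecu\|^2}\vecv$ shows that if the forward trajectory $(\vecq,\vecu,\vecv)$ first hits a separated scatterer $\vecq^{(1)}$ with impact parameter $\vecw_1$ at time $\tau_1$, then $\vecbeta_{\vecw_1 R_\vecv}(-\vecv)=\vecu_1:=\rho^{-1}(\vecq+\tau_1\vecv-\vecq^{(1)})$. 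Hence the reverse trajectory, starting at $\vecq^{(1)}+\rho\vecu_1$ in direction $-\vecv$, fits Theorem~\ref{Thm2gen}'s framework with exit parameter $\vecw_1 R_\vecv$, and by the same calculation it first hits $\vecq$ with impact parameter $\vecu R_\vecv$ at the same time $\tau_1$. The map $\Phi_\rho:(\vecq,\vecu,\vecv)\mapsto(\vecq^{(1)},\vecw_1 R_\vecv,-\vecv)$ is therefore an involution on admissible triples. For measure preservation, $\vecu$ and $\vecw_1$ both parametrize the intersection of the trajectory line with $\vecv^\perp$ and differ by a constant (depending on $\vecq,\vecq^{(1)},\vecv$) on regions of fixed $\vecq^{(1)}$, giving $d\vecu=d\vecw_1$; combined with $R_\vecv$ being a Euclidean isometry and $\omega_1$'s invariance under $\vecv\mapsto-\vecv$, $\Phi_\rho$ has Jacobian~$1$.

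\emph{Deducing the identity.} Apply Corollary~\ref{Thm2genCOR} (with $\lambda=\omega_1$) to
\begin{align*}
f(\vecp,\vecu,\vecv,\xi,(\vecw,\vs_\omega))&:=g_B(\vecx_\vecp)\,G(\vs_\vecp)\,\chi(\xi)\,g_2(\vecu,\vecv,\xi,\vecw,\vs_\omega),\\
\tilde f(\vecp,\vecu,\vecv,\xi,(\vecw,\vs_\omega))&:=g_B(\vecx_\vecp+\xi\vecv)\,G(\vs_\omega)\,\chi(\xi)\,g_2(\vecw R_\vecv,-\vecv,\xi,\vecu R_\vecv,\vs_\vecp),
\end{align*}
with $g_B\in\C_c(\R^d)$, $G\in\C(\Sigma)$, $\chi\in\C_c(\R_{>0})$, $g_2\in\C_c$; the (at most) two-point discontinuity of $R_\vecv$ inherited from that of $R$ is handled by a routine approximation by continuous cutoffs. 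Evaluating $\tilde f$ along a trajectory, using $\rho^{d-1}\vecq+\rho^{d-1}\tau_1\vecv=\rho^{d-1}\vecq^{(1)}+O(\rho^d)$, and reindexing by $\Phi_\rho$ shows that the left-hand side of Corollary~\ref{Thm2genCOR} for $\tilde f$ equals that for $f$ up to $o(1)$; hence their limits agree. Performing the substitutions $\vecx_\vecp\mapsto\vecx_\vecp+\xi\vecv$ (factoring out $\int g_B\,d\vecx$) and $(\vecu,\vecv,\vecw,\vs_\vecp,\vs_\omega)\mapsto(\vecw R_\vecv,-\vecv,\vecu R_\vecv,\vs_\omega,\vs_\vecp)$ (Jacobian~$1$, using $R_{-\vecv}=R_\vecv^{-1}$) on the $\tilde f$ side converts the kernel factor into $k((\vecw R_\vecv,\vs_\omega),\xi,(\vecu R_\vecv,\vs_\vecp))$. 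Writing $R_\vecv=R\cdot(R^{-1}R_\vecv)$ with $R^{-1}R_\vecv\in\SO(d-1)$ and invoking Lemma~\ref{SOdm1symLEM}, this equals $k((\vecw R,\vs_\omega),\xi,(\vecu R,\vs_\vecp))$ for every $\vecv$, so the $\vecv$-integral collapses. Letting $G,\chi,g_2$ range over dense classes of product test functions yields the desired identity $k(\vecomega',\xi,\vecomega)=k(\vecomega R,\xi,\vecomega' R)$ almost everywhere. The principal technical step is verifying the measure-preservation and the algebraic identity $\vecbeta_{\vecw_1 R_\vecv}(-\vecv)=\vecu_1$ defining the bijection $\Phi_\rho$; the remaining manipulations are direct.
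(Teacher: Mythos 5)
Your proposal is correct and follows essentially the same route as the paper's own proof: the $\vecv$-dependent reflection $K_\vecv=R(\vecv)^{-1}R(-\vecv)$, the involution on impact/exit parameters $\vecu\mapsto\vecw_1K_\vecv$ between first-collision data for $\vecq$ and for $\vecq^{(1)}$ with $d\vecu=d\vecw_1$, the double application of Corollary~\ref{Thm2genCOR} with the $O(\rho^d)$ replacement $\rho^{d-1}\vecq^{(1)}=\rho^{d-1}(\vecq'-\tau_1\vecv)+O(\rho^d)$, reduction from $K_\vecv$ to a fixed $R$ via Lemma~\ref{SOdm1symLEM}, and a density argument in the test functions (the paper works with general $f\in\C_c$ and the surjectivity of $f\mapsto f_0$ rather than products, a cosmetic difference). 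Only note the harmless slip that $\vecu_1$ should be $\rho^{-1}\bigl(\vecq+\rho\vecbeta_\vecu(\vecv)+\tau_1\vecv-\vecq^{(1)}\bigr)$, i.e.\ measured from the actual starting point $\vecq_{\rho,\vecu}(\vecv)$.
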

\begin{proof}
Let $f\in\C_c(\scrX\times\UB\times\US\times\R_{>0}\times\Omega)$ and $\lambda\in P(\US)$.
Let $\scrP_\rho$ be the set of $\vecq\in\scrP$ corresponding to separated scatterers,
i.e.\ $\scrP_\rho=\{\vecq\in\scrP\col d_\scrP(\vecq)>2\rho\}$.
Inspecting the proof of Corollary \ref{Thm2genCOR} and recalling Lemma \ref{GOODDISTANCElem}
we see that the summation in the left hand side of \eqref{Thm2genCORRES} may be restricted to $\vecq\in\scrP_\rho$
without changing the limit.
Using also the fact that $\vecq^{(1)}(\vecq_{\rho,\vecu}(\vecv),\vecv;\rho)\in\scrP_\rho$
for all $\vecv\in\fw_{\vecq,\rho}^{\vecu}$, 
we obtain
\begin{align}\notag
\lim_{\rho\to0}\rho^{d(d-1)}\sum_{\vecq\in\scrP_\rho}\sum_{\vecq'\in\scrP_\rho}
\int_{\US}\int_{\UB}
I\bigl(\vecv\in\fw_{\vecq,\rho}^{\vecu}\text{ and }\vecq^{(1)}(\vecq_{\rho,\vecu}(\vecv),\vecv;\rho)=\vecq'\bigr)
\hspace{40pt}
\\\label{krelPROP1PF1}
\times 
f\bigl((\rho^{d-1}\vecq,\vs(\vecq)),\vecu,\vecv,\rho^{d-1}\tau_1(\vecq_{\rho,\vecu}(\vecv),\vecv;\rho),
\vecomega_1(\vecq_{\rho,\vecu}(\vecv),\vecv;\rho)\bigr)
\,d\vecu\,d\lambda(\vecv)
\hspace{10pt}
\\[4pt]\notag
=c_\scrP v_{d-1}\int_\Omega\int_{\US}\int_0^\infty\int_\Omega
f_0(\vecomega',\vecv,\xi,\vecomega)\, k(\vecomega',\xi,\vecomega)
\,d\mu_{\Omega}(\vecomega)\,d\xi\,d\lambda(\vecv)\,d\mu_{\Omega}(\vecomega'),
\end{align}
where
\begin{align*}
f_0((\vecu,{\vs}),\vecv,\xi,\vecomega)=\int_{\R^d}f((\vecx,{\vs}),\vecu,\vecv,\xi,\vecomega)\,d\vecx.
\end{align*}

Let $\rho\in(0,1)$ be fixed.
Given $\vecq,\vecq'\in\scrP_\rho$ and $\vecv\in\US$, we set
\begin{align*}
U[\vecq,\vecq',\vecv]=\{\vecu\in\UB\col\vecv\in\fw_{\vecq,\rho}^{\vecu}\:\text{ and }\:
\vecq^{(1)}(\vecq_{\rho,\vecu}(\vecv),\vecv;\rho)=\vecq'\}.
\end{align*}
Also set $K_\vecv:=R(\vecv)^{-1}R(-\vecv)$.
Given any $\vecu\in U[\vecq,\vecq',\vecv]$
we set $\vecu':=$\linebreak$\vecw_1(\vecq_{\rho,\vecu}(\vecv),\vecv;\rho)K_\vecv$;
also write $\tau_1=\tau_1(\vecq_{\rho,\vecu}(\vecv),\vecv;\rho)$ and
$\vecw_1=\vecw_1(\vecq_{\rho,\vecu}(\vecv),\vecv;\rho)$.
Then by the definition \eqref{BETAUdef},
$\vecbeta_{\vecu'}(-\vecv)$ is the unique point in $\US$ satisfying
$(\vecbeta_{\vecu'}(-\vecv)R(\vecv))_\perp=\vecu'K_{-\vecv}=\vecw_1$
and $\vecbeta_{\vecu'}(-\vecv)\cdot\vecv<0$;
hence by the definition of $\vecw_1$ we have 
$\vecq_{\rho,\vecu}(\vecv)+\tau_1\vecv=\vecq'+\rho\vecbeta_{\vecu'}(-\vecv)=\vecq'_{\rho,\vecu'}(-\vecv)$,
and thus also $\vecq_{\rho,\vecu}(\vecv)=\vecq'_{\rho,\vecu'}(-\vecv)-\tau_1\vecv$.
It follows that $\vecu'\in U[\vecq',\vecq,-\vecv]$,
$\tau_1(\vecq'_{\rho,\vecu'}(-\vecv),-\vecv;\rho)=\tau_1$,
and
\begin{align*}
\vecw_1(\vecq'_{\rho,\vecu'}(-\vecv),-\vecv;\rho)=
(\vecbeta_\vecu(\vecv)R(-\vecv))_\perp=(\vecu K_\vecv)_\perp=\vecu K_\vecv.
\end{align*}
The map $\vecu\mapsto\vecu':=\vecw_1(\vecq_{\rho,\vecu}(\vecv),\vecv;\rho)K_\vecv$
from $U[\vecq,\vecq',\vecv]$ to $U[\vecq',\vecq,-\vecv]$
is clearly injective,
and it follows from the above considerations that the composition of this map
with the corresponding injection
$\vecu'\mapsto\vecw_1(\vecq_{\rho,\vecu'}(-\vecv),-\vecv;\rho)K_{-\vecv}$ 
from $U[\vecq',\vecq,-\vecv]$ to $U[\vecq,\vecq',\vecv]$ is the identity map.
Hence both these maps are in fact bijections, and inverses of each other.
Note also that Lebesgue measure $d\vecu$ corresponds to $d\vecu'$ under the bijection.
Hence the left hand side of \eqref{krelPROP1PF1} equals
\begin{align}\notag
\lim_{\rho\to0}\rho^{d(d-1)}\sum_{\vecq\in\scrP_\rho}\sum_{\vecq'\in\scrP_\rho}
\int_{\US}\int_{\UB}
I\bigl(-\vecv\in\fw_{\vecq',\rho}^{\vecu'}\text{ and }
\vecq^{(1)}=\vecq\bigr)
\hspace{70pt}
\\\label{krelPROP1PF2}
\times f\Bigl(\bigl(\rho^{d-1}{\vecq}^{(1)},\vs({\vecq}^{(1)})\bigr),
\:\vecw_1 K_{-\vecv},\:\vecv,\:
\rho^{d-1}\tau_1,\:\bigl(\vecu'K_{-\vecv},{\vs}(\vecq')\bigr)\Bigr)
\,d\vecu'\,d\lambda(\vecv),
\end{align}
where now 
$\vecq^{(1)}=\vecq^{(1)}(\vecq'_{\rho,\vecu'}(-\vecv),-\vecv;\rho)$,
$\tau_1=\tau_1(\vecq'_{\rho,\vecu'}(-\vecv),-\vecv;\rho)$
and
$\vecw_1=\vecw_1(\vecq'_{\rho,\vecu'}(-\vecv),-\vecv;\rho)$.
Using the fact that ${\vecq}^{(1)}\in\scrP_\rho$ 
for all $-\vecv\in\fw_{\vecq',\rho}^{\vecu'}$ the summation over $\vecq$ in \eqref{krelPROP1PF2} may be removed,
keeping only the condition $-\vecv\in\fw_{\vecq',\rho}^{\vecu'}$ in the indicator function.
Furthermore in the first argument of $f$ in \eqref{krelPROP1PF2},
we may replace $\rho^{d-1}{\vecq}^{(1)}$ by
$\rho^{d-1}(\vecq'-\tau_1\vecv)$.
Indeed, using $\|{\vecq}^{(1)}-(\vecq'-\tau_1\vecv)\|=\rho$
and $f\in\C_c$ we see that the error in the integrand caused by the replacement is uniformly small;
and hence by [P1] the error caused in the total expression
tends to zero as $\rho\to0$.
Substituting also $-\vecv$ for $\vecv$, 
writing $\tlambda$ for the corresponding probability measure on $\US$,
we conclude that \eqref{krelPROP1PF2} equals
\begin{align}\notag
\lim_{\rho\to0}\rho^{d(d-1)}\sum_{\vecq\in\scrP}
\int_{\UB}\int_{\fw_{\vecq,\rho}^{\vecu}}
\tf\bigl((\rho^{d-1}\vecq,\vs(\vecq)),\:\vecu,\:\vecv,\:\rho^{d-1}\tau_1(\vecq_{\rho,\vecu}(\vecv),\vecv;\rho),\:
\hspace{40pt}
\\\label{krelPROP1PF3}
\vecomega_1(\vecq_{\rho,\vecu}(\vecv),\vecv;\rho)\bigr)
\,d\tlambda(\vecv)\,d\vecu,
\end{align}
where
\begin{align*}
\tf\Bigl((\vecx,{\vs}),\:\vecu,\:\vecv,\:\xi,\:(\vecw,{\vs}')\Bigr)
:=f\Bigl((\vecx+\xi\vecv,{\vs}'),\:\vecw{K}_\vecv,\:-\vecv,\:\xi,\:(\vecu{K}_\vecv,{\vs})\Bigr).
\end{align*}
Clearly $\tf\in\C_c(\scrX\times\UB\times\US\times\R_{>0}\times\Omega)$;
hence Corollary \ref{Thm2genCOR} applies,
and we conclude that \eqref{krelPROP1PF3} equals
\begin{align*}
c_\scrP\int_\scrX\int_{\UB}\int_{\US}\int_0^\infty\int_\Omega
\tf(\vecp,\vecu,\vecv,\xi,\vecomega)\, k((\vecu,{\vs}(\vecp)),\xi,\vecomega)
\hspace{70pt}
\\
\times d\mu_{\Omega}(\vecomega)\,d\xi\,d\tlambda(\vecv)\,d\vecu\,d\mu_\scrX(\vecp),
\end{align*}
and integrating out the $\R^d$-component of $\vecp$ and 
changing variables appropriately, we get
\begin{align}\label{krelPROP1PF4}
c_\scrP v_{d-1}\int_{\Omega}\int_{\US}\int_0^\infty\int_\Omega
f_0(\vecomega',\vecv,\xi,\vecomega)\, k(\vecomega K_\vecv,\xi,\vecomega'K_\vecv)
\,d\mu_{\Omega}(\vecomega)  %
\,d\xi\,d\lambda(\vecv)\,d\mu_\Omega(\vecomega').
\end{align}
Finally, note that for each $\vecv\in\US$
we have $K_\vecv=\smatr{-1}00{K_\vecv'}$ for some $K_\vecv'\in\O(d-1)$ with $\det K_\vecv'=-1$;
thus $R^{-1}K_\vecv'\in\SO(d-1)$, where $R$ is fixed as in the statement of the proposition.
Also by our identification of $\R^{d-1}\times\Sigma$ with $\{0\}\times\R^{d-1}\times\Sigma$ %
we have $\vecomega K_\vecv=\vecomega K_\vecv'$ for all $\vecomega\in\Omega$.
Hence by Lemma \ref{SOdm1symLEM},
for all $\vecv$ and $\vecomega'$ we have
$k(\vecomega K_\vecv,\xi,\vecomega'K_\vecv)=k(\vecomega R,\xi,\vecomega'R)$ for almost all
$(\xi,\vecomega')$.
Hence \eqref{krelPROP1PF4} equals
\begin{align*}
c_\scrP v_{d-1}\int_{\Omega}\int_{\US}\int_0^\infty\int_\Omega
f_0(\vecomega',\vecv,\xi,\vecomega)\, k(\vecomega R,\xi,\vecomega'R)
\,d\mu_{\Omega}(\vecomega)  %
\,d\xi\,d\lambda(\vecv)\,d\mu_\Omega(\vecomega').
\end{align*}

Summing up, using also an obvious surjectivity property of the map $f\mapsto f_0$,
we have proved that the last expression equals the expression in the right hand side of \eqref{krelPROP1PF1},
for every $f_0\in\C_c(\Omega\times\US\times\R_{>0}\times\Omega)$.
The proposition is an immediate consequence of this fact.
\end{proof}

\subsection{Expressions in terms of Palm distributions}
\label{TKPALMsec}

We will now show that if $\Xi_\vs$ has constant intensity measure $\EE\,\Xi_\vs=c_\scrP\mu_\scrX$
(as is true in all of the examples which we consider in Section~\ref{EXAMPLESsec}),
then the transition kernel $k$ can be given explicitly in terms of the Palm distributions of $\Xi_{\vs}$.

For any $\vs\in\Sigma$ such that $\EE\,\Xi_\vs=c_\scrP\mu_\scrX$,
we let $\nu_{\vs}$ be a version of the Palm distributions of $\Xi_{\vs}$.
Recall that this means that 
$\nu_{\vs}$ is a function $\scrX\times\scrN\to[0,1]$,
where $\scrN$ is the Borel $\sigma$-algebra of $N_s(\scrX)$,
such that 
$\nu_{\vs}(\vecx,{A})$ is Borel measurable in $\vecx\in\scrX$ for each ${A}\in\scrN$,
is a probability measure in ${A}\in\scrN$ for each $\vecx\in\scrX$,
and for any Borel sets $B\subset\scrX$ and ${A}\in\scrN$ one has
\begin{align}\label{PALMdefrel}
\int_{{A}}\#({Y}\cap B)\,d\mu_{\vs}({Y})=c_\scrP\int_B\nu_{\vs}(\vecy,{A})\,d\mu_\scrX(\vecy).
\end{align}
Cf.\ \cite[Ch.\ 10]{kallenberg86}.

\begin{prop}\label{BASICkidpropG}
Let $\vecomega'=(\vecx',{\vs}')\in\scrX_\perp$ be given,
and assume that 
$\Xi_{\vs'}$ has constant intensity measure $\EE\,\Xi_{\vs'}=c_\scrP\mu_\scrX$.
Then the relation 
\begin{align}\label{transkerdefG}
k(\vecomega',\xi,\vecomega)
=v_{d-1}c_\scrP %
\,\nu_{{\vs}'}\bigl((\xi,\vecx'-\vecx,{\vs}),\:
\bigl\{{Y}\in N_s(\scrX)\col
{Y}\cap(\fZ_\xi+\vecx')=\emptyset\bigr\}\bigr)
\end{align}
holds for almost every $(\xi,\vecomega)=(\xi,(\vecx,\vs))\in\R_{>0}\times\Omega$
with respect to the measure $d\xi\,d\mu_\Omega(\vecomega)$.
\end{prop}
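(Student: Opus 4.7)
The plan is to verify the identity by showing that, for any Borel set $B\subset\R_{>0}\times\Omega$, the integral of both sides of \eqref{transkerdefG} over $B$ against $d\xi\,d\mu_\Omega(\vecomega)$ agrees, and then invoke uniqueness of the density. By the definition of $k$ and $\kappa$ in Section~\ref{TRANSKERsec}, together with $\iota^2=\id$,
\begin{align*}
\int_B k(\vecomega',\xi,\vecomega)\,d\xi\,d\mu_\Omega(\vecomega)
=\kappa(\vecomega',B)
=\mu_{\vs'}\bigl(\{Y\in N_s(\scrX)\col \vecz(Y-\vecx')\in\iota(B)\}\bigr).
\end{align*}

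For $\mu_{\vs'}$-a.e.\ $Y$, $\vecz(Y-\vecx')$ is either undefined or a single point of $(Y-\vecx')\cap(\fZ_\infty\times\Sigma)$. I would therefore rewrite the right-hand side as an expected sum over the points of $\Xi_{\vs'}$ and then apply the Campbell-Palm relation \eqref{PALMdefrel} (permissible because $\EE\,\Xi_{\vs'}=c_\scrP\mu_\scrX$) to obtain
\begin{align*}
\mu_{\vs'}\bigl(\{Y\col \vecz(Y-\vecx')\in\iota(B)\}\bigr)
=c_\scrP\int_\scrX I\bigl(\vecp-\vecx'\in\iota(B)\bigr)\,\nu_{\vs'}\bigl(\vecp,\{Y\col \vecz(Y-\vecx')=\vecp-\vecx'\}\bigr)\,d\mu_\scrX(\vecp).
\end{align*}

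The key step will be to identify the event $\{Y\col \vecz(Y-\vecx')=\vecp-\vecx'\}$ under the Palm measure $\nu_{\vs'}(\vecp,\cdot)$. Writing $\vecp-\vecx'=(\xi,\vecb,\vs)$ with $\xi>0$ and $\vecb\in\UB$, and using that $\nu_{\vs'}(\vecp,\cdot)$-a.s.\ one has $\vecp\in Y$, the condition $\vecz(Y-\vecx')=\vecp-\vecx'$ is equivalent, up to a $\nu_{\vs'}(\vecp,\cdot)$-null set, to the emptiness $Y\cap(\fZ_\xi+\vecx')=\emptyset$. The ``no tie'' part of this equivalence --- that no other point of $Y$ shares the first coordinate $\xi$ --- holds by [Q2] transferred to the Palm distribution via another use of \eqref{PALMdefrel}: applying that formula to the function that counts points whose first coordinate is matched by another point of $Y$ shows $\nu_{\vs'}(\vecp,\cdot)$ concentrates on configurations without coincident first coordinates, for $\mu_\scrX$-a.e.\ $\vecp$. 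The ``minimality'' part is exactly the emptiness of the open cylinder $\fZ_\xi+\vecx'$.

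Finally, I would change variables $\vecp=(\xi,\vecb+\vecx',\vs)$, giving $d\mu_\scrX(\vecp)=d\xi\,d\vecb\,d\mm(\vs)=v_{d-1}\,d\xi\,d\mu_\Omega(\vecb,\vs)$, and substitute $\vecb=-\vecx$ so that the condition $\vecp-\vecx'\in\iota(B)$ becomes $(\xi,\vecx,\vs)\in B$, with $\vecb+\vecx'=\vecx'-\vecx$. This yields
\begin{align*}
\kappa(\vecomega',B)
=v_{d-1}c_\scrP\int_B \nu_{\vs'}\bigl((\xi,\vecx'-\vecx,\vs),\{Y\col Y\cap(\fZ_\xi+\vecx')=\emptyset\}\bigr)\,d\xi\,d\mu_\Omega(\vecomega),
\end{align*}
and the claim follows by uniqueness of the density of $\kappa(\vecomega',\cdot)$ with respect to $d\xi\,d\mu_\Omega$. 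The only subtlety is the Palm-theoretic transfer of [Q2] and of the relation $\vecp\in Y$ described above; these are the standard properties for Palm distributions of simple point processes, obtained via the Campbell formula.
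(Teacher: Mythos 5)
Your proof is correct, and it runs along the same backbone as the paper's: both invoke the Campbell--Mecke formula \eqref{BASICkidpropGnewpf1} over the region $U=\iota(B)+\vecx'$, identify the left-hand expectation with $\kappa(\vecomega',B)$, and read off the density from the Palm-side integral. The one structural difference is the choice of test function $f(\vecy,Y)$. You take $f(\vecy,Y)=I\bigl(\vecz(Y-\vecx')=\vecy-\vecx'\bigr)$, which makes the left-side identification immediate but then forces you to reinterpret the event $\{\vecz(Y-\vecx')=\vecp-\vecx'\}$ under the Palm measure $\nu_{\vs'}(\vecp,\cdot)$; this requires two Palm-theoretic facts, namely that $\vecp\in Y$ a.s.\ under $\nu_{\vs'}(\vecp,\cdot)$ for $\mu_\scrX$-a.e.\ $\vecp$, and that the ``no coincident first coordinate'' property [Q2] transfers to the Palm distributions. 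Both of these do follow by the route you indicate (apply Campbell to the indicators $I(\vecy\notin Y)$ and $I(\exists\,\vecy'\in Y\setminus\{\vecy\}\col \vecy'\cdot\vece_1=\vecy\cdot\vece_1)$ respectively, and use that the $\mu_{\vs'}$-side vanishes), so the argument is sound. The paper instead takes $f(\vecy,Y)=I\bigl((Y-\vecx')\cap\fZ_{y_1}=\emptyset\bigr)$ directly. This is the indicator appearing in the final formula, so after verifying that the left-hand sum equals $I(\vecz(Y-\vecx')\in\iota(B))$ for a.e.\ $Y$ (which uses only Lemma \ref{BASICkidpreplemG}), there is nothing left to do on the Palm side beyond a change of variables. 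The paper's choice thus avoids the two auxiliary Palm-side lemmas entirely, which is the economy worth noting.
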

\begin{proof}
By \cite[Lemma 10.1]{kallenberg86},
and using our assumption that $\EE\,\Xi_{{\vs}'}=c_\scrP\mu_\scrX$,
we have
for any Borel measurable function $f:\scrX\times N_s(\scrX)\to\R_{\geq0}$,
and any Borel subset $U\subset\scrX$,
\begin{align}\label{BASICkidpropGnewpf1}
\int_{N_s(\scrX)} \sum_{\vecy\in U\cap Y}f(\vecy,Y)\, d\mu_{{\vs}'}(Y)
=c_\scrP\int_U\int_{N_s(\scrX)}f(\vecy,Y)\,\nu_{{\vs}'}(\vecy,dY)\,d\mu_\scrX(\vecy).
\end{align}
We apply this relation with 
$U=\iota(B)+\vecx'$ for a given Borel set $B\subset\R_{>0}\times\Omega$,
and $f$ as the indicator function
\begin{align*}
f(\vecy,Y):=I((Y-\vecx')\cap\fZ_{y_1}=\emptyset)
\qquad\text{(where $y_1=\vecy\cdot\vece_1$)}.
\end{align*}
Then the integrand in the left hand side of \eqref{BASICkidpropGnewpf1}
equals $I(\vecz(Y-\vecx')\in \iota(B))$
for each $Y$ with $\vecz(Y-\vecx')\neq\undef$.
In the right hand side of \eqref{BASICkidpropGnewpf1}
we substitute $\vecy=(\xi,\vecx'-\vecx,{\vs})\in\R\times\R^{d-1}\times\Sigma$.
Using \eqref{kappadef} and Lemma \ref{BASICkidpreplemG} we then get
\begin{align*}
\kappa((\vecx',{\vs}');B)
=c_\scrP v_{d-1}\int_{B}\nu_{{\vs}'}((\xi,\vecx'-\vecx,{\vs}),
\{Y\in N_s(\scrX)\col (Y-\vecx')\cap\fZ_{y_1}=\emptyset\})
\hspace{10pt}
\\
\times d\xi\,d\mu_{\Omega}(\vecx,{\vs}).
\end{align*}
Using the fact that this holds for every Borel set $B\subset\R_{>0}\times\Omega$,
and comparing with the definition of $k(\vecomega',\xi,\vecomega)$, %
we obtain \eqref{transkerdefG}.
\end{proof}
The same technique also leads to the following formula for 
the ``generic'' transition kernel $k^{\g}$:
\begin{prop}\label{BASICkidpropgen}
For almost every $(\xi,(\vecx,\vs))\in\R_{>0}\times\Omega$,
\begin{align*}%
k^{\g}(\xi,(\vecx,\vs))
=v_{d-1}c_\scrP\mu_\vs\bigl(\{Y\col Y\cap(\fZ_\xi-(\xi,-\vecx))=\emptyset\}\bigr).
\end{align*}
\end{prop}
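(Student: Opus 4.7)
The proof parallels that of Proposition~\ref{BASICkidpropG}, but with the microscopic Palm formula for $\Xi_{\vs'}$ replaced by its macroscopic counterpart established in Proposition~\ref{GENPALMprop}: since the intensity of $\mu$ equals $c_\scrP\mu_\scrX$ (taking $A=N_s(\scrX)$ in that proposition), a standard monotone class argument upgrades Proposition~\ref{GENPALMprop} to the disintegration identity
\begin{align*}
\int_{N_s(\scrX)}\sum_{(\vecx,\vs)\in U\cap Y}f((\vecx,\vs),Y)\,d\mu(Y)
=c_\scrP\int_U\!\!\int_{N_s(\scrX)}\!\!f((\vecx,\vs),Y)\,d\omu_\vs^{(\vecx)}(Y)\,d\vecx\,d\mm(\vs)
\end{align*}
for every nonnegative Borel $f:\scrX\times N_s(\scrX)\to\R_{\geq0}$ and every Borel $U\subset\scrX$. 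This is the sole nontrivial input; everything else is a change of variables and bookkeeping.

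The plan is then to apply this identity with $U=\fZ_\infty\times\Sigma$ and, for a given Borel set $V\subset\R_{>0}\times\Omega$,
\begin{align*}
f((\vecx,\vs),Y)=I((\vecx,\vs)\in\iota V)\,I(Y\cap\fZ_{x_1}=\emptyset), \qquad x_1=\vecx\cdot\vece_1.
\end{align*}
By the very definition of $\vecz$, the inner sum collapses: for every $Y$ with $\vecz(Y)\neq\undef$ it equals $I(\iota\vecz(Y)\in V)$, since the unique minimizer in $Y\cap\fZ_\infty$ is $\vecz(Y)$ and it is the only point of $Y\cap U$ with $Y\cap\fZ_{x_1}=\emptyset$. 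By Lemma~\ref{BASICkidpreplemgen}, $\vecz(Y)\neq\undef$ holds $\mu$-almost surely, so the left-hand side reduces to $\mu(\iota\vecz(Y)\in V)=\kappa^{\g}(V)$.

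For the right-hand side, I parametrize $\iota V$ by $(\xi,-\vecb,\vs)$ with $(\xi,\vecb,\vs)\in V$; the reflection $\vecb\mapsto-\vecb$ preserves Lebesgue measure on $\scrB_1^{d-1}$, and on $\fZ_\infty\times\Sigma$ one has $d\vecx\,d\mm(\vs)=v_{d-1}\,d\xi\,d\mu_\Omega$. This yields
\begin{align*}
\kappa^{\g}(V)=v_{d-1}c_\scrP\int_V\omu_\vs^{(\xi,-\vecb)}\bigl(\{Y\col Y\cap\fZ_\xi=\emptyset\}\bigr)\,d\xi\,d\mu_\Omega(\vecb,\vs).
\end{align*}
Since $V$ is arbitrary, the definition of $k^{\g}$ forces
$k^{\g}(\xi,\vecb,\vs)=v_{d-1}c_\scrP\,\omu_\vs^{(\xi,-\vecb)}(\{Y\col Y\cap\fZ_\xi=\emptyset\})$ for a.e.\ $(\xi,\vecb,\vs)$.

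Finally, I unfold $\omu_\vs^{(\xi,-\vecb)}$ by its definition. With $\vecy'=(\xi,-\vecb)$, the added atom $(\vecy',\vs)$ in $Y_{(\vecy',\vs)}=(Y+\vecy')\cup\{(\vecy',\vs)\}$ lies outside $\fZ_\xi$ since its first coordinate equals $\xi\notin(0,\xi)$; hence $Y_{(\vecy',\vs)}\cap\fZ_\xi=\emptyset$ is equivalent to $(Y+\vecy')\cap\fZ_\xi=\emptyset$, i.e.\ $Y\cap(\fZ_\xi-(\xi,-\vecb))=\emptyset$. This gives exactly the claimed formula. The only subtle point is tracking the $\iota$-reflection so as to correctly convert the minimizer in $\fZ_\infty$ into the target variable $\vecomega\in\Omega$; once this is done, the argument is routine.
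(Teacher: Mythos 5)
Your proof is correct and follows essentially the same route as the paper: identify the Palm distributions of $\Xi$ via Proposition~\ref{GENPALMprop} as $\nu((\vecx,\vs),\cdot)=\omu_\vs^{(\vecx)}$, then apply the refined Campbell/disintegration identity with the same test function used in Proposition~\ref{BASICkidpropG}. The paper compresses all of this into the phrase ``by the same argument as in the proof of Proposition~\ref{BASICkidpropG}''; your write-up simply spells out those steps explicitly.
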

\begin{proof}
By Proposition \ref{GENPALMprop},
$\Xi$ has constant intensity $c_\scrP\mu_\scrX$,
and a version $\nu:\scrX\times\scrN\to[0,1]$ of the Palm distributions of $\Xi$ are given by
$\nu((\vecx,\vs),A)=\omu_\vs^{(\vecx)}(A)$.
Hence by the same argument as in the proof of Proposition \ref{BASICkidpropG} we get
\begin{align*}
k^{\g}(\xi,(\vecx,\vs))
&=v_{d-1}c_\scrP\,\nu\bigl((\xi,-\vecx,\vs),\{Y\col Y\cap\fZ_\xi=\emptyset\}\bigr)
\\
&=v_{d-1}c_\scrP\,\omu_\vs^{(\xi,-\vecx)}\bigl(\{Y\col Y\cap\fZ_\xi=\emptyset\}\bigr)
\\
&=v_{d-1}c_\scrP\,\mu_\vs\bigl(\{Y\col Y\cap(\fZ_\xi-(\xi,-\vecx))=\emptyset\}\bigr).
\end{align*}
\end{proof}
\begin{prop}\label{kgfromkprop}
For almost every $(\xi,\vecomega)\in\R_{>0}\times\Omega$,
\begin{align*}%
k^{\g}(\xi,\vecomega)=v_{d-1}c_\scrP\int_\xi^\infty\int_\Omega k(\vecomega',\xi',\vecomega)\,d\mu_\Omega(\vecomega')\,d\xi'.
\end{align*}
\end{prop}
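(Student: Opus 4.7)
Plan: I would start from the explicit formula for $k^{\g}$ provided by Proposition \ref{BASICkidpropgen},
\[
k^{\g}(\xi,(\vecx,\vs)) = v_{d-1}c_\scrP\,\mu_\vs\bigl(\{Y\col Y\cap((-\xi,0)\times(\UB+\vecx))=\emptyset\}\bigr),
\]
and transform the cylinder on the right so that the resulting event becomes the tail-event naturally encoded by $k$. Concretely, I would pick $K\in\SO(d)$ with $\vece_1 K=-\vece_1$ as in Lemma \ref{ODM1NDlem}; necessarily $K=\diag(-1)\oplus R_K$ with $R_K\in\O(d-1)$, $\det R_K=-1$, and $R_K$ preserves $\UB$. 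Applying the $K$-invariance of $\mu_\vs$ flips the sign of the first coordinate of the cylinder and sends $\vecx$ to $\vecx R_K$, giving
\[
\mu_\vs\bigl(\{Y\col Y\cap((-\xi,0)\times(\UB+\vecx))=\emptyset\}\bigr)
=\mu_\vs\bigl(\{Y\col Y\cap((0,\xi)\times(\UB+\vecx R_K))=\emptyset\}\bigr).
\]

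The event on the right is precisely $\{\iota(\vecz(\Xi_\vs-\vecx R_K))\in(\xi,\infty)\times\Omega\}$ (using Lemma \ref{BASICkidpreplemG} to discard the null event $\vecz=\undef$), and by the very definition of $\kappa$ and $k$ in Section \ref{TRANSKERsec} its probability equals $\int_\xi^\infty\int_\Omega k((\vecx R_K,\vs),\xi',\vecomega)\,d\mu_\Omega(\vecomega)\,d\xi'$. To rewrite this so that the first argument of $k$ is the integration variable and the third argument equals the given $\vecomega=(\vecx,\vs)$, I would invoke the time-reversal identity of Proposition \ref{krelPROP1} with $R=R_K^{-1}$ (which again has determinant $-1$): for a.e.\ $(\xi',\vecomega)$,
\[
k((\vecx R_K,\vs),\xi',\vecomega) = k(\vecomega R_K^{-1},\xi',(\vecx,\vs)).
\]
The substitution $\vecomega'=\vecomega R_K^{-1}$, justified by the $\O(d-1)$-invariance of $\mu_\Omega$ (Lebesgue on $\R^{d-1}$ and the trivial action on $\Sigma$), then converts the integral into $\int_\Omega k(\vecomega',\xi',(\vecx,\vs))\,d\mu_\Omega(\vecomega')$, and combining everything produces the claimed identity for a.e.\ $(\xi,\vecomega)\in\R_{>0}\times\Omega$.

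The main point to watch is the compatibility of determinants between the two invariances used: Lemma \ref{ODM1NDlem} only furnishes an $\SO(d)$-invariance, which automatically forces the perpendicular block $R_K$ to lie in $\O(d-1)\setminus\SO(d-1)$---exactly the hypothesis permitting Proposition \ref{krelPROP1} to swap the first and third arguments of $k$. Thus the $K$-invariance and the time-reversal symmetry are exactly complementary, and their interplay is what makes the formula possible; the remainder is bookkeeping, namely checking that $(\vecx R_K,\vs)\in\Omega$, that the a.e.\ exceptional sets in Proposition \ref{krelPROP1} remain negligible after integration, and the routine change of variables under $\mu_\Omega$.
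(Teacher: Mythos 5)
Your proof is correct and takes essentially the same route as the paper: both start from Proposition \ref{BASICkidpropgen}, apply the $K$-invariance of Lemma \ref{ODM1NDlem} to flip the cylinder, invoke the definition of $k$, and then combine Proposition \ref{krelPROP1} with $\O(d-1)$-invariance of $\mu_\Omega$ to land on the stated formula. You simply perform the time-reversal symmetry before the change of variables in $\Omega$ rather than after, and your observation that the $\SO(d)$-invariance with $\vece_1 K = -\vece_1$ forces $\det R_K = -1$ is exactly the determinant bookkeeping the paper relies on.
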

\begin{proof}
Fix $R\in\O(d-1)$ with $\det R=-1$,
and set $K=\smatr{-1}00R\in\SO(d)$.
We then note that $(\fZ_\xi-(\xi,-\vecx))K=%
\fZ_\xi+\vecx R$,
and hence by Proposition \ref{BASICkidpropgen} and Lemma~\ref{ODM1NDlem},
for almost every $(\xi,(\vecx,\vs))\in\R_{>0}\times\Omega$ we have
\begin{equation}\label{lastline}
\begin{split}
k^{\g}(\xi,(\vecx,\vs)) 
&=v_{d-1}c_\scrP\,\mu_\vs\bigl(\{Y\col (Y-\vecx R)\cap\fZ_\xi=\emptyset\}\bigr)
\\
&=v_{d-1}c_\scrP\,\int_\xi^\infty\int_\Omega k\bigl((\vecx R,\vs),\xi',\vecomega'\bigr)\,d\mu_\Omega(\vecomega')\,d\xi'
\\
&=v_{d-1}c_\scrP\,\int_\xi^\infty\int_\Omega k\bigl((\vecx,\vs)R,\xi',\vecomega'R\bigr)\,d\mu_\Omega(\vecomega')\,d\xi',
\end{split}
\end{equation}
where we used the definition of $k$,
and then used the fact that $\vecomega'\mapsto\vecomega'R$ is a diffeomorphism of $\Omega$ onto itself preserving the
measure $\mu_\Omega$.
Now the desired formula follows by also using Proposition \ref{krelPROP1}.
\end{proof}

The following corollary shows that
$\oxi=(v_{d-1}c_\scrP)^{-1}$
(cf.\ \eqref{OXIFORMULAG})
equals the mean free path length
for the particle dynamics in the Boltzmann-Grad limit.
\begin{cor}
$$\int_\Omega\int_0^\infty\int_\Omega \xi \, k(\vecomega',\xi,\vecomega)\,d\mu_\Omega(\vecomega')\,d\xi\,d\mu_\Omega(\vecomega)
{\displaystyle =\frac1{v_{d-1}c_\scrP}}
=\oxi .$$
\end{cor}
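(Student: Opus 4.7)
The plan is to reduce the statement to Proposition \ref{kgfromkprop} together with the fact that $k^{\g}$ is a probability density on $\R_{>0}\times\Omega$ (with respect to $d\xi\,d\mu_\Omega$), which was noted just after \eqref{kgdef}.

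First I would define, for fixed $\vecomega\in\Omega$, the function
\begin{equation*}
g(\xi,\vecomega) := \int_\Omega k(\vecomega',\xi,\vecomega)\,d\mu_\Omega(\vecomega'),
\end{equation*}
and rewrite the triple integral by Fubini as
\begin{equation*}
\int_\Omega\int_0^\infty \xi\, g(\xi,\vecomega)\,d\xi\,d\mu_\Omega(\vecomega).
\end{equation*}
By Proposition \ref{kgfromkprop}, for $\mu_\Omega$-a.e.\ $\vecomega$ we have
\begin{equation*}
k^{\g}(\xi,\vecomega)=v_{d-1}c_\scrP\int_\xi^\infty g(\xi',\vecomega)\,d\xi'.
\end{equation*}
Integrating over $\xi\in(0,\infty)$ and applying Fubini to swap the order of integration in the $(\xi,\xi')$-plane on the region $\{0<\xi<\xi'\}$ gives
\begin{equation*}
\int_0^\infty k^{\g}(\xi,\vecomega)\,d\xi=v_{d-1}c_\scrP\int_0^\infty \xi'\, g(\xi',\vecomega)\,d\xi'.
\end{equation*}

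Next I would integrate both sides over $\vecomega\in\Omega$ with respect to $\mu_\Omega$. On the right-hand side this gives exactly $v_{d-1}c_\scrP$ times the triple integral we wish to compute. On the left-hand side, since $k^{\g}$ is a probability density on $\R_{>0}\times\Omega$ with respect to $d\xi\,d\mu_\Omega$, we get $1$. Dividing by $v_{d-1}c_\scrP$ and recalling \eqref{OXIFORMULAG} yields the claimed identity.

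There is no real obstacle here: the proof is a straightforward application of Fubini's theorem together with the two facts used above. The only thing to check is that all integrands are nonnegative (so Fubini applies without integrability worries), which is immediate from $k,k^{\g}\geq 0$.
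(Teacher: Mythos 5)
Your proposal is correct and follows essentially the same route as the paper: the paper's proof likewise starts from $\int_0^\infty\int_\Omega k^{\g}(\xi,\vecomega)\,d\mu_\Omega(\vecomega)\,d\xi=\kappa^{\g}(\R_{>0}\times\Omega)=1$ and substitutes the formula of Proposition \ref{kgfromkprop}, the interchange of the $\xi,\xi'$ integrations being the same (implicit) Fubini step you spell out. No gaps.
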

\begin{proof}
We have
$\int_0^\infty\int_\Omega k^{\g}(\xi,\vecomega)\,d\mu_\Omega(\vecomega)\,d\xi=
\kappa^g(\R_{>0}\times\Omega)=1$.
Substituting the formula from Proposition \ref{kgfromkprop} into this relation,
the corollary follows.
\end{proof}

\begin{cor}\label{kgfromkCORRO}
There is a representative of $k^{\g}$ which is continuous on all $\R_{>0}\times\Omega$ and which satisfies
\begin{align*}
k^{\g}(0,\vecomega) := \lim_{\xi\to 0} k^{\g}(\xi,\vecomega)= v_{d-1}c_\scrP,
\qquad\forall\vecomega\in\Omega.
\end{align*}
\end{cor}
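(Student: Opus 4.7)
The plan is to write down an explicit representative of $k^\g$ using the formula of Proposition \ref{BASICkidpropgen}, and then verify continuity on $\R_{>0}\times\Omega$ and the boundary value at $\xi=0$ by direct measure-theoretic estimates.

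\textbf{Step 1: Definition of the representative.} For $\xi\geq 0$ and $(\vecx,\vs)\in\Omega$, set
\begin{align*}
A_{\xi,\vecx}:=\bigl(\fZ_\xi-(\xi,-\vecx)\bigr)\times\Sigma=(-\xi,0)\times(\vecx+\scrB_1^{d-1})\times\Sigma\subset\scrX,
\end{align*}
and define
\begin{align*}
\tilde{k}^\g(\xi,(\vecx,\vs)):=v_{d-1}c_\scrP\,\mu_\vs\bigl(\bigl\{Y\in N_s(\scrX)\col Y\cap A_{\xi,\vecx}=\emptyset\bigr\}\bigr).
\end{align*}
By Proposition \ref{BASICkidpropgen}, $\tilde{k}^\g$ agrees with $k^\g$ almost everywhere on $\R_{>0}\times\Omega$ with respect to $d\xi\,d\mu_\Omega(\vecomega)$.

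\textbf{Step 2: Continuity on $\R_{>0}\times\Omega$.} Suppose $(\xi_n,\vecx_n,\vs_n)\to(\xi,\vecx,\vs)$ in $\R_{>0}\times\Omega$. By our standing assumption that $\vs\mapsto\mu_\vs$ is continuous, we have $\mu_{\vs_n}\xrightarrow[]{\textup{ w }}\mu_\vs$. Let $F(Y):=I(Y\cap A_{\xi,\vecx}=\emptyset)$ and $F_n(Y):=I(Y\cap A_{\xi_n,\vecx_n}=\emptyset)$. One checks that if $Y_n\to Y$ in $N_s(\scrX)$ and $Y\cap\partial A_{\xi,\vecx}=\emptyset$, then $F_n(Y_n)\to F(Y)$ (using that an open region containing $Y\cap\overline{A_{\xi,\vecx}}$ eventually contains $A_{\xi_n,\vecx_n}$ from inside and is contained in $A_{\xi_n,\vecx_n}$ from outside in the relevant sense). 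By Lemma \ref{GROUNDINTENSITYlem2} together with \eqref{SIGMAeqp},
\begin{align*}
\mu_\vs\bigl(\{Y\col Y\cap\partial A_{\xi,\vecx}\neq\emptyset\}\bigr)\leq c_\scrP\mu_\scrX(\partial A_{\xi,\vecx})=0,
\end{align*}
since $\vol_{\R^d}(\partial A_{\xi,\vecx})=0$. Hence the generalized continuous mapping theorem \cite[Thm.\ 4.27]{kallenberg02} (applied exactly as in the proof of Lemma \ref{ASS:KEYbblem2}) yields $\int F_n\,d\mu_{\vs_n}\to\int F\,d\mu_\vs$, i.e.\ $\tilde{k}^\g(\xi_n,(\vecx_n,\vs_n))\to\tilde{k}^\g(\xi,(\vecx,\vs))$.

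\textbf{Step 3: The limit at $\xi=0$.} Since the set $A_{\xi,\vecx}$ has $\mu_\scrX$-measure $\xi v_{d-1}$, Lemma \ref{GROUNDINTENSITYlem2} gives
\begin{align*}
\mu_\vs\bigl(\{Y\col Y\cap A_{\xi,\vecx}\neq\emptyset\}\bigr)\leq\int_{N_s(\scrX)}\#(Y\cap A_{\xi,\vecx})\,d\mu_\vs(Y)\leq c_\scrP v_{d-1}\xi,
\end{align*}
uniformly in $(\vecx,\vs)\in\Omega$. Therefore $\tilde{k}^\g(\xi,\vecomega)\to v_{d-1}c_\scrP$ as $\xi\to 0^+$, uniformly in $\vecomega\in\Omega$. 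Extending $\tilde{k}^\g(0,\vecomega):=v_{d-1}c_\scrP$ thus produces a continuous function on $\R_{\geq0}\times\Omega$, which is the desired representative.

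The main subtlety is Step 2, where one has to combine weak convergence of $\mu_{\vs_n}$ to $\mu_\vs$ with a varying region $A_{\xi_n,\vecx_n}$; this is handled by exactly the same device used earlier in Lemma \ref{ASS:KEYbblem2} and Lemma \ref{MUSXFTCONTlem}, so no genuinely new difficulty arises.
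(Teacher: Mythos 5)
Your proposal is correct, but it takes a somewhat different route from the paper. The paper fixes the representative via the tail-integral identity
$k^{\g}(\xi,\vecomega)=v_{d-1}c_\scrP\int_\xi^\infty\int_\Omega k(\vecomega R,\xi',\vecomega'R)\,d\mu_\Omega(\vecomega')\,d\xi'$
coming from Proposition \ref{kgfromkprop} (with $R\in\O(d-1)$, $\det R=-1$); continuity then follows from Lemma \ref{zXisigmmxcontLEM} together with the boundedness of $k$, and the limit at $\xi=0$ is immediate because $k(\vecomega R,\cdot,\cdot)$ is a probability density on $\R_{>0}\times\Omega$ and $\mu_\Omega$ is $\O(d-1)$-invariant. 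You instead stay with the void-probability formula of Proposition \ref{BASICkidpropgen}, take $\tilde k^{\g}(\xi,(\vecx,\vs))=v_{d-1}c_\scrP\,\mu_\vs(\{Y\col Y\cap A_{\xi,\vecx}=\emptyset\})$ as the representative, and reprove its joint continuity by the continuous mapping theorem, using the standing continuity of $\vs\mapsto\mu_\vs$ and the fact that $\mu_\vs$ charges $\{Y\col Y\cap\partial A_{\xi,\vecx}\neq\emptyset\}$ with probability zero (Lemma \ref{GROUNDINTENSITYlem2} plus \eqref{SIGMAeqp}); this is exactly the device of Lemmas \ref{ASS:KEYbblem2} and \ref{MUSXFTCONTlem}, so your Step 2, while not quotable from a single existing lemma, introduces no new difficulty. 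The two constructions produce the same continuous function (two continuous functions agreeing $d\xi\,d\mu_\Omega$-a.e.\ on $\R_{>0}\times\Omega$ coincide), so the difference is in which intermediate result carries the weight: the paper leans on the already-established relation between $k^{\g}$ and $k$ and gets continuity and the normalization essentially for free, whereas your argument is more self-contained at the level of the point process and has the minor advantage of yielding the boundary value with an explicit uniform rate, $|\tilde k^{\g}(\xi,\vecomega)-v_{d-1}c_\scrP|\leq (v_{d-1}c_\scrP)^2\,\xi$ for all $\vecomega\in\Omega$.
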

\begin{proof}
Fix $R\in\O(d-1)$ with $\det R=-1$.
As in the proof of Proposition \ref{kgfromkprop} we have
\begin{align}\label{kgfromkCORROpf1}
k^{\g}(\xi,\vecomega)
=v_{d-1}c_\scrP\,\int_\xi^\infty\int_\Omega k\bigl(\vecomega R,\xi',\vecomega'R\bigr)\,d\mu_\Omega(\vecomega')\,d\xi'
\end{align}
for almost all $(\xi,\vecomega)\in\R_{>0}\times\Omega$.
Now fix the representative of $k^{\g}$ for which \eqref{kgfromkCORROpf1} holds for \textit{all} $(\xi,\vecomega)$.
Then $k^{\g}$ is continuous,
as follows from Lemma \ref{zXisigmmxcontLEM} and the boundedness of $k$ (cf.\ \eqref{kdef}).
Using the boundedness of $k$ we also obtain
\begin{align*}
\lim_{\xi\to 0} k^{\g}(\xi,\vecomega)= 
v_{d-1}c_\scrP \,\int_{\R_{>0}}\int_\Omega k\bigl(\vecomega R,\xi',\vecomega'R\bigr)\,d\mu_\Omega(\vecomega')\,d\xi'
=v_{d-1}c_\scrP,
\end{align*}
where the last equality holds by the $\O(d-1)$-invariance of $\mu_{\Omega}$
and since $k(\vecomega R,\cdot,\cdot)$ is a probability density.
\end{proof}

\section{Scattering maps}
\label{SCATTERINGMAPS}

We now describe
the general scattering process which we will allow
in the statement of our main results on
the limit distribution of the
sequence $\tau_1,\vecv_1,\tau_2,\vecv_2,\ldots$
(cf.\ Theorems \ref{MAINTECHNTHM2A} and \ref{MAINTECHNthm2G} below).
As in Section \ref{sec:classical},
the scattering process is defined by a map $\Psi:\scrS_-\to\scrS_+$,
where
\begin{align*}
\scrS_-:=\{(\vecv,\vecb)\in\US\times\US\col\vecv\cdot\vecb<0\}
\end{align*}
and 
\begin{align*}
\scrS_+:=\{(\vecv,\vecb)\in\US\times\US\col\vecv\cdot\vecb>0\}
\end{align*}
are the sets of incoming and outgoing data, respectively.
\label{PSIdef}
We write $\Psi_1(\vecv,\vecb)\in\US$ and $\Psi_2(\vecv,\vecb)\in\US$
for the projection of $\Psi(\vecv,\vecb)$ onto the first and second component, respectively.
We assume that $\Psi$ satisfies the following conditions:\footnote{These conditions correspond to assumptions (i), (ii), (iv) in
\cite[Section 2.2]{partII};
the fourth assumption is not required in the present paper, cf.\ \eqref{Psi1vmv}.}
\begin{enumerate}[{\rm (i)}]
	\item $\Psi$ is \textit{spherically symmetric}, i.e., 
$\Psi_j(\vecv,\vecb)K=\Psi_j(\vecv K,\vecb K)$ for all $K\in \O(d)$, $j=1,2$;
	\item $\Psi_1(\vecv,\vecb)$ and $\Psi_2(\vecv,\vecb)$
are contained in the linear subspace spanned by $\vecv$ and $\vecb$;
	\item $\Psi:\scrS_-\to\scrS_+$ is $\C^1$ and 
for each fixed $\vecv\in\S_1^{d-1}$ the map $\Psi_1(\vecv,\cdot)$
is a $\C^1$ diffeomorphism from $\{\vecb\in\S_1^{d-1}\col\vecv\cdot\vecb<0\}$
onto some open subset of $\S_1^{d-1}$.
\end{enumerate}
As we will explain in Section~\ref{AppA1},
the above conditions are satisfied for many standard choices of
scatterers described by a Hamiltonian flow with a compactly supported potential.

We introduce some further notation associated to the scattering map.
We will write $\varphi(\vecu,\vecv)\in [0,\pi]$ for the angle between\label{angledef}
any two vectors $\vecu,\vecv\in\R^d\setminus\{\bn\}$.
Using (ii) it follows that 
\begin{align}\label{Psi1vmv}
\Psi_1(\vecv,-\vecv)={s_\Psi}\cdot\vecv \qquad\text{for all }\:\vecv\in\US,
\end{align}
where the constant ${s_\Psi}$ is either $1$ or $-1$.
It then follows from (i) and (iii) that there exists a constant $B_\Psi\in[0,\pi]$
such that for each $\vecv\in\US$, the image of the diffeomorphism $\Psi_1(\vecv,\cdot)$ equals
\begin{align} \label{VPDEF}
\scrV_{\vecv}:=\bigl\{\vecu\in\S_1^{d-1}\col {s_\Psi}\cdot(B_\Psi-\varphi(\vecu,\vecv))>0\bigr\}.
\end{align}
Let us write $\vecbeta_\vecv^-:\scrV_\vecv\to\{\vecb\in\S_1^{d-1}\col\vecv\cdot\vecb<0\}$ for the inverse map.
\label{betavm}
Then $\vecbeta_\vecv^-$ is spherically symmetric in the sense that
$\vecbeta_{\vecv K}^-(\vecu K)=\vecbeta_{\vecv}^-(\vecu)K$ 
for all $\vecv\in\S_1^{d-1}$, $\vecu\in\scrV_\vecv$, $K\in\O(d)$,
and in particular $\vecbeta_\vecv^-(\vecu)$ is jointly $\C^1$ in $\vecv,\vecu$.
We also define
\begin{align}
\vecbeta^+_{\vecv}(\vecu)=\Psi_2(\vecv,\vecbeta^-_{\vecv}(\vecu))
\qquad (\vecv\in\S_1^{d-1},\:\vecu\in\scrV_\vecv).
\end{align}
The map $\vecbeta^+$ is also spherically symmetric and jointly $\C^1$ in 
$\vecv,\vecu$.
Note that for any given $\vecv,\vecv_+\in\S_1^{d-1}$, there exist
$\vecb,\vecb_+\in\S_1^{d-1}$ such that %
$\Psi(\vecv,\vecb)=(\vecv_+,\vecb_+)$ if and only if
$\vecv_+\in\scrV_{\vecv}$, and in this case $\vecb$ and $\vecb_+$
are uniquely determined, as 
$\vecb=\vecbeta^-_{\vecv}(\vecv_+)$
and $\vecb_+=\vecbeta^+_{\vecv}(\vecv_+)$.

\begin{remark} \label{PRESERVELIOUVILLEREMARK}
Denote by $R_{\{\vecv_2\}^\perp}\in \O(d)$ the orthogonal reflection in the 
hyperplane $\{\vecv_2\}^\perp\subset\R^d$.
If the scattering map $\Psi$ is a %
diffeomorphism from $\scrS_-$ onto $\scrS_+$ which carries the volume measure
$|\vecv\cdot\vecb|\,d\!\vol_{\S_1^{d-1}}(\vecv)\,d\!\vol_{\S_1^{d-1}}(\vecb)$
on $\scrS_-$ to 
$(\vecv\cdot\vecb)\,d\!\vol_{\S_1^{d-1}}(\vecv)\,d\!\vol_{\S_1^{d-1}}(\vecb)$
on $\scrS_+$, then 
\begin{align} \label{PLBETACRITERION}
\vecbeta_{\vecv_1}^+(\vecv_2)\equiv -\vecbeta_{\vecv_2}^-(\vecv_1)
\qquad \text{or} \qquad
\vecbeta_{\vecv_1}^+(\vecv_2)\equiv \vecbeta_{\vecv_2}^-(\vecv_1)R_{\{\vecv_2\}^\perp}.
\end{align}
The reverse implication is also true;
see \cite[Remark 2.3]{partII} for a detailed discussion.
\footnote{Note that $s_{\Psi}=-1$ in \cite{partII}; however 
\cite[Remark 2.3]{partII} applies verbatim also when $s_{\Psi}=1$,
with the only modification that ``$\vartheta_j(0)=0$'' is replaced by ``$\vartheta_j(0)=\pi$''.}
In physical terms, \eqref{PLBETACRITERION}
reflects the preservation of the angular momentum $\vecb\wedge\vecv$, or 
its reversal, respectively. The first alternative in \eqref{PLBETACRITERION} holds for specular reflection as well as potential scattering.
\end{remark}

It will be useful later to have a reformulation of 
the condition (iii) in terms of the \textit{deflection angle}
of the scattering map,
i.e.\ the angle between the incoming and outgoing velocities expressed as a function of the length of the impact parameter.
In precise terms, for a scattering map $\Psi$ satisfying conditions (i) and (ii),
the deflection angle is a function $\theta:[0,1)\to\R$
satisfying the formula
\begin{align}\label{PSI1formula}
&\Psi_1(\vecv,\vecb)=(\cos\theta(w))\vecv+\frac{\sin\theta(w)}w \vecw
\qquad ((\vecv,\vecb)\in\scrS_-),
\end{align}
where
\begin{align}\label{PSI1formulaadd}
\vecw:=\vecb-(\vecv\cdot\vecb)\vecv\in\{\vecv\}^\perp
\quad\text{ and }\quad
w=\|\vecw\|.
\end{align}
In the special case $\vecw=\bn$ ($\Leftrightarrow$ $\vecb=-\vecv$),
we require $\theta(0)\equiv 0$ (mod $\pi$),  %
and the right hand side of \eqref{PSI1formula} should be interpreted as $(\cos\theta(0))\vecv$.
Note that we do not require $\theta$ to take values in $[-\pi,\pi]$; 
in fact in the case of potential scattering the natural definition of $\theta$ is a function which
can take any value in $[-\infty,\pi]$, 
cf.\ \eqref{deflangle} below.

The following lemma gives an equivalent formulation 
in terms of the deflection angle of the ``$\Psi_1$-part'' of condition (iii).
\begin{lem}\label{DEFLANGLElem}
Given any continuous %
function $\theta:[0,1)\to\R$ with $\theta(0)=k\pi$ ($k\in\Z$),
the following two statements are equivalent:
\begin{enumerate}[{\rm (1)}]
\item The map $\Psi_1$ given by \eqref{PSI1formula} is $\C^1$ and for each fixed $\vecv\in\S_1^{d-1}$ the map $\Psi_1(\vecv,\cdot)$
is a $\C^1$ diffeomorphism from $\{\vecb\in\S_1^{d-1}\col\vecv\cdot\vecb<0\}$
onto some open subset of $\S_1^{d-1}$.
\item %
$\theta:[0,1)\to\R$ is $\C^1$,
and for all $w\in[0,1)$ we have
$\theta'(w)\neq0$ and $|\theta(w)-k\pi|<\pi$.
\end{enumerate}
\end{lem}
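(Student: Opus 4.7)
The plan is to parametrize the hemisphere $\{\vecb\in\S_1^{d-1}:\vecv\cdot\vecb<0\}$ (for fixed $\vecv$) by the orthogonal projection $\vecw = \vecb - (\vecv\cdot\vecb)\vecv$, which gives a $\C^\infty$-diffeomorphism onto the open ball $\scrB_1^{d-1}\subset\{\vecv\}^\perp$. In this parametrization,
\begin{equation*}
\Psi_1(\vecv,\vecb) = \alpha(w)\vecv + \beta(w)\vecw,\qquad
\alpha(w):=\cos\theta(w),\quad \beta(w):=\frac{\sin\theta(w)}{w},
\end{equation*}
with $\beta$ extended continuously by $\beta(0):=(-1)^k\theta'(0)$ at $w=0$ (legitimate since $\theta(0)=k\pi$). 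Studying regularity and invertibility of the map $\vecw\mapsto\Psi_1(\vecv,\vecb)$ on $\scrB_1^{d-1}$ yields both implications.

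For $(2)\Rightarrow(1)$, I would first show that $\vecw\mapsto \Psi_1(\vecv,\vecb)$ is $\C^1$ on $\scrB_1^{d-1}$. Away from $\vecw=\bn$ this is routine from $\theta\in\C^1$. At $\vecw=\bn$, writing $\{\vece_j\}$ for an orthonormal basis of $\{\vecv\}^\perp$, the partial derivative for $\vecw\ne\bn$ reads
\begin{equation*}
\partial_{w_j}\Psi_1 = -\frac{\sin\theta(w)}{w}\,\theta'(w)\,w_j\,\vecv + \beta'(w)\,\frac{w_j}{w}\,\vecw + \beta(w)\vece_j,
\end{equation*}
and each of the first two terms is bounded in norm by $\bigl(|\sin\theta(w)/w|+|w\beta'(w)|\bigr)\cdot|w_j|$. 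Since $|\sin\theta(w)/w|\to|\theta'(0)|$ and the cancellation $w\beta'(w) = \theta'(w)\cos\theta(w)-\sin\theta(w)/w\to 0$ as $w\to 0^+$ (using only $\theta\in\C^1$ and $\theta(0)=k\pi$), these two contributions vanish and $\partial_{w_j}\Psi_1$ extends continuously to $(-1)^k\theta'(0)\vece_j$ at $\vecw=\bn$. The diffeomorphism property then follows by checking injectivity and nonsingular Jacobian separately: the constraint $|\theta(w)-k\pi|<\pi$ confines the range of $\theta-k\pi$ to an interval of length $<\pi$ in $(-\pi,\pi)$, so that together with the strict monotonicity of $\theta$ (coming from $\theta'\ne 0$) the map $w\mapsto(\cos\theta(w),\sin\theta(w))$ is injective on $[0,1)$, giving injectivity of $\Psi_1(\vecv,\cdot)$; and the Jacobian determinant factors as a nonzero radial speed $\theta'(w)$ times a nonzero angular factor $\beta(w)^{d-2}$ (with $\beta(w)\ne 0$ for $w\in[0,1)$ by the same range constraint), while at $\vecw=\bn$ the full derivative is $(-1)^k\theta'(0)\cdot I$.

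For $(1)\Rightarrow(2)$, I would fix $\vecv$ and a unit vector $\hat\vecw\in\{\vecv\}^\perp$, and restrict $\Psi_1$ to the $\C^1$ curve $w\mapsto \cos\theta(w)\vecv+\sin\theta(w)\hat\vecw$. Computing its tangent $\theta'(w)\bigl(-\sin\theta(w)\vecv+\cos\theta(w)\hat\vecw\bigr)$, whose norm equals $|\theta'(w)|$, shows that $\theta\in\C^1([0,1))$ with continuous derivative. Nonvanishing of $\theta'$ then follows from the nondegeneracy of $d\Psi_1(\vecv,\cdot)$ in the radial direction. The bound $|\theta(w)-k\pi|<\pi$ I would obtain by contradiction: if it failed, continuity of $\theta$ and $\theta(0)=k\pi$ together with the intermediate value theorem would force $\theta(w_1)=(k\pm 1)\pi$ for some $w_1\in(0,1)$; but then $\Psi_1(\vecv,\cdot)$ would collapse the entire sphere $\{\vecw\in\{\vecv\}^\perp:\|\vecw\|=w_1\}$ to the single point $\pm\vecv$, violating injectivity.

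The main technical obstacle will be verifying $\C^1$-regularity at $\vecw=\bn$ in the direction $(2)\Rightarrow(1)$, since the expression for $\partial_{w_j}\Psi_1$ contains $\beta'(w)$, which is not a priori bounded under the sole assumption $\theta\in\C^1$. The key observation resolving this is the cancellation identity $w\beta'(w) = \theta'(w)\cos\theta(w)-\sin\theta(w)/w$, whose right-hand side tends to $0$ as $w\to 0$ since $\cos\theta(w)\to(-1)^k$ and $\sin\theta(w)/w\to(-1)^k\theta'(0)$.
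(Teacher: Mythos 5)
Your proof is correct and takes essentially the same route as the paper's, reducing the $\C^1$-regularity of $\Psi_1$ at $\vecw=\bn$ to the cancellation $w\beta'(w)=\theta'(w)\cos\theta(w)-\sin\theta(w)/w\to 0$ (the paper's "$\lim_{w\to 0}wf_1'(w)=0$") and reading off the diffeomorphism property from the radial and angular Jacobian factors $\theta'(w)$ and $\beta(w)^{d-2}$, with the reverse direction extracted from the radial restriction. One small plus for your write-up: the limiting value $\beta(0)=(-1)^k\theta'(0)$ carries the sign $(-1)^k$ correctly, whereas the paper's $f_1(0)=\theta'(0)$ omits it (making $f_1$ discontinuous at $0$ whenever $k$ is odd, a harmless typo since the lemma and its application are unaffected).
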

\begin{proof}
The implication (1)$\Rightarrow$(2)
is straightforward,
e.g.\ using the fact that for all $w\in[0,1)$,
$\cos\theta(w)$ and $\sin\theta(w)$
can be expressed as the scalar product of
$\Psi_1\bigl(\vece_1,-(1-w^2)^{1/2}\vece_1+w\vece_2\bigr)$
with $\vece_1$ and $\vece_2$, respectively.
We turn to the proof of (2)$\Rightarrow$(1);
thus assume that (2) holds.
By immediate inspection, \eqref{PSI1formula} yields a spherically symmetric map $\Psi_1:\scrS_-\to\US$
which is continuous, and $\C^1$ on $\scrS_-\setminus\{(\vecv,-\vecv)\}$.
In order to verify that $\Psi_1$ is $\C^1$ on all $\scrS_-$,
note that we can write $\Psi_1(\vecv,\vecb)=g(\|\vecw\|^2)\vecv+f(\vecw)$,
where the maps
$f:\scrB_1^d\to\R^d$ and $g:[0,1)\to\R$ are given by 
\begin{align}\label{DEFLANGLElempf1}
&g(u)=\cos\theta(u^{1/2})\quad\text{and}
\\\notag
&f(\vecw)=f_1(\|\vecw\|)\,\vecw
\quad\text{with}\quad
f_1(w)=\begin{cases}w^{-1}\sin\theta(w)&\text{if }\: w>0
\\
\theta'(0)&\text{if }\:w=0.
\end{cases}
\end{align}
Therefore it suffices to verify $f\in\C^1(\scrB_1^d)$ and $g\in\C^1([0,1))$.
Both of these are straightforward.
(For $f$, an intermediate step is to note that
$f_1$ is continuous on $[0,1)$ and $\C^1$ on $(0,1)$,
and $\lim_{w\to0}wf_1'(w)=0$.)
It remains to verify the diffeomorphism statement.
By spherical symmetry it suffices to verify that 
$\Psi_1(\vece_1,\cdot)$ is a $\C^1$ diffeomorphism from $\{\vecb\in\US\col\vece_1\cdot\vecb<0\}$
onto an open subset of $\US$.
Let us first note that for every $0<w<1$,
the differential of the map $\vecb\mapsto\Psi_1(\vece_1,\vecb)$ 
at $\vecb=-(1-w^2)^{1/2}\vece_1+w\vece_2$
equals the linear map
from $T_{\vecb}(\US)=\{\vech\in\R^d\col\vech\cdot\vecb=0\}$ %
to $T_{\Psi_1(\vece_1,\vecb)}(\US)$ given by
\begin{align}\label{CONDiTOiiiLEMpf1}
\vech=(h_1,\cdots,h_d)\mapsto
\Bigl(-\theta'(w)\sin\theta(w)\,h_2,\;\theta'(w)\cos\theta(w)\,h_2,\;
\frac{\sin\theta(w)}w\,h_3,
\hspace{40pt}
\\\notag
\cdots,
\frac{\sin\theta(w)}w\,h_d\Bigr).
\end{align}
It follows from the assumption (2) that this map is non-singular for every $0<w<1$.
Furthermore the differential at $\vecb=-\vece_1$ is seen to be
scalar multiplication with $\theta'(0)$, which is again a non-singular map.
Hence by spherical symmetry, the differential of $\vecb\mapsto\Psi_1(\vece_1,\vecb)$
is non-singular throughout $\{\vecb\in\US\col\vece_1\cdot\vecb<0\}$.
It also follows from assumption (2) that this map
$\vecb\mapsto\Psi_1(\vece_1,\vecb)$ is injective.
Hence this map is indeed a $\C^1$ diffeomorphism from 
$\{\vecb\in\US\col\vece_1\cdot\vecb<0\}$ onto an open subset of $\US$.
\end{proof}

\begin{remark}\label{COND3completecondrem}
In the situation of Lemma \ref{DEFLANGLElem},
if the scattering map is also known to preserve angular momentum $\vecb\wedge\vecv$,
then one computes that
\begin{align}\label{PSI2formula}
&\Psi_2(\vecv,\vecb)=-w(\sin\theta)\vecv+(\cos\theta)\vecw-(\vecv\cdot\vecb)\Psi_1(\vecv,\vecb)
\qquad ((\vecv,\vecb)\in\scrS_-),
\end{align}
with $\vecw$ and $w$ as in \eqref{PSI1formulaadd}.
Hence in this case,
condition (2) in Lemma \ref{DEFLANGLElem}
implies that $\Psi$ satisfies \textit{all} of condition (iii).
Indeed, it only remains to verify that $\Psi_2$ is $\C^1$,
and this follows %
once we note that
$\Psi_2(\vecv,\vecb)=-(\vecw\cdot f(\vecw))\vecv+g(\|\vecw\|^2)\vecw-(\vecv\cdot\vecb)\Psi_1(\vecv,\vecb)$,
with $f,g$ as in \eqref{DEFLANGLElempf1}.
\end{remark}

Next, for a scattering map $\Psi$ satisfying conditions (i), (ii) and (iii),
the normalized impact parameter corresponding to incoming and outgoing velocities
$\vecv$ and $\vecv_+$, respectively, is
$\vecw:=(\vecbeta^-_{\vecv}(\vecv_+)R(\vecv))_\perp$,
and the {\em differential cross section} 
$\sigma(\vecv,\vecv_+)$ is defined as the Jacobian of the map\label{crosssecdef}
$\vecv_+\mapsto\vecw$
with respect to the measures $\omega=\vol_{\US}$ and $d\vecw$ ( = Lebesgue measure on $\R^{d-1}$).
Thus, for each fixed $\vecv\in\US$, 
\begin{align}\label{dcsdef}
d\vecw=\sigma(\vecv,\vecv_+)\,d\vecv_+\qquad(\vecv_+\in\scrV_{\vecv}).
\end{align}
Hence $\sigma$ is a continuous function on 
$\{(\vecv,\vecv_+)\in\US\times\US\col \vecv_+\in\scrV_\vecv\}$. %
In fact, %
\begin{align}\label{dcsformula}
\sigma(\vecv,\vecv_+)=
\begin{cases}
{\displaystyle\Bigl(\frac{w(\varphi)}{\sin\varphi}\Bigr)^{d-2} |w'(\varphi)|}&\text{if }\:\vecv_+\neq s_\Psi\cdot\vecv,
\\[8pt]
|w'(\varphi)|^{d-1}&\text{if }\:\vecv_+=s_\Psi\cdot\vecv,
\end{cases}
\end{align}
where $\varphi=\varphi(\vecv,\vecv_+)$
and $w(\varphi)=\|\vecw\|$.
Note that $\|\vecw\|$ is indeed a function of $\varphi$,
due to spherical symmetry.
The formula \eqref{dcsformula} is an immediate consequence of
\eqref{CONDiTOiiiLEMpf1} in the proof of Lemma \ref{DEFLANGLElem},
once we note that 
$\varphi\equiv\pm\theta(w)\mod2\pi$
(and $0\leq\varphi\leq\pi$).
In particular in the case of specular reflection, we have
$w(\varphi)=\sin(\varphi/2)$, and hence we recover the formula \eqref{SIGMAforrefl}.
We extend $\sigma$ to all of $\US\times\US$ through $\sigma(\vecv,\vecv_+)=0$ when $\vecv_+\notin\scrV_\vecv$;
this extension is generally not continuous.
It is clear from the above that $\sigma(\vecv K,\vecv_+K)=\sigma(\vecv,\vecv_+)$ for all $K\in\O(d)$, and furthermore $\sigma(\vecv_+,\vecv)=\sigma(\vecv,\vecv_+)$. 

\vspace{5pt}

Finally, with the scattering map in place,
we now extend some definitions from Section \ref{FIRSTCOLLsec}.
Recall that in Section \ref{FIRSTCOLLsec} we introduced 
the notion of a scatterer being 'separated';
we defined $\fw(1;\rho)$ to be the subset of initial conditions
$(\vecq,\vecv)\in\fw(\rho)$ which lead to a collision 
with a separated scatterer in finite time,
and for $(\vecq_0,\vecv_0)\in\fw(1;\rho)$
we defined $\vecq^{(1)}(\vecq_0,\vecv_0;\rho)$,
$\vecw_1(\vecq_0,\vecv_0;\rho)$
and $\vecomega_1(\vecq_0,\vecv_0;\rho)$.
More generally, we now define $\fw(j;\rho)$ \label{winij}
and $\vecq^{(j)},\vs_j,\vecw_j,\vecomega_j,\vecq_j,\vecv_j$, $\vecu_j$ 
\label{qjscattererdef}
for $j\geq1$
by the following recursive formulas:\footnote{All these are functions of $\vecq_0,\vecv_0,\rho$,
i.e.\ $\vecq^{(j)}=\vecq^{(j)}(\vecq_0,\vecv_0;\rho)$; 
$\vs_j=\vs_j(\vecq_0,\vecv_0;\rho)$, etc.}
Set $\fw(0;\rho):=\fw(\rho)$.
For $j\geq1$ and any %
$(\vecq_0,\vecv_0)\in\fw(j-1;\rho)$ 
we set:\footnote{It will be seen that
$(\vecq_0,\vecv_0)\in\fw(j-1;\rho)$ implies that $\vecq_{j-1}$ and $\vecv_{j-1}$ have been defined
and that $(\vecq_{j-1},\vecv_{j-1})\in\fw(\rho)$.}
\begin{align*}
\tau_j=\tau_j(\vecq_0,\vecv_0;\rho):=\tau_1(\vecq_{j-1},\vecv_{j-1};\rho).
\end{align*}
Then $\tau_j\in\R_{>0}\cup\{\infty\}$.
Let $\fw(j;\rho)$ be the subset of those $(\vecq_0,\vecv_0)\in\fw(j-1;\rho)$
for which $\tau_j<\infty$ and $\vecq_{j-1}+\tau_j\vecv_{j-1}$
lies on the boundary of a separated scatterer.
Next, for $(\vecq_0,\vecv_0)\in\fw(j;\rho)$, let
$\vecq^{(j)}\in\scrP$ be the center of the unique scatterer
with $\vecq_{j-1}+\tau_j\vecv_{j-1}\in\partial\scrB^d(\vecq^{(j)},\rho)$
and set $\vs_j=\vs(\vecq^{(j)})$;
let $\vecu_j\in\US$ be the point such that
$\vecq_{j-1}+\tau_j\vecv_{j-1}=\vecq^{(j)}+\rho\vecu_j$,
and then set
\begin{align*}
\vecv_j=\Psi_1(\vecv_{j-1},\vecu_j);
\qquad
\vecw_j=(\vecu_j R(\vecv_{j-1}))_\perp;
\qquad
\vecq_j=\vecq^{(j)}+\rho\,\Psi_2(\vecv_{j-1},\vecu_j);
\end{align*}
and finally set
\begin{align*}
\vecomega_j=(\vecw_j,\vs_j   %
)\in\Omega.
\end{align*}

The sequences
$\{\tau_j\}$, $\{\vs_j\}$ and $\{\vecv_j\}$
which we have thus associated to a given initial condition $(\vecq_0,\vecv_0)\in\fw(\rho)$
generalize %
the corresponding sequences defined in Section~\ref{sec:classical}
to the case of a general scattering process,
except %
that our present conventions regarding overlapping scatterers differ
from those in Section \ref{sec:classical}.
However we have seen that the probability of hitting a non-separated scatterer in the first collision
tends to zero,
and the same fact will turn out to hold for every later collision.
Therefore, the difference in conventions does not affect the limit result as $\rho\to0$;
cf.\ Remark \ref{MAINTECHNthm2Grem} below.

\section{Collision kernels}
\label{COLLKERsec}

We now define the \textit{collision kernels;}
these are simple transforms of the transition kernels defined in 
Section \ref{TRANSKERsec}.
Recall from Section \ref{SCATTERINGMAPS} the definition of the scattering map $\Psi$, the associated maps $\vecbeta_\vecv^{\pm}$,
and the differential cross section $\sigma(\vecv,\vecv_+)$.
For $\vecv_0\in\US$, $\vecv\in\scrV_{\vecv_0}$, $\vecvp\in\scrV_{\vecv}$,
and $\xi>0$, ${\vs}\in\Sigma$, ${\vs}_+\in\Sigma$,
we set
\begin{align}\label{pbndefG}
p_\bn\bigl(\vecv_0,{\vs},\vecv;\xi,{\vs}_+,\vecvp\bigr)=
\frac{\sigma(\vecv,\vecvp)}{v_{d-1}}
k\Bigl(\bigl(\vecbeta_{\vecv_0R(\vecv)}^+(\vece_1)_\perp,{\vs}\bigr),
\xi,\bigl(\vecbeta_{\vece_1}^-(\vecvp R(\vecv))_\perp,{\vs}_+\bigr)\Bigr).
\end{align}
We extend the function $p_\bn$ by setting
$p_\bn\bigl(\vecv_0,{\vs},\vecv;\xi,{\vs}_+,\vecvp\bigr)=0$ for any 
$\vecv,\vecvp\in\US$ with $\vecv\notin\scrV_{\vecv_0}$ or $\vecvp\notin\scrV_{\vecv}$.
More generally, given a function $\vecbeta\in \C_b(U,\R^d)$
where $U$ is an open subset of $\US$, we set
\begin{align}\label{p0bdef}
p_{\bn,\vecbeta}\bigl({\vs},\vecv;\xi,{\vs}_+,\vecvp\bigr)=
\frac{\sigma(\vecv,\vecvp)}{v_{d-1}}
k\Bigl(\bigl((\vecbeta(\vecv)R(\vecv))_\perp,{\vs}\bigr),
\xi,\bigl(\vecbeta_{\vece_1}^-(\vecvp R(\vecv))_\perp,{\vs}_+\bigr)\Bigr)
\end{align}
if $\vecv\in{U}$ and $\vecvp\in\scrV_\vecv$, and otherwise
$p_{\bn,\vecbeta}\bigl({\vs},\vecv;\xi,{\vs}_+,\vecvp\bigr)=0$.
We then have
\begin{align}\label{BASICp0idG}
p_\bn\bigl(\vecv_0,{\vs},\vecv;\xi,{\vs}_+,\vecvp\bigr)\equiv
p_{\bn,\vecbeta_{\vecv_0}^+}\bigl({\vs},\vecv;\xi,{\vs}_+,\vecvp\bigr).
\end{align}
Let us also define
\begin{align}\label{pgendef}
p\bigl(\vecv;\xi,{\vs}_+,\vecvp\bigr)=\frac{\sigma(\vecv,\vecvp)}{v_{d-1}}
\,k^{\g}\Bigl(\xi,\bigl(\vecbeta_{\vece_1}^-(\vecvp R(\vecv))_\perp,{\vs}_+\bigr)\Bigr)
\end{align}
if $\vecv_+\in\scrV_\vecv$,
and otherwise $p\bigl(\vecv;\xi,{\vs}_+,\vecvp\bigr)=0$.

The relation between the transition kernels and the collision kernels is captured by the formulas in the
following two lemmas.
Let $s_-$ be the diffeomorphism from $\UB$ onto the negative hemisphere $\{\vecx\in\US\col x_1<0\}$
which is inverse to the projection $\vecx\mapsto\vecx_\perp$, i.e.\
\begin{align}\label{smDEF}
s_-(\vecw)=\bigl(-(1-\|\vecw\|^2)^{1/2},\vecw\bigr),
\qquad\vecw\in\UB.
\end{align}
Then for any $\vecv\in\US$,
the inverse of the $\C^1$ diffeomorphism $\scrV_\vecv\to\UB$, 
$\vecv_+\mapsto\vecw:=(\vecbeta_\vecv^-(\vecv_+)R(\vecv))_\perp$,
is given by $\vecv_+=\Psi_1(\vece_1,s_-(\vecw))R(\vecv)^{-1}$.
\begin{lem}\label{BASICpktransfrem1lem}
For any bounded Borel measurable function $f:\R_{>0}\times\Sigma\times\US\to\R$
and any fixed $\vecbeta\in\C_b(U,\R^d)$, $\vecv\in U$, $\vs\in\Sigma$,
if $f_1:\R_{>0}\times\Omega\to\R_{\geq0}$ is defined through
\begin{align}\label{BASICpktransfrem1lemres1}
f_1(\xi,(\vecw,{\vs}))=f\bigl(\xi,{\vs},\Psi_1(\vece_1,s_-(\vecw))R(\vecv)^{-1}\bigr),
\end{align}
then 
\begin{align}\notag
\int_0^{\infty}\int_{\Omega}f_1(\xi,\vecomega)k\bigl(((\vecbeta(\vecv)R(\vecv))_\perp,\vs),\xi,\vecomega\bigr)
\,d\mu_{\Omega}(\vecomega)\,d\xi
\hspace{120pt}
\\\label{BASICpktransfrem1}
=\int_0^{\infty}\int_{\Sigma}\int_{\scrV_\vecv}
f(\xi,{\vs_+},\vecv_+)p_{\bn,\vecbeta}(\vs,\vecv;\xi,\vs_+,\vecv_+)\,d\vecv_+\,d\mm(\vs_+)\,d\xi.
\end{align}
\end{lem}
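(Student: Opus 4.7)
The plan is to rewrite the left-hand side by unfolding the definition of $\mu_\Omega$ and then apply the change of variables $\vecw\leftrightarrow\vecv_+$ coming from the map $\scrV_\vecv\to\UB$, $\vecv_+\mapsto(\vecbeta_\vecv^-(\vecv_+)R(\vecv))_\perp$. The identification of the resulting integrand with $p_{\bn,\vecbeta}$ is then a matter of invoking spherical symmetry of $\vecbeta_\vecv^-$.

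First I would unfold $\mu_\Omega=v_{d-1}^{-1}\vol_{\R^{d-1}}\times\mm$ so that the left-hand side becomes
\begin{align*}
\int_0^\infty\int_\Sigma\int_{\UB}
f\bigl(\xi,\vs_+,\Psi_1(\vece_1,s_-(\vecw))R(\vecv)^{-1}\bigr)\,
k\bigl(((\vecbeta(\vecv)R(\vecv))_\perp,\vs),\xi,(\vecw,\vs_+)\bigr)
\,\frac{d\vecw}{v_{d-1}}\,d\mm(\vs_+)\,d\xi.
\end{align*}
As noted just before the statement of the lemma, the map $\scrV_\vecv\to\UB$,
$\vecv_+\mapsto\vecw:=(\vecbeta_\vecv^-(\vecv_+)R(\vecv))_\perp$ is a $\C^1$ diffeomorphism with inverse $\vecw\mapsto\Psi_1(\vece_1,s_-(\vecw))R(\vecv)^{-1}$. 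By the definition of the differential cross section in \eqref{dcsdef} we have $d\vecw=\sigma(\vecv,\vecv_+)\,d\vecv_+$ along this change of variables; hence substituting yields
\begin{align*}
\int_0^\infty\int_\Sigma\int_{\scrV_\vecv}
f(\xi,\vs_+,\vecv_+)\,
k\bigl(((\vecbeta(\vecv)R(\vecv))_\perp,\vs),\xi,((\vecbeta_\vecv^-(\vecv_+)R(\vecv))_\perp,\vs_+)\bigr)
\,\frac{\sigma(\vecv,\vecv_+)}{v_{d-1}}\,d\vecv_+\,d\mm(\vs_+)\,d\xi.
\end{align*}

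Finally, to match the second argument of $k$ with the one appearing in the definition \eqref{p0bdef} of $p_{\bn,\vecbeta}$, I would invoke the spherical symmetry of $\vecbeta^-$: applying it with $K=R(\vecv)\in\SO(d)$ and using $\vecv R(\vecv)=\vece_1$ gives $\vecbeta_\vecv^-(\vecv_+)R(\vecv)=\vecbeta_{\vece_1}^-(\vecv_+R(\vecv))$, so their $\perp$-projections coincide. Substituting this identity, the integrand becomes $f(\xi,\vs_+,\vecv_+)\,p_{\bn,\vecbeta}(\vs,\vecv;\xi,\vs_+,\vecv_+)$ on $\scrV_\vecv$, which is exactly the right-hand side of \eqref{BASICpktransfrem1}. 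There is no serious obstacle — the only things to check carefully are that the diffeomorphism indeed maps onto all of $\UB$ (which follows because $\vecbeta_\vecv^-$ surjects onto $\{\vecb\in\US:\vecv\cdot\vecb<0\}$ and $R(\vecv)$ preserves this inner product condition with $\vece_1$ in place of $\vecv$) and that extending by zero outside $\scrV_\vecv$ is consistent with the zero extension built into $\sigma$ and $p_{\bn,\vecbeta}$.
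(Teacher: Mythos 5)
Your proof is correct and matches the paper's (unwritten) argument: the paper states the lemma is ``immediate from the definition of the differential cross section,'' and your account simply spells out that one-line justification, combining the unfolding of $\mu_\Omega$, the change of variables $\vecv_+\mapsto\vecw=(\vecbeta_\vecv^-(\vecv_+)R(\vecv))_\perp$ with $d\vecw=\sigma(\vecv,\vecv_+)\,d\vecv_+$ as in \eqref{dcsdef}, and the spherical-symmetry identity $\vecbeta_\vecv^-(\vecv_+)R(\vecv)=\vecbeta_{\vece_1}^-(\vecv_+R(\vecv))$.
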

\begin{lem}\label{BASICpktransfrem2lem}
For any $\vecv\in\US$ and any bounded Borel measurable functions
$f:\R_{>0}\times\Sigma\times\US\to\R$
and $f_1:\R_{>0}\times\Omega\to\R_{\geq0}$
subject to \eqref{BASICpktransfrem1lemres1},
\begin{align*}%
\int_0^{\infty}\int_{\Omega} &f_1(\xi,\vecomega)k^{\g}\bigl(\xi,\vecomega\bigr)
\,d\mu_{\Omega}(\vecomega)\,d\xi
\\
&=\int_0^{\infty}\int_{\Sigma}\int_{\scrV_\vecv}
f(\xi,{\vs_+},\vecv_+)p(\vecv;\xi,\vs_+,\vecv_+)\,d\vecv_+\,d\mm(\vs_+)\,d\xi.
\end{align*}
\end{lem}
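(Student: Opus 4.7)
The plan is to follow exactly the same change-of-variables strategy as in the preceding Lemma \ref{BASICpktransfrem1lem}, with the only difference being that the transition kernel $k^{\g}$ appears in place of $k$ (and carries no dependence on a point in $\scrX_\perp$). First, I would expand the definition \eqref{ppdefG} of $\mu_\Omega$ to rewrite the left hand side as
\begin{align*}
\frac{1}{v_{d-1}}\int_\Sigma \int_{\UB}\int_0^\infty f_1\bigl(\xi,(\vecw,\vs_+)\bigr)\,k^{\g}\bigl(\xi,(\vecw,\vs_+)\bigr)\,d\xi\,d\vecw\,d\mm(\vs_+).
\end{align*}

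Next, for the fixed $\vecv\in\US$, I would apply the change of variables $\vecw\mapsto\vecv_+$ given by $\vecv_+=\Psi_1(\vece_1,s_-(\vecw))R(\vecv)^{-1}$, which by the discussion before Lemma \ref{BASICpktransfrem1lem} is a $\C^1$ diffeomorphism from $\UB$ onto $\scrV_\vecv$ whose inverse is $\vecv_+\mapsto\vecw=(\vecbeta_\vecv^-(\vecv_+)R(\vecv))_\perp$. By the definition \eqref{dcsdef} of the differential cross section, this change of variables introduces the Jacobian factor $\sigma(\vecv,\vecv_+)$, i.e.\ $d\vecw=\sigma(\vecv,\vecv_+)\,d\vecv_+$. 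The hypothesis \eqref{BASICpktransfrem1lemres1} rewrites $f_1(\xi,(\vecw,\vs_+))=f(\xi,\vs_+,\vecv_+)$ after this substitution.

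Finally, invoking the spherical symmetry of $\vecbeta^-$ together with $\vecv R(\vecv)=\vece_1$ we have $\vecbeta_\vecv^-(\vecv_+)R(\vecv)=\vecbeta_{\vece_1}^-(\vecv_+R(\vecv))$, so that
\begin{align*}
\frac{\sigma(\vecv,\vecv_+)}{v_{d-1}}\,k^{\g}\Bigl(\xi,\bigl(\vecbeta_{\vece_1}^-(\vecv_+R(\vecv))_\perp,\vs_+\bigr)\Bigr)
\end{align*}
is precisely $p(\vecv;\xi,\vs_+,\vecv_+)$ by \eqref{pgendef}. Collecting these steps then yields the claimed identity. No step poses a real obstacle — the argument is an immediate bookkeeping exercise, and in particular the fact that the change of variables $\vecw\mapsto\vecv_+$ parametrises exactly $\scrV_\vecv$ (and not all of $\US$) matches the support convention for $p$, namely that $p(\vecv;\xi,\vs_+,\vecv_+)=0$ when $\vecv_+\notin\scrV_\vecv$.
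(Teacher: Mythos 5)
Your proof is correct and fills in exactly the bookkeeping the paper labels as "immediate from the definition of the differential cross section": expand $\mu_\Omega$, change variables $\vecw\mapsto\vecv_+$ using \eqref{dcsdef}, invoke spherical symmetry of $\vecbeta^-$, and match with \eqref{pgendef}. This is the same argument the paper has in mind.
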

The proof of both lemmas is immediate from the definition of the differential cross section.
\begin{remark}\label{RINDEPANDCONTrem}
Using Lemma \ref{SOdm1symLEM} one sees that the formula \eqref{p0bdef} remains true if in the right hand side we
replace $R(\vecv)$ by any $\hR\in\SO(d)$ satisfying $\vecv \hR=\vece_1$,
so long as both the ``$R(\vecv)$'s'' are replaced by the \textit{same} $\hR$.
In other words, the function $p_{\bn,\vecbeta}$ does not depend on the choice of the function $R:\US\to\SO(d)$,
and the same is true for $p_\bn$.
Similarly the formula in Lemma~\ref{BASICpktransfrem1lem} %
remains valid if we replace $R$ by
any other (measurable) function $\hR:\US\to\SO(d)$ satisfying $\vecv\hR(\vecv)=\vece_1$ for all $\vecv\in\US$,
so long as we use the same function $\hR$ in both \eqref{BASICpktransfrem1lemres1} and \eqref{BASICpktransfrem1}.
In particular in this way we see, via Remark \ref{CONTINTEGRALlemG2REM},
that if $f\in\C_b(\R_{>0}\times\Omega)$ then the expression
in \eqref{BASICpktransfrem1} depends continuously on $\vecv\in U$, even at the (possible) discontinuity point of the
function $R$.
The analogous statement %
holds for Lemma~\ref{BASICpktransfrem2lem}.
\end{remark}

It will follow from Theorem \ref{unifmodThm2} below
that, for given $\vs,\vecv$,
the function\linebreak
$p_{\bn,\vecbeta}(\vs,\vecv;\xi,\vs_+,\vecv_+)$
is the limiting probability density (as $\rho\to0$)
of hitting
a scatterer with marking $\vs_+$ at time $\rho^{1-d}\xi$ and
in such a way that the exit velocity is $\vecv_+$,
when starting at the point $\vecq+\rho\vecbeta(\vecv)$ and with velocity $\vecv$,
where
$\vecq\in\scrP$ and $\vs=\vs(\vecq)$.

The first part of the following lemma shows that
$p_{\bn,\vecbeta}(\vs,\vecv;\xi,\vs_+,\vecv_+)$
is indeed a probability density in the variables $\xi,\vs_+,\vecv_+$.
For the second part of the lemma, we introduce the following notation,
for any $\vecv\in\US$ and $\eta>0$
(cf.\ \eqref{VPDEF}):
\begin{align} \label{WVPDEF}
\scrV^\eta_\vecv:=\bigl\{\vecu\in\S_1^{d-1}\col{s_\Psi}\cdot(B_\Psi-\varphi(\vecu,\vecv))>\eta\bigr\}\subset\scrV_\vecv.
\end{align}
\begin{lem}\label{REMOVINGCPTSUPPfactpfLEM1}
For any open set $U\subset\US$
and any $\vecbeta\in\C_b(U,\R^d)$, $\vecv\in U$ and $\vs\in\Sigma$,
\begin{align}\label{REMOVINGCPTSUPPfactpfLEM1res1}
&\int_0^\infty\int_\Sigma\int_{\scrV_\vecv}
p_{\bn,\vecbeta}(\vs,\vecv;\xi,\vs_+,\vecv_+)\,d\vecv_+\,d\mm(\vs_+)\,d\xi=1.
\end{align}
Also for any $\ve>0$ there exist $C>1$ and $\eta>0$ such that
\begin{align}\label{REMOVINGCPTSUPPfactpfLEM1res2}
&\int_{1/C}^C\int_\Sigma\int_{\scrV_\vecv^\eta}
p_{\bn,\vecbeta}(\vs,\vecv;\xi,\vs_+,\vecv_+)\,d\vecv_+\,d\mm(\vs_+)\,d\xi>1-\ve,
\end{align}
uniformly over all $U,\vecbeta,\vecv,\vs$ as above.
\end{lem}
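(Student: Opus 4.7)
The plan is to reduce both parts of the statement to properties of the transition kernel $k$ via Lemma~\ref{BASICpktransfrem1lem}, and then invoke the uniform bounds on $k$ already established earlier in the text. For the first identity, I would apply Lemma~\ref{BASICpktransfrem1lem} with $f\equiv 1$, which yields
\begin{align*}
\int_0^\infty\int_\Sigma\int_{\scrV_\vecv}
p_{\bn,\vecbeta}(\vs,\vecv;\xi,\vs_+,\vecv_+)\,d\vecv_+\,d\mm(\vs_+)\,d\xi
=\int_0^\infty\int_\Omega k(\vecomega',\xi,\vecomega)\,d\mu_\Omega(\vecomega)\,d\xi,
\end{align*}
with $\vecomega':=((\vecbeta(\vecv)R(\vecv))_\perp,\vs)$. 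The right-hand side equals $1$ because $k(\vecomega',\,\cdot\,,\,\cdot\,)$ is by construction the density of $\kappa(\vecomega';\,\cdot\,)$, which is a probability measure on $\R_{>0}\times\Omega$ in view of Lemma~\ref{BASICkidpreplemG}.

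The key observation for the second part is that the diffeomorphism $\scrV_\vecv\to\UB$, $\vecv_+\mapsto\vecw:=(\vecbeta_\vecv^-(\vecv_+)R(\vecv))_\perp$, sends $\scrV_\vecv^\eta$ onto a subset of $\UB$ that \emph{does not depend on $\vecv$}, namely
\begin{align*}
\UB^{(\eta)}:=\{\vecw\in\UB\col\Psi_1(\vece_1,s_-(\vecw))\in\scrV_{\vece_1}^\eta\}.
\end{align*}
Indeed, since $R(\vecv)\in\SO(d)$ we have $\varphi(\vecv_+,\vecv)=\varphi(\vecv_+R(\vecv),\vece_1)$, so $\vecv_+\in\scrV_\vecv^\eta$ iff $\vecv_+R(\vecv)\in\scrV_{\vece_1}^\eta$, and using the inverse formula $\vecv_+=\Psi_1(\vece_1,s_-(\vecw))R(\vecv)^{-1}$ this is equivalent to $\vecw\in\UB^{(\eta)}$. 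As $\eta\to 0^+$, the sets $\scrV_{\vece_1}^\eta$ increase to $\scrV_{\vece_1}$, and since $\vecw\mapsto\Psi_1(\vece_1,s_-(\vecw))$ is a bijection $\UB\to\scrV_{\vece_1}$, the sets $\UB^{(\eta)}$ increase to $\UB$; in particular $\vol(\UB\setminus\UB^{(\eta)})\to 0$.

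Now applying Lemma~\ref{BASICpktransfrem1lem} with the indicator $f=I_{[1/C,C]\times\Sigma\times\scrV_\vecv^\eta}$, the left-hand side of \eqref{REMOVINGCPTSUPPfactpfLEM1res2} becomes the $\kappa(\vecomega';\,\cdot\,)$-mass of $[1/C,C]\times\UB^{(\eta)}\times\Sigma$. Its complement in $\R_{>0}\times\Omega$ is contained in the union of $[0,1/C)\times\Omega$, $[C,\infty)\times\Omega$, and $[1/C,C]\times(\UB\setminus\UB^{(\eta)})\times\Sigma$. Using the uniform bound $k\leq c_\scrP v_{d-1}$ from \eqref{kdef}, the $\kappa(\vecomega';\,\cdot\,)$-mass of the first set is at most $c_\scrP v_{d-1}/C$, and that of the third is at most $c_\scrP C\,\vol(\UB\setminus\UB^{(\eta)})$, both uniformly in $\vecomega'$; the mass of the middle set equals $\kappa(\vecomega';[C,\infty)\times\Omega)$, which tends to $0$ as $C\to\infty$ uniformly in $\vecomega'$ by Lemma~\ref{UNIFKBOUNDGlem1}. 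Given $\ve>0$, I would first choose $C$ large so that the first and second contributions are each less than $\ve/3$, and then choose $\eta$ small so that the third contribution is less than $\ve/3$. Since all three estimates are uniform in $\vecomega'\in\scrX_\perp$, the same $C,\eta$ work uniformly over all admissible $U,\vecbeta,\vecv,\vs$. No step presents a serious obstacle; the only conceptual ingredient is the $\vecv$-independence of $\UB^{(\eta)}$, which is an immediate consequence of spherical symmetry of the scattering map.
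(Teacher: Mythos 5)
Your argument is correct and follows essentially the same route as the paper's (terse) proof: the first identity via Lemma~\ref{BASICpktransfrem1lem} together with the fact that $\kappa(\vecomega';\cdot)$ is a probability measure (Lemma~\ref{BASICkidpreplemG}), and the second estimate via the uniform bound $k\leq c_\scrP v_{d-1}$ combined with Lemma~\ref{UNIFKBOUNDGlem1}. Your explicit observation that spherical symmetry makes the image of $\scrV_\vecv^\eta$ under the impact-parameter map independent of $\vecv$ is exactly the detail the paper leaves implicit in calling the second statement an ``immediate consequence.''
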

\begin{proof}
The first statement follows from Lemma \ref{BASICpktransfrem1lem}
and the definition of the transition kernel $k$,
in particular the fact that $\kappa(\vecomega,\cdot)$ is a probability measure on $\R_{>0}\times\Omega$
for every $\vecomega\in\scrX_\perp$;
cf.\ Lemma \ref{BASICkidpreplemG}.
The second statement is an immediate consequence of
Lemma \ref{UNIFKBOUNDGlem1} and the fact that $k(\cdot,\cdot,\cdot)\leq C_\scrP v_{d-1}$.
\end{proof}

Similarly, it will follow from Theorem \ref{unifmodThm2macr}
that the function
$p(\vecv;\xi,\vs_+,\vecv_+)$
is the limiting probability density (as $\rho\to0$)
of hitting
a scatterer with marking $\vs_+$ at time $\rho^{1-d}\xi$ and
in such a way that the exit velocity is $\vecv_+$,
when starting with velocity $\vecv$ from a generic point in $\R^d$.
The following lemma shows that $p(\vecv;\xi,\vs_+,\vecv_+)$ is indeed a probability density in the variables
$\xi,\vs_+,\vecv_+$.
\begin{lem}\label{BASICpkproblem}
For any $\vecv\in\US$,
\begin{align*}
\int_0^{\infty}\int_{\Sigma}\int_{\scrV_\vecv}p(\vecv;\xi,\vs_+,\vecv_+)\,d\vecv_+\,d\mm(\vs_+)\,d\xi=1.
\end{align*}
\end{lem}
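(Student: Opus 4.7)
The plan is to reduce this statement to the fact that $\kappa^{\g}$ is a probability measure on $\R_{>0}\times\Omega$, by applying Lemma \ref{BASICpktransfrem2lem} with the trivial test function $f\equiv 1$.

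First, fix $\vecv\in\US$. Taking $f\equiv 1$ on $\R_{>0}\times\Sigma\times\US$, the corresponding function $f_1$ defined by \eqref{BASICpktransfrem1lemres1} is identically $1$ on $\R_{>0}\times\Omega$, so the hypothesis of Lemma \ref{BASICpktransfrem2lem} is trivially satisfied. Applying the lemma yields
\begin{align*}
\int_0^{\infty}\int_{\Sigma}\int_{\scrV_\vecv}p(\vecv;\xi,\vs_+,\vecv_+)\,d\vecv_+\,d\mm(\vs_+)\,d\xi
=\int_0^{\infty}\int_{\Omega} k^{\g}(\xi,\vecomega)\,d\mu_{\Omega}(\vecomega)\,d\xi.
\end{align*}

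Second, recall from Section \ref{TRANSKERsec} that $k^{\g}$ is the density, with respect to $d\xi\,d\mu_\Omega(\vecomega)$, of the Borel measure $\kappa^{\g}$ on $\R_{>0}\times\Omega$ which is by definition the distribution of the random point $\iota(\vecz(\Xi))\in\R_{>0}\times\Omega$; here Lemma \ref{BASICkidpreplemgen} guarantees that $\vecz(\Xi)\in\R_{>0}\times\Omega$ almost surely, so $\kappa^{\g}$ is a probability measure. Hence the right-hand side above equals $\kappa^{\g}(\R_{>0}\times\Omega)=1$, which completes the proof. There is no substantial obstacle; the content of the lemma is entirely packaged into the change-of-variables formula of Lemma \ref{BASICpktransfrem2lem} together with the total-mass property of $\kappa^{\g}$ established earlier.
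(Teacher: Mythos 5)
Your proof is correct and follows exactly the same route as the paper: apply Lemma \ref{BASICpktransfrem2lem} with the test function $f\equiv 1$ (so that $f_1\equiv 1$), and then invoke the fact that $\kappa^{\g}$ is a probability measure on $\R_{>0}\times\Omega$ (which rests on Lemma \ref{BASICkidpreplemgen}). Nothing is missing.
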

\begin{proof}
This follows in a similar manner using Lemma \ref{BASICpktransfrem2lem}
and the fact that $\kappa^{\g}$ is a probability measure on $\R_{>0}\times\Omega$.
\end{proof}

\section{Relations for the collision kernels}
\label{COLLKERRELsec}

\begin{lem}\label{SOdm1symLEMccc}
For any fixed $\vecv_0,\vecv\in\US$, ${\vs}\in\Sigma$ and $K\in\SO(d)$,
we have
\begin{align*}
p_\bn\bigl(\vecv_0 K,{\vs},\vecv K;\xi,{\vs}_+,\vecvp K\bigr) = p_\bn\bigl(\vecv_0,{\vs},\vecv;\xi,{\vs}_+,\vecvp\bigr) ,
\end{align*}
\begin{align*}
p\bigl(\vecv K ;\xi,{\vs}_+,\vecvp K\bigr)=p\bigl(\vecv;\xi,{\vs}_+,\vecvp\bigr)
\end{align*}
for almost all $(\xi,{\vs}_+,\vecvp)\in\R_{>0}\times\Sigma\times\US$
with respect to the measure
$d\xi\,d\mm(\vs_+)\,d\vecv_+$.
\end{lem}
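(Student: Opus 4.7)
The plan is to exploit the freedom, noted in Remark \ref{RINDEPANDCONTrem}, that in the defining formulas \eqref{pbndefG} and \eqref{pgendef} for $p_\bn$ and $p$, the quantity $R(\vecv)$ may be replaced by any $\hR\in\SO(d)$ satisfying $\vecv\hR=\vece_1$ (provided both occurrences of $R(\vecv)$ are replaced by the same $\hR$). I would therefore evaluate the left-hand side of each claimed identity, which \emph{a priori} involves $R(\vecv K)$, using an $\hR$ tailored to the given $K$.

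Concretely, I would set $\hR:=K^{-1}R(\vecv)$, which lies in $\SO(d)$ and satisfies $(\vecv K)\hR=\vecv R(\vecv)=\vece_1$, hence is admissible as a replacement for $R(\vecv K)$ in the formula for $p_\bn(\vecv_0 K,\vs,\vecv K;\xi,\vs_+,\vecvp K)$. With this substitution the two slots of the transition kernel $k$ collapse, since
\[ (\vecv_0 K)\hR=\vecv_0 R(\vecv) \quad\text{and}\quad (\vecvp K)\hR=\vecvp R(\vecv), \]
which are precisely the arguments appearing in $p_\bn(\vecv_0,\vs,\vecv;\xi,\vs_+,\vecvp)$. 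The differential-cross-section prefactor matches by the $\O(d)$-invariance $\sigma(\vecv K,\vecvp K)=\sigma(\vecv,\vecvp)$ noted after \eqref{dcsformula}, and the first identity follows. The second identity for $p$ is obtained by the same substitution, with $k^{\g}$ replacing $k$ and only the outgoing slot present.

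The only point requiring care — and the closest thing to an obstacle — is the ``almost all'' qualifier. Since $k(\vecomega',\cdot,\cdot)$ is well defined only as an element of $\L^1(\R_{>0}\times\Omega,d\xi\,d\mu_\Omega)$, the equality just derived holds \emph{a priori} only for almost every $(\xi,\vecomega_+)\in\R_{>0}\times\Omega$. Pulling this back through the $\C^1$ change of variables $\vecvp\mapsto\vecw_+:=(\vecbeta^-_{\vece_1}(\vecvp R(\vecv)))_\perp$, whose Jacobian is $\sigma(\vecv,\vecvp)$ by \eqref{dcsdef}, converts the statement to ``for almost every $(\xi,\vs_+,\vecvp)$'' with respect to $d\xi\,d\mm(\vs_+)\,d\vecvp$, as claimed (recalling that $p_\bn$ vanishes off $\scrV_\vecv$). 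Beyond this bookkeeping, the argument is purely algebraic and rests entirely on the $R$-independence of the kernels together with the spherical symmetry of $\sigma$.
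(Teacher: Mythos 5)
Your proof is correct and follows essentially the same route as the paper's: the paper recognizes the left-hand side as the defining formula evaluated with the admissible map $\vecv\mapsto K R(\vecv K)$ and invokes Remark \ref{RINDEPANDCONTrem}, which is just the mirror image of your substitution $\hR=K^{-1}R(\vecv)$ for $R(\vecv K)$. The use of $\sigma(\vecv K,\vecvp K)=\sigma(\vecv,\vecvp)$ and the almost-everywhere bookkeeping (inherited from Lemma \ref{SOdm1symLEM} via the $R$-independence remark) are the same in both arguments.
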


\begin{proof}
We have
\begin{multline*}%
p_\bn\bigl(\vecv_0 K,{\vs},\vecv K;\xi,{\vs}_+,\vecvp K\bigr) \\
= \frac{\sigma(\vecv,\vecvp)}{v_{d-1}}
k\Bigl(\bigl(\vecbeta_{\vecv_0 K R(\vecv K)}^+(\vece_1)_\perp,{\vs}\bigr),
\xi,\bigl(\vecbeta_{\vece_1}^-(\vecvp K R(\vecv K))_\perp,{\vs}_+\bigr)\Bigr),
\end{multline*}
and the analogous relation for $p(\vecv K ;\xi,{\vs}_+,\vecvp K)$.
For fixed $K\in\SO(d)$, the function $\vecv\mapsto \tilde R(\vecv)= K R(\vecv K)\in\SO(d)$ has the property that $\vecv \tilde R(\vecv) = \vece_1$. The claim now follows from Remark \ref{RINDEPANDCONTrem}.
\end{proof}

\begin{prop}\label{kgfromkpropccc}
Assume \eqref{PLBETACRITERION} holds (i.e., the scattering map preserves or reverses angular momentum). Then, for any fixed $\vecv\in\US$,
\begin{align*}
p\bigl(\vecv;\xi,{\vs}_+,\vecvp\bigr) = c_{\scrP} \int_{[\xi,\infty)\times\Sigma\times\US}  \sigma(\vecv_0,\vecv)\, p_\bn\bigl(\vecv_0,{\vs},\vecv;\xi',{\vs}_+,\vecvp\bigr) \, d\xi' \,d\mm({\vs}) \,d\vecv_0 ,
\end{align*}
for almost all $(\xi,{\vs}_+,\vecvp)\in\R_{>0}\times\Sigma\times\US$
with respect to the measure
$d\xi\,d\mm(\vs_+)\,d\vecv_+$.
\end{prop}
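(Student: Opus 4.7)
The plan is to substitute Proposition \ref{kgfromkprop} for $k^{\g}$ into the defining formula \eqref{pgendef} of $p$, and then massage the resulting integral over $\R_{>0}\times\Omega$ into the claimed integral over $[\xi,\infty)\times\Sigma\times\US$. Fix $\vecv\in\US$ and write $\vecomega_0:=(\vecbeta_{\vece_1}^-(\vecvp R(\vecv))_\perp,{\vs}_+)\in\Omega$, so that $p(\vecv;\xi,{\vs}_+,\vecvp)=\tfrac{\sigma(\vecv,\vecvp)}{v_{d-1}}\,k^{\g}(\xi,\vecomega_0)$. Applying Proposition \ref{kgfromkprop} at $\vecomega_0$ (which holds for a.e.\ $\vecomega_0$, hence for a.e.\ $\vecvp$ since $\vecvp\mapsto\vecomega_0$ is a diffeomorphism between open sets) and unfolding $\mu_\Omega=v_{d-1}^{-1}\vol_{\R^{d-1}}\times\mm$ yields
\[
p(\vecv;\xi,{\vs}_+,\vecvp)=\tfrac{c_\scrP\,\sigma(\vecv,\vecvp)}{v_{d-1}}\int_\xi^\infty\!\int_\Sigma\!\int_\UB k((\vecw',{\vs}),\xi',\vecomega_0)\,d\vecw'\,d\mm({\vs})\,d\xi'.
\]

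The next step is a change of variables $\vecw'\leftrightarrow\vecv_0$ in the innermost integral via the $\C^1$ diffeomorphism $\scrV_\vecv\to\UB$, $\vecv_0\mapsto\vecw'(\vecv_0):=(\vecbeta_\vecv^-(\vecv_0)R(\vecv))_\perp$, whose Jacobian is by \eqref{dcsdef} equal to $\sigma(\vecv,\vecv_0)=\sigma(\vecv_0,\vecv)$; hence $d\vecw'=\sigma(\vecv_0,\vecv)\,d\vecv_0$. On the other hand, recalling the definition \eqref{pbndefG} of $p_\bn$ and using that $p_\bn$ vanishes outside $\scrV_\vecv$, the claimed right-hand side rewrites as
\[
\tfrac{c_\scrP\,\sigma(\vecv,\vecvp)}{v_{d-1}}\int_\xi^\infty\!\int_\Sigma\!\int_{\scrV_\vecv}\sigma(\vecv_0,\vecv)\,k\bigl(((\vecbeta^+_{\vecv_0R(\vecv)}(\vece_1))_\perp,{\vs}),\,\xi',\,\vecomega_0\bigr)\,d\vecv_0\,d\mm({\vs})\,d\xi'.
\]
Thus the whole problem reduces to identifying the first argument of $k$ in the last display with (a symmetry-preserving modification of) $\vecw'(\vecv_0)$.

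This is precisely where the hypothesis \eqref{PLBETACRITERION} enters, and the argument splits into two cases. By the spherical symmetry of $\vecbeta^+$, $(\vecbeta^+_{\vecv_0R(\vecv)}(\vece_1))_\perp=(\vecbeta^+_{\vecv_0}(\vecv)R(\vecv))_\perp$. Under the ``reversal'' alternative $\vecbeta^+_{\vecv_0}(\vecv)=\vecbeta^-_\vecv(\vecv_0)R_{\{\vecv\}^\perp}$, the elementary matrix identity $R_{\{\vecv\}^\perp}R(\vecv)=R(\vecv)\,\diag(-1,1,\ldots,1)$ gives $(\vecbeta^+_{\vecv_0}(\vecv)R(\vecv))_\perp=\vecw'(\vecv_0)$ on the nose, and the two displays match directly. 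Under the ``preservation'' alternative $\vecbeta^+_{\vecv_0}(\vecv)=-\vecbeta^-_\vecv(\vecv_0)$ the same computation yields $-\vecw'(\vecv_0)$; the extra sign is absorbed by first performing the measure-preserving substitution $\vecw'\mapsto-\vecw'$ in the $\int_\UB d\vecw'$ integral above, after which both alternatives produce the same expression.

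The main obstacle is essentially bookkeeping: handling the two alternatives in \eqref{PLBETACRITERION} by the sign trick just described --- this is exactly why the hypothesis is needed, since without angular-momentum preservation or reversal one could not match the first arguments of $k$ modulo an $\UB$-symmetry. A secondary technicality is verifying that the a.e.\ equivalence classes of $k$ and $k^{\g}$ are respected by the change of variables; but both $\vecvp\mapsto\vecomega_0$ and $\vecv_0\mapsto\vecw'(\vecv_0)$ are $\C^1$ diffeomorphisms between open sets, so they preserve null sets with respect to the relevant product measures. The identity is therefore established for a.e.\ $(\xi,{\vs}_+,\vecvp)\in\R_{>0}\times\Sigma\times\US$.
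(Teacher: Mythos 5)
Your argument is correct and follows essentially the same route as the paper's proof: substitute Proposition \ref{kgfromkprop} into \eqref{pgendef}, rewrite the $\Omega$-integral via the change of variables $\vecomega'=(\vecw',\vs)$ with $d\vecw'=\sigma(\vecv,\vecv_0)\,d\vecv_0$ from \eqref{dcsdef}, and use \eqref{PLBETACRITERION} together with \eqref{pbndefG} to identify the first argument of $k$ up to the sign $\mp$ (which the paper absorbs implicitly, exactly as in your $\vecw'\mapsto-\vecw'$ step). No gaps; if anything, you spell out the sign absorption and the null-set bookkeeping more explicitly than the paper does.
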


\begin{proof}
By \eqref{pgendef} and Proposition \ref{kgfromkprop}, we have
\begin{align*}
p\bigl(\vecv;\xi,{\vs}_+,\vecvp\bigr) =
c_\scrP\, \sigma(\vecv,\vecvp)
\int_\xi^\infty\int_\Omega k\Bigl(\vecomega',
\xi',\bigl(\vecbeta_{\vece_1}^-(\vecvp R(\vecv))_\perp,{\vs}_+\bigr)\Bigr) \,d\mu_\Omega(\vecomega')\,d\xi'.
\end{align*}
We now use relation \eqref{pbndefG} in combination with \eqref{PLBETACRITERION} to obtain 
\begin{align}\label{pbndefG01}
p_\bn\bigl(\vecv_0,{\vs},\vecv;\xi,{\vs}_+,\vecvp\bigr)=
\frac{\sigma(\vecv,\vecvp)}{v_{d-1}}
k\Bigl(\bigl(\mp\vecbeta_{\vece_1}^-(\vecv_0 R(\vecv))_\perp,{\vs}\bigr),
\xi,\bigl(\vecbeta_{\vece_1}^-(\vecvp R(\vecv))_\perp,{\vs}_+\bigr)\Bigr).
\end{align}
Now $\vecbeta_{\vece_1}^-(\vecv_0 R(\vecv))=\vecbeta_{\vecv}^-(\vecv_0) R(\vecv)$, and hence by \eqref{dcsdef}, $d\mu_\Omega(\vecomega')= v_{d-1}^{-1} d\mm({\vs})$\linebreak$\times\sigma(\vecv,\vecv_0)\, d\vecv_0$. Finally, $\sigma(\vecv,\vecv_0)=\sigma(\vecv_0,\vecv)$.
\end{proof}

\section{Post-collision velocity}
\label{NEXTVELOCITYsec}

We now transform Theorem \ref{Thm2gen} to obtain the limit distribution of the velocity after the first collision.
For later use, we give a result which is uniform with respect to appropriate families of test functions $f$ and 
probability measures $\lambda$.

We recall some definitions from 
\cite[Section 2.4]{partII}.
\begin{definition}
Given any subset ${U}\subset\US$ we set
\begin{align}\label{PARTIALVEdef}
\partial_\ve{U}:=\bigl\{\vecv\in\S_1^{d-1} \col
\exists \vecw\in\partial{U}\col \varphi(\vecv,\vecw)<\ve\bigr\}.
\end{align}
A family $F$ of Borel subsets of $\S_1^{d-1}$ is called
\textit{equismooth} if for every $\delta>0$ there is some $\ve>0$ such that
$\omega(\partial_\ve{U})<\delta$ for all ${U}\in F$.
Finally, a family $F$ of measures on $\S_1^{d-1}$ is called
\textit{equismooth} if there exist an equicontinuous and uniformly bounded
family $F'$ of functions from $\S_1^{d-1}$ to $\R_{\geq 0}$ and
an equismooth family $F''$ of open subsets of $\S_1^{d-1}$, 
such that each $\mu\in F$ can be expressed as
$\mu=(g\cdot \omega)_{|{U}}$ for some $g\in F'$, ${U}\in F''$.
\end{definition}

Given any open set $U\subset\US$ we define
\begin{align}\label{XSPACEdef1}
{X_U}:=\Bigl\{\langle\vecv_0,\xi_1,{\vs}_1,\vecv_1\rangle\in
U\times\R_{>0}\times\Sigma\times\US
\col \vecv_1\in\scrV_{\vecv_0}\Bigr\}.
\end{align}

\begin{thm}\label{unifmodThm2}
Let $T\geq1$; 
let $U$ be an open subset of $\US$;
let $F_1$ be an equismooth family of probability measures on $\S_1^{d-1}$ 
such that $\lambda(U)=1$ for each $\lambda\in F_1$;
let $F_2$ be a uniformly bounded and pointwise equicontinuous family
of functions $f:{X_U}\to\R$; %
and let $F_3$ be a relatively compact subset of $\C_b(U,\R^d)$ such that 
$(\vecbeta(\vecv)+\R_{>0}\vecv)\cap\scrB_1^d=\emptyset$ 
for all $\vecbeta\in F_3$, $\vecv\in U$.
Then
\begin{align}\notag
\int_{\fw_{\vecq,\rho}^{\vecbeta}}f\Bigl(\vecv,\rho^{d-1}\tau_1(\vecq_{\rho,\vecbeta}(\vecv),\vecv;\rho)
,{\vs}_1(\vecq_{\rho,\vecbeta}(\vecv),\vecv;\rho),
\vecv_1(\vecq_{\rho,\vecbeta}(\vecv),\vecv;\rho)\Bigr)\,d\lambda(\vecv)
\hspace{50pt}
\\\label{unifmodThm2res}
-\int_{{X_U}} %
f\bigl(\vecv,\xi_1,{\vs}_1,\vecv_1\bigr)
p_{\bn,\vecbeta}\bigl(\vs(\vecq),\vecv;\xi_1,{\vs}_1,\vecv_1\bigr)
\, d\lambda(\vecv)\,d\xi_1\,d\mm({\vs}_1)\,d\vecv_1\to0
\end{align}
as $\rho\to0$,
uniformly with respect to all $\vecq\in\scrP_T(\rho)$, $\lambda\in F_1$, $f\in F_2$, $\vecbeta\in F_3$.
\end{thm}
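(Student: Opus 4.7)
The plan is to reduce Theorem \ref{unifmodThm2} to Theorem \ref{Thm2gen} via the change of variables from normalized impact parameter to post-collision velocity, and then to upgrade that convergence so that it holds uniformly over the families $F_1,F_2,F_3$.

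Given $f\in F_2$, I would define $\tilde f:U\times\R_{>0}\times\Omega\to\R$ by
\begin{equation*}
\tilde f\bigl(\vecv,\xi,(\vecw,\vs_1)\bigr):=f\bigl(\vecv,\xi,\vs_1,\Psi_1(\vece_1,s_-(\vecw))R(\vecv)^{-1}\bigr).
\end{equation*}
By spherical symmetry of $\Psi_1$ together with the identity $\vecu_1R(\vecv_0)=s_-(\vecw_1)$ coming from the definition of $\vecw_1$, one has $\vecv_1(\vecq_0,\vecv_0;\rho)=\Psi_1(\vece_1,s_-(\vecw_1))R(\vecv_0)^{-1}$ for every $(\vecq_0,\vecv_0)\in\fw(1;\rho)$, so the integrand in the first term of \eqref{unifmodThm2res} equals $\tilde f(\vecv,\rho^{d-1}\tau_1,\vecomega_1)$. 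Theorem \ref{Thm2gen} applied with test function $\tilde f$ (for fixed $f,\lambda$) produces a limit equal, after the change-of-variables identity in Lemma \ref{BASICpktransfrem1lem}, to the integral against $p_{\bn,\vecbeta}$ on the right-hand side of \eqref{unifmodThm2res}. This settles the pointwise convergence; all that remains is uniformity over $(\lambda,f,\vecbeta)$ in $F_1\times F_2\times F_3$.

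The uniformities over $\vecq\in\scrP_T(\rho)$ and $\vecbeta\in\overline{F_3}$ are already provided by Theorem \ref{Thm2gen}, so the remaining task is joint uniformity over $\lambda\in F_1$ and $f\in F_2$. I would argue by contradiction: suppose the claim fails, so there exist $\ve>0$ and sequences $\rho_n\downarrow 0$, $\vecq_n\in\scrP_T(\rho_n)$, $\vecbeta_n\in F_3$, $\lambda_n\in F_1$, $f_n\in F_2$ along which the difference in \eqref{unifmodThm2res} has absolute value at least $\ve$. By compactness I extract subsequences so that $\vs(\vecq_n)\to\vs\in\Sigma$, $\vecbeta_n\to\vecbeta\in\overline{F_3}\subset\C_b(U,\R^d)$, $\lambda_n\xrightarrow[]{\textup{ w }}\lambda$ with $\lambda\in\Pac(\US)$ (tightness is automatic on $\US$, and absolute continuity of the limit follows from the equismoothness of $F_1$, which presents each $\lambda_n$ as the restriction of a measure with uniformly bounded and equicontinuous density to an equismooth open set), and $f_n\to f$ pointwise and uniformly on compact subsets of $X_U$, using pointwise equicontinuity, the uniform bound, and a diagonal argument on a countable dense subset of $X_U$. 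Then the subsequence argument underlying Theorem \ref{Thm2gen} (via Lemma \ref{KEYyCONVprop3}) combined with $f_n\to f$ produces convergence of the integrals to the limit predicted by the right-hand side of \eqref{unifmodThm2res} for the limiting data $(\vs,\vecbeta,\lambda,f)$, contradicting the lower bound $\ve$.

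The main obstacle is coordinating the three jointly varying inputs $(\lambda_n,f_n,\vecbeta_n)$ with the possible discontinuity of $\vecv\mapsto R(\vecv)$: the test function $\tilde f_n$ is generically discontinuous at the single exceptional point $\vecv_0\in\US$ at which $R$ fails to be continuous. Since each $\lambda\in F_1$ is absolutely continuous, the singleton $\{\vecv_0\}$ carries no mass, and by Remark \ref{RINDEPANDCONTrem} I may work with the restriction of everything to $U\setminus\{\vecv_0\}$ without changing either side of \eqref{unifmodThm2res}. A secondary point is continuity of the right-hand side of \eqref{unifmodThm2res} in $(\vs,\vecbeta,\lambda,f)$ under the modes of convergence above; this follows from Lemma \ref{CONTINTEGRALlemG2} together with uniform convergence of $f_n\to f$ on compacta, the bound $k\leq c_\scrP v_{d-1}$, and dominated convergence.
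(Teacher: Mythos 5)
Your reduction for singleton $F_1,F_2$ is exactly the paper's: apply Theorem~\ref{Thm2gen} to the transformed test function $f_1(\vecv,\xi,(\vecw,\vs))=f(\vecv,\xi,\vs,\Psi_1(\vece_1,s_-(\vecw))R(\vecv)^{-1})$, then use Lemma~\ref{BASICpktransfrem1lem} to rewrite the limit, after first discarding the exceptional point of $R$ since $\lambda$ is absolutely continuous. For the extension to general $F_1,F_2$, the paper simply cites \cite[Thm.~2.3, Steps~2--4]{partII} (with Lemma~\ref{REMOVINGCPTSUPPfactpfLEM1} replacing the analogous tightness input there, and the observation that the densities of $F_1$ form a relatively compact subset of $\L^1(U,\omega)$), whereas you give a self-contained compactness-and-contradiction argument. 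That route is legitimate, but two points in your last paragraph need tightening.

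First, the phrase ``dominated convergence, using the bound $k\leq c_\scrP v_{d-1}$'' does not close the argument on its own: since $f_n\to f$ only uniformly on compacta, you must control the mass that the probability measures $p_{\bn,\vecbeta_n}(\vs(\vecq_n),\vecv;\cdot)\,d\xi\,d\mm\,d\vecv_+$ (and the corresponding microscopic quantities) put outside compacta, \emph{uniformly} in the extracted data. This is precisely the content of Lemma~\ref{REMOVINGCPTSUPPfactpfLEM1}: for any $\ve>0$ there are $C,\eta$ so that the $p_{\bn,\vecbeta}$-mass outside $[C^{-1},C]\times\Sigma\times\overline{\scrV_\vecv^\eta}$ is $<\ve$ for all $U,\vecbeta,\vecv,\vs$ simultaneously --- you should cite this rather than ``dominated convergence,'' since the bound $k\leq c_\scrP v_{d-1}$ gives no tightness in $\xi$ by itself.

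Second, when you pass to a subsequence with $\lambda_n\xrightarrow{\rm w}\lambda$ and claim $\lambda\in\Pac(\US)$ with $\lambda(U)=1$, the mode of convergence really needed is convergence of the densities of $\lambda_n$ in $\L^1(U,\omega)$, not merely weak convergence of measures; this is the precise way to guarantee the passage to the limit in the integrals of Lemma~\ref{KEYyCONVprop3} is uniform in $\lambda$. Equismoothness does deliver $\L^1$ relative compactness of the densities (via Arzel\`a--Ascoli for the density functions $g_n$ and a Riesz--Kolmogorov argument for the indicator functions of the equismooth open sets $U_n$), but your proposal gestures at this without spelling it out, and it is the key observation the paper explicitly singles out. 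With these two repairs your argument is sound and provides a self-contained substitute for the paper's appeal to \cite{partII}.
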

\begin{remark}
In the left hand side of \eqref{unifmodThm2res}, note that by definition
\linebreak
$\vecv_1(\vecq_{\rho,\vecbeta}(\vecv),\vecv;\rho)=\Psi_1(\vece_1,s_-(\vecw_1))R(\vecv)^{-1}$
with $\vecw_1=\vecw_1(\vecq_{\rho,\vecbeta}(\vecv),\vecv;\rho)$.
In particular $\vecv_1\in\scrV_\vecv$.
\end{remark}
\begin{proof}
Without loss of generality we assume that $R$ is continuous on $U$
(otherwise replace $U$ with $U\setminus\{\vecv_0\}$ where $\vecv_0$ is the unique point where $R$
is discontinuous).
Now if $F_1$ and $F_2$ are singleton sets,
say $F_1=\{\lambda\}$ and $F_2=\{f\}$,
then \eqref{unifmodThm2res} is an immediate consequence of Theorem \ref{Thm2gen},
applied with $f_1\in\C_b(U\times\R_{>0}\times\Omega)$ defined by
\begin{align*}%
f_1(\vecv,\xi,(\vecw,{\vs}))=f(\vecv,\xi,{\vs},\Psi_1(\vece_1,s_-(\vecw))R(\vecv)^{-1}),
\end{align*}
combined with Lemma \ref{BASICpktransfrem1lem}.
The extension to uniformity over general sets $F_1$ and $F_2$ 
is carried out in the same way as in the proof of
\cite[Thm.\ 2.3; Steps 2--4]{partII}.
(One uses
Lemma~\ref{REMOVINGCPTSUPPfactpfLEM1} %
in place of
\cite[(2.42)]{partII}.
When proving uniformity over $F_1$, the key point is to note that 
the set of densities of the measures in $F_1$ with respect to $\omega$ form a relatively compact subset of $\L^1(U,\omega)$.)
\end{proof}

\begin{remark}
The proof of Theorem \ref{unifmodThm2} is significantly shorter than the proof of 
the corresponding result \cite[Thm.\ 2.3]{partII}.
The reason is that we have proved the auxiliary results about convergence of point processes,
Lemma \ref{BETAUNIFCONVlem}, with the appropriate uniformity with respect to $\vecbeta$,
which could then be carried over to Theorems \ref{Thm2gen} and \ref{unifmodThm2},
thereby avoiding the need of the discussion
\cite[pp.\ 241--244]{partII}.
\end{remark}

The following is the analogue of Theorem \ref{unifmodThm2} for macroscopic initial conditions.
Set
\begin{equation}\label{XXdef2}
X=\big\{ \big\langle\vecq,\vecv,\xi,{\vs},\vecv_+\big\rangle \in \T^1(\R^d)\times\R_{>0}\times\Sigma\times\US \col \vecv_+\in\scrV_{\vecv} \big\}.
\end{equation}
This is the extended phase space; cf.\ Section \ref{sec:linear}. %
\begin{thm}\label{unifmodThm2macr}
Let $\Lambda\in\Pac(\T^1(\R^d))$ 
and let $F$ be a uniformly bounded and pointwise equicontinuous family
of functions $f:X\to\R$.
Then
\begin{align*}%
\int_{\fW(1;\rho)}f\Bigl(\vecq,\vecv,\rho^{d-1}\tau_1(\rho^{1-d}\vecq,\vecv;\rho)
,{\vs}_1(\rho^{1-d}\vecq,\vecv;\rho),\vecv_1(\rho^{1-d}\vecq,\vecv;\rho)\Bigr)\,d\Lambda(\vecq,\vecv)
\hspace{10pt}
\\\notag
-\int_{X} %
f\bigl(\vecq,\vecv,\xi_1,{\vs}_1,\vecv_1\bigr)
p\bigl(\vecv;\xi_1,{\vs}_1,\vecv_1\bigr)
\, d\Lambda(\vecq,\vecv)\,d\xi_1\,d\mm({\vs}_1)\,d\vecv_1
\to0
\end{align*}
as $\rho\to0$,
uniformly with respect to all $f\in F$.
\end{thm}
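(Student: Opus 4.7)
The plan is to mirror the proof of Theorem \ref{unifmodThm2} in the macroscopic setting, using Theorem \ref{Thm2macr} in place of Theorem \ref{Thm2gen}. The key inputs are Theorem \ref{Thm2macr} itself (which provides the basic weak-convergence statement for test functions $f_1(\vecq,\vecv,\xi,\vecomega)$ involving the generic transition kernel $k^{\g}$), together with Lemma \ref{BASICpktransfrem2lem} (which translates integrals of $k^{\g}$ into integrals of the collision kernel $p$).

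First I would reduce to a single $f\in F$. Fix the exceptional point $\vecv_0\in\US$ where $R$ fails to be continuous; since $\Lambda\in\Pac(\T^1(\R^d))$ assigns zero mass to $\R^d\times\{\vecv_0\}$, this point is harmless. Define
\begin{align*}
f_1(\vecq,\vecv,\xi,(\vecw,\vs)):=f\bigl(\vecq,\vecv,\xi,\vs,\Psi_1(\vece_1,s_-(\vecw))R(\vecv)^{-1}\bigr),
\end{align*}
which is bounded and jointly continuous wherever $R$ is continuous in the $\vecv$-variable. Recalling from Section \ref{SCATTERINGMAPS} that $\vecv_1(\rho^{1-d}\vecq,\vecv;\rho)=\Psi_1(\vece_1,s_-(\vecw_1))R(\vecv)^{-1}$ and $\vecomega_1=(\vecw_1,\vs_1)$, evaluation of $f_1$ at $(\vecq,\vecv,\rho^{d-1}\tau_1,\vecomega_1)$ reproduces the integrand in the first line of the theorem. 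Applying Theorem \ref{Thm2macr} to $f_1$---extended, if necessary, via the generalized continuous mapping theorem to bounded Borel functions whose discontinuity set has measure zero relative to $\Lambda\times\mu$ (cf.\ Lemma \ref{KEYyCONVprop3macr})---yields convergence to $\int\int\int f_1\,k^{\g}\,d\mu_\Omega\,d\xi\,d\Lambda$. Then Lemma \ref{BASICpktransfrem2lem} transforms this into the claimed limit with $p$ in place of $k^{\g}$, while the missing contribution from $\T^1(\R^d)\setminus\fW(1;\rho)$ vanishes by Lemma \ref{Thm2macrremlem}.

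To upgrade to uniformity over the equicontinuous, uniformly bounded family $F$, I would follow the scheme of \cite[Thm.\ 2.3; Steps 2--4]{partII}. The main observations are: (i) using Lemma \ref{BASICpkproblem} together with a uniform tail bound coming from Lemma \ref{UNIFKBOUNDGlem1} and Corollary \ref{kgfromkCORRO}, one truncates both sides of the asserted convergence to a bounded region $\xi\in[1/C,C]$, $\vecv_+\in\scrV_\vecv^\eta$ at the cost of an error bounded uniformly in $f\in F$ (here $\scrV_\vecv^\eta$ was defined in \eqref{WVPDEF}); (ii) on the truncated region, pointwise equicontinuity plus a standard finite-cover compactness argument allows one to approximate each $f\in F$ uniformly by an element of a finite set $\{f^{(1)},\ldots,f^{(N)}\}$, for each of which the single-function argument of the previous paragraph applies; (iii) summing the contributions and sending the approximation parameters to zero gives the claimed uniformity.

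The main technical obstacle will be the uniformity step: the domain $X$ is non-compact both in $\xi$ and in $\vecv_+$ (the latter because $\scrV_\vecv$ may have boundary along which $\sigma(\vecv,\vecv_+)$ may blow up), so the truncation must be carried out simultaneously on both fronts, and the tail estimates must be uniform in $\vecv$. Both controls are already implicit in the kernel bounds established in Sections \ref{TRANSKERsec} and \ref{COLLKERsec}, so the argument should go through; however the bookkeeping requires care, especially because pointwise equicontinuity (rather than uniform equicontinuity on compacta) is the hypothesis on $F$ in the statement.
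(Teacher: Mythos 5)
Your proposal is correct and is essentially the paper's own argument: the paper proves Theorem \ref{unifmodThm2macr} by noting it is "similar to the proof of Theorem \ref{unifmodThm2}, but easier," substituting Theorem \ref{Thm2macr} for Theorem \ref{Thm2gen}, which is exactly your reduction via $f_1(\vecq,\vecv,\xi,(\vecw,\vs))=f(\vecq,\vecv,\xi,\vs,\Psi_1(\vece_1,s_-(\vecw))R(\vecv)^{-1})$, Lemma \ref{BASICpktransfrem2lem}, and the uniformity scheme of \cite[Thm.\ 2.3, Steps 2--4]{partII} with the tail/truncation controls you cite. Your handling of the discontinuity point of $R$ (harmless since $\Lambda$ is absolutely continuous) matches the paper's treatment in the microscopic case, so no gap remains.
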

\begin{proof}
This is similar to the proof of Theorem \ref{unifmodThm2}, but easier.
One uses Theorem \ref{Thm2macr} in place of Theorem \ref{Thm2gen}.
\end{proof}

\section{Bounding the probability of grazing a scatterer or hitting $\scrE$}
\label{bndgrazingprobSEC}
In the proof of our main result, Theorem \ref{MAINTECHNTHM2A},
we will also need the following two propositions,
which say that most initial velocities $\vecv$ give rise to
a ``good'' path and scatterer collision,  %
in the sense that
the particle never gets very near any other scatterer before the collision,
the scatterer involved in the collision does not 
belong to the exceptional set $\scrE$,
and the length of the impact parameter %
is not too close to $1$.
\begin{prop}\label{Thm2genaddlem}
For $U,K,T,\lambda$ as in Theorem \ref{Thm2gen} and $\vecbeta\in K$, set
\begin{align*}
\tfw_{\vecq,\rho}^{\vecbeta}:=\{\vecv\in\fw_{\vecq,\rho}^{\vecbeta}\col\vecq^{(1)}(\vecq_{\rho,\vecbeta}(\vecv),\vecv;\rho)
\in\scrP\setminus\scrE\}.
\end{align*}
Then %
$\lambda(\tfw_{\vecq,\rho}^{\vecbeta})\to1$ as $\rho\to0$,
uniformly over all $\vecq\in\scrP_T(\rho)$ and $\vecbeta\in K$.
\end{prop}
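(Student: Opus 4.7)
The plan is to combine Theorem \ref{Thm2gen} with Lemma \ref{EXCHITUNLIKELYlem}. Without loss of generality we may assume $K$ is compact (cf.\ Remark \ref{Thm2genRELCPTrem}), and we set $M := \sup_{\vecbeta \in K}\|\vecbeta\| < \infty$. Given $\ve>0$, the first step is a tail bound on the free path length: applying Theorem \ref{Thm2gen} with a continuous cutoff $\chi$ satisfying $I_{[T_1+1,\infty)}\leq\chi\leq I_{[T_1,\infty)}$, together with Lemma \ref{UNIFKBOUNDGlem1} applied to the transition kernel, allows one to choose $T_1>1$ such that
\begin{align*}
\lambda\bigl(\bigl\{\vecv\in\fw_{\vecq,\rho}^{\vecbeta}\col
\rho^{d-1}\tau_1(\vecq_{\rho,\vecbeta}(\vecv),\vecv;\rho)\geq T_1+1\bigr\}\bigr)<\ve
\end{align*}
for all sufficiently small $\rho$, uniformly over $\vecq\in\scrP_T(\rho)$ and $\vecbeta\in K$.

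Next I would track the location of the first-collision scatterer in the particle frame. By Lemma \ref{ZTAU1CONNECTIONlem}, if $(\vecx,\vs)=\vecz_\rho(\scrQ_\rho(\vecq,\vecbeta,\vecv))\neq\undef$ then
\begin{align*}
\vecq^{(1)}=\vecq+\rho\vecbeta(\vecv)+\vecx D_\rho^{-1}R(\vecv)^{-1},
\end{align*}
with $\vecx\in\fZ_\infty$ and $\xi_\rho(\vecx)=\rho^{d-1}\tau_1$; restricting to $\rho^{d-1}\tau_1\leq T_1+1$ forces $\vecx\in B_0:=[-1,T_1+2]\times\overline{\scrB_1^{d-1}}$ for all $\rho\in(0,1)$. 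Rewriting $\rho\vecbeta(\vecv)=\bigl(\rho\vecbeta(\vecv)R(\vecv)D_\rho\bigr)D_\rho^{-1}R(\vecv)^{-1}$ and noting that $\rho\vecbeta(\vecv)R(\vecv)D_\rho$ has first coordinate $O(\rho^d M)$ and perpendicular coordinates bounded by $M$, one produces a single bounded set $B\subset\R^d$, depending only on $T_1$ and $M$, such that
\begin{align*}
\vecq^{(1)}\in\vecq+BD_\rho^{-1}R(\vecv)^{-1}
\end{align*}
for every $\rho\in(0,1)$, $\vecbeta\in K$ and $\vecv$ satisfying the above tail restriction.

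Consequently, if in addition $\vecq^{(1)}\in\scrE$, then $\scrE\cap(\vecq+BD_\rho^{-1}R(\vecv)^{-1})\neq\emptyset$, and Lemma \ref{EXCHITUNLIKELYlem} gives that the $\lambda$-measure of such $\vecv$ tends to $0$ as $\rho\to0$, uniformly over $\vecq\in\scrP_T(\rho)$. Combining this with the first step and with Remark \ref{Thm2genrem} yields $\lambda(\fw_{\vecq,\rho}^{\vecbeta}\setminus\tfw_{\vecq,\rho}^{\vecbeta})<2\ve$ for all sufficiently small $\rho$, uniformly in $\vecq$ and $\vecbeta$, and since $\ve$ is arbitrary and $\lambda(\fw_{\vecq,\rho}^{\vecbeta})\to1$, the proposition follows. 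The only bookkeeping subtlety is ensuring that the bounded set $B$ can be chosen independently of $\vecbeta\in K$, which is precisely what the compactness of $K$ provides.
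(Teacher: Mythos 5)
Your argument is correct and takes essentially the same route as the paper's own proof: a uniform tail bound on $\rho^{d-1}\tau_1$ obtained from Theorem \ref{Thm2gen} together with Lemma \ref{UNIFKBOUNDGlem1}, the containment $\vecq^{(1)}\in\vecq+B D_\rho^{-1}R(\vecv)^{-1}$ for a single bounded set $B$ depending only on $T_1$ and $\sup_{\vecbeta\in K}\|\vecbeta\|$, and then Lemma \ref{EXCHITUNLIKELYlem}. The only cosmetic difference is that the paper derives this containment directly from the fact that $\vecq^{(1)}$ lies at distance $\rho$ from $\vecq+\rho\vecbeta(\vecv)+\tau_1\vecv$ (so no appeal to $\vecz_\rho$ or Lemma \ref{ZTAU1CONNECTIONlem} is needed), which avoids having to know that $\vecz_\rho(\scrQ_\rho(\vecq,\vecbeta,\vecv))\neq\undef$ for every $\vecv\in\fw_{\vecq,\rho}^{\vecbeta}$.
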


\begin{proof}
Let $\ve>0$ be given.
By Lemma \ref{UNIFKBOUNDGlem1} we can take $T_1>1$ so %
that $\kappa(\vecomega';[T_1-1,\infty)\times\Omega)<\ve$ for all $\vecomega'\in\scrX_\perp$,
and then by Theorem \ref{Thm2gen} there is some $\rho_1\in(0,1)$ such that
\begin{align}\label{Thm2genaddlempf1}
\lambda(\{\vecv\in \fw_{\vecq,\rho}^{\vecbeta}\col\rho^{d-1}\tau_1(\vecq_{\rho,\vecbeta}(\vecv),\vecv;\rho)\geq T_1\})<2\ve
\end{align}
for all $\rho\in(0,\rho_1)$, $\vecq\in\scrP_T(\rho)$, $\vecbeta\in K$.
Let $C:=\sup_{\vecbeta\in K}\|\vecbeta\|$
and $B=[-1,T_1+1]\times\scrB_{C+1}^{d-1}$.
Then by Lemma \ref{EXCHITUNLIKELYlem},
after shrinking $\rho_1$ appropriately we have
\begin{align}\label{Thm2genaddlempf2}
\lambda(\{\vecv\in\US\col\scrE\cap (\vecq+B D_\rho^{-1} R(\vecv)^{-1})\neq\emptyset\})<\ve
\end{align}
for all $\rho\in(0,\rho_1)$, $\vecq\in\scrP_T(\rho)$.
We may also assume $(C+1)\rho_1^d<1$.

Now let 
$\rho\in(0,\rho_1)$, $\vecq\in\scrP_T(\rho)$ and $\vecbeta\in K$ be given,
and consider any 
$\vecv\in \fw_{\vecq,\rho}^{\vecbeta}$
satisfying $\rho^{d-1}\tau_1<T_1$,
where $\tau_1:=\tau_1(\vecq_{\rho,\vecbeta}(\vecv),\vecv;\rho)$.
Then the scattering center $\vecq^{(1)}=\vecq^{(1)}(\vecq_{\rho,\vecbeta}(\vecv),\vecv;\rho)$
has distance $\rho$ from $\vecq+\rho\vecbeta(\vecv)+\tau_1\vecv$, and thus
\begin{align*}
\|(\vecq^{(1)}-\vecq)R(\vecv)-\tau_1\vece_1\|\leq(C+1)\rho,
\end{align*}
and using $(C+1)\rho^d<1$ this is seen to imply
$\vecq^{(1)}\in\vecq+B D_\rho^{-1} R(\vecv)^{-1}$.
Hence, by \eqref{Thm2genaddlempf1} and \eqref{Thm2genaddlempf2}, we have
\begin{align*}
\lambda(\{\vecv\in\fw_{\vecq,\rho}^{\vecbeta}\col\rho^{d-1}\tau_1(\vecq_{\rho,\vecbeta}(\vecv),\vecv;\rho)<T_1
\text{ and }\vecq^{(1)}(\vecq_{\rho,\vecbeta}(\vecv),\vecv;\rho)\notin\scrE\})>1-3\ve,
\end{align*}
and in particular $\lambda(\tfw_{\vecq,\rho}^{\vecbeta})>1-3\ve$.
\end{proof}
To prepare for the next proposition, 
recall \eqref{WVPDEF},
and define
\begin{align}\label{FUETA}
\fU_\eta:=\vecbeta^-_{\vece_1}\bigl(\scrV_{\vece_1}\setminus\scrV_{\vece_1}^{10\eta}\bigr)_\perp\subset\UB,
\qquad \text{for }\:\eta>0.%
\end{align}
(Note that \eqref{FUETA} differs from the notation in 
\cite[(2.33)]{partII}.)
\begin{definition}\label{fgDEF}
For $\vecv\in\fw_{\vecq,\rho}^\vecbeta$, we say that the particle path 
from $\vecq_{\rho,\vecbeta}(\vecv)$ to $\vecq_{\rho,\vecbeta}(\vecv)+\tau_1(\vecq_{\rho,\vecbeta}(\vecv),\vecv;\rho)\vecv$ 
is ``$\eta$-grazing''
if either $\vecw_1(\vecq_{\rho,\vecbeta}(\vecv),\vecv;\rho)\in\fU_\eta$ or
if there is some point $\vecq'\in\scrP\setminus\{\vecq,\vecq^{(1)}(\vecq_{\rho,\vecbeta}(\vecv),\vecv;\rho)\}$
which has distance $<(1+\eta)\rho$ from the line segment between
$\vecq_{\rho,\vecbeta}(\vecv)$ and $\vecq_{\rho,\vecbeta}(\vecv)+\tau_1(\vecq_{\rho,\vecbeta}(\vecv),\vecv;\rho)\vecv$.
Let $\fg_{\vecq,\rho,\eta}^\vecbeta$ be the set of those $\vecv\in\fw_{\vecq,\rho}^\vecbeta$
which give rise to $\eta$-grazing paths.
\end{definition}
\begin{prop}\label{ETAGRAZINGprop}
Let $U,K,T,\lambda$ be as in Theorem \ref{Thm2gen}
and let $\ve>0$.
Then there exist $\eta$ and $\rho_0$ in the interval $(0,1)$ so that
$\lambda(\fg_{\vecq,\rho,\eta}^\vecbeta)<\ve$ for all $\rho\in(0,\rho_0)$,
$\vecq\in\scrP_T(\rho)$, $\vecbeta\in K$.
\end{prop}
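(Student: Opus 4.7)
The plan is to decompose $\fg_{\vecq,\rho,\eta}^\vecbeta=\fg^{(1)}\cup\fg^{(2)}$, where $\fg^{(1)}$ collects those $\vecv\in\fw_{\vecq,\rho}^\vecbeta$ with $\vecw_1(\vecq_{\rho,\vecbeta}(\vecv),\vecv;\rho)\in\fU_\eta$ (first clause of Definition \ref{fgDEF}) and $\fg^{(2)}$ is the near-miss set (second clause). I will bound $\lambda(\fg^{(i)})<\ve/2$ for $i=1,2$, uniformly in $\vecq\in\scrP_T(\rho)$ and $\vecbeta\in K$, for $\eta$ small and $\rho$ sufficiently small. The two key inputs are the uniform upper bound $k\leq c_\scrP v_{d-1}$ from \eqref{kdef} and the intensity estimate of Lemma \ref{GROUNDINTENSITYlem2}.

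For $\fg^{(1)}$: applying Theorem \ref{Thm2gen} to a continuous approximation of the indicator of $\fU_\eta\times\Sigma$ in the $\vecomega$-variable (or directly via Remark \ref{UNIFPORTMlemBrem}, since $\partial\fU_\eta$ has Lebesgue measure zero) shows that $\lambda(\fg^{(1)})$ differs, uniformly in $\vecq$ and $\vecbeta$, by $o(1)$ as $\rho\to 0$ from
\begin{equation*}
I_1(\eta):=\int_U\int_0^\infty\int_\Sigma\int_{\fU_\eta} v_{d-1}^{-1}\,k\bigl(((\vecbeta(\vecv)R(\vecv))_\perp,\vs(\vecq)),\xi,(\vecw,\vs_+)\bigr)\,d\vecw\,d\mm(\vs_+)\,d\xi\,d\lambda(\vecv).
\end{equation*}
By Lemma \ref{UNIFKBOUNDGlem1} the $\xi$-integral may be truncated to $\xi\leq T_0$ with uniform error arbitrarily small, and on the truncation the bound $k\leq c_\scrP v_{d-1}$ gives the majorant $c_\scrP T_0\vol(\fU_\eta)$. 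Since $\vecbeta^-_{\vece_1}$ is a $\C^1$ diffeomorphism, $\vol(\fU_\eta)\to 0$ as $\eta\to 0$, so $\lambda(\fg^{(1)})<\ve/2$ for $\eta$ small and $\rho$ small, uniformly.

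For $\fg^{(2)}$: I would reformulate the near-miss condition in the rescaled frame. A scatterer $\vecq'\in\scrP\setminus\{\vecq,\vecq^{(1)}\}$ within $(1+\eta)\rho$ of the trajectory segment corresponds to a point $\vecy\in\scrQ_\rho(\vecq,\vecbeta,\vecv)\setminus\{\vecz_\rho\}$ of Euclidean distance $<1+\eta$ from the segment $[\bn,\xi_1\vece_1]$, where $\xi_1=\xi_\rho(\vecz_\rho(\scrQ_\rho(\vecq,\vecbeta,\vecv)))$. The minimality of $\xi_\rho(\vecz_\rho)$ over $\scrQ_\rho\cap\fZ_\infty$ prevents any such $\vecy$ from satisfying $\|\vecy_\perp\|<1$ together with $y_1<\xi_1$ (other than $\vecy=\vecz_\rho$), while Lemma \ref{GOODDISTANCElem} rules out points of $\scrQ_\rho$ near $\bn$. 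Hence, after truncating $\xi_1\leq T_0$ via Theorem \ref{Thm2gen} and Lemma \ref{UNIFKBOUNDGlem1} with error $<\ve/4$, $\fg^{(2)}$ is contained in
\begin{equation*}
\bigl\{\vecv\in U\col\scrQ_\rho(\vecq,\vecbeta,\vecv)\cap\bigl((0,T_0+1)\times(\scrB^{d-1}_{1+\eta}\setminus\scrB^{d-1}_1)\bigr)\neq\emptyset\bigr\}.
\end{equation*}
By Lemma \ref{BETAUNIFCONVlem} combined with the portmanteau principle of Remark \ref{UNIFPORTMlemBrem} (the annular set having boundary of Lebesgue measure zero, hence $\mu_\vs$-a.s.\ disjoint from $Y$ by Lemma \ref{GROUNDINTENSITYlem2}), the $\lambda$-measure of this event converges uniformly to a limit bounded by $c_\scrP(T_0+1)v_{d-1}((1+\eta)^{d-1}-1)$, which tends to $0$ as $\eta\to 0$.

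The main obstacle is the geometric reduction at the heart of the $\fg^{(2)}$ bound, namely the observation that the minimality of $\vecz_\rho$ confines any extra near-miss point into the thin annular region of radial width $\eta$; this ensures the final estimate is effectively $O(\eta)$ times the mean-free-path scale $T_0$, independently of $\vecq$ and $\vecbeta$. The remainder is a standard portmanteau argument together with the intensity bound of Lemma \ref{GROUNDINTENSITYlem2}.
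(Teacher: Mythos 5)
Your approach is genuinely different from the paper's: you attempt a direct decomposition of $\fg_{\vecq,\rho,\eta}^\vecbeta$ and reduce the near-miss clause $\fg^{(2)}$ to a point-process event involving the annular set $\scrB_{1+\eta}^{d-1}\setminus\scrB_1^{d-1}$, then close with the intensity bound of Lemma \ref{GROUNDINTENSITYlem2}. The paper instead uses an \emph{inflation trick}: it replaces $(\rho,\vecbeta)$ by $(\hrho,\hbeta)=((1+\eta)\rho,\,(1+\eta)^{-1}\vecbeta+2C\vecv)$ and shows, via a careful geometric case distinction on whether $\hq^{(1)}=\vecq^{(1)}$, that every $\eta$-grazing path at scale $\rho$ either gives a near-grazing impact parameter at scale $\hrho$ or destroys separation of the inflated scatterer, both already controlled by Theorem \ref{Thm2gen}. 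Your $\fg^{(1)}$ bound is correct.

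However, there is a genuine gap in the $\fg^{(2)}$ argument. The claimed containment $\fg^{(2)}\subset\{\vecv\col\scrQ_\rho(\vecq,\vecbeta,\vecv)\cap((0,T_0+1)\times(\scrB_{1+\eta}^{d-1}\setminus\scrB_1^{d-1}))\neq\emptyset\}$ fails for \emph{shadow near-misses}: a scatterer $\vecq'\neq\vecq^{(1)}$ that would be hit \emph{after} $\vecq^{(1)}$ on the same ray and that lies within $(1+\eta)\rho$ of the endpoint of the segment. In rescaled coordinates this $\vecy$ has $\|\vecy_\perp\|<1$, so it does \emph{not} lie in your annulus, while the minimality of $\vecz_\rho$ only forces $y_1\ge\xi_1+\rho^d\sqrt{1-\|\vecy_\perp\|^2}$, not $y_1<\xi_1$; the near-miss condition forces $y_1<\xi_1+\rho^d\sqrt{(1+\eta)^2-\|\vecy_\perp\|^2}$, so the two constraints are compatible. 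One can check (writing $a=\|(\vecz_\rho)_\perp\|$, $b=\|\vecy_\perp\|$, $u=(y_1-(\vecz_\rho)_1)\rho^{-d}$) that the separation condition $u^2+\|\vecy_\perp-(\vecz_\rho)_\perp\|^2>4$ of \eqref{BALLSEPCOND} is satisfiable with $a,b<1$ once $a,b$ are close enough to $1$. A sharper analysis of these constraints does show that a shadow near-miss forces $b>1-O(\sqrt\eta)$, so the argument can be repaired by thickening the annulus to $\scrB_{1+\eta}^{d-1}\setminus\scrB_{1-C\sqrt\eta}^{d-1}$ for a suitable constant $C$; the Lebesgue volume of this thickened annulus is still $O(\sqrt\eta)$, so the final bound still tends to $0$. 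But as written the containment claim is false and the needed constraint analysis (deriving $b>1-O(\sqrt\eta)$ from the separation of the first collision scatterer) is entirely missing. The paper's inflation argument neatly sidesteps precisely this shadow configuration.
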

\begin{proof}
Without loss of generality we assume that $K$ is compact
(cf.\ Remark \ref{Thm2genRELCPTrem}).
Let us set \label{tfUdef}
$\tfU_\eta:=\fU_\eta\cup(\scrB_1^{d-1}\setminus\scrB_{1-\eta}^{d-1})$,
and note that this is a set of the form $\scrB_1^{d-1}\setminus\scrB_{r(\eta)}^{d-1}$ with
$r(\eta)\to1$ as $\eta\to0$.
Using Lemma \ref{UNIFKBOUNDGlem1}, Lemma \ref{GROUNDINTENSITYlem2} and \eqref{SIGMAeqp},
we have $\kappa(\vecomega';\R_{>0}\times\tfU_\eta\times\Sigma)\to0$ as $\eta\to0$,
uniformly over all $\vecomega'\in\scrX_\perp$.
Hence we may fix $\eta\in(0,1)$ so small that
\begin{align}\label{ETAGRAZINGproppf1}
\kappa(\vecomega';\R_{>0}\times\tfU_{2\eta}\times\Sigma)<\frac\ve4,
\qquad\forall\vecomega'\in\scrX_\perp.
\end{align}
Using Theorem \ref{Thm2gen} and Remark \ref{Thm2genrem}, it follows that there is some $\rho_1\in(0,1)$ such that
\begin{align}\label{ETAGRAZINGproppf2}
\lambda(\{\vecv\in\fw_{\vecq,\rho}^{\vecbeta}\col\vecw_1(\vecq_{\rho,\vecbeta}(\vecv),\vecv;\rho)\in\tfU_\eta\})
+\lambda(U\setminus\fw_{\vecq,\rho}^{\vecbeta})
<\frac\ve2
\end{align}
for all $\rho\in(0,\rho_1)$, $\vecq\in\scrP_T(\rho)$, $\vecbeta\in K$.
Let $C:=1+\sup_{\vecbeta\in K}\|\vecbeta\|$ and
\begin{align*}
\hK=\{\hbeta\col\vecbeta\in K\}\qquad
\text{with }\:
\hbeta(\vecv):=(1+\eta)^{-1}\vecbeta(\vecv)+2C\vecv.
\end{align*}
Then $\hK$ is a compact subset of $\C_b(U,\R^d)$ and
$(\vecbeta(\vecv)+\R_{>0}\vecv)\cap\scrB_1^d=\emptyset$ for all $\vecbeta\in\hK$, $\vecv\in U$,
and using Theorem \ref{Thm2gen} again we see that after possibly shrinking $\rho_1$,
\eqref{ETAGRAZINGproppf2} holds also for all
$\rho\in(0,\rho_1)$, $\vecq\in\scrP_T(\rho)$ and $\vecbeta\in\hK$.
Furthermore, by Lemma \ref{GOODDISTANCElem}, we may assume that 
$d_\scrP(\vecq)>5C\rho$ for all $\rho\in(0,\rho_1)$ and $\vecq\in\scrP_T(\rho)$.

Now take any $\rho\in(0,(1+\eta)^{-1}\rho_1)$, $\vecq\in\scrP_T(\rho)$ and $\vecbeta\in K$.
Set $\hrho:=(1+\eta)\rho$.
Then $\hrho\in(0,\rho_1)$, $\vecq\in\scrP_T(\hrho)$ and $\hbeta\in\hK$,
and so by the above we have
\begin{align}\notag
\lambda\Big(\Big\{\vecv\in\fw_{\vecq,\rho}^{\vecbeta}\col\vecw_1(\vecq_{\rho,\vecbeta}(\vecv),\vecv;\rho)\in\fU_\eta
\text{ or }\vecv\notin\fw_{\vecq,\hrho}^{\hbeta}
\text{ or }\vecw_1(\vecq_{\hrho,\hbeta}(\vecv),\vecv;\hrho)\notin\scrB_{1-\eta}^{d-1}\Big\}\Big)
\\\label{ETAGRAZINGproppf3}
<\frac\ve2+\frac\ve2=\ve.
\end{align}
Now assume that $\vecv\in\fw_{\vecq,\rho}^{\vecbeta}$ has the property that 
there exists some point $\vecq'\in\scrP\setminus\{\vecq,\vecq^{(1)}(\vecq_{\rho,\vecbeta}(\vecv),\vecv;\rho)\}$
which has distance $<\hrho$ from some point $\vecx$ on the line segment between
$\vecq_{\rho,\vecbeta}(\vecv)$ and $\vecq_{\rho,\vecbeta}(\vecv)+\tau_1\vecv$,
where $\tau_1=\tau_1(\vecq_{\rho,\vecbeta}(\vecv),\vecv;\rho)$.
Assume also $\vecv\in\fw_{\vecq,\hrho}^{\hbeta}$,
and set $\vecq^{(1)}=\vecq^{(1)}(\vecq_{\rho,\vecbeta}(\vecv),\vecv;\rho)$,
$\hq^{(1)}=\vecq^{(1)}(\vecq_{\hrho,\hbeta}(\vecv),\vecv;\hrho)$,
and $\htau_1=\tau_1(\vecq_{\hrho,\hbeta}(\vecv),\vecv;\hrho)$.
Note that $\vecq_{\hrho,\hbeta}(\vecv)=\vecq_{\rho,\vecbeta}(\vecv)+2C\hrho\vecv$.
Also %
$d_\scrP(\vecq)>5C\rho$, whence $\tau_1>2C\hrho$,
and it follows that the line segment from
$\vecq_{\hrho,\hbeta}(\vecv)$ to $\vecq_{\hrho,\hbeta}(\vecv)+\htau_1\vecv$ 
is a strict subset of the line segment from
$\vecq_{\rho,\vecbeta}(\vecv)$ and $\vecq_{\rho,\vecbeta}(\vecv)+\tau_1\vecv$.
If $\hq^{(1)}=\vecq^{(1)}$, then $\vecx$ must lie 
between $\vecq_{\hrho,\hbeta}(\vecv)+\htau_1\vecv$ and $\vecq_{\rho,\vecbeta}(\vecv)+\tau_1\vecv$;
this implies that 
$\vecx\in\scrB^d\bigl(\hq^{(1)},\hrho\bigr)\cap\scrB^d\bigl(\vecq',\hrho\bigr)$,
i.e.\ the scatterer 
$\scrB^d\bigl(\hq^{(1)},\hrho\bigr)$
is not separated, contradicting
$\vecv\in\fw_{\vecq,\hrho}^{\hbeta}$
{\blu (cf.\ the definitions at the beginning of Section \ref{FIRSTCOLLsec}).}
Hence $\hq^{(1)}\neq\vecq^{(1)}$,
and then from the definitions of these points it follows that $\hq^{(1)}$ has distance $\geq\rho$ from the 
ray $\vecq_{\hrho,\hbeta}(\vecv)+\R_{>0}\vecv$,
and so $\|\vecw_1(\vecq_{\hrho,\hbeta}(\vecv),\vecv;\hrho)\|\geq(1+\eta)^{-1}>1-\eta$,
i.e.\ $\vecv$ belongs to the set in \eqref{ETAGRAZINGproppf3}.

It follows from the above 
discussion that $\fg_{\vecq,\rho,\eta}^\vecbeta$ is a subset of the set in \eqref{ETAGRAZINGproppf3}.
Hence $\lambda(\fg_{\vecq,\rho,\eta}^\vecbeta)<\ve$ for all $\rho\in(0,(1+\eta)^{-1}\rho_1)$,
$\vecq\in\scrP_T(\rho)$, $\vecbeta\in K$,
and the proposition is proved.
\end{proof}

\subsection{Macroscopic initial conditions}
The macroscopic analogue of Proposition \ref{Thm2genaddlem} is as follows.
\begin{prop}\label{Thm2genaddgenlem}
Let $\Lambda\in \Pac(\T^1(\R^d))$. Then
\begin{align*}
\Lambda(\{(\vecq,\vecv)\in\fW(1;\rho)\col\vecq^{(1)}(\rho^{1-d}\vecq,\vecv;\rho)\in\scrE\})\to0
\qquad\text{as }\:\rho\to0.
\end{align*}
\end{prop}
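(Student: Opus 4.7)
The plan is to split the event according to the scaled free path length, and then exploit the asymptotic density zero of $\scrE$ via a change-of-variables argument similar to the one used in the proof of Proposition \ref{GENLIMITprop1}. By a standard density argument (via $\C_c$-approximation in $\L^1$), it suffices to treat $\Lambda \in \Pac(\T^1(\R^d))$ with density $\Lambda' \in \C_c(\T^1(\R^d))$; fix such a $\Lambda$ with $\|\Lambda'\|_\infty \leq M$ and $\supp \Lambda' \subset \scrB_R^d \times \US$, and let $\ve > 0$.

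First, I would apply Theorem \ref{Thm2macr} with the bounded continuous test function $f(\vecq,\vecv,\xi,\vs_1,\vecv_1) = \chi(\xi)$, where $\chi$ is a cutoff equal to $1$ on $[T_1,\infty)$ and $0$ on $[0,T_1-1]$. Since $\int_0^\infty \int_\Omega k^{\g}(\xi,\vecomega)\,d\mu_\Omega\,d\xi = 1$, the tail $\int_{T_1-1}^\infty \int_\Omega k^{\g} \,d\mu_\Omega\,d\xi$ can be made arbitrarily small, so for $T_1$ large and $\rho$ small,
\begin{align*}
\Lambda\bigl(\bigl\{(\vecq,\vecv)\in\fW(1;\rho)\col \rho^{d-1}\tau_1(\rho^{1-d}\vecq,\vecv;\rho)\geq T_1\bigr\}\bigr) < \ve.
\end{align*}
It then remains to control the contribution from initial data with $\rho^{d-1}\tau_1 < T_1$ and $\vecq^{(1)} \in \scrE$.

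The collision relation $\rho^{1-d}\vecq + \tau_1\vecv = \vecq^{(1)} + \rho\vecu_1$, rewritten in macroscopic coordinates, yields $\vecq = \rho^{d-1}\vecq^{(1)} + \rho^d\vecu_1 - \rho^{d-1}\tau_1\vecv$. Thus for each $\vecq' \in \scrE$, the set of compatible $(\vecq,\vecv)$ lies in the ``tube''
\begin{align*}
A_{\vecq'} := \bigl\{(\vecq,\vecv) \in \R^d\times\US \col \vecq \in \rho^{d-1}\vecq' + \rho^d\,\overline{\scrB_1^d} - \vecv\cdot[0,T_1]\bigr\}.
\end{align*}
For fixed $\vecv$, the $\vecq$-slice of $A_{\vecq'}$ is the Minkowski sum of a line segment of length $T_1$ with a ball of radius $\rho^d$, so its Lebesgue measure is at most $CT_1\rho^{d(d-1)}$; integrating over $\vecv \in \US$ and bounding by $\|\Lambda'\|_\infty \leq M$ gives $\Lambda(A_{\vecq'}) \leq C'T_1\rho^{d(d-1)}$ with $C'$ independent of $\vecq'$ and $\rho$.

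Finally, any $\vecq' \in \scrE$ contributing nontrivially must satisfy $\|\vecq'\| \leq (R+T_1+1)\rho^{1-d}$, and since $\scrE$ has asymptotic density zero by [P2], we have $\#(\scrE \cap \scrB^d_{(R+T_1+1)\rho^{1-d}}) = o(\rho^{-d(d-1)})$ as $\rho \to 0$. Summing $\Lambda(A_{\vecq'})$ over these points yields an $o(1)$ bound, which combined with the $\ve$-bound for long free paths and letting $\ve \to 0$ finishes the argument. The only mildly technical point is keeping the tube volume estimate sharp so as not to lose powers of $\rho$, but this is elementary Euclidean geometry; the essential input is the density-zero property of $\scrE$, with Theorem \ref{Thm2macr} (or equivalently Lemma \ref{ASS:bdfreepathgenlem}) furnishing the required a priori bound on the free path length.
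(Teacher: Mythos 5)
Your proof is correct and follows essentially the same route as the paper: the paper derives the result by adapting the proof of Proposition \ref{Thm2genaddlem}, using Theorem \ref{Thm2macr} to truncate at a large rescaled path length $T_1$, and then proving exactly your key estimate — each point of $\scrE$ contributes at most $\|\Lambda'\|_\infty\vol(B)\omega(\US)\rho^{d(d-1)}$ to the relevant event, only points with $\|\vecq'\|\ll\rho^{1-d}$ contribute, and the density-zero property of $\scrE$ from [P2] finishes the argument. Your capsule (segment plus $\rho^d$-ball) is just the paper's set $BD_\rho^{-1}R(\vecv)^{-1}$ in macroscopic coordinates, so the two computations coincide.
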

\begin{proof}
The proof of Proposition \ref{Thm2genaddlem} carries over with simple modifications,
using Theorem~\ref{Thm2macr}  in place of Theorem \ref{Thm2gen}.
The only step which is not immediate is %
the following fact,
which is the required analogue of \eqref{Thm2genaddlempf2}:
For any relatively compact set $B\subset\R^d$,
\begin{align}\label{Thm2genaddgenlempf1}
\Lambda\bigl(\bigl\{\scrE\cap\bigl(\rho^{1-d}\vecq+BD_\rho^{-1}R(\vecv)^{-1}\bigr)\neq\emptyset\bigr\}\bigr)\to0
\quad\text{as }\:\rho\to0.
\end{align}
To prove \eqref{Thm2genaddgenlempf1}, we first note that,
as in the proof of Prop.\ \ref{GENLIMITprop1}, we may reduce to the case when
$\Lambda$ has a density
$\Lambda'\in\C_c(\T^1(\R^d))$ with respect to $\vol_{\R^d\times\US}$.  %
Then each point $\vecp\in\scrE$ gives a contribution to the expression in %
\eqref{Thm2genaddgenlempf1} which is bounded above by
\begin{align*}
\Bigl(\sup_{\T^1(\R^d)}|\Lambda'|\Bigr)\cdot
\int_{\R^d}\int_{\US}I\bigl(\vecp\in\bigl(\rho^{1-d}\vecq+BD_\rho^{-1}R(\vecv)^{-1}\bigr)\bigr)\,d\vecv\,d\vecq
\hspace{70pt}
\\
=\Bigl(\sup_{\T^1(\R^d)}|\Lambda'|\Bigr)\vol(B)\omega(\US)\cdot\rho^{d(d-1)}.
\end{align*}
Take $R,R'>0$ so that $\supp\Lambda'\subset\scrB_R^d\times\US$ and $B\subset\scrB_{R'}^d$.
Then $BD_\rho^{-1}\subset\scrB_{\rho^{1-d}R'}^d$,
and therefore only points $\vecp\in\scrE$ with 
$\|\vecp\|<\rho^{1-d}(R+R')$ can give any contribution to the expression in \eqref{Thm2genaddgenlempf1}.
Hence that expression is bounded above by
\begin{align*}
\#\bigl(\scrE\cap\scrB_{\rho^{1-d}(R+R')}^d\bigr)\cdot\Bigl(\sup_{\T^1(\R^d)}|\Lambda'|\Bigr)
\vol(B)\omega(\US)\cdot\rho^{d(d-1)}.
\end{align*}
Now \eqref{Thm2genaddgenlempf1}
follows from %
{\blu the fact that $\scrE$ has asymptotic density zero (cf.\ [P2]).}
\end{proof}

Finally we give the macroscopic analogue of Proposition \ref{ETAGRAZINGprop}.
\begin{definition}
Let $\fG_{\rho,\eta}$ be the set of all $(\vecq,\vecv)\in\fW(1;\rho)$ which give rise to $\eta$-grazing paths,
\label{fGrhoetaDEF}
i.e.\ paths such that $\vecw_1(\rho^{1-d}\vecq,\vecv;\rho)\in\fU_\eta$
or such that there exists some $\vecq'\in\scrP\setminus\{\vecq^{(1)}(\rho^{1-d}\vecq,\vecv;\rho)\}$
which has distance $<(1+\eta)\rho$ from the line segment between $\rho^{1-d}\vecq$
and $\rho^{1-d}\vecq+\tau_1(\rho^{1-d}\vecq,\vecv;\rho)\vecv$.
\end{definition}
\begin{prop}\label{ETAGRAZINGgenprop}
Let $\Lambda\in\Pac(\T^1(\R^d))$
and $\ve>0$.
Then there exist $\eta$ and $\rho_0$ in the interval $(0,1)$ such that
$\Lambda(\fG_{\rho,\eta})<\ve$ for all $\rho\in(0,\rho_0)$.
\end{prop}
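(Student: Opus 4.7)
The plan is to mimic the proof of Proposition \ref{ETAGRAZINGprop}, with Theorem \ref{Thm2macr} replacing Theorem \ref{Thm2gen}, the generic transition kernel $\kappa^{\g}$ replacing $\kappa$, and the absolute continuity of $\Lambda$ taking the place of Lemma \ref{GOODDISTANCElem}.

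Set $\tfU_\eta:=\fU_\eta\cup(\scrB_1^{d-1}\setminus\scrB_{1-\eta}^{d-1})$ as in the proof of Proposition \ref{ETAGRAZINGprop}. Because $k^{\g}\le c_\scrP v_{d-1}$, $\kappa^{\g}$ is a probability measure on $\R_{>0}\times\Omega$, and $\mu_\Omega(\tfU_\eta\times\Sigma)\to0$ as $\eta\to0$, I would fix $\eta\in(0,1)$ small enough that $\kappa^{\g}(\R_{>0}\times\tfU_{2\eta}\times\Sigma)<\ve/8$. Theorem \ref{Thm2macr}, combined with Lemma \ref{Thm2macrremlem}, then yields some $\rho_1\in(0,1)$ such that for every $\rho\in(0,\rho_1)$,
\begin{align*}
\Lambda\bigl(\{(\vecq,\vecv)\in\fW(1;\rho)\col \vecw_1(\rho^{1-d}\vecq,\vecv;\rho)\in\tfU_\eta\}\bigr)+\Lambda(\T^1(\R^d)\setminus\fW(1;\rho))<\ve/2.
\end{align*}

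Now set $\hrho:=(1+\eta)\rho$, fix a constant $C>0$ to be chosen sufficiently large, and let $\tLambda_\rho$ be the pushforward of $\Lambda$ by
\begin{align*}
S_\rho:(\vecq,\vecv)\mapsto\bigl((1+\eta)^{d-1}\vecq+2C(1+\eta)^d\rho^d\vecv,\:\vecv\bigr),
\end{align*}
chosen so that if $(\tq,\vecv):=S_\rho(\vecq,\vecv)$ then $\hrho^{1-d}\tq=\rho^{1-d}\vecq+2C\hrho\vecv$; thus $\tLambda_\rho$ corresponds, in $\hrho$-microscopic coordinates, to advancing the starting point by $2C\hrho\vecv$ along $\vecv$. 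Since $\Lambda$ has an $\L^1$-density with respect to Liouville measure, $\L^1$-continuity of translation gives $\|\tLambda_\rho-\Lambda_\eta\|_{\mathrm{TV}}\to0$ as $\rho\to0$, where $\Lambda_\eta$ is the pushforward of $\Lambda$ by $(\vecq,\vecv)\mapsto((1+\eta)^{d-1}\vecq,\vecv)$. Applying Theorem \ref{Thm2macr} to $\Lambda_\eta\in\Pac(\T^1(\R^d))$ with radius $\hrho$, and absorbing the total-variation error, one obtains for all sufficiently small $\rho$
\begin{align*}
\tLambda_\rho\bigl(\{(\tq,\vecv)\in\fW(1;\hrho)\col \vecw_1(\hrho^{1-d}\tq,\vecv;\hrho)\notin\scrB_{1-\eta}^{d-1}\}\bigr)<\ve/4.
\end{align*}
Moreover, since $\kappa^{\g}$ has a bounded density, Theorem \ref{Thm2macr} also yields $\Lambda(\{\rho^{d-1}\tau_1(\rho^{1-d}\vecq,\vecv;\rho)\le (2C+1)(1+\eta)\rho^d\})<\ve/8$ for $\rho$ small, so that $\tau_1>(2C+1)\hrho$ with $\Lambda$-probability exceeding $1-\ve/8$.

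The final step is the combinatorial dichotomy of the microscopic proof, which transfers unchanged. Suppose $(\vecq,\vecv)\in\fG_{\rho,\eta}$, $\vecw_1(\rho^{1-d}\vecq,\vecv;\rho)\notin\fU_\eta$, and $\tau_1>(2C+1)\hrho$. Then the enlarged-radius trajectory starting at $\rho^{1-d}\vecq+2C\hrho\vecv$ is a strict sub-segment of the original trajectory (the bound on $\tau_1$ ensuring $\tq$ lies outside $\scrB^d(\vecq^{(1)},\hrho)$), and a near-miss with some $\vecq'\in\scrP\setminus\{\vecq^{(1)}\}$ at distance $<\hrho$ then either forces the shifted initial data out of $\fW(1;\hrho)$ (because the first-collision scatterer is not separated, cf.\ the corresponding argument in the proof of Proposition \ref{ETAGRAZINGprop}), or else yields a different first-collision center $\hq^{(1)}\ne\vecq^{(1)}$, forcing $\|\vecw_1(\hrho^{1-d}\tq,\vecv;\hrho)\|\ge(1+\eta)^{-1}>1-\eta$. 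Summing the three error bounds gives $\Lambda(\fG_{\rho,\eta})<\ve/2+\ve/4+\ve/8<\ve$ for all sufficiently small $\rho$. The main obstacle is that $\rho$ enters both the scatterer radius and, through $S_\rho$, the initial distribution, so Theorem \ref{Thm2macr} cannot be applied directly to $\Lambda$ at radius $\hrho$ with shifted start; the resolution is $\L^1$-continuity of translation, which gives $\|\tLambda_\rho-\Lambda_\eta\|_{\mathrm{TV}}\to0$ and reduces matters to Theorem \ref{Thm2macr} for the fixed measure $\Lambda_\eta$. The remaining geometric step is a routine transcription of the microscopic argument.
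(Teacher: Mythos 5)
The central idea of your proof is correct — scale by $(1+\eta)^{d-1}$, run Theorem \ref{Thm2macr} at radius $\hrho=(1+\eta)\rho$, and bound the remaining events — but there is a genuine gap: the shift $2C\hrho\vecv$ you import from the microscopic proof is not only unnecessary here, it breaks the combinatorial dichotomy.

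In Proposition \ref{ETAGRAZINGprop}, the replacement $\vecbeta\rightsquigarrow\hbeta$, whose effect is precisely to move the start forward by $2C\hrho\vecv$, is forced because the particle starts on the boundary of the scatterer at $\vecq\in\scrP$: when the radius is inflated to $\hrho$, the unshifted starting point $\vecq_{\rho,\vecbeta}(\vecv)$ is swallowed by $\scrB^d(\vecq,\hrho)$, and one must march past it. The price is a ``head'' segment of length $2C\hrho$ that the enlarged-radius trajectory never traverses; that head is harmless there only because Lemma \ref{GOODDISTANCElem} gives $d_\scrP(\vecq)>5C\rho$ uniformly over $\vecq\in\scrP_T(\rho)$, so no other point of $\scrP$ can come within $\hrho$ of the head. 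In the macroscopic setting neither the reason for the shift nor the control on the head is available: the initial point $\rho^{1-d}\vecq$ is Lebesgue-generic, so it lies in $\scrK_\hrho^\circ$ anyway (or else $(\rho^{1-d}\vecq,\vecv)\notin\fw(1;\hrho)$, an event your Theorem \ref{Thm2macr} bound already absorbs), and there is no $d_\scrP$-type lower bound on the distance from $\rho^{1-d}\vecq$ to $\scrP\setminus\{\vecq^{(1)}\}$. Consequently, a near-miss $\vecx=\rho^{1-d}\vecq+t\vecv$ with $0\le t<2C\hrho$ and $\|\vecx-\vecq'\|<\hrho$ for some $\vecq'\in\scrP\setminus\{\vecq^{(1)}\}$ lies entirely behind $\tq=\rho^{1-d}\vecq+2C\hrho\vecv$; the shifted enlarged-radius ray need not enter $\scrB^d(\vecq',\hrho)$ at all, and can perfectly well hit $\vecq^{(1)}$ with a separated scatterer and impact parameter well inside $\scrB_{1-\eta}^{d-1}$. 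Such $(\vecq,\vecv)$ belong to $\fG_{\rho,\eta}$ but to none of your three bad events, and neither your $\tau_1>(2C+1)\hrho$ estimate nor the total-variation comparison $\|\tLambda_\rho-\Lambda_\eta\|_{\mathrm{TV}}\to 0$ touches this case. Salvaging the shift would require an extra estimate showing $\Lambda$ gives vanishing mass to configurations with a point of $\scrP$ inside the head tube (a Siegel–Veech type bound, of order $\rho^d$), which you have not supplied.

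The paper's proof avoids all of this by dropping the shift entirely: it applies Theorem \ref{Thm2macr} and Remark \ref{Thm2macrrem} to $\Lambda\circ S^{-1}$ with $S(\vecq,\vecv)=((1+\eta)^{d-1}\vecq,\vecv)$, evaluated at radius $\hrho$, which — because $\hrho^{1-d}(1+\eta)^{d-1}\vecq=\rho^{1-d}\vecq$ — yields a bound on the probability that $(\rho^{1-d}\vecq,\vecv)\notin\fw(1;\hrho)$ or $\vecw_1(\rho^{1-d}\vecq,\vecv;\hrho)\in\tfU_\eta$, i.e.\ on the \emph{unshifted} enlarged-radius path starting at $\rho^{1-d}\vecq$ itself. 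With this choice the enlarged-radius segment $[\rho^{1-d}\vecq,\rho^{1-d}\vecq+\htau_1\vecv]$ is an initial sub-segment of the original path with no head, and the dichotomy closes: a near-miss at $t<\htau_1$ forces the inflated path to hit $\vecq'$ first (so either $(\rho^{1-d}\vecq,\vecv)\notin\fw(1;\hrho)$ or $\hq^{(1)}\ne\vecq^{(1)}$), while a near-miss at $t\in[\htau_1,\tau_1]$ forces the scatterer at $\vecq^{(1)}$ to be non-separated at radius $\hrho$. Your $\tau_1$-estimate and the TV argument then become superfluous. If you remove the shift — take $S_\rho=S$, drop the short-$\tau_1$ bound, and replace the appeal to ``the corresponding argument in the proof of Proposition \ref{ETAGRAZINGprop}'' by the unshifted dichotomy just described — the remainder of your proof goes through.
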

\begin{proof}
The proof of Proposition \ref{ETAGRAZINGprop} carries over with some modifications.
First, fix $\eta\in(0,1)$ so small that
\begin{align*}
\kappa^{\g}\bigl(\R_{>0}\times\tfU_{2\eta}\times\Sigma\bigr)<\frac{\ve}4,
\end{align*}
where $\tfU_\eta=\fU_\eta\cup(\scrB_1^{d-1}\setminus\scrB_{1-\eta}^{d-1})$ as before.
We introduce the scaling map
\begin{align*}
S:\T^1(\R^d)\to\T^1(\R^d),\qquad
S(\vecq,\vecv)=((1+\eta)^{d-1}\vecq,\vecv).
\end{align*}
By Theorem \ref{Thm2macr}  and Remark \ref{Thm2macrrem},
applied to both the measures $\Lambda$ and $\Lambda\circ S^{-1}$,
it follows that there exists some $\rho_1\in(0,1)$
such that for every $\rho\in(0,\rho_1)$ we have
\begin{align}\label{ETAGRAZINGgenproppf1}
\Lambda\bigl(\{(\vecq,\vecv)\in\T^1(\R^d)\col (\vecq,\vecv)\notin\fW(1;\rho)\:\text{ or }\:
\vecw_1(\rho^{1-d}\vecq,\vecv;\rho)\in\tfU_\eta\}\bigr)<\frac{\ve}2
\end{align}
as well as
\begin{align}\label{ETAGRAZINGgenproppf2}
\Lambda\circ S^{-1}\bigl(\{(\vecq,\vecv)\in\T^1(\R^d)\col (\vecq,\vecv)\notin\fW(1;\rho)\:\text{ or }\:
\vecw_1(\rho^{1-d}\vecq,\vecv;\rho)\in\tfU_\eta\}\bigr)<\frac{\ve}2.
\end{align}

Now take any $\rho\in(0,(1+\eta)^{-1}\rho_1)$.
Set $\hrho:=(1+\eta)\rho$.
Then \eqref{ETAGRAZINGgenproppf2} holds with $\hrho$ in place of $\rho$,
and %
this statement %
can be equivalently expressed as:
\begin{align*}
\Lambda\bigl(\bigl\{(\vecq,\vecv)\in\T^1(\R^d)\col
(\rho^{1-d}\vecq,\vecv)\notin\fw(1;\hrho)\:\text{ or }\:
\vecw_1(\rho^{1-d}\vecq,\vecv;\hrho)\in\tfU_\eta\bigr\}\bigr)<\frac{\ve}2.
\end{align*}
The last bound together with \eqref{ETAGRAZINGgenproppf2} imply:
\begin{align}\notag
\Lambda\Bigl(\Bigl\{(\vecq,\vecv)\in\fW(1;\rho)\col
\vecw_1(\rho^{1-d}\vecq,\vecv;\rho)\in\fU_\eta
\:\text{ or }\:
(\rho^{1-d}\vecq,\vecv)\notin\fw(1;\hrho)
\hspace{50pt}
\\\label{ETAGRAZINGgenproppf3}
\:\text{ or }\:
\vecw_1(\rho^{1-d}\vecq,\vecv;\hrho)\notin\scrB_{1-\eta}^{d-1}\Bigr\}\Bigr)<\ve.
\end{align}
By the same argument as in the proof of Proposition \ref{ETAGRAZINGprop},
$\fG_{\rho,\eta}$ is verified to be a subset of the set in 
the left hand side of \eqref{ETAGRAZINGgenproppf3}.
Hence the proposition is proved.
\end{proof}

\chapter{Convergence to a random flight process}
\label{MAINRESsec}

\section{Joint distribution of path segments}
\label{MAINTECHNthm2aSEC}

Theorem \ref{MAINTECHNTHM2A} below is our first main result;
it gives the limit of the joint distribution of the first $n$ flight segments
and the marks of the corresponding scatterers.
It generalizes \cite[Thm.\ 4.1]{partII}
(specialized to start from a scatterer)
from the case of a lattice to the case of an arbitrary point set $\scrP$ satisfying the assumptions in 
Section \ref{ASSUMPTLISTsec}.

Recall the definitions of $\fw(j;\rho)$,
$\vecq^{(j)}(\vecq,\vecv;\rho)$,
$\vs_j(\vecq,\vecv;\rho)$, 
$\vecw_j(\vecq,\vecv;\rho)$, 
$\vecq_j(\vecq,\vecv;\rho)$
and $\vecv_j(\vecq,\vecv;\rho)$
given in Section \ref{SCATTERINGMAPS}.
Given an open subset $U\subset\US$ and a function $\vecbeta\in \C_b(U,\R^d)$, we set
\begin{align}\label{fwqrhonDEF}
\fw_{\vecq,\rho,n}^{\vecbeta}:=\{\vecv\in U\col(\vecq_{\rho,\vecbeta}(\vecv),\vecv)\in\fw(n;\rho)\}.
\end{align}
This notation generalizes that of \eqref{wqrbdef}, in that
$\fw_{\vecq,\rho}^{\vecbeta}=\fw_{\vecq,\rho,1}^\vecbeta$.
We also introduce the following notation generalizing \eqref{XSPACEdef1}:
\begin{align}\label{XSPACEdef}
X_U^{(n)}\hspace{-2pt}:=\hspace{-2pt}\Bigl\{\langle\vecv_0,\langle\xi_j,{\vs}_j,\vecv_j\big\rangle_{j=1}^{n}\rangle\in
U\hspace{-1pt}\times\hspace{-1pt}(\R_{>0}\times\hspace{-1pt}\Sigma\hspace{-1pt}\times\hspace{-1pt}\US)^n
:\: \vecv_j\in\scrV_{\vecv_{j-1}}\: %
(j=1,\ldots,n)\Bigr\}.
\end{align}

For $\vecv\in\US$, the tangent space $\T_\vecv(\US)$ is naturally identified with 
the orthogonal complement of $\vecv$ in $\R^d$.
For $\vech\in\T_\vecv(\US)$, we write $D_\vech$ for the corresponding derivative.
We use the standard Riemannian metric for $\S_1^{d-1}$, and denote by $\T^1_\vecv(\S_1^{d-1})$ the set of 
unit vectors in $\T_\vecv(\S_1^{d-1})$. 
For any %
open subset $U\subset\S_1^{d-1}$ we write 
\begin{align*}
\T^1(U)=\bigsqcup_{\vecv\in U} \T^1_\vecv(\S_1^{d-1}).
\end{align*}
for the unit tangent bundle of $U$.\label{UNITTBDLEofU}

We will formulate the limit result of Theorem \ref{MAINTECHNTHM2A}
in a way that is uniform with respect to certain families of functions $\vecbeta:U\to\R^d$.
This will be crucial for making it possible to prove the theorem by induction over $n$.
\begin{definition}
For $U$ an open subset of $\US$,
let $\C^1_b(U,\R^d)$ be the space of $\C^1$ functions $\vecbeta:U\to\RR^{d}$ which are bounded and satisfy
$\sup_{\vech\in \T^1(U)} \|D_\vech\vecbeta\|<\infty$.
\label{Cb1URddef}
We call a subset $F$ of $\C^1_b(U,\R^d)$ 
\textit{admissible}
if it is relatively compact as a subset of $\C_b(U,\R^d)$
and satisfies $\sup_{\vecbeta\in F}\sup_{\vech\in \T^1(U)} \|D_\vech\vecbeta\|<\infty$
and $(\vecbeta(\vecv)+\R_{>0}\vecv)\cap\scrB_1^d=\emptyset$ for all $\vecbeta\in F$ and $\vecv\in U$.
\end{definition}
\begin{thm}\label{MAINTECHNTHM2A}
Let $\scrP$ {\blu and $\scrE$}
satisfy all the conditions in Section \ref{ASSUMPTLISTsec} and \eqref{SIGMAeqp},
and let $\Psi$ be a scattering process satisfying the conditions in Section \ref{SCATTERINGMAPS}.
Let $n\in\Z_{\geq1}$ and $T\in\R_{\geq1}$;
let $U$ be an open subset of $\US$; %
let $F_1$ be an equismooth family of probability measures on $\S_1^{d-1}$ 
such that $\lambda(U)=1$ for each $\lambda\in F_1$;
let $F_2$ be a uniformly bounded and pointwise equicontinuous family
of functions $f:{X_U^{(n)}}\to\R$; %
and let $F_3$ be an admissible subset of $\C^1_b(U,\R^d)$.
Then
\begin{align}\notag
\int_{\fw_{\vecq,\rho,n}^{\vecbeta}}f\Bigl(\vecv,\Big\langle\rho^{d-1}\tau_j(\vecq_{\rho,\vecbeta}(\vecv),\vecv;\rho)
,{\vs}_j(\vecq_{\rho,\vecbeta}(\vecv),\vecv;\rho),
\vecv_j(\vecq_{\rho,\vecbeta}(\vecv),\vecv;\rho)\Big\rangle_{j=1}^{n}\Bigr)\,d\lambda(\vecv)
\\\label{unifmodThm2ares}
-\int_{{X_U^{(n)}}} %
f\bigl(\vecv_0,\big\langle\xi_j,{\vs}_j,\vecv_j\big\rangle_{j=1}^{n}\bigr)
p_{\bn,\vecbeta}\bigl(\vs(\vecq),\vecv_0;\xi_1,{\vs}_1,\vecv_1\bigr)
\hspace{85pt}
\\\notag
\times\prod_{j=2}^n p_\bn(\vecv_{j-2},{\vs}_{j-1},\vecv_{j-1};\xi_j,{\vs}_{j},\vecv_{j})
\, d\lambda(\vecv_0)\,\prod_{j=1}^n\bigl( d\xi_j\,d\mm({\vs}_j)\,d\vecv_j\bigr)\to0
\end{align}
as $\rho\to0$,
uniformly with respect to all $\vecq\in\scrP_T(\rho)$, $\lambda\in F_1$, $f\in F_2$, $\vecbeta\in F_3$.
\end{thm}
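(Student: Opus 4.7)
The plan is to prove the theorem by induction on $n$. The base case $n=1$ is precisely Theorem \ref{unifmodThm2}, so we may assume $n\geq 2$ and that the statement holds for $n-1$, uniformly over arbitrary admissible $F_3'$, equismooth $F_1'$, and pointwise equicontinuous uniformly bounded $F_2'$.

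For the induction step, I would first use Propositions \ref{Thm2genaddlem} and \ref{ETAGRAZINGprop} to discard, up to an arbitrarily small error $\ve$, the set of initial velocities $\vecv$ for which either $\vecq^{(1)}(\vecq_{\rho,\vecbeta}(\vecv),\vecv;\rho)\in\scrE$, or the first path is $\eta$-grazing. By Lemma \ref{UNIFKBOUNDGlem1} (translated via Theorem \ref{Thm2gen}) I would further restrict to $\rho^{d-1}\tau_1 \leq T_1$ for some large $T_1$, and (via Lemma \ref{REMOVINGCPTSUPPfactpfLEM1}) to $\vecv_1\in\scrV_{\vecv}^\eta$ lying in a compact subset. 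On the remaining "good" set the post-collision data can be safely used as the initial condition for an $(n-1)$-fold collision problem: writing $\vecq^{(1)}\in\scrP\setminus\scrE$ for the first scattering center and $\vecv_1\in\scrV_{\vecv_0}^\eta$ for the post-collision velocity, one has $\vecq_1=\vecq^{(1)}+\rho\,\vecbeta_{\vecv_0}'(\vecv_1)$ with
\begin{equation*}
\vecbeta_{\vecv_0}'(\vecv_1):=\Psi_2\bigl(\vecv_0,\vecbeta^-_{\vecv_0}(\vecv_1)\bigr).
\end{equation*}
Moreover $\vecq^{(1)}\in\scrP_{T+T_1+1}(\rho)$ for small $\rho$ (since $\|\vecq^{(1)}-\vecq\|\leq\tau_1+2\rho$).

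Next I would apply Theorem \ref{unifmodThm2} to an enlarged pointwise equicontinuous family $F_2^{\,\star}$ whose elements are the functions $(\vecv_0,\xi_1,\vs_1,\vecv_1)\mapsto f(\vecv_0,\xi_1,\vs_1,\vecv_1,\cdots)$ obtained by fixing the remaining arguments via an $(n-1)$-tuple test function (this enlargement is handled by a standard approximation using uniform continuity of $f\in F_2$ on a compact subset of $X_U^{(n)}$, and $L^1$-density arguments as in \cite[Thm.\ 2.3, Steps 2--4]{partII}). Then, conditioning on the first-collision data $(\xi_1,\vs_1,\vecv_1)$, for fixed $\vecv_0$ the remaining integrand is expressed as an integral over the remaining $n-1$ segments of the particle starting at $\vecq^{(1)}+\rho\,\vecbeta'_{\vecv_0}(\vecv_1)$ with direction $\vecv_1$. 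Applying the inductive hypothesis at each fixed $\vs_1,\vecv_0$ with the new data
\begin{equation*}
U'=\scrV_{\vecv_0}^\eta,\quad \vecbeta'=\vecbeta'_{\vecv_0},\quad \lambda'=\text{conditional law of }\vecv_1\text{ given }\vecv_0,\vs_1,
\end{equation*}
and with the restricted test function, produces the desired product $\prod_{j=2}^n p_\bn(\vecv_{j-2},\vs_{j-1},\vecv_{j-1};\xi_j,\vs_j,\vecv_j)$. One then integrates out $(\xi_1,\vs_1,\vecv_1,\vecv_0)$ against $p_{\bn,\vecbeta}(\vs(\vecq),\vecv_0;\xi_1,\vs_1,\vecv_1)\,d\lambda(\vecv_0)$ produced by the first application of Theorem \ref{unifmodThm2}.

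The main obstacle, and the whole reason for formulating the theorem with the explicit uniformity over $\vecbeta$ in an admissible family, lies in verifying that the new data $(U',\vecbeta',\lambda')$ as $(\vecv_0,\vs_1)$ ranges over a compact set satisfies the hypotheses of the inductive statement. Admissibility of $\{\vecbeta'_{\vecv_0}\}$ follows from the $\C^1$-smoothness of $\Psi_2$ and $\vecbeta^-$ (condition (iii) of Section \ref{SCATTERINGMAPS}) together with the fact that we are restricting to $\vecv_1\in\scrV_{\vecv_0}^\eta$, which is uniformly bounded away from the boundary of $\scrV_{\vecv_0}$, making the derivatives uniformly bounded; the condition $(\vecbeta'_{\vecv_0}(\vecv_1)+\R_{>0}\vecv_1)\cap\scrB_1^d=\emptyset$ is geometrically built in since $\vecbeta'_{\vecv_0}(\vecv_1)\in\US$ is the outgoing position on the scatterer boundary with outgoing direction $\vecv_1$. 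Equismoothness of the conditional laws $\lambda'$ of $\vecv_1$ follows because these are obtained (by Bayes' rule from the collision kernel \eqref{p0bdef}) as absolutely continuous measures with densities proportional to $\sigma(\vecv_0,\vecv_1)k(\cdot)$ restricted to $\scrV_{\vecv_0}^\eta$, which form an equicontinuous and uniformly bounded family in $\vecv_0$ by Lemma \ref{CONTINTEGRALlemG2} and the continuity of $\sigma$ on the compact region where $\vecv_1\in\scrV_{\vecv_0}^\eta$. Combining uniformity in $(\vecv_0,\vs_1)$ provided by the induction with uniformity of the first-step approximation and letting finally $T_1\to\infty$, $\eta\to 0$, $\ve\to 0$, we obtain the desired conclusion.
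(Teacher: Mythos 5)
Your high-level strategy (induction on $n$, base case from Theorem~\ref{unifmodThm2}, discarding grazing/exceptional sets via Propositions~\ref{Thm2genaddlem} and~\ref{ETAGRAZINGprop}, uniformity over admissible $\vecbeta$-families) is the right one, but the proposal skips the central technical step and misdescribes the data fed into the inductive hypothesis, in a way that would not go through.

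The crux of the problem: for $\rho>0$ the trajectory is a \emph{deterministic} function of $\vecv_0$, so ``conditioning on the first-collision data $(\xi_1,\vs_1,\vecv_1)$'' has no content --- there is no conditional law, and the inductive statement cannot be applied to a Dirac mass at the deterministic $\vecv_1(\vecv_0)$. What makes the induction go is a \emph{geometric partition} of the $\vecv_0$-domain into small cells $U_{\vecq',\ell}$ on which both the first-hit scatterer $\vecq^{(1)}(\vecv_0)$ and the cell of $\vecv_1(\vecv_0)$ are constant, followed by a \emph{change of variables} $\vecv_0 \mapsto \vecv_1 = \overline{\vecV}(\vecv_0)$ using Lemma~\ref{SHINELEM1}. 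This is entirely absent from your proposal, and it is precisely the step that converts the deterministic first collision into genuine randomness in $\vecv_1$ to which the $(n-1)$-step statement may be applied.

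Two concrete consequences of this omission are errors in your claimed new data. First, the new measure $\lambda'$ is \emph{not} the density $\propto\sigma(\vecv_0,\vecv_1)k(\cdots)$ that ``Bayes' rule'' from the limit kernel \eqref{p0bdef} would produce; the $k$-factor never enters. The correct $\lambda'$ is the normalized restriction $\nu_{\ell,\vecs_1} = \nu_{\vecs_1}(D_\ell)^{-1}\nu_{\vecs_1}|_{D_\ell}$, whose density is proportional to the cross section $\sigma(\hs_1,\cdot)$ alone; this density arises from the Jacobian of $\overline{\vecV}$, as controlled by Lemma~\ref{SHINELEM1}(ii). Second, the new $\vecbeta'$ cannot be $\vecbeta^+_{\vecv_0}$ with $\vecv_0$ held fixed: inside a cell $U_{\vecq',\ell}$, as $\vecv_1$ varies, $\vecv_0 = \overline{\vecV}^{-1}(\vecv_1)$ varies with it, so the correct exit-position function is $\vecB^+ = \vecB^+_{\rho,\vecs_1,\vecbeta}$ of Lemma~\ref{SHINELEM1}(iii), which is $\rho$-dependent and includes the chain-rule contribution from $\vecv_0(\vecv_1)$. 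Its admissibility (bounded $\C^1$-norm, closeness to the idealized $\vecbeta^+_{\hs_1}$) is exactly what Lemma~\ref{SHINELEM1}(iii) establishes, and is then fed into Lemma~\ref{TRKERCONTlem} to replace $p_{\bn,\vecB^+}$ by $p_\bn$. Your version of the derivative bound does not account for the chain rule.

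Finally, the order of operations should be reversed relative to what you sketch: one decomposes, changes variables, applies the $(n-1)$-step hypothesis on each piece (producing the recursively integrated test function $f_{n-1}$), sums \emph{back} over cells (transforming the result back into an integral against $\lambda$), and only at the end applies the $n=1$ base case to $f_{n-1}$. Applying Theorem~\ref{unifmodThm2} ``first'' as you suggest would require the test function to already encode the $\rho$-dependent remaining dynamics, which Theorem~\ref{unifmodThm2} cannot handle (it is stated for a fixed, $\rho$-independent family $F_2$).
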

\begin{remark}\label{unifmodThmREM} 
Regarding the limit expression in \eqref{unifmodThm2ares}, 
one should note that
\begin{align*}\notag %
\int_{{X_U^{(n)}}} %
p_{\bn,\vecbeta}\bigl(\vs,\vecv_0;\xi_1,{\vs}_1,\vecv_1\bigr)
\prod_{j=2}^n p_\bn(\vecv_{j-2},{\vs}_{j-1},\vecv_{j-1};\xi_j,{\vs}_{j},\vecv_{j})
\,d\lambda(\vecv_0)\hspace{30pt}
\\
\times\prod_{j=1}^n\bigl( d\xi_j\,d\mm({\vs}_j)\,d\vecv_j\bigr)=1
\end{align*}
for all $\vs\in\Sigma$ and $\vecbeta\in\C_b(U,\R^d)$.
This follows by using \eqref{REMOVINGCPTSUPPfactpfLEM1res1} in Lemma \ref{REMOVINGCPTSUPPfactpfLEM1}
$n$ times.
In particular, taking $f\equiv1$ in \eqref{unifmodThm2ares}, 
the theorem implies that
$\lambda(\fw_{\vecq,\rho,n}^{\vecbeta})\to1$ as $\rho\to0$,
uniformly with respect to all
$\vecq\in\scrP_T(\rho)$, $\lambda\in F_1$, and $\vecbeta\in F_3$.
\end{remark}

\section{Auxiliary results}
\label{scatmapsmore}
We next review some results from 
\cite[Section 3]{partII}.

Recall the definition of the maps $\vecbeta_{\vecv}^{\pm}$
and of the differential cross section $\sigma(\vecv,\vecv_+)$
from Section \ref{SCATTERINGMAPS}.
Set
\begin{align} \label{C1THETAETADEF}
C_{\eta}:=1+\max\Bigl(
\sup_{\vech\in \T^1(\scrV_{\vecv}^\eta)} 
\bigl\|D_\vech\vecbeta^+_{\vecv}\bigr\|,
\sup_{\vech\in \T^1(\scrV_{\vecv}^\eta)} 
\bigl\|D_\vech\vecbeta^-_{\vecv}\bigr\|
\Bigr).
\end{align}
Then $C_\eta$ is independent of $\vecv$, depends continuously on 
$\eta>0$, and may approach infinity as $\eta\to 0$.

For any $\vecs\in\R^d\setminus\{\bn\}$ we let $\nu_\vecs$ be the 
probability measure on $\S_1^{d-1}$ which gives\label{nusDEF}
the direction of a ray after it has been scattered 
in the ball $\scrB_1^{d}$, given that the
incoming ray has direction \hypertarget{hsdeflink}{$\hs:=\|\vecs\|^{-1}\vecs$}
\label{hsdef}
and is part of the line 
$\vecx+\R\vecs$ with $\vecx$ picked at random in 
the $(d-1)$-dimensional unit ball $\{\vecs\}^\perp\cap \scrB_1^d$, with respect to 
the $(d-1)$-dimensional Lebesgue measure.
Thus 
\begin{align}\label{LAMBDA0DEFnew}
d\nu_\vecs(\vecv)=v_{d-1}^{-1}\sigma(\hs,\vecv)\,d\vecv. %
\end{align}
Let us write $\scrV_\vecs^\eta:=\scrV_{\hs}^\eta$
(cf.\ \eqref{WVPDEF}).
For $\eta>0$ so small that $\scrV_\vecs^\eta\neq\emptyset$,
we define $\nu_\vecs^\eta$ to be the probability measure
which is obtained by restricting $\nu_\vecs$ to $\scrV_\vecs^\eta$ and renormalizing,
i.e.\ 
\begin{align}\label{nuseta}
\nu_\vecs^\eta:=\nu_\vecs(\scrV_\vecs^\eta)^{-1}\cdot\nu_\vecs|_{\scrV_\vecs^\eta}.
\end{align}

Given $\vecs\in\R^d\setminus\{\bn\}$,
a number $\rho>0$, an open set $U\subset\US$ and a continuous function
$\vecbeta:U\to\R^d$
subject to the condition $\rho\vecbeta(\vecv)\notin\scrB^d(\vecs,\rho)$ $\forall\vecv\in U$,
we set 
\begin{align} \label{OMEGARHOMBDEF}
\scrU%
=\bigl\{\vecv\in U\col (\rho\vecbeta(\vecv)+\R_{>0}\vecv)\cap
\scrB^d(\vecs,\rho)
\neq\emptyset\bigr\}.
\end{align}
For $\vecv\in\scrU$ we set
\begin{equation}\label{TAUrhovecsvecbetaDEF}
\tau(\vecv)=\tau_{\rho,\vecs,\vecbeta}(\vecv)
:=\inf\bigl\{t>0\col\rho\vecbeta(\vecv)+t\vecv\in\scrB^d(\vecs,\rho)\bigr\},
\end{equation}
let $\vecB(\vecv)=\vecB_{\!\rho,\vecs,\vecbeta}(\vecv)$ be the impact location on $\S_1^{d-1}$, i.e., the point for 
which $\rho\vecbeta(\vecv)+\tau(\vecv)\vecv=\vecs+\rho \vecB(\vecv)$,
and let
\begin{align} \label{SHINELEM2VDEF}
\vecV(\vecv)=\vecV_{\!\!\!\rho,\vecs,\vecbeta}(\vecv):=\Psi_1(\vecv,\vecB(\vecv))\in\S_1^{d-1},
\end{align}
the outgoing direction after the ray
$\rho\vecbeta(\vecv)+\R_{>0}\vecv$ is scattered in the sphere 
$\vecs+\S_\rho^{d-1}$.

Let us write $\scrD_\vecs^\eta:=\{\vecv\in\US\col\varphi(\vecv,\vecs)<\eta\}$ for the 
\label{scrDsetaDEF}
ball of radius $\eta$ with center $\hs$ in $\US$.
We now have:
\begin{lem} \label{SHINELEM1}
Given any $0<\eta<\frac1{100}\bigl(\frac{\pi}2-{s_\Psi}(\frac{\pi}2-B_\Psi)\bigr)$,   %
$C\geq 10$ and $\ve>0$, there exists a constant
$\trho_0=\trho_0(\eta,C,\ve)>0$ such that 
for any $\rho\in (0,\trho_0)$,
any $\vecs\in\R^d$ with $\|\vecs\|\geq C^{-1}$,
any open subset $U\subset\US$ containing $\scrD_\vecs^\eta$,
and any $\C^1$-function $\vecbeta:U\to\R^d$
satisfying $\sup_{\vecv\in U}\|\vecbeta(\vecv)\|\leq C$ and
$\sup_{\vech\in \T^1(U)} \|D_\vech\vecbeta\|\leq C$,
all of the following statements hold:
\begin{enumerate}[{\rm (i)}]
\item Let $\overline{\vecV}=\overline{\vecV}_{\!\!\rho,\vecs,\vecbeta}$ be the 
restriction of $\vecV=\vecV_{\!\!\rho,\vecs,\vecbeta}$ to $\vecV^{-1}(\scrV_\vecs^\eta)$;
then $\overline{\vecV}$ is a $\C^1$ diffeomorphism onto $\scrV_\vecs^\eta$.
\item If $M\subset\scrV^{\eta}_\vecs$ 
is any Borel subset with $\nu_\vecs(M)>0$ and if $\mu$ denotes
the measure $\omega$ restricted to $\overline{\vecV}^{-1}(M)$ and
rescaled to be a probability measure, then
$\overline{\vecV}_*\mu=g\cdot \nu_{\vecs}(M)^{-1}{\nu_{\vecs}}_{|M}$
for some continuous function $g:M\to [1-\ve,1+\ve]$.
\item Define the $\C^1$ maps $\vecB^\pm=\vecB^\pm_{\rho,\vecs,\vecbeta}:\scrV^\eta_\vecs
\to \S_1^{d-1}$ through
$\vecB^\pm(\vecu)=\vecbeta^\pm_{\overline{V}^{-1}(\vecu)}(\vecu).$ Then 
$\bigl\| \vecB^\pm(\vecu)-\vecbeta^\pm_{\uvecs}(\vecu)\bigr\|<\ve$
for all $\vecu\in\scrV_\vecs^\eta$ and
$\|D_\vech \vecB^\pm\|<C_{\eta}$ for all 
$\vech\in \T^1(\scrV_\vecs^\eta)$.
\end{enumerate}
\end{lem}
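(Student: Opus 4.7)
The plan is to derive all three statements from a single uniform $C^1$-closeness fact: as $\rho\to0$, the real scattering map $\vecV=\vecV_{\!\!\rho,\vecs,\vecbeta}$ (rescaled appropriately) converges to the idealized scattering map in which the particle starts at the origin with direction $\hs$ and scatters off the unit ball. Once this closeness is in place, parts (i), (ii), (iii) all reduce to standard consequences (inverse function theorem, Jacobian comparison, uniform derivative bounds).

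First I would show that if $\rho$ is small enough (depending only on $\eta$ and $C$), then every $\vecv \in \scrU$ actually lies in the small spherical cap $\scrD_\vecs^{c\rho}$ with $c=c(C,\eta)$. Indeed, for the ray $\rho\vecbeta(\vecv) + \R_{>0}\vecv$ to hit $\scrB^d(\vecs,\rho)$, the angle between $\vecv$ and the unit vector $(\vecs-\rho\vecbeta(\vecv))/\|\vecs-\rho\vecbeta(\vecv)\|$ must be at most $\arcsin(\rho/\|\vecs-\rho\vecbeta(\vecv)\|)$, and since $\|\vecs\|\geq C^{-1}$ and $\|\vecbeta\|\leq C$, this is $O(\rho)$ with an implied constant depending only on $C$. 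Thus $\vecv = \exp_{\hs}(\rho\vect)$ for some $\vect\in\T_{\hs}\US$ of bounded norm, and in this rescaled coordinate the whole geometry becomes explicit.

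Next, I would expand the impact point $\vecB(\vecv)$ and the outgoing direction $\vecV(\vecv)=\Psi_1(\vecv,\vecB(\vecv))$ in powers of $\rho$. A direct computation (using the explicit tangent line–sphere intersection and the smoothness of $\Psi_1$ via Lemma \ref{DEFLANGLElem} and condition (iii) of Section \ref{SCATTERINGMAPS}) gives that, in the rescaled parameter $\vect$, the map $\vect\mapsto \vecV$ agrees to leading order with the idealized scattering-from-$\hs$ map, with a $C^1$-error of size $O(\rho)$ uniform over $\|\vecs\|\geq C^{-1}$ and over the allowed $\vecbeta$. Because the idealized map is a $C^1$ diffeomorphism from a neighbourhood of $\bn$ in $\T_{\hs}\US$ onto $\scrV_{\hs}$, and the non-vanishing of its differential on $\scrV_{\vecs}^\eta$ is quantitative (depending on $\eta$ via $C_\eta$ as defined in \eqref{C1THETAETADEF}), the inverse function theorem gives part (i) for all $\rho<\trho_0(\eta,C)$.

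For part (iii), by the same expansion $\overline{\vecV}^{-1}(\vecu)$ is within angular distance $O(\rho)$ of $\hs$, uniformly for $\vecu\in\scrV_\vecs^\eta$; joint $C^1$-regularity of $(\vecv,\vecu)\mapsto \vecbeta^\pm_\vecv(\vecu)$ on $\scrV_\vecv^\eta$ then gives $\|\vecB^\pm(\vecu)-\vecbeta^\pm_{\hs}(\vecu)\|<\ve$ for $\rho$ small, and the derivative bound $\|D_\vech\vecB^\pm\|<C_\eta$ follows from the definition \eqref{C1THETAETADEF} combined with the $C^1$-proximity of $\overline{\vecV}^{-1}$ to the idealized inverse. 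For part (ii), in the rescaled parameter $\vect$, the restriction $\omega|_{\overline{\vecV}^{-1}(M)}$ pulls back to (a constant multiple of) Lebesgue measure on a subset of the tangent disc times a Jacobian factor $1+O(\rho)$, and the push-forward under $\overline{\vecV}$ therefore agrees with $\nu_\vecs|_M$ up to a continuous density $g$ with $|g-1|<\ve$, using the formula \eqref{LAMBDA0DEFnew} together with the expression \eqref{dcsformula} for the idealized differential cross section.

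The main obstacle will be bookkeeping: making the error estimates in the $\rho$-expansions uniform in $\vecs$ (with $\|\vecs\|\geq C^{-1}$) and in $\vecbeta$ satisfying the stated bounds, while simultaneously keeping them controlled near the edge $\partial\scrV_\vecs^\eta$ of the angular window, where $C_\eta$ may be large. The choice of $\trho_0(\eta,C,\ve)$ must absorb all of these dependencies simultaneously, but since $\scrV_\vecs^\eta$ is bounded away from the tangential grazing set by a fixed margin $\eta$ and $C_\eta$ is finite, this is a routine (if delicate) uniform continuity argument.
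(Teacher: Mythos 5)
Your sketch is essentially the paper's route: the paper disposes of this lemma simply by citing \cite[Lemma 3.2]{partII} (remarking only that the generalization to an arbitrary open $U\supset\scrD_\vecs^\eta$ and to either sign of $s_\Psi$ carries over), and that cited proof proceeds exactly as you describe --- the admissible directions form an $O(\rho)$-cap around $\hs$, in rescaled coordinates $\vecV$ is uniformly $\C^1$-close to the ideal parallel-beam scattering map with incidence direction $\hs$, and (i)--(iii) then follow from quantitative inverse-function and Jacobian comparisons together with the slack built into the constant $C_\eta$. The one point where you should be a little more careful than ``the inverse function theorem gives part (i)'' is that global injectivity and surjectivity of $\overline{\vecV}$ onto $\scrV_\vecs^\eta$ do not come from the local statement alone, but from the quantitative $\C^1$-proximity to a diffeomorphism whose image covers $\scrV_\vecs^\eta$ with a fixed margin (this is precisely where the hypothesis $\scrD_\vecs^\eta\subset U$ and the restriction to the non-grazing window $\scrV_\vecs^\eta$ enter).
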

\begin{proof}
This is 
\cite[Lemma 3.2]{partII},
mildly generalized by allowing a more general set $U$ in place of
``$\scrV_\vecr^\eta$'',
and allowing either $s_\Psi=1$ or $-1$ (whereas in \cite{partII} we assumed $s_\Psi=-1$).
The proof carries over immediately, 
using the assumption $\scrD_\vecs^\eta\subset U$;
cf.\ in particular \cite[(3.9)]{partII}.
\end{proof}

Next we prove a lemma which is useful for reducing to test functions $f$ of compact support
in the proof of Theorem \ref{MAINTECHNTHM2A}.
\begin{lem}\label{REMOVINGCPTSUPPfactLEM}
Let $U$ be an open subset of $\US$ and let $\lambda\in P(U)$ and $n\in\Z^+$.
Given any $\ve>0$ there is a compact subset $K\subset X_U^{(n)}$ %
such that 
\begin{align*}%
\int_{X_U^{(n)}\setminus K} p_{\bn,\vecbeta}\bigl(\vs,\vecv_0;\xi_1,{\vs}_1,\vecv_1\bigr)
\prod_{j=2}^n p_\bn(\vecv_{j-2},{\vs}_{j-1},\vecv_{j-1};\xi_j,{\vs}_{j},\vecv_{j})
\, d\lambda(\vecv_0)
\hspace{40pt}
\\
\times
\prod_{j=1}^n\bigl( d\xi_j\,d\mm({\vs}_j)\,d\vecv_j\bigr)
<\ve
\end{align*}
for all $\vs\in\Sigma$ and $\vecbeta\in \C_b(U,\R^d)$.
\end{lem}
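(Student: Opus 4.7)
The key input will be the second half of Lemma \ref{REMOVINGCPTSUPPfactpfLEM1}: given any $\delta>0$, there exist $C>1$ and $\eta\in(0,1)$ such that
\begin{align*}
\int_{1/C}^{C}\int_{\Sigma}\int_{\scrV_{\vecu}^{\eta}}
p_{\bn,\vecgamma}\bigl(\vs',\vecu;\xi,\vs_+,\vecv_+\bigr)\,d\vecv_+\,d\mm(\vs_+)\,d\xi
>1-\delta
\end{align*}
uniformly over all open $V\subset\US$, all $\vecgamma\in\C_b(V,\R^d)$ with $(\vecgamma(\vecu)+\R_{>0}\vecu)\cap\scrB_1^d=\emptyset$, all $\vecu\in V$, and all $\vs'\in\Sigma$. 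Via the identity \eqref{BASICp0idG}, namely $p_\bn(\vecv_{j-2},\vs_{j-1},\vecv_{j-1};\cdot)=p_{\bn,\vecbeta^+_{\vecv_{j-2}}}(\vs_{j-1},\vecv_{j-1};\cdot)$, the same bound will apply to each factor $p_\bn$ appearing in \eqref{unifmodThm2ares}: the function $\vecbeta^+_{\vecv_{j-2}}$ takes values in $\US$ and satisfies $\vecbeta^+_{\vecv_{j-2}}(\vecu)\cdot\vecu>0$, so $(\vecbeta^+_{\vecv_{j-2}}(\vecu)+\R_{>0}\vecu)\cap\scrB_1^d=\emptyset$; hence the uniformity in Lemma \ref{REMOVINGCPTSUPPfactpfLEM1} covers these functions as well.

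Fix $\delta:=\ve/(n+1)$ and let $C,\eta$ be the corresponding constants. Since $\lambda$ is a Borel probability measure on the Polish space $\US$ (with $\lambda(U)=1$), it is tight, so we may choose a compact set $K_0\subset U$ with $\lambda(U\setminus K_0)<\delta$. Define
\begin{align*}
K:=\bigl\{\bigl\langle\vecv_0,\langle\xi_j,\vs_j,\vecv_j\rangle_{j=1}^n\bigr\rangle\in X_U^{(n)}\col
\vecv_0\in K_0,\;\tfrac1C\leq\xi_j\leq C,\;
\vecv_j\in\overline{\scrV_{\vecv_{j-1}}^{\eta}}\;\forall j\bigr\}.
\end{align*}
Since $\overline{\scrV_{\vecu}^{\eta}}\subset\scrV_{\vecu}$ for every $\vecu\in\US$, we have $K\subset X_U^{(n)}$; and $K$ is closed in the compact set $K_0\times([1/C,C]\times\Sigma\times\US)^n$, hence compact.

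To estimate the integral over $X_U^{(n)}\setminus K$, cover the complement by the $n+1$ ``bad'' events: $B_0:=\{\vecv_0\notin K_0\}$ and, for $j=1,\ldots,n$, $B_j:=\{\xi_j\notin[1/C,C]\text{ or }\vecv_j\notin\overline{\scrV_{\vecv_{j-1}}^{\eta}}\}$. The integrand in \eqref{unifmodThm2ares} is a Markov transition density: all the variables $(\xi_i,\vs_i,\vecv_i)$ for $i>j$ integrate to $1$ (using Lemma \ref{REMOVINGCPTSUPPfactpfLEM1}\,(i)), while integrating the $j$-th triple $(\xi_j,\vs_j,\vecv_j)$ against $I(B_j)$ contributes at most $\delta$ by the displayed uniform bound; the remaining outer integrals then contribute at most $1$. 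Hence the measure of each $B_j$ ($j\geq 1$) is $<\delta$, while the measure of $B_0$ is simply $\lambda(U\setminus K_0)<\delta$. Summing, the total mass outside $K$ is strictly less than $(n+1)\delta=\ve$, independently of $\vs$ and $\vecbeta$, which completes the proof.

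The only mild subtlety I foresee is verifying that Lemma \ref{REMOVINGCPTSUPPfactpfLEM1} genuinely applies to $p_{\bn,\vecbeta^+_{\vecv_{j-2}}}$ with the required uniformity, which comes down to checking the ``outgoing'' condition $(\vecbeta^+_{\vecv_{j-2}}(\vecu)+\R_{>0}\vecu)\cap\scrB_1^d=\emptyset$ noted above.
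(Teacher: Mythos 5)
Your proof is correct and follows essentially the same route as the paper: the same compact set $K$ built from $C$, $\eta$ via Lemma~\ref{REMOVINGCPTSUPPfactpfLEM1}(ii) together with a tight compact $K_0\subset U$ from $\lambda$, with \eqref{BASICp0idG} in the background. The only cosmetic difference is the final estimate---you bound the complement by a union of $n+1$ bad events each of measure $<\delta=\ve/(n+1)$, whereas the paper bounds the integral over $K$ from below by $(1-\ve')^{n+1}$ via iterated use of the bound in Lemma~\ref{REMOVINGCPTSUPPfactpfLEM1}; both work. One small remark: the ``mild subtlety'' you flag at the end is vacuous, since Lemma~\ref{REMOVINGCPTSUPPfactpfLEM1} as stated already holds uniformly over \emph{all} $\vecbeta\in\C_b(U,\R^d)$ with no outgoing condition $(\vecbeta(\vecv)+\R_{>0}\vecv)\cap\scrB_1^d=\emptyset$ required, so it applies to $p_{\bn,\vecbeta^+_{\vecv_{j-2}}}$ with nothing extra to check.
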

\begin{proof}
Take $\ve'>0$ so small that $(1-\ve')^{n+1}>1-\ve$.
Let $K_U$ be a compact subset of $U$ such that $\lambda(K_U)>1-\ve'$.
By Lemma \ref{REMOVINGCPTSUPPfactpfLEM1} we can take $C>1$ and $\eta>0$ so that 
\begin{align}\label{REMOVINGCPTSUPPfactpfLEM1res2REP}
&\int_{1/C}^C\int_{\Sigma}\int_{\overline{\scrV^\eta_\vecv}}
p_{\bn,\vecbeta}(\vs,\vecv;\xi,\vs_+,\vecv_+)\,d\vecv_+\,d\mm(\vs_+)\,d\xi>1-\ve'
\end{align}
holds for any open set $U'\subset\US$, and any $\vecbeta\in\C^d(U',\R^d)$, $\vecv\in U'$, $\vs\in\Sigma$.
Now set
\begin{align*}
K:=%
\Bigl\{\langle\vecv_0;\langle\xi_j,{\vs}_j,\vecv_j\big\rangle_{j=1}^{n}\rangle\in{X_U^{(n)}}
\col \vecv_0\in K_U,\: \xi_j\in[C^{-1},C],\:
\hspace{70pt}
\\
\vecv_j\in\overline{\scrV^\eta_{\vecv_{j-1}}}\: %
(j=1,\ldots,n)\Bigr\}.
\end{align*}
Using \eqref{REMOVINGCPTSUPPfactpfLEM1res2REP} iteratively $n$ times it follows that
for any $\vs\in\Sigma$ and $\vecbeta\in \C_b(U,\R^d)$, 
\begin{align*}
\int_K p_{\bn,\vecbeta}\hspace{-1pt}\bigl(\vs,\vecv_0;\xi_1,{\vs}_1,\vecv_1\bigr)\hspace{-1pt}
\prod_{j=2}^n p_\bn(\vecv_{j-2},{\vs}_{j-1},\vecv_{j-1};\xi_j,{\vs}_{j},\vecv_{j})
\, d\lambda(\vecv_0)\prod_{j=1}^n\bigl( d\xi_j\,d\mm({\vs}_j)\,d\vecv_j\bigr)
\\
>(1-\ve')^n\int_{K_U}\,d\lambda(\vecv_0)>(1-\ve')^{n+1}>1-\ve.
\end{align*}
Recalling also Remark \ref{unifmodThmREM}, the lemma follows.
\end{proof}

Next we give a lemma about varying $\vecbeta$ in
$p_{\bn,\vecbeta}({\vs},\vecv;\xi,{\vs}_+,\vecvp)$.
\begin{lem}\label{TRKERCONTlem}
Let $U$ be an open subset of $\US$, and let $f\in \C_c(X_U^{(2)})$ and $\ve>0$.
Then there exists $\nu>0$ such that for any 
$\vecv_0\in U$, $\xi'>0$, $\vs'\in\Sigma$,
$U'\subset\US$, $\vecv'\in U'\cap\scrV_{\vecv_0}$
and any continuous functions 
$\vecbeta_1,\vecbeta_2:U'\to\US$,
if $\|\vecbeta_1(\vecv')-\vecbeta_2(\vecv')\|<\nu$ then
\begin{align}\notag
\biggl|\int_{\R_{>0}\times\Sigma\times\scrV_{\vecv'}}
f(\vecv_0,\xi',\vs',\vecv',\xi,\vs,\vecv)
\Bigl(p_{\bn,\vecbeta_1}\bigl(\vs',\vecv';\xi,\vs,\vecv\bigr)
-p_{\bn,\vecbeta_2}\bigl(\vs',\vecv';\xi,\vs,\vecv\bigr)\Bigr)
\hspace{20pt}
\\\label{TRKERCONTlemres}
\times
 d\xi\,d\mm(\vs)\,d\vecv\biggr|<\ve.
\end{align}
\end{lem}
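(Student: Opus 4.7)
The plan is to reduce the claim to the joint continuity of the transition kernel integral established in Lemma \ref{CONTINTEGRALlemG2} (see Remark \ref{CONTINTEGRALlemG2REM}). First observe that $p_{\bn,\vecbeta}(\vs',\vecv';\xi,\vs,\vecv)$ depends on $\vecbeta$ only through the single point value $\vecbeta(\vecv')\in\US$, and only via the first argument of $k$. Applying Lemma \ref{BASICpktransfrem1lem} with $\vecv=\vecv'$ rewrites the integral in \eqref{TRKERCONTlemres} as
\begin{equation*}
\int_0^\infty\int_\Omega \tf(\xi,\vecomega)\bigl[k(\vecomega'_1,\xi,\vecomega)-k(\vecomega'_2,\xi,\vecomega)\bigr]\,d\mu_\Omega(\vecomega)\,d\xi,
\end{equation*}
where $\vecomega'_i:=\bigl((\vecbeta_i(\vecv')R(\vecv'))_\perp,\vs'\bigr)$ and
\begin{equation*}
\tf(\xi,(\vecw,\vs)):=f\bigl(\vecv_0,\xi',\vs',\vecv',\xi,\vs,\Psi_1(\vece_1,s_-(\vecw))R(\vecv')^{-1}\bigr).
\end{equation*}
Since $\vecbeta_i(\vecv')\in\US$ and $R(\vecv')\in\SO(d)$, both $\vecomega'_1,\vecomega'_2$ lie in the compact set $\oUB\times\Sigma\subset\scrX_\perp$, and since orthogonal projection is a contraction we have $\|\vecomega'_1-\vecomega'_2\|\leq\|\vecbeta_1(\vecv')-\vecbeta_2(\vecv')\|<\nu$.

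Next I would argue that the family
\begin{equation*}
\mathcal{F}:=\bigl\{\tf\col(\vecv_0,\xi',\vs',\vecv')\in\pi(\supp f)\bigr\},
\end{equation*}
where $\pi$ denotes projection of $X_U^{(2)}$ onto its first four coordinates, is a relatively compact subset of $\C_b(\R_{>0}\times\Omega)$. Uniform boundedness $\|\tf\|_\infty\leq\|f\|_\infty$ is immediate. Since $\supp f$ is compact in the open set $X_U^{(2)}$, there is some $\eta_0>0$ such that every point in $\supp f$ satisfies $\vecv_2\in\scrV_{\vecv_1}^{\eta_0}$; this forces the variable $\vecw$ at which $\tf$ can be nonzero to lie in a fixed compact subset of $\UB$ (bounded away from $\partial\UB$ via \eqref{FUETA}), and forces $\xi$ to lie in a fixed compact subset of $\R_{>0}$, so the supports of the elements of $\mathcal{F}$ lie in a common compact subset of $\R_{>0}\times\Omega$. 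Equicontinuity of $\mathcal{F}$ follows from the uniform continuity of $f$ on $\supp f$ together with the observation that, for every $\vecv'\in\US$, the map $\vecw\mapsto\Psi_1(\vece_1,s_-(\vecw))R(\vecv')^{-1}$ has a modulus of continuity in $\vecw$ inherited from $\Psi_1(\vece_1,s_-(\cdot))$ alone, as $R(\vecv')^{-1}$ is an isometry of $\R^d$. In particular this modulus is independent of $\vecv'$, so the possible single point of discontinuity of $R:\US\to\SO(d)$ poses no difficulty. By Arzel\`{a}--Ascoli, $\mathcal{F}$ is relatively compact in $\C_b(\R_{>0}\times\Omega)$.

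Finally, by Lemma \ref{CONTINTEGRALlemG2} and Remark \ref{CONTINTEGRALlemG2REM}, the map
\begin{equation*}
(g,\vecomega')\mapsto\int_0^\infty\int_\Omega g(\xi,\vecomega)\,k(\vecomega',\xi,\vecomega)\,d\mu_\Omega(\vecomega)\,d\xi
\end{equation*}
is jointly continuous on $\C_b(\R_{>0}\times\Omega)\times\scrX_\perp$. Restricted to the compact set $\overline{\mathcal{F}}\times(\oUB\times\Sigma)$ it is uniformly continuous, which yields some $\nu=\nu(f,\ve)>0$ such that $\|\vecomega'_1-\vecomega'_2\|<\nu$ implies the displayed integral difference is less than $\ve$, uniformly for every $\tf\in\mathcal{F}$. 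Combined with $\|\vecomega'_1-\vecomega'_2\|\leq\|\vecbeta_1(\vecv')-\vecbeta_2(\vecv')\|$, this proves the lemma. The main technical step is the relative compactness of $\mathcal{F}$; the rest is a direct assembly of the preceding results.
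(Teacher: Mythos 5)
Your proposal is correct and follows essentially the same route as the paper's proof: rewrite the difference via Lemma \ref{BASICpktransfrem1lem} as an integral of a test function $f_1$ against $k(\vecomega'_1,\cdot,\cdot)-k(\vecomega'_2,\cdot,\cdot)$, observe that the family of such test functions is relatively compact in $\C_b(\R_{>0}\times\Omega)$, and conclude by the continuity statement of Lemma \ref{CONTINTEGRALlemG2}/Remark \ref{CONTINTEGRALlemG2REM} upgraded to uniform continuity on a compact set. The only difference is that you spell out the relative-compactness step (common compact support from $\supp f\subset X_U^{(2)}$ forcing $\vecv_+\in\scrV_{\vecv'}^{\eta_0}$, hence $\vecw$ in a compact sub-ball of $\UB$, plus equicontinuity and Arzel\`a--Ascoli), which the paper only asserts.
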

\begin{proof}
By Lemma \ref{BASICpktransfrem1lem},
the expression inside the absolute value in the left hand side of \eqref{TRKERCONTlemres} equals
\begin{align*}
\int_0^\infty\int_{\Omega}
f_1(\xi,\vecomega)\Bigl(k\bigl(((\vecbeta_1(\vecv')R(\vecv'))_\perp,\vs'),\xi,\vecomega\bigr)
-k\bigl(((\vecbeta_2(\vecv')R(\vecv'))_\perp,\vs'),\xi,\vecomega\bigr)\Bigr)
\hspace{20pt}
\\
\times d\mu_\Omega(\vecomega)\,d\xi,
\end{align*}
where $f_1\in\C_b(\R_{>0}\times\Omega)$ is given by
\begin{align*}
f_1(\xi,(\vecw,\vs))
=f(\vecv_0,\xi',\vs',\vecv',\xi,\vs,\Psi_1(\vece_1,s_-(\vecw))R(\vecv')^{-1})
\end{align*}
with $s_-$ as in \eqref{smDEF}.
Let $F_1$ be the subset of $\C_b(\R_{>0}\times\Omega)$ given by
\begin{align*}
F_1:=\bigl\{\langle\xi,(\vecw,\vs)\rangle\mapsto
f\bigl(\vecv_0,\xi',\vs',\vecv',\xi,\vs,\Psi_1(\vece_1,s_-(\vecw))R(\vecv')^{-1}\bigr)
\hspace{80pt}
\\
\col 
\vecv_0\in U,\:  \xi'>0,\:  \vs'\in\Sigma,\:  \vecv'\in\scrV_{\vecv_0}\bigr\}.
\end{align*}
Using $f\in \C_c(X_U^{(2)})$ one shows that $F_1$ is relatively compact.
It now suffices to prove that there exists $\nu>0$ such that for any
$\vecw_0,\vecw_0'\in\overline{\UB}$, $\vs'\in\Sigma$, $f_1\in F_1$,
if $\|\vecw_0-\vecw_0'\|<\nu$ then
\begin{align*}
&\biggl|\int_0^\infty\int_{\Omega}
f_1(\xi,\vecomega)\bigl\{k((\vecw_0,\vs'),\xi,\vecomega)-k((\vecw_0',\vs'),\xi,\vecomega)\bigr\}
\,d\mu_\Omega(\vecomega)\,d\xi\biggr|<\ve.
\end{align*}
However this is a consequence of Remark \ref{CONTINTEGRALlemG2REM},
since $F_1$ is relatively compact.
\end{proof}

\section{Proof of Theorem \ref*{MAINTECHNTHM2A}}
\label{MAINTECHNthm2apfSEC}
We now prove Theorem \ref{MAINTECHNTHM2A}.
The proof is by induction.
The case $n=1$ is already covered by Theorem \ref{unifmodThm2}.
Hence we now fix $n\geq2$;
we assume that the statement of Theorem \ref{MAINTECHNTHM2A} holds 
with $n-1$ in the place of $n$;
our goal is to prove that the statement also holds for $n$.

\vspace{5pt}

\subsection{Initial reductions}

By standard approximation arguments similar to those used in the proof of
Theorem \ref{unifmodThm2}
(again cf.\ \cite[Thm.\ 2.3; Steps 2--4]{partII})
and utilizing Lemma \ref{REMOVINGCPTSUPPfactLEM},
it suffices to prove the desired statement
in the special case when $F_1$ and $F_2$ are \textit{singleton} sets, 
with the unique function $f\in F_2$ having compact support,
i.e.\ $f\in\C_c({X_U^{(n)}})$.
Hence from now on we restrict to that situation.
Let $\lambda$ be the unique element in $F_1$.
Since $F_1=\{\lambda\}$ is equismooth,
we have
$\lambda=(g\cdot\omega)_{|U_1}$ for some continuous function $g:\US\to\R_{\geq0}$
and some open set $U_1\subset\US$ satisfying $\omega(\partial U_1)=0$.
Let $K$ be the image of the support of $f$ under the projection map from $X_U^{(n)}$ to $U$;
this is a compact subset of $U$.
Let us show that without loss of generality we may assume $U_1\subset U$.
To this end, choose an open neighborhood $U'$ of $K$ satisfying $\overline{U'}\subset U$ and $\omega(\partial U')=0$.
Note that the desired limit statement, \eqref{unifmodThm2ares}, remains the same
if we replace %
$\lambda$ by %
$\lambda(U')^{-1}\cdot\lambda_{|U'}\in P(\US)$
(in the special case $\lambda(U')=0$ the limit statement is of course trivial).
This corresponds to replacing $g$ by $\lambda(U')^{-1}\cdot g$ and $U_1$ by $U_1\cap U'$;
and we note that $\omega(\partial(U_1\cap U'))=0$ since $\partial(U_1\cap U')\subset\partial U_1\cup\partial U'$.
After having carried out these replacements, we have
\begin{align*}
\lambda=(g\cdot\omega)_{|U_1}\in P(\US);\qquad \overline{U_1}\subset U;\qquad \omega(\partial U_1)=0.
\end{align*}

\subsection{Introducing auxiliary parameters, functions and spaces}

Since $f$ has compact support, we can choose $C_1>1$ so that 
$f(\vecv,\langle\xi_j,\vs_j,\vecv_j\rangle_{j=1}^{n})=0$
unless $\xi_1,\ldots,\xi_n$ all lie in the interval $(C_1^{-1},C_1)$.
Set
\begin{align*}
T_1:=T+C_1+1
\end{align*}
and
\begin{align*}
C_2:=\sup_{\vecbeta\in F_3}\max\Bigl(10,\:\sup_{\vecv\in U}\|\vecbeta(\vecv)\|,
\sup_{\vech\in \T^1(U)} \|D_\vech\vecbeta\|\Bigr). %
\end{align*}

Let us write $f_0=f$;
we will now define functions $f_m\in\C_c(X_U^{(n-m)})$
recursively for $m=1,\ldots,n-1$.
Assuming that $f_{m-1}\in\C_c(X_U^{(n-m+1)})$ has been defined,
we define $f_m$ on $X_U^{(n-m)}$ by 
\begin{align}\notag
f_m\bigl(\vecv_0,\langle\xi_j,\vs_j,\vecv_j\rangle_{j=1}^{n-m}\bigr):=
\int_{\R_{>0}\times\Sigma\times\scrV_{\vecv_{n-m}}}f_{m-1}\bigl(\vecv_0,\langle\xi_j,\vs_j,\vecv_j\rangle_{j=1}^{n-m},
\:\xi,\vs,\vecv\bigr)
\hspace{40pt}
\\\label{MAINTECHNthm2aPF6}
\times p_{\bn}(\vecv_{n-m-1},\vs_{n-m},\vecv_{n-m};\xi,\vs,\vecv)\,
d\xi\,d\mm({\vs})\,d\vecv.
\end{align}
The fact that $f_m\in\C_c(X_U^{(n-m)})$ indeed holds follows 
by using Remark \ref{CONTINTEGRALlemG2REM}
together with Lemma \ref{BASICpktransfrem1lem} 
(cf.\ also Remark \ref{RINDEPANDCONTrem}).

Let $\ve>0$ be given.
We fix $0<\eta<\frac1{100}\bigl(\frac{\pi}2-{s_\Psi}(\frac{\pi}2-B_\Psi)\bigr)$ so small that
\begin{align}\label{lambdapartialetaU1small}
\partial_{2\eta}(U_1)\subset U
\qquad\text{and}\qquad
\lambda(\partial_{2\eta}(U_1))<\ve/\|f\|_{\infty}
\end{align}
(this is possible since %
$\omega(\partial U_1)=0$), %
and also so that there is some $\rho_0'\in(0,1)$ so that
\begin{align}\label{ETAGRZsmallass}
\lambda(\fg_{\vecq,\rho,\eta}^\vecbeta)<\ve/\|f\|_{\infty}
\qquad\forall \rho\in(0,\rho_0'),\:\vecq\in\scrP_T(\rho),\:\vecbeta\in F_3
\end{align}
(as is possible by Prop.\ \ref{ETAGRAZINGprop}).
Fix a family of pairwise disjoint open subsets $D_1,\ldots,D_N$ of $\US$\label{Djintro}
such that each $\overline{D_\ell}$ is a diffeomorphic image of a closed $(d-1)$-simplex in $\R^{d-1}$
and has diameter $<\eta/C_\eta$
(with respect to the metric $\varphi$),
and so that $\US=\cup_{\ell=1}^N \overline{D_\ell}$.
Recall the definition of $\nu_\vecs\in P(\US)$
for $\vecs\in\R^d\setminus\{\bn\}$,
cf.\ \eqref{LAMBDA0DEFnew}; %
it depends only on $\hs$.
Given $\vecs\in\R^d\setminus\{\bn\}$ and $\ell\in\{1,\ldots,N\}$ with
$\nu_\vecs(D_\ell)>0$,
we let $\nu_{\ell,\vecs}\in P(\US)$ be the normalized 
restriction of the measure $\nu_\vecs$ to $D_\ell$:
\begin{align}\label{LAMBDALSdef}
\nu_{\ell,\vecs}=\nu_\vecs(D_\ell)^{-1}\cdot\nu_{\vecs}\big|_{D_\ell}.
\end{align}
For each $\ell\in\{1,\ldots,N\}$ we let 
\begin{align}\label{Aelldef}
A_\ell=\{\vecs\in\US\col D_\ell\subset\scrV_\vecs^{5\eta}\},
\end{align}
and set
\begin{align}\label{F1elldef}
F_{1,\ell}:=\{\nu_{\ell,\vecs}\col\vecs\in A_\ell\};
\end{align}
this is an equismooth family of probability measures.
Also for any $\vecv_0\in U\cap A_\ell$, $\xi_0>0$, $\vs_0\in\Sigma$
we define the function 
$f_{[\vecv_0,\xi_0,\vs_0]}:X_{D_\ell}^{(n-1)}\to\R$ by
\begin{align}\label{TFdef}
f_{[\vecv_0,\xi_0,\vs_0]}(\vecv,\langle\xi_j,\vs_j,\vecv_j\rangle_{j=1}^{n-1})
=f(\vecv_0,\xi_0,\vs_0,\vecv,\langle\xi_j,\vs_j,\vecv_j\rangle_{j=1}^{n-1}),
\end{align}
and set
\begin{align}\label{F2elldef}
F_{2,\ell}:=\bigl\{f_{[\vecv_0,\xi_0,\vs_0]}\col\vecv_0\in U\cap A_\ell,\:\xi_0>0,\:\vs_0\in\Sigma\bigr\}.
\end{align}
This is a uniformly bounded and equicontinuous family of functions on $X_{D_\ell}^{(n-1)}$.
Define
\begin{align}\label{F3elldef}
F_{3,\ell}=\Bigl\{\vecbeta:D_\ell\to\US\col
\vecbeta\text{ is $\C^1$},
\:\sup_{\vech\in\T^1(D_\ell)}\|D_\vech\vecbeta\|\leq C_\eta,
\hspace{60pt}
\\\notag
\: (\vecbeta(\vecv)+\R_{>0}\vecv)\cap\scrB_1^d=\emptyset\:\forall\vecv\in D_\ell\Bigr\}.
\end{align}
Then $F_{3,\ell}$ is relatively compact as a subset of $\C_b(D_\ell,\R^d)$.

Let us also take $\eta'>0$ so small that 
for any $\vecv_0\in U$, $\xi'>0$, $\vs'\in\Sigma$, $\ell\in\{1,\ldots,N\}$,
$\vecv'\in D_\ell\cap\scrV_{\vecv_0}$
and any continuous functions $\vecbeta_1,\vecbeta_2:D_\ell\to\US$,
if $\|\vecbeta_1(\vecv')-\vecbeta_2(\vecv')\|<\eta'$ then
\begin{align}\label{NUCHOICE}
\biggl|\int_{\R_{>0}\times\Sigma\times\scrV_{\vecv'}}
f_{n-2}(\vecv_0,\xi',\vs',\vecv',\xi,\vs,\vecv)
\hspace{170pt}
\\\notag
\times\Bigl\{p_{\bn,\vecbeta_1}(\vs',\vecv';\xi,\vs,\vecv)-
p_{\bn,\vecbeta_2}(\vs',\vecv';\xi,\vs,\vecv)\Bigr\}\,
d\xi\,d\mm(\vs)\,d\vecv\biggr|<\ve.
\end{align}
This is possible by Lemma \ref{TRKERCONTlem}.

\subsection{The choice of $\rho_0$}

Now take $\rho_0\in(0,1)$ so small that 
\begin{align}\notag
\biggl|\int_{\fw_{\vecq,\rho,n-1}^{\vecbeta}}
\tf\bigl(\vecv,\langle\rho^{d-1}\tau_j(\vecq_{\rho,\vecbeta}(\vecv),\vecv;\rho),\vs_j(\vecq_{\rho,\vecbeta}(\vecv),\vecv;\rho),
\vecv_j(\vecq_{\rho,\vecbeta}(\vecv),\vecv;\rho)\rangle_{j=1}^{n-1}\bigr)
\,d\mu(\vecv)
\\\label{RHO0CONDmain}
-\int_{X_{D_\ell}^{(n-1)}}
\tf\bigl(\vecv_0,\big\langle\xi_j,{\vs}_j,\vecv_j\big\rangle_{j=1}^{n-1}\bigr)
p_{\bn,\vecbeta}\bigl(\vs(\vecq),\vecv_0;\xi_1,{\vs}_1,\vecv_1\bigr)
\hspace{75pt}
\\\notag
\times\prod_{j=2}^{n-1} p_\bn(\vecv_{j-2},{\vs}_{j-1},\vecv_{j-1};\xi_j,{\vs}_{j},\vecv_{j})
\, d\mu(\vecv_0)\,\prod_{j=1}^{n-1}\bigl( d\xi_j\,d\mm({\vs}_j)\,d\vecv_j\bigr)
\biggr|<\ve
\end{align}
for all 
$\rho\in(0,\rho_0)$,
$\vecq\in\scrP_{T_1}(\rho)$,
$\ell\in\{1,\ldots,N\}$,
$\mu\in F_{1,\ell}$,
$\tf\in F_{2,\ell}$
and $\vecbeta\in F_{3,\ell}$.
This is possible by our induction hypothesis, i.e.\ the assumption that %
the statement of Theorem \ref{MAINTECHNTHM2A} holds with $n-1$ in the place of $n$.
We shrink $\rho_0$ further if necessary, so as to also ensure that
\begin{align}\label{RHO0CONDmain2}
\biggl|\int_{\fw_{\vecq,\rho,1}^\vecbeta}f_{n-1}\bigl(\vecv,\rho^{d-1}\tau_1(\vecq_{\rho,\vecbeta}(\vecv),\vecv;\rho),
\vs_1(\vecq_{\rho,\vecbeta}(\vecv),\vecv;\rho),
\vecv_1(\vecq_{\rho,\vecbeta}(\vecv),\vecv;\rho)\bigr)\,d\lambda(\vecv)
\\\notag
-\int_{X_U^{(1)}}
f_{n-1}(\vecv,\xi_1,\vs_1,\vecv_1)p_{\bn,\vecbeta}(\vs(\vecq),\vecv;\xi_1,{\vs}_1,\vecv_1\bigr)
\, d\lambda(\vecv)\,d\xi_1\,d\mm({\vs}_1)\,d\vecv_1
\biggr|<\ve
\end{align}
for all $\rho\in(0,\rho_0)$, $\vecq\in\scrP_T(\rho)$, and $\vecbeta\in F_3$.
This is possible by Theorem \ref{unifmodThm2}.

We shrink $\rho_0$ yet further if necessary, so as to also ensure that
the following four conditions \eqref{RHO0COND3}--\eqref{RHO0COND2} are fulfilled:
\begin{align}\label{RHO0COND3}
  \rho_0<\min\biggl\{\trho_0\Bigl(\eta,C_2,\min\Bigl(\frac{\ve}{\|f\|_\infty},\eta'\Bigr)\Bigr),
  \:\rho_0'
  ,\:\Bigl(\frac{\eta}{8C_1C_2(1+C_2)}\Bigr)^{1/(d-1)}
  \biggr\}
\end{align}
(where $\trho_0(\cdots)$ is as in Lemma \ref{SHINELEM1} and $\rho_0'$ is the number in \eqref{ETAGRZsmallass});
\begin{align}\label{EHITSMALLPROB}
\lambda(\{\vecv\in\fw_{\vecq,\rho,1}^{\vecbeta}\col\vecq^{(1)}(\vecq_{\rho,\vecbeta}(\vecv),\vecv;\rho)\in\scrE\})<\frac{\ve}{\|f\|_\infty}
\quad\forall \rho\in(0,\rho_0),\,\vecq\in\scrP_T(\rho),\,\vecbeta\in F_3
\end{align}
(as is possible by Prop.\ \ref{Thm2genaddlem});
\begin{align}\label{RHO0COND1}
&\varphi(\vecv,\vecv')\leq 4C_1(1+C_2)\rho_0^d
\text{ and }
|\xi_1-\xi_1'|\leq(1+C_2)\rho_0^d
\\\notag
&\quad\Rightarrow\quad
\bigl|f(\vecv,\langle\xi_j,\vs_j,\vecv_j\rangle_{j=1}^{n})-
f(\vecv',\xi_1',\vs_1,\vecv_1,\langle\xi_j,\vs_j,\vecv_j\rangle_{j=2}^{n})\bigr|<\ve
\end{align}
(this can be obtained since $f$ is continuous and has compact support);
and
\begin{align}\label{RHO0COND2}
\varphi(\vecv,\vecv')\leq 8C_1(1+C_2)\rho_0^d
\quad\Rightarrow\quad
|g(\vecv)-g(\vecv')|<\frac{\ve}{\omega(\US)\|f\|_\infty}.
\end{align}

\subsection{Modification and partition of the domain of integration}

Let us now consider any choice of $\rho\in(0,\rho_0)$, $\vecq\in\scrP_T(\rho)$ and $\vecbeta\in F_3$.
In order to complete the proof of Theorem \ref{MAINTECHNTHM2A},
we wish to prove that the difference in \eqref{unifmodThm2ares} is $\ll\ve$.
Thus we need to study the integral
\begin{align}\label{MAINTECHNthm2aPF1}
\int_{\fw_{\vecq,\rho,n}^{\vecbeta}}f\Bigl(\vecv,\big\langle\rho^{d-1}\tau_j(\vecv),
{\vs}_j(\vecv),\vecv_j(\vecv)\big\rangle_{j=1}^{n}\Bigr)\,d\lambda(\vecv),
\end{align}
where
$\tau_j(\vecv):=\tau_j(\vecq_{\rho,\vecbeta}(\vecv),\vecv;\rho)$
and
$\vs_j(\vecv):={\vs}_j(\vecq_{\rho,\vecbeta}(\vecv),\vecv;\rho)$
and $\vecv_j(\vecv):=$\linebreak$\vecv_j(\vecq_{\rho,\vecbeta}(\vecv),\vecv;\rho)$.
Let us fix sets $\tD_1,\ldots,\tD_N$ so that $D_\ell\subset \tD_\ell\subset\overline{D_\ell}$
for all $\ell$ %
and $\tD_1,\ldots,\tD_N$ partition $\US$,
i.e.\ $\tD_i\cap\tD_j=\emptyset$ for all $i\neq j$ and $\US=\cup_{\ell=1}^N\tD_\ell$.
Given any $\veca\in\US$ we let $[\veca]$ be the unique set $\tD_j$ for which $\veca\in\tD_j$.
Let us also write $\vecq^{(j)}(\vecv):=\vecq^{(j)}(\vecq_{\rho,\vecbeta}(\vecv),\vecv;\rho)$
and
\begin{align*}
\vecs_1(\vecv):=\vecq^{(1)}(\vecv)-\vecq.
\end{align*}

{\em We now come to a crucial step of our treatment:} We will prove that with small error %
the domain of integration in \eqref{MAINTECHNthm2aPF1}, $\fw_{\vecq,\rho,n}^{\vecbeta}$, 
can be slightly modified
in such a way that the new domain can be perfectly partitioned into a large number of small pieces
which can each be dealt with using \eqref{RHO0CONDmain}. %
First of all, since $\lambda$ is concentrated on $U_1$,   %
we may trivially replace $\fw_{\vecq,\rho,n}^{\vecbeta}$ by $U_1\cap\fw_{\vecq,\rho,n}^{\vecbeta}$.
We then throw away any $\vecv\in U_1$ for which
$\rho^{d-1}\tau_1(\vecv)\notin(C_1^{-1},C_1)$ or $\vecq^{(1)}(\vecv)\in\scrE$,
and also any $\vecv\in U_1$ which does not satisfy that the whole set $[\vecv_1(\vecv)]$ is
``far from grazing position and is fully lit upon from $U_1$''.
In precise terms, we replace the domain of integration by $U_2\cap\fw_{\vecq,\rho,n}^{\vecbeta}$, where
\begin{align}\label{U2def}
U_2:=\Bigl\{\vecv\in U_1\cap\fw_{\vecq,\rho,1}^{\vecbeta}:\:
C_1^{-1}\hspace{-1pt}<\hspace{-1pt}\rho^{d-1}\tau_1(\vecv)\hspace{-1pt}<\hspace{-1pt}C_1,
\:
\vecq^{(1)}(\vecv)\in\scrP\setminus\scrE,
\:
[\vecv_1(\vecv)]\subset\scrV_{\vecs_1(\vecv)}^{5\eta},
\\\notag
\text{and }
[\forall\vecalf\in[\vecv_1(\vecv)]\col\exists\vecv'\in U_1\cap\fw_{\vecq,\rho,1}^{\vecbeta}\text{ s.t. }
\vecq^{(1)}(\vecv')=\vecq^{(1)}(\vecv)\text{ and }\vecv_1(\vecv')=\vecalf]\Bigr\}.
\end{align}
The following lemma will allow us to bound the error caused by this replacement.
\begin{lem}\label{GOODCOLLlem}
If $\vecv\in(U_1\cap\fw_{\vecq,\rho,1}^{\vecbeta})\setminus U_2$ then one of the following holds:
\begin{enumerate}[{\rm (i)}]
\item $\rho^{d-1}\tau_1(\vecv)\notin(C_1^{-1},C_1)$;
\item $\vecq^{(1)}(\vecv)\in\scrE$;
\item $\vecv\in\partial_{2\eta}(U_1)$;
\item $\vecv\in\fg_{\vecq,\rho,\eta}^\vecbeta$.
\end{enumerate}
\end{lem}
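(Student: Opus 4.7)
The plan is to exhaust the four conditions defining $U_2$. Since $\vecv\in(U_1\cap\fw_{\vecq,\rho,1}^{\vecbeta})\setminus U_2$, at least one of them fails, and failures of the first two conditions immediately give (i) and (ii). A preliminary geometric estimate underlies the handling of the remaining two. Since $\vecq^{(1)}(\vecv)=\vecq+\rho\vecbeta(\vecv)+\tau_1\vecv-\rho\vecu_1$, one has $\vecs_1(\vecv)-\tau_1\vecv=\rho(\vecbeta(\vecv)-\vecu_1)$, of norm at most $\rho(1+C_2)$. Assuming (i) fails so that $\tau_1>C_1^{-1}\rho^{1-d}$, for sufficiently small $\rho$ this gives $\|\vecs_1(\vecv)\|\geq\tau_1/2$, and a chord-versus-arc estimate yields $\varphi(\vecs_1(\vecv),\vecv)\leq 4C_1(1+C_2)\rho^d\leq\eta/2$ by the choice of $\rho_0$ in \eqref{RHO0COND3}.

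If condition 3 fails, pick $\vecalf\in[\vecv_1(\vecv)]$ with $\vecalf\notin\scrV_{\vecs_1(\vecv)}^{5\eta}$, i.e.\ $s_\Psi(B_\Psi-\varphi(\vecalf,\vecs_1(\vecv)))\leq 5\eta$. Using the diameter bound $\varphi(\vecv_1(\vecv),\vecalf)<\eta/C_\eta\leq\eta$ (since $[\vecv_1(\vecv)]\subset\overline{D_\ell}$ for some $\ell$) together with the angle bound on $\varphi(\vecs_1(\vecv),\vecv)$, the triangle inequality gives $s_\Psi(B_\Psi-\varphi(\vecv_1(\vecv),\vecv))<5\eta+\eta+\eta/2<10\eta$. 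Hence $\vecv_1(\vecv)\notin\scrV_\vecv^{10\eta}$; by spherical symmetry this is equivalent to $\vecw_1(\vecv)\in\fU_\eta$, so (iv) holds.

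Suppose next that condition 4 fails while conditions 1--3 hold, and assume that neither (iii) nor (iv) holds; the task is to derive a contradiction. Fix $\vecalf\in[\vecv_1(\vecv)]$ for which no witness $\vecv'$ exists, and set $\vecs:=\vecs_1(\vecv)$. Because $\varphi(\vecs,\vecv)\leq\eta/2$ and $\vecv\notin\partial_{2\eta}(U_1)$, the ball $\scrD_\vecs^\eta$ around $\hs$ lies in $U_1\subset U$; combined with $\|\vecs\|\geq C_2^{-1}$ and the choice of $\rho_0$ in \eqref{RHO0COND3}, Lemma \ref{SHINELEM1} applies. Define $\vecv':=\overline{\vecV}^{-1}(\vecalf)\in U$, so that by construction the particle starting at $\vecq+\rho\vecbeta(\vecv')$ with velocity $\vecv'$ reaches the scatterer centered at $\vecq+\vecs$ with outgoing direction $\vecalf$. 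The identity $\tau(\vecv')\vecv'-\tau(\vecv)\vecv=\rho((\vecB(\vecv')-\vecB(\vecv))-(\vecbeta(\vecv')-\vecbeta(\vecv)))$ together with $\|\vecs\|\geq\tau_1/2$ gives $\varphi(\vecv',\vecv)\leq 64C_1C_2\rho^d\leq 8\eta/(1+C_2)<\eta$, so $\vecv'\in U_1$. The Lipschitz bound $\|D_\vech\vecB^-\|<C_\eta$ from Lemma \ref{SHINELEM1}(iii) combined with the cell-diameter bound yields $\|\vecB(\vecv')-\vecB(\vecv)\|<C_\eta\varphi(\vecv_1(\vecv),\vecalf)<\eta$, while the Lipschitz property of $\vecbeta$ gives $\|\vecbeta(\vecv')-\vecbeta(\vecv)\|<C_2\varphi(\vecv',\vecv)$; hence the endpoint separations $\|A-A'\|$ and $\|B-B'\|$ of the two trajectory segments (with $A,A'$ the starting points and $B,B'$ the scatterer-entry points) are both strictly less than $\rho\eta$. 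The convex-combination identity $(1-s)(A-A')+s(B-B')$ propagates these endpoint bounds to a uniform bound strictly less than $\rho\eta$ along the entire segment. Consequently any $\vecq'\in\scrP$ within $\rho$ of the $\vecv'$-trajectory lies strictly within $(1+\eta)\rho$ of the $\vecv$-trajectory, whereupon the non-grazing hypothesis together with the condition $(\vecbeta(\vecv')+\R_{>0}\vecv')\cap\scrB_1^d=\emptyset$ force $\vecq'=\vecq^{(1)}(\vecv)$. Thus $\vecv'\in U_1\cap\fw_{\vecq,\rho,1}^{\vecbeta}$ with $\vecq^{(1)}(\vecv')=\vecq^{(1)}(\vecv)$ and $\vecv_1(\vecv')=\vecalf$, contradicting the failure of condition 4.

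The main obstacle is the tight coupling in the last paragraph: the corridor of width $(1+\eta)\rho$ around the $\vecv$-trajectory supplied by the non-grazing hypothesis must enclose the \emph{entire} perturbed $\vecv'$-trajectory, not merely its endpoints. The twin strict bounds $\|A-A'\|<\rho\eta$ and $\|B-B'\|<\rho\eta$, arising respectively from the Lipschitz constant of $\vecbeta$ and from that of $\vecB^-$ paired with the prescribed cell diameter $\eta/C_\eta$, are calibrated precisely so that the convex-combination identity propagates them uniformly along the segment. The quantitative content of Lemma \ref{SHINELEM1} together with the explicit choice \eqref{RHO0COND3} of $\rho_0$ are both essential for this calibration to succeed.
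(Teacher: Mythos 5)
Your proposal follows the same overall strategy as the paper: the first three cases are handled by the triangle-inequality estimate on $\varphi(\vecs_1,\vecv)$, and the fourth via Lemma \ref{SHINELEM1} by showing that the absence of a witness $\vecv'$ forces the path to be $\eta$-grazing. The two-step structure (first rule out the case $[\vecv_1(\vecv)]\not\subset\scrV_{\vecs_1}^{5\eta}$, then use $\overline{\vecV}^{-1}$ to produce $\vecv'$ and compare the segments $L$ and $L'$) is exactly the paper's, and the key observation in your closing paragraph---that the twin endpoint bounds of size $<\rho\eta$ must be propagated along the whole convex combination---is indeed the crux.

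However, there is a genuine arithmetic slip in the step estimating $\varphi(\vecv,\vecv')$, and with the constant you wrote the argument does not close. You claim $\varphi(\vecv,\vecv')\leq 64C_1C_2\rho^d\leq 8\eta/(1+C_2)$, and then deduce $\|A-A'\|=\rho\|\vecbeta(\vecv)-\vecbeta(\vecv')\|<\rho C_2\varphi(\vecv,\vecv')$. But $\rho C_2\cdot 8\eta/(1+C_2)$ is about $7.3\,\rho\eta$ for $C_2\geq 10$, \emph{not} strictly less than $\rho\eta$ as required. The bound is off by roughly a factor of $8$. The correct estimate, obtained either from your identity by a more careful projection argument or (as the paper does) by bounding $\varphi(\vecs_1,\vecv)$ and $\varphi(\vecs_1,\vecv')$ separately and using the triangle inequality, is $\varphi(\vecv,\vecv')<8C_1(1+C_2)\rho^d$. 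With this one then has $\|\vecbeta(\vecv)-\vecbeta(\vecv')\|<8C_1C_2(1+C_2)\rho^d$, and hence $\|A-A'\|<8C_1C_2(1+C_2)\rho^{d+1}<\eta\rho$ by the third term in \eqref{RHO0COND3}. Your identity $\tau(\vecv')\vecv'-\tau(\vecv)\vecv=\rho\bigl((\vecB(\vecv')-\vecB(\vecv))-(\vecbeta(\vecv')-\vecbeta(\vecv))\bigr)$ is correct and can give this constant: the right-hand side has norm $\leq 2(1+C_2)\rho$, the perpendicular component is $\tau(\vecv')\sin\varphi(\vecv,\vecv')$, and $\tau(\vecv')\geq\tau_1/2$ for small $\rho$, whence $\sin\varphi(\vecv,\vecv')\leq 4C_1(1+C_2)\rho^d$ and $\varphi(\vecv,\vecv')\leq 8C_1(1+C_2)\rho^d$. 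So the approach is sound, but the loose constant $64C_1C_2$ has to be tightened to $8C_1(1+C_2)$ for the chain of inequalities to land where you need it.
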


\begin{proof}
Assume $\vecv\in(U_1\cap\fw_{\vecq,\rho,1}^{\vecbeta})\setminus U_2$,
$\rho^{d-1}\tau_1(\vecv)\in(C_1^{-1},C_1)$
and $\vecq^{(1)}(\vecv)\in\scrP\setminus\scrE$,
i.e.\ neither (i) or (ii) hold.
Then our task is to prove that either (iii) or (iv) holds.
Now $\tau_1=\tau_1(\vecv)$, $\vecq^{(1)}=\vecq^{(1)}(\vecv)$,
$\vecv_1=\vecv_1(\vecv)$,
$\vecw_1(\vecv)=\vecw_1(\vecq_{\rho,\vecbeta}(\vecv),\vecv;\rho)$
are well-defined,
with $\tau_1(\vecv)<\infty$.
Take $\ell$ so that $[\vecv_1]=\tD_\ell$.
Let us first assume $\tD_\ell\not\subset\scrV_{\vecs_1}^{5\eta}$ (with $\vecs_1=\vecs_1(\vecv)$),
and take $\vecalf\in \tD_\ell$ with $\vecalf\notin\scrV_{\vecs_1}^{5\eta}$,
i.e.\
\begin{align*}
s_\Psi\cdot(B_\Psi-\varphi(\vecalf,\vecs_1))\leq5\eta.
\end{align*}
Then $\varphi(\vecalf,\vecv_1)<\eta/C_\eta<\eta$ since $\vecalf,\vecv_1\in \overline{D_\ell}$.
Furthermore, since the ray $\vecq+\rho\vecbeta(\vecv)+\R_{>0}\vecv$ hits 
$\scrB^d(\vecq^{(1)},\rho)$
and $\|\vecbeta(\vecv)\|\leq C_2$,
we have $\|\vecs_1\|\geq\tau_1-(1+C_2)\rho>(2C_1)^{-1}\rho^{1-d}$ (cf.\ \eqref{RHO0COND3}),
and 
\begin{align}\label{GOODCOLLlempf1}
\varphi(\vecs_1,\vecv)
<\arcsin\frac{(1+C_2)\rho}{\|\vecs_1\|}
\leq\arcsin\bigl(2C_1(1+C_2)\rho^d\bigr)<4 C_1(1+C_2)\rho^d<\eta  %
\end{align}
(again cf.\ \eqref{RHO0COND3} for the last inequality).
Hence 
\begin{align*}
s_\Psi\cdot(B_\Psi-\varphi(\vecv,\vecv_1))<7\eta.
\end{align*}
This implies $\vecw_1\in\fU_{\eta}$ (cf.\ \eqref{FUETA}),
and so (iv) holds.

It remains to treat the case when $\tD_\ell\subset\scrV_{\vecs_1}^{5\eta}$.
It then follows from $\vecv\notin U_2$ that there is some $\vecalf\in\tD_\ell$ such that there does not exist any
$\vecv'\in U_1\cap\fw_{\vecq,\rho,1}^{\vecbeta}$ satisfying 
$\vecq^{(1)}(\vecv')=\vecq^{(1)}(\vecv)$ and $\vecv_1(\vecv')=\vecalf$.
We noted $\|\vecs_1\|>(2C_1)^{-1}\rho^{1-d}$ above;
hence $\|\vecs_1\|\geq C_2^{-1}$
(cf.\ \eqref{RHO0COND3}).
If (iii) holds then we are done;
hence let us assume that (iii) does not hold,
i.e.\ $\scrD_\vecv^{2\eta}\subset U_1$.
Then $\scrD_{\vecs_1}^\eta\subset U_1$, because of \eqref{GOODCOLLlempf1},
and hence all the assumptions of Lemma \ref{SHINELEM1} are fulfilled,
with $C_2,\vecs_1,U_1$ in place of $C,\vecs,U$.
It follows that 
$\overline{\vecV}$,
the restriction of $\vecV=\vecV_{\!\!\rho,\vecs_1,\vecbeta}$
to $\vecV^{-1}(\scrV_{\vecs_1}^\eta)$,
is a $\C^1$ diffeomorphism onto $\scrV_{\vecs_1}^\eta$.
In particular, since $\vecalf\in\tD_\ell\subset\scrV_{\vecs_1}^\eta$,
there is a unique $\vecv'\in U_1$ satisfying $\vecV(\vecv')=\vecalf$.
For this $\vecv'$, set $\tau'=\tau_{\rho,\vecs_1,\vecbeta}(\vecv')$,
so that the ray $\{\vecq+\rho\vecbeta(\vecv')+t\vecv'\col t>0\}$
hits the ball 
$\scrB^d(\vecq^{(1)}(\vecv),\rho)$
for $t=\tau'$ (cf.\ \eqref{TAUrhovecsvecbetaDEF}).
Thus $\tau_1(\vecv')\leq\tau'$.
If $\tau_1(\vecv')=\tau'$ then it would follow that
$\vecv'\in \fw_{\vecq,\rho,1}^{\vecbeta}$,
$\vecq^{(1)}(\vecv')=\vecq^{(1)}(\vecv)$ and $\vecv_1(\vecv')=\vecV(\vecv')=\vecalf$,
contrary to our present assumptions.
Hence we must have $\tau(\vecv')<\tau'$,
and there is some $\vecq'\in\scrP\setminus\{\vecq,\vecq^{(1)}(\vecv)\}$
so that the line segment $L'$ between $\vecq+\rho\vecbeta(\vecv')$
and $\vecq+\rho\vecbeta(\vecv')+\tau'\vecv'$ intersects 
$\scrB^d(\vecq',\rho)$. %
Let us also denote by $L$ the line segment between 
$\vecq+\rho\vecbeta(\vecv)$ and $\vecq+\rho\vecbeta(\vecv)+\tau_1(\vecv)\vecv$,
and take points $\vecu_1',\vecu_1\in\US$ so that
\begin{align*}
  \vecq+\rho\vecbeta(\vecv')+\tau'\vecv'=\vecq^{(1)}(\vecv)+\rho\vecu_1'
  \qquad\text{and}\qquad
  \vecq+\rho\vecbeta(\vecv)+\tau_1(\vecv)\vecv=\vecq^{(1)}(\vecv)+\rho\vecu_1.
\end{align*}

We have $\varphi(\vecs_1,\vecv)<4C_1(1+C_2)\rho^d$ by \eqref{GOODCOLLlempf1},
and in the same way, $\varphi(\vecs_1,\vecv')<4C_1(1+C_2)\rho^d$.
Hence $\varphi(\vecv,\vecv')<8C_1(1+C_2)\rho^d<\eta$.
Hence using $\sup_{\vech\in \T^1(U)} \|D_\vech\vecbeta\|$\linebreak$\leq C_2$ 
and $\scrD_\vecv^{2\eta}\subset U_1\subset U$,
we get $\|\vecbeta(\vecv)-\vecbeta(\vecv')\|<8C_1C_2(1+C_2)\rho^d$.
Furthermore we have 
$\varphi(\vecalf,\vecv_1)<\eta/C_\eta$ since $\vecalf,\vecv_1\in \overline{D_\ell}$.
Hence by Lemma \ref{SHINELEM1}(iii),
noticing that $\vecu_1'=\vecB^-(\vecalf)$ and $\vecu_1=\vecB^-(\vecv_1)$ with $\vecB^-=\vecB^-_{\rho,\vecs_1,\vecbeta}$,
it follows that $\varphi(\vecu_1,\vecu_1')<C_\eta\varphi(\vecalf,\vecv_1)<\eta$.
Hence the end-points of $L$ and $L'$ satisfy
\begin{align*}
  \|(\vecq^{(1)}(\vecv)+\rho\vecu_1')-(\vecq^{(1)}(\vecv)+\rho\vecu_1)\|<\eta\rho
\end{align*}
and
\begin{align*}
\|(\vecq+\vecbeta(\vecv))-(\vecq+\vecbeta(\vecv'))\|<8C_1C_2(1+C_2)\rho^d<\eta\rho
\end{align*}
(cf.\ \eqref{RHO0COND3}).
It follows that each point on $L'$ has distance $<\eta\rho$ to $L$.
Hence $\vecq'$ has distance $<(1+\eta)\rho$ from $L$,
and hence $\vecv\in\fg_{\vecq,\rho,\eta}^\vecbeta$,
i.e.\ (iv) holds.
\end{proof}
By Lemma \ref{GOODCOLLlem},
and since %
$f(\vecv,\langle\xi_j,\vs_j,\vecv_j\rangle_{j=1}^{n})=0$ whenever $\xi_1\notin(C_1^{-1},C_1)$,
the error caused by replacing the domain of integration in 
\eqref{MAINTECHNthm2aPF1} by $U_2\cap\fw_{\vecq,\rho,n}^{\vecbeta}$ is
\begin{align}\label{GOODCOLLlemAPPL}
\leq\bigl(\lambda(\{\vecv\in\fw_{\vecq,\rho,1}^{\vecbeta}\col\vecq^{(1)}(\vecv)\in\scrE\})+
\lambda(\partial_{2\eta}(U_1))+
\lambda(\fg_{\vecq,\rho,\eta}^\vecbeta)\bigr)\cdot\|f\|_{\infty}
<3\ve.
\end{align}
Cf.\ \eqref{lambdapartialetaU1small}, \eqref{ETAGRZsmallass} and \eqref{EHITSMALLPROB} for the last inequality.

Now our task is to understand
\begin{align}\label{MAINTECHNthm2aPF3}
\int_{U_2\cap\fw_{\vecq,\rho,n}^{\vecbeta}}f\Bigl(\vecv,\big\langle\rho^{d-1}\tau_j(\vecv),
{\vs}_j(\vecv),\vecv_j(\vecv)\big\rangle_{j=1}^{n}\Bigr)\,d\lambda(\vecv).
\end{align}
With the new domain of integration,
the integral can be decomposed as a sum over those $\vecq'$ which can appear as $\vecq^{(1)}$.
By the definition of $U_2$, each such point $\vecq'$ satisfies $\vecq'\in\scrP\setminus\scrE$ and
\begin{align*}
\|\vecq'\|\leq\|\vecq\|+(C_2+1)\rho+C_1\rho^{1-d}<(T+C_1+1)\rho^{1-d}=T_1\rho^{1-d}
\end{align*}
(we used \eqref{RHO0COND3} in the second inequality);
thus $\vecq'\in\scrP_{T_1}(\rho)$.
Given any $\vecq'\in\scrP_{T_1}(\rho)$
we write $\vecs_1:=\vecq'-\vecq$, %
and let $M(\vecq')$ be the corresponding set of
$\ell\in\{1,\ldots,N\}$ such that $\tD_\ell$ is far from grazing position and is fully lit upon from $U_1$,
i.e.
\begin{align*}\notag
M(\vecq')=\bigl\{\ell\col
\tD_\ell\subset\scrV_{\vecs_1}^{5\eta}
\:\:\text{ and }\:\:
\hspace{220pt}
\\\notag %
[\forall\vecalf\in\tD_\ell\col\exists\vecv'\in U_1\cap\fw_{\vecq,\rho,1}^{\vecbeta}\text{ s.t. }
\vecq^{(1)}(\vecv')=\vecq'\text{ and }\vecv_1(\vecv')=\vecalf]\bigr\}.
\end{align*}
Then \eqref{MAINTECHNthm2aPF3} can be expressed as
\begin{align}\label{MAINTECHNthm2aPF3a}
\sum_{\vecq'\in\scrP_{T_1}(\rho)}\:\sum_{\ell\in M(\vecq')}\:\int_{U_{\vecq',\ell}\cap\fw_{\vecq,\rho,n}^{\vecbeta}}
f\bigl(\vecv,\big\langle\rho^{d-1}\tau_j(\vecv),{\vs}_j(\vecv),
\vecv_j(\vecv)\big\rangle_{j=1}^{n}\bigr)\,d\lambda(\vecv),
\end{align}
where
\begin{align*}
U_{\vecq',\ell}:=\{\vecv\in U_1\cap\fw_{\vecq,\rho,1}^{\vecbeta}\col\vecq^{(1)}(\vecv)=\vecq',\:\vecv_1(\vecv)\in \tD_\ell\}.
\end{align*}
Indeed, for every $\vecv\in U_2\cap\fw_{\vecq,\rho,n}^{\vecbeta}$ there is exactly one choice of
$\vecq'\in\scrP_{T_1}(\rho)$ and $\ell\in M(\vecq')$ such that
$\vecv\in U_{\vecq',\ell}\cap\fw_{\vecq,\rho,n}^{\vecbeta}$,
and conversely for any $\vecq'\in\scrP_{T_1}(\rho)$, $\ell\in M(\vecq')$ and
$\vecv\in U_{\vecq',\ell}\cap\fw_{\vecq,\rho,n}^{\vecbeta}$,
we have $\vecv\in U_2$ or $\rho^{d-1}\tau_1(\vecv)\notin(C_1^{-1},C_1)$,
and in the latter case the integrand vanishes. %

\subsection{Step by step modification and approximation of the decomposed integral}

For any $\vecq',\ell$ and $\vecv\in U_{\vecq',\ell}$
as in \eqref{MAINTECHNthm2aPF3a}, we have
$|\tau_1(\vecv)-\|\vecs_1\||\leq(1+C_2)\rho$,
$\varphi(\hs_1,\vecv)<4 C_1(1+C_2)\rho^d$ (cf.\ \eqref{GOODCOLLlempf1}),
and $\vs_1(\vecv)=\vs(\vecq')$.
Hence by \eqref{RHO0COND1}, %
up to an error of absolute size $<\ve$,
\eqref{MAINTECHNthm2aPF3a} equals
\begin{align}\label{MAINTECHNthm2aPF8}
\sum_{\vecq'\in\scrP_{T_1}(\rho)}\sum_{\ell\in M(\vecq')}\int_{U_{\vecq',\ell}\cap\fw_{\vecq,\rho,n}^{\vecbeta}}
f\Bigl(\hs_1,\rho^{d-1}\|\vecs_1\|,\vs(\vecq'),\vecv_1(\vecv),
\hspace{70pt}
\\\notag
\big\langle\rho^{d-1}\tau_j(\vecv),{\vs}_j(\vecv),
\vecv_j(\vecv)\big\rangle_{j=2}^n\Bigr)\,d\lambda(\vecv).
\end{align}
(Note that $\hs_1\in U$,
since $\vecv\in U_1$ and $U_1\cup\partial_{2\eta}(U_1)\subset U$.)
Next recall that
$d\lambda(\vecv)=g(\vecv)\,d\omega(\vecv)$ for $\vecv\in U_1$. %
By \eqref{RHO0COND2}, if in each region $U_{\vecq',\ell}$
we replace the function $g$ by any constant equal to a value taken by $g$ in $U_{\vecq',\ell}$,
this causes a total error of absolute size $<\ve$;
and by the intermediate value theorem, an admissible such constant is %
$\lambda(U_{\vecq',\ell})/\omega(U_{\vecq',\ell})$.
Hence we conclude that \eqref{MAINTECHNthm2aPF8} differs by less than $\ve$ from
\begin{align}\notag
\sum_{\vecq'\in\scrP_{T_1}(\rho)}\sum_{\ell\in M(\vecq')}
\lambda(U_{\vecq',\ell})
\int_{U_{\vecq',\ell}\cap\fw_{\vecq,\rho,n}^{\vecbeta}}
f\Bigl(\hs_1,\rho^{d-1}\|\vecs_1\|,\vs(\vecq'),\vecv_1(\vecv),
\hspace{70pt}
\\\label{MAINTECHNthm2aPF2}
\big\langle\rho^{d-1}\tau_j(\vecv),{\vs}_j(\vecv),
\vecv_j(\vecv)\big\rangle_{j=2}^n\Bigr)
\,\frac{d\omega(\vecv)}{\omega(U_{\vecq',\ell})}.
\end{align}

As in the proof of Lemma \ref{GOODCOLLlem},
for any fixed $\vecq'\in\scrP_{T_1}(\rho)$ and $\ell\in M(\vecq')$,
we have a $\C^1$ diffeomorphism $\overline{\vecV}$ of from $\vecV^{-1}(\scrV_{\vecs_1}^\eta)$ onto $\scrV_{\vecs_1}^\eta$;
also $U_{\vecq',\ell}\subset \vecV^{-1}(\scrV_{\vecs_1}^\eta)$,
$\overline{\vecV}(U_{\vecq',\ell})=\tD_\ell$,
and $\vecv_1(\vecv)=\overline{\vecV}(\vecv)$ for all $\vecv\in U_{\vecq',\ell}$.
We take $\vecv_1=\vecv_1(\vecv)$ as a new variable of integration in \eqref{MAINTECHNthm2aPF2}.
By Lemma \ref{SHINELEM1}(ii),
using also \eqref{RHO0COND3} and our notation from \eqref{LAMBDALSdef},
$\omega(U_{\vecq',\ell})^{-1}\,d\omega(\vecv)$ in \eqref{MAINTECHNthm2aPF2}
transforms into $h(\vecv_1)\,d\nu_{\ell,\vecs_1}(\vecv_1)$,
where $h=h_{\vecq',\ell}$ is a continuous function on $\tD_\ell$ satisfying
$\bigl|h(\vecv_1)-1\bigr|\leq\ve/\|f\|_\infty$ for all $\vecv_1\in\tD_\ell$.
It follows that replacing $h(\vecv_1)\,d\nu_{\ell,\vecs_1}(\vecv_1)$ by $d\nu_{\ell,\vecs_1}(\vecv_1)$
causes a total error $\leq\ve$ in our expression. %
Also, the point where the particle leaves the $\vecq'$-scatterer is
$\vecq'+\rho \vecB^+(\vecv_1)$ 
with $\vecB^+=\vecB^+_{\rho,\vecs_1,\vecbeta}$ as in Lemma \ref{SHINELEM1}(iii),
and we note that for all $\vecv\in U_{\vecq',\ell}$
the condition $\vecv\in\fw_{\vecq,\rho,n}^{\vecbeta}$
is equivalent with $\vecv_1(\vecv)\in\fw_{\vecq',\rho,n-1}^{\vecB^+}$.
Hence, up to an error of absolute size $<\ve$, \eqref{MAINTECHNthm2aPF2} equals
\begin{align}\notag
\sum_{\vecq'\in\scrP_{T_1}(\rho)}\:\sum_{\ell\in M(\vecq')}
\lambda(U_{\vecq',\ell})
\int_{\tD_\ell\cap\fw_{\vecq',\rho,n-1}^{\vecB^+}}
f\Bigl(\hs_1,\rho^{d-1}\|\vecs_1\|,\vs(\vecq'),\vecv,
\hspace{85pt}
\\\label{MAINTECHNthm2aPF5}
\big\langle\rho^{d-1}\ttau_j(\vecv),{\tvs}_j(\vecv),
\tv_j(\vecv)\big\rangle_{j=1}^{n-1}\Bigr)\, 
d\nu_{\ell,\vecs_1}(\vecv), %
\end{align}
where
$\ttau_j(\vecv)=\tau_j(\vecq'+\rho \vecB^+(\vecv),\vecv;\rho)$,
$\tvs_j(\vecv)=\vs_j(\vecq'+\rho \vecB^+(\vecv),\vecv;\rho)$,
$\tv_j(\vecv)=\vecv_j(\vecq'+\rho \vecB^+(\vecv),\vecv;\rho)$.

Clearly we may replace $\tD_\ell$ by $D_\ell$ in \eqref{MAINTECHNthm2aPF5},
since $\partial D_\ell$ has measure zero.
Let us temporarily fix $\vecq'\in\scrP_{T_1}(\rho)$ and $\ell\in M(\vecq')$,
and set $\tbe:=\vecB^+_{|D_\ell}$;
then $D_\ell\cap\fw_{\vecq',\rho,n-1}^{\vecB^+}=\fw_{\vecq',\rho,n-1}^{\tbe}$,
and using also \eqref{TFdef} the integral appearing in \eqref{MAINTECHNthm2aPF5} can be rewritten as
\begin{align}\label{MAINTECHNthm2aPF5intrewr}
\int_{\fw_{\vecq',\rho,n-1}^{\tbe}} f_{[\hs_1,\rho^{d-1}\|\vecs_1\|,\vs(\vecq')]}\Bigl(\vecv,
\big\langle\rho^{d-1}\ttau_j(\vecv),{\tvs}_j(\vecv),
\tv_j(\vecv)\big\rangle_{j=1}^{n-1}\Bigr)\,d\nu_{\ell,\vecs_1}(\vecv).
\end{align}
Note that for all $\vecq'$ and $\ell$ appearing in \eqref{MAINTECHNthm2aPF5},
we have $\nu_{\ell,\vecs_1}\in F_{1,\ell}$,
$f_{[\hs_1,\rho^{d-1}\|\vecs_1\|,\vs(\vecq')]}\in F_{2,\ell}$,
and also  $\tbe:=\vecB^+_{|D_\ell}\in F_{3,\ell}$, by Lemma \ref{SHINELEM1}(iii).
Hence by \eqref{RHO0CONDmain},
the integral in \eqref{MAINTECHNthm2aPF5intrewr} differs by less than $\ve$ from
\begin{align}\notag
\int_{X_{D_\ell}^{(n-1)}}
f\Bigl(\hs_1,\rho^{d-1}\|\vecs_1\|,\vs(\vecq'),\vecv_0,\big\langle\xi_j,{\vs}_j,\vecv_j\big\rangle_{j=1}^{n-1}\Bigr)
p_{\bn,\tbe}\bigl(\vs(\vecq'),\vecv_0;\xi_1,{\vs}_1,\vecv_1\bigr)
\hspace{40pt}
\\\label{MAINTECHNthm2aPF7}
\times\prod_{j=2}^{n-1} p_\bn(\vecv_{j-2},{\vs}_{j-1},\vecv_{j-1};\xi_j,{\vs}_{j},\vecv_{j})
\, d\nu_{\ell,\vecs_1}(\vecv_0)\,\prod_{j=1}^{n-1}\bigl( d\xi_j\,d\mm({\vs}_j)\,d\vecv_j\bigr)
\\\notag
=\int_{X_{D_\ell}^{(1)}}f_{n-2}(\hs_1,\rho^{d-1}\|\vecs_1\|,\vs(\vecq'),\vecv_1,\xi_2,\vs_2,\vecv_2)
\, p_{\bn,\tbe}\bigl(\vs(\vecq'),\vecv_1;\xi_2,{\vs}_2,\vecv_2\bigr)
\hspace{40pt}
\\\notag
\times d\nu_{\ell,\vecs_1}(\vecv_1)\,d\xi_2\,d\mm({\vs}_2)\,d\vecv_2,
\end{align}
where in the last equality we used \eqref{MAINTECHNthm2aPF6} for $m=1,2,\ldots,n-2$, %
and then renamed the variables $\vecv_0,\xi_1,\vs_1,\vecv_1$ as $\vecv_1,\xi_2,\vs_2,\vecv_2$.
Here $\tbe=\vecB^+_{\rho,\vecs_1,\vecbeta|D_\ell}$,
and by Lemma \ref{SHINELEM1}(iii),
using $\rho<\rho_0$ and \eqref{RHO0COND3},
we have $\|\tbe(\vecv_1)-\vecbeta_{\hs_1}^+(\vecv_1)\|<\eta'$ for all $\vecv_1\in D_\ell$,
and hence by \eqref{NUCHOICE} and \eqref{BASICp0idG},
the expression in \eqref{MAINTECHNthm2aPF7} differs by less than $\ve$ from
\begin{align}\notag
\int_{X_{D_\ell}^{(1)}}f_{n-2}(\hs_1,\rho^{d-1}\|\vecs_1\|,\vs(\vecq'),\vecv_1,\xi_2,\vs_2,\vecv_2)\,
p_{\bn}\bigl(\hs_1,\vs(\vecq'),\vecv_1;\xi_2,{\vs}_2,\vecv_2\bigr)
\hspace{40pt}
\\\label{MAINTECHNthm2aPF9}
\times d\nu_{\ell,\vecs_1}(\vecv_1)\,d\xi_2\,d\mm({\vs}_2)\,d\vecv_2
\\\notag
=\int_{D_\ell}f_{n-1}(\hs_1,\rho^{d-1}\|\vecs_1\|,\vs(\vecq'),\vecv_1)\,d\nu_{\ell,\vecs_1}(\vecv_1),
\hspace{110pt}
\end{align}
where the equality holds by \eqref{MAINTECHNthm2aPF6}.
In conclusion, for any $\vecq'\in\scrP_{T_1}(\rho)$ and $\ell\in M(\vecq')$,
the integral appearing in \eqref{MAINTECHNthm2aPF5} differs by less than $2\ve$ from
the integral in \eqref{MAINTECHNthm2aPF9}.
Adding over $\vecq'$ and $\ell$, it follows that the whole expression in \eqref{MAINTECHNthm2aPF5} differs by less than $2\ve$ from
\begin{align}\label{MAINTECHNthm2aPF4}
&\sum_{\vecq'\in\scrP_{T_1}(\rho)}\sum_{\ell\in M(\vecq')}\lambda(U_{\vecq',\ell})
\int_{D_\ell} f_{n-1}(\hs_1,\rho^{d-1}\|\vecs_1\|,\vs(\vecq'),\vecv_1)\,d\nu_{\ell,\vecs_1}(\vecv_1).
\end{align}

\subsection{Conclusion of the proof}

It follows from the recursion formula
\eqref{MAINTECHNthm2aPF6}
together with \eqref{REMOVINGCPTSUPPfactpfLEM1res1} in Lemma~ \ref{REMOVINGCPTSUPPfactpfLEM1} %
that $\|f_{n-1}\|_\infty\leq\|f_{n-2}\|_\infty\leq\cdots\leq \|f\|_\infty$.
Similarly the continuity property \eqref{RHO0COND1} immediately carries over to $f_{n-1}$, %
i.e.\ we have
\begin{align*}%
&\varphi(\vecv,\vecv')\leq 4C_1(1+C_2)\rho_0^d
\text{ and }
|\xi_1-\xi_1'|\leq(1+C_2)\rho_0^d
\\\notag
&\hspace{50pt}\quad\Rightarrow\quad
\bigl|f_{n-1}(\vecv,\xi_1,\vs_1,\vecv_1)-
f_{n-1}(\vecv',\xi_1',\vs_1,\vecv_1)\bigr|<\ve.
\end{align*}
Hence by %
repeating the argument between
\eqref{MAINTECHNthm2aPF3a} and \eqref{MAINTECHNthm2aPF5} (``backwards''),
with the function $f_{n-1}$ in place of $f$
and with a slight simplification due to the fact that this time we are not intersecting by $\fw_{\vecq,\rho,n}^\vecbeta$
in the domain of integration,
it follows that \eqref{MAINTECHNthm2aPF4} differs by less than $3\ve$ from
\begin{align}\label{MAINTECHNthm2aPF10}
&\sum_{\vecq'\in\scrP_{T_1}(\rho)}\sum_{\ell\in M(\vecq')}
\int_{U_{\vecq',\ell}}  %
f_{n-1}(\vecv,\rho^{d-1}\tau_1(\vecv),\vs(\vecq'),\vecv_1(\vecv))\,d\lambda(\vecv).
\end{align}
Also by the argument between 
\eqref{MAINTECHNthm2aPF3} and \eqref{MAINTECHNthm2aPF3a},
this double sum equals %
\begin{align}\label{MAINTECHNthm2aPF11}
\int_{U_2}f_{n-1}(\vecv,\rho^{d-1}\tau_1(\vecv),\vs_1(\vecv),\vecv_1(\vecv))\,d\lambda(\vecv).
\end{align}
Now by Lemma \ref{GOODCOLLlem} and the bound in \eqref{GOODCOLLlemAPPL}
(and the fact that $\lambda$ is concentrated on $U_1$),
replacing the domain of integration in \eqref{MAINTECHNthm2aPF11} by $\fw_{\vecq,\rho,1}^{\vecbeta}$ 
causes a total error less than $3\ve$.
Hence, using also \eqref{RHO0CONDmain2},
it follows that \eqref{MAINTECHNthm2aPF11} differs by less than $4\ve$ from 
\begin{align}\notag
\int_{X_U^{(1)}}
f_{n-1}(\vecv,\xi_1,\vs_1,\vecv_1)\, p_{\bn,\vecbeta}(\vs(\vecq),\vecv;\xi_1,{\vs}_1,\vecv_1\bigr)
\, d\lambda(\vecv)\,d\xi_1\,d\mm({\vs}_1)\,d\vecv_1
\hspace{60pt}
\\\label{MAINTECHNthm2aPF12}
=\int_{{X_U^{(n)}}} %
f\bigl(\vecv_0,\big\langle\xi_j,{\vs}_j,\vecv_j\big\rangle_{j=1}^{n}\bigr)
\, p_{\bn,\vecbeta}\bigl(\vs(\vecq),\vecv_0;\xi_1,{\vs}_1,\vecv_1\bigr)
\hspace{85pt}
\\\notag
\times\prod_{j=2}^n p_\bn(\vecv_{j-2},{\vs}_{j-1},\vecv_{j-1};\xi_j,{\vs}_{j},\vecv_{j})
\, d\lambda(\vecv_0)\,\prod_{j=1}^n\bigl( d\xi_j\,d\mm({\vs}_j)\,d\vecv_j\bigr),
\end{align}
where the last equality holds by repeated use of \eqref{MAINTECHNthm2aPF6}.

Summing up, we have proved that for any
$\rho\in(0,\rho_0)$, $\vecq\in\scrP_T(\rho)$ and $\vecbeta\in F_3$,
the two integrals \eqref{MAINTECHNthm2aPF1} and \eqref{MAINTECHNthm2aPF12} 
differ by less than $15\ve$.
This completes the proof of Theorem \ref{MAINTECHNTHM2A}.
\hfill$\square$ %

\section{Macroscopic initial conditions}
\label{MAINRESMACRO}

Generalizing the notation $\fW(1;\rho)$ from Section \ref{FIRSTCOLLMACRsec},
let us write $\fW(n;\rho)$ for the set $\fw(n;\rho)$ in macroscopic coordinates,
i.e.\
\begin{align}\label{fWndef}
\fW(n;\rho)=\{(\vecq,\vecv)\in\T^1(\R^d)\col\langle\rho^{1-d}\vecq,\vecv\rangle\in\fw(n;\rho)\}.
\end{align}
The following space is the macroscopic analogue of $X_U^{(n)}$, cf.\ \eqref{XSPACEdef}:
\begin{align}\notag
X^{(n)}:=\Bigl\{\langle\vecq,\vecv_0,\langle\xi_j,{\vs}_j,\vecv_j\big\rangle_{j=1}^{n}\rangle\in
\T^1(\R^d)\times(\R_{>0}\times\Sigma\times\US)^n
\col
\hspace{70pt}
\\\label{Xmacrdef}
\vecv_j\in\scrV_{\vecv_{j-1}}\: %
(j=1,\ldots,n)\Bigr\}.
\end{align}
In particular note that $X^{(1)}=X$, the extended phase space defined in \eqref{XXdef2}.

\begin{thm}\label{MAINTECHNthm2G}
Let $\scrP$ {\blu and $\scrE$} satisfy all the conditions in Section \ref{ASSUMPTLISTsec} and \eqref{SIGMAeqp},
and let $\Psi$ be a scattering process satisfying the conditions in Section \ref{SCATTERINGMAPS}.
Then for any $n\geq1$, $\Lambda\in\Pac(\T^1(\R^d))$ and $f\in\C_b(X^{(n)})$, we have
\begin{align}\notag
\lim_{\rho\to0}\int_{\fW(n;\rho)}f\Bigl(\vecq,\vecv,
\Big\langle\rho^{d-1}\tau_j(\rho^{1-d}\vecq,\vecv;\rho),{\vs}_j(\rho^{1-d}\vecq,\vecv;\rho),
\vecv_j(\rho^{1-d}\vecq,\vecv;\rho)
\Big\rangle_{j=1}^n\Bigr)
\hspace{10pt}
\\\notag
\times d\Lambda(\vecq,\vecv)
\\\label{MAINTECHNthm2Gres}
=\int_{X^{(n)}} %
f\Bigl(\vecq,\vecv_0,\big\langle \xi_j,{\vs}_j,\vecv_j\big\rangle_{j=1}^n\Bigr)
\,p\bigl(\vecv_0;\xi_1,{\vs}_1,\vecv_1\bigr)
\hspace{100pt}
\\\notag
\times \prod_{j=2}^n p_\bn(\vecv_{j-2},{\vs}_{j-1},\vecv_{j-1};\xi_j,{\vs}_{j},\vecv_{j})
\, d\Lambda(\vecq,\vecv_0)\prod_{j=1}^n\bigl( d\xi_j\,d\mm({\vs}_j)\,d\vecv_j\bigr).
\end{align}
\end{thm}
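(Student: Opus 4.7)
The plan is to reduce the macroscopic statement to Theorem \ref{MAINTECHNTHM2A} by conditioning on the outcome of the first collision, analogously to the inductive step in the proof of Theorem \ref{MAINTECHNTHM2A} but with the first step done macroscopically. By the standard approximation arguments (cf.\ the initial reductions at the start of Section \ref{MAINTECHNthm2apfSEC}), it suffices to treat the case $f\in\C_c(X^{(n)})$, say with $\supp f$ contained in the set where $\xi_1\in(C_1^{-1},C_1)$ for some $C_1>1$, and with $\Lambda$ having a continuous compactly supported density $\Lambda'$.

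First I would handle $n=1$: this is just Theorem \ref{unifmodThm2macr} once we note that $\Lambda(\fW(1;\rho))\to 1$ by Remark \ref{Thm2macrrem}. For $n\geq 2$ I would proceed as follows. Fix $\ve>0$. Using Proposition \ref{Thm2genaddgenlem} I discard the initial data $(\vecq,\vecv)$ whose first collision scatterer lies in $\scrE$, and using Proposition \ref{ETAGRAZINGgenprop} I choose $\eta>0$ and discard the $\eta$-grazing set $\fG_{\rho,\eta}$; both contribute negligibly. Partition $\US$ into small Borel sets $\tD_1,\ldots,\tD_N$ of diameter $<\eta/C_\eta$ (as in the proof of Theorem \ref{MAINTECHNTHM2A}), and decompose the good portion of $\fW(n;\rho)$ according to (a) the scatterer centre $\vecq^{(1)}\in\scrP\setminus\scrE$ hit in the first collision and (b) the cell $\tD_\ell$ containing the post-collision velocity $\vecv_1$. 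Since $\rho^{d-1}\tau_1$ is bounded in $(C_1^{-1},C_1)$ on $\supp f$, only $\vecq^{(1)}\in\scrP_{T_1}(\rho)$ can contribute, where $T_1$ is determined by $C_1$ and $\supp\Lambda'$.

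Next, on each piece I change variables from $\vecv$ to $\vecv_1$ via the diffeomorphism $\overline{\vecV}$ of Lemma \ref{SHINELEM1} (applied with $\vecs_1=\vecq^{(1)}-\vecq$ and $\vecbeta\equiv\bn$), picking up the Jacobian $d\nu_{\ell,\vecs_1}$ up to a factor close to $1$. After the first collision the particle is located at $\vecq^{(1)}+\rho\vecB^+(\vecv_1)$ with velocity $\vecv_1$, where $\vecB^+=\vecB^+_{\rho,\vecs_1,\bn}$ is a $\C^1$ exit function on $D_\ell$ whose $\C^1$-norm is bounded independently of $\vecq^{(1)}$ and which lies uniformly $\ve'$-close to $\vecbeta^+_{\hs_1}$ (again by Lemma \ref{SHINELEM1}(iii)). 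Thus the remaining $n-1$ collisions are exactly the data of Theorem \ref{MAINTECHNTHM2A} with the near-scatterer initial condition $\vecq^{(1)}\in\scrP_{T_1}(\rho)$, measure $\nu_{\ell,\vecs_1}$, perturbation $\vecB^+_{|D_\ell}$, and test function $f_{[\hs_1,\rho^{d-1}\|\vecs_1\|,\vs(\vecq^{(1)})]}$. The crucial point is that Theorem \ref{MAINTECHNTHM2A} is uniform over the admissible families $F_{1,\ell},F_{2,\ell},F_{3,\ell}$ (defined exactly as in \eqref{F1elldef}--\eqref{F3elldef}) and over $\vecq^{(1)}\in\scrP_{T_1}(\rho)$; applying it produces, on each piece, an expression of the form \eqref{MAINTECHNthm2aPF7} but now with the \emph{collision kernel} $p_\bn(\vecv_{j-2},\vs_{j-1},\vecv_{j-1};\xi_j,\vs_j,\vecv_j)$ for $j=2,\ldots,n$, and with $p_{\bn,\vecB^+}\bigl(\vs(\vecq^{(1)}),\vecv_1;\xi_2,\vs_2,\vecv_2\bigr)$ for the first of the subsequent collisions. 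Using Lemma \ref{TRKERCONTlem} to replace $\vecB^+$ by $\vecbeta^+_{\hs_1}$ converts the latter to $p_\bn(\hs_1,\vs(\vecq^{(1)}),\vecv_1;\xi_2,\vs_2,\vecv_2)$.

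Finally, reassembling the sum over $\vecq^{(1)}$ and $\ell$ reverses the decomposition step and produces an integral over the first collision alone, with integrand obtained by integrating $f\cdot\prod_{j=2}^n p_\bn$ against the $(n-1)$-fold product measure in the variables $(\xi_j,\vs_j,\vecv_j)_{j=2}^n$. Calling this reduced integrand $f_{n-1}\in\C_b(X^{(1)})$ (compare \eqref{MAINTECHNthm2aPF6}), the remaining task is precisely the $n=1$ case of the theorem applied to $f_{n-1}$, which by Theorem \ref{unifmodThm2macr} converges to
\begin{equation*}
\int_{X^{(1)}} f_{n-1}(\vecq,\vecv_0,\xi_1,\vs_1,\vecv_1)\, p(\vecv_0;\xi_1,\vs_1,\vecv_1)\,d\Lambda(\vecq,\vecv_0)\,d\xi_1\,d\mm(\vs_1)\,d\vecv_1,
\end{equation*}
which unfolds to the right-hand side of \eqref{MAINTECHNthm2Gres}. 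The main obstacle is verifying that all intermediate modifications (restricting to the good subset, the change of variable, the replacement of $\vecB^+$ by $\vecbeta^+_{\hs_1}$, and the continuity-of-$g$ style replacement of $\Lambda'$ by a piecewise constant proxy over the partition induced by $\vecq^{(1)}$ and $\ell$) can simultaneously be made with total error $O(\ve)$ uniformly in $\rho$; this is done exactly by transcribing the corresponding steps from Section \ref{MAINTECHNthm2apfSEC} using the macroscopic density $\Lambda'$ in place of $g$, and by carrying along the uniformity built into Theorem \ref{MAINTECHNTHM2A}.
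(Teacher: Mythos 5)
Your plan is correct and matches the paper's own proof essentially step for step: reduce to $f\in\C_c(X^{(n)})$ and $\Lambda'\in\C_c$, discard $\scrE$-hits and $\eta$-grazing paths via Propositions \ref{Thm2genaddgenlem} and \ref{ETAGRAZINGgenprop}, decompose by scatterer $\vecq^{(1)}\in\scrP_{T_1}(\rho)$ and cell $\tD_\ell$, change variables via Lemma \ref{SHINELEM1}, invoke the uniformity in Theorem \ref{MAINTECHNTHM2A} (with $n-1$) over the families $F_{1,\ell},F_{2,\ell},F_{3,\ell}$, swap $\vecB^+$ for $\vecbeta^+_{\hs_1}$ via Lemma \ref{TRKERCONTlem}, reassemble into a single $X^{(1)}$-integral against $f_{n-1}$, and finish with Theorem \ref{unifmodThm2macr}. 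The paper additionally states the macroscopic analogue of the ``good collision'' lemma (Lemma \ref{GOODCOLLlemmacr}) explicitly and tracks the outer $\vecq$-integration over $\scrB_T^d$ carefully, but these are precisely the ``transcribing'' steps you already flagged.
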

\begin{remark}\label{MAINTECHNthm2Gprobrem}
Regarding the limit expression in \eqref{MAINTECHNthm2Gres},
one should note that
\begin{align*}%
\int_{X^{(n)}} %
p\bigl(\vecv_0;\xi_1,{\vs}_1,\vecv_1\bigr)
\prod_{j=2}^n p_\bn(\vecv_{j-2},{\vs}_{j-1},\vecv_{j-1};\xi_j,{\vs}_{j},\vecv_{j})
\,d\Lambda(\vecq,\vecv_0)
\hspace{60pt}
\\
\times\prod_{j=1}^n\bigl( d\xi_j\,d\mm({\vs}_j)\,d\vecv_j\bigr)=1.
\end{align*}
This follows by iterated use of \eqref{REMOVINGCPTSUPPfactpfLEM1res1} in Lemma \ref{REMOVINGCPTSUPPfactpfLEM1},
and Lemma \ref{BASICpkproblem}.
In particular, taking $f\equiv1$ in \eqref{MAINTECHNthm2Gres},
the theorem implies that
$\Lambda(\fW(n;\rho))\to1$ as $\rho\to0$.
\end{remark}

\begin{remark}\label{MAINTECHNthm2Grem}
In the special case of $\Psi$ being specular reflection,
Theorem \ref{MAINTECHNthm2G} is equivalent with Theorem \ref{thm:M2}.
Here one should note that the definition of the sequences
$\{\tau_j\}$, $\{\vs_j\}$, $\{\vecv_j\}$ which was given in Sec.\ \ref{SCATTERINGMAPS}
and which is used in Theorem \ref{MAINTECHNthm2G},
differs slightly from the definition used in Section \ref{sec:classical} and 
Theorem \ref{thm:M2}.
However, as far as the values of $\langle(\tau_j,\vs_j,\vecv_j)\rangle_{j=1}^n$ are concerned,
this difference occurs only for initial conditions
$(\vecq,\vecv)$ which lie outside $\fW(n;\rho)$,
and since $\Lambda(\fW(n;\rho))\to1$ as $\rho\to0$
(cf.\ Remark \ref{MAINTECHNthm2Gprobrem}),
it follows that Theorem \ref{thm:M2} and Theorem \ref{MAINTECHNthm2G} are indeed equivalent.
\end{remark}
\noindent
\textit{Proof.}
We derive Theorem \ref{MAINTECHNthm2G} as a consequence of Theorem \ref{MAINTECHNTHM2A}
together with Theorem~\ref{unifmodThm2macr}
and Propositions \ref{Thm2genaddgenlem} and \ref{ETAGRAZINGgenprop}.
As we will see, after expressing the integral in the left hand side of \eqref{MAINTECHNthm2Gres}
as an iterated integral over $\vecq$ and $\vecv$,
the inner integral (that is, the integral over $\vecv$)
can be treated by more or less exactly the same %
arguments as in the proof of Theorem \ref{MAINTECHNTHM2A}.

Some initial reductions:
The right hand side of \eqref{MAINTECHNthm2Gres} can be expressed as
$\int_{X^{(n)}} f\,d\mu_\Lambda$,
where $\mu_\Lambda$ is a Borel probability measure on $X^{(n)}$; %
cf.\ Remark \ref{MAINTECHNthm2Gprobrem} regarding the fact that $\mu_\Lambda(X^{(n)})=1$.
Hence by a standard approximation argument,
it suffices to prove \eqref{MAINTECHNthm2Gres} under the extra assumption that $f$ has compact support in $X^{(n)}$.
Next let $g\in\L^1(\T^1(\R^d))$ be the density of $\Lambda$ with respect to $d\vecq\,d\vecv$.
Again by Lemma \ref{REMOVINGCPTSUPPfactpfLEM1} and Lemma \ref{BASICpkproblem} we have
\begin{align*}
\int_{(\R_{>0}\times\Sigma\times\US)^n}
\Bigl|f\bigl(\cdots\bigr)\Bigr|
\,p\bigl(\vecv_0;\xi_1,{\vs}_1,\vecv_1\bigr)
\prod_{j=2}^n p_\bn(\vecv_{j-2},{\vs}_{j-1},\vecv_{j-1};\xi_j,{\vs}_{j},\vecv_{j})
\hspace{40pt}
\\
\,\times \prod_{j=1}^n\bigl( d\xi_j\,d\mm({\vs}_j)\,d\vecv_j\bigr)
\leq\|f\|_\infty
\end{align*}
for all $(\vecq,\vecv_0)\in\T^1(\R^d)$
(where the integrand should be interpreted to vanish when \linebreak
$\langle\vecq,\vecv_0;\langle\xi_j,{\vs}_j,\vecv_j\big\rangle_{j=1}^{n}\rangle\notin X^{(n)}$).
Hence, since $\C_c(\T^1(\R^d))$ is dense in $\L^1(\T^1(\R^d))$,
we may without loss of generality assume $g\in\C_c(\T^1(\R^d))$.

Since $f$ has compact support, we can choose $C_1>1$ and $T>0$ so that 
$f\bigl(\vecq,\vecv_0;\big\langle \xi_j,{\vs}_j,\vecv_j\big\rangle_{j=1}^n\bigr)$
vanishes unless
$\|\vecq\|<T$ and $\xi_1,\ldots,\xi_n$ all lie in the interval $(C_1^{-1},C_1)$.
Set
\begin{align*}
T_1:=T+C_1+1.
\end{align*}

We write $f_0=f$, and define functions
$f_m\in \C_c(X^{(n-m)})$ 
recursively for $m=1,\ldots,n-1$ exactly as in
\eqref{MAINTECHNthm2aPF6}
but with the extra parameter $\vecq$ in each $f_m$.

Let $\ve>0$ be given.
We fix $0<\eta<\frac1{100}\bigl(\frac{\pi}2-{s_\Psi}(\frac{\pi}2-B_\Psi)\bigr)$ and $\rho_0'\in(0,1)$ so that
\begin{align}\label{MAINTECHNthm2Gpf2}
\Lambda(\fG_{\rho,\eta})<\ve/\|f\|_{\infty}
\qquad\forall \rho\in(0,\rho_0');
\end{align}
this is possible by Prop.\ \ref{ETAGRAZINGgenprop}.
Given %
$\eta$, we let
$D_\ell,$ $\tD_\ell,$ $A_\ell$, $F_{1,\ell}$ and $F_{3,\ell}$ 
for $\ell=1,\ldots,N$
be exactly as in the proof of Theorem \ref{MAINTECHNTHM2A}.
Also for $\ell=1,\ldots,N$ 
we set
\begin{align*}
F_{2,\ell}:=\bigl\{f_{[\vecq,\vecv_0,\xi_0,\vs_0]}\col(\vecq,\vecv_0)\in\T^1(\R^d),\:
\vecv_0\in A_\ell,\:\xi_0>0,\:\vs_0\in\Sigma\bigr\},
\end{align*}
where $f_{[\vecq,\vecv_0,\xi_0,\vs_0]}\in\C_c(X_{D_\ell}^{(n-1)})$
is defined exactly as in \eqref{TFdef}
but with the extra parameter $\vecq$ in $f$.
Then $F_{2,\ell}$ is a uniformly bounded and equicontinuous family of functions on $X_{D_\ell}^{(n-1)}$.

Let us also take $\eta'>0$ sufficiently small so that the
condition formulated around \eqref{NUCHOICE} holds,
but with the extra parameter $\vecq$ in $f_{n-2}$
and with $(\vecq,\vecv')$ arbitrary in $\T^1(\R^d)$.
This is possible by an obvious modification of Lemma \ref{TRKERCONTlem}. %

Next take $\rho_0\in(0,1)$ so small that 
the inequality in \eqref{RHO0CONDmain} %
holds
for all 
$\rho\in(0,\rho_0)$,
$\vecq\in\scrP_{T_1}(\rho)$,
$\ell\in\{1,\ldots,N\}$,
$\mu\in F_{1,\ell}$,
$\tf\in F_{2,\ell}$
and $\vecbeta\in F_{3,\ell}$.
This is possible by Theorem \ref{MAINTECHNTHM2A} applied with $n-1$ in the place of $n$.
We shrink $\rho_0$ further if necessary, so as to also ensure that
\begin{align}\notag
\biggl|\int_{\fW(1;\rho)}f_{n-1}\bigl(\vecq,\vecv,\rho^{d-1}\tau_1(\rho^{1-d}\vecq,\vecv;\rho),
\vs_1(\rho^{1-d}\vecq,\vecv;\rho),
\vecv_1(\rho^{1-d}\vecq,\vecv;\rho)\bigr)\,d\Lambda(\vecq,\vecv)
\hspace{10pt}
\\\label{MAINTECHNthm2Gpf5}
-\int_{X^{(1)}}
f_{n-1}(\vecq,\vecv,\xi_1,\vs_1,\vecv_1)p(\vecv;\xi_1,{\vs}_1,\vecv_1\bigr)
\, d\Lambda(\vecq,\vecv)\,d\xi_1\,d\mm({\vs}_1)\,d\vecv_1
\biggr|<\ve
\end{align}
for all $\rho\in(0,\rho_0)$.
This is possible by Theorem \ref{unifmodThm2macr}.
We shrink $\rho_0$ still further if necessary, so as to also ensure that
the following four conditions \eqref{RHO0COND3macr}--\eqref{RHO0COND2macr} are fulfilled:
\begin{align}\label{RHO0COND3macr}
  \rho_0<\min\biggl\{\trho_0\Bigl(\eta,10,\min\Bigl(\frac{\ve}{\|f\|_\infty},\eta'\Bigr)\Bigr),
  \:\rho_0'
  ,\:\Bigl(\frac{\eta}{4C_1}\Bigr)^{1/(d-1)}
  \biggr\}
\end{align}
(where $\trho_0(\cdots)$ is as in Lemma \ref{SHINELEM1} and $\rho_0'$ is the number in \eqref{MAINTECHNthm2Gpf2});
\begin{align}\label{EHITSMALLPROBmacr}
\Lambda(\{(\vecq,\vecv)\in\fW(1;\rho)\col\vecq^{(1)}(\rho^{1-d}\vecq,\vecv;\rho)\in\scrE\})<\ve/\|f\|_\infty
\qquad\forall \rho\in(0,\rho_0)
\end{align}
(as is possible by Prop.\ \ref{Thm2genaddgenlem});
\begin{align}\label{RHO0COND1macr}
&\varphi(\vecv,\vecv')\leq 4C_1\rho_0^d
\text{ and }
|\xi_1-\xi_1'|\leq 2\rho_0^d
\\\notag
&\quad\Rightarrow\quad
\bigl|f(\vecq,\vecv,\langle\xi_j,\vs_j,\vecv_j\rangle_{j=1}^{n})-
f(\vecq,\vecv',\xi_1',\vs_1,\vecv_1,\langle\xi_j,\vs_j,\vecv_j\rangle_{j=2}^{n})\bigr|<\ve
\end{align}
(this can be obtained since $f$ is continuous and has compact support);
and
\begin{align}\label{RHO0COND2macr}
\varphi(\vecv,\vecv')\leq 8C_1\rho_0^d
\quad\Rightarrow\quad
|g(\vecq,\vecv)-g(\vecq,\vecv')|<\frac{\ve}{\vol(\scrB_T^d)\omega(\US)\|f\|_\infty}.
\end{align}

Now fix any $\rho\in(0,\rho_0)$.
We will prove that the integral in the left hand side of 
\eqref{MAINTECHNthm2Gres} differs from the right hand side by $\ll\ve$.
We will use the following short-hand notation:
\begin{align*}
&\tau_j(\vecv):=\tau_j(\rho^{1-d}\vecq,\vecv;\rho);
\qquad
\vs_j(\vecv):=\vs_j(\rho^{1-d}\vecq,\vecv;\rho);
\qquad
\vecv_j(\vecv):=\vecv_j(\rho^{1-d}\vecq,\vecv;\rho);
\\
&\vecq^{(j)}(\vecv):=\vecq^{(j)}(\rho^{1-d}\vecq,\vecv;\rho);
\qquad
\vecs_1(\vecv):=\vecq^{(1)}(\vecv)-\rho^{1-d}\vecq.
\end{align*}
For each $\vecq\in\R^d$ we set
\begin{align*}
\fW_\vecq(n;\rho):=\{\vecv\in\US\col (\vecq,\vecv)\in \fW(n;\rho)\},
\end{align*}
and let $\lambda_\vecq$ be the Borel measure $d\lambda_\vecq(\vecv):=g(\vecq,\vecv)\,d\vecv$ on $\US$.
Then since\linebreak $f(\vecq,\vecv,\langle\xi_j,\vs_j,\vecv_j\rangle_{j=1}^{n})=0$ whenever 
$\|\vecq\|\geq T$,
the integral in the left hand side of 
\eqref{MAINTECHNthm2Gres} equals
\begin{align}\label{MAINTECHNthm2GaPF1a}
\int_{\scrB_T^d}
\int_{\fW_\vecq(n;\rho)}f\Bigl(\vecq,\vecv,\big\langle\rho^{d-1}\tau_j(\vecv),\vs_j(\vecv),\vecv_j(\vecv)\big\rangle_{j=1}^n\Bigr)
\,d\lambda_\vecq(\vecv)\,d\vecq.
\end{align}
For each $\vecq\in\scrB_T^d$, we 
define the set $U_{2,\vecq}$ as the exact counterpart of $U_2$ in \eqref{U2def}:
\begin{align*}
U_{2,\vecq}:=\Bigl\{\vecv\in\fW_\vecq(1;\rho):\: C_1^{-1}\hspace{-1pt}<\hspace{-1pt}\rho^{d-1}\tau_1(\vecv)\hspace{-1pt}<\hspace{-1pt}C_1,
\:
\vecq^{(1)}(\vecv)\in\scrP\setminus\scrE,
\:
[\vecv_1(\vecv)]\subset\scrV_{\vecs_1(\vecv)}^{5\eta},
\:
\\
\text{and }
\bigl[\forall\vecalf\in[\vecv_1(\vecv)]\col\exists\vecv'\in\fW_{\vecq}(1;\rho)\text{ s.t. }\:
\vecq^{(1)}(\vecv')=\vecq^{(1)}(\vecv)
\text{ and }\vecv_1(\vecv')=\vecalf\bigr]\Bigr\}.
\end{align*}

We now have the following analogue of Lemma \ref{GOODCOLLlem}:
\begin{lem}\label{GOODCOLLlemmacr}
For $\vecq\in\scrB_T^d$, if $\vecv\in\fW_\vecq(1;\rho)\setminus U_{2,\vecq}$ then one of the following holds:
\begin{enumerate}[{\rm (i)}]
\item $\rho^{d-1}\tau_1(\vecv)\notin(C_1^{-1},C_1)$;
\item $\vecq^{(1)}(\vecv)\in\scrE$;
\item $(\vecq,\vecv)\in\fG_{\rho,\eta}$.
\end{enumerate}
\end{lem}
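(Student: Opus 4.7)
The plan is to adapt the proof of Lemma \ref{GOODCOLLlem} to the macroscopic setting. Since the particle now starts exactly at $\rho^{1-d}\vecq$ with no $\rho\vecbeta(\vecv)$ offset, the argument actually simplifies: the rescaling by $\hrho=(1+\eta)\rho$ together with the auxiliary $\hbeta$ used in that earlier proof are not needed here. Suppose $\vecv \in \fW_\vecq(1;\rho) \setminus U_{2,\vecq}$ and that neither (i) nor (ii) holds; then $C_1^{-1}<\rho^{d-1}\tau_1(\vecv)<C_1$ and $\vecq^{(1)}(\vecv) \in \scrP\setminus\scrE$, so the failure of membership in $U_{2,\vecq}$ must come from either (a) $[\vecv_1(\vecv)] \not\subset \scrV_{\vecs_1(\vecv)}^{5\eta}$, or (b) some $\vecalf \in [\vecv_1(\vecv)]$ admits no $\vecv' \in \fW_\vecq(1;\rho)$ with $\vecq^{(1)}(\vecv')=\vecq^{(1)}(\vecv)$ and $\vecv_1(\vecv')=\vecalf$. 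I will show that each alternative forces (iii).

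In case (a), I follow Step~1 of the proof of Lemma \ref{GOODCOLLlem}, dropping the $1+C_2$ factors since we effectively have $\vecbeta\equiv\bn$. From $\rho^{d-1}\tau_1(\vecv)>C_1^{-1}$ and $\|\vecs_1(\vecv)\|\geq\tau_1(\vecv)-\rho$ I obtain $\|\vecs_1(\vecv)\|>(2C_1)^{-1}\rho^{1-d}$, whence $\varphi(\hs_1,\vecv)<4C_1\rho^d<\eta$ by \eqref{RHO0COND3macr}. Picking $\ell$ with $[\vecv_1(\vecv)]=\tD_\ell$ and any $\vecalf\in\tD_\ell\setminus\scrV_{\vecs_1}^{5\eta}$, the diameter bound $\varphi(\vecalf,\vecv_1(\vecv))<\eta/C_\eta<\eta$ combined with the triangle inequality for $\varphi$ on $\US$ gives $|\varphi(\vecv,\vecv_1(\vecv))-\varphi(\vecalf,\hs_1)|<2\eta$, so $s_\Psi(B_\Psi-\varphi(\vecv,\vecv_1(\vecv)))<5\eta+2\eta<10\eta$. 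This means $\vecw_1(\rho^{1-d}\vecq,\vecv;\rho)\in\fU_\eta$ (cf.\ \eqref{FUETA} and \eqref{WVPDEF}), so (iii) holds by the definition of $\fG_{\rho,\eta}$.

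In case (b), I apply Lemma \ref{SHINELEM1} with $C=10$, $\vecs=\vecs_1(\vecv)$, $U=\US$, and $\vecbeta\equiv\bn$; the hypotheses $\|\vecs_1(\vecv)\|\geq 1/10$ and the requirement on $\trho_0(\eta,10,\cdot)$ follow from the bound $\|\vecs_1(\vecv)\|>(2C_1)^{-1}\rho^{1-d}$ obtained in case (a) and \eqref{RHO0COND3macr}. Since $\vecalf\in\tD_\ell\subset\scrV_{\vecs_1}^{5\eta}\subset\scrV_{\vecs_1}^\eta$, the lemma produces a unique $\vecv'\in\US$ with $\vecV(\vecv')=\vecalf$; write $\tau'=\tau_{\rho,\vecs_1,\bn}(\vecv')$. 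The ray $\rho^{1-d}\vecq+\R_{>0}\vecv'$ meets $\partial\scrB^d(\vecq^{(1)}(\vecv),\rho)$ at time $\tau'$, hence $\tau_1(\rho^{1-d}\vecq,\vecv';\rho)\leq\tau'$. Equality would give $\vecv'\in\fW_\vecq(1;\rho)$ with $\vecq^{(1)}(\vecv')=\vecq^{(1)}(\vecv)$ and $\vecv_1(\vecv')=\vecalf$, contradicting (b); thus $\tau_1(\rho^{1-d}\vecq,\vecv';\rho)<\tau'$, and some $\vecq'\in\scrP\setminus\{\vecq^{(1)}(\vecv)\}$ lies within $\rho$ of the line segment $L'=\{\rho^{1-d}\vecq+t\vecv':t\in[0,\tau']\}$. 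The endpoints of $L=\{\rho^{1-d}\vecq+t\vecv:t\in[0,\tau_1(\vecv)]\}$ and $L'$ both have the form $\vecq^{(1)}(\vecv)+\rho\vecu$ with $\vecu\in\{\vecB^-(\vecv_1(\vecv)),\vecB^-(\vecalf)\}$; Lemma \ref{SHINELEM1}(iii) together with $\varphi(\vecv_1(\vecv),\vecalf)<\eta/C_\eta$ then yields $\varphi(\vecB^-(\vecv_1(\vecv)),\vecB^-(\vecalf))\leq\eta$, so the endpoints of $L$ and $L'$ differ by at most $\eta\rho$. Since $L$ and $L'$ share the starting point $\rho^{1-d}\vecq$, linearly parameterizing both by $s\in[0,1]$ gives $\|L(s)-L'(s)\|\leq s\eta\rho\leq\eta\rho$ throughout, and therefore $\vecq'$ has distance $<(1+\eta)\rho$ from $L$; this is the grazing condition in (iii).

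The principal step is the geometric comparison of $L$ and $L'$ in case (b), and the main conceptual simplification relative to Lemma \ref{GOODCOLLlem} is that the absence of a $\vecbeta$-offset makes both segments emanate from the same point, so that the endpoint bound propagates by linear interpolation without any auxiliary rescaling.
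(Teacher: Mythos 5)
Your proof is correct and fills in exactly the details the paper intends by its remark that Lemma \ref{GOODCOLLlem} carries over with $\vecbeta\equiv\bn$ and $C=10$; your observation that the common starting point $\rho^{1-d}\vecq$ lets the endpoint bound propagate by linear interpolation is a clean way to see the resulting simplification. One small misattribution: the rescaling $\hrho=(1+\eta)\rho$ and auxiliary $\hbeta$ you say ``are not needed here'' appear in the proof of Proposition \ref{ETAGRAZINGprop}, not in the proof of Lemma \ref{GOODCOLLlem}, so they were never part of the argument you are adapting.
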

\begin{proof}
The proof of Lemma \ref{GOODCOLLlem} carries over
with very small and obvious modifications.
There are some simplifications due to the fact that we now have
``$\vecbeta\equiv\bn$'', meaning that we can replace $C_2$ by $0$,
and take $C=10$ in the application of Lemma \ref{SHINELEM1}.
\end{proof}

By Lemma \ref{GOODCOLLlemmacr},
and since %
$f(\vecq,\vecv,\langle\xi_j,\vs_j,\vecv_j\rangle_{j=1}^{n})=0$ whenever 
$\xi_1\notin(C_1^{-1},C_1)$,
the error caused by replacing the domain of integration in 
the inner integral in \eqref{MAINTECHNthm2GaPF1a} by $U_{2,\vecq}\cap\fW_\vecq(n;\rho)$ is
\begin{align}\label{GOODCOLLlemmacrAPPL}
\leq\bigl(\Lambda(\{(\vecq,\vecv)\in\fW(1;\rho)\col\vecq^{(1)}(\vecv)\in\scrE\})+
\Lambda(\fG_{\rho,\eta})\bigr)\cdot\|f\|_{\infty}
<2\ve.
\end{align}
Cf.\ \eqref{MAINTECHNthm2Gpf2} and \eqref{EHITSMALLPROBmacr} for the last inequality.

Now our task is to understand
\begin{align}\label{MAINTECHNthm2GaPF3aa}
\int_{\scrB_T^d}
\int_{U_{2,\vecq}\cap\fW_\vecq(n;\rho)}f\Bigl(\vecq,\vecv,\big\langle\rho^{d-1}\tau_j(\vecv),\vs_j(\vecv),\vecv_j(\vecv)\big\rangle_{j=1}^n\Bigr)
\,d\lambda_\vecq(\vecv)\,d\vecq.
\end{align}
\textit{Fix} an arbitrary $\vecq\in\scrB_T^d$,
and consider the inner integral in \eqref{MAINTECHNthm2GaPF3aa}.
This integral can be decomposed as a sum over those $\vecq'$ which can appear as $\vecq^{(1)}$.
By the definition of $U_{2,\vecq}$, each such point $\vecq'$ satisfies $\vecq'\in\scrP\setminus\scrE$ and
\begin{align*}
\|\vecq'\|\leq\|\rho^{1-d}\vecq\|+2\rho+C_1\rho^{1-d}<(T+C_1+1)\rho^{1-d}=T_1\rho^{1-d};
\end{align*}
thus $\vecq'\in\scrP_{T_1}(\rho)$.
Given any $\vecq'\in\scrP_{T_1}(\rho)$
we write $\vecs_1:=\vecq'-\rho^{1-d}\vecq$, %
and let $M_\vecq(\vecq')$ be the corresponding set of
$\ell\in\{1,\ldots,N\}$ such that $\tD_\ell$ is far from grazing position and is fully lit upon,
i.e.,
\begin{align*}
M_\vecq(\vecq')=\bigl\{\ell\col
\tD_\ell\subset\scrV_{\vecs_1}^{5\eta}
\:\:\text{ and }\:\:
\hspace{210pt}
\\
[\forall\vecalf\in\tD_\ell\col\exists\vecv'\in\fW_\vecq(1;\rho),\text{ s.t. }
\vecq^{(1)}(\vecv')=\vecq'\text{ and }\vecv_1(\vecv')=\vecalf]\bigr\}.
\end{align*}
Then for our fixed $\vecq\in\scrB_T^d$,
by the same argument as for \eqref{MAINTECHNthm2aPF3a},
the inner integral in \eqref{MAINTECHNthm2GaPF3aa} can be expressed as
\begin{align}\label{MAINTECHNthm2GaPF3a}
\sum_{\vecq'\in\scrP_{T_1}(\rho)}\:\sum_{\ell\in M_\vecq(\vecq')}\:\int_{U_{\vecq',\ell}\cap\fW_\vecq(n;\rho)}
f\bigl(\vecq,\vecv,\big\langle\rho^{d-1}\tau_j(\vecv),{\vs}_j(\vecv),
\vecv_j(\vecv)\big\rangle_{j=1}^{n}\bigr)\,d\lambda_\vecq(\vecv)
\end{align}
where
\begin{align*}
U_{\vecq',\ell}:=\{\vecv\in \fW_\vecq(1;\rho)\col\vecq^{(1)}(\vecv)=\vecq',\:\vecv_1(\vecv)\in \tD_\ell\}.
\end{align*}

Now, by a direct mimic of the treatment of \eqref{MAINTECHNthm2aPF3a}
in the proof of Theorem \ref{MAINTECHNTHM2A},
all the way to \eqref{MAINTECHNthm2aPF10},
one shows that for every $\vecq\in\scrB_T^d$, the expression in \eqref{MAINTECHNthm2GaPF3a} 
differs by at most
\begin{align*}
\ve\cdot\Bigl(6\,\lambda_\vecq(\US)+\frac2{\vol(\scrB_T^d)}\Bigr)
\end{align*}
from
\begin{align}\label{MAINTECHNthm2GaPF11rep}
\int_{U_{2,\vecq}}f_{n-1}(\vecq,\vecv,\rho^{d-1}\tau_1(\vecv),\vs_1(\vecv),\vecv_1(\vecv))\,d\lambda_\vecq(\vecv).
\end{align}
(The factor $\vol(\scrB_T^d)^{-1}$ in the bound comes from the corresponding factor in 
\eqref{RHO0COND2macr}, which was not present %
in the analogous assumption in \eqref{RHO0COND2}.)
Integrating now over $\vecq\in\scrB_T^d$,
we conclude that up to an error of absolute size $\leq8\ve$,
the double integral in \eqref{MAINTECHNthm2GaPF3aa} equals
\begin{align}\label{MAINTECHNthm2GaPF11a}
\int_{\scrB_T^d}
\int_{U_{2,\vecq}}
f_{n-1}(\vecq,\vecv,\rho^{d-1}\tau_1(\vecv),\vs_1(\vecv),\vecv_1(\vecv))\,d\lambda_\vecq(\vecv)\,d\vecq.
\end{align}
By Lemma \ref{GOODCOLLlemmacr} and the bound in \eqref{GOODCOLLlemmacrAPPL},
replacing $U_{2,\vecq}$ by $\fW_\vecq(1;\rho)$ in \eqref{MAINTECHNthm2GaPF11a} 
causes a total error less than $2\ve$;
and hence, using also \eqref{MAINTECHNthm2Gpf5},
it follows that \eqref{MAINTECHNthm2GaPF11a} differs by less than $3\ve$ from 
\begin{align}\notag
&\int_{X^{(1)}}
f_{n-1}(\vecq,\vecv,\xi_1,\vs_1,\vecv_1)p(\vecv;\xi_1,{\vs}_1,\vecv_1\bigr)
\, d\Lambda(\vecq,\vecv)\,d\xi_1\,d\mm({\vs}_1)\,d\vecv_1,
\end{align}
and this integral is equal to the right hand side of \eqref{MAINTECHNthm2Gres}.

Summing up, we have proved that for any
$\rho\in(0,\rho_0)$,
the integral in \eqref{MAINTECHNthm2GaPF1a} 
differs by less than $13\ve$
from the right hand side of \eqref{MAINTECHNthm2Gres}.
This completes the proof of Theorem \ref{MAINTECHNthm2G}.
\hfill$\square$ %

\section{Random flight processes}
\label{LIMITFLIGHTsec}
We will here discuss the deduction of Theorem \ref{thm:M1} and Theorem \ref{thm:M1V}
from Theorem \ref{MAINTECHNthm2G}.

For any metric space $S$ and positive real number $T$,
we write $D_S[0,T]$ (resp., $D_S[0,\infty)$) for the space of c\`adl\`ag functions $[0,T]\to S$
(resp., $[0,\infty)\to S$),
equipped with the Skorohod topology
(cf., e.g., \cite[Ch.\ 3]{billingsley99} and \cite[Ch.\ 3.5]{sEtK86}).
Given $\Lambda\in P(\T^1(\R^d))$ and $\rho>0$,
if $(\vecq,\vecv)$ is a random point in $(\T^1(\R^d),\Lambda)$
then $\Theta^{(\rho)}$ 
defined by $\Theta^{(\rho)}(t) = \widetilde \Phi_t^{(\rho)} (\vecq,\vecv)$
as in \eqref{ThetarhoDEF} or \eqref{tildePhidef2}, %
is a random element in $D_{\T^1(\R^d)}[0,\infty)$.%

We will first give a precise definition of the limiting flight processes $\Theta$ appearing in 
Theorems~\ref{thm:M1} and \ref{thm:M1V}.
To this end, we extend \eqref{Xmacrdef} by letting
\begin{align}\notag
X^{(\infty)}:=\Bigl\{\langle\vecq_0,\vecv_0;\langle\xi_j,{\vs}_j,\vecv_j\big\rangle_{j=1}^{\infty}\rangle\in
\T^1(\R^d)\times\prod_{j=1}^\infty(\R_{>0}\times\Sigma\times\US)
\col\:\:\hspace{30pt}
\\\label{Xinftydef}
\vecv_j\in\scrV_{\vecv_{j-1}},\:\forall j\geq1\Bigr\},
\end{align}
with the topology induced
from the product topology
on $\T^1(\R^d)\times\prod_{j=1}^\infty(\R_{>0}\times\Sigma\times\US)$.
Also let $\pr_n:X^{(\infty)}\to X^{(n)}$
be the projection taking
$\langle\vecq_0,\vecv_0;\langle\xi_j,{\vs}_j,\vecv_j\big\rangle_{j=1}^{\infty}\rangle$
to $\langle\vecq_0,\vecv_0;\langle\xi_j,{\vs}_j,\vecv_j\big\rangle_{j=1}^{n}\rangle$.
Given any Borel probability measure $\Lambda$ on $\T^1(\R^d)$,
we let $\nu_\Lambda$ be the unique Borel probability measure on $X^{(\infty)}$
which for any $n\geq1$ and any Borel set $A\subset X^{(n)}$ satisfies
\begin{align}\label{nuLambdadef}
\nu_\Lambda(\pr_n^{-1}(A))
=\int_A
p\bigl(\vecv_0;\xi_1,{\vs}_1,\vecv_1\bigr)
\,\prod_{j=2}^n p_\bn(\vecv_{j-2},{\vs}_{j-1},\vecv_{j-1};\xi_j,{\vs}_{j},\vecv_{j})
\hspace{20pt}
\\\notag
\times d\Lambda(\vecq,\vecv_0)\prod_{j=1}^n\bigl( d\xi_j\,d\mm({\vs}_j)\,d\vecv_j\bigr).
\end{align}
(The probability measure in the right hand side is exactly the one that
appears in \eqref{MAINTECHNthm2Gres} in Theorem \ref{MAINTECHNthm2G}.)
The existence and uniqueness of the measure $\nu_\Lambda$ is a consequence of %
the Kolmogorov extension theorem.
Note that $\nu_\Lambda$ is the distribution of a Markov process with memory two 
on the space $\R_{>0}\times\Sigma\times\US$.

Set
\begin{align*}
\scrF=\bigl\{\bigl(\vecq_0,\vecv_0,\big\langle\xi_j,{\vs}_j,\vecv_j\big\rangle_{j=1}^{\infty}\bigr)\in
X^{(\infty)}
\col\:\:
{\textstyle\sum_{j=1}^\infty\xi_j<\infty}\bigr\}.
\end{align*}
\begin{lem}\label{SUMINFlem2}
$\nu_\Lambda(\scrF)=0$.
\end{lem}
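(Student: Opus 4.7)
The plan is to exploit the uniform lower bound on flight times encoded in Lemma \ref{REMOVINGCPTSUPPfactpfLEM1} (together with its analogue for $p$, which follows from Lemma \ref{BASICpkproblem} combined with the support properties of $k^{\g}$) in order to run a conditional Borel--Cantelli argument on the $\xi_j$'s. More precisely, for any fixed $\ve\in(0,1)$, Lemma \ref{REMOVINGCPTSUPPfactpfLEM1} (applied via \eqref{BASICp0idG}) provides a constant $C=C(\ve)>1$ such that
\begin{align*}
\int_0^{1/C}\int_{\Sigma}\int_{\scrV_\vecv}p_\bn(\vecv_0,\vs,\vecv;\xi,\vs_+,\vecv_+)\,d\vecv_+\,d\mm(\vs_+)\,d\xi<\ve
\end{align*}
uniformly over all $\vecv_0,\vs,\vecv$, and analogously with $p(\vecv_0;\cdot)$ in place of $p_\bn(\vecv_0,\vs,\vecv;\cdot)$. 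The first step is to translate these two bounds, via the explicit form \eqref{nuLambdadef} of $\nu_\Lambda$ and a Fubini argument, into the conditional statement
\begin{align*}
\nu_\Lambda(\xi_j<1/C\mid\vecv_0,\langle\xi_i,\vs_i,\vecv_i\rangle_{i=1}^{j-1})<\ve
\qquad\text{for all }\: j\geq1,
\end{align*}
that is, the one-step conditional probability of a ``short'' flight is at most $\ve$, uniformly in the history.

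The second step is to iterate this bound. For any fixed $N\geq 1$ and $k\geq 0$ the event $\{\xi_N<1/C,\xi_{N+1}<1/C,\ldots,\xi_{N+k}<1/C\}$ depends on $\langle\vecq_0,\vecv_0,\langle\xi_j,\vs_j,\vecv_j\rangle_{j=1}^{N+k}\rangle$, so iterating the one-step bound on the innermost $\xi_{N+k}$-integral and working outward gives
\begin{align*}
\nu_\Lambda\bigl(\xi_j<1/C\text{ for }N\leq j\leq N+k\bigr)\leq\ve^{k+1}.
\end{align*}
Letting $k\to\infty$ shows that $\nu_\Lambda\bigl(\bigcap_{j\geq N}\{\xi_j<1/C\}\bigr)=0$ for every $N$, and hence
\begin{align*}
\nu_\Lambda\bigl(\{\xi_j<1/C\text{ eventually}\}\bigr)
=\nu_\Lambda\Bigl(\bigcup_{N\geq1}\bigcap_{j\geq N}\{\xi_j<1/C\}\Bigr)=0.
\end{align*}

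The final step is to observe that $\scrF\subset\{\xi_j\to0\}\subset\{\xi_j<1/C\text{ eventually}\}$ for every $C>0$: if $\sum_{j}\xi_j<\infty$ then $\xi_j\to0$, so $\xi_j<1/C$ for all sufficiently large $j$. Combining with the preceding display (for any single choice of $C$, e.g.\ $C=C(\tfrac12)$) yields $\nu_\Lambda(\scrF)=0$.

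The only genuinely non-routine point is the justification of the conditional bound in the first step. This requires carefully unwinding \eqref{nuLambdadef}: one writes out $\nu_\Lambda(\pr_n^{-1}(A))$ for $A$ of the form $\{\xi_n<1/C\}\cap B$ with $B$ a cylinder in $\langle\vecq_0,\vecv_0,\langle\xi_i,\vs_i,\vecv_i\rangle_{i=1}^{n-1}\rangle$, performs the innermost integration over $(\xi_n,\vs_n,\vecv_n)$ using the uniform bound from Lemma \ref{REMOVINGCPTSUPPfactpfLEM1}, and identifies the result as an upper bound on the conditional expectation. Once this is set up the rest of the argument is a clean iteration, with no analytic difficulty beyond the uniform tail bound already established in Section \ref{COLLKERsec}.
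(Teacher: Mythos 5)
Your proof is correct, but it takes a genuinely different route from the paper. You isolate the event that the flight times are eventually small and kill it by a conditional, geometric Borel--Cantelli argument: from the uniform bound $p_\bn\leq c_\scrP\,\sigma(\vecv,\vecv_+)$ (cf.\ \eqref{kkg}) one gets the one-step conditional bound $\nu_\Lambda(\xi_j<1/C\mid\text{history})\leq c_\scrP v_{d-1}/C$, iteration gives $\nu_\Lambda(\xi_j<1/C \text{ for } N\leq j\leq N+k)\leq(c_\scrP v_{d-1}/C)^{k+1}$, and the union over $N$ plus the inclusion $\scrF\subset\{\xi_j\to0\}$ finishes. The paper instead proves the sharper quantitative bound
\begin{align*}
\nu_\Lambda\Bigl(\sum_{j=1}^n\xi_j\leq t\Bigr)\leq\frac{(c_\scrP v_{d-1}\,t)^n}{n!}
\end{align*}
by bounding the density of $\nu_\Lambda\circ\pr_n^{-1}$, after passing through Lemmas \ref{BASICpktransfrem1lem}--\ref{BASICpktransfrem2lem}, pointwise by $(c_\scrP v_{d-1})^n$ and integrating over the simplex $\{\xi_1+\cdots+\xi_n\leq t\}$; letting $n\to\infty$ and taking a countable union over $t$ gives the claim. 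Both arguments rest on the same uniform bound on the kernels, and yours is a perfectly clean alternative. What the paper's version buys is the explicit super-exponential (factorial) decay in \eqref{SUMINFlem2pf1}, which is reused later with an explicit $t=T$ to establish \eqref{thm:M1Vpf2} and \eqref{thm:M1Vpf3}; your purely qualitative exponential bound suffices for Lemma \ref{SUMINFlem2} itself but would not serve these downstream estimates. Also worth noting: since you only need the $\xi<1/C$ tail, you could skip the detour through Lemma \ref{REMOVINGCPTSUPPfactpfLEM1} (whose second part also controls the $\xi>C$ and near-grazing ends) and appeal directly to \eqref{kkg} plus $\int_{\scrV_\vecv}\sigma(\vecv,\vecv_+)\,d\vecv_+=v_{d-1}$, which is a slight simplification of your first step.
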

\begin{proof}
For any $t>0$ and any positive integer $n$ we have
\begin{align}\label{SUMINFlem2pf1}
\nu_\Lambda\Bigl(\Bigl\{\big\langle\vecq_0,\vecv_0,\big\langle\xi_j,{\vs}_j,\vecv_j\big\rangle_{j=1}^{\infty}\big\rangle
\in X^{(\infty)}
\col \sum_{j=1}^n\xi_j\leq t\Bigr\}\Bigr)\leq (c_\scrP v_{d-1})^n \frac{t^n}{n!}.
\end{align}
Indeed, using \eqref{nuLambdadef} and Lemmas \ref{BASICpktransfrem1lem} and \ref{BASICpktransfrem2lem} 
to express the left hand side as an integral over
$\T^1(\R^d)\times (\R_{>0}\times\Omega)^n$,
and then using the fact that both $k$ and $k^g$ take values in $[0,c_\scrP v_{d-1}]$
(cf.\ \eqref{kdef} and \eqref{kgdef}),
the left hand side of \eqref{SUMINFlem2pf1} is seen to be
bounded above by $(c_\scrP v_{d-1})^n$ times the Lebesgue volume of the
simplex $\{(\xi_1,\ldots,\xi_n)\in(\R_{>0})^n\col\sum_{j=1}^n\xi_j\leq t\}$;
this is the bound in \eqref{SUMINFlem2pf1}.

It follows from \eqref{SUMINFlem2pf1} that $\nu_\Lambda\bigl(\sum_{j=1}^\infty\xi_j<t\bigr)=0$, for every $t>0$.
The lemma follows from this fact.
\end{proof}
We next define a map 
\begin{align}\label{Jx}
J:X^{(\infty)}\to D_{\T^1(\R^d)}[0,\infty),
\end{align}
as follows.
For $x=\bigl(\vecq_0,\vecv_0;\big\langle\xi_j,{\vs}_j,\vecv_j\big\rangle_{j=1}^{\infty}\bigr)$
in $X^{(\infty)}\setminus\scrF$,
\begin{align}\label{Jxt}
J(x)(t) %
:=\Bigl(\vecq_0+\sum_{j=1}^n\xi_j\vecv_{j-1}+\bigl(t-\xi_1-\cdots-\xi_n\bigr)\vecv_n,
\vecv_n\Bigr),
\end{align}
where $n=n(\langle\xi_1,\xi_2,\ldots\rangle,t)$ is the nonnegative integer defined through the relation
\begin{align}\label{ndef}
\xi_1+\cdots+\xi_n\leq t<\xi_1+\cdots+\xi_{n+1}.
\end{align}
To make $J$ defined on all $X^{(\infty)}$ we choose a fixed (dummy) value $y^{0}\in D_{\T^1(\R^d)}[0,\infty)$
and declare $J(x):=y^0$ for all $x\in\scrF$.

The map $J$ is Borel measurable;
in fact $J$ is even continuous on $X^{(\infty)}\setminus\scrF$,
as one easily verifies using \cite[Prop.\ 3.6.5]{sEtK86}.
\begin{definition}\label{Thetadef}
We let $\Theta$ be the random element 
$J(x)$ in $D_{\T^1(\R^d)}[0,\infty)$
for $x$ random in $(X^{(\infty)},\nu_\Lambda)$.
\end{definition}

We will now give the proof of Theorems \ref{thm:M1} and \ref{thm:M1V}.
In the case of Theorem \ref{thm:M1}, \label{thmsM1M1Vpfs}
we consider the hard sphere scattering process introduced in Section~\ref{sec:classical},
and let the scattering map $\Psi:\scrS_-\to\scrS_+$ be as in \eqref{scatmap};
in the case of Theorem \ref{thm:M1V} we consider the Lorentz process for potentials
and let $\Psi:\scrS_-\to\scrS_+$ be the scattering map
associated to the fixed potential $W$.
(We recall explicit formulas for the correspondence $W\mapsto\Psi$ in Section \ref{AppA1} below.
Note that in Theorem \ref{thm:M1V} we are assuming that $W$ is such that $\Psi$ satisfies the
conditions in Section~\ref{SCATTERINGMAPS}.)
The choice of scattering map $\Psi$
then leads to corresponding collision kernels
$p(\vecv;\xi,\vs_+,\vecv_+)$
and $p_{\bn}(\vecv_0,\vs,\vecv;\xi,\vs_+,\vecv_+)$
(cf.\ Sec.\ \ref{COLLKERsec})
and a corresponding probability measure $\nu_\Lambda$ on $X^{(\infty)}$
(cf.\ \eqref{nuLambdadef})
and finally a random flight process $\Theta$ (cf.\ Def.~\ref{Thetadef}).
We will prove that Theorem~\ref{thm:M1} (resp., Theorem \ref{thm:M1V}) holds
with \textit{this} limiting random flight process $\Theta$.

Let us note that it suffices to prove that,
for each fixed $T>0$,
the random element $\Theta^{(\rho)}|_{[0,T]}$ in 
$D_{\T^1(\R^d)}[0,T]$ converges in distribution to
$\Theta|_{[0,T]}$, as $\rho\to0$.
Thus from now on we keep $T$ fixed.
For each $n\in\Z^+$ we define $\Sigma_n:X^{(n)}\to\R_{>0}$ by
\begin{align*}
\Sigma_n(x)=\xi_1+\cdots+\xi_n\qquad\text{for }\: 
x=\big\langle\vecq_0,\vecv_0;\big\langle\xi_j,{\vs}_j,\vecv_j\big\rangle_{j=1}^n\big\rangle\in X^{(n)}.
\end{align*}
We also view $\Sigma_n$ as a function on $X^{(\infty)}$,
via composition with the projection $\pr_n:X^{(\infty)}\to X^{(n)}$.
Then define the random element $\Theta_{n,T}$ in $D_{\T^1(\R^d)}[0,T]$  through
\begin{align}\label{ThetanTdef}
\Theta_{n,T}=\begin{cases}
J(x)&\text{if }\: \Sigma_n(x)>T,
\\
y^0|_{[0,T]}&\text{if }\:\Sigma_n(x)\leq T,
\end{cases}
\end{align}
with $x$ being the same random element in $(X^{(\infty)},\nu_\Lambda)$
as in Definition \ref{Thetadef},
and $y^0$ being the dummy constant in $D_{\T^1(\R^d)}[0,\infty)$ fixed above.
Let us record that,
as an immediate consequence of \eqref{SUMINFlem2pf1} applied with $t=T$, we have
\begin{align}\label{thm:M1Vpf2}
\lim_{n\to\infty}\PP(\Theta_{n,T}=\Theta_T)=1.
\end{align}

For any $\rho>0$, $(\vecq,\vecv)\in\fw(n;\rho)$ and $j\in\{1,\ldots,n\}$ we let
$\tau_j(\vecq,\vecv;\rho)$,
$\vs_j(\vecq,\vecv;\rho)$,
and 
$\vecv_j(\vecq,\vecv;\rho)$
be as defined in Section \ref{SCATTERINGMAPS}.
We define the map \label{Crhodef}
\begin{align*}
C_\rho:\T^1(\R^d)\to X^{(n)}
\end{align*}
by 
\begin{align*}
C_\rho(\vecq,\vecv)=\begin{cases}
\big\langle\vecq,\vecv,
\big\langle\rho^{d-1}\tau_j(\rho^{1-d}\vecq,\vecv;\rho),{\vs}_j(\rho^{1-d}\vecq,\vecv;\rho),
\vecv_j(\rho^{1-d}\vecq,\vecv;\rho)
\big\rangle_{j=1}^n\big\rangle
\\
\rule{0pt}{0pt}\hspace{210pt}
\text{if }\: (\vecq,\vecv)\in\fW(n;\rho),
\\[3pt]
x^0\hspace{200pt}\text{if }\: (\vecq,\vecv)\notin\fW(n;\rho)
\end{cases}
\end{align*}
(recall \eqref{fWndef}),
where $x^0$ is a (dummy) point in $X^{(n)}$ fixed once and for all.
Let us also set
\begin{align}\notag
\fW_T(n;\rho):=\bigl\{(\vecq,\vecv)\in\fW(n;\rho)\col \Sigma_n(C_\rho(\vecq,\vecv))>T\:\text{ and }\:
\hspace{100pt}
\\\label{fWtnDEF}
\vecv_{j-1}(\vecq,\vecv;\rho)\neq s_\Psi\cdot\vecv_j(\vecq,\vecv;\rho)
\text{ for }j=1,\ldots,n-1\bigr\}.
\end{align}

\begin{remark}
Recall \eqref{Psi1vmv} regarding $s_\Psi$;
thus the last condition in \eqref{fWtnDEF} means that none of the first $n-1$ %
collisions occurs with exactly vanishing impact parameter.
We need to exclude the case of vanishing impact parameter
since the collision time may be infinite in this case.
We remark that in the case of the hard sphere scattering process,
the last condition in \eqref{fWtnDEF} could be removed in the following proof.
\end{remark}

We will next define a map $J_\rho:X^{(n)}\to D_{\T^1(\R^d)}[0,T]$
such that $[J_\rho\circ C_\rho(\vecq,\vecv)](t)=\widetilde \Phi_t^{(\rho)} (\vecq,\vecv)$
for all $(\vecq,\vecv)\in\fW_T(n;\rho)$ and $t\in[0,T]$.
This is slightly more complicated in the case of the Lorentz process for potentials, and we discuss that case first.
Here, we first need to introduce one more piece of notation
regarding the Hamiltonian flow with the potential $W$.
Recall that for a particle entering the unit sphere with velocity $\vecv_-$
and exiting with velocity $\vecv_+$,
the point of entrance is uniquely determined to be
$\vecbeta_{\vecv_-}^-(\vecv_+)$,
and the point of exit is $\vecbeta_{\vecv_-}^+(\vecv_+)$.
It is also easily verified %
that the total time which the particle spends inside the unit sphere,
$T_{\vecv_-,\vecv_+}$, is finite whenever $\vecbeta_{\vecv_-}^-(\vecv_+)\neq-\vecv_-$.\footnote{Indeed,
we have
$T_{\vecv_-,\vecv_+}=T\bigl(\|(\vecbeta_{\vecv_-}^-(\vecv_+)R(\vecv_-))_{\perp}\|\bigr)$ 
in the notation of \eqref{scatteringtime};
hence the claim follows from \eqref{Twbound} and Lemma \ref{DEFLANGLElem}(2).}
In this case, let the particle path inside the unit sphere  be
$t\mapsto\vecpsi_{\vecv_-,\vecv_+}(t)$, for $t\in[0,T_{\vecv_-,\vecv_+}]$;
in particular $\vecpsi_{\vecv_-,\vecv_+}(0)=\vecbeta_{\vecv_-}^-(\vecv_+)$
and $\vecpsi_{\vecv_-,\vecv_+}(T_{\vecv_-,\vecv_+})=\vecbeta_{\vecv_-}^+(\vecv_+)$.
It follows that the particle path in the sphere of radius $\rho^d$ centered at the origin is given by
$t\mapsto \rho^d\,\vecpsi_{\vecv_-,\vecv_+}(\rho^{-d}t)$ for $t\in[0,\rho^d\,T_{\vecv_-,\vecv_+}]$.
\label{PATHINBALLRADIUSRHOd}
Now we define the map 
\begin{align*}
J_\rho:X^{(n)}\to D_{\T^1(\R^d)}[0,T]
\end{align*}
as follows. Let 
\begin{align*}
x=\bigl(\vecq,\vecv_0;\big\langle\xi_j,{\vs}_j,\vecv_j\big\rangle_{j=1}^{n}\bigr)\in X^{(n)}
\end{align*}
be given. If $\sum_{j=1}^n\xi_j\leq T$
or if $\vecv_j=s_\Psi\cdot\vecv_{j-1}$ for some $j=1,\ldots,n-1$
(cf.\ \eqref{Psi1vmv})
then we set $J_\rho(x)=y^0|_{[0,T]}$.
From now on assume $\sum_{j=1}^n\xi_j>T$
and $\vecv_j\neq s_\Psi\cdot\vecv_{j-1}$ 
($\Leftrightarrow \vecbeta_{\vecv_{j-1}}^-(\vecv_j)\neq\vecv_{j-1}$)
for each $j=1,\ldots,n-1$.
Set $\xi_j':=\xi_j+\rho^d T_{\vecv_{j-1},\vecv_j}$.
Given $t\in[0,T]$,
let $m$ be the largest number in $\{0,1,\ldots,n\}$
satisfying
$\sum_{j=1}^m\xi_j'\leq t$;
then in fact $0\leq m\leq n-1$.
If $t\leq\xi_{m+1}+\sum_{j=1}^m\xi_j'$
then we set
\begin{align*}
J_\rho(x)(t):=\Bigl(\vecq+\sum_{j=1}^m\Bigl(\xi_j\vecv_{j-1}
+\rho^d\bigl(\vecbeta_{\vecv_{j-1}}^+(\vecv_j)-\vecbeta_{\vecv_{j-1}}^-(\vecv_j)\bigr)\Bigr)
\hspace{80pt}
\\
+(t-\xi_1'-\cdots-\xi_m')\vecv_m,\:\vecv_m\Bigr),
\end{align*}
whereas if $\xi_{m+1}+\sum_{j=1}^m\xi_j'<t<\sum_{j=1}^{m+1}\xi_j'$
then set
$s:=\rho^{-d}\bigl(t-\bigl(\xi_{m+1}+\sum_{j=1}^m\xi_j'\bigr)\bigr)$ and 
\begin{align}\notag
J_\rho(x)(t):=\Bigl(\vecq+\sum_{j=1}^m\Bigl(\xi_j\vecv_{j-1}
+\rho^d\bigl(\vecbeta_{\vecv_{j-1}}^+(\vecv_j)-\vecbeta_{\vecv_{j-1}}^-(\vecv_j)\bigr)\Bigr)
+\xi_{m+1}\vecv_m
\hspace{40pt}
\\\label{Jrhoxdef2}
+
\rho^d\bigl(\vecpsi_{\vecv_{m},\vecv_{m+1}}(s)-\vecbeta_{\vecv_m}^-(\vecv_{m+1})\bigr)
,\:\|\dot\vecpsi_{\vecv_{m},\vecv_{m+1}}(s)\|^{-1}\dot\vecpsi_{\vecv_{m},\vecv_{m+1}}(s)\Bigr).
\end{align}
This completes the definition of $J_\rho$, in the case of the Lorentz process for potentials.

In the case of the hard sphere scattering process,
we define $J_\rho:X^{(n)}\to D_{\T^1(\R^d)}[0,T]$ simply by applying the
above definition with $T_{\vecv_-,\vecv_+}\equiv0$;
this means that $\xi_j'=\xi_j$ for all $j$ and the case \eqref{Jrhoxdef2} never occurs;
thus there is no reference to ``$\vecpsi_{\vecv_-,\vecv_+}(t)$''.

By inspection one verifies that,
both for the Lorentz process for potentials and for the hard sphere scattering process:
\begin{align}\label{JrhoCrhoeqtPhi}
[J_\rho\circ C_\rho(\vecq,\vecv)](t)=\widetilde \Phi_t^{(\rho)} (\vecq,\vecv),
\qquad\forall (\vecq,\vecv)\in\fW_T(n;\rho),\: t\in[0,T].
\end{align}
Furthermore $J_\rho\circ C_\rho(\vecq,\vecv)=y^0|_{[0,T]}$ for all 
$(\vecq,\vecv)\in\fW(n;\rho)\setminus\fW_T(n;\rho)$.

Recall from Sections \ref{sec:classical} and \ref{sec:soft}
that the random element $\Theta^{(\rho)}$ in $D_{\T^1(\R^d)}[0,\infty)$ 
is defined by $\Theta^{(\rho)}(t) = \widetilde \Phi_t^{(\rho)} (\vecq,\vecv)$
where $(\vecq,\vecv)$ is a random point in $(\T^1(\R^d),\Lambda)$.
Using the same random point %
$(\vecq,\vecv)$,
we now also introduce, for each fixed $n\in\Z^+$,
the random element $\Theta^{(\rho)}_{n,T}$ in $D_{\T^1(\R^d)}[0,T]$ 
through
\begin{align*}
\Theta^{(\rho)}_{n,T}:=\begin{cases} J_\rho\circ C_\rho(\vecq,\vecv)&\text{if }\: (\vecq,\vecv)\in\fW(n;\rho),
\\
y^0|_{[0,T]}&\text{if }\: (\vecq,\vecv)\notin\fW(n;\rho).
\end{cases}
\end{align*}
Note that 
$\Theta^{(\rho)}_{n,T}=\Theta^{(\rho)}|_{[0,T]}$ whenever
$(\vecq,\vecv)\in\fW_T(n;\rho)$;
hence
\begin{align*}
\PP\bigl(\Theta^{(\rho)}_{n,T}=\Theta^{(\rho)}|_{[0,T]}\bigr)
\geq\Lambda(\fW_T(n;\rho)).
\end{align*}
Furthermore,
by Theorem \ref{MAINTECHNthm2G} and Remark \ref{MAINTECHNthm2Gprobrem}
(and \eqref{nuLambdadef}),
$C_\rho(\vecq,\vecv)$ converges in distribution to a random point in 
$(X^{(n)},\nu_\Lambda\circ \pr_n^{-1})$ as $\rho\to0$.
It is immediate from \eqref{nuLambdadef} that
\begin{align}\notag
\nu_\Lambda\circ\pr_n^{-1}\bigl(\bigl\{z\in X^{(n)}\col \Sigma_n(z)=T\text{ or }\vecv_{j-1}(z)=s_\Psi\cdot\vecv_j(z)\text{ for some }
\hspace{40pt}
\\\label{thm:M1Vpf5}
j=1,\ldots,n-1\bigr\}\bigr)=0,
\end{align}
and hence by the Portmanteau Theorem,
\begin{align*}
\lim_{\rho\to0}\Lambda(\fW_T(n;\rho))=\nu_\Lambda\circ \pr_n^{-1}\bigl(\bigl\{z\in X^{(n)}\col \Sigma_n(z)>T\bigr\}\bigr).
\end{align*}
The last expression tends to $1$ as $n\to\infty$,
by \eqref{SUMINFlem2pf1} applied with $t=T$.
Hence we conclude:
\begin{align}\label{thm:M1Vpf3}
\lim_{n\to\infty}\liminf_{\rho\to0}\PP\bigl(\Theta^{(\rho)}_{n,T}=\Theta^{(\rho)}|_{[0,T]}\bigr)=1.
\end{align}
In view of \eqref{thm:M1Vpf3} 
and \eqref{thm:M1Vpf2},
in order to prove that 
$\Theta^{(\rho)}|_{[0,T]}$ converges in distribution to
$\Theta|_{[0,T]}$,
it now suffices to prove that for each fixed $n\in\Z^+$,
$\Theta^{(\rho)}_{n,T}$ converges in distribution to $\Theta_{n,T}$.

Thus from now on we keep $n\in\Z^+$ (as well as $T>0$) fixed.
We will prove the desired convergence by
using Theorem \ref{MAINTECHNthm2G} and the continuous mapping theorem.
We first need to introduce one more map.
We define
\begin{align}\label{tJdef1}
\tJ:X^{(n)}\to D_{\T^1(\R^d)}[0,T]
\end{align}
by setting,
for $x=\bigl(\vecq,\vecv_0;\big\langle\xi_j,{\vs}_j,\vecv_j\big\rangle_{j=1}^{n}\bigr)\in X^{(n)}$:
\begin{align*}
\tJ(x)=y^0|_{[0,T]}\qquad\text{if }\:\Sigma_n(x)\leq T,
\end{align*}
while if $\Sigma_n(x)>T$ then
\begin{align*}
\tJ(x)(t):=\Bigl(\vecq_0+\sum_{j=1}^m\xi_j\vecv_{j-1}+\bigl(t-\Sigma_m(x)\bigr)\vecv_m,
\vecv_m\Bigr),
\end{align*}
where $m$ is the unique integer in $\{1,\ldots,n-1\}$ such that
$\Sigma_m(x)\leq t<\Sigma_{m+1}(x)$.
The point of this definition is that now 
the random element $\Theta_{n,T}$ %
in \eqref{ThetanTdef} can be expressed as
\begin{align*}
\Theta_{n,T}=\tJ(\pr_n(x)),
\end{align*}
where $x$ is a random point in $(X^{(\infty)},\nu_\Lambda)$ as before.

\begin{lem}\label{JrhotendtotJLEM}
If $\{\rho_k\}$ is any sequence in $(0,1)$ with $\rho_k\to0$,
and $\{z_k\}$ is any sequence in $X^{(n)}$
such that $z:=\lim_{k\to\infty}z_k$ exists in $X^{(n)}$, and $\Sigma_n(z)\neq T$
and $\vecv_{j-1}(z)\neq s_\Psi\cdot\vecv_j(z)$ for each $j=1,\ldots,n-1$,
then $\lim_{k\to\infty}J_{\rho_k}(z_k)=\tJ(z)$ in $D_{\T^1(\R^d)}[0,T]$.
\end{lem}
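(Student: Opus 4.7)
Plan: The proof splits according to whether $\Sigma_n(z) < T$ or $\Sigma_n(z) > T$ (which is precisely why the hypothesis $\Sigma_n(z) \neq T$ is imposed). Write $\xi_j^{(k)}, \vecv_j^{(k)}, \vs_j^{(k)}$ for the components of $z_k$ and $\xi_j, \vecv_j, \vs_j$ for those of $z$. The standing assumption $\vecv_j \neq s_\Psi \cdot \vecv_{j-1}$ for $j=1,\ldots,n-1$ guarantees that $\vecbeta^{\pm}_{\vecv_{j-1}}(\vecv_j)$ is well-defined and jointly continuous at these arguments, and that the in-ball traversal time $T_{\vecv_{j-1},\vecv_j}$ (appearing in the definition of $J_\rho$ in the potential case) is finite and continuous at the limiting pair. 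Consequently $T_{\vecv_{j-1}^{(k)},\vecv_j^{(k)}}$ stays bounded as $k\to\infty$, and the rescaled traversal times $\rho_k^d\,T_{\cdot,\cdot}$ and position offsets $\rho_k^d\bigl(\vecbeta^+_{\cdot}(\cdot)-\vecbeta^-_{\cdot}(\cdot)\bigr)$ tend to zero.

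In Case~1 ($\Sigma_n(z)<T$), the above gives $\sum_{j=1}^n \xi_j^{(k)\prime} = \sum_{j=1}^n \xi_j^{(k)} + \rho_k^d \sum_{j=1}^n T_{\vecv_{j-1}^{(k)},\vecv_j^{(k)}} \to \Sigma_n(z)<T$, so for $k$ large $\sum_j \xi_j^{(k)\prime}\leq T$; hence $J_{\rho_k}(z_k)=y^0|_{[0,T]}=\tJ(z)$ and the convergence is trivial.

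In Case~2 ($\Sigma_n(z)>T$), let $m$ be the largest integer in $\{0,\ldots,n-1\}$ with $\Sigma_m(z)\leq T$, so that $\Sigma_m(z)\leq T<\Sigma_{m+1}(z)$; for $k$ large, also $\sum_j \xi_j^{(k)\prime}>T$, so $J_{\rho_k}(z_k)$ is given by the non-trivial formula. I would then construct piecewise-linear strictly increasing surjections $\lambda_k\colon [0,T]\to[0,T]$ that map each breakpoint $\Sigma_j(z)$ ($1\leq j\leq m$) to the entry time $\Sigma_{j-1}^{(k)\prime}+\xi_j^{(k)}$ of the $j$-th scattering sphere in $J_{\rho_k}(z_k)$, with $\lambda_k(0)=0$ and $\lambda_k(T)=T$. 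Since $\xi_j^{(k)}\to\xi_j$ and $\rho_k^d T_{\cdot,\cdot}\to 0$, every breakpoint of $\lambda_k$ converges to the corresponding $\Sigma_j(z)$, yielding $\sup_t|\lambda_k(t)-t|\to 0$. On each free-flight interval $(\Sigma_{j-1}(z),\Sigma_j(z))$ (with $(\Sigma_m(z),T]$ as the last), $\lambda_k(t)$ lies in the free-flight portion of $J_{\rho_k}(z_k)$ for $k$ large; there both position and velocity are explicit affine functions of $t$ whose coefficients converge (by continuity of $\vecbeta^{\pm}$ and $\xi_j^{(k)}\to\xi_j$) to those of $\tJ(z)$, giving uniform convergence of $J_{\rho_k}(z_k)\circ\lambda_k$ to $\tJ(z)$ on compacta away from the jump times.

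The main technical obstacle is the potential case, in which each collision of $J_{\rho_k}(z_k)$ occupies a continuous scattering interval of length $\rho_k^d T_{\vecv_{j-1}^{(k)},\vecv_j^{(k)}}\to 0$, on which the velocity transitions smoothly from $\vecv_{j-1}^{(k)}$ to $\vecv_j^{(k)}$ along the rescaled trajectory $\vecpsi_{\vecv_{j-1}^{(k)},\vecv_j^{(k)}}$. With $\lambda_k$ chosen as above, the preimages of these shrinking intervals are vanishing neighbourhoods of the jump times $\Sigma_j(z)$ of $\tJ(z)$; to conclude Skorohod convergence I would invoke \cite[Prop.\ 3.6.5]{sEtK86}, verifying convergence of $J_{\rho_k}(z_k)\circ\lambda_k$ to $\tJ(z)$ at every continuity point of $\tJ(z)$ together with matching of the jump locations and heights, both of which follow from $\vecv_{j-1}^{(k)}\to\vecv_{j-1}$, $\vecv_j^{(k)}\to\vecv_j$, the vanishing of $\rho_k^d$-order position offsets, and the shrinking of the scattering intervals to points. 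Finally, the subcase in which $\Sigma_j(z)=T$ for some $j<n$ (permitted by the hypothesis, which only excludes $\Sigma_n(z)=T$) requires a minor modification of the breakpoints of $\lambda_k$ near $T$ and is handled analogously.
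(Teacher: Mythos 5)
Your argument reproduces the ingredients that the paper's brief proof invokes: the split according to whether $\Sigma_n(z)<T$ or $\Sigma_n(z)>T$; the role of the hypothesis $\vecv_{j-1}\neq s_\Psi\vecv_j$ in keeping $\vecbeta^{\pm}_{\vecv_{j-1}}(\vecv_j)$ well-defined and the collision times $T_{\vecv_{j-1},\vecv_j}$ bounded (via \eqref{Twbound} and Lemma~\ref{DEFLANGLElem}(2)); and time changes matching the breakpoints. A minor remark: the dummy criterion in the definition of $J_\rho$ tests $\sum_j\xi_j\leq T$, not $\sum_j\xi_j'\leq T$, though since $\xi_j\leq\xi_j'$ your Case~1 conclusion is unaffected. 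For the hard-sphere scattering map the verification along these lines is complete.

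In the potential case, however, the concluding step does not go through as you have it, and this is a subtlety the paper's one-sentence proof also leaves unaddressed. There $J_\rho(x)$ is a \emph{continuous} path: its velocity component $\|\dot\vecpsi_{\vecv_m,\vecv_{m+1}}(s)\|^{-1}\dot\vecpsi_{\vecv_m,\vecv_{m+1}}(s)$ interpolates continuously from $\vecv_m$ to $\vecv_{m+1}$ over a scattering window of length $\rho^d\,T_{\vecv_m,\vecv_{m+1}}$, whereas $\tJ(z)$ has a genuine velocity jump of size $\|\vecv_m-\vecv_{m+1}\|$ at $\Sigma_{m+1}(z)$. In the $J_1$ Skorohod topology the maximal-jump functional $x\mapsto\sup_t\|x(t)-x(t^-)\|$ is lower semicontinuous, so the set of continuous paths is \emph{closed} in $D_{\T^1(\R^d)}[0,T]$: a sequence of continuous paths cannot converge in $J_1$ to a path with a non-trivial jump. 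Concretely, for any admissible time changes $\lambda_k$, the composition $J_{\rho_k}(z_k)\circ\lambda_k$ takes velocity values bounded away from both $\vecv_m$ and $\vecv_{m+1}$ at some times arbitrarily close to $\Sigma_{m+1}(z)$, so $\sup_t\dist\bigl(J_{\rho_k}(z_k)(\lambda_k(t)),\tJ(z)(t)\bigr)$ stays bounded below; and your appeal to ``matching of jump locations and heights'' via \cite[Prop.\ 3.6.5]{sEtK86} is vacuous because $J_{\rho_k}(z_k)\circ\lambda_k$, being continuous, has no jumps to match. As written, this step fails in the potential case.
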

\begin{proof}
This is easily verified by comparing the definitions of $J_\rho$ and $\tJ$.
One also uses the basic fact that the collision time for any scatterer collision
is uniformly bounded so long as the impact parameter is bounded away from zero
(cf.\ Lemma \ref{DEFLANGLElem}(2) and \eqref{Twbound} in Section~\ref{AppA1} below).
\end{proof}

We continue to let $x$ be a random point in $(X^{(\infty)},\nu_\Lambda)$
and also let $(\vecq,\vecv)$ be a random point in $(\T^1(\R^d),\Lambda)$.
As we have noted above,
$C_\rho(\vecq,\vecv)$
tends in distribution to $\pr_n(x)$ as $\rho\to0$
(by Theorem \ref{MAINTECHNthm2G} and \eqref{nuLambdadef}).
Hence by the continuous mapping theorem
\cite[Thm.\ 4.27]{kallenberg02},
together with Lemma \ref{JrhotendtotJLEM} and \eqref{thm:M1Vpf5},
we conclude that $J_\rho\circ C_\rho(\vecq,\vecv)$
tends in distribution to $\tJ(\pr_n(x))$ as $\rho\to0$.
Equivalently, $\Theta_{n,T}^{(\rho)}$ tends in distribution to $\Theta_{n,T}$.

\vspace{2pt}

This completes the proof of both Theorems \ref{thm:M1} and \ref{thm:M1V}.
\hfill$\square$

\section{Semigroups and kinetic transport equations}
\label{KINETICEQsec}

This section provides more details on the forward Kolmogorov equation \eqref{KINETICEQ00} for the random flight process $\hTheta$ introduced in \eqref{eq:RFP2}. We follow closely \cite[Section 6]{partII}, and will only highlight key steps. We start by providing a precise definition for $\hTheta$ and showing the process is Markovian.

Define $\L^1(X)$ and $\Lloc(X)$ as the spaces of integrable/locally integrable functions $X\to\RR$ with respect to the measure $d\vecq\,d\vecv\, d\xi\,d\mm(\vs)\,d\vecv_+$, where $X$ is the extended phase space as defined in \eqref{XXdef2}. We generalise $\nu_\Lambda$ in \eqref{nuLambdadef} as follows. Given any non-negative function $f\in\Lloc(X)$ we define
$\hat\nu_f$ to be the (unique) Borel measure on $X^{(\infty)}$
which for any $n\geq1$ and any Borel set $A\subset X^{(n)}$ satisfies
\begin{multline}\label{nuLambdadef231}
\hat\nu_f(\pr_n^{-1}(A))
=\int_A
f\bigl(\vecq,\vecv_0,\xi_1,{\vs}_1,\vecv_1\bigr) \\ \times
\prod_{j=2}^n p_\bn(\vecv_{j-2},{\vs}_{j-1},\vecv_{j-1};\xi_j,{\vs}_{j},\vecv_{j})
\, d\vecq\, d\vecv_0 \prod_{j=1}^n\bigl( d\xi_j\,d\mm({\vs}_j)\,d\vecv_j\bigr).
\end{multline}
The same formula also associates to any 
$f\in\L^1(X)$
a signed %
Borel measure $\hnu_f$ on $X^{(\infty)}$.
Note that if $f$ is a probability density then $\hnu_f$ is a probability measure.
In analogy with \eqref{Jx} we define the map
\begin{align*}%
\widehat J:X^{(\infty)}\to D_{X}[0,\infty),
\end{align*}
by 
\begin{align*}%
\widehat J(x)(t) %
:=\Bigl(\vecq_0+\sum_{j=1}^n\xi_j\vecv_{j-1}+\bigl(t-\Sigma_n(x)\bigr)\vecv_n,
\vecv_n, \Sigma_{n+1}(x)-t, \vs_{n+1}, \vecv_{n+1} \Bigr),
\end{align*}
for $x=\bigl(\vecq_0,\vecv_0;\big\langle\xi_j,{\vs}_j,\vecv_j\big\rangle_{j=1}^{\infty}\bigr)\in X^{(\infty)}\setminus\scrF$
(with $n=n(\langle\xi_1,\xi_2,\ldots\rangle,t)$ as before); again declare the dummy variable $\widehat J(x):=y^0$ for all $x\in\scrF$.
This map $\hJ$ is Borel measurable;
in fact $\hJ$ is even continuous on $X^{(\infty)}\setminus\scrF$,
as one verifies using \cite[Prop.\ 3.6.5]{sEtK86}.

\begin{definition}\label{ThetaHATdef}
For $f\in\L^1(X)$ a probability density, we let $\hTheta$ be the random element 
$\widehat J(x)$ in $D_{X}[0,\infty)$
for $x$ random in $(X^{(\infty)},\hat\nu_f)$.
\end{definition}

That is, for all probability densities $f\in\L^1(X)$ and Borel sets $A\subset X$, 
\begin{equation*}
\PP(\hTheta(t)\in A) = \hat\nu_f\{x\in X^{(\infty)} \col \widehat J(x)(t)\in A\}.
\end{equation*}

\begin{definition}\label{KtDEFdef}
The evolution operator $K_t:\Lloc(X)\to \Lloc(X)$ for $\hTheta$ is defined by the relation
\begin{equation}\label{KtDEF}
\int_{A}  K_t f(\vecq,\vecv,\xi,\vs,\vecv_+) \, d\vecq\,d\vecv\, d\xi\,d\mm(\vs)\,d\vecv_+ = \hat\nu_f\{x\in X^{(\infty)} \col \widehat J(x)(t)\in A\} ,
\end{equation}
for all non-negative $f\in\Lloc(X)$ and Borel sets $A\subset X$,
and extended to all $\Lloc(X)$ by linearity.
\end{definition}

We note that $K_t$ preserves the subspace of non-negative functions in $\Lloc(X)$; $K_t$ also preserves $\L^1(X)$.
If $f\in\L^1(X)$ is non-negative, then $\| K_t f \|_{\L^1(X)}=\| f \|_{\L^1(X)}$. This follows from \eqref{KtDEF} for probability densities $f$, and for general non-negative $f$ by linearity. We thus have by the triangle inequality
\begin{equation} \label{contract001}
\| K_t f \|_{\L^1(X)}\leq \| f \|_{\L^1(X)}
\end{equation}
for all $f\in\L^1(X)$.
We have the following expansion in terms of number of collisions $n$ within time $t$,
\begin{equation} \label{KTEXPLICIT}
K_t = \sum_{n=0}^\infty K_t^{(n)}	,
\end{equation}
where
\begin{align}\label{KTndef}
	\int_{A}  K_t^{(n)} f(\vecq,\vecv,\xi,\vs,\vecv_+) \, d\vecq\,d\vecv\, d\xi\,d\mm(\vs)\,d\vecv_+ 
\hspace{150pt}
\\\notag
=\hat\nu_f\{x\in X^{(\infty)}\col\widehat J(x)(t)\in A,\, \Sigma_n(x)\leq t<\Sigma_{n+1}(x) \} .
\end{align}
More explicitly, in the case $n=0$,
\begin{equation} \label{KT0EXPLDEF}
K_t^{(0)}  f(\vecq,\vecv,\xi,\vs,\vecv_+) = f(\vecq-t\vecv,\vecv,\xi+t,\vs,\vecv_+) ,
\end{equation}
and for $n\geq 1$,
\begin{equation} \label{KTNEXPLDEF}
\begin{split}
& K_t^{(n)} f(\vecq,\vecv,\xi,\vs,\vecvp) \\
& =
\int_{\xi_1+\ldots+\xi_n\leq t}  f\bigg(\vecq-\bigg(\sum_{j=1}^n \xi_j\vecv_{j-1} + (t-\xi_1-\ldots-\xi_n) \vecv_n\bigg),\vecv_0,\xi_1,\vs_1,\vecv_1\bigg)\\ 
& \times \prod_{j=2}^{n+1} p_\bn(\vecv_{j-2},{\vs}_{j-1},\vecv_{j-1};\xi_j,{\vs}_{j},\vecv_{j})
\,  \prod_{j=1}^n\bigl( d\vecv_{j-1}\,d\xi_j\,d\mm({\vs}_j)\bigr) ,
\end{split}
\end{equation}
subject to 
\begin{equation}\label{convent007}
\vecv_n=\vecv, \quad \xi_{n+1}=\xi+t-(\xi_1+\ldots+\xi_n), \quad \vs_{n+1}=\vs, \quad \vecv_{n+1}=\vecv_+.
\end{equation}

We have for any $f\in \L^1(X)$ and $n\geq1$:
\begin{equation} \label{KTNEXPLDEF001}
\| K_t^{(n)} f \|_{\L^1(X)} \leq  \frac{(c_\scrP v_{d-1} t)^{n-1}}{(n-1)!} \, \| f \|_{\L^1(X)} .
\end{equation}
This bound is proved by first applying Lemma \ref{REMOVINGCPTSUPPfactpfLEM1}
to the integrals over $\xi,\vs_{n+1},\vecv_{n+1}$,
and then mimicking the proof of \eqref{SUMINFlem2pf1}.
It follows from \eqref{KTNEXPLDEF001} that the sum \eqref{KTEXPLICIT} is uniformly operator convergent
on $\L^1(X)$. 

The semigroup property established in the following proposition implies that $\hTheta$ is Markovian.

\begin{prop} \label{SEMIGROUPLEM}
The family $(K_t)_{t\geq 0}$ forms a linear semigroup on 
$\Lloc(X)$, and a (strongly continuous) linear contraction semigroup on $\L^1(X)$.
\end{prop}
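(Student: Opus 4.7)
The plan is to establish, in order: (a) $K_0=\mathrm{id}$; (b) the semigroup identity $K_{s+t}=K_sK_t$; (c) strong continuity on $\L^1(X)$ at $t=0$. Linearity is built into the definition, and the contraction property on $\L^1(X)$ was already recorded in \eqref{contract001}. Item (a) reads off from \eqref{KT0EXPLDEF}, together with the observation that $K_0^{(n)}f=0$ for $n\geq 1$, since $\hat\nu_f\{x:\Sigma_n(x)\leq 0\}=0$.

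For (b) I would establish the layered Cauchy-type identity
\begin{equation*}
K_{s+t}^{(n)}f=\sum_{m=0}^n K_s^{(n-m)}K_t^{(m)}f,\qquad n\geq 0,
\end{equation*}
by expanding both sides via \eqref{KT0EXPLDEF}--\eqref{KTNEXPLDEF} and partitioning the collision-time simplex $\{\xi_1+\cdots+\xi_n\leq s+t\}$ according to the unique $m\in\{0,\ldots,n\}$ with $\xi_1+\cdots+\xi_m\leq t<\xi_1+\cdots+\xi_{m+1}$. After writing $\xi_{m+1}=\xi'+\eta_1$ with $\xi'=t-(\xi_1+\cdots+\xi_m)$ and relabelling the second-stage variables so that the $i$-th inter-collision time, mark and velocity become the $(m+i)$-th one in the original indexing, the first-stage integrand for $K_t^{(m)}$ produces the extended state $(\vecq(t),\vecv_m,\eta_1,\vs_{m+1},\vecv_{m+1})$ at the cut time $t$, which serves as the initial data for $K_s^{(n-m)}$; the position argument of $f$ and the product of memory-two kernels $p_\bn(\vecv_{j-2},\vs_{j-1},\vecv_{j-1};\xi_j,\vs_j,\vecv_j)$ for $j=2,\ldots,n+1$ then reassemble exactly into the integrand for $K_{s+t}^{(n)}f$. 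Summing over $n$ (the series converges absolutely in $\L^1(X)$ by \eqref{KTNEXPLDEF001}) gives $K_{s+t}f=K_sK_tf$ on $\L^1(X)$; the extension to $\Lloc(X)$ follows by monotone truncation for non-negative $f$ and then by linearity. Conceptually this identity is the continuous-time Markov property of $\hTheta$ on the extended state $X$, which encodes the memory-two collision sequence of Theorem \ref{MAINTECHNthm2G}.

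For strong continuity at $t=0$, I split $K_tf-f=(K_t^{(0)}f-f)+K_t^{(1)}f+\sum_{n\geq 2}K_t^{(n)}f$. The tail obeys $\sum_{n\geq 2}\|K_t^{(n)}f\|_{\L^1}\leq\|f\|_{\L^1}(e^{c_\scrP v_{d-1}t}-1)\to 0$ by \eqref{KTNEXPLDEF001}. Since \eqref{KT0EXPLDEF} exhibits $K_t^{(0)}$ as a $\vecv$-dependent translation by $(-t\vecv,t)$ in the $(\vecq,\xi)$-coordinates, standard $\L^1$-continuity of translations together with dominated convergence in $\vecv\in\US$ yields $\|K_t^{(0)}f-f\|_{\L^1}\to 0$. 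For the middle term, the change of variable $\vecq\mapsto\vecq+\xi_1\vecv_0+(t-\xi_1)\vecv$ in \eqref{KTNEXPLDEF} followed by integration of $(\xi,\vs,\vecv_+)$ against $p_\bn(\vecv_0,\vs_1,\vecv;\cdot,\cdot,\cdot)$ produces
\begin{equation*}
\|K_t^{(1)}f\|_{\L^1}\leq\int_0^t\!\int_{\R^d\times\US\times\Sigma\times\US}|f(\vecq,\vecv_0,\xi_1,\vs_1,\vecv)|\,d\vecq\,d\vecv_0\,d\mm(\vs_1)\,d\vecv\,d\xi_1,
\end{equation*}
which tends to zero as $t\to 0$ by absolute continuity of the $\L^1$ integral in $\xi_1$. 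Strong continuity at arbitrary $t>0$ then follows from the identity $K_tf-K_{t-h}f=K_{t-h}(K_hf-f)$ combined with contraction.

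The main obstacle is the index bookkeeping in (b). The crucial points are that the boundary kernel $p_\bn(\vecv_{m-1},\vs_m,\vecv_m;\xi_{m+1},\vs_{m+1},\vecv_{m+1})$ appearing at $j=m+1$ in the expansion of $K_{s+t}^{(n)}f$ survives the split $\xi_{m+1}=\xi'+\eta_1$ intact---because $p_\bn$ depends only on the sum---and that the relabelling of the second-stage variables converts the kernels $p_\bn$ appearing inside $K_s^{(n-m)}$ into exactly the required factors at $j=m+2,\ldots,n+1$ of $K_{s+t}^{(n)}f$. Once these identifications are in hand, the remainder---summation over $n$, extension from $\L^1$ to $\Lloc$, and the passage from right-continuity at $0$ to full strong continuity---is routine semigroup theory.
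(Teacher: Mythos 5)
Your proof is correct and follows essentially the same route as the paper: the semigroup law is obtained from the layered identity $\sum_{m=0}^n K_s^{(n-m)}K_t^{(m)}=K_{s+t}^{(n)}$, proved by partitioning the collision-time simplex according to the unique $m$ with $\xi_1+\cdots+\xi_m\leq t<\xi_1+\cdots+\xi_{m+1}$ (the paper's \eqref{KNMMMFREL}--\eqref{KNMMMFRELrestr}, up to the cosmetic relabelling $K_t=K_{t-s}K_s$), and the contraction property is the already-recorded \eqref{contract001}. The only difference is that you write out the standard strong-continuity argument (tail bound from \eqref{KTNEXPLDEF001}, $\L^1$-continuity of translations for $K_t^{(0)}$, and the $O(t)$ bound for $K_t^{(1)}$) which the paper simply cites from \cite[Proposition 6.3]{partII}; your version of it is sound.
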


\begin{proof}
(This is almost identical to the proof of \cite[Proposition 6.3]{partII}.) For $f\in \Lloc(X)$, $0\leq s\leq t$, $0\leq m\leq n$, and \eqref{convent007},
\begin{equation} \label{KNMMMFREL}
\begin{split}
& K_{t-s}^{(n-m)}K_{s}^{(m)} f (\vecq,\vecv,\xi,\vs,\vecvp)  \\
& =
\int_{\Box}  f\bigg(\vecq-\bigg(\sum_{j=1}^n \xi_j\vecv_{j-1} + (t-\xi_1-\ldots-\xi_n) \vecv_n\bigg),\vecv_0,\xi_1,\vs_1,\vecv_1\bigg)
\\ 
& \times \prod_{j=2}^{n+1} p_\bn(\vecv_{j-2},{\vs}_{j-1},\vecv_{j-1};\xi_j,{\vs}_{j},\vecv_{j})
\, \prod_{j=1}^n\bigl( d\vecv_{j-1}\,d\xi_j\,d\mm({\vs}_j)\bigr) ,
\end{split}
\end{equation}
with the range of integration $\Box$ restricted to 
\begin{equation}\label{KNMMMFRELrestr}
\begin{cases}
\displaystyle
\sum_{j=1}^n \xi_j \leq t \text{ and }  \sum_{j=1}^m \xi_j\leq s <\sum_{j=1}^{m+1}\xi_j & (m<n) \\[10pt]
\displaystyle
\sum_{j=1}^n \xi_j \leq s & (m=n).
\end{cases}
\end{equation}
Therefore
\begin{equation*}
	\sum_{m=0}^n K_{t-s}^{(n-m)} K_s^{(m)} = K_t^{(n)}
\end{equation*}
and thus
\begin{equation*}
	K_{t-s}K_s = \sum_{m,n=0}^\infty K_{t-s}^{(m)} K_s^{(n)}
	= \sum_{n=0}^\infty \sum_{m=0}^n K_{t-s}^{(n-m)} K_s^{(m)} =K_t. 
\end{equation*}
This proves the semigroup property. Strong continuity follows from a standard argument, see \cite[Proposition 6.3]{partII}. The contraction property is already established \eqref{contract001}.
\end{proof}

Set $f_t=K_t f$. 
For $f$ sufficiently nice (see below for details), $t\geq0$ and $h$ small,
we have in view of the semigroup property and the expansion \eqref{KTEXPLICIT},
\begin{multline*}%
f_{t+h}(\vecq,\vecv,\xi,\vs,\vecv_+) 
= f_t(\vecq-h\vecv, \vecv,\xi+h,\vs,\vecv_+) \\
+ \int_{0<\xi_1<h}  
f_t\big(\vecq-\big( \xi_1\vecv_0 + (h-\xi_1) \vecv\big),\vecv_0,\xi_1,\vs_1,\vecv\big)
\, p_\bn(\vecv_0,{\vs}_1,\vecv;\xi+h-\xi_1,{\vs},\vecvp)
\\
\times d\vecv_0\, d\xi_1\,d\mm({\vs}_1) 
+ O(h^2).
\end{multline*}
If we divide this expression by $h$ and formally take the limit $h\to 0$, we recover the transport equation \eqref{KINETICEQ00}. To make this rigorous, we need to assume suitable differentiability assumptions for $f$. To this end, we define the following spaces of continuous and continuously differentiable functions.

For functions $X\to\RR$ we define the norm
\begin{equation}\label{sigmasup}
\|f\|_\sigma:=\esssup_{(\vecq,\vecv,\xi,\vs,\vecvp)\in X} \frac{|f(\vecq,\vecv,\xi,\vs,\vecvp)|}{\sigma(\vecv,\vecvp)} ,
\end{equation}
and let $\L_\sigma^\infty(X)$ be the space of $f$ with $\|f\|_\sigma<\infty$. We denote by $\C_\sigma(X)\subset \L_\sigma^\infty(X)$ the subspace of continuous functions, and furthermore set
\begin{equation*}
\C_\sigma^1(X):=\bigl\{f\in\C_\sigma(X)\col
\partial_{q_1}f,\ldots,\partial_{q_d}f,
\partial_\xi f\in\C_\sigma(X)
\bigr\} .
\end{equation*}

Similarly, we consider function spaces with an additional time-dependence, where for any given $T>0$, $X$ in the above definitions is replaced by $[0,T]\times X$. In particular, we set
\begin{equation} \label{Csigma1TXdef}
\C_\sigma^1([0,T]\times X)\\
:=\bigl\{f\in\C_\sigma([0,T]\times X)\col
\partial_t f,\partial_{q_1}f,\ldots,\partial_{q_d}f,
\partial_\xi f\in\C_\sigma([0,T]\times X)
\bigr\}.
\end{equation}

In the following we will assume that the collision kernel $p_\bn$ is a continuous function in all variables.
This allows us to solve the Cauchy problem of the forward Kolmogorov equation for the Markov process $\hTheta$. Examples of case where $p_\bn$ is continuous include Poisson scatterer configurations and Euclidean lattices in dimension $d\geq 3$ 
\cite[Remark 4.1]{partII}\footnote{For Euclidean lattices in dimension $d=2$ one has a completely
explicit formula for $p_{\bn}$; cf.\ \cite{partIII},
and $p_{\bn}$ is continuous except at points with 
$\vecbeta_{\vecv_0R(\vecv)}^+(\vece_1)_\perp=\vecbeta_{\vece_1}^-(\vecvp R(\vecv))_\perp$
(there is a misprint in the statement of this condition in \cite[Remark 4.1]{partII},
however it appears in the correct form in \cite[Lemma 6.5(iii)]{partII}).
Using this precise control on the set of discontinuities of $p_{\bn}$
one can show that Theorem \ref{FPK} holds also for the case of Euclidean lattices in dimension $d=2$;
cf.\ \cite[Theorem 6.4]{partII}.}.

Set
\begin{equation*}
Y = \{ (\vecv_0,\vs',\vecv;\xi,\vs,\vecvp) \in \US\times\Sigma\times\US\times\RR_{>0}\times\Sigma\times\US : \vecv\in\scrV_{\vecv_0}\; \vecvp\in\scrV_{\vecv} \},
\end{equation*}
\begin{equation}\label{sigmasup22}
\|\varphi\|_\sigma:=\esssup_{(\vecv_0,\vs',\vecv;\xi,\vs,\vecvp)\in Y} \frac{|\varphi(\vecv_0,\vs',\vecv;\xi,\vs,\vecvp)|}{\sigma(\vecv,\vecvp)} .
\end{equation}
We let $\L_\sigma^\infty(Y)$ be the space of $\varphi$ with $\|\varphi\|_\sigma<\infty$, and $\C_\sigma(Y)\subset \L_\sigma^\infty(Y)$ the subspace of continuous functions. 

\begin{thm}\label{FPK}
For $T>0$, $f_0\in\C_\sigma^1(X)$ and $p_\bn\in\C_\sigma(Y)$, the function 
$f(t,\vecq,\vecv,\xi,\vs,\vecvp):=K_t f_0(\vecq,\vecv,\xi,\vs,\vecvp)$ is the unique solution 
in $\C_\sigma^1([0,T]\times X)$ of the integro-differential equation  \eqref{KINETICEQ00},
\begin{multline}\label{KINETICEQ001}
\bigl(\partial_t+\vecv\cdot\nabla_\vecq-\partial_\xi\bigr)f(t,\vecq,\vecv,\xi,\vs,\vecvp) \\
= \int_{\Sigma\times\US} f(t,\vecq,\vecv_0,0,\vs',\vecv) \, p_\bn(\vecv_0,\vs',\vecv;\xi,\vs,\vecv_+)\,d\mm(\vs')\,d\vecv_0 
\end{multline}
with $f(0,\vecq,\vecv,\xi,\vs,\vecvp)=f_0(\vecq,\vecv,\xi,\vs,\vecvp)$.
\end{thm}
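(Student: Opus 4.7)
I would reformulate \eqref{KINETICEQ001} as a Duhamel integral equation and establish existence and uniqueness in parallel. Introducing the collision operator
\[
\scrC h(\vecq,\vecv,\xi,\vs,\vecvp) := \int_{\Sigma \times \US} h(\vecq,\vecv_0,0,\vs',\vecv)\,p_\bn(\vecv_0,\vs',\vecv;\xi,\vs,\vecvp)\,d\mm(\vs')\,d\vecv_0,
\]
direct inspection of \eqref{KT0EXPLDEF}--\eqref{KTNEXPLDEF} (grouping the $\xi_1$-integration as the time of the first collision) yields the convolution identity
\[
K_t^{(n)} f_0 = \int_0^t K_{t-s}^{(0)}\,\scrC\,K_s^{(n-1)} f_0\,ds \qquad (n \geq 1).
\]
Summing over $n\geq 1$ using the uniform operator convergence of \eqref{KTEXPLICIT} then gives the Duhamel equation $f_t = K_t^{(0)} f_0 + \int_0^t K_{t-s}^{(0)}\,\scrC f_s\,ds$ for $f_t := K_t f_0$.

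Next I would establish the $\sigma$-weighted estimate $\|K_t^{(n)} f_0\|_\sigma \leq (c_\scrP v_{d-1} t)^n \|f_0\|_\sigma / n!$ by iterating the bound $p_\bn \leq c_\scrP \sigma$ of \eqref{kkg}: factor $\sigma(\vecv,\vecvp)$ out of the outermost $p_\bn$ in \eqref{KTNEXPLDEF}, then collapse the inner velocity integrals via $\int_{\scrV_\vecu}\sigma(\vecu,\vecv_+)\,d\vecv_+ = v_{d-1}$, and use the volume $t^n/n!$ of the time-simplex. This yields uniform convergence of $\sum_n K_t^{(n)} f_0$ in $\C_\sigma([0,T]\times X)$, so $f\in\C_\sigma$; differentiating under the integral — where $\nabla_\vecq$ acts only on $f_0\in\C_\sigma^1$ — gives analogous bounds and $\nabla_\vecq f \in \C_\sigma$.

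The main obstacle is the regularity $\partial_\xi f \in \C_\sigma$, since $p_\bn$ is only assumed continuous (not $C^1$) in $\xi$, while the $\xi$-dependence of each $K_t^{(n)} f_0$ enters precisely through the outermost $p_\bn$. I would handle this by induction on $n$ using the recursion above. After the change of variable $u = t - s$, write
\[
K_t^{(n)} f_0 = \int_0^t \Phi_n(u)\,du, \qquad \Phi_n(u) := [\scrC K_{t-u}^{(n-1)} f_0](\vecq - u\vecv,\vecv,\xi + u,\vs,\vecvp);
\]
the chain rule gives $\partial_\xi \Phi_n(u) = \tfrac{d}{du}\Phi_n(u) + [(\partial_s + \vecv\cdot\nabla_\vecq)\scrC K_s^{(n-1)} f_0]$ evaluated at $s=t-u$, position $\vecq - u\vecv$, $\xi+u$. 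Integrating, the total-derivative term collapses to boundary values $\Phi_n(t) - \Phi_n(0)$ (which are continuous and never require $\partial_\xi p_\bn$), while the remaining integrand lies in $\C_\sigma$ by the induction hypothesis that the $s$- and $\vecq$-derivatives of $\scrC K_s^{(n-1)} f_0$ (which inherit regularity from $K_s^{(n-1)} f_0$ via Leibniz, since $\partial_s$ and $\nabla_\vecq$ commute with the $\scrC$-integral) already lie there. Summing in $n$ with the same factorial bound gives $\partial_\xi f \in \C_\sigma([0,T]\times X)$. The PDE \eqref{KINETICEQ001} then follows by differentiating the Duhamel equation in $t$ and applying $\partial_t K_{t-s}^{(0)} h = (-\vecv\cdot\nabla_\vecq + \partial_\xi)K_{t-s}^{(0)} h$ on $\C_\sigma^1$; the initial condition is immediate from $K_0 = \mathrm{id}$.

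For uniqueness, let $g \in \C_\sigma^1([0,T]\times X)$ be any solution of the Cauchy problem. Along backward characteristics \eqref{KINETICEQ001} reads
\[
\tfrac{d}{ds}\,g(s,\vecq-(t-s)\vecv,\vecv,\xi+t-s,\vs,\vecvp) = [\scrC g(s,\cdot)](\vecq-(t-s)\vecv,\vecv,\xi+t-s,\vs,\vecvp),
\]
so integrating from $s=0$ to $s=t$ shows that $g$ satisfies the same Duhamel equation as $f$. Iterating this identity $N$ times produces $g(t,\cdot) = \sum_{n=0}^{N} K_t^{(n)} f_0 + R_N$, where $R_N$ is an $(N+1)$-fold iterated collision integral of $g$. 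The $\sigma$-bound established above yields $\|R_N\|_\sigma \leq (c_\scrP v_{d-1} T)^{N+1}\|g\|_\sigma/(N+1)! \to 0$ as $N\to\infty$, so $g = K_t f_0 = f$.
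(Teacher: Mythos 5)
Your proposal is correct in its essentials, and it takes a route that is organisationally somewhat different from the paper's. The paper's proof is terse and largely defers to \cite[Thm.\ 6.4, Lemmas 6.6--6.10]{partII}: it works term by term on the series $K_t=\sum_n K_t^{(n)}$, establishing the $\sigma$-weighted bound $\|K_t^{(n)}f_0\|_\sigma\leq(c_\scrP v_{d-1}t)^n\|f_0\|_\sigma/n!$ for continuity (Lemma \ref{Lim1}) and analogous bounds for derivatives (Lemma \ref{Lim2}), with uniqueness cited as a ``standard argument.'' Your Duhamel/Volterra reformulation $f_t = K_t^{(0)}f_0 + \int_0^t K_{t-s}^{(0)}\scrC f_s\,ds$, together with the convolution identity $K_t^{(n)}f_0 = \int_0^t K_{t-s}^{(0)}\scrC K_s^{(n-1)}f_0\,ds$, is a genuinely different organising device: it slices the time-simplex by the time $\xi_1+\cdots+\xi_n$ of the last collision, which is not the same as the paper's semigroup splitting \eqref{KNMMMFREL}--\eqref{KNMMMFRELrestr} (that one splits by the number of collisions occurring before an intermediate time $s$). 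The Duhamel form pays off particularly in the uniqueness step, where Picard iteration with the $\sigma$-bound closes immediately, and in the verification of the PDE, where along the backward characteristic $(t,\vecq,\xi)\mapsto(t+h,\vecq+h\vecv,\xi-h)$ the integral term of the Duhamel equation is merely translated, so the directional derivative $(\partial_t+\vecv\cdot\nabla_\vecq-\partial_\xi)f_t=\scrC f_t$ drops out of a one-line difference quotient with no need to differentiate $p_\bn$.

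Two steps need slightly more care than your sketch gives them. First, in the $\partial_\xi$-regularity argument, the chain-rule identity $\partial_\xi\Phi_n(u)=\tfrac{d}{du}\Phi_n(u)+[(\partial_s+\vecv\cdot\nabla_\vecq)\scrC K_s^{(n-1)}f_0]$ is written as though $\partial_\xi\Phi_n(u)$ were already known to exist, which is what you are trying to prove. The fix is to run it as a difference quotient: write $\bigl[\Phi_n(u;\xi+\delta)-\Phi_n(u;\xi)\bigr]$ as a $u$-shift $\bigl[\Phi_n(u+\delta;\xi)-\Phi_n(u;\xi)\bigr]$ (which after integration collapses to the boundary terms) plus a shift in the first two arguments of $\scrC K_s^{(n-1)}f_0$ (which converges to the integral term by dominated convergence using the inductive $\sigma$-bounds); the conclusion is exactly yours, but the circularity is removed. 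Second, ``differentiating the Duhamel equation in $t$'' term by term would again produce $\partial_\xi$ acting on $\scrC f_s$, which does not exist since $p_\bn$ is only continuous; as just noted, the PDE should instead be read off from the directional derivative along characteristics, which is precisely the computation you already carry out, correctly, in the uniqueness step. Neither issue affects the correctness of the final conclusion.
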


\begin{proof}
The proof is virtually identical to that of \cite[Theorem 6.4]{partII}. We will therefore only sketch the main steps. Key are the following two lemmas.

\begin{lem}\label{Lim1}
For every $f_0\in\C_\sigma(X)$, the function $f(t,\vecq,\vecv,\xi,\vs,\vecvp):=$\linebreak$K_t f_0(\vecq,\vecv,\xi,\vs,\vecvp)$ belongs to $\C_\sigma([0,T]\times X)$ for all $T>0$.
\end{lem}

\begin{proof}
See \cite[Lemma 6.6]{partII}. We have by \eqref{KTNEXPLDEF},
\begin{multline} \label{KTNEXPLDEF00154}
 | K_t^{(n)} f_0 |  \leq \| f_0 \|_\sigma
\int_{\xi_1+\ldots+\xi_n\leq t}  \sigma(\vecv_0,\vecv_1) \\ 
\times \prod_{j=2}^{n+1} p_\bn(\vecv_{j-2},{\vs}_{j-1},\vecv_{j-1};\xi_j,{\vs}_{j},\vecv_{j})
\, \prod_{j=1}^n\bigl( d\vecv_{j-1}\,d\xi_j\,d\mm({\vs}_j)\bigr) ,
\end{multline}
subject to \eqref{convent007}. The same proof as for \eqref{SUMINFlem2pf1} then yields
\begin{equation}\label{almost11}
\| K_t^{(n)} f_0 \|_\sigma \leq \frac{(c_\scrP v_{d-1} t)^n}{n!} \| f_0 \|_\sigma,
\end{equation}
and hence
\begin{equation*}%
\| f(t,\,\cdot\,)\|_\sigma = \| K_t f_0\|_\sigma \leq \e^{c_\scrP v_{d-1} t} \| f_0 \|_\sigma,
\end{equation*}
which shows that $f$ is bounded. 

It now remains to establish continuity. In view of \eqref{almost11},  it suffices to prove continuity for each function $f^{(n)}(t,\vecq,\vecv,\xi,\vs,\vecvp):=K_t^{(n)} f_0(\vecq,\vecv,\xi,\vs,\vecvp)$, $n\geq 0$, which in turn follows from \eqref{KTNEXPLDEF}, using the assumed continuity of $f_0$ and $p_\bn$. 
\end{proof}

\begin{lem}\label{Lim2}
For $f_0\in\C_\sigma^1(X)$ and
$f(t,\vecq,\vecv,\xi,\vs,\vecvp):=K_t f_0(\vecq,\vecv,\xi,\vs,\vecvp)$, the derivatives $\partial_t f,\partial_{q_1}f,\ldots,\partial_{q_d}f,
\partial_\xi f$ exist and belong to $\C_\sigma([0,T]\times X)$ for all $T>0$, and $f$ is a solution of the transport equation \eqref{KINETICEQ001}.
\end{lem}

\begin{proof}
The proof follows the same strategy as Lemma \ref{Lim1}. See \cite[Lemmas 6.7, 6.8]{partII} for details.
\end{proof}

The remaining step in the proof of Theorem \ref{FPK} is thus the uniqueness of the solution, which follows again from a standard argument, cf.~\cite[Lemmas 6.9, 6.10]{partII}.
\end{proof}

The analysis of the above Cauchy problem can be extended in principle to cases when $p_\bn$ is not everywhere continuous. We will not pursue this here, but instead demonstrate that, given initial data $f_0$, and two collision kernels $p_\bn$ and $\widetilde p_\bn$ that are close in $\L^1$, the resulting time-evolved densities $K_t f_0$ and $\widetilde K_t f_0$ remain close in $\L^1$ for all $t\in[0,T]$ ($T$ fixed). This means in particular that the solutions of \eqref{KINETICEQ001} provide arbitrarily good approximations of processes with general collision kernels.

To make this precise, let $\L_\sigma^1(Y)$ the space of $\varphi$ that are integrable with respect to the measure $$\sigma(\vecv_0,\vecv)\, d\vecv_0\,d\mm(\vs')\,d\vecv\,d\xi\,d\mm(\vs)\,d\vecvp,$$  
and denote by $\|\varphi\|_{\L_\sigma^1(Y)}$ the corresponding norm.
Recall that, by Lemma \ref{REMOVINGCPTSUPPfactpfLEM1},
every collision kernel satisfies
\begin{align}\label{COLLKERNELPROBDENSITY}
\int_0^\infty\int_\Sigma\int_{\scrV_\vecv}
p_{\bn}(\vecv_0,\vs',\vecv;\xi,\vs,\vecv_+)\,d\vecv_+\,d\mm(\vs_+)\,d\xi
=1
\end{align}
for all $\vecv_0\in\US$, $\vs'\in\Sigma$ and $\vecv\in\scrV_{\vecv_0}$.
This implies that $p_{\bn}\in\L_\sigma^1(Y)$,
with $\|p_{\bn}\|_{\L_\sigma^1(Y)}=v_{d-1}\,\omega(\US)$.
Of course also $p_{\bn}\in\L_\sigma^\infty(Y)$,
with $\|p_{\bn}\|_\sigma\leq c_{\scrP}v_{d-1}$.
Finally we define
\begin{equation*}
\overline f(\vecv,\xi,\vs,\vecvp)= \int_{\RR^d} |f(\vecq,\vecv,\xi,\vs,\vecvp)| \, d\vecq,
\end{equation*}
which we view as a function on $X$ which is independent of $\vecq$.

\begin{prop}\label{L1closenessprop}
Let $p_\bn$ and $\tp_\bn$ be two nonnegative functions in $\L_\sigma^\infty(Y)$
both satisfying the relation \eqref{COLLKERNELPROBDENSITY}
for all $\vecv_0\in\US$, $\vs'\in\Sigma$ and $\vecv\in\scrV_{\vecv_0}$.
Then for any $f\in \L^1(X)$ and $t>0$, %
\begin{equation}\label{abs321}
\|K_t f - \widetilde K_t f \|_{\L^1(X)} \leq 2\| \overline f\|_\sigma\,  \| p_\bn-\widetilde p_\bn\|_{\L_\sigma^1(Y)} \, 
t\, \exp\big( v_{d-1} (\|p_\bn\|_\sigma+\|\widetilde p_\bn\|_\sigma)\, t\big).
\end{equation}
\end{prop}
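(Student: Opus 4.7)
The plan is to combine the series expansion $K_t=\sum_{n\geq 0}K_t^{(n)}$ from \eqref{KTEXPLICIT}--\eqref{KTNEXPLDEF} (and the analogous one for $\widetilde K_t$) with a telescoping identity inside each $n$-collision integrand. Setting $D^{(j)}:=p_\bn(\vecv_{j-2},\vs_{j-1},\vecv_{j-1};\xi_j,\vs_j,\vecv_j)-\widetilde p_\bn(\vecv_{j-2},\vs_{j-1},\vecv_{j-1};\xi_j,\vs_j,\vecv_j)$, the elementary identity
\[
\prod_{j=2}^{n+1}p_\bn^{(j)}-\prod_{j=2}^{n+1}\widetilde p_\bn^{(j)}=\sum_{k=2}^{n+1}\Bigl(\prod_{j=2}^{k-1}p_\bn^{(j)}\Bigr)D^{(k)}\Bigl(\prod_{j=k+1}^{n+1}\widetilde p_\bn^{(j)}\Bigr)
\]
expresses $K_t^{(n)}f-\widetilde K_t^{(n)}f$ as a sum over $k$ of terms with exactly one factor of $p_\bn-\widetilde p_\bn$ placed at position $k$, flanked by $k-2$ factors of $p_\bn$ on the ``past'' side and $n-k+1$ factors of $\widetilde p_\bn$ on the ``future'' side.

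For each $(n,k)$ I plan to estimate the $\L^1(X)$-norm of the corresponding summand by carrying out the integrations in three coordinated layers. First, integrating over $\vecq$ replaces $|f|$ by $\overline f$ and allows the pointwise bound $\overline f(\vecv_0,\xi_1,\vs_1,\vecv_1)\leq\|\overline f\|_\sigma\,\sigma(\vecv_0,\vecv_1)$. Second, on the ``future'' side the last factor $\widetilde p_\bn^{(n+1)}$ must be integrated out using the probability-density identity \eqref{COLLKERNELPROBDENSITY} (since in the $\L^1$-norm $\xi_{n+1}$ ranges over an unbounded set), contributing a factor $\leq 1$, while each of the remaining factors $\widetilde p_\bn^{(j)}$ for $k<j\leq n$ is bounded pointwise by $\|\widetilde p_\bn\|_\sigma\,\sigma(\vecv_{j-1},\vecv_j)$; symmetrically, every ``past'' factor $p_\bn^{(j)}$, $2\leq j\leq k-1$, is bounded pointwise by $\|p_\bn\|_\sigma\,\sigma(\vecv_{j-1},\vecv_j)$. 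Third, pairing the $\sigma(\vecv_{k-2},\vecv_{k-1})$-factor already present in the past chain with $|D^{(k)}|$ and integrating over $(\vecv_{k-2},\vs_{k-1},\vecv_{k-1},\xi_k,\vs_k,\vecv_k)$ produces exactly $\|p_\bn-\widetilde p_\bn\|_{\L_\sigma^1(Y)}$, while the remaining $\sigma$-chain telescopes under $\int_{\S_1^{d-1}}\sigma(\vecu,\vecv)\,d\vecu=v_{d-1}$ (cf.\ \eqref{dcsdef}), and the $\xi_j$-variables ($j\neq k$) are integrated over a simplex of volume at most $t^{n-1}/(n-1)!$ (or $t^n/n!$ in the boundary case $k=n+1$).

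Summing the resulting bound first over $k=2,\ldots,n+1$ and then over $n\geq 1$, the double sum splits into ``interior'' contributions ($2\leq k\leq n$) and ``boundary'' contributions ($k=n+1$). For the interior contributions, the combinatorial identity $\sum_{j=0}^{m}\|p_\bn\|_\sigma^{j}\|\widetilde p_\bn\|_\sigma^{m-j}\leq(m+1)(\|p_\bn\|_\sigma+\|\widetilde p_\bn\|_\sigma)^m$, combined with $\sum_{m\geq 0}(m+1)x^{m}/(m+1)!=e^{x}$, produces the prefactor $t\exp\bigl(v_{d-1}(\|p_\bn\|_\sigma+\|\widetilde p_\bn\|_\sigma)t\bigr)$; the boundary contributions give at most $(e^{v_{d-1}\|p_\bn\|_\sigma t}-1)/(v_{d-1}\|p_\bn\|_\sigma)\leq t\,e^{v_{d-1}\|p_\bn\|_\sigma t}$, which is also dominated by the same exponential. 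Collecting both, with the factor of $2$ absorbing these two contributions, yields \eqref{abs321}.

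The main obstacle I anticipate is the careful bookkeeping in the second and third layers: the variable $\vecv_k$ that occurs in $|D^{(k)}|$ must be arranged so that it does not simultaneously participate in a $\sigma$-factor from the future chain, since otherwise the $\|D\|_{\L_\sigma^1(Y)}$-block cannot be cleanly extracted. Choosing a symmetric integration order on the two sides of position $k$, and applying the probability-density identity \eqref{COLLKERNELPROBDENSITY} symmetrically (once on the extreme future factor and implicitly via the $\sigma$-chain collapse on the past side) is what keeps the final constant equal to $2$ rather than a larger, dimension-dependent factor involving $\omega(\S_1^{d-1})$.
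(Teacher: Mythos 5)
Your proposal is correct and follows essentially the same route as the paper's proof: the paper also expands $K_t-\widetilde K_t$ collision by collision, telescopes the product of kernels so that exactly one factor $p_\bn-\tp_\bn$ appears, bounds each flanking factor by $\|\cdot\|_\sigma\,\sigma$, integrates the $\sigma$-chain via $v_{d-1}$ and the symmetry of $\sigma$, applies \eqref{COLLKERNELPROBDENSITY} to the final factor when the difference sits in an interior position, and sums the interior and boundary ($k=n+1$) contributions to get the factor $2$ and the exponential. The only cosmetic difference is that the paper packages your per-term estimates as two general bounds for a modified operator $\vecK_t^{(n)}$ with arbitrary kernels $\varphi_1,\ldots,\varphi_n$ (its \eqref{KTNEXPLDEF001tildeA} and \eqref{KTNEXPLDEF001tilde}) before telescoping.
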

\begin{proof}
Let us modify the definition of $K_t^{(n)}$ in \eqref{KTNEXPLDEF}, replacing the fixed collision kernel $p_\bn$ by a sequence of general functions $\varphi_1,\varphi_2,\ldots\in\L_\sigma^1(Y)\cap\L_\sigma^\infty(Y)$. That is, we set
\begin{equation} \label{KTNEXPLDEFtilde}
\begin{split}
& \vecK_t^{(n)} f(\vecq,\vecv,\xi,\vs,\vecvp) \\
& =
\int_{\xi_1+\ldots+\xi_n\leq t}  f\bigg(\vecq-\bigg(\sum_{j=1}^n \xi_j\vecv_{j-1} + (t-\xi_1-\ldots-\xi_n) \vecv_n\bigg),\vecv_0,\xi_1,\vs_1,\vecv_1\bigg)\\ 
& \times \prod_{j=2}^{n+1} \varphi_{j-1}(\vecv_{j-2},{\vs}_{j-1},\vecv_{j-1};\xi_j,{\vs}_{j},\vecv_{j})
\, \prod_{j=1}^n\bigl( d\vecv_{j-1}\,d\xi_j\,d\mm({\vs}_j)\bigr) ,
\end{split}
\end{equation}
subject to \eqref{convent007}.
In analogy with the proof of \eqref{SUMINFlem2pf1} (this time also using\linebreak 
$\sigma(\vecv_{j-1},\vecv_j)=\sigma(\vecv_j,\vecv_{j-1})$ to integrate out the terms with index $\leq n-2$), we then have for any $f\in \L^1(X)$, $n\geq 1$,
\begin{equation} \label{KTNEXPLDEF001tildeA}
\| \vecK_t^{(n)} f \|_{\L^1(X)} \leq  
\frac{v_{d-1}^{n-1}t^n}{n!}\|\of\|_\sigma\|\varphi_n\|_{\L_\sigma^1(Y)}\prod_{i=1}^{n-1}\|\varphi_i\|_\sigma.
\end{equation}
Alternatively, if $\varphi_n=p_{\bn}$ (or $\tp_{\bn}$),
then we may first apply the relation
\eqref{COLLKERNELPROBDENSITY}
to integrate out the variables $\xi,\vs_{n+1},\vecv_{n+1}$.
Bounding the remaining factors appropriately we conclude that,
for any $1\leq j<n$,
\begin{equation} \label{KTNEXPLDEF001tilde}
\| \vecK_t^{(n)} f \|_{\L^1(X)} \leq  
\frac{v_{d-1}^{n-2}t^{n-1}}{(n-1)!}\|\of\|_{\sigma}\|\varphi_j\|_{\L_\sigma^1(Y)}
\prod_{\substack{i=1\\i\neq j}}^{n-1}\|\varphi_i\|_{\sigma}.
\end{equation}
We use the formal relation
\begin{equation*}
A_1\cdots A_n-B_1\cdots B_n = \sum_{j=1}^ n A_1\cdots A_{j-1} (A_j-B_j) B_{j+1}\cdots B_n
\end{equation*}
to expand $K_t^{(n)}  f - \widetilde K_t^{(n)} f$ into a sum of $n$ terms, each of the form \eqref{KTNEXPLDEFtilde} with $\varphi_1=\ldots=\varphi_{j-1}=p_\bn$, $\varphi_j=p_\bn-\widetilde p_\bn$, $\varphi_{j+1}=\ldots=\varphi_n=\widetilde p_\bn$. 
Using the bounds \eqref{KTNEXPLDEF001tildeA} and \eqref{KTNEXPLDEF001tilde} we then obtain
\begin{equation}\label{onlyK}
\|K_t^{(1)}  f - \widetilde K_t^{(1)}  f \|_{\L^1(X)} \leq \| \overline f \|_\sigma
\| p_\bn-\widetilde p_\bn\|_{\L_\sigma^1(Y)} \, t
\end{equation}
and for $n\geq2$:
\begin{align}\notag
\|K_t^{(n)}  & f - \widetilde K_t^{(n)}  f \|_{\L^1(X)} \\  \notag
& \leq \|\of\|_{\sigma}\|p_\bn-\tp_\bn\|_{\L_\sigma^1(Y)}\biggl(
\frac{v_{d-1}^{n-2}t^{n-1}}{(n-2)!}
\bigl(\|p_\bn\|_{\sigma}+\|\tp_\bn\|_\sigma\bigr)^{n-2}
+\frac{v_{d-1}^{n-1}t^{n}}{n!}
\|p_{\bn}\|_\sigma^{n-1}\biggr)
 \\ \label{onlyKA}
& \leq \|\of\|_{\sigma}\|p_\bn-\tp_\bn\|_{\L_\sigma^1(Y)}\sum_{k=n-1}^n
\frac{v_{d-1}^{k-1}t^k}{(k-1)!}
\bigl(\|p_\bn\|_{\sigma}+\|\tp_\bn\|_\sigma\bigr)^{k-1}.
\end{align}
Since $K_t^{(0)}  f - \widetilde K_t^{(0)}  f = 0$, the bound \eqref{abs321} follows from summing \eqref{onlyK}
and \eqref{onlyKA} over $n\geq 2$.
\end{proof}

\chapter{Examples, extensions, and open questions}
\label{EXAMPLESsec}

\section{The Poisson case}
\label{PoissonSEC}

Fix a constant $c>0$.
In the present section we will prove that
all the assumptions in Section \ref{ASSUMPTLISTsec}
are (almost surely) satisfied in the case when $\scrP$ is a fixed realization of a Poisson process 
in $\R^d$ with constant intensity $c$.
In fact we will prove that the key limit statement, [P2], %
holds with the limit measure being \textit{independent} of $\vecq\in\scrP$.
Hence in the present section, 
the space of marks
$\Sigma$ can be taken to be a singleton set, 
and we may %
remove it entirely from our notation,
writing $\scrX=\R^d$ and $\tP=\scrP$.
However, we will still write ``$\mu_\vs$'' for the unique limit measure
appearing in [P2], so as to avoid a clash of notation with the macroscopic limit measure
$\mu$ defined in Section \ref{GENLIMITsec}.
(In the end it turns out that $\mu_\vs=\mu$.)

\begin{prop}\label{POISSONprop}
Fix constants $c>0$ and $0<\alpha<1$.
Let $\psi\in P(N_s(\R^d))$ be the distribution of a Poisson process in $\R^d$ ($d\geq2$) with constant intensity $c$.
For $\psi$-almost every $\scrP\in N_s(\R^d)$,
{\blu all the assumptions in Section \ref{ASSUMPTLISTsec} are satisfied,
with $c_{\scrP}=c$,
the unique limit distribution $\mu_{\vs}$ being equal to $\psi$,
and with an admissible choice of $\scrE$ in [P2] being
$\scrE=\{\vecq\in\scrP\col d_\scrP(\vecq)\leq\|\vecq\|^{-\alpha/(d-1)}\}$.}
\end{prop}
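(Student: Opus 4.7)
The plan is to take $\Sigma$ as a singleton so that marks are trivial and the conjectured limit distribution $\mu_\vs$ reduces to a single measure, which will equal $\psi$. Conditions [Q1] and [Q3] are immediate from the isotropy of the constant-intensity Poisson process and its explicit void probability $\exp(-cv_d R^d)$; [Q2] holds $\psi$-a.s.\ by simplicity of the Poisson process; and [P1] holds for $\psi$-a.e.\ realisation by the strong law of large numbers for Poisson counts applied to a countable convergence-determining family of bounded Borel sets with null boundary. The proposed exceptional set $\scrE$ has asymptotic density zero $\psi$-a.s.\ by Campbell's formula:
\begin{equation*}
\EE\,\#(\scrE\cap \scrB_R^d) = c\int_{\scrB_R^d}\bigl(1-\exp(-c v_d\|\vecq\|^{-\alpha d/(d-1)})\bigr)\,d\vecq = O\bigl(R^{d-\alpha d/(d-1)}\bigr)=o(R^d)
\end{equation*}
for $\alpha\in(0,1)$, and a Borel--Cantelli argument along $R_n=2^n$ upgrades this to an almost-sure statement. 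Condition [P3] follows by a parallel computation: the expected value of the measure in its statement reduces, by Fubini and the Poisson void formula, to $\exp(-cv_{d-1}\xi)$ on the rescaled cylinder $\fZ_\xi$, and a further Borel--Cantelli argument transfers the average bound to a $\psi$-almost sure one.

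The main task is [P2]. The plan is to invoke Lemma~\ref{PPUNICCONVCONCRETElem3} and reduce the weak convergence $\mu_{\vecq,\rho}^{(\lambda)}\to\psi$ to the convergence of count-exceedance probabilities
\begin{equation*}
\lambda\bigl\{\vecv\col\#\bigl(\scrP\cap(\vecq+BD_\rho^{-1}R(\vecv)^{-1})\setminus\{\vecq\}\bigr)\geq r\bigr\}
\end{equation*}
for $B$ in a countable family of bounded Borel sets with null boundaries and $r\in\ZZ^+$, together with analogous joint probabilities on disjoint families $B_1,\dots,B_n$. Since $BD_\rho^{-1}R(\vecv)^{-1}$ is volume preserving with Lebesgue measure $\vol(B)$ and, as $\vecv$ varies over $\US$ and $\rho\to 0$, it sweeps long thin needles whose far ends distribute uniformly over a large sphere, one expects the count to be asymptotically $\mathrm{Poisson}(c\vol(B))$ with independence across disjoint $B_i$. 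The verification proceeds by computing the $k$-th factorial moments
\begin{equation*}
\sum_{\vecp_1\neq\cdots\neq\vecp_k\in\scrP\setminus\{\vecq\}}\lambda\bigl\{\vecv\col(\vecp_i-\vecq)R(\vecv)D_\rho\in B,\:i=1,\dots,k\bigr\},
\end{equation*}
changing variables $\vecu=(\vecp-\vecq)R(\vecv)D_\rho$ on annuli around $\vecq$ and using [P1] to convert the sum into an asymptotic integral against $c\,d\vecu$; this yields the limit $(c\vol(B))^k$. The same method treats joint moments over disjoint $B_i$, and the Slivnyak--Mecke description of the Palm measure at $\vecq\in\scrP$ justifies the exclusion of $\vecq$ itself.

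The principal obstacle will be the uniformity over $\vecq\in\scrP_T(\rho)$ required in [P2], and it is here that the role of $\scrE$ is essential: a point $\vecq\in\scrP$ with an anomalously close neighbour would place a spurious point near the origin of the rescaled configuration, violating convergence to a Poisson process. The threshold $\|\vecq\|^{-\alpha/(d-1)}$ is tuned to remove exactly such configurations without sacrificing the density-zero property. The plan for uniformity is a discretisation argument: cover the parameter space of $(\vecq,B,r)$ by a grid of polynomial fineness in $\rho^{-1}$, control the finitely many relevant random count probabilities by an exponential concentration inequality for Poisson counts, and apply Borel--Cantelli along a geometric sequence $\rho_n=q^n$ to obtain the $\psi$-almost sure convergence simultaneously in all parameters. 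Local interpolation between grid points is then controlled by the Poisson regularity guaranteed by the exclusion of $\scrE$.
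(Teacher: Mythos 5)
Your overall skeleton (trivial marks, standard arguments for [P1] and [Q1--Q3], Campbell plus Borel--Cantelli for the density of $\scrE$, and a discretisation in $\vecq$, in the test sets and in $\rho$, followed by concentration and Borel--Cantelli for [P2]) matches the paper's strategy, but the two probabilistic steps you lean on are not yet sound. First, in [P2] the random variable that must be concentrated is not a Poisson count: it is the $\lambda$-average $\lambda\{\vecv\col\#(\scrQ_\rho(\vecq,\vecv)\cap B)\geq r\}$, i.e.\ an average over directions of indicators of counts in long thin needles which all overlap near the base point $\vecq$, so ``an exponential concentration inequality for Poisson counts'' does not apply as stated. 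The paper's mechanism is essential here: one excises a ball of radius $\rho^{\gamma}$ (with $\alpha<\gamma<1$) around $\vecq$ from the configuration (the modified set $\tQ_\rho$ of \eqref{tXIdef}), partitions the sphere into sectors of diameter $\rho^{\beta_1}$ separated by $\rho^{\beta_2}$, and uses the geometric fact (Lemma~\ref{INDEPLEM1}) that needles whose directions differ by $\gg\rho^{1-\gamma}$ intersect only inside the excised ball; this makes the sector contributions genuinely independent, so a Bernstein-type inequality gives \emph{exponential} concentration, which is what survives the union bound over the polynomially large grid $G[\rho_n]$ (a Chebyshev/variance bound would not). The excision is harmless precisely because admissible $\vecq\notin\scrE$ satisfy $d_\scrP(\vecq)>2\rho^{\gamma}$ (Lemma~\ref{POISSONlem4}); note that $\scrE$ is needed already to permit this excision, not only to rule out a spurious point near the origin. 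Relatedly, your factorial-moment computation cannot be closed ``using [P1]'': for $k\geq2$ the joint $\lambda$-measure of directions aligning with $k$ distinct points of a \emph{fixed} realisation depends on the correlation structure of $\scrP$ (near-collinear $k$-tuples), not just on its density, so this step must itself be routed through the concentration argument rather than presented as deterministic.

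Second, your derivation of [P3] fails as written. The first moment of the bad-set measure is $\asymp e^{-cv_{d-1}\xi}$ \emph{uniformly in} $\rho$; it does not decay along any sequence $\rho_n\to0$, so Markov plus Borel--Cantelli gives divergent sums and no almost-sure control of $\limsup_{\rho\to0}$, and neither direction of Fatou helps. You would need either a concentration estimate for this direction-averaged void quantity (essentially the same machinery as above) or, as the paper does, to bypass the direct argument: the uniform discretised convergence already yields the macroscopic limit $\mu_\rho^{(\Lambda)}\xrightarrow[]{\textup{ w }}\psi$ (Lemma~\ref{POISSONlem5}), and since $\psi(\{\emptyset\})=0$, condition [P3] follows from Remark~\ref{ASS:bdfreepathrem}. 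With these two repairs --- the sector-independence mechanism enabling Bernstein, and the deduction of [P3] from the macroscopic limit --- your outline becomes the paper's proof.
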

The proof of the proposition builds on ideas from Boldrighini, Bunimovich and Sinai, \cite{Boldrighini83}.
It is well-known that $\psi$-almost every $\scrP\in N_s(\R^d)$
has constant asymptotic density $c$,
i.e.\ [P1] %
holds with $c_\scrP=c$.
Also the properties [Q1]--[Q3] %
are well-known to hold for $\mu_\vs=\psi$.
Hence our task is to prove 
{\blu [P2] and [P3].
The following lemma shows that the set $\scrE$ 
defined in Proposition \ref{POISSONprop}
has asymptotic density zero.}
\begin{lem}\label{POISSONlem0}
Let $c,\alpha,\psi,\scrP,\scrE$ be as in Proposition \ref{POISSONprop}.
Then for $\psi$-almost all $\scrP\in N_s(\R^d)$, 
{\blu the set $\scrE$ has asymptotic density zero,}
i.e.\ $T^{-d}\#(\scrE\cap\scrB_T^d)\to0$ as $T\to\infty$.
\end{lem}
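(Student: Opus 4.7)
The plan is to show that $\EE_\psi[X_T]=o(T^d)$, where $X_T:=\#(\scrE\cap\scrB_T^d)$, and then boost this to an almost sure statement via Borel--Cantelli applied along a dyadic sequence. For the expectation, I would invoke the Slivnyak--Mecke formula for the Poisson process: if $\Phi$ is Poisson of intensity $c$ then for any non-negative measurable $f$,
\begin{align*}
\EE\Bigl[\sum_{\vecq\in\Phi}f(\vecq,\Phi)\Bigr]=c\int_{\R^d}\EE\bigl[f(\vecq,\Phi\cup\{\vecq\})\bigr]\,d\vecq.
\end{align*}
Applied to $f(\vecq,\scrP)=I(\vecq\in\scrB_T^d)I(d_\scrP(\vecq)\le\|\vecq\|^{-\alpha/(d-1)})$ and using that $\scrP\cup\{\vecq\}$ has $d_{\scrP\cup\{\vecq\}}(\vecq)=\inf\{\|\vecp-\vecq\|:\vecp\in\scrP\}$ (since $\vecq\notin\scrP$ almost surely), together with the void probability $\PP(\scrP\cap\scrB^d(\vecq,r)=\emptyset)=\exp(-cv_dr^d)$, this yields
\begin{align*}
\EE[X_T]=c\int_{\scrB_T^d}\bigl(1-\exp(-cv_d\|\vecq\|^{-\alpha d/(d-1)})\bigr)\,d\vecq.
\end{align*}

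Next I would bound this integral using $1-e^{-x}\le\min(x,1)$. Splitting at the radius $r_*:=(cv_d)^{(d-1)/(\alpha d)}$ where the two bounds agree, passing to polar coordinates, and using the assumption $0<\alpha<1\le d-1$ (which makes the exponent $d-1-\alpha d/(d-1)$ strictly greater than $-1$), one gets
\begin{align*}
\EE[X_T]\le C_1+C_2\,T^{d(d-1-\alpha)/(d-1)}=O(T^{d-\beta}),\qquad\beta:=\alpha d/(d-1)>0,
\end{align*}
for constants depending only on $c,d,\alpha$. The key point is that $\alpha>0$ makes the exponent strictly less than $d$.

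Finally, I would upgrade this to almost sure convergence. Fix $\eta>0$ and apply Markov's inequality along the dyadic sequence $T_k=2^k$:
\begin{align*}
\PP_\psi\bigl(X_{2^k}>\eta\cdot 2^{kd}\bigr)\le\eta^{-1}\EE[X_{2^k}]2^{-kd}\le C\eta^{-1}2^{-k\beta},
\end{align*}
which is summable in $k$. Borel--Cantelli then gives $\limsup_kX_{2^k}/2^{kd}\le\eta$ almost surely; letting $\eta$ run through a sequence tending to $0$, we conclude $X_{2^k}/2^{kd}\to0$ almost surely. To pass from the dyadic sequence to arbitrary $T\to\infty$, I would use the obvious monotonicity $T\mapsto X_T$: for $T\in[2^k,2^{k+1}]$ one has $X_T/T^d\le 2^d X_{2^{k+1}}/2^{(k+1)d}\to0$. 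I do not expect a real obstacle here; the only step requiring care is verifying that the exponent $d(d-1-\alpha)/(d-1)$ is strictly less than $d$ and that the integral near the origin is controlled by the truncation at $r_*$, both of which reduce to the hypothesis $0<\alpha<1$.
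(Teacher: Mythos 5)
Your proof is correct and follows essentially the same route as the paper's: compute $\EE_\psi[\#(\scrE\cap\scrB_T^d)]$ via the Mecke/void-probability identity, bound it by $O(T^{d(d-1-\alpha)/(d-1)})=o(T^d)$, then Markov plus Borel--Cantelli along a dyadic sequence and monotonicity in $T$. The only cosmetic difference is the choice of Markov threshold (a fixed $\eta T^d$ with $\eta\to0$ countably versus the paper's single threshold $T^{\beta+d(d-1-\alpha)/(d-1)}$ for a fixed small $\beta$); these are interchangeable.
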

\begin{proof}
(Cf.\ \cite[Prop.\ 3.4]{Boldrighini83}.)
Set $r(\vecy)=\|\vecy\|^{-\alpha/(d-1)}$.
By basic properties of the Poisson process we have, for any $T\geq 1$,
\begin{align*}
\EE_\psi &\#(\scrE\cap\scrB_T^d)
=c\int_{\scrB_T^d}
\psi\big(\big\{Y\in N_s(\R^d)\col Y\cap\big(\vecy+\overline{\scrB^d_{r(\vecy)}}\big)\neq\emptyset\big\}\big)\,d\vecy
\\
&=c\int_{\scrB_T^d}\Bigl(1-e^{-c\vol(\scrB_{r(\vecy)}^d)}\Bigr)\,d\vecy
\leq c^2\int_{\scrB_T^d}\vol\bigl(\scrB_{r(\vecy)}^d\bigr)\,d\vecy
\ll T^{d(d-1-\alpha)/(d-1)}.
\end{align*}
Hence for any fixed $\beta>0$,   
we have by Markov's inequality
\begin{align*}
\psi\bigl(\bigl\{\scrP\in N_s(\R^d)\col\#(\scrE\cap\scrB_T^d)>T^{\beta+d(d-1-\alpha)/(d-1)}\bigr\}\bigr)
\ll T^{-\beta}
\end{align*}
as $T\to\infty$.
Applying this for $T=2^n$, $n=1,2,\ldots$, and using $\sum_{n=1}^\infty 2^{-n\beta}<\infty$,
it follows by the Borel-Cantelli Lemma that,
for $\psi$-almost all $\scrP$,
$\#(\scrE\cap\scrB_{2^n}^d)\leq 2^{n(\beta+d(d-1-\alpha)/(d-1))}$ for all sufficiently large $n$.
Taking here $\beta\in(0,\frac{d\alpha}{d-1})$, so that $\beta+d(d-1-\alpha)/(d-1)<d$,
and using the fact that $\#(\scrE\cap\scrB_T^d)$ is an increasing function of $T$,
the lemma follows.
\end{proof}

\begin{remark}\label{POISSONENOTEMPTYrem}
Of course there is some flexibility regarding the choice of $\scrE$ in Proposition \ref{POISSONprop};
however let us note that $\scrE$ certainly cannot be taken to be empty.
Indeed, it is easily seen that, $\psi$-almost surely,
for all small $\rho$ there exist points $\vecq\in\scrP\cap\scrB_{\rho^{1-d}}^d$ for which
$d_\scrP(\vecq)<\rho$ (and in fact even $d_\scrP(\vecq)\ll\rho^{d-1}$).
As seen in Lemma \ref{GOODDISTANCElem},
for $\rho$ sufficiently small those 
points necessarily have to be in $\scrE$ in order for the uniform convergence in [P2] to hold.
\end{remark}

The remainder of this section will be spent on the proof of the key convergence property [P2];
the property [P3] will also follow as a consequence of the proof.
The method of proof is basically the same as in \cite[Props.\ 2.3-2.5]{Boldrighini83}.

First we discretize the choices of test sets $A\subset N_s(\R^d)$,
measures $\lambda\in P(\US)$ and center points $\vecq$.

Let $\scrF$ be the family of all boxes $B\subset\R^d$ of the form
$B=\prod_{j=1}^d[\alpha_j,\beta_j)$ where $\alpha_j,\beta_j\in\Q$ and $\alpha_j<\beta_j$ for all $j$.
Let $\tF$ be the set of finite unions of boxes $B\in\scrF$,
and let $\scrA$ be the family of all sets $A\subset N_s(\R^d)$ of the form
$A=\{Y\in N_s(\R^d)\col \#(Y\cap B)\geq r\}$
for $B\in\tF$ and $r\in\Z^+$.
Note that $\scrF$ is countable, and hence so are $\tF$ and $\scrA$.

Let $\scrS$ be a countable family of subsets $S\subset\US$,
chosen so that each $S\in\scrS$ 
has diameter $<\pi/2$ with respect to the metric $\varphi$ on $\US$,
each $S\in\scrS$ is a diffeomorphic image of the closed unit cube $[0,1]^{d-1}$,
and furthermore so that for each $\ve>0$ there is a finite subfamily $F\subset\scrS$ such that the sets
$S\in F$ form a partition of $\US$ (up to sets of measure zero) and each $S\in F$ has diameter 
$<\ve$%
.\footnote{To see that this is possible, for each $k\in\Z^+$ consider the decomposition
of each $(d-1)$-face of the cube $[-1,1]^d$ into $k^{d-1}$ congruent $(d-1)$-cubes of side $2/k$;
by radial projection (with origin as center) this yields a decomposition of $\US$ into
$2d\cdot k^{d-1}$ closed subsets, each of which is a diffeomorphic image of $[0,1]^{d-1}$.
We can take $\scrS$ to be the family of subsets of $\US$ obtained when the previous construction is carried out for
all $k\in\Z^+$.}
For $S\in\scrS$ we set $\lambda_S:=\omega(S)^{-1}\omega_{|S}\in P(\US)$.
Let $\scrL$ be the set of all these probability measures $\lambda_S$. %

Let us now fix a constant $t>0$ and set, for each $0<\rho<1$,
\begin{align}
G[\rho]:=\rho^{1+{t}}\Z^d\cap\scrB_{\rho^{1-d-\alpha}}^d.
\end{align}
We also set $\rho_n=n^{-{t}}$ for $n\in\Z_{\geq2}$.
Finally, we fix a constant $\gamma$ subject to $\alpha<\gamma<1$.
Given a fixed $\scrP\in N_s(\R^d)$, we
define %
\begin{align}\label{tXIdef}
&\tQ_\rho(\vecq,\vecv)=\bigl((\scrP\setminus\scrB^d(\vecq,\rho^{\gamma}))-\vecq\bigr)R(\vecv)D_\rho
\qquad (\vecq\in\R^d,\: \vecv\in\US),
\end{align}
and for each $\lambda\in P(\US)$,
we let $\tmu_{\vecq,\rho}^{(\lambda)}\in P(N_s(\R^d))$ be the distribution of $\tQ_\rho(\vecq,\vecv)$
for $\vecv$ random in $(\US,\lambda)$.
\begin{lem}\label{POISSONlem4}
Let $\scrP\in N_s(\R^d)$ be given.
Assume that for each fixed $A\in\scrA$ and $\lambda\in\scrL$,
\begin{align}\label{POISSONlem4ass}
\tmu_{\vecq,\rho_n}^{(\lambda)}(A)-\psi(A)\to0\:\text{ as }\:n\to\infty,
\:\text{ uniformly over all }\:
\vecq\in{G}[\rho_n].
\end{align}
Then for every $\lambda\in\Pac(\US)$,
we have $\mu_{\vecq,\rho}^{(\lambda)}\xrightarrow[]{\textup{ w }}\psi$
as $\rho\to0$, uniformly over all 
$\vecq\in\scrB_{\rho^{1-d-\alpha}}^d$ subject to $d_\scrP(\vecq)>2\rho^{\gamma}$.
In particular [P2] holds,
i.e.\ for every $T\geq1$ we have $\mu_{\vecq,\rho}^{(\lambda)}\xrightarrow[]{\textup{ w }}\psi$
as $\rho\to0$, uniformly over all $\vecq\in\scrP_T(\rho)$.
\end{lem}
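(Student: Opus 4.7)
The plan is to first dispose of the easy reductions and then concentrate on a continuity/discretization argument that bridges the hypothesis (convergence along the countable families $G[\rho_n]$, $\scrA$, $\scrL$) and the desired uniform weak convergence.

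\textbf{Reduction to $\tmu$, and the ``in particular'' statement.} Under the condition $d_\scrP(\vecq)>2\rho^\gamma$, the ball $\scrB^d(\vecq,\rho^\gamma)$ meets $\scrP$ only possibly at $\vecq$ itself (if $\vecq\in\scrP$), which is already removed in $\tP_\vecq$; hence $\tQ_\rho(\vecq,\vecv)=\scrQ_\rho(\vecq,\vecv)$ and $\tmu^{(\lambda)}_{\vecq,\rho}=\mu^{(\lambda)}_{\vecq,\rho}$, so it suffices to prove the stated convergence for $\tmu$. For the ``in particular'' claim, a point $\vecq\in\scrP_T(\rho)\subset\scrP\setminus\scrE$ lying in $\scrB^d_{T\rho^{1-d}}$ satisfies $d_\scrP(\vecq)>\|\vecq\|^{-\alpha/(d-1)}\geq T^{-\alpha/(d-1)}\rho^\alpha$, which exceeds $2\rho^\gamma$ for $\rho$ small since $\gamma>\alpha$; moreover $T\rho^{1-d}\leq \rho^{1-d-\alpha}$ for $\rho$ small. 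Thus $\scrP_T(\rho)$ is eventually contained in the class of $\vecq$'s covered by the first conclusion.

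\textbf{Countable approximations of test sets and measures.} The family $\scrA$ is countable and, by \cite[Thm.\ 16.16]{kallenberg02}, it determines weak convergence in $P(N_s(\R^d))$ towards any $\nu$ such that $\EE_\nu[Y(\partial B)]=0$ for each $B\in\tF$; this holds for $\nu=\psi$ since $\partial B$ has Lebesgue measure zero. Hence uniform convergence $\tmu^{(\lambda)}_{\vecq,\rho}(A)\to\psi(A)$ for every $A\in\scrA$ will upgrade to uniform weak convergence. Similarly, for each $\lambda\in\Pac(\US)$ with density $g\in\L^1(\US,\omega)$, a standard $\L^1$-approximation using a finite partition of $\US$ by elements $S\in\scrS$ writes $\lambda$ as a convex combination of $\lambda_S$'s plus an error of small total variation, which couples to an error of size at most that total variation in $\tmu^{(\lambda)}_{\vecq,\rho}(A)$ uniformly in $\vecq,\rho,A$. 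Thus it suffices to prove uniform convergence for each fixed $A\in\scrA$ and $\lambda\in\scrL$, over all $\vecq$ satisfying the stated constraints and all $\rho\in(0,\rho_0)$.

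\textbf{The continuity step: discrete to continuous $(\rho,\vecq)$.} Given $\rho>0$ small and such a $\vecq$, let $n$ be the unique integer with $\rho_{n+1}<\rho\leq\rho_n$, so $\rho/\rho_n\to1$, and let $\hq\in G[\rho_n]$ be a nearest grid point, giving $\|\vecq-\hq\|_\infty\leq\rho_n^{1+t}$. We compare $\tmu^{(\lambda)}_{\vecq,\rho}(A)$ with $\tmu^{(\lambda)}_{\hq,\rho_n}(A)$ for $A=\{Y\col \#(Y\cap B)\geq r\}$, $B\in\tF$. Fixing $\vecv\in\US$, the set $\tQ_\rho(\vecq,\vecv)\cap B$ corresponds to those $\vecp\in\scrP$ lying in the ellipsoid $\vecq+BD_\rho^{-1}R(\vecv)^{-1}$ and outside $\scrB^d(\vecq,\rho^\gamma)$; analogously for $(\hq,\rho_n)$. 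Because $\|\vecq-\hq\|\cdot\rho^{-1}=O(\rho_n^{t})\to0$ in the transverse directions and $O(\rho_n^{t}\rho^{d-1})$ in the longitudinal direction, and because the geometric scaling $D_\rho$ vs.\ $D_{\rho_n}$ differs by $D_{\rho/\rho_n}\to I$, the two ellipsoids have a symmetric difference of relative measure $\to 0$ uniformly. Similarly the symmetric difference of the excluded balls $\scrB^d(\vecq,\rho^\gamma)\bigtriangleup\scrB^d(\hq,\rho_n^\gamma)$ has Euclidean volume of order $o(\rho^{\gamma(d-1)})$, uniformly in the admissible $\vecq$. Using [P1] (which gives $\scrP$ a locally uniform density for $\psi$-a.e.\ realization), averaging over $\vecv\in\US$ with density $g_\lambda$, the expected number of $\scrP$-points falling in these symmetric-difference regions tends to $0$, uniformly over admissible $\vecq$; Markov's inequality then shows that the $\lambda$-measure of $\{\vecv\col \#(\tQ_\rho(\vecq,\vecv)\cap B)\neq\#(\tQ_{\rho_n}(\hq,\vecv)\cap B)\}$ is $o(1)$ uniformly. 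Combined with the hypothesis \eqref{POISSONlem4ass}, this yields $\tmu^{(\lambda)}_{\vecq,\rho}(A)\to\psi(A)$ uniformly over all admissible $\vecq$, as required.

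\textbf{Main obstacle.} The crux is the last quantitative bound: verifying that the $\vecv$-measure of the ``exceptional'' set where a $\scrP$-point crosses the boundary between the $(\vecq,\rho)$-ellipsoid and the $(\hq,\rho_n)$-ellipsoid (or between the two excluded balls) is $o(1)$ uniformly in $\vecq$, even as $\vecq$ ranges over a ball whose radius $\rho^{1-d-\alpha}$ diverges. This requires exploiting [P1] together with a Fubini argument over $\vecv$ — not unlike the approach used in Lemma \ref{EXCHITUNLIKELYlem} above — to ensure that the $\scrP$-density inside a thin ellipsoidal shell is a controlled integral, yielding a bound depending only on the thickness and on $c$, and not on the location $\vecq$.
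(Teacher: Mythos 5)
Your reductions (the $\tmu$ vs.\ $\mu$ identity under $d_\scrP(\vecq)>2\rho^\gamma$, the ``in particular'' step, the restriction to $\scrA$ and $\scrL$ via Kallenberg's criterion and $\L^1$-approximation) are all correct and essentially match the paper. The divergence, and the gap, is in the continuity step. You compare $\#(\tQ_\rho(\vecq,\vecv)\cap B)$ with $\#(\tQ_{\rho_n}(\hq,\vecv)\cap B)$ for the \emph{same} test set $B$, and then try to argue that the $\lambda$-measure of the set of $\vecv$ for which the counts differ is $o(1)$, via Fubini and what you call ``locally uniform density'' from [P1]. This is where the argument breaks: [P1] is an asymptotic density statement, $\#(\scrP\cap T\scrB)/T^d\to c_\scrP\vol(\scrB)$, and does \emph{not} give uniform control of $\#(\scrP\cap\scrB^d(\vecq,C\rho^{1-d}))$ over all $\vecq$ in the ball $\scrB^d_{\rho^{1-d-\alpha}}$ — whose radius diverges faster than the ellipsoid scale $\rho^{1-d}$. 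A realization of a Poisson process satisfies [P1] almost surely and yet exhibits local upward density fluctuations, so the Fubini sum $\sum_{\vecp}\lambda(\{\vecv\col\vecp\in\text{sym-diff}(\vecv)\})$ cannot be bounded uniformly in $\vecq$ from [P1] alone. You would need to establish a separate almost-sure maximal density bound (e.g.\ $\sup_{\vecq\in\scrB^d_{\rho^{1-d-\alpha}}}\#(\scrP\cap\scrB^d(\vecq,C\rho^{1-d}))\ll\rho^{-d(d-1)}$, which does hold $\psi$-a.s.\ but requires its own Chernoff/Borel--Cantelli argument and is not stated as a hypothesis), and this is not what Lemma \ref{EXCHITUNLIKELYlem} gives either — that lemma handles $\vecq$ in the much smaller region $\scrB^d_{T\rho^{1-d}}$ and exploits that $\scrE$ has density \emph{zero}.

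The paper sidesteps the whole issue by a deterministic sandwich. Given $\ve>0$, one fixes $B'\subset B\setminus\partial_\eta B$ and $B\cup\partial_\eta B\subset B''$ with $B',B''\in\tF$ and $\vol(B''\setminus B')<\ve/c$. Using the exact affine relation $\tQ_{\rho_n}(\vecq',\vecv)=\scrQ_\rho(\vecq,\vecv)D_{\rho_n/\rho}+(\vecq-\vecq')R(\vecv)D_{\rho_n}$, every $\vecp\in\scrQ_\rho(\vecq,\vecv)\cap B$ is sent to a point $\vecp'\in\tQ_{\rho_n}(\vecq',\vecv)$ with $\|\vecp-\vecp'\|<\eta$, so $\vecp'\in B''$. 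This yields the deterministic implications $\scrQ_\rho(\vecq,\vecv)\in A\Rightarrow\tQ_{\rho_n}(\vecq',\vecv)\in A''$ and $\tQ_{\rho_n}(\vecq',\vecv)\in A'\Rightarrow\scrQ_\rho(\vecq,\vecv)\in A$ for \emph{every} $\vecv\in\US$ and all admissible $\vecq$, hence $\tmu^{(\lambda)}_{\vecq',\rho_n}(A')\leq\mu^{(\lambda)}_{\vecq,\rho}(A)\leq\tmu^{(\lambda)}_{\vecq',\rho_n}(A'')$. Combined with the hypothesis \eqref{POISSONlem4ass} applied to $A',A''$ and the observation $\psi(A''\setminus A')\leq c\vol(B''\setminus B')<\ve$, the claim follows — with no density estimates on $\scrP$ anywhere. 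The lesson is that when the difference of two point-counting events should be rare, an enlarge/shrink of the test window turns a probabilistic comparison into a deterministic inclusion; you should replace your Fubini step with this sandwich.
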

(In the last statement, $\scrP_T(\rho)=\scrP\cap\scrB^d_{T\rho^{1-d}}\setminus\scrE$, %
with $\scrE$ defined as in Proposition \ref{POISSONprop}.)

\begin{proof}
Note that the second statement of the lemma is a trivial consequence of the first,  %
since $\vecq\in\scrP_T(\rho)$ for $\rho$ sufficiently small implies
$\|\vecq\|<T\rho^{1-d}<\rho^{1-d-\alpha}$
and $d_\scrP(\vecq)>\|\vecq\|^{-\alpha/(d-1)}>T^{-\alpha/(d-1)}\rho^\alpha>2\rho^{\gamma}$.
In order to prove the first statement, by Lemma \ref{PPUNICCONVCONCRETElem3}
it suffices to prove that for any $\lambda\in\Pac(\US)$,
any bounded Borel set $B\subset\R^d$ with $\vol(\partial B)=0$, and any $r\in\Z^+$,
\begin{align}\label{POISSONlem4newpf1}
\mu_{\vecq,\rho}^{(\lambda)}(\{Y\in N_s(\R^d)\col \#(Y\cap B)\geq r\})-\psi(\{Y\in N_s(\R^d)\col \#(Y\cap B)\geq r\})
\to0
\end{align}
as $\rho\to0$, uniformly over all 
$\vecq\in\scrB_{\rho^{1-d-\alpha}}^d$ subject to $d_\scrP(\vecq)>2\rho^{\gamma}$.
It follows from our choice of $\scrL$ that the set of densities of finite linear combinations of measures in
$\scrL$ is dense in $\L^1(\US,\omega)$;
thus it suffices to prove \eqref{POISSONlem4newpf1} under the restriction that $\lambda\in\scrL$.

Hence we now fix some $\lambda\in\scrL$ and some $B\subset\R^d$ as above,
and $r\in\Z^+$,
and seek to prove the uniform convergence in \eqref{POISSONlem4newpf1}.
Let $\ve>0$ be given.
Note that %
$B$ is Jordan measurable; hence there exist some $\eta>0$ 
and $B',B''\in\tF$ such that
\begin{align}\label{POISSONlem4newpf8}
B'\subset B\setminus\partial_\eta B,
\qquad
B\cup\partial_\eta B\subset B'',
\qquad
\vol(B''\setminus B')<\ve/c,
\end{align}
where $\partial_\eta B$ denotes the 
$\eta$-neighborhood of the boundary of $B$, that is,
$\partial_\eta B=$\linebreak$\cup_{\vecp\in\partial B}\,\scrB^d(\vecp,\eta)$.
We set
\begin{align*}
A':=\{Y\col \#(Y\cap B')\geq r\};
\qquad
A:=\{Y\col \#(Y\cap B)\geq r\};
\hspace{60pt}
\\
A'':=\{Y\col \#(Y\cap B'')\geq r\}.
\end{align*}
Then $A'\subset A\subset A''$ and $A',A''\in\scrA$,
and by our assumption, \eqref{POISSONlem4ass},
there is some integer $N\geq2$ such that
\begin{align}\label{POISSONlem4newpf17}
\bigl|\tmu_{\vecq',\rho_n}^{(\lambda)}(A')-\psi(A')\bigr|<\ve
\quad\text{and}\quad
\bigl|\tmu_{\vecq',\rho_n}^{(\lambda)}(A'')-\psi(A'')\bigr|<\ve,
\qquad\forall n\geq N,\:\vecq'\in{G}[\rho_n].
\end{align}
Take $R>0$ so that $B''\subset\scrB_R^d$.
After possibly enlarging $N$, we may also assume that
for every $n\geq N$,
\begin{align}\label{POISSONlem4newpf3}
&\sqrt d\rho_n^{1+{t}}<\rho_n^{\gamma}
\qquad\text{and}\qquad
R\bigl((\rho_{n-1}/\rho_n)^{d-1}-1\bigr)+\sqrt d\rho_{n}^{t}<\eta.
\end{align}
Having thus fixed $N$, we claim that 
\begin{align}\label{POISSONlem4newpf7}
\bigl|\mu_{\vecq,\rho}^{(\lambda)}(A)-\psi(A)\bigr|<2\ve\qquad
\text{for all }\:\rho\in(0,\rho_N)\:\text{ and }\:
\vecq\in\scrB_{\rho^{1-d-\alpha}}^d\text{ with }d_\scrP(\vecq)>2\rho^{\gamma}.
\end{align}
Since $\ve>0$ was arbitrary, 
this will prove \eqref{POISSONlem4newpf1}, and thus complete the proof of the lemma.

Let $\rho\in(0,\rho_N)$ and $\vecq\in\scrB_{\rho^{1-d-\alpha}}^d$ be given, subject to $d_\scrP(\vecq)>2\rho^{\gamma}$.
Take $n>N$ so that $\rho_{n}\leq\rho<\rho_{n-1}$.
Let $\vecq'$ be the point in $G[\rho_n]$
lying nearest to $\vecq$
(if there are several options then just pick one).
Then $\|\vecq'-\vecq\|<\sqrt d\rho_n^{1+{t}}<\rho_n^{\gamma}$
(cf.\ \eqref{POISSONlem4newpf3}),
i.e.\ $\vecq$ lies in the ball 
$\scrB^d(\vecq',\rho_n^{\gamma})$;
also $d_\scrP(\vecq)>2\rho^{\gamma}$ implies that 
$\scrP\cap \scrB^d(\vecq',\rho_n^{\gamma})$ equals $\{\vecq\}$ or $\emptyset$,
and so $\scrP\setminus\scrB^d(\vecq',\rho_n^{\gamma})=\scrP\setminus\{\vecq\}$.
This implies that for each $\vecv\in\US$,
\begin{align}\label{POISSONlem4pf4}
\tQ_{\rho_n}(\vecq',\vecv)
=\scrQ_\rho(\vecq,\vecv)D_{\rho_n/\rho}+(\vecq-\vecq')R(\vecv)D_{\rho_n}.
\end{align}
Using this we will prove
\begin{align}\label{POISSONlem4pf11}
\bigl[\scrQ_\rho(\vecq,\vecv)\in A
\:\Rightarrow\:
\tQ_{\rho_n}(\vecq',\vecv)\in A''\bigr],\qquad
\forall \vecv\in\US.
\end{align}
Indeed, assume $\scrQ_\rho(\vecq,\vecv)\in A$,
i.e.\ $\#(\scrQ_\rho(\vecq,\vecv)\cap B)\geq r$.
Given a point $\vecp\in\scrQ_\rho(\vecq,\vecv)\cap B$,
we set
\begin{align}\label{POISSONlem4pf5}
\vecp':=\vecp D_{\rho_n/\rho}+(\vecq-\vecq')R(\vecv)D_{\rho_n}.
\end{align}
Note that $\|(\vecq-\vecq')R(\vecv)D_{\rho_n}\|<\sqrt d\rho_n^{{t}}$,
since $\|\vecq-\vecq'\|<\sqrt d\rho_n^{1+{t}}$;
furthermore $\|\vecp\|<R$
since $B\subset B''\subset\scrB_R^d$.
Hence
\begin{align}\label{POISSONlem4pf13}
\|\vecp-\vecp'\|
<\|\vecp D_{\rho_n/\rho}-\vecp\|+\sqrt d\rho_n^{t}
<R((\rho/\rho_n)^{d-1}-1)+\sqrt d\rho_n^{t}<\eta,
\end{align}
where we used $\rho<\rho_{n-1}$ and \eqref{POISSONlem4newpf3}.
If $\vecp'\notin B''$ then 
using $B\cup\partial_\eta B\subset B''$ (cf.\ \eqref{POISSONlem4newpf8})
it follows that $\vecp'$ has distance $\geq\eta$ from $\partial B$,
and also $\vecp'\notin B$, so that the line segment between $\vecp$ and $\vecp'$ intersects $\partial B$;
these together yield a contradiction against \eqref{POISSONlem4pf13}.
Hence $\vecp'\in B''$ must hold.
Also $\vecp'\in\tQ_{\rho_n}(\vecq',\vecv)$, by \eqref{POISSONlem4pf4} and \eqref{POISSONlem4pf5}.
Hence each $\vecp\in\scrQ_\rho(\vecq,\vecv)\cap B$
gives rise to a corresponding point
$\vecp'\in\tQ_{\rho_n}(\vecq',\vecv)\cap B''$;
therefore $\#(\tQ_{\rho_n}(\vecq',\vecv)\cap B'')\geq \#(\scrQ_\rho(\vecq,\vecv)\cap B)\geq r$, 
so that $\tQ_{\rho_n}(\vecq',\vecv)\in A''$,
and we have proved \eqref{POISSONlem4pf11}.
By a %
similar argument we also have
\begin{align}\label{POISSONlem4pf14}
\bigl[
\tQ_{\rho_n}(\vecq',\vecv)\in A'
\:\Rightarrow\:
\scrQ_\rho(\vecq,\vecv)\in A
\bigr]\qquad
(\forall \vecv\in\US).
\end{align}
Together, \eqref{POISSONlem4pf11} and \eqref{POISSONlem4pf14} imply
\begin{align}\label{POISSONlem4pf15}
\tmu_{\vecq',\rho_n}^{(\lambda)}(A')\leq\mu_{\vecq,\rho}^{(\lambda)}(A)\leq\tmu_{\vecq',\rho_n}^{(\lambda)}(A'').
\end{align}
Note also that
\begin{align}\label{POISSONlem4pf16}
\psi(A''\setminus A')\leq\psi(\{Y\col Y\cap B''\setminus B'\neq\emptyset\})\leq c\vol(B''\setminus B')<\ve
\end{align}
(cf.\ \eqref{POISSONlem4newpf8}).
Using \eqref{POISSONlem4newpf17}, \eqref{POISSONlem4pf15}, \eqref{POISSONlem4pf16},
and $A'\subset A\subset A''$,
we obtain $\bigl|\mu_{\vecq,\rho}^{(\lambda)}(A)-\psi(A)\bigr|<2\ve$,
and we have thus proved \eqref{POISSONlem4newpf7} and with it the lemma.
\end{proof}

\begin{lem}\label{POISSONlem5}
Under the assumption of Lemma \ref{POISSONlem4},
for each $\Lambda\in\Pac(\T^1(\R^d))$
we have $\mu_\rho^{(\Lambda)}\xrightarrow[]{\textup{ w }}\psi$ as $\rho\to0$.
\end{lem}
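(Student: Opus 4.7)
My plan is to lift the microscopic convergence of Lemma \ref{POISSONlem4} to macroscopic initial conditions, exploiting the fact that for most $\vecq\in\R^d$ in a bounded region the rescaled point $\rho^{1-d}\vecq$ automatically satisfies the hypotheses of that lemma. For small $\rho$ and $\vecq$ in a fixed ball, $\rho^{1-d}\vecq$ lies well inside $\scrB_{\rho^{1-d-\alpha}}^d$, and I will show that the exceptional set where the separation condition $d_\scrP(\rho^{1-d}\vecq)>2\rho^\gamma$ fails has vanishing Lebesgue measure in $\vecq$-coordinates.

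First I would reduce to $\Lambda'\in\C_c(\T^1(\R^d))$ by density of $\C_c$ in $\L^1$, using the total-variation bound $\|\mu_\rho^{(\Lambda_1)}-\mu_\rho^{(\Lambda_2)}\|_{\mathrm{TV}}\leq \|\Lambda_1'-\Lambda_2'\|_{\L^1}$; by Lemma \ref{PPUNICCONVCONCRETElem3} it will then suffice to prove $\mu_\rho^{(\Lambda)}(A)\to\psi(A)$ for each $A\in\scrA$. Next, fixing such $A$ and $R>0$ with $\supp\Lambda'\subset\scrB_R^d\times\US$, I would bound the ``bad set'' $\scrM_\rho:=\{\vecq\in\scrB_R^d\col d_\scrP(\rho^{1-d}\vecq)\leq 2\rho^\gamma\}$: every $\vecq\in\scrM_\rho$ lies within distance $2\rho^{\gamma+d-1}$ of some $\rho^{d-1}\vecp$ with $\vecp\in\scrP\cap\scrB_{R\rho^{1-d}+2\rho^\gamma}^d$, and [P1] bounds the number of such $\vecp$ by $\ll\rho^{-d(d-1)}$, each contributing a ball of volume $\ll\rho^{d(\gamma+d-1)}$ in $\vecq$-space, giving $\vol(\scrM_\rho)\ll\rho^{d\gamma}\to0$ and hence negligible bad-set contribution to $\mu_\rho^{(\Lambda)}(A)$.

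The main step is to decouple the $\vecv$-marginal of $\Lambda$ from the integration in $\vecq$, which is needed because Lemma \ref{POISSONlem4} provides uniformity in $\vecq$ only for each individually fixed $\lambda\in\Pac(\US)$. For this I would use a finite cell decomposition of $\US$: given $\ve>0$, uniform continuity of $\Lambda'$ lets me pick $S_1,\ldots,S_N\in\scrS$ partitioning $\US$ (up to a null set) and reference points $\vecv_j\in S_j$ so that $|\Lambda'(\vecq,\vecv)-\Lambda'(\vecq,\vecv_j)|<\ve$ on each $S_j$. Writing $\lambda_{S_j}=\omega(S_j)^{-1}\omega_{|S_j}\in\scrL$, Fubini yields
\[
\mu_\rho^{(\Lambda)}(A) = \sum_{j=1}^N \omega(S_j) \int_{\scrB_R^d\setminus\scrM_\rho} \Lambda'(\vecq,\vecv_j) \, \mu_{\rho^{1-d}\vecq,\rho}^{(\lambda_{S_j})}(A) \, d\vecq + O(\ve) + O(\rho^{d\gamma}),
\]
and for each fixed $j$, Lemma \ref{POISSONlem4} applied with $\lambda=\lambda_{S_j}$ gives $\mu_{\rho^{1-d}\vecq,\rho}^{(\lambda_{S_j})}(A)\to\psi(A)$ uniformly on the good set. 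Passing to $\rho\to 0$ and then $\ve\to 0$, and using that the Riemann sum $\sum_j\omega(S_j)\Lambda'(\vecq,\vecv_j)$ approximates $\int_{\US}\Lambda'(\vecq,\vecv)\,d\vecv$ uniformly together with $\int\Lambda'=1$, I obtain $\mu_\rho^{(\Lambda)}(A)\to\psi(A)$, as required.

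The main obstacle is precisely the clash between the uniformity in $\vecq$ for each \emph{fixed} $\lambda$ provided by Lemma \ref{POISSONlem4} and the fact that in the macroscopic setting the natural conditional $\vecv$-law depends on $\vecq$. The cell decomposition resolves this by reducing matters to finitely many fixed $\lambda_{S_j}\in\scrL$, at the controllable cost of an $O(\ve)$ partition error; I do not expect further difficulty beyond this.
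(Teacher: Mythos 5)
Your proof is correct in substance, and it takes a genuinely different route from the paper's at the key ``decoupling'' step. The paper does not use a cell decomposition: instead, for each fixed $\vecx\in\R^d$ it applies Lemma \ref{POISSONlem4} with the $\vecx$-dependent probability measure $\lambda_\vecx(d\vecv)\propto\Lambda'(\vecx,\vecv)\,d\vecv$, obtaining uniform-in-$\vecq$ convergence for each individual $\vecx$, and then upgrades this to uniformity over $\vecx$ by a compactness/subsequence argument, exploiting $\Lambda'\in\C_c$; finally it sets $\vecx=\rho^{d-1}\vecq$ and integrates. Your finite partition of $\US$ into cells $S_1,\dots,S_N$ with fixed reference measures $\lambda_{S_j}\in\scrL$ achieves the same decoupling in a more elementary way: it eliminates the subsequence argument entirely, at the modest cost of an explicit $O(\ve)$ freezing error and a second limit $\ve\to 0$. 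This is a perfectly good alternative; if anything it is cleaner, since it avoids the slightly delicate step of propagating a fixed-$\lambda$ uniform statement along a sequence of varying measures $\lambda_{\vecx_n}$.

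One technical point worth tightening: your invocation of Lemma \ref{PPUNICCONVCONCRETElem3} to reduce to test sets $A\in\scrA$ is not an immediate application of that lemma, whose hypothesis requires the convergence for \emph{all} relatively compact Borel sets $B$ with $\psi(\partial B)=0$, not only rational boxes $B\in\tF$. The class $\scrA$ is still convergence-determining, but establishing this requires a further sandwich argument (as in the proof of Lemma \ref{POISSONlem4}, comparing a general Jordan-measurable $B$ to inner and outer rational boxes). You can sidestep this entirely by running your cell-decomposition argument with a general $f\in\C_b(N_s(\R^d))$ in place of the indicator of $A$: since Lemma \ref{POISSONlem4} gives uniform weak convergence for each $\lambda_{S_j}$, Lemma \ref{UNIFPORTMlemB} directly provides $\mu_{\rho^{1-d}\vecq,\rho}^{(\lambda_{S_j})}(f)\to\psi(f)$ uniformly over the good set, and the rest of your argument goes through unchanged. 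This is in fact closer in form to the paper's exposition, which tests against $f\in\C_b$ throughout.
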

\begin{proof}
Fix $f\in\C_b(N_s(\R^d))$;
then our task is to prove $\mu_\rho^{(\Lambda)}(f)\to\psi(f)$, %
i.e.\
\begin{align}\label{POISSONlem5pf1}
\rho^{d(d-1)}\int_{\R^d}\int_{\US}f(\scrQ_\rho(\vecq,\vecv))\,\Lambda'(\rho^{d-1}\vecq,\vecv)\,d\vecv\,d\vecq\to\psi(f),
\qquad\text{as }\:\rho\to0,
\end{align}
where $\Lambda'\in\L^1(\T^1(\R^d))$ is the density of $\Lambda$.
Without loss of generality we may assume $\Lambda'\in\C_c(\T^1(\R^d))$.
Take $R>0$ so that $\supp\Lambda'\subset\scrB_R^d\times\US$.

It follows from Lemma \ref{POISSONlem4} and Lemma \ref{UNIFPORTMlemB} that, for each fixed $\vecx\in\R^d$,
\begin{align}\label{POISSONlem5pf2}
\int_{\US}f(\scrQ_\rho(\vecq,\vecv))\,\Lambda'(\vecx,\vecv)\,d\vecv\to\biggl(\int_{\US}\Lambda'(\vecx,\vecv)\,d\vecv\biggr)\psi(f)
\end{align}
as $\rho\to0$, uniformly over all
$\vecq\in\scrB_{\rho^{1-d-\alpha}}^d$ subject to $d_\scrP(\vecq)>2\rho^{\gamma}$.
By a standard subsequence argument, using the fact that $\Lambda'\in\C_c$,
\eqref{POISSONlem5pf2} is upgraded to also hold uniformly over all $\vecx\in\R^d$;
in particular we may take $\vecx=\rho^{d-1}\vecq$ in the statement.
Note also that $R\rho^{1-d}<\rho^{1-d-\alpha}$ for all small $\rho$,
and the total volume of all $\vecq\in\scrB_{R\rho^{1-d}}^d$
satisfying $d_\scrP(\vecq)\leq2\rho^{\gamma}$ is $\ll\rho^{-d(d-1)+d\gamma}$,
which gives a negligible contribution to the left hand side \eqref{POISSONlem5pf1} as $\rho\to0$.
Using these facts, we conclude that \eqref{POISSONlem5pf1} holds.
\end{proof}

Note that the conclusion of Lemma \ref{POISSONlem5} implies in particular that 
the condition [P3] %
holds; cf.\ Remark \ref{ASS:bdfreepathrem}.
Hence in order to complete the proof of Proposition \ref{POISSONprop} it now suffices to prove that
$\psi$-almost every $\scrP\in N_s(\R^d)$ satisfies the assumption 
\eqref{POISSONlem4ass} in Lemma~\ref{POISSONlem4}.
In fact, since $\scrA$ and $\scrL$ are countable, it suffices to prove that for any 
\textit{fixed} $A\in\scrA$ and $\lambda\in\scrL$,
the condition \eqref{POISSONlem4ass} holds for $\psi$-almost every $\scrP$.
Thus let $A\in\scrA$ and $\lambda\in\scrL$ be given;
take $B\in\tF$ and $r\in\Z^+$ so that $A=\{Y\in N_s(\R^d)\col \#(Y\cap B)\geq r\}$,
and take $S\in\scrS$ so that $\lambda=\lambda_S$.
The following proof is modelled on the proof of Prop.\ 2.3 in \cite{Boldrighini83}.

We fix constants $\beta_1$ and $\beta_2$ satisfying 
\begin{align}\label{BETA1BETA2restr}
0<\beta_1<\tfrac12(1-\gamma)
\qquad\text{and}\qquad
\beta_1<\beta_2<1-\gamma.
\end{align}
For each sufficiently small $\rho$,
we fix a choice of subsets $S_1,\ldots,S_k\subset S$ satisfying
$\diam(S_\ell)<\rho^{\beta_1}$
and $\omega(S_\ell)\asymp\rho^{\beta_1(d-1)}$,
which are separated so that $\varphi(S_{\ell},S_{\ell'})>\rho^{\beta_2}$
for any $\ell\neq \ell'$,
and which fill up most of $S$
in the sense that $\omega(S\setminus\cup_{\ell=1}^kS_{\ell})\ll\rho^{\beta_2-\beta_1}$.
\footnote{An explicit choice of such subsets $S_1,\ldots,S_k$ is as follows:
By assumption there is a fixed diffeomorphism $\Phi$ from an open set $U\subset\R^{d-1}$ onto an open subset of $\US$
such that $[0,1]^{d-1}\subset U$ and $S=\Phi([0,1]^{d-1})$.
Now fix $C>1$ large,
and for each small $\rho$ set $n=\lceil C\rho^{-\beta_1}\rceil$, $k=n^{d-1}$,
and let $S_1,\ldots,S_k$ be the sets
$\Phi(n^{-1}\vecm+[0,n^{-1}-C\rho^{\beta_2}]^{d-1})$ with $\vecm$ running through $\{0,1,\ldots,n-1\}^{d-1}$.
If $C$ is larger than a certain constant which only depends on $\Phi$,
then for all sufficiently small $\rho$, the sets $S_1,\ldots,S_k$ satisfy all the conditions.}
It follows that $k\asymp\rho^{\beta_1(1-d)}$. 
(Here and in the following, the implied constant in any ``$\ll$'', ``$\asymp$'' or ``$O(\cdots)$''
depends only on $d$, $S$ and $B$.)

Also for $\rho$ small we set
\begin{align*}
\tOmega^{(\rho)}=\{\scrP\in N_s(\R^d)\col\#(\scrP\cap\scrB_{\rho^{\gamma}}^dD_\rho)=0\}.
\end{align*}
This is a Borel subset of $N_s(\R^d)$ and letting $\mu=c\vol(\scrB_{\rho^{\gamma}}^d)$ we have
\begin{align}\label{POISSONpropPF1}
\psi(\tOmega^{(\rho)})=e^{-\mu}=1-O(\rho^{d\gamma}).
\end{align}
We write $\tPP^{(\rho)}=\psi(\cdot\mid\tOmega^{(\rho)})$
for the corresponding conditional probability,
i.e.\ $\tPP^{(\rho)}(A')=\psi(A'\cap\tOmega^{(\rho)})/\psi(\tOmega^{(\rho)})$ for any Borel set $A'\subset N_s(\R^d)$.
For any $\scrP\in N_s(\R^d)$ let
\begin{align}\label{RrhojAqdef}
R_{\rho,\vecq}^{\scrP}
:=\tmu_{\vecq,\rho}^{(\lambda)}(A)-\psi(A)
=\lambda\big(\big\{\vecv\in S\col\tQ_\rho(\vecq,\vecv)\in A\bigr\}\bigr)-\psi(A).
\end{align}
and  %
\begin{align}\label{RnjlAqDEF}
R_{\rho,\ell,\vecq}^{\scrP}
:=\lambda\big(\big\{\vecv\in S_\ell\col\tQ_\rho(\vecq,\vecv)\in A\bigr\}\bigr)-\lambda(S_\ell)\tPP^{(\rho)}(A).
\end{align}
Using $\lambda(S\setminus\cup_{\ell=1}^{k}S_{\ell})\ll \rho^{\beta_2-\beta_1}$
and $|\psi(A)-\tPP^{(\rho)}(A)|\ll\rho^{d\gamma}$
(cf.\ \eqref{POISSONpropPF1}) we then have
\begin{align}\label{POISSONpropPF6}
R_{\rho,\vecq}^{\scrP}
=\sum_{\ell=1}^{k}R_{\rho,\ell,\vecq}^{\scrP}
+O\bigl(\rho^{\beta_2-\beta_1}+\rho^{d\gamma}\bigr).
\end{align}
The point of using $\tPP^{(\rho)}(A)$ in \eqref{RnjlAqDEF} is that 
we have the identity
\begin{align}\label{Epqzero}
\EE_\psi R_{\rho,\ell,\vecq}^{\scrP}=0.
\end{align}
Indeed, by Fubini,
\begin{align}\label{POISSONpropPF3}
\EE_\psi R_{\rho,\ell,\vecq}^{\scrP}
=\int_{S_\ell}\Bigl(
\psi\bigl(\bigl\{\scrP\col\tQ_\rho(\vecq,\vecv)\in A\bigr\}\bigr)
- \tPP^{(\rho)}(A)\Bigr)
\,d\lambda(\vecv).
\end{align}
Here for each $\vecv$,
\begin{align}\label{POISSONpropPF2}
\psi\bigl(\tQ_\rho(\vecq,\vecv)\in A\bigr)
=\psi\bigl((\scrP\setminus\scrB_{\rho^{\gamma}}^d)R(\vecv)D_{\rho}\in A\bigr)
=\psi\bigl(\scrP R(\vecv)D_{\rho}\in A\mid\scrP \cap\scrB_{\rho^{\gamma}}^d=\emptyset\bigr)
\\\notag
=\tPP^{(\rho)}(A),
\end{align}
where we first used \eqref{tXIdef} and the fact that $\psi$ is translation invariant,
then the fact that for $\scrP$ random in $(N_s(\R^d),\psi)$,
the two random elements $\scrP \setminus\scrB_{\rho^{\gamma}}^d$
and $\scrP \cap\scrB_{\rho^{\gamma}}^d$ in $N_s(\R^d)$ are independent;
and finally, for the last equality,
we noted that 
$\scrP \cap\scrB_{\rho^{\gamma}}^d=\emptyset$
is equivalent with $\scrP R(\vecv)D_{\rho} \cap\scrB_{\rho^{\gamma}}^dD_{\rho}=\emptyset$,
and then used the fact that $\psi$ is $\SL(d,\R)$-invariant.
Hence the integrand in \eqref{POISSONpropPF3} vanishes identically,
and we have proved \eqref{Epqzero}.

Next we claim that for $\rho$ sufficiently small and for every $\vecq\in\R^d$,
if $\scrP$ is random in $(N_s(\R^d),\psi)$
then the random variables $R_{\rho,\ell,\vecq}^{\scrP}$ for $\ell=1,\ldots,k$
are mutually independent.
In view of the definition \eqref{RnjlAqDEF},
and noticing that 
$\tQ_\rho(\vecq,\vecv_\ell)\in A$ is equivalent with
$\scrP-\vecq$ having at least $r$ points in the region
\begin{align}\label{INDEPPF1}
BD_\rho^{-1} R(\vecv_\ell)^{-1}\setminus\scrB_{\rho^{\gamma}}^d,
\end{align}
it follows that it suffices 
to prove that for any $\vecv_1\in S_1,\ldots,\vecv_k\in S_k$,
the regions in \eqref{INDEPPF1} for $\ell=1,\ldots,k$ are pairwise disjoint.
Fix $R>0$ so that $B\subset\scrB_R^d$.
\begin{lem}\label{INDEPLEM1}
If $\vecv,\vecv'\in\US$ and $4R\rho^{1-\gamma}\leq\varphi(\vecv,\vecv')\leq\pi/2$
then 
\begin{align*}
BD_\rho^{-1} R(\vecv)^{-1}\cap BD_\rho^{-1} R(\vecv')^{-1}\setminus\scrB_{\rho^\gamma}^d=\emptyset.
\end{align*}
\end{lem}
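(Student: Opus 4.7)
The plan is to take any $\vecx\in BD_\rho^{-1}R(\vecv)^{-1}\cap BD_\rho^{-1}R(\vecv')^{-1}$ with $\|\vecx\|\geq\rho^\gamma$ and derive a contradiction, by showing that such a point would force the axes $\R\vecv$ and $\R\vecv'$ to be angularly much closer than $4R\rho^{1-\gamma}$. The geometric picture is simple: $BD_\rho^{-1}$ sits inside an ellipsoid of extent $<R\rho^{1-d}$ along $\vece_1$ and $<R\rho$ in the orthogonal $(d-1)$-plane, so after applying the orthogonal matrix $R(\vecv)^{-1}$ (which sends $\vece_1$ to $\vecv$) the set $BD_\rho^{-1}R(\vecv)^{-1}$ is a thin tube about the line $\R\vecv$ of transverse radius $<R\rho$; any point in this tube at distance at least $\rho^\gamma$ from the origin is therefore within projective angle $\arcsin(R\rho^{1-\gamma})$ of $\R\vecv$.

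Concretely, I would write $\vecx=\vecy D_\rho^{-1}R(\vecv)^{-1}$ for some $\vecy=(y_1,\vecy_\perp)\in B\subset\scrB_R^d$. The identity $\vecy D_\rho^{-1}=y_1\rho^{1-d}\vece_1+(0,\rho\vecy_\perp)$, together with the orthogonality of $R(\vecv)^{-1}$ and $\vece_1 R(\vecv)^{-1}=\vecv$, yields the orthogonal decomposition $\vecx=y_1\rho^{1-d}\vecv+\vecu$ with $\vecu\perp\vecv$ and $\|\vecu\|<R\rho$. Setting $\alpha=\varphi(\vecx,\vecv)\in[0,\pi]$, this reads $\|\vecx\|\sin\alpha<R\rho$, and combined with $\|\vecx\|\geq\rho^\gamma$ it gives $\sin\alpha<R\rho^{1-\gamma}$. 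Repeating the argument verbatim with $\vecv'$ in place of $\vecv$ yields $\sin\alpha'<R\rho^{1-\gamma}$ for $\alpha'=\varphi(\vecx,\vecv')$.

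The numbers $\alpha^*:=\min(\alpha,\pi-\alpha)$ and $\alpha'^{*}:=\min(\alpha',\pi-\alpha')$ are the angular distances from the line $\R\vecx$ to the lines $\R\vecv$ and $\R\vecv'$ in the real projective space; by the triangle inequality, the projective distance between $\R\vecv$ and $\R\vecv'$ is at most $\alpha^*+\alpha'^{*}<2\arcsin(R\rho^{1-\gamma})\leq\pi R\rho^{1-\gamma}<4R\rho^{1-\gamma}$ (the hypothesis $4R\rho^{1-\gamma}\leq\pi/2$ ensures $R\rho^{1-\gamma}\leq 1$ so that $\arcsin x\leq\frac{\pi}{2}x$ applies). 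On the other hand, the condition $\varphi(\vecv,\vecv')\leq\pi/2$ means that the projective distance between $\R\vecv$ and $\R\vecv'$ equals $\varphi(\vecv,\vecv')$, which by the other half of the hypothesis is $\geq 4R\rho^{1-\gamma}$; this is the contradiction. There is no real obstacle in the argument, the only point needing care being to work consistently with projective rather than spherical angles — the tube $BD_\rho^{-1}R(\vecv)^{-1}$ extends in both the $+\vecv$ and $-\vecv$ directions — and the hypothesis $\varphi(\vecv,\vecv')\leq\pi/2$ is precisely what makes the projective distance between $\R\vecv$ and $\R\vecv'$ coincide with $\varphi(\vecv,\vecv')$, preventing the $\pm$-ambiguity from weakening the conclusion.
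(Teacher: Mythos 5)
Your proof is correct and follows essentially the same route as the paper's: the orthogonal decomposition showing that $BD_\rho^{-1}R(\vecv)^{-1}$ is a tube of transverse radius $<R\rho$ about $\R\vecv$, the resulting bound $\|\vecx\|\sin\alpha<R\rho$, and the projective triangle inequality (with $\varphi(\vecv,\vecv')\leq\pi/2$ ensuring the projective distance equals $\varphi(\vecv,\vecv')$). The only cosmetic differences are that you argue by contradiction rather than concluding $\|\vecx\|<\rho^\gamma$ directly, and that you convert $\sin\alpha^*<R\rho^{1-\gamma}$ to an angular bound via $\arcsin x\leq\frac{\pi}{2}x$ where the paper uses the equivalent inequality $\varphi\leq 2\sin\varphi$ on $[0,\pi/2]$.
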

\begin{proof}
Suppose $\vecx\in BD_\rho^{-1} R(\vecv)^{-1}\cap BD_\rho^{-1} R(\vecv')^{-1}$
and $\vecx\neq\bn$. %
Set $\varphi=\min(\varphi(\vecv,\vecx),\varphi(\vecv,-\vecx))$ and 
$\varphi'=\min(\varphi(\vecv',\vecx),\varphi(\vecv',-\vecx))$.
The point $\vecx$ lies in $BD_\rho^{-1} R(\vecv)^{-1}\subset\scrB_R^d D_\rho^{-1} R(\vecv)^{-1}
\subset (\R\times\scrB_{R\rho}^{d-1})R(\vecv)^{-1}$;
this implies that $\vecx$ has distance $<R\rho$ from the line $\R\vecv$,
and thus $\varphi\leq2\sin\varphi<2R\rho/\|\vecx\|$.
Similarly $\varphi'<2R\rho/\|\vecx\|$.
Now by the triangle inequality in $\US/\{\pm\}$ we have
$4R\rho^{1-\gamma}\leq\varphi(\vecv,\vecv')\leq\varphi+\varphi'<4R\rho/\|\vecx\|$,
implying $\|\vecx\|<\rho^\gamma$,
i.e.\ $\vecx\in\scrB_{\rho^\gamma}^d$.
\end{proof}
If $\rho$ is sufficiently small then the lemma applies to
any pair of points $\vecv_\ell,\vecv_{\ell'}$,
$\ell\neq\ell'$,
since then $\varphi(\vecv_\ell,\vecv_{\ell'})\geq\varphi(S_\ell,S_{\ell'})>\rho^{\beta_2}>4R\rho^{1-\gamma}$
(cf.\ \eqref{BETA1BETA2restr}),
and also $\varphi(\vecv_\ell,\vecv_{\ell'})<\pi/2$ since we have assumed that each $S\in\scrS$ has diameter $<\pi/2$.
This completes the proof that $R_{\rho,\ell,\vecq}^{\scrP}$ for $\ell=1,\ldots,k$ are indeed independent.

Set
\begin{align}\label{HRHODEF}
V_{\rho,\vecq}:=\sum_{\ell=1}^{k}\EE_\psi\bigl((R_{\rho,\ell,\vecq}^{\scrP})^2\bigr)
\quad\text{and}\quad
H_\rho:=\max\bigl\{\lambda(S_\ell)\col\ell\in\{1,\ldots,k\}\bigr\}\asymp\rho^{\beta_1(d-1)}.
\end{align}
Then $|R_{\rho,\ell,\vecq}^{\scrP}|\leq H_\rho$ everywhere.
In view of our observations, in particular \eqref{Epqzero} and the independence just proved,
we have the following inequality of Bernstein type,
for any $X>0$ and $0<h<3/H_\rho$
(cf., e.g., \cite[eq.\ (2a)]{Bennett}):
\begin{align}\label{BERNSTEINineq}
\psi\biggl(\biggl\{\scrP\in N_s(\R^d)\col
\biggl|\sum_{\ell=1}^{k}R_{\rho,\ell,\vecq}^{\scrP}\biggr|\geq X\biggr\}\biggr)
\leq 2e^{-hX}\exp\Bigl(\frac{h^2}2V_{\rho,\vecq}\Bigl(1-\frac{hH_\rho}3\Bigr)^{-1}\Bigr).
\end{align}
In order to bound $V_{\rho,\vecq}$, note that 
\begin{align}\notag
\EE_\psi\Bigl(\lambda\big(\big\{\vecv\in S_\ell\col\tQ_\rho(\vecq,\vecv)\in A\bigr\}\bigr)^2\Bigr)
=\int_{S_{\ell}\times S_{\ell}}
\psi\bigl( %
\tQ_{\rho}(\vecq,\vecv)\in A,\:\: %
\tQ_{\rho}(\vecq,\vecw)\in A\bigr)
\hspace{20pt}
\\\label{POISSONpropPF4}
\times d\lambda(\vecw)\,d\lambda(\vecv).
\end{align}
Here for any $\vecv,\vecw$ with $\varphi(\vecv,\vecw)\geq4R\rho^{1-\gamma}$,
it follows from Lemma \ref{INDEPLEM1} that the events
$\tQ_{\rho}(\vecq,\vecv)\in A$ and $\tQ_{\rho}(\vecq,\vecw)\in A$
are independent on $(N_s(\R^d),\psi)$,
and so the integrand equals $\tPP^{(\rho)}(A)^2$
(cf.\ \eqref{POISSONpropPF2}).
Furthermore for any $\vecv\in S_{\ell}$,
the set of $\vecw\in S_{\ell}$ for which $\varphi(\vecv,\vecw)<4R\rho^{1-\gamma}$
has measure $\ll\rho^{(1-\gamma)(d-1)}$ with respect to $\lambda$.
Using these facts, and \eqref{RnjlAqDEF} and \eqref{Epqzero},
we obtain
\begin{align}\label{POISSONpropPF5}
\EE_\psi\Bigl((R_{\rho,\ell,\vecq}^{\scrP})^2\Bigr)
\ll\lambda(S_{\ell})\rho^{(1-\gamma)(d-1)}
\ll\rho^{(1-\gamma+\beta_1)(d-1)}.
\end{align}
Adding this over $\ell=1,\ldots,k$ it follows that
\begin{align*}
V_{\rho,\vecq}\ll\rho^{(1-\gamma)(d-1)}.
\end{align*}
We now fix a constant $\delta$ satisfying
\begin{align*}
0<\delta<\beta_1(d-1),
\end{align*}
and apply \eqref{BERNSTEINineq} with $h=1/H_\rho$
and $X=H_\rho\rho^{-\delta}$.
Then $h^2V_{\rho,\vecq}<1$ provided that $\rho$ is sufficiently small
(cf.\ \eqref{HRHODEF} and \eqref{BETA1BETA2restr}), and we obtain
\begin{align*}
\psi\biggl(%
\biggl|\sum_{\ell=1}^{k}R_{\rho,\ell,\vecq}^{\scrP}\biggr|\geq H_\rho\rho^{-\delta}\biggr)
\ll e^{-\rho^{-\delta}}.
\end{align*}
Setting $\ve(\rho)=H_\rho\rho^{-\delta}+C_1\bigl(\rho^{\beta_2-\beta_1}+\rho^{d\gamma}\bigr)$,
where $C_1>0$ is the implied constant in the big-$O$ expression in  \eqref{POISSONpropPF6},
it follows that
\begin{align}\label{POISSONpropPF7}
\psi\bigl(%
\bigl|R_{\rho,\vecq}^{\scrP}\bigr|\geq \ve(\rho)\bigr)
\ll e^{-\rho^{-\delta}},
\end{align}
This holds for every sufficiently small $\rho$, and all $\vecq\in\R^d$.
Using also $\# G[\rho]\ll\rho^{-d(d+\alpha+{t})}$
it follows that for $\rho$ small,
\begin{align*}
&\psi\bigl(%
\exists\vecq\in{G}[\rho]\text{ s.t. }
\bigl|R_{\rho,\vecq}^{\scrP}\bigr|\geq \ve(\rho)\bigr)
\leq\sum_{\vecq\in G[\rho]}\psi\bigl(\bigl|R_{\rho,\vecq}^{\scrP}\bigr|\geq \ve(\rho)\bigr)
\ll\rho^{-d(d+\alpha+t)}e^{-\rho^{-\delta}}.
\end{align*}
This implies that the sum
\begin{align*}
\sum_{n=2}^\infty\psi\bigl(%
\exists\vecq\in{G}[\rho_n]\text{ s.t. }
\bigl|R_{\rho_n,\vecq}^{\scrP}\bigr|\geq \ve(\rho_n)\bigr)
\end{align*}
converges (recall $\rho_n=n^{-t}$),
and hence by the Borel-Cantelli Lemma,
for $\psi$-almost every $\scrP\in N_s(\R^d)$
there is some $N=N(\scrP)$ such that
$|R_{\rho_n,\vecq}^\scrP|<\ve(\rho_n)$ holds for all $n\geq N$ and all $\vecq\in{G}[\rho_n]$.
Therefore we have uniform convergence as in \eqref{POISSONlem4ass} in Lemma \ref{POISSONlem4},
and the proof of Proposition \ref{POISSONprop} is complete.
\hfill $\square$

\vspace{10pt}

Let us conclude this section by computing the collision kernels.
Recall that $\Sigma$ is a singleton,
which we remove from our notation.
Thus %
$\scrX_\perp=\R^{d-1}$, $\Omega=\UB$ and $\mu_\Omega=v_{d-1}^{-1}\vol_{\R^{d-1}}$, 
and we find that for every $\vecx'\in\scrX_\perp$,
the distribution of the random point $(w_1,\vecw'):=\iota(\vecz(\Xi_\vs-\vecx'))$ equals
$\oxi^{-1}e^{-w_1/\oxi}\,dw_1\,d\mu_\Omega(\vecw')$
with $\oxi=(v_{d-1}c)^{-1}$;
cf.\ \eqref{OXIFORMULAG}.
Hence
\begin{align*}
k(\vecx',\xi,\vecx)=\oxi^{-1}e^{-\xi/\oxi},
\qquad\text{for }\: \vecx',\vecx\in\UB,\:\xi>0.
\end{align*}
Similarly
\begin{align*}
k^{\g}(\xi,\vecx)=\oxi^{-1}e^{-\xi/\oxi}.
\end{align*}
From \eqref{pbndefG} and \eqref{pgendef} we now get
\begin{align*}
p_\bn\bigl(\vecv_0,\vecv;\xi,\vecvp\bigr)
=p\bigl(\vecv;\xi,\vecvp\bigr)
=c\,\sigma(\vecv,\vecv_+)\,e^{-\xi/\oxi}.
\end{align*}
Hence the generalized Boltzmann equation reads
\begin{align*}
\bigl(\partial_t+\vecv\cdot\nabla_\vecq-\partial_{\xi}\bigr)f(t,\vecq,\vecv,\xi,\vecv_+) 
=\int_{\US} f\bigl(t,\vecq,\vecv_0,0,\vecv\bigr)
\, p(\vecv;\xi,\vecv_+)\,d\vecv_0.
\end{align*}
As we discussed in the introduction,
upon making the ansatz 
\begin{align*}
f(t,\vecq,\vecv,\xi,\vecv_+)=f(t,\vecq,\vecv)\,\sigma(\vecv,\vecv_+)\, e^{-\xi/\oxi},
\end{align*}
this equation reduces to the standard linear Boltzmann equation, \eqref{KINETICEQ008}.

\section{Periodic point sets}
\label{periodicpointsetsSEC}

In this section we let $\scrP$ be a locally finite \textit{periodic} point set in $\R^d$
($d\geq2$).
This means that there exists a lattice $\scrL$ of full rank in $\R^d$,
such that $\scrP+\vecell=\scrP$ for all $\vecell\in\scrL$.
We fix, once and for all, $g\in\SL(d,\R)$ and $\delta>0$ such that
$\scrL=\delta^{1/d}\Z^dg$.
We can then choose a finite number of vectors $\vecb_1,\ldots,\vecb_m\in\R^d$
such that
\begin{align}\label{PERIODICpointsetexpl}
\scrP=\bigcup_{j=1}^m \delta^{1/d}(\vecb_j+\Z^d)g,
\end{align}
and $\vecb_i-\vecb_j\notin\Z^d$ for all $i\neq j$. %
Without loss of generality, 
since we may from the start replace $\scrP$ by a translate of $\scrP$,
we may also require that $\vecb_1=\bn$.

We will prove that such a set $\scrP$ satisfies the assumptions in Section \ref{ASSUMPTLISTsec},
with 
\begin{align*}
\Sigma:=\{1,\ldots,m\},
\end{align*}
with the map $\vs:\scrP\to\Sigma$ given by $\vs(\vecq)=j$ 
for all $\vecq\in\delta^{1/d}(\vecb_j+\Z^d)g$ ($j\in\Sigma$),
and with $\mm$ being the uniform probability measure on $\Sigma$.
We will start by giving an explicit description of the map
$j\mapsto\mu_j$ from $\Sigma$ to $P(N_s(\scrX))$.
This requires some preparation.

Let $B$ be the matrix in $M_{m,d}(\R)$ whose row vectors are $\vecb_1,\ldots,\vecb_m$ 
(in this order).
Let $B_1,\ldots,B_d\in\R^m$ be the \textit{column} vectors of $B$,
and %
let $\scrJ$ be the smallest closed subgroup of $\R^m$
containing $\Z^m$ and $B_1,\ldots,B_d$.
In other words, $\scrJ$ equals the closure of the integer span of $\Z^m$ and $B_1,\ldots,B_d$:
\begin{align*}
\scrJ=\overline{\Z^m+\Z B_1+\cdots+\Z B_d}.
\end{align*}
This is a closed Lie subgroup of $\R^m$.
Let $\scrJ^\circ$ be the connected subgroup of $\scrJ$ containing $\bn$;
this is a linear subspace of $\R^m$ which intersects $\Z^m$ in a lattice
(that is, there exists an $\R$-linear basis of $\scrJ^\circ$
consisting of vectors in $\scrJ^\circ\cap\Z^m$).
Furthermore,
$\scrJ\subset\Q^m+\scrJ^\circ$,
and either $\scrJ=\scrJ^\circ=\R^m$ or
$\scrJ$ is a union of a countable number of translates of $\scrJ^\circ$.
Note also that if we assume $\vecb_1=\bn$ then $\scrJ^\circ\subset\vece_1^\perp$.
\begin{remark}\label{scrJcircREM}
Equivalently, $\scrJ^\circ$ can be defined as the orthogonal complement in $\R^m$ of
the set of integer vectors $\vech\in\Z^m$ which satisfy $\vech\cdot B_j\in\Z$ for all
$j\in\{1,\ldots,d\}$. %
\end{remark}

We identify the product space
${\scrJ^{\circ}}^d=\scrJ^{\circ}\times\cdots\times\scrJ^{\circ}$
with the subspace of matrices in $M_{m,d}(\R)$
all of whose column vectors belong to $\scrJ^\circ$.
Recall that $\scrJ^\circ\cap\Z^m$ is a lattice in $\scrJ^\circ$;
hence ${\scrJ^\circ_{\Z}}^d:={\scrJ^\circ}^d\cap M_{m,d}(\Z)$ is a lattice in ${\scrJ^\circ}^d$;
we let $\TT_{{\scrJ^\circ}^d}={\scrJ^\circ}^d/{\scrJ^\circ_{\Z}}^d$ be the quotient torus,
and let $\eta_{\TT}$ be the translational invariant probability measure on $\TT_{{\scrJ^\circ}^d}$.

For each $j$ we have $B_j\in\scrJ\subset\Q^m+\scrJ^\circ$;
hence there exists a positive integer $q$ such that
$B_j\in q^{-1}\Z^m+\scrJ^\circ$ for each $j\in\{1,\ldots,d\}$,
or in other words,
\begin{align}\label{BinqinvMZpJd}
B\in q^{-1}M_{m,d}(\Z)+{\scrJ^\circ}^d.
\end{align}
We fix $q$ once and for all, and set
\begin{align*}
\Gamma(q)=\{\gamma\in\SL(d,\Z)\col \gamma\equiv I\mod q\}.
\end{align*}
Also let $F_q\subset\SL(d,\R)$ be a fixed (Borel measurable) fundamental domain
for $\Gamma(q)\bs\SL(d,\R)$,
and let $\eta$ be the (left and right) Haar measure on $\SL(d,\R)$,
normalized so that $\eta(F_q)=1$.

Recall that $\scrX=\R^d\times\Sigma$.
Now for each $\ell\in\Sigma$ we define the map
\begin{align*}
J_\ell: F_q%
\times\TT_{{\scrJ^\circ}^d}\to N_s(\scrX)
\end{align*}
by
\begin{align}\label{Jelldef}
J_\ell\bigl(A,U+{\scrJ^\circ_{\Z}}^d\bigr)=
\biggl(\bigcup_{j=1}^m \delta^{1/d}\bigl(\vecb_j-\vecb_\ell+\vecu_j-\vecu_\ell+\Z^d\bigr)A\times\{j\}\biggr)
\setminus\{(\bn,\ell)\},
\end{align}
where $\vecu_1,\ldots,\vecu_m\in\R^d$ are the \textit{row} vectors of the matrix $U$
(in order).
Of course, the right hand side of \eqref{Jelldef} is independent of the choice of 
the representative $U\in{\scrJ^\circ}^d$ for the point
$U+{\scrJ^\circ_{\Z}}^d$ in $\TT_{{\scrJ^\circ}^d}$,
since any other representative $U'$ for the same point has
$\vecu_j'\in\vecu_j+\Z^d$ for each $j$.
These maps $J_1,\ldots,J_m$ are continuous.
Finally, we define $\mu_{\ell}$ to be the pushforward by $J_\ell$ of the probability measure
$\eta\times\eta_{\TT}$ on $F_q\times\TT_{{\scrJ^\circ}^d}$:
\begin{align}\label{periodicmuwdef}
\mu_{\ell}:=(\eta\times\eta_{\TT})\circ J_\ell^{-1}\in P(N_s(\scrX)).
\end{align}
It will be clear from the proof of Proposition \ref{PERIODICSETprop}
that %
$\mu_\ell$ is independent of the choice of $q$ 
and the choice of the fundamental domain $F_q$.

\begin{prop}\label{PERIODICSETprop}
For $\scrP, \Sigma,\mm$ as above, 
all the assumptions in Section \ref{ASSUMPTLISTsec} are satisfied,
with $\scrE=\emptyset$ {\blu in [P2]},
and with the map $\ell\mapsto\mu_\ell$ from $\Sigma$ to $P(N(\scrX))$ given by \eqref{periodicmuwdef}.
\end{prop}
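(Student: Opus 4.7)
The plan is to verify the six assumptions [P1-3] and [Q1-3] in turn, with the overwhelming majority of the work concentrated on [P2].

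Condition [P1] is immediate from the periodic structure: each set $\scrP_j := \delta^{1/d}(\vecb_j+\Z^d)g$ is a coset of a full-rank lattice of covolume $\delta$, so has asymptotic density $1/\delta$; since $c_\scrP = m/\delta$ and $\mm$ is uniform on $\Sigma$, \eqref{ASYMPTDENSITY1b} follows for product sets and extends by linearity.

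For [P2], fix $\ell \in \Sigma$ and $\vecq \in \scrP_\ell$; writing $\vecq = \delta^{1/d}(\vecb_\ell+\vecm)g$ for some $\vecm \in \Z^d$, we have
\begin{align*}
(\tP_\vecq - \vecq) R(\vecv) D_\rho
= \left(\bigsqcup_{j=1}^m \delta^{1/d}(\vecb_j - \vecb_\ell + \Z^d)\, g R(\vecv) D_\rho \times \{j\}\right) \setminus \{(\bn,\ell)\}.
\end{align*}
Setting $M(\vecv,\rho) := gR(\vecv)D_\rho$, the right-hand side depends on $M(\vecv,\rho)$ modulo $\Gamma(q)$ (acting on the left) together with the vector of torus shifts $\bigl((\vecb_j - \vecb_\ell)\,\delta^{1/d} M(\vecv,\rho) \mod \Z^d\bigr)_{j=1}^m$, which is naturally a point of $\TT_{{\scrJ^\circ}^d}$ by \eqref{BinqinvMZpJd} and the definition of $\scrJ^\circ$. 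So the task reduces to proving that, as $\rho \to 0$ with $\vecv$ random according to any $\lambda \in \Pac(\US)$, the resulting point in $(\Gamma(q)\bs \SL(d,\R)) \times \TT_{{\scrJ^\circ}^d}$ equidistributes to $\eta \times \eta_\TT$, \emph{uniformly} in $\vecq \in \scrP \cap \scrB^d_{T\rho^{1-d}}$. This is a direct analogue of the equidistribution of horospherical translates used in the lattice case, and can be extracted from the Marklof--Str\"ombergsson equidistribution theorem \cite{partI}: the orbit $\{gR(\vecv)D_\rho\}_{\vecv \in \US}$ equidistributes on $\Gamma(q)\bs \SL(d,\R)$, and the joint statement with the torus factor is obtained by applying that theorem to the $\Gamma(q)$-extension corresponding to the subspace $\scrJ^\circ$, with uniformity in $\vecq$ coming from the fact that shifting $\vecq$ by a lattice vector in $\scrP_\ell$ only post-translates $M(\vecv,\rho)$ by an element of $\Gamma(q)$ and does not change the torus coordinate.

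With [P2] in hand, [P3] follows from the resulting uniform tail bound on the free path length: by Theorem \ref{Thm2gen} (which only uses [P1]--[P2] and [Q1]--[Q3] for points with the relevant markings), the distribution of $\rho^{d-1}\tau_1$ is asymptotically governed by the transition kernel $k$, and Lemma \ref{UNIFKBOUNDGlem1} then implies the required no-escape-of-mass statement. For [Q1]--[Q3] we use the explicit formula \eqref{periodicmuwdef}: [Q1] follows because any $K \in \SO(d-1)$ commutes with $D_\rho$ and preserves the right Haar measure $\eta$ on $\Gamma(q)\bs\SL(d,\R)$, so $\mu_\ell \circ K^{-1} = \mu_\ell$; [Q2] holds because for $\eta$-a.e.\ $A$ the lattice $\Z^d A$ has pairwise distinct first coordinates, and adding $\eta_\TT$-random torus shifts $\vecu_j$ preserves this genericity; [Q3] follows from the standard fact (via Siegel's formula or direct analysis of the cusp of $\SL(d,\Z)\bs\SL(d,\R)$) that a Haar-random unimodular lattice misses $\scrB^d(\vecx,R)$ with probability $\to 0$ as $R \to \infty$, uniformly in $\vecx$, applied to the $m$ translates making up $\Xi_\ell$.

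The hard part is clearly the joint equidistribution in [P2], and more specifically identifying the correct torus $\TT_{{\scrJ^\circ}^d}$: one must show that the closure of the image of the shift map in the $m \cdot d$-dimensional torus is precisely ${\scrJ^\circ}^d / {\scrJ^\circ_\Z}^d$, with no unexpected resonances causing the orbit to concentrate on a proper subtorus. This relies on Remark \ref{scrJcircREM}: the orthogonal-complement characterization of $\scrJ^\circ$ exactly matches the set of integer linear relations among the columns of $B$ modulo $\Z$, so that the unipotent/horospherical orbit in $\Gamma(q)\bs\SL(d,\R)$ lifts to an orbit whose closure in the torus extension is the full ${\scrJ^\circ}^d$-fibre; Ratner's theorem (or the more concrete equidistribution on semisimple plus torus extensions in \cite{partI}) then identifies the limit as $\eta \times \eta_\TT$, and the uniformity in $\vecq \in \scrP_T(\rho)$ reduces to the same uniformity statement already established in the lattice case.
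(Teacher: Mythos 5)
Your proposal attempts a direct verification of [P1--3] and [Q1--3] for the periodic set $\scrP$, whereas the paper instead realizes $\scrP$ as a cut-and-project set $\scrP(\scrW,\scrL')$ for a lattice $\scrL'\subset\R^n$ and a finite window $\scrW$, invokes Proposition~\ref{QCprop}, and then translates the abstract limit measures into the explicit formula \eqref{periodicmuwdef} via Lemma~\ref{HgkformulaLEM}. The direct route is reasonable, and your observation that $\tP_\vecq-\vecq$ depends only on $\vs(\vecq)=\ell$ (so that uniformity over $\vecq\in\scrP_T(\rho)$ for [P2] is automatic) is correct and a genuine simplification available in the periodic case. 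Nevertheless the proposal has two substantive gaps.

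\textbf{Gap in [P2].} You assert that $(\tP_\vecq-\vecq)R(\vecv)D_\rho$ ``depends on $M(\vecv,\rho)$ modulo $\Gamma(q)$ together with the torus shifts $((\vecb_j-\vecb_\ell)\delta^{1/d}M(\vecv,\rho)\bmod\Z^d)_j$, which is naturally a point of $\TT_{{\scrJ^\circ}^d}$.'' This is not a well-defined factoring: $(\vecb_j-\vecb_\ell)M\bmod\Z^d$ lives in $(\R^d/\Z^d)^m$, not in $\TT_{{\scrJ^\circ}^d}$, and passing to a $\Gamma(q)$-representative $M=\gamma A$ changes the shift coset $(\vecb_j-\vecb_\ell)\gamma A\bmod\Z^d A$ by an element of $({\scrJ^\circ}^d+M_{m,d}(\Z))A$, so the observable factors through a nontrivial \emph{bundle} over $\Gamma(q)\bs\SL(d,\R)$ with fibre $\TT_{{\scrJ^\circ}^d}$, not through the naive product you describe. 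Identifying this bundle as the homogeneous space $\Gamma_H\bs H$ associated to the Ratner orbit closure, and hence showing the orbit closure is exactly the full $\TT_{{\scrJ^\circ}^d}$-fibre, is precisely the content of Lemma~\ref{HgkformulaLEM}; the proof there is a nontrivial computation, checking that $\scrJ^\circ$ is the minimal subspace $\scrU\subset\R^m$ for which $H_\scrU$ intersects $\Gamma$ in a lattice. Invoking \cite{partI} and Ratner ``for the extension corresponding to $\scrJ^\circ$'' is not a proof that this $H$ is the orbit closure; there genuinely could be ``unexpected resonances,'' and ruling them out requires the argument of Lemma~\ref{HgkformulaLEM}, which you do not supply.

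\textbf{Gap in [P3].} Your argument invokes Theorem~\ref{Thm2gen} and Lemma~\ref{UNIFKBOUNDGlem1} to conclude the no-escape-of-mass condition. But Theorem~\ref{Thm2gen} concerns initial conditions \emph{on or near a scatterer} $\vecq\in\scrP$, while [P3] concerns \emph{macroscopic} initial conditions $\rho^{1-d}\vecq$ with $\vecq$ running over a Lebesgue-positive bounded set $B$; these are controlled by entirely different point processes ($\Xi_\vs$ versus the macroscopic limit $\Xi$, respectively). The on-scatterer tail bound does not transfer to generic starting points without a macroscopic equidistribution statement. The paper handles this by proving Lemma~\ref{qcgenlimitprop} directly (using the $\tH=\ASL$-equidistribution of \cite{qc}, which is logically independent of [P3]) and then appealing to Remark~\ref{ASS:bdfreepathrem}. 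In fact Remark~\ref{ASS:bdfreepathrem} makes explicit that [P3] is essentially \emph{equivalent} to the macroscopic limit theorem once the other hypotheses are in place, so one cannot deduce [P3] merely from the on-scatterer results.

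\textbf{Minor issue with [Q2].} You claim the random torus shifts preserve genericity, but when $\scrJ^\circ=\{\bn\}$ (the ``resonant'' periodic case, e.g.\ $\vecb_j\in\Q^d$) the torus $\TT_{{\scrJ^\circ}^d}$ is trivial and there are no shifts to randomize over; one must instead argue using genericity of $A$ alone for the fixed relative displacements $\delta^{1/d}(\vecb_j-\vecb_{j'})A$. This can be done, but the argument as stated is incomplete.
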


We give the proof of Proposition \ref{PERIODICSETprop}
in Section \ref{PERIODICsetproofsec},
by deriving it as a special case of our main result for quasicrystals of cut-and-project type,
Proposition \ref{QCprop}, but with the limit measures $\mu_\ell$ given in more explicit form.

\vspace{5pt}

\begin{remark}
It is immediate from \eqref{Jelldef} that
\begin{align*}
J_\ell\bigl(A,U+{\scrJ^\circ_{\Z}}^d\bigr)\cap (\R^d\times\{\ell\})
=(\delta^{1/d}\Z^dA\setminus\{\bn\})\times\{\ell\}.
\end{align*}
It follows that a point process $\Xi$ in $\scrX$ with distribution $\mu_\ell$
has the property that the projection of $\Xi\cap(\R^d\times\{\ell\})$ in $\R^d$
is a random lattice of covolume $\delta$ in $\R^d$
minus the origin,
distributed according to the standard invariant measure on such lattices.
\end{remark}

\begin{ex}\label{PERIODICgenericEX}
Assume w.l.o.g.\ $\vecb_1=\bn$.
It follows from Remark \ref{scrJcircREM} that $\scrJ^\circ=\vece_1^\perp$ holds if and only if
the vectors $\vece_1,\ldots,\vece_d,\vecb_2,\ldots,\vecb_m$ are linearly independent over $\Q$.
In this case, ${\scrJ^\circ}^d$ consists of all matrices in $M_{m,d}(\R)$ with
vanishing top row; we identify this space with $M_{m-1,d}(\R)$ in the obvious way,
and then get $\TT_{{\scrJ^\circ}^d}=M_{m-1,d}(\R)/M_{m-1,d}(\Z)$.
Note also that we can take $q=1$.
It follows that in this case, 
a point process $\Xi$ in $\scrX$ with distribution $\mu_\ell$ can
be constructed as follows:
Pick a random lattice $L$ of covolume $\delta$ in $\R^d$
distributed according to the $\SL(d,\R)$ invariant probability measure on such lattices;
then pick $m-1$ random points $\{\vecp_j\}_{j\in\{1,\ldots,m\}\setminus\{\ell\}}$
in the torus $\R^d/L$, independently and uniformly distributed;
and finally set:
\begin{align*}
\Xi=\bigl((L\setminus\{\bn\})\times\{\ell\}\bigr)\:\:\bigcup\:\:\bigcup_{j\in\{1,\ldots,m\}\setminus\{\ell\}} \bigl((L+\vecp_j)\times\{j\}\bigr).
\end{align*}
Note that if $p_1$ is the projection map $\scrX\to\R^d$ then
\begin{align*}
p_1(\Xi)=(L\setminus\{\bn\})\:\:\bigcup\:\:\bigcup_{j\in\{1,\ldots,m\}\setminus\{\ell\}} (L+\vecp_j),
\end{align*}
and the distribution of this point process in $\R^d$ is independent of $\ell$.
Hence in the present situation it is possible to
discard the set of marks, i.e.\
the assumptions in Section \ref{ASSUMPTLISTsec} 
can be satisfied with $\Sigma$ being a singleton set and $\scrX=\R^d$ (up to obvious identification).
\end{ex}

\begin{ex}\label{PERIODICm2ex}
Now assume $m=2$, and again $\vecb_1=\bn$.
Then $\scrJ^\circ=\vece_1^\perp$ if and only if $\vecb_2\notin\Q^d$,
and in this case the description in Ex.\ \ref{PERIODICgenericEX} applies.
On the other hand if $\vecb_2\in\Q^d$ then $\scrJ^\circ=\{\bn\}$,
thus $\TT_{{\scrJ^\circ}^d}=\{\bn\}$,
and $q$ is any positive integer such that $\vecb_2\in q^{-1}\Z^d$.
In this case the formulas for $J_1,J_2$ become
\begin{align*}
&J_1(A)=\Bigl(\delta^{1/d}(\Z^d\setminus\{\bn\}) A\times\{1\}\Bigr) \:\:\bigcup\:\:\Bigl(\delta^{1/d}(\vecb_2+\Z^d)A\times\{2\}\Bigr);
\\
&J_2(A)=\Bigl(\delta^{1/d}(\Z^d\setminus\{\bn\}) A\times\{2\}\Bigr) \:\:\bigcup\:\:\Bigl(\delta^{1/d}(-\vecb_2+\Z^d)A\times\{1\}\Bigr).
\end{align*}
Here we remark that for $A$ random in $(F_q,\eta)$,
the two random point sets $p_1(J_1(A))$ and $p_1(J_2(A))$
have the same distribution.
(The proof of this fact uses the observation that there exists an element $\gamma\in\SL(d,\Z)$ such that
$(-\vecb_2+\Z^d)\gamma=\vecb_2+\Z^d$; 
we then get $p_1(J_2(\gamma A))=p_1(J_1(A))$ for all $A\in\SL(d,\R)$;
finally note that $\gamma F_q$ is a fundamental domain for $\Gamma(q)\bs\SL(d,\R)$
since $\Gamma(q)$ is normal in $\SL(d,\Z)$.)
Hence as in Ex.\ \ref{PERIODICgenericEX} it is again 
possible to satisfy the assumptions in Section \ref{ASSUMPTLISTsec}
without using markings, i.e.\ with $\Sigma$ being a singleton set.

It should also be noted that the %
two cases for $m=2$ just discussed
(i.e., $\scrJ^\circ=\vece_1^\perp$
and $\scrJ^\circ=\{\bn\}$) %
are closely related to %
\cite[Cor.\ 5.4]{partI}
and \cite[Cor.\ 5.9]{partI}, respectively.
\end{ex}

\begin{ex}\label{HONEYCOMBex}
A special case of the situation
in Ex.\ \ref{PERIODICm2ex}
is the honeycomb point set,
for which the limit distribution of the free path length 
in the low density limit was considered in 
Boca and Gologan \cite{Boca09}
and Boca \cite{Boca10}; 
cf.\ also \cite[Remark 2.2]{qc}.
The honeycomb point set
can be represented as in
\eqref{PERIODICpointsetexpl} with $d=2$, $m=2$, $\delta=\sqrt3/2$, $g=\matr10{1/2}{\sqrt3/2}$ and
$\vecb_1=\bn$, $\vecb_2=\frac13(1,1)$;
thus $\scrJ^\circ=\{\bn\}$ and we can take $q=3$ in Ex.\ \ref{PERIODICm2ex}.
\end{ex}

\begin{ex}\label{MARKINGSessEX}
In the previous examples we noted that it was possible to discard the marking space.
A simple example where the marking space \textit{cannot} be discarded 
is obtained by taking $m=3$,
$\vecb_1=\bn$, $\vecb_2\in\Q^d\setminus\Z^d$ and $\vecb_3\notin\Q^d$
in \eqref{PERIODICpointsetexpl}.
(However for this example 
it is still possible to reduce from $\#\Sigma=3$ to $\#\Sigma=2$.)
\end{ex}

\section{Quasicrystals of cut-and-project type}
\label{QCexsec}

In this section we let $\scrP$ be a \textit{regular cut-and-project set};
such $\scrP$ are also referred to as (Euclidean) model sets. 
{\blu These are point sets which are typically non-periodic, 
yet strongly correlated.
Famous examples of point sets $\scrP$ which are covered by the theory in the present section
are the vertex sets of a Penrose tiling and of an Ammann-Beenker tiling.}
Previous results on the Lorentz gas in a quasicrystal have been limited to numerical simulations \cite{Kraemer12} and the distribution of free path lengths \cite{Wennberg12,qc}. 

\subsection{Preliminaries}
\label{QCexsec1}

We will use almost the same notation as in \cite{qc}:
Let $d\geq2$, $m\geq0$, $n=d+m$, and denote by $\pi$ and $\pi_\intl$ the projection of 
$\R^n=\R^d\times\R^m$ onto the first $d$ and last $m$ coordinates.
We refer to $\RR^d$ and $\RR^m$ as the {\em physical space} and {\em internal space}, respectively. 
{\blu Let $\scrL$ be a grid (also called affine lattice) in $\R^n$, i.e.\ a translate of a lattice in $\R^n$ of full rank.\label{afflatt}
A \textit{cut-and-project set} $\scrP=\scrP(\scrW,\scrL)$ 
is defined as the set of all projections to $\R^d$
of points in $\scrL$ which lie above a certain \textit{window} set $\scrW\subset\R^m$,
that is:
\begin{equation}\label{CUTPROJDEF}
	\scrP=\scrP(\scrW,\scrL) := \{ \pi(\vecy) \col \vecy\in\scrL, \; \pi_\intl(\vecy)\in\scrW \} \subset \RR^d .
\end{equation}

Conditions which are often imposed in the quasicrystal literature is that 
$\pi|_\scrL$ is injective and $\pi_\intl(\scrL)$ is dense in $\R^m$;
however we will not require any of these here.\footnote{This will allow us to also include periodic sets as part of the same
setting; see Section \ref{PERIODICsetproofsec} for details.}
Allowing this generality makes it necessary to introduce some notation of a more technical nature\footnote{In the special case when
$\pi_\intl(\scrL)$ is dense in $\R^m$, the notation which we introduce here 
could be dispensed with,
since in this case we simply have:
$\scrA=\scrA^\circ=\R^m$; $m_1=m$; 
$\mu_{\scrA}=$ Lebesgue measure on $\R^m$; $\scrV^\circ=\scrV=\R^n$;
$\scrL_{\scrV^\circ}=\scrL-\scrL$ (this 
is the lattice which $\scrL$ is a translate of),
and $\mu_{\scrV}=$ Lebesgue measure on $\R^n$.}:} 
Let $\scrA$ be the closure of $\pi_\intl(\scrL)$ in $\R^m$;
then $\scrA$ is a translate of the set $\scrA-\scrA=\{a-a'\col a,a'\in\scrA\}$,
which is a closed %
subgroup of $\R^m$.
We denote by $\scrA^\circ$ the connected component of $\scrA-\scrA$ containing $\bn$;
this is a linear subspace of $\R^m$,
and both $\scrA-\scrA$ and $\scrA$ are countable disjoint unions of translates of $\scrA^\circ$.
Set $m_1=\dim\scrA^\circ$.
We define $\mu_\scrA$ to be the natural volume measure on $\scrA$,
i.e.\ the measure which restricts to the standard $m_1$ dimensional Lebesgue measure on each translate of $\scrA^\circ$
contained in $\scrA$. 
Set\footnote{Our usage of the symbols ``$\scrV$'' and ``$\scrV^\circ$'' differs slightly from that in \cite{qc}.}
\begin{align*}
\scrV^\circ=\R^d\times\scrA^\circ,\qquad
\scrV=\R^d\times\scrA=\scrV^\circ+\scrL,\quad
\text{and}\qquad
\scrL_{\scrV^\circ}:=(\scrL-\scrL)\cap\scrV^\circ.
\end{align*}
Then $\scrL_{\scrV^\circ}$ is a lattice of full rank in $\scrV^\circ$.
We let $\mu_\scrV=\vol\times\mu_{\scrA}$, the natural volume measure on $\scrV$.
By abuse of notation, we will write $\mu_\scrA$ \textit{also} for the $m_1$ dimensional Lebesgue measure on $\scrA^\circ$,
and $\mu_\scrV$ also for the natural volume measure on $\scrV^\circ$.

{\blu We will always assume that the window set $\scrW$ is a subset of $\scrA$;
note that this is no loss of generality, since the cut-and-project set $\scrP=\scrP(\scrW,\scrL)$
remains the same upon replacing $\scrW$ by $\scrW\cap\scrA$.
We will furthermore assume that $\scrW$ is bounded, 
and that $\scrW$ has non-empty interior with respect to the topology of $\scrA$.}
If $\scrW$ has boundary of measure zero with respect to $\mu_\scrA$, we will say that $\scrP$
is \textit{regular.}
It follows from Weyl equidistribution (see \cite{Hof98} or \cite[Prop.\ 3.2]{qc})
that for any regular cut-and-project set $\scrP$ and 
any bounded $B\subset\RR^d$ with boundary of measure zero with respect to Lebesgue measure, 
\begin{equation}\label{WEYLCOUNTING1}
\lim_{T\to\infty} \frac{\#\{ \vecb \in \scrL\col \pi(\vecb)\in\scrP\cap T B\}}{T^d} 
= \delta_{d,m}(\scrL)\mu_\scrA(\scrW)\vol(B),
\end{equation}
{\blu where
\begin{align}\label{DELTADMLDEF}
\delta_{d,m}(\scrL):=\frac1{\mu_{\scrV}(\scrV^\circ/\scrL_{\scrV^\circ})}.
\end{align}}

{\blu Our final assumption is that the window $\scrW$ is 
appropriately chosen so that 
$\pi|_{\scrL\cap\pi_\intl^{-1}(\scrW)}$ is injective, and thus %
a bijection onto $\scrP$.}
Then \eqref{WEYLCOUNTING1} implies that $\scrP$ has asymptotic density
\begin{align}\label{qccPdef}
c_\scrP=\delta_{d,m}(\scrL)\mu_\scrA(\scrW),
\end{align}
i.e., \eqref{ASYMPTDENSITY0} holds with this $c_\scrP$.
Under the above assumptions $\scrP$ is a Delone set, i.e., uniformly discrete and relatively dense in $\RR^d$
\cite[Prop.\ 3.1]{qc};
in particular $\scrP$ is locally finite. %

Let $\ASL(n,\R)=\SL(n,\R)\ltimes\R^n$, with multiplication law\label{ASLMULTLAW}
\begin{align*}
(M,\vecx)(M',\vecx')=(MM',\vecx M'+\vecx').
\end{align*}
We let $\ASL(n,\R)$ act {\blu from the right} on $\R^n$ %
by affine linear maps, through
\begin{align*}
\vecy\mapsto \vecy(M,\vecx):=\vecy M+\vecx.
\end{align*}
Set $G=\ASL(n,\R)$ and $\Gamma=\ASL(n,\Z)$.
We fix, once and for all, $g\in G$ and $\delta>0$ so that %
$\scrL=\delta^{1/n}(\Z^ng)$. %
We define an embedding of $\ASL(d,\RR)$ in $G$ by
\begin{equation*}
\varphi_g: \ASL(d,\RR) \to G,\quad (A,\vecx) \mapsto g \left( \begin{pmatrix} A  &  0 \\ 0 & 1_m \end{pmatrix},(\vecx,\vecnull) \right) g^{-1} .
\end{equation*}
We also set $G^1=\SL(n,\R)$ and $\Gamma^1=\SL(n,\Z)$, and 
identify $G^1$ with a subgroup of $G$ through
$M\mapsto(M,\bn)$;
similarly we identify $\SL(d,\R)$ with a subgroup of $\ASL(d,\R)$.
It follows from {\blu the celebrated results of Ratner} \cite{Ratner91a}, \cite{Ratner91b}
that there exists a unique closed connected subgroup $H=H_g$ of $G$ such that $\Gamma\cap H$ is a lattice in $H$, $\varphi_g(\SL(d,\R))\subset H$,
and the closure of $\Gamma\backslash\Gamma\varphi_g(\SL(d,\RR))$ in $\GamG$ is given by 
\begin{align}\label{qcXdef}
X:=\Gamma\backslash\Gamma H.
\end{align}
We set $\Gamma_H:=\Gamma\cap H$,
and note that $X$ can be naturally identified with the homogeneous space $\Gamma_H\backslash H$.
We denote the unique right-$H$ invariant probability measure on $X$ %
by $\mu_g$; sometimes we will also let $\mu_g$ denote the corresponding Haar measure on $H$.

Similarly, there exists a unique closed connected subgroup $\widetilde H=\widetilde H_g$ of $G$ such that $\Gamma\cap \widetilde H$ is a lattice in $\widetilde H$, $\varphi_g(\ASL(d,\R))\subset\widetilde H$,
and the closure of $\Gamma\backslash\Gamma\varphi_g(\ASL(d,\RR))$ in $\GamG$ is given by 
\begin{align*}
\tX=\Gamma\backslash\Gamma\widetilde H.
\end{align*}
Note that $\tX$ %
can be naturally identified with the homogeneous space $(\Gamma\cap\widetilde H)\backslash\widetilde H$. 
We denote the unique right-$\widetilde H$ invariant probability measure on either of these spaces by $\tmu_g$;
sometimes we will also use $\tmu_g$ to denote the corresponding Haar measure on $\widetilde H$.
Of course, $H\subset \widetilde H$. %
It holds that $\pi_\intl(\delta^{1/n}(\Z^nhg))\subset\scrA$ for all $h\in\tH$, and
$\scrA$ equals the closure of $\pi_\intl(\delta^{1/n}(\Z^nhg))$ for $\tmu_g$-almost all $h\in\tH$
and also for $\mu_g$-almost all $h\in H$; cf.\ \cite[Props.\ 3.5 and 4.5]{qc}.

The following is a corrected and slightly generalized version of the
Siegel-Veech formula \cite[Theorem 5.1]{qc}.
\begin{thm}\label{SIEGELVEECHTHM}
For any $f\in\L^1(\scrV,\mu_\scrV)$,
\begin{align}\label{SIEGELVEECHTHMres}
\int_X\: %
\sum_{\substack{\vecm\in\delta^{1/n}(\Z^n hg)\\\pi(\vecm)\neq\bn}}f(\vecm)\,d\mu_g(h)
=\delta_{d,m}(\scrL)\int_{\scrV}f\,d\mu_\scrV.
\end{align}
\end{thm}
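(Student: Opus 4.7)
My plan is to carry out the classical Siegel--Veech argument: realize the left-hand side of \eqref{SIEGELVEECHTHMres} as a positive linear functional on $\C_c(\scrV)$, show that the associated Radon measure is invariant under a large enough group acting on $\scrV$, invoke a uniqueness theorem to deduce proportionality to $\mu_{\scrV}$, and finally pin down the constant. For $f\in\C_c(\scrV)$ with $f\geq 0$, denote the left-hand side of \eqref{SIEGELVEECHTHMres} by $\Phi(f)$. First I would verify finiteness: unfolding $\int_X(\cdot)\,d\mu_g$ to an integral over a fundamental domain $\fF\subset H$ for $\Gamma_H\bs H$ and swapping sum and integral, $\Phi(f)$ becomes a sum, over $\Gamma_H$-orbit representatives $\vecm_0\in\delta^{1/n}\Z^n$, of integrals $\int_H f(\vecm_0 hg)\,d\mu_g(h)$ (divided by the appropriate stabilizer orders). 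Each individual term is finite for $f\in\C_c$ since the $h$ with $\vecm_0 hg\in\supp f$ form a set of bounded $\mu_g$-measure---this requires a Mahler-type compactness bound controlling how sparse the relevant $\vecm_0$'s are. Since this decomposition shows $\Phi$ extends from indicators of compact sets to all $f\in\L^1(\scrV,\mu_\scrV)$ once the main identity is established, one obtains a well-defined Radon measure $\nu$ on $\scrV$.

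The crux is \emph{invariance}. I claim $\nu$ is invariant under the right-action on $\scrV\subset\R^n$ of the conjugate subgroup $H':=g^{-1}Hg\subset G$. Indeed, for $h_0\in H$ define $T(\vecv):=\vecv\,g^{-1}h_0g$, an affine self-map of $\R^n$; then
\begin{equation*}
\delta^{1/n}\Z^n(hh_0)g \;=\; \bigl(\delta^{1/n}\Z^n hg\bigr)\cdot g^{-1}h_0g,
\end{equation*}
so substituting $h\mapsto hh_0$ in the outer integral (which preserves $\mu_g$ by right-$H$-invariance) and then reindexing the inner sum by $\vecm\mapsto\vecm T^{-1}$ yields $\Phi(f)=\Phi(f\circ T)$, i.e.\ $T_*\nu=\nu$. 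Two subsidiary checks are required: that $T$ maps $\scrV$ into itself (which follows from the Ratner-theoretic construction of $H$ as the minimal closed connected subgroup for which $\Gamma\bs\Gamma\varphi_g(\SL(d,\R))$ has closure $X$, via which $H$ is precisely adapted to the internal subspace $\scrA^\circ$), and that the exceptional locus $\pi^{-1}(\bn)\cap\scrV$ carries zero $\nu$-mass and transforms compatibly under $T$.

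The same Ratner-theoretic input then shows that $H'$ acts transitively on each connected leaf $\R^d\times(a+\scrA^\circ)$ of $\scrV=\R^d\times\scrA$ (for $a$ running over coset representatives of $\scrA/\scrA^\circ$), while preserving $\mu_{\scrV}$ on each leaf. Uniqueness of $H'$-invariant Radon measures on $\scrV\setminus\pi^{-1}(\bn)$ therefore forces $\nu=C\,\mu_{\scrV}$ for some constant $C\geq 0$. To compute $C$ I would test \eqref{SIEGELVEECHTHMres} against $f=\mathbf{1}_{B\times U}$ for a large ball $B\subset\R^d$ and a bounded Borel $U\subset\scrA$ with $\mu_\scrA(\partial U)=0$, and combine the Weyl equidistribution statement \eqref{WEYLCOUNTING1} with the equidistribution of $\Gamma\varphi_g(\SL(d,\R))$-orbits in $X$ to read off $C=\delta_{d,m}(\scrL)$. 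The main obstacle I expect is the pair of subsidiary checks in the invariance step---verifying both that $T$ preserves $\scrV$ for every $h_0\in H$ (not just for $h_0$ in the manifest subgroup $\varphi_g(\SL(d,\R))$) and that $H'$ acts sufficiently transitively on each leaf of $\scrV$ for the uniqueness argument to apply. Both hinge on a precise algebraic description of the Ratner subgroup $H$ beyond its abstract defining property, and particular care will be needed in the case $\scrA^\circ\subsetneq\R^m$, i.e.\ when $\pi_\intl(\scrL)$ fails to be dense in $\R^m$.
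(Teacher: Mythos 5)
Your proposal follows the classical Siegel--Veech strategy (realize the transform as a positive functional, establish invariance, invoke uniqueness of invariant measures). For context, the paper's own ``proof'' of Theorem~\ref{SIEGELVEECHTHM} is extremely short: it cites \cite[Thm.\ 5.1]{qc} for the case $\delta=1$ and then disposes of general $\delta$ by a one-line scaling argument. The substantive content is thus deferred to the reference, whose proof is indeed of Veech type.

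However, there is a concrete step in your invariance argument that fails for $\delta\neq1$. Writing $g^{-1}h_0g=(M_0,\vecx_0)\in\ASL(n,\R)$, one computes
\begin{align*}
\delta^{1/n}\bigl(\Z^n(hh_0)g\bigr) &= \bigl\{\delta^{1/n}(\vecy M_0)+\delta^{1/n}\vecx_0 \col \vecy\in\Z^nhg\bigr\},\\
\bigl(\delta^{1/n}\Z^nhg\bigr)\cdot g^{-1}h_0g &= \bigl\{\delta^{1/n}(\vecy M_0)+\vecx_0 \col \vecy\in\Z^nhg\bigr\};
\end{align*}
these coincide only if $\delta=1$ or $\vecx_0=\bn$. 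Since $H$ is in general a genuinely affine subgroup of $G=\ASL(n,\R)$, $\vecx_0$ need not vanish, so the claimed identity is wrong as stated and $\nu$ is not $H'$-invariant with $H'=g^{-1}Hg$; the correct invariance group is $H'$ conjugated by the scalar $\delta^{1/n}I$. This is precisely why the paper reduces to $\delta=1$ first and then scales: replacing $\scrL$ by $\delta^{1/n}\scrL$ rescales $\mu_\scrV$ and $\delta_{d,m}(\scrL)$ compatibly. More seriously, the uniqueness step you sketch requires $H'$ to act transitively (or with ergodic Haar measure) on each leaf $\R^d\times(a+\scrA^\circ)$ of $\scrV\setminus\pi^{-1}(\bn)$. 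This is not a formal consequence of $H$'s abstract Ratner-theoretic defining property; it requires a detailed structural description of $H$ (compare the explicit computation in Lemma~\ref{HgkformulaLEM} for the periodic case, and the structural lemmas of \cite{qc} that the present paper cites elsewhere). You flag this yourself as the ``main obstacle,'' but as it stands it is the missing ingredient without which the uniqueness argument does not close --- it is a genuine gap, not a subsidiary check.
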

(Note that $\delta^{1/n}(\Z^n hg)$ is invariant under $h\mapsto\gamma h$ for all $\gamma\in\Gamma$, %
and $\delta^{1/n}(\Z^n hg)\subset\scrV$ for all $h\in H$. %
Hence the left hand side of \eqref{SIEGELVEECHTHMres} is well defined.)
\begin{proof}
If $\delta=1$, then Theorem \ref{SIEGELVEECHTHM} is exactly
\cite[Theorem 5.1]{qc} (as explained in the erratum to that paper,
the summation condition in \cite[(5.1)]{qc} should be
corrected to ``$\vecm\in\Z^nhg\setminus\pi^{-1}(\{\bn\})$'').
The proof of that theorem is easily generalized to the case of an arbitrary $\delta>0$.
Alternatively, the extension to general $\delta$ can be done by a simple scaling argument.
\end{proof}

The following lemma gives information regarding the summation condition in Theorem~\ref{SIEGELVEECHTHM}.
\begin{lem}\label{excpCONSTlem}
If $\vecm\in\Z^n$ and $\pi(\vecm g)=\bn$, then $\vecm h=\vecm$ for all $h\in H$.
\end{lem}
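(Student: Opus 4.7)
The plan is to identify the stabilizer $H':=\{h\in G\col\vecm h=\vecm\}$ and to use the uniqueness of $H$ via a connectedness argument to conclude $H\subset H'$.

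First I would verify two algebraic facts. The parametrization $M\mapsto(M,\vecm-\vecm M)$ realizes $H'$ as a closed connected subgroup of $G$ isomorphic as a Lie group to $\SL(n,\R)$, and under this isomorphism $\Gamma\cap H'$ corresponds to $\SL(n,\Z)$, hence is a lattice in $H'$. Also, the hypothesis $\pi(\vecm g)=\bn$ gives $\varphi_g(\SL(d,\R))\subset H'$: writing $\varphi_g(A,\bn)=g M_A g^{-1}$ with $M_A=\bigl(\bigl(\begin{smallmatrix}A&0\\0&1_m\end{smallmatrix}\bigr),\bn\bigr)$, the vector $\vecm g$ has vanishing $\R^d$-component and is therefore fixed by $M_A$, so $\vecm\varphi_g(A,\bn)=(\vecm g)M_A g^{-1}=\vecm g\cdot g^{-1}=\vecm$.

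The next step would be to show that $\Gamma H'$ is closed in $G$. For this I would use that the orbit of $\vecm$ under the right $G$-action on $\R^n$ is all of $\R^n$ (translations act transitively) and its stabilizer is $H'$, so that the orbit map descends to a diffeomorphism $G/H'\cong\R^n$, $hH'\mapsto\vecm h$. Under this identification $\Gamma H'/H'$ corresponds to $\vecm\Gamma=\Z^n$, which is closed (indeed discrete) in $\R^n$; hence $\Gamma H'$ will be closed in $G$, and moreover its connected components will be exactly the cosets $\gamma H'$ indexed by the discrete set $\Gamma/(\Gamma\cap H')$.

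With these pieces in place the conclusion would follow quickly. From $\Gamma\varphi_g(\SL(d,\R))\subset\Gamma H'$ together with the closedness of $\Gamma H'$, one gets $\Gamma H=\overline{\Gamma\varphi_g(\SL(d,\R))}\subset\Gamma H'$, so $H\subset\Gamma H'$. Since $H$ is connected and contains $e$, it must lie in the connected component of $\Gamma H'$ through $e$, namely $H'$; hence $H\subset H'$, i.e.\ $\vecm h=\vecm$ for every $h\in H$. The only step that I expect to require any real thought is the closedness of $\Gamma H'$; this is precisely why the identification $G/H'\cong\R^n$ is introduced, and without it the final connectedness argument would not be available.
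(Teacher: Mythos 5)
Your proof is correct, and it is essentially a structural repackaging of the paper's argument: where the paper directly approximates $h\in H$ by a sequence $\gamma_j\varphi_g(A_j)$, computes $\vecm(\gamma_j\varphi_g(A_j))^{-1}=\vecm\gamma_j^{-1}\in\Z^n$, and passes to the limit to get $\vecm h^{-1}\in\Z^n$ (then concludes by discreteness of $\Z^n$ and connectedness of $H$), you instead name the stabilizer $H'=\{h\col\vecm h=\vecm\}$, show $\varphi_g(\SL(d,\R))\subset H'$ and that $\Gamma H'$ is closed, and deduce $H\subset H'$ from $\Gamma H=\overline{\Gamma\varphi_g(\SL(d,\R))}\subset\Gamma H'$ plus a connected-component argument. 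The closedness of $\Gamma H'$ is exactly the content of ``$\vecm h^{-1}\in\Z^n$ is a closed condition'' (indeed $\Gamma H'=\{h\in G\col\vecm h^{-1}\in\Z^n\}$), and the component argument is the same use of connectedness-plus-discreteness the paper makes, so the two proofs are mathematically equivalent; yours is a bit more conceptual at the cost of some extra scaffolding. A few small remarks: the identification should really send $hH'\mapsto\vecm h^{-1}$ (or $H'h\mapsto\vecm h$), since the fibers of $h\mapsto\vecm h$ are right cosets $H'h$ — with the left-coset formulation as you wrote it the map is not well-defined, though this is a bookkeeping fix, not a gap. Also, your assertions that $H'\cong\SL(n,\R)$ (beyond the connectedness this gives) and that $\Gamma\cap H'$ is a lattice are never used, and what you called ``the uniqueness of $H$'' is really the defining density property $\overline{\Gamma\backslash\Gamma\varphi_g(\SL(d,\R))}=\Gamma\backslash\Gamma H$, not uniqueness per se.
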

\begin{proof}
Let $h\in H$ be given. It follows from the defining properties of $H$ that 
there exist $\gamma_1,\gamma_2,\ldots\in\Gamma$ and $A_1,A_2,\ldots\in\SL(d,\R)$
such that $\gamma_j \varphi_g(A_j)\to h$   %
in $G$ as $j\to\infty$.
But $\pi(\vecm g)=\bn$ implies that $\vecm \varphi_g(A)=\vecm$ for all $A\in\SL(d,\R)$,
and thus $\vecm(\gamma_j\varphi_g(A_j))^{-1}=\vecm\gamma_j^{-1}\in\Z^n$ for all $j$.
However $\vecm(\gamma_j\varphi_g(A_j))^{-1}\to\vecm h^{-1}$ in $\R^n$ as $j\to\infty$,
and since $\Z^n$ is discrete this forces $\vecm h^{-1}\in\Z^n$.
But $H$ is connected; %
hence the fact that $\vecm h^{-1}\in\Z^n$ for all $h\in H$
implies that $\vecm h^{-1}$ is independent of $h$.
\end{proof}

Now %
we may reformulate Theorem \ref{SIEGELVEECHTHM} as follows.
Let us set
\begin{align}\label{tZndef}
\hZ:=\{\vecm\in\Z^n\col \pi(\vecm g)\neq\bn\}.   %
\end{align}
\noindent\hypertarget{SVTHMbislink}{\textbf{Theorem \ref*{SIEGELVEECHTHM}'.}}
\textit{For any $f\in\L^1(\scrV,\mu_\scrV)$,}
\begin{align}\label{SIEGELVEECHTHMres2}
\int_X \: %
\sum_{\vecm\in\delta^{1/n}(\hZ hg)}f(\vecm)\,d\mu_g(h)
=\delta_{d,m}(\scrL)\int_{\scrV}f\,d\mu_\scrV.
\end{align}
(Note that $\hZ\gamma=\hZ$ for every $\gamma\in\Gamma_H$, by Lemma \ref{excpCONSTlem}.
Hence the point set $\hZ hg$ is a well-defined function of $\Gamma h\in X$,
and the left hand side of \eqref{SIEGELVEECHTHMres2} is well-defined.)
\begin{proof}
Note that for any $\veck\in\Z^n\setminus\hZ$,
the condition ``$\pi(\vecm)\neq\bn$'' implies that the vector $\vecm=\delta^{1/n}(\veck hg)$ is excluded
from the sum in \eqref{SIEGELVEECHTHMres}, for all $\Gamma h\in X$.
On the other hand, if $\veck\in\hZ$,
then by a simple argument using real-analyticity, $\pi(\veck hg)\neq\bn$ for $\mu$-almost all $h\in H$
(cf.\ \cite[Lemma 8]{qcvisible}),
and so the vector $\vecm=\delta^{1/n}(\veck hg)$ is included in the sum 
in \eqref{SIEGELVEECHTHMres}, for almost all $\Gamma h\in X$.
Hence the left hand side of \eqref{SIEGELVEECHTHMres} equals the left hand side of \eqref{SIEGELVEECHTHMres2}.
\end{proof}

The following is a strengthening of \cite[Prop.\ 3.7]{qc} (which dealt with the case of $\scrW$ open):
\begin{lem}\label{INJlem}
Let %
$\scrW\subset\scrA$, $\mu_{\scrA}(\partial\scrW)=0$, and assume that the projection map from
$\{\vecy\in\scrL\col\pi_\intl(\vecy)\in\scrW\}$ to $\scrP(\scrW,\scrL)$ is bijective.
Then for $\mu_g$-almost all $h\in H$ the projection map from 
$\{\vecy\in\delta^{1/n}(\Z^nhg)\col\pi_\intl(\vecy)\in\scrW\}$ to $\scrP(\scrW,\delta^{1/n}(\Z^nhg))$
is bijective.
\end{lem}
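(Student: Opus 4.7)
The plan is to show that $B := \{h \in H : \text{the cut-and-project map fails to be injective}\}$ has $\mu_g$-measure zero. Non-injectivity at $h$ means the existence of distinct $\veck_1, \veck_2 \in \Z^n$ with $\vecy_j := \delta^{1/n}\veck_j hg$ satisfying $\pi(\vecy_1) = \pi(\vecy_2)$ and $\pi_\intl(\vecy_j) \in \scrW$. A direct calculation using the product law of $\ASL(n,\R)$ yields $\vecy_2 - \vecy_1 = \delta^{1/n}\veck\,MM_g$, where $\veck := \veck_2 - \veck_1 \ne \bn$ and $M, M_g$ are the linear parts of $h, g$. Hence the coincidence condition reduces to the polynomial equation $\pi(\delta^{1/n}\veck MM_g) = \bn$ in the coordinates of $h$, depending only on $\veck$.

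By Lemma~\ref{excpCONSTlem}, each $\veck \in \hZ^c = \Z^n \setminus \hZ$ is $H$-fixed, so $\delta^{1/n}\veck hg = \delta^{1/n}\veck g \in \scrL$ is independent of $h$. This yields the decomposition $\delta^{1/n}(\Z^n hg) = \delta^{1/n}(\hZ hg) \sqcup \delta^{1/n}(\hZ^c g)$, separating the ``moving'' and ``fixed'' pieces of the lattice. A case analysis based on whether $\vecy_1, \vecy_2$ are moving or fixed then organizes the argument. Case~I (both fixed, $\veck_1, \veck_2 \in \hZ^c$): $\vecy_1, \vecy_2 \in \scrL \cap \pi_\intl^{-1}(\scrW)$ would contradict the bijectivity hypothesis on $\scrL$, so this case is vacuous. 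Case~II (exactly one fixed): the fixed point projects to $\bn$, forcing $\pi(\delta^{1/n}\veck_1 hg) = \bn$ for the moving point with $\veck_1 \in \hZ$, which at $h=e$ equals $\delta^{1/n}\pi(\veck_1 g) \ne \bn$; so the bad set for each $\veck_1$ is a proper real-analytic subset of $H$ and hence $\mu_g$-null, and the countable union over $\veck_1$ stays null. Case~III (both moving): for each fixed $\veck \ne \bn$ the set $B_\veck := \{h : \pi(\delta^{1/n}\veck MM_g) = \bn\}$ is either all of $H$ or $\mu_g$-null; for $\veck$ in the countable exceptional set $\scrC := \{\veck : B_\veck = H\}$, a separate argument is needed.

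The main obstacle is the sub-case of Case~III with $\veck \in \scrC$, i.e., automatic coincidence. Using that $H \supseteq \varphi_g(\SL(d,\R))$ and the explicit formula for $\varphi_g(A)$ obtained from $g \cdot (\diag(A, 1_m), \bn) \cdot g^{-1}$, plugging $h = \varphi_g(A)$ into $\pi(\delta^{1/n}\veck MM_g) = \bn$ and varying $A \in \SL(d,\R)$ forces $\pi(\veck M_g) = \bn$ by a matrix computation on $\veck M_g$ written in block form. Combined with an analogous exploitation of the affine parts of $H$-elements, this identifies $\scrC$ as a subset of $\hZ^c$. For $\veck \in \scrC \subseteq \hZ^c$, Lemma~\ref{excpCONSTlem} simplifies $\vecy_2 - \vecy_1$ to a fixed vector in $\ker\pi$ (depending only on $\veck$ and $g$, up to a controllable $h$-dependence forced by $\veck \in \scrC$), so that the bad event reduces to $\pi_\intl(\delta^{1/n}(\hZ hg))$ meeting $\scrW \cap (\scrW - \delta^{1/n}\pi_\intl(\veck g))$ for some $\veck \in \scrC$. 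The hypothesis on $\scrL$ states that the analogous statement with $\pi_\intl(\scrL)$ in place of $\pi_\intl(\delta^{1/n}(\hZ hg))$ is false; transferring this to $\mu_g$-a.e.\ $h$ is done by applying Theorem~\ref{SIEGELVEECHTHM} (in the form of Theorem~\ref{SIEGELVEECHTHM}$'$) to the indicator function of $\scrW \cap (\scrW - \delta^{1/n}\pi_\intl(\veck g))$, for which the assumption $\mu_\scrA(\partial \scrW) = 0$ is used to stay in $\L^1$. Summing the resulting null sets over the countable $\scrC$ completes the proof.
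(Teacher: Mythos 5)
Your proof takes a genuinely different route from the paper's: the paper introduces two bad sets, kills the first ($\hZ hg$ meeting $(\R^d\times\partial\scrW)\cup(\{\bn\}\times\scrA)$) by the Siegel--Veech formula applied to a $\mu_\scrV$-null set, quotes \cite[Prop.\ 3.7]{qc} for the set of $h$ admitting two points over $\scrW^\circ$ with equal $\pi$-projection, and then runs a short case analysis off the union; your argument instead organizes everything by difference vectors $\veck$ and in effect re-proves the content of that citation (real-analyticity for generic $\veck$, a separate treatment of the exceptional set $\scrC$). This is workable, but as written it has one imprecision and one genuine gap, both in the exceptional case. The imprecision: what the $\varphi_g(\SL(d,\R))$-computation gives for $\veck\in\scrC$ is $\pi(\veck M_g)=\bn$ with $M_g$ the \emph{linear} part of $g$, which is not the condition $\pi(\veck g)=\bn$ defining $\Z^n\setminus\hZ$ when $\scrL$ is a genuine grid (the setting of the lemma); so $\scrC\subset\hZ^c$ is false in general and Lemma \ref{excpCONSTlem} does not literally apply to the difference vector. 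You need, and can prove by repeating the proof of Lemma \ref{excpCONSTlem} with linear parts, the analogue ``$\pi(\veck M_g)=\bn\Rightarrow\veck M_h=\veck$ for all $h\in H$''; correspondingly the shift of the window should be by $\delta^{1/n}\pi_\intl(\veck M_g)$, not $\delta^{1/n}\pi_\intl(\veck g)$.

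The genuine gap is the final Siegel--Veech step. Theorem \ref{SIEGELVEECHTHM}$'$ only yields almost-sure emptiness of $\delta^{1/n}(\hZ hg)$ over the set $S_\veck:=\scrW\cap(\scrW-\vecc_\veck)$ if you first know $\mu_\scrA(S_\veck)=0$ (so that the indicator of $\R^d\times S_\veck$ is $\mu_\scrV$-a.e.\ zero); the fact that $S_\veck$ is disjoint from the countable set $\pi_\intl(\scrL)$, which is all the hypothesis directly gives you, does not imply it is $\mu_\scrA$-null, and nothing in your write-up supplies this. The missing ingredient is the density of $\pi_\intl(\scrL)$ in $\scrA$ (which holds by the very definition of $\scrA$ as the closure of $\pi_\intl(\scrL)$): since $\scrW^\circ\cap(\scrW^\circ-\vecc_\veck)$ is open in $\scrA$ and disjoint from the dense set $\pi_\intl(\scrL)$, it must be empty, whence $S_\veck\subset\partial\scrW\cup(\partial\scrW-\vecc_\veck)$, which is null by the regularity assumption $\mu_\scrA(\partial\scrW)=0$ and the invariance of $\mu_\scrA$ under translation by $\vecc_\veck\in\scrA-\scrA$. (You should also note that the $h$-fixed points, which Theorem \ref{SIEGELVEECHTHM}$'$ does not count, lie in $\scrL$ and avoid $S_\veck$ by the hypothesis.) With these two repairs your argument goes through and gives a self-contained proof; without the density step the key application of the Siegel--Veech formula has nothing to act on.
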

\begin{proof}
The projections are surjective by construction.
To prove the injectivity, set
\begin{align*}
D=(\R^d\times\partial\scrW)\:\cup\:(\{\bn\}\times\scrA)\subset\scrV,
\end{align*}
and consider the following two subsets of $H$:
\begin{align}\label{S1def}
&S_1=\bigl\{h\in H\col\delta^{1/n}(\hZ hg)\cap D\neq\emptyset\bigr\};
\\
&S_2=\bigl\{h\in H\col\exists\vecy_1\neq\vecy_2\in\delta^{1/n}(\Z^nhg)\cap\pi_\intl^{-1}(\scrW^\circ)
\:\text{ satisfying }\: \pi(\vecy_1)=\pi(\vecy_2)\bigr\},
\end{align}
where $\scrW^\circ$ is the interior of $\scrW$.
Then $\mu_g(S_1)=0$, by Theorem \hyperlink{SVTHMbislink}{\ref*{SIEGELVEECHTHM}'}
applied with $f$ as the characteristic function of $D$.
Also $\mu_g(S_2)=0$, by \cite[Prop.\ 3.7]{qc} 
(after scaling by $\delta^{1/n}$). %
We will prove %
that every $h\in H\setminus(S_1\cup S_2)$ has the desired injectivity property.

Thus let $h\in H\setminus(S_1\cup S_2)$, and 
let $\vecy_1\neq\vecy_2$ be two arbitrary points in $\delta^{1/n}(\Z^nhg)\cap\pi_\intl^{-1}(\scrW)$.
Take $\vecm_j\in\Z^n$ so that $\vecy_j=\delta^{1/n}(\vecm_jhg)$.
If $\vecm_j\notin\hZ$ then $\pi(\vecy_j)=\bn$ and $\vecy_j=\delta^{1/n}(\vecm_j g)\in\scrL$ by 
Lemma \ref{excpCONSTlem};
hence our assumption that the projection map from
$\{\vecy\in\scrL\col\pi_\intl(\vecy)\in\scrW\}$ to $\scrP(\scrW,\scrL)$
is injective implies that at least one of $\vecm_1$ and $\vecm_2$ must lie in $\hZ$; say $\vecm_1\in\hZ$.
Then $\pi(\vecy_1)\neq\bn$, since $h\notin S_1$.
If $\vecm_2\notin\hZ$ then $\pi(\vecy_1)\neq\bn=\pi(\vecy_2)$ and we are done;
hence from now on we may assume that both $\vecm_1,\vecm_2\in\hZ$.
Again using $h\notin S_1$ we then have $\pi_\intl(\vecy_j)\notin\partial\scrW$,
i.e.\ $\pi_\intl(\vecy_j)\in\scrW^\circ$, for both $j=1,2$.
Now it follows from $h\notin S_2$ that $\pi(\vecy_1)\neq\pi(\vecy_2)$,
and the injectivity is proved.
\end{proof}

In fact a similar injectivity property holds also for \textit{shifts} of $\scrW$,
at least away from $\bn\in\R^d$:
\begin{lem}\label{INJlem2}
Let $\scrW$ be as in Lemma \ref{INJlem}, and fix $\tw\in\scrA$.
Then for $\mu_g$-almost all $h\in H$, the restriction of $\pi$ to
$\delta^{1/n}(\Z^nhg)\cap\pi_\intl^{-1}(\scrW+\tw)\setminus\pi^{-1}(\{\bn\})$ is injective.
\end{lem}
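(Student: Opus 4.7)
My approach is to mimic closely the proof of Lemma \ref{INJlem}, keeping track of the two new features of the present statement: the window is translated by $\tw$, and the injectivity is claimed only away from the fibre $\pi^{-1}(\{\bn\})$. Write $\scrW':=\scrW+\tw$, and work under the implicit assumption $\scrW'\subset\scrA$ (which is the only nontrivial case, since otherwise $\scrA\cap(\scrW+\tw)=\emptyset$ and the intersection in the lemma is empty for every $h$). Then $\scrW'$ is bounded and $\mu_\scrA(\partial\scrW')=0$ by translation-invariance of $\mu_\scrA$ on (cosets of) $\scrA-\scrA$.

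Introduce the exceptional sets
\begin{align*}
S_1&:=\bigl\{h\in H\col\delta^{1/n}(\hZ hg)\cap(\R^d\times\partial\scrW')\neq\emptyset\bigr\},\\
S_2&:=\bigl\{h\in H\col\exists\,\vecy_1\neq\vecy_2\in\delta^{1/n}(\Z^nhg)\cap\pi_\intl^{-1}(\scrW'^{\circ})\text{ with }\bn\neq\pi(\vecy_1)=\pi(\vecy_2)\bigr\}.
\end{align*}
Applying Theorem~\ref*{SIEGELVEECHTHM}$'$ with $f$ the indicator of $\R^d\times\partial\scrW'$, which is a $\mu_\scrV$-null set, immediately gives $\mu_g(S_1)=0$. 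The vanishing $\mu_g(S_2)=0$ should follow from essentially the same real-analyticity argument as in \cite[Prop.\ 3.7]{qc}: writing $\vecy_j=\delta^{1/n}\vecm_jhg$, the coincidence $\pi(\vecy_1)=\pi(\vecy_2)$ is equivalent to the linear equation $\pi(\vecm_0 L_{hg})=\bn$ for $\vecm_0:=\vecm_1-\vecm_2\neq\bn$, where $L_{hg}$ is the linear part of $hg$. For each fixed $\vecm_0\in\Z^n\setminus\{\bn\}$ this is a real-analytic condition on $h\in H$, and therefore either defines a $\mu_g$-null subset of $H$ or holds identically. Countability of the relevant pairs $(\vecm_1,\vecm_2)$ (which follows from discreteness of $\delta^{1/n}(\Z^nhg)$ and boundedness of $\scrW'^{\circ}$) then upgrades the pointwise nullity to $\mu_g(S_2)=0$.

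For $h\in H\setminus(S_1\cup S_2)$ the conclusion will then follow by a direct imitation of the last paragraph in the proof of Lemma~\ref{INJlem}: given two distinct points $\vecy_1,\vecy_2\in\delta^{1/n}(\Z^nhg)\cap\pi_\intl^{-1}(\scrW+\tw)\setminus\pi^{-1}(\{\bn\})$, write $\vecy_j=\delta^{1/n}\vecm_jhg$. The assumption $\pi(\vecy_j)\neq\bn$ forces $\vecm_j\in\hZ$ by Lemma~\ref{excpCONSTlem}; then $h\notin S_1$ improves $\pi_\intl(\vecy_j)\in\scrW'$ to $\pi_\intl(\vecy_j)\in\scrW'^{\circ}$; and finally $h\notin S_2$ yields the desired $\pi(\vecy_1)\neq\pi(\vecy_2)$.

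The hard part will be the rigorous verification of $\mu_g(S_2)=0$. In Lemma~\ref{INJlem} this step was disposed of by a single citation of \cite[Prop.\ 3.7]{qc}, and one must check that the argument there carries over to a shifted window. The core real-analyticity mechanism is window-independent (the role of $\scrW'^{\circ}$ is only to enforce countability of the relevant index set), but a subtle point to verify is the absence of a nonzero $\vecm_0\in\Z^n$ for which $\pi(\vecm_0 L_{hg})=\bn$ identically on $H$. Such an identically-vanishing condition would force the $H^1$-orbit of $\vecm_0 L_g$ to lie in the $m$-dimensional subspace $\{\bn\}\times\R^m$, and a short group-theoretic argument using the inclusion $\varphi_g(\SL(d,\R))\subset H$ shows that this would entail $\pi(\vecm_0 L_g)=\bn$, which together with the exclusion of the $\pi^{-1}(\{\bn\})$-fibre and Lemma~\ref{excpCONSTlem} suffices to rule out the relevant coincidence $\pi(\vecy_1)=\pi(\vecy_2)\neq\bn$.
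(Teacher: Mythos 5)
Your overall setup mirrors the paper's proof: introduce $S_1$, $S_2$ for the shifted window $\tW:=\scrW+\tw$, dispose of $S_1$ by the Siegel--Veech formula, and then deduce the injectivity for $h\notin S_1\cup S_2$ just as in Lemma~\ref{INJlem}. The gap is in your argument for $\mu_g(S_2)=0$, and it is genuine.

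You correctly observe that for a fixed $\vecm_0=\vecm_1-\vecm_2\neq\bn$ the condition $\pi(\vecm_0 hg)=\bn$ is real-analytic in $h$, so it either holds on a $\mu_g$-null set or identically on $H$, and that the identical case forces $\pi(\vecm_0 g)=\bn$, i.e.\ $\vecm_0\in\Z^n\setminus\hZ$. But your claim that the fibre exclusion $\pi(\vecy_j)\neq\bn$ together with Lemma~\ref{excpCONSTlem} then rules out the coincidence does not hold. Lemma~\ref{excpCONSTlem} gives $\vecm_0 h=\vecm_0$ for all $h\in H$, so $\vecy_1-\vecy_2=\delta^{1/n}\vecm_0 g$ is a fixed nonzero vector with $\pi(\vecy_1-\vecy_2)=\bn$; this is exactly what \emph{produces} the coincidence $\pi(\vecy_1)=\pi(\vecy_2)$, and both values can very well be nonzero (take $\vecm_1,\vecm_2\in\hZ$ with $\vecm_1-\vecm_2=\vecm_0$). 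Nothing in your argument prevents both $\pi_\intl(\vecy_1),\pi_\intl(\vecy_2)$ from lying in $\tW^\circ$. In other words, you have reduced to the one case that cannot be excluded by analyticity or by the fibre constraint, and you discard it without a valid reason.

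What actually closes this case is the injectivity hypothesis inherited from Lemma~\ref{INJlem}, which you never invoke. If both $\pi_\intl(\vecy_j)\in\tW^\circ$, then $\pi_\intl(\delta^{1/n}\vecm_0 g)=\pi_\intl(\vecy_1)-\pi_\intl(\vecy_2)\in\tW^\circ-\tW^\circ=\scrW^\circ-\scrW^\circ$. Since $\delta^{1/n}\vecm_0 g$ is a nonzero lattice vector in $\scrL\cap(\{\bn\}\times\R^m)$, this would contradict the bijectivity of the projection from $\{\vecy\in\scrL\col\pi_\intl(\vecy)\in\scrW\}$ onto $\scrP(\scrW,\scrL)$. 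The crucial observation, which the paper makes explicit, is that the relevant obstruction depends only on $\scrW^\circ-\scrW^\circ$, which is invariant under the translation by $\tw$; this is what allows the $h=1_n$ injectivity for the \emph{original} window $\scrW$ to control the shifted window $\tW$, and then \cite[Props.~3.5 and 3.6]{qc} upgrade the statement to $\mu_g$-a.e.\ $h$. You should incorporate this translation-invariance of $\scrW^\circ-\scrW^\circ$ and the use of the injectivity hypothesis; without it, the proof does not go through.
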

\begin{proof}
Set $\tW=\scrW+\tw$. Define $D,S_1,S_2$ be as in the proof of Lemma \ref{INJlem},
but with $\scrW$ replaced by $\tW$.
Then $\mu_g(S_1)=0$ as before.
Furthermore, as in the proof of \cite[Prop.\ 3.7]{qc},
for any $h\in H$ with $\overline{\pi_\intl(\delta^{1/n}(\Z^nhg))}=\scrA$,
we have $h\notin S_2$ if and only if 
\begin{align}\label{INJlem2pf1}
\scrW_0\cap\pi_\intl\bigl(\delta^{1/n}(\Z^nhg)_0\cap(\{\bn\}\times\R^m)\bigr)=\{\bn\},
\end{align}
where $\scrW_0:=\tW^\circ-\tW^\circ=\scrW^\circ-\scrW^\circ\subset\R^m$ and 
$(\Z^nhg)_0:=(\Z^nhg)-(\Z^nhg)\subset\R^n$.
Our injectivity assumption 
implies that \eqref{INJlem2pf1} holds for $h=1_n$.
Hence by \cite[Props.\ 3.5 and 3.6]{qc}, $\mu_g(S_2)=0$.

Now let $h\in H\setminus(S_1\cup S_2)$ and let $\vecy_1\neq\vecy_2$ be two arbitrary points in
$\delta^{1/n}(\Z^nhg)\cap\pi_\intl^{-1}(\tW)\setminus\pi^{-1}(\{\bn\})$.
Take $\vecm_j\in\Z^n$ so that $\vecy_j=\delta^{1/n}(\vecm_jhg)$.
It follows from $\vecy_1,\vecy_2\notin\pi^{-1}(\{\bn\})$ and Lemma \ref{excpCONSTlem} that both $\vecm_1,\vecm_2\in\hZ$.
Now $h\notin S_1$ implies $\pi_\intl(\vecy_1),\pi_\intl(\vecy_2)\in\tW^\circ$,
and thus, using also $h\notin S_2$, we have  $\pi(\vecy_1)\neq\pi(\vecy_2)$,
and the injectivity is proved.
\end{proof}

Finally, we also recall the Siegel-Veech formula for $\tH$.
The following is \cite[Cor.\ 5.2]{qc} but for a general density $\delta$.
\begin{cor}\label{SIEGELVEECHTILDEcor}
For any $f\in\L^1(\scrV,\mu_\scrV)$,
\begin{align*}
\int_{\Gamma\bs\Gamma\tH}\sum_{\vecm\in\delta^{1/n}(\Z^n hg)}f(\vecm)\,d\tmu_g(h)
=\delta_{d,m}(\scrL)\int_{\scrV}f\,d\mu_\scrV.
\end{align*}
\end{cor}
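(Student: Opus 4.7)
The plan is to derive Corollary \ref{SIEGELVEECHTILDEcor} from Theorem \ref{SIEGELVEECHTHM} (equivalently, its reformulation Theorem \hyperlink{SVTHMbislink}{\ref*{SIEGELVEECHTHM}'}) by exploiting the structural relationship between $\tH$ and $H$. Since the corollary is stated to be essentially \cite[Cor.\ 5.2]{qc} with the only change being the density parameter $\delta$, the approach mirrors the way the present paper derives Theorem \ref{SIEGELVEECHTHM} from \cite[Thm.\ 5.1]{qc}: namely, the $\tH$-case reduces to the $H$-case by integrating out the extra translation directions in $\tH$, and the generalization from $\delta=1$ to arbitrary $\delta>0$ follows by a routine scaling argument.

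More concretely, the first step is to identify the affine translation subgroup responsible for enlarging $H$ to $\tH$. Writing $g=(g_0,\vecx_0)\in G$, a direct calculation gives $\varphi_g((1_d,\vecx))=(1_n,(\vecx,\bn)g_0^{-1})$, so that $T:=\varphi_g(\{(1_d,\vecx)\col\vecx\in\R^d\})$ is a group of pure translations inside $\tH$. Because $\varphi_g(\ASL(d,\R))=\varphi_g(\SL(d,\R))\cdot T$ and $\varphi_g(\SL(d,\R))\subset H$, the minimality/uniqueness properties defining $\tH$ force a decomposition of $\tH$ as (essentially) $H\cdot T$, modulo taking closures so that $\Gamma\cap\tH$ remains a lattice. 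The second step is to disintegrate the Haar measure $\tmu_g$ accordingly, so that the left-hand side of the corollary becomes an iterated integral: first over $H$ against $\mu_g$, then over the affine fiber $\tH/H$. The action of $T$ on $\delta^{1/n}(\Z^n h_0 g)$ is translation in physical space only; thus for $h=h_0 t$ one has $\delta^{1/n}(\Z^n hg)=\delta^{1/n}(\Z^n h_0 g)+\delta^{1/n}(\vecx,\bn)$. Applying Theorem \hyperlink{SVTHMbislink}{\ref*{SIEGELVEECHTHM}'} to the inner $H$-integral, and then Fubini together with translation-invariance of $\mu_\scrV$ in the physical direction, produces the required identity; importantly, integrating the affine shift in $\vecx\in\R^d$ converts the summation index set from $\hZ$ back to all of $\Z^n$, because the exceptional points $\vecm\in\Z^n\setminus\hZ$ (which by Lemma \ref{excpCONSTlem} are $H$-fixed and sit over $\bn\in\R^d$) are precisely those that $T$ sweeps across $\scrV$ to produce the missing contribution.

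The main technical obstacle is verifying the normalization of $\tmu_g$ against $\mu_g$ and the Haar measure on the fiber $\tH/H$. Since $T\cap\Gamma$ need not be a lattice in $T$ (this depends on whether $\{(\vecx,\bn)g_0^{-1}\col\vecx\in\R^d\}$ meets $\Z^n$ in a lattice), the quotient $\tH/H$ may be a torus of dimension strictly smaller than $d$, and one must track how the combinatorial factor $\delta_{d,m}(\scrL)$ on the right-hand side is reproduced after the disintegration. However, both sides of the proposed identity are continuous linear functionals on $\L^1(\scrV,\mu_\scrV)$ that agree on a translation-invariant class of test functions; any residual ambiguity in normalization is pinned down by testing against a single nonzero $f$ and comparing with Theorem \hyperlink{SVTHMbislink}{\ref*{SIEGELVEECHTHM}'}. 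Finally, the scaling step from $\delta=1$ to general $\delta$ proceeds exactly as in the proof of Theorem \ref{SIEGELVEECHTHM}: replace $f(\vecm)$ by $\tilde f(\vecm)=f(\delta^{1/n}\vecm)$ for the unit-density lattice $\Z^ng$, change variables $\vecm\mapsto\delta^{1/n}\vecm$, and observe that the resulting Jacobian combines with the change in $\delta_{d,m}(\scrL)$ under rescaling of $\scrL$ to give exactly the stated formula.
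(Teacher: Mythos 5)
The paper's own ``proof'' of Corollary~\ref{SIEGELVEECHTILDEcor} is simply a citation of \cite[Cor.\ 5.2]{qc} together with the routine scaling remark used for Theorem~\ref{SIEGELVEECHTHM}; you attempt instead to \emph{derive} the corollary from Theorem~\hyperlink{SVTHMbislink}{\ref*{SIEGELVEECHTHM}'} by disintegrating $\tmu_g$ over $\mu_g$. That is a genuinely different route, and it has a gap.

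The central problem is that the fibration $\Gamma_{\tH}\bs\tH\to\Gamma_H\bs H$ is \emph{twisted}: if $V$ denotes the normal translation subgroup of $\tH$, the fiber over $\Gamma_H h_0$ is a torus $(\Gamma\cap V)^{h_0}\bs V$ whose lattice depends on $h_0$ through conjugation. Your proposed iterated integral --- ``first over $H$ against $\mu_g$, then over the affine fiber'' --- is a Fubini swap that does not respect this twist, and it is precisely what invalidates the step where you apply Theorem~\hyperlink{SVTHMbislink}{\ref*{SIEGELVEECHTHM}'} to the inner $H$-integral with the shift frozen. Moreover, your argument is internally inconsistent: if for each frozen shift the inner $H$-integral of the $\hZ$-sum really did produce $\delta_{d,m}(\scrL)\int_\scrV f\,d\mu_\scrV$ (via the theorem and the translation-invariance of $\mu_\scrV$ in the physical direction), then averaging over the fiber would already give the right-hand side, and the additional contribution from the $H$-fixed points $\vecm\in\Z^n\setminus\hZ$ --- which is strictly positive for $f\ge0$ unless $f$ vanishes along their orbits --- would make the total \emph{exceed} the right-hand side. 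Your sentence that these exceptional points ``produce the missing contribution'' therefore cannot be squared with the rest of the argument. What actually happens (one can check this directly in the toy case $m=0$, $\tH=\ASL(d,\RR)$) is the opposite: with the correct disintegration (fiber integral first), the $\hZ$-part yields strictly \emph{less} than $\delta_{d,m}(\scrL)\int_\scrV f\,d\mu_\scrV$, because the fiber torus has $h_0$-dependent shape; the exceptional $\vecm$'s fill exactly the missing fundamental-domain's worth, and the sum over \emph{all} of $\Z^n$ combined with the fiber integral folds into $\int_\scrV f\,d\mu_\scrV$ directly, without invoking Theorem~\ref{SIEGELVEECHTHM} at all. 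So the twist is not a normalization nuisance to be ``pinned down afterwards''; it is the mechanism that makes the identity true.

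Two further points deserve flagging. First, the decomposition $\tH=H\cdot T$ with $T=\varphi_g(\{(1_d,\vecx)\})$ is not established: $T$ need not be normalized by $H$, and the actual translation subgroup $V$ of $\tH$ may be a strictly larger rational subspace dictated by Ratner's theorem; you wave at ``taking closures'' but this needs to be the $H$-invariant rational hull of $(\R^d\times\{\bn\})g^{-1}$. Second, the concluding ``pinning down the normalization'' argument is circular: translation-invariance in the physical direction alone does not force a linear functional on $\L^1(\scrV,\mu_\scrV)$ to be a scalar multiple of $\int_\scrV\,\cdot\,d\mu_\scrV$ (any internal weight produces another such functional), so the reduction to a single test function presupposes exactly the structural statement that needs to be proved.
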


\subsection{Verifying that the assumptions of Section \ref*{ASSUMPTLISTsec} hold}
\label{QCexsec2}

From now on we assume that $\scrP=\scrP(\scrW,\scrL)$ is a regular cut-and-project set with
$\scrL$ a \textit{genuine} lattice, viz., $\bn\in\scrL$.
We will prove that $\scrP$ satisfies the assumptions in Section \ref{ASSUMPTLISTsec},
with 
\begin{align}\label{qcSIGMA}
\Sigma:=\overline{\scrW},
\end{align}
with the map $\vs:\scrP\to\Sigma$ defined by letting $\vs(\vecq)$ be the unique point $\vecw\in\scrW$ for which 
$(\vecq,\vecw)\in\scrL$,
and with 
\begin{align}\label{qcmmdef}
\mm:=\mu_{\scrA}(\scrW)^{-1}\mu_{\scrA\mid\overline{\scrW}}.
\end{align}
The map $\vs\mapsto\mu_\vs$, or as we will call it here,
$\vecw\mapsto\mu_\vecw$, from $\Sigma=\oW$ to $P(N(\scrX))$,
is defined as follows.
We assume that the fixed element $g$ lies in $G^1$;
this is permitted since $\bn\in\scrL$.
Then also $H\subset G^1$ and $\delta^{1/n}(\Z^nhg)=\delta^{1/n}\Z^{n}hg$ for all $h\in H$.
For each $\vecw\in\oW$, define the map $J_\vecw:X\to N_s(\scrX)$ through
\begin{align}\label{Jwdef}
J_\vecw(\Gamma h):=\big(\delta^{1/n}\hZ hg+(\bn,\vecw)\big)\cap\big(\R^d\times\scrW\big).
\end{align}
Noticing that the map $X\to N_s(\R^n)$,
$\Gamma h\mapsto\delta^{1/n}\hZ hg+(\bn,\vecw)$ is continuous
(and thus Borel measurable),
and using \cite[Thm.\ A2.3(iv)]{kallenberg02},
one verifies that $J_\vecw$ is Borel measurable.
Now define 
\begin{align}\label{qcmuwdef}
\mu_\vecw:=\mu_g\circ J_{\vecw}^{-1}\in P(N_s(\scrX)).
\end{align}

\begin{prop}\label{QCprop}
For $\scrP, \Sigma,\mm$ as above, 
all the assumptions in Section \ref{ASSUMPTLISTsec} are satisfied,
with $\scrE=\emptyset$
{\blu in [P2]},
and with the map $\vs\mapsto\mu_\vs$ given by \eqref{qcmuwdef}.
\end{prop}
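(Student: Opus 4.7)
The proof proceeds by verifying [P1-3] and [Q1-3] in turn. The key analytic input enters only in [P2] and takes the form of an equidistribution result on the homogeneous space $X=\Gamma\bs\Gamma H$ coming from Ratner's measure classification theorem. All other conditions reduce to the Siegel-Veech framework (Theorem \hyperlink{SVTHMbislink}{\ref*{SIEGELVEECHTHM}'} and Corollary \ref{SIEGELVEECHTILDEcor}), invariance properties of $\mu_g$, and Weyl equidistribution \eqref{WEYLCOUNTING1}.

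My plan for [P2] rests on a clean identification of $\scrQ_\rho(\vecq,\vecv)$ with $J_\vecw(\Gamma h')$ for an explicit $h'\in H$. The starting observation is that since $(\vecq,\vecw)\in\scrL$ with $\vecw=\vs(\vecq)$, and $\scrL$ is a genuine lattice, one has the elegant identity
\[
\scrL-(\vecq,\vecnull)=\scrL+(\vecnull,\vecw).
\]
Applying the physical transformation $h^*:=\diag(R(\vecv)D_\rho,1_m)\in G^1$, which preserves the internal shift $(\vecnull,\vecw)$, and using the conjugation $h':=gh^*g^{-1}=\varphi_g(R(\vecv)D_\rho)\in H$, one obtains the exact identification
\[
\scrQ_\rho(\vecq,\vecv)=J_\vecw(\Gamma h').
\]
Matching the two sides uses the injectivity of $\pi$ on $\scrL\cap\pi_\intl^{-1}(\scrW)$ transferred, via the lattice shift, to $\scrL\cap\pi_\intl^{-1}(\scrW-\vecw)$; this is precisely where Lemmas \ref{INJlem} and \ref{INJlem2} enter to guarantee that the only lattice element with vanishing physical projection in the relevant region is $\vecnull$ itself. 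Crucially, the identification depends on $\vecq$ only through $\vecw=\vs(\vecq)$, which makes uniformity over $\vecq\in\scrP_T(\rho)$ automatic with $\scrE=\emptyset$.

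With this identification, [P2] reduces to showing that, as $\rho\to0$, the pushforward of $\lambda\in\Pac(\US)$ under $\vecv\mapsto\Gamma\varphi_g(R(\vecv)D_\rho)\in X$ converges weakly to $\mu_g$. This is the equidistribution result established by Marklof-Str\"ombergsson in \cite{partII} for lattices and extended in \cite{qc} to cut-and-project sets, as a direct consequence of Ratner's theorem. To transfer weak convergence from $X$ to $P(N_s(\scrX))$, I apply the continuous mapping theorem to $\Gamma h\mapsto J_\vecw(\Gamma h)$. Continuity fails only on the $\mu_g$-null set where $\delta^{1/n}(\hZ hg)+(\vecnull,\vecw)$ has a point on $\partial\scrW$: this is a null set because $\mu_\scrA(\partial\scrW)=0$ (regularity of $\scrP$) combined with Corollary~\ref{SIEGELVEECHTILDEcor} applied to the characteristic function of $\R^d\times\partial\scrW$. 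This equidistribution step, resting on deep homogeneous dynamics, is the analytic heart of the proof.

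The remaining conditions are comparatively routine. [P1] follows from Weyl equidistribution \eqref{WEYLCOUNTING1} applied jointly to physical and internal coordinates of lattice points in $\scrW$, giving $c_\scrP=\delta_{d,m}(\scrL)\mu_\scrA(\scrW)$ and equidistribution of marks in $(\oW,\mm)$. [P3] reduces, via a parallel identification in $\tH$ (accommodating the generic shift $\rho^{1-d}\vecq$, which need not lie in $\scrP$) and Corollary \ref{SIEGELVEECHTILDEcor}, to the statement that for a generic translate $\delta^{1/n}(\Z^n hg)+\vect$ the expected number of points in the cylinder $\fZ_\xi\times\scrA$ grows like $c_\scrP\xi$, so the probability of missing the cylinder tends to zero as $\xi\to\infty$. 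For [Q1], $\SO(d-1)$ embeds into $H$ via $\varphi_g$ and preserves $\mu_g$; since $\vece_1$-fixing matrices commute with $D_\rho$, this invariance transfers through $J_\vecw$ to $\mu_\vecw$. [Q2] is a real-analyticity argument on $H$ paralleling \cite[Lemma 3.6]{qc}: the locus where two distinct points of $\delta^{1/n}(\hZ hg)$ share their first coordinate is a proper real-analytic subvariety of $H$, hence $\mu_g$-null. [Q3] follows from the uniform relative density (Delone property) of the random cut-and-project set $\scrP(\scrW,\delta^{1/n}(\Z^nhg))+\vecp$, which follows from Siegel-Veech and the fact that $\scrW$ has non-empty interior in $\scrA$, as in \cite[Prop.\ 3.1]{qc}. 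The main obstacle, unsurprisingly, is the equidistribution step for [P2]; everything else is bookkeeping on top of that input.
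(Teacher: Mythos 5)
Your overall approach tracks the paper's proof closely: the exact identification $\scrQ_\rho(\vecq,\vecv)=J_{\vecw}(\Gamma\varphi_g(R(\vecv)D_\rho))$ with $\vecw=\vs(\vecq)$, the equidistribution input from Ratner's theorem via \cite[Thm.~4.1]{qc}, and the continuous mapping theorem transfer are all correct and are precisely what the paper does. The observation that the identification kills the $\vecq$-dependence beyond the mark --- so that $\scrE=\emptyset$ works and the limit is uniform in $\vecq$ --- is the right insight, though making it rigorous also requires showing $\vecw\mapsto\mu_\vecw$ is continuous (so $\{\mu_\vecw\}$ is compact in $P(N_s(\scrX))$), which you gesture at but should separate out.

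There are two genuine issues. First, you cite Corollary~\ref{SIEGELVEECHTILDEcor} (the Siegel--Veech formula for $\widetilde H$ and $\tmu_g$) to show the exceptional set for continuity of $J_\vecw$ is $\mu_g$-null; but $\mu_g$ lives on $X=\Gamma\backslash\Gamma H$, and one needs the version Theorem~\hyperlink{SVTHMbislink}{\ref*{SIEGELVEECHTHM}'} for $H$, applied to the indicator of $\R^d\times\partial\scrW$. The two theorems are not interchangeable. Second, and more substantively, your argument for [Q2] is incomplete: you claim the locus where two points of $\delta^{1/n}(\hZ hg)$ share their first coordinate is a \emph{proper} real-analytic subvariety of $H$, but this fails when $\vecm,\vecm'\in\hZ$ satisfy $\pi((\vecm-\vecm')g)=\bn$; then $(\vecm-\vecm')hg\cdot\vece_1=0$ for \emph{all} $h\in H$ (by Lemma~\ref{excpCONSTlem}), so the variety is everything. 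One must treat this degenerate case separately, and the paper does this by observing $\vecm-\vecm'\notin\hZ$ and reducing to an injectivity statement for shifted windows (Lemma~\ref{INJlem2}). Without that step the argument for [Q2] has a hole. Your sketch for [Q3] is also too brief: the Delone property of a fixed cut-and-project set does not by itself give uniformity over $\vecx\in\R^d$ \emph{and} $\vs\in\Sigma$ simultaneously; the paper constructs an explicit open set $X_\ve\subset X$ where the covering condition $\scrL_h+(\scrB_{R/2}^d\times B)=\scrV$ holds, and verifies this yields the required uniformity.

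Finally, a small mislabelling: Lemmas~\ref{INJlem} and \ref{INJlem2} are not needed for the identification $\scrQ_\rho=J_\vecw\circ F_\rho$ in [P2] --- that step is purely algebraic, using $(\vecq,\vecw)\in\scrL$ and that $\scrL$ is a genuine lattice. Those lemmas enter in [Q2], where one must ensure the limit process is almost surely simple.
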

We split the proof in a series of lemmas.
\begin{lem}\label{ASYMPTDENSITY1qclem}
The %
density assumption [P1] holds for our $\tP$,
with $c_\scrP=\delta_{d,m}(\scrL)\cdot\mu_\scrA(\scrW)$.
\end{lem}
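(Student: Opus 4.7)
The plan is to reduce the statement to \eqref{WEYLCOUNTING1} applied to cut-and-project sets with varying windows. The injectivity assumption on the projection from $\scrL \cap \pi_\intl^{-1}(\scrW)$ to $\scrP$ allows us to identify $\tP$ bijectively with $\scrL_\scrW := \scrL \cap \pi_\intl^{-1}(\scrW)$: by construction the marking $\vs(\vecp)$ is exactly the unique internal coordinate of the lattice lift of $\vecp$, so under the natural embedding of $\scrX$ into $\R^d \times \R^m$ the two sets literally coincide and
\[
\#(\tP \cap TB) \;=\; \#\bigl\{\vecy \in \scrL_\scrW : \bigl(\pi(\vecy)/T,\,\pi_\intl(\vecy)\bigr) \in B\bigr\}.
\]

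Next I would handle the case when $B = B' \times U$ is a product set, with $B' \subset \R^d$ bounded Borel satisfying $\vol(\partial B') = 0$, and $U \subset \scrW$ a Borel subset with $\mu_\scrA(\partial U) = 0$. The count above then equals $\#(\scrP(U,\scrL) \cap TB')$, and $\scrP(U,\scrL)$ is a regular cut-and-project set whose window-restricted projection inherits injectivity from $\scrP(\scrW,\scrL)$. Applying \eqref{WEYLCOUNTING1} to $\scrP(U,\scrL)$ then yields
\[
T^{-d}\#(\tP \cap TB) \;\longrightarrow\; \delta_{d,m}(\scrL)\,\mu_\scrA(U)\,\vol(B') \;=\; c_\scrP\,\mu_\scrX(B),
\]
where the final equality uses $c_\scrP = \delta_{d,m}(\scrL)\mu_\scrA(\scrW)$ together with the definitions of $\mm$ and $\mu_\scrX$.

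For a general bounded Borel $B \subset \scrX$ with $\mu_\scrX(\partial B) = 0$, I would sandwich $B$ between inner and outer approximations by finite disjoint unions of such product sets, whose $\mu_\scrX$-measures agree with $\mu_\scrX(B)$ to within any prescribed $\varepsilon > 0$; applying the product case to each approximation and then sending $T \to \infty$ followed by $\varepsilon \to 0$ completes the proof. The only nontrivial ingredient is the construction of the product-set approximations with negligible boundaries in the internal factor. This uses the fact that $\scrA$ is a countable disjoint union of translates of the linear subspace $\scrA^\circ$, each carrying $m_1$-dimensional Lebesgue measure, which reduces the problem to Jordan-measurable approximations inside finitely many of these translates; the hypothesis $\mu_\scrX(\partial B) = 0$ together with Fubini ensures that one may in addition arrange $\vol(\partial B') = 0$ and $\mu_\scrA(\partial U) = 0$ for each factor.
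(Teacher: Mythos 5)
Your argument follows essentially the same route as the paper's: first establish [P1] for product sets $D\times U$ via Weyl equidistribution (the paper invokes \cite[Prop.~3.2]{qc} directly, which is exactly what underlies \eqref{WEYLCOUNTING1} for subwindows $U\subset\scrW$), then handle a general bounded $B$ with $\mu_\scrX(\partial B)=0$ by Jordan-measurable approximation inside $\scrV=\R^d\times\scrA$. The one small caveat in your write-up is that $\scrP(U,\scrL)$ for a generic Borel $U$ with $\mu_\scrA(\partial U)=0$ need not literally be a ``regular cut-and-project set'' in the paper's sense (which requires non-empty interior of the window); this costs nothing since the counting asymptotics from \cite[Prop.~3.2]{qc} apply to any such $U$ regardless, but it is cleaner to cite that proposition directly rather than \eqref{WEYLCOUNTING1} as stated.
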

\begin{proof}
We have $\scrX=\R^d\times\oW$ and $\tP=\scrL\cap(\R^d\times\scrW)$. %
By \cite[Prop.\ 3.2]{qc},
[P1] holds for any set $B\subset\scrX$ of the form
$B={D}\times{U}$ where 
${D}$ is a bounded subset of $\R^d$ with $\vol(\partial{D})=0$,
and ${U}$ is a bounded subset of $\overline{\scrW}$ with $\mu_\scrA(\partial{U})=0$.
Now for an arbitrary bounded subset $B\subset\scrX$ with $\mu_\scrX(\partial B)=0$,
note that $B$ is a Jordan measurable %
subset of the space $\scrV$;
hence for any $\ve>0$ there exist subsets $B',B''\subset\scrV$ which are both finite unions of disjoint boxes in $\scrV$,
and which satisfy $B'\subset B\subset B''$ and $\mu_\scrV(B''\setminus B)<\ve$.
By what we have already noted, 
[P1] holds for the two sets $B'\cap(\R^d\times\overline\scrW)$ and $B''\cap(\R^d\times\overline\scrW)$,
and by letting $\ve\to0$ we conclude that [P1] also holds for $B$.
\end{proof}

We next show that $\vecw\mapsto\mu_\vecw$ is a continuous map; %
cf.\ Lemma \ref{qcmuwcontlem}.

\begin{lem}\label{qcJcontlem}
Let $\vecw,\vecw_1,\vecw_2,\ldots\in\oW$ and
$h,h_1,h_2,\ldots \in H$,
subject to $\vecw_j\to\vecw$ and $\Gamma h_j\to\Gamma h$ as $j\to\infty$.
Furthermore assume $\bigl(\delta^{1/n}\hZ hg+(\bn,\vecw)\bigr)\cap(\R^d\times\partial\scrW)=\emptyset$.
Then $J_{\vecw_j}(\Gamma h_j)\to J_{\vecw}(\Gamma h)$ in $N_s(\scrX)$ as $j\to\infty$.
\end{lem}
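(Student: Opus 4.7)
My plan is to test vague convergence in $N_s(\scrX)$ against functions $f\in\C_c(\scrX)$, reducing the problem to a finite computation in $\R^n$ where the boundary-avoidance hypothesis makes the restriction to $\R^d\times\scrW$ behave continuously. The first step is to upgrade $\Gamma h_j\to\Gamma h$ in $X=\Gamma_H\bs H$ to convergence $h_j\to h$ in $H$ itself: since $\Gamma_H$ is discrete in $H$, this is achieved by replacing each $h_j$ with $\gamma_j h_j$ for a suitable $\gamma_j\in\Gamma_H$, and the substitution leaves $\delta^{1/n}\hZ h_j g$ unchanged because $\hZ\gamma=\hZ$ for every $\gamma\in\Gamma_H$ (cf.\ Lemma \ref{excpCONSTlem} and the remark following Theorem \hyperlink{SVTHMbislink}{\ref*{SIEGELVEECHTHM}'}). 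Each point $\delta^{1/n}\vecm hg$ depends continuously on $h\in H$, so together with $\vecw_j\to\vecw$ one obtains the vague convergence
\[
\delta^{1/n}\hZ h_j g+(\bn,\vecw_j)\longrightarrow\delta^{1/n}\hZ hg+(\bn,\vecw)\quad\text{in }N_s(\R^n).
\]

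Next I would fix $f\in\C_c(\scrX)$ and choose a compact set $K\subset\R^n$ with $\supp f\subset K^{\circ}$ and with no point of the limit set on $\partial K$. The limit set meets $K$ in a finite collection $\{\vecy^{(1)},\ldots,\vecy^{(N)}\}$, and vague convergence provides, for $j$ large, matching points $\vecy_j^{(i)}\to\vecy^{(i)}$ in the $j$th perturbed set (and no further points in $K$). The hypothesis delivers the crucial dichotomy: for each $i$, either $\pi_\intl(\vecy^{(i)})\in\scrW^{\circ}$ (the interior of $\scrW$ in $\scrA$) or $\pi_\intl(\vecy^{(i)})\notin\oW$, since $\partial\scrW$ is avoided by the limit. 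Using that $\scrW^{\circ}$ is open in $\scrA$, that $\R^m\setminus\oW$ is open in $\R^m$, and that $\pi_\intl(\vecy_j^{(i)})\in\scrA$ (as $\vecy_j^{(i)}\in\scrV$), this membership is inherited for large $j$: $\vecy_j^{(i)}\in J_{\vecw_j}(\Gamma h_j)$ if and only if $\vecy^{(i)}\in J_\vecw(\Gamma h)$. Continuity of $f$ then gives $\sum_{\vecy\in J_{\vecw_j}(\Gamma h_j)}f(\vecy)\to\sum_{\vecy\in J_\vecw(\Gamma h)}f(\vecy)$, which is precisely the required convergence.

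The main obstacle is this last dichotomy step: one must rule out that some $\vecy_j^{(i)}$ drifts across $\partial\scrW$ as $j\to\infty$, and this is exactly where the hypothesis enters, via the two separate openness statements (in $\scrA$ for $\scrW^{\circ}$, in $\R^m$ for the complement of $\oW$). By contrast, the vague convergence in the first step is essentially automatic, flowing from local finiteness of the shifted lattices and continuous dependence of their points on $h$ and $\vecw$.
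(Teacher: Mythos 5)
Your proof is correct, and it is precisely the detailed unwinding of what the authors treat as ``Immediate.'' You lift $\Gamma h_j\to\Gamma h$ to $h_j\to h$ in $H$ using $\hZ\gamma=\hZ$ for $\gamma\in\Gamma_H$, deduce vague convergence of the full shifted point sets in $N_s(\R^n)$, and then use the boundary-avoidance hypothesis together with the two openness facts (openness of $\scrW^{\circ}$ in $\scrA$, openness of $\R^m\setminus\oW$ in $\R^m$) — which is exactly the right mechanism for passing from convergence of the ambient point processes to convergence of their $\R^d\times\scrW$-restrictions in $N_s(\scrX)$.
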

\begin{proof}
Immediate.
\end{proof}

\begin{lem}\label{qcmuwcontlem}
The map $\vecw\mapsto\mu_\vecw$ from $\oW$ to $P(N_s(\scrX))$ is continuous.
\end{lem}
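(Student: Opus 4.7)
The plan is to establish continuity by means of the continuous mapping theorem \cite[Thm.\ 4.27]{kallenberg02}: since $\mu_\vecw = \mu_g \circ J_\vecw^{-1}$, it suffices to show that whenever $\vecw_j \to \vecw$ in $\oW$ and $\Gamma h_j \to \Gamma h$ in $X$ with $\Gamma h$ belonging to a set $E\subset X$ of full $\mu_g$-measure, one has $J_{\vecw_j}(\Gamma h_j) \to J_\vecw(\Gamma h)$ in $N_s(\scrX)$. Lemma \ref{qcJcontlem} hands us exactly this continuity, provided $\Gamma h$ lies in
\begin{align*}
E_\vecw := \bigl\{\Gamma h \in X \col (\delta^{1/n}\hZ hg + (\bn,\vecw))\cap(\R^d\times\partial\scrW)=\emptyset\bigr\}.
\end{align*}
So the entire task reduces to showing $\mu_g(E_\vecw)=1$.

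For this I will invoke the Siegel-Veech formula of Theorem \hyperlink{SVTHMbislink}{\ref*{SIEGELVEECHTHM}'}. First I note that since $\bn\in\scrL$, the set $\scrA$ is a closed subgroup of $\R^m$; hence $\partial\scrW - \vecw \subset \scrA$, and the translation invariance of $\mu_\scrA$ together with regularity of $\scrP$ gives $\mu_\scrA(\partial\scrW-\vecw)=\mu_\scrA(\partial\scrW)=0$. For each $N\in\Z^+$, applying Theorem \hyperlink{SVTHMbislink}{\ref*{SIEGELVEECHTHM}'} to $f=\mathbf{1}_{\scrB_N^d\times(\partial\scrW-\vecw)}\in\L^1(\scrV,\mu_\scrV)$ yields
\begin{align*}
\int_X \#\bigl\{\vecm\in\delta^{1/n}\hZ hg \col \vecm\in \scrB_N^d\times(\partial\scrW-\vecw)\bigr\}\,d\mu_g(h)=\delta_{d,m}(\scrL)\vol(\scrB_N^d)\mu_\scrA(\partial\scrW-\vecw)=0.
\end{align*}
Hence for $\mu_g$-a.e.\ $h$, the set $\delta^{1/n}\hZ hg$ has no point in $\scrB_N^d\times(\partial\scrW-\vecw)$; letting $N\to\infty$ and intersecting the countable family of full-measure sets, we conclude that for $\mu_g$-a.e.\ $h$ the set $\delta^{1/n}\hZ hg$ has no point in $\R^d\times(\partial\scrW-\vecw)$, i.e.\ $\Gamma h\in E_\vecw$.

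With $\mu_g(E_\vecw)=1$ in hand, the continuous mapping theorem gives $\mu_{\vecw_j}\xrightarrow[]{\textup{ w }}\mu_\vecw$ whenever $\vecw_j\to\vecw$ in $\oW$, completing the proof. I expect no serious obstacle: Lemma \ref{qcJcontlem} already does all the topological work, and the only minor point requiring care is reducing the application of Siegel-Veech from the non-integrable $\mathbf{1}_{\R^d\times(\partial\scrW-\vecw)}$ to a countable family of integrable truncations $\mathbf{1}_{\scrB_N^d\times(\partial\scrW-\vecw)}$.
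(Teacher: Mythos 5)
Your proof is correct and follows the same route as the paper: reduce via the continuous mapping theorem and Lemma \ref{qcJcontlem} to showing that $\delta^{1/n}\hZ hg$ avoids $\R^d\times(\partial\scrW-\vecw)$ for $\mu_g$-a.e.\ $h$, and deduce this from the Siegel--Veech formula of Theorem \hyperlink{SVTHMbislink}{\ref*{SIEGELVEECHTHM}'}. The paper states this last step more tersely as \eqref{qcmuwcontlempf1}; your truncation argument is the standard way to make the application of Siegel--Veech rigorous.
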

\begin{proof}
Let $\vecw,\vecw_1,\vecw_2,\ldots\in\oW$ and assume $\vecw_n\to\vecw$;
then our task is to prove %
$\mu_g\circ J_{\vecw_n}^{-1}\xrightarrow[]{\textup{ w }}\mu_g\circ J_\vecw^{-1}$.
However this follows from the continuous mapping theorem
\cite[Thm.\ 4.27]{kallenberg02}
together with Lemma \ref{qcJcontlem},
once we note that
\begin{align}\label{qcmuwcontlempf1}
\mu_g\big(\bigl\{\Gamma h\in X\col\bigl(\delta^{1/n}\hZ hg+(\bn,\vecw)\bigr)
\cap(\R^d\times\partial\scrW)\neq\emptyset\bigr\}\big)=0,
\end{align}
by Theorem \hyperlink{SVTHMbislink}{\ref*{SIEGELVEECHTHM}'}.
\end{proof}

Next we prove that the key assumption, [P2], holds,
{\blu with $\scrE=\emptyset$ and} with stronger uniformity:
\begin{lem}\label{qcasskeyprop}
For any $\lambda\in\Pac(\US)$,
$\mu_{\vecq,\rho}^{(\lambda)}\xrightarrow[]{\textup{ w }}\mu_{{\vs}(\vecq)}$
as $\rho\to0$, uniformly over all $\vecq\in\scrP$.
\end{lem}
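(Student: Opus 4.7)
The plan is to reduce the statement to a homogeneous–space equidistribution assertion via an explicit identification between the rescaled point sets $\scrQ_\rho(\vecq,\vecv)$ and the maps $J_\vecw$ that define the candidate limits.

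First I would carry out the following algebraic identification. Since $g\in G^1$ is chosen so that $\scrL=\delta^{1/n}\Z^n g$ is a genuine lattice, and $(\vecq,\vs(\vecq))\in\scrL$ for every $\vecq\in\scrP$, translation of $\scrL$ by $(\vecq,\vs(\vecq))$ returns $\scrL$. Using this together with the natural embedding $\varphi_g:\SL(d,\R)\hookrightarrow H$, a direct computation yields
\[
\scrQ_\rho(\vecq,\vecv)\;=\;J_{\vs(\vecq)}\bigl(\Gamma\,\varphi_g(R(\vecv)D_\rho)\bigr),\qquad\forall\,\vecq\in\scrP,\ \vecv\in\US,\ \rho>0.
\]
The key observation is that for each $\vecp\in\scrP\setminus\{\vecq\}$ the difference $(\vecp,\vs(\vecp))-(\vecq,\vs(\vecq))$ lies in $\scrL$ with non-zero physical component, hence corresponds to some $\vecm\in\hat\Z$; the additive correction $+(\bn,\vs(\vecq))$ in the definition \eqref{Jwdef} restores the internal coordinate to $\vs(\vecp)$, and the window condition reduces to the tautology $\vs(\vecp)\in\scrW$. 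Consequently
\[
\mu_{\vecq,\rho}^{(\lambda)}\;=\;(J_{\vs(\vecq)})_{*}\,(\Phi_\rho)_{*}\lambda,\qquad \Phi_\rho(\vecv):=\Gamma\,\varphi_g(R(\vecv)D_\rho)\in X,
\]
and the dependence on $\vecq$ enters only through $\vs(\vecq)\in\overline{\scrW}$.

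The second step, which is the only deep input, is the equidistribution
\[
(\Phi_\rho)_{*}\lambda\;\xrightarrow[\rho\to 0]{\textup{w}}\;\mu_g\quad\text{in }P(X)\qquad(\lambda\in\Pac(\US)).
\]
The matrix $D_\rho$ is a one-parameter diagonal (horospherically expanding) element in $\SL(d,\R)$; transporting it through $\varphi_g$ produces a translate of the $\varphi_g(\SO(d))$-orbit through the identity in $X=\Gamma\bs\Gamma H$. By the defining property of $H$, $\varphi_g(\SL(d,\R))$ generates a dense orbit in $X$, and a right-translate of an absolutely continuous measure supported on the compact piece $\varphi_g(R(\US))$ equidistributes toward $\mu_g$ as $\rho\to 0$. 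This is an instance of Ratner's theorem in the Mozes–Shah / Shah formulation; concretely it is the quasicrystal analogue of the equidistribution theorem employed in \cite{partII} and established in the authors' prior work on free path lengths in quasicrystals.

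With both ingredients available, the lemma follows from the continuous mapping theorem. The map $J_\vecw:X\to N_s(\scrX)$ is continuous at every $\Gamma h$ for which $\delta^{1/n}\hat\Z hg+(\bn,\vecw)$ avoids $\R^d\times\partial\scrW$ (Lemma \ref{qcJcontlem}), and Theorem \hyperlink{SVTHMbislink}{\ref*{SIEGELVEECHTHM}'} shows the exceptional set has $\mu_g$-measure zero for every $\vecw\in\overline{\scrW}$; hence $(J_{\vs(\vecq)})_{*}(\Phi_\rho)_{*}\lambda\xrightarrow{\textup{w}}(J_{\vs(\vecq)})_{*}\mu_g=\mu_{\vs(\vecq)}$. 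Uniformity over $\vecq\in\scrP$ is automatic: the $\vecq$-dependence enters only through $\vs(\vecq)\in\overline{\scrW}$, which is compact, and $\vecw\mapsto\mu_\vecw$ is continuous (Lemma \ref{qcmuwcontlem}), so a standard subsequence argument (of the type used repeatedly in Section \ref{FIRSTCONSsec}) upgrades the convergence to be uniform in $\vecq$. The main obstacle, as expected, is the equidistribution input in the second step; once it is imported, the rest is bookkeeping built around the clean identification proved in the first step, which crucially exploits $g\in G^1$ and the lattice property of $\scrL$.
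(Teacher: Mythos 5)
Your proposal is correct and follows essentially the same route as the paper: the identification $\scrQ_\rho(\vecq,\vecv)=J_{\vs(\vecq)}\bigl(\Gamma\varphi_g(R(\vecv)D_\rho)\bigr)$, the equidistribution input from the authors' earlier quasicrystal work, the continuous mapping theorem combined with Lemma \ref{qcJcontlem} and the Siegel--Veech null-set estimate, and a subsequence argument exploiting compactness of $\overline{\scrW}$ for the uniformity in $\vecq$. The only cosmetic difference is that the paper runs the uniformity through Lemma \ref{UNIFPORTMlemC} with simultaneously varying windows $\vs(\vecq_n)\to\vecw$, which is exactly the varying-window form of Lemma \ref{qcJcontlem} you already cite.
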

\begin{proof}
Recall that $\tP=\scrL\cap(\R^d\times\scrW)$.
For any fixed $\vecq\in\scrP$,
letting $\vecw=\vs(\vecq)$ and using $\scrL=\scrL+(\vecq,\vecw)$, we have
\begin{align*}%
\tP_\vecq-\vecq=\big(\delta^{1/n}\hZ g+(\bn,\vecw)\big)\cap(\R^d\times\scrW),
\end{align*}
and so
\begin{align}\label{qcasskeyproppf1}
\scrQ_\rho(\vecq,\vecv) &=\big((\delta^{1/n}\hZ g) R(\vecv)D_\rho+(\bn,\vecw)\big)\cap\big(\R^d\times\scrW\big)
=J_\vecw\big(F_\rho(\vecv)\big),
\end{align}
where $F_\rho:\US\mapsto X$ is the map $F_\rho(\vecv)=\Gamma\varphi_g(R(\vecv)D_{\rho})$.
Let $\rho_n\in(0,1)$ and $\vecq_n\in\scrP$ for $n=1,2,\ldots$,
and assume that $\rho_n\to0$ and $\vs(\vecq_n)\to\vecw$ as $n\to\infty$, for some $\vecw\in\oW$;
by Lemma~\ref{UNIFPORTMlemC} it then suffices to prove that 
$\mu_{\vecq_n,\rho_n}^{(\lambda)}\xrightarrow[]{\textup{ w }}\mu_{\vecw}$ as $n\to\infty$.
Set $\vecw_n=\vs(\vecq_n)$ %
and $\nu_n=\lambda\circ F_{\rho_n}^{-1}$;
then by \eqref{qcasskeyproppf1} and \eqref{qcmuwdef},
our task is to prove $\nu_n\circ J_{\vecw_n}^{-1}\xrightarrow[]{\textup{ w }}\mu_g\circ J_\vecw^{-1}$. %
By \cite[Thm.\ 4.1]{qc} we have $\nu_n\xrightarrow[]{\textup{ w }}\mu_g$.
Now the desired result follows from 
\cite[Thm.\ 4.27]{kallenberg02},
using Lemma \ref{qcJcontlem}
and \eqref{qcmuwcontlempf1}.
\end{proof}

The following lemma shows that the assumption [Q1] %
holds, in a much stronger form.
\begin{lem}\label{qcsldinvlem}
For each $\vecw\in\oW$, $\mu_\vecw$ is invariant under the action of $\SL(d,\R)$.
\end{lem}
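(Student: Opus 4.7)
The plan is to reduce the statement to the right-$H$-invariance of $\mu_g$, via the embedding $\varphi_g:\SL(d,\R)\hookrightarrow G$. Recall that for $A\in\SL(d,\R)$, the right action on $\scrX=\R^d\times\Sigma$ affects only the $\R^d$-component: $(\vecp,\vs)A=(\vecp A,\vs)$. Consequently, for $Y\in N_s(\scrX)$, $YA$ is obtained by applying $A$ to each of the $\R^d$-components of points of $Y$. The strategy is to express this action on $J_\vecw(\Gamma h)$ as a right-translation of the argument $\Gamma h\in X$ by an element of $H$.

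To carry this out, I would first write $\varphi_g(A)=g\tA g^{-1}$ with $\tA=\smatr{A}{0}{0}{1_m}\in\SL(n,\R)$, and observe the crucial compatibility: for $(\vecx,\vecy)\in\R^d\times\R^m$, we have $(\vecx,\vecy)\tA=(\vecx A,\vecy)$, so $\tA$ acts as the identity on the internal component. Using this, for $\vecm\in\hZ$ and $\vecz=\delta^{1/n}(\vecm hg)$, the point $(\pi(\vecz),\pi_\intl(\vecz)+\vecw)\in J_\vecw(\Gamma h)$ gets mapped by $A$ to $(\pi(\vecz)A,\pi_\intl(\vecz)+\vecw)=(\pi(\vecz\tA),\pi_\intl(\vecz\tA)+\vecw)$. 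The membership condition $\pi_\intl(\vecz)+\vecw\in\scrW$ is unchanged because $\tA$ preserves the internal component. Then, using $hg\tA=h(g\tA g^{-1})g=h\varphi_g(A)g$, we obtain the key identity
\begin{equation*}
J_\vecw(\Gamma h)\cdot A \;=\; \bigl(\delta^{1/n}\hZ\,h\varphi_g(A)g+(\bn,\vecw)\bigr)\cap(\R^d\times\scrW) \;=\; J_\vecw(\Gamma h\varphi_g(A)).
\end{equation*}

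With this identity in hand, the proof concludes easily: for any Borel set $E\subset N_s(\scrX)$,
\begin{equation*}
(\mu_\vecw\circ A^{-1})(E) \;=\; \mu_g\bigl(\{\Gamma h\in X\col J_\vecw(\Gamma h)\cdot A\in E\}\bigr) \;=\; \mu_g\bigl(\{\Gamma h\col J_\vecw(\Gamma h\varphi_g(A))\in E\}\bigr),
\end{equation*}
which equals $\mu_g(J_\vecw^{-1}(E))=\mu_\vecw(E)$ by right-$H$-invariance of $\mu_g$, since $\varphi_g(A)\in\varphi_g(\SL(d,\R))\subset H$.

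The argument is mostly bookkeeping; the only real point to be careful about is the verification of the key identity $J_\vecw(\Gamma h)\cdot A=J_\vecw(\Gamma h\varphi_g(A))$, in particular confirming that applying $A$ on the $\R^d$-side does not disturb the window cutoff (since $\tA$ is the identity on $\R^m$) and that the shift $+(\bn,\vecw)$ commutes with the $\tA$-multiplication for the same reason. Everything else is a direct consequence of the defining property $\varphi_g(\SL(d,\R))\subset H$ and right-invariance of $\mu_g$.
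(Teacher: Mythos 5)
Your proof is correct and follows essentially the same route as the paper's own (terse) argument: establish the equivariance identity $J_\vecw(\Gamma h\varphi_g(A))=J_\vecw(\Gamma h)A$ and combine it with the right-$H$-invariance of $\mu_g$ together with $\varphi_g(\SL(d,\R))\subset H$. Your verification of the identity, using that $\tA$ acts as the identity on the internal $\R^m$-component (so the window cutoff and the shift $(\bn,\vecw)$ are both undisturbed) and that $hg\tA=h\varphi_g(A)g$, is exactly the content the paper leaves implicit.
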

\begin{proof}
It follows from $\varphi_g(\SL(d,\R))\subset H$ 
that for every $A\in\SL(d,\R)$, right multiplication by $\varphi_g(A)$ on $X$ preserves the measure $\mu_g$.
The lemma follows from this fact,
together with \eqref{qcmuwdef} and the fact that 
$J_\vecw(\Gamma h\varphi_g(A))=J_\vecw(\Gamma h)A$
for all $\Gamma h\in X$ and $A\in\SL(d,\R)$.
\end{proof}

\begin{lem}\label{qce1simplelem}
The assumption [Q2] %
holds,
i.e.\ for every $\vecw\in\oW$ and $\mu_\vecw$-almost every $Y\in N_s(\scrX)$
we have $\vecy\cdot\vece_1\neq\vecy'\cdot\vece_1$ for all $\vecy\neq\vecy'\in Y$.
\end{lem}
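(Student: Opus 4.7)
\medskip

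The strategy will be to show that the set of $h \in H$ such that $J_\vecw(\Gamma h)$ contains two distinct points with a common $\vece_1$-coordinate is contained in a countable union of $\mu_g$-null subsets of $H$. Unfolding the definition \eqref{Jwdef}, a point of $J_\vecw(\Gamma h)$ has the form $\delta^{1/n}(\vecm hg) + (\bn,\vecw)$ with $\vecm \in \hZ$ subject to $\pi_\intl(\delta^{1/n}(\vecm hg)) + \vecw \in \scrW$. Two distinct such points corresponding to $\vecm_1 \neq \vecm_2$ in $\hZ$ have the same $\vece_1$-coordinate precisely when $F_\vecm(h) := \pi(\vecm hg) \cdot \vece_1 = 0$, where $\vecm := \vecm_1 - \vecm_2 \in \Z^n \setminus \{\bn\}$.

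I would then split into two cases. In Case A, assume $\pi(\vecm_1 hg) \neq \pi(\vecm_2 hg)$, so $\pi(\vecm hg) \neq \bn$; by Lemma \ref{excpCONSTlem} this forces $\vecm \in \hZ$. The plan is to show that for each such fixed $\vecm$ the real-analytic function $F_\vecm: H \to \R$ is not identically zero, by restricting to the subgroup $\varphi_g(\SL(d,\R)) \subset H$: a direct computation using the block form of $\varphi_g(A)$ gives $\pi(\vecm \cdot \varphi_g(A) \cdot g) = \pi(\vecm g)\cdot A$, and since $\vecm \in \hZ$ implies $\pi(\vecm g) \neq \bn$, the function $A \mapsto \pi(\vecm g)\cdot A \cdot \vece_1$ is visibly not identically zero on $\SL(d,\R)$. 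Hence $F_\vecm^{-1}(\{0\})$ is a proper real-analytic subvariety of the connected real-analytic manifold $H$, so it has $\mu_g$-measure zero; summing over the countable set $\hZ$ retains this property.

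In Case B, assume $\pi(\vecm_1 hg) = \pi(\vecm_2 hg)$. If this common value is non-zero, I would invoke Lemma \ref{INJlem2} with $\tw := -\vecw$ — note that the standing assumption $\bn \in \scrL$ (Section \ref{QCexsec2}) forces $\scrA$ to be a genuine subgroup of $\R^m$, so $-\vecw \in \scrA$ is legitimate. This immediately confines the bad $h$ to a $\mu_g$-null set. If instead the common physical projection is $\bn$, then one needs two distinct $\vecm_j \in \hZ$ each satisfying $\pi(\vecm_j hg) = \bn$; the same $\varphi_g(\SL(d,\R))$-restriction argument (now applied to the vector-valued $h \mapsto \pi(\vecm_j hg)$) shows that $\{h \in H : \pi(\vecm_j hg) = \bn\}$ is a proper real-analytic subvariety and hence $\mu_g$-null for each $\vecm_j \in \hZ$; countability of $\hZ$ finishes this sub-case.

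The main obstacle is the origin sub-case in Case B above: a careless application of Lemma \ref{INJlem2} excludes precisely the point $\pi(\vecy) = \bn$, so one really needs a separate measure-theoretic argument to rule out multiple $\hZ$-labels colliding at the origin, and this is where the analyticity-plus-$\SL(d,\R)$-restriction trick does the essential work. Beyond this, everything relies only on the standard fact that a proper real-analytic subvariety of a connected Lie group is Haar-null, plus the countability of the indexing sets $\hZ$ and $\hZ \times \hZ$.
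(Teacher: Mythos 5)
Your proposal is correct and follows essentially the same route as the paper: the case split on whether $\vecm_1-\vecm_2$ lies in $\hZ$ is equivalent to the paper's split on whether the real-analyticity argument forces $(\vecm_1-\vecm_2)hg\cdot\vece_1\equiv 0$ on $H$ (which happens precisely when $\vecm_1-\vecm_2\notin\hZ$), and the off-origin collisions are dealt with in both treatments via Lemma~\ref{INJlem2}. The only cosmetic difference is the origin sub-case: where you run the $\varphi_g(\SL(d,\RR))$-restriction/analyticity argument directly to show $\{h\in H\col\pi(\vecm_jhg)=\bn\}$ is $\mu_g$-null for $\vecm_j\in\hZ$, the paper instead invokes $\mu_g(S_1)=0$ from the proof of Lemma~\ref{INJlem} (a consequence of Theorem~\hyperlink{SVTHMbislink}{\ref*{SIEGELVEECHTHM}'}), but these amount to the same fact.
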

\begin{proof}
Let $\vecw\in\oW$. 
Our task is to prove that
\begin{align*}
\mu_g\big(\big\{\Gamma h\in X\col \exists\vecy,\vecy'\in J_\vecw(\Gamma h)\text{ s.t. }
\vecy\neq\vecy'\text{ and }
\vecy\cdot\vece_1=\vecy'\cdot\vece_1\big\}\big)=0,
\end{align*}
and for this it suffices to prove that for any two fixed $\vecm\neq\vecm'\in\hZ$,
\begin{align}\label{qce1simplelempf1}
\mu_g\big(\big\{h\in H\col 
\pi_\intl(\delta^{1/n}\vecm hg)\in\scrW-\vecw,\:
\pi_\intl(\delta^{1/n}\vecm' hg)\in\scrW-\vecw,\:
\hspace{50pt}
\\\notag
(\vecm-\vecm')hg\cdot\vece_1=0\big\}\big)=0.
\end{align}
This is obviously true if the larger set
$S:=\big\{h\in H\col (\vecm-\vecm')hg\cdot\vece_1=0\big\}$ satisfies $\mu_g(S)=0$;
hence from now on we may assume $\mu_g(S)>0$.
Then a real-analyticity argument implies that
$(\vecm-\vecm')hg\cdot\vece_1=0$ for \textit{all} $h\in H$.
Using $\varphi_g(\SL(d,\R))\subset H$ this forces $\pi((\vecm-\vecm')g)=\bn$.
This means that $\vecm-\vecm'\notin\hZ$,
and thus by Lemma \ref{excpCONSTlem}
we have $\pi((\vecm-\vecm')hg)=\bn$ for all $h\in H$.
Now for every $h$ appearing in the set in \eqref{qce1simplelempf1},
we either have $\pi(\vecm hg)=\pi(\vecm'hg)=\bn$
or else the restriction of $\pi$ to
$\delta^{1/n}\Z^nhg\cap\pi_\intl^{-1}(\scrW-\vecw)\setminus\pi^{-1}(\{\bn\})$ is non-injective.
Hence \eqref{qce1simplelempf1} follows from Lemma \ref{INJlem2}
and the fact that the set $S_1$ in \eqref{S1def} satisfies $\mu_g(S_1)=0$.
\end{proof}

Next, the following lemma shows that the assumption [Q3] %
holds,
in a stronger form.
\begin{lem}\label{qcassnonemptylem}
For every $\ve>0$ there is some $R>0$ and an open set $X_\ve\subset X$ such that
$\mu_g(X_\ve)>1-\ve$ 
and $J_\vecw(\Gamma h)\cap(\scrB^d(\vecx,R)\times\oW)\neq\emptyset$
for all $\Gamma h\in X_\ve$, $\vecw\in\oW$, $\vecx\in\R^d$.
\end{lem}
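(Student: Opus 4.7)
My plan is to reduce the lemma via a compactness argument on $\oW$ to a uniform covering-radius estimate for the cut-and-project set with a single fixed internal window, and then to extract the required open set by combining the Siegel--Veech formula (Theorem \hyperlink{SVTHMbislink}{\ref*{SIEGELVEECHTHM}'}) with a Mahler-type ``no-short-vector'' condition on $X$. Since $\scrW$ has nonempty interior in $\scrA$, I first fix $\vecw_0\in\scrA$ and $r>0$ with $\{\vecz\in\scrA\col\|\vecz-\vecw_0\|\leq 2r\}\subset\scrW$. For every $\vecw\in\oW$ the translate $\scrW-\vecw$ then contains the closed ball of radius $2r$ about $\vecw_0-\vecw$, whose center lies in the compact set $K_\scrA:=\vecw_0-\oW\subset\scrA$. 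I cover $K_\scrA$ by finitely many open balls of radius $r$ at points $\vecc_1,\ldots,\vecc_J\in\scrA$, so that for each $\vecw\in\oW$ at least one closed ball $W_j:=\{\vecz\in\scrA\col\|\vecz-\vecc_j\|\leq r\}$ is contained in $\scrW-\vecw$. Replacing $\ve$ by $\ve/J$ and intersecting the open sets produced for each $j$, the problem reduces to showing, for fixed $\vecc\in\scrA$, $r>0$, and $\ve'>0$: there exist $R>0$ and an open $X_{\ve'}\subset X$ with $\mu_g(X_{\ve'})>1-\ve'$ such that for every $\Gamma h\in X_{\ve'}$ and every $\vecx\in\R^d$, $\delta^{1/n}\hZ hg\cap(\scrB^d(\vecx,R)\times W)\neq\emptyset$, where $W=\{\vecz\in\scrA\col\|\vecz-\vecc\|\leq r\}$.

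For the open set itself, I plan to take
\[
X_\eta^*:=\{\Gamma h\in X\col\delta^{1/n}\hZ hg\cap(\overline{\scrB_\eta^d}\times W)=\emptyset\},
\]
where $\overline{\scrB_\eta^d}$ denotes the closed ball of radius $\eta$ about $\bn$ in $\R^d$. I claim $X_\eta^*$ is open: its complement equals the union over $\vecm\in\hZ$ of the closed conditions $\delta^{1/n}\vecm hg\in\overline{\scrB_\eta^d}\times W$, and because $\overline{\scrB_\eta^d}\times W$ is bounded, discreteness of $\Z^n hg$ forces any convergent sequence $h_n\to h$ in the complement to have its witness $\vecm_n$ eventually constant along a subsequence, from which closedness of the complement follows. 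By Theorem \hyperlink{SVTHMbislink}{\ref*{SIEGELVEECHTHM}'} applied to the indicator of $\overline{\scrB_\eta^d}\times W$, the complement of $X_\eta^*$ has $\mu_g$-measure at most $\delta_{d,m}(\scrL)\vol(\overline{\scrB_\eta^d})\mu_\scrA(W)=O(\eta^d)$, so $\mu_g(X_\eta^*)\to 1$ as $\eta\to 0$.

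The main obstacle is then to convert the ``no short vector at the origin'' condition defining $X_\eta^*$ into a uniform covering-radius bound $R=R(\eta)$ valid at \emph{every} $\vecx\in\R^d$. The plan is to exploit that for $\mu_g$-almost every $h$, the physical projection $\pi(\delta^{1/n}\hZ hg\cap(\R^d\times W))$ is a regular cut-and-project set (by density of $\pi_\intl(\delta^{1/n}\Z^n hg)$ in $\scrA$ combined with the injectivity in Lemma~\ref{INJlem2}), hence a Delone set with density equal to the deterministic constant $\delta_{d,m}(\scrL)\mu_\scrA(W)$. The systole condition from $X_\eta^*$ then feeds, via a Minkowski/Blichfeldt-type convex-body argument applied to the ambient lattice $\delta^{1/n}\Z^n hg$ in $\R^n$ together with the explicit geometry of the window $W$, into a covering-radius bound depending only on $\eta$, $r$, and $\scrL$. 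The delicate point — and the genuine obstacle — is the uniformity in $\vecx$: unlike a lattice, a cut-and-project set is not invariant under translations of $\R^d$, so propagating the origin-based systole bound to an $\vecx$-independent covering bound requires careful use of the $H$-invariance of $\mu_g$ and the explicit geometry of $W$, likely via a covering-by-translates argument on the hyperplane bundle of the cut-and-project construction.
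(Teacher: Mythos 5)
There is a genuine gap at what you yourself call ``the genuine obstacle'': your plan routes through the open set
$X_\eta^*=\{\Gamma h\col\delta^{1/n}\hZ hg\cap(\overline{\scrB_\eta^d}\times W)=\emptyset\}$,
a ``no short vector near the origin'' condition, and then hopes to upgrade this to a covering-radius bound uniform over all $\vecx\in\R^d$. But these are conditions of opposite type: $X_\eta^*$ bounds the number of lattice points in a small region from \emph{above}, whereas the conclusion needs a \emph{lower} bound on the number of points of $\delta^{1/n}\hZ hg$ with internal coordinate constrained to the fixed small window $W$ and physical coordinate in $\scrB^d(\vecx,R)$, for \emph{every} $\vecx$. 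For genuine lattices, Minkowski's successive-minima theorem does convert a systole lower bound into a covering-radius upper bound, but that controls the distance to the nearest lattice point \emph{in all $n$ coordinates jointly}, not the existence of a lattice point whose internal coordinate lands in a window $W$ of fixed small $\mu_\scrA$-measure while the physical coordinate stays in a moderate ball. The gesture toward ``Minkowski/Blichfeldt $+$ covering-by-translates'' is not a proof, and I don't think the implication is true as stated: one can construct ambient lattices with large systole and bounded covering radius whose slices through a thin window are badly distributed.

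The paper's proof avoids this entirely by directly working with the covering condition $X(R)=\{\Gamma h\col\delta^{1/n}\Z^nhg + (\scrB_{R/2}^d\times B)=\scrV\}$, where $\veca+B\subset\scrW$ is a fixed ball in the window. This condition is already exactly what the conclusion needs: for any $\vecx'$ and any $\vecw\in\oW$, the point $(\vecx',\veca-\vecw)\in\scrV$ is within $\scrB_{R/2}^d\times B$ of some $\vecy\in\scrL_h$, which after shifting by $\vecw$ lands in $J_\vecw(\Gamma h)\cap(\scrB^d(\vecx,R)\times\oW)$. Openness of $X(R)$ follows from compactness of the torus $\scrV/\scrL_h$ and openness of the covering set; monotonicity in $R$ is obvious; and $\bigcup_R X(R)$ has full measure because for $\mu_g$-a.e.\ $h$ the image of $\R^d\times\{\bn\}$ is dense in $\scrV/\scrL_h$, so \emph{some} $R$ works. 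The Siegel--Veech formula plays no role in the measure estimate. Your preliminary compactness reduction over $\oW$ is also unnecessary: a single fixed ball $\veca+B\subset\scrW$ already handles all $\vecw\in\oW$ simultaneously, since $\veca-\vecw$ ranges over a subset of $\scrA$ and the covering condition is over all of $\scrV$. If you want to rescue your approach, you should replace $X_\eta^*$ by a covering condition of the form $X(R)$ from the outset; as it stands, the core step is missing.
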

\begin{proof}
Since $\scrW$ has non-empty interior, there exist $\veca\in\scrA$ and an open ball $B\subset\scrA^\circ$
centered at $\bn$, such that $\veca+B\subset\scrW$.
For any $\Gamma h\in X$ we write $\scrL_h:=\delta^{1/n}\Z^nhg$.
Recall that %
$\scrL_h\subset\scrV$ for all $\Gamma h\in X$. %
To start with, we note that if $\Gamma h\in X$ and $R>0$ satisfy
\begin{align}\label{qcassnonemptylempf1}
\scrL_h + (\scrB_{R/2}^d\times B)=\scrV,
\end{align}
then $J_\vecw(\Gamma h)\cap (\scrB^d(\vecx,R)\times\oW)\neq\emptyset$
for all $\vecw\in\oW$, $\vecx\in\R^d$.
Indeed, given $\vecw\in\oW$ and $\vecx\in\R^d$,
take $\vecx'\in\R^d$ so that $\scrB^d(\vecx',R/2)\subset\scrB^d(\vecx,R)\setminus\{\bn\}$;
then $(\vecx',\veca-\vecw)\in\scrV$, 
and so by \eqref{qcassnonemptylempf1} 
there exists a point $\vecy\in\scrL_h$ such that
\begin{align*}
(\vecx',\veca-\vecw)\in\vecy+ (\scrB_{R/2}^d\times B).
\end{align*}
Then $\pi(\vecy)\in\scrB^d(\vecx',R/2)\subset\scrB^d(\vecx,R)\setminus\{\bn\}$
and $\vecy\in\delta^{1/n}\hZ hg$ since $\pi(\vecy)\neq\bn$;
also $\pi_\intl(\vecy)+\vecw\in \veca+B\subset\scrW$.
It follows that $\vecy+(\bn,\vecw)\in J_\vecw(\Gamma h)\cap (\scrB^d(\vecx,R)\times\oW)$,
proving our assertion.

Now let $X'$ be the set of $\Gamma h\in X$ satisfying
$\overline{\pi_\intl(\scrL_h)}=\scrA$;
recall that $\mu_g(X')=1$.
For every $\Gamma h\in X'$,
the subspace $\R^d\times\{\bn\}$ maps to a dense subset in the torus 
$\scrV/\scrL_h=\scrV^\circ/(\scrL_h\cap\scrV^\circ)$
and hence there exists some $R>0$ for which \eqref{qcassnonemptylempf1} holds.
It follows that if we let
\begin{align*}
X(R)=\{\Gamma h\in X\col\scrL_h + (\scrB_{R/2}^d\times B)=\scrV\}
\end{align*}
then $X'$ is contained in the union $\cup_{R>0}X(R)$. %
Also each $X(R)$ is an open subset of %
$X$, and $X(R)$ is increasing with respect to $R$.
It follows that $\lim_{R\to\infty}\mu_g(X(R))=1$,
and so there is some $R>0$ such that $\mu_g(X(R))>1-\ve$.
Then $X_\ve:=X(R)$ has the desired properties.
\end{proof}

Finally, we will prove [P3]. %
As in the case of the Poisson process, we will do so by explicitly identifying the macroscopic limit process.
Recall that $\tX=\Gamma\bs\Gamma\tH$,
and define the map $\tJ:\tX\to N_s(\scrX)$ by
\begin{align*}
\tJ(\Gamma h):=\delta^{1/n}(\Z^n hg) \cap(\R^d\times\scrW).
\end{align*}
This map is Borel measurable. We set
\begin{align}\label{qcmudef}
\mu:=\tmu_g\circ \tJ^{-1}\in P(N_s(\scrX)).
\end{align}
\begin{lem}\label{qcgenlimitprop}
Let $\Lambda\in\Pac(\T^1(\R^d))$.
Then $\mu_\rho^{(\Lambda)}\xrightarrow[]{\textup{ w }}\mu$ as $\rho\to0$.
\end{lem}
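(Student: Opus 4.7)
The plan is to mimic the proof of Lemma \ref{qcasskeyprop}, but using the homogeneous space $\tX=\Gamma\bs\Gamma\tH$ in place of $X=\Gamma\bs\Gamma H$ and a \emph{macroscopic} equidistribution statement in place of the spherical equidistribution from \cite[Thm.\ 4.1]{qc}. Since $\Lambda$ is absolutely continuous, for $\Lambda$-almost every $(\vecq,\vecv)$ we have $\rho^{1-d}\vecq\notin\scrP$, so that $\tP_{\rho^{1-d}\vecq}=\tP$. A direct computation entirely analogous to \eqref{qcasskeyproppf1} then yields the pointwise identity
\[
\scrQ_\rho(\rho^{1-d}\vecq,\vecv)=\tJ(\Gamma h_\rho(\vecq,\vecv)),\qquad h_\rho(\vecq,\vecv):=\varphi_g\bigl(R(\vecv)D_\rho,\,-\rho^{1-d}\vecq R(\vecv)D_\rho\bigr)\in\tH.
\]
Writing $\tF_\rho:\T^1(\R^d)\to\tX$ for the map $(\vecq,\vecv)\mapsto\Gamma h_\rho(\vecq,\vecv)$, we obtain the identity $\mu_\rho^{(\Lambda)}=(\Lambda\circ\tF_\rho^{-1})\circ\tJ^{-1}$ as Borel measures on $N_s(\scrX)$, valid after standard reduction to densities in $\C_c(\T^1(\R^d))$.

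The key step is then to show $\Lambda\circ\tF_\rho^{-1}\xrightarrow[]{\textup{ w }}\tmu_g$ in $P(\tX)$ as $\rho\to0$. Using the factorisation
\[
h_\rho(\vecq,\vecv)=\varphi_g\bigl(1_d,\,-\rho^{1-d}\vecq\bigr)\cdot\varphi_g\bigl(R(\vecv)D_\rho,\,\vecnull\bigr),
\]
the right-hand diagonal factor accounts for the spherical equidistribution in $X$ used in \cite[Thm.\ 4.1]{qc}, while the left-hand unipotent factor supplies the equidistribution along the ``extra'' direction of $\tH/H$; together they yield equidistribution in $\tX$ by the same Ratner-type arguments. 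Meanwhile, applying Corollary \ref{SIEGELVEECHTILDEcor} to the characteristic function of $\R^d\times\partial\scrW$ (using $\mu_\scrA(\partial\scrW)=0$) gives
\[
\tmu_g\bigl(\{\Gamma h\in\tX\col \delta^{1/n}(\Z^n hg)\cap(\R^d\times\partial\scrW)\neq\emptyset\}\bigr)=0,
\]
so that, exactly as in Lemma \ref{qcJcontlem}, the map $\tJ:\tX\to N_s(\scrX)$ is continuous on a set of full $\tmu_g$-measure. The continuous mapping theorem \cite[Thm.\ 4.27]{kallenberg02} then gives
\[
\mu_\rho^{(\Lambda)}=(\Lambda\circ\tF_\rho^{-1})\circ\tJ^{-1}\xrightarrow[]{\textup{ w }}\tmu_g\circ\tJ^{-1}=\mu.
\]

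The main obstacle is the macroscopic equidistribution in $\tX$. In addition to the diagonal scaling by $D_\rho$ already controlled in Lemma \ref{qcasskeyprop}, one must also handle the large unipotent translations $\varphi_g(1_d,-\rho^{1-d}\vecq)$ as $\vecq$ ranges over the support of $\Lambda$, and verify that the joint system equidistributes in the whole of $\tX$ rather than merely in a random coset of $X$. Conceptually this is the macroscopic counterpart of the equidistribution that underlies the Boltzmann--Grad limits in \cite{partI,qc}, and the necessary input is a careful analysis of the interplay between the diagonal and unipotent one-parameter subgroups inside $\tH$; no new Ratner-theoretic input is required beyond that already present in \cite{qc}.
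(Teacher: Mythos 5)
Your proof is structurally identical to the paper's: reduce to the pointwise identity $\scrQ_\rho(\rho^{1-d}\vecq,\vecv)=\tJ(\TF_\rho(\vecq,\vecv))$ (valid for $\Lambda$-a.e.\ $(\vecq,\vecv)$, since $\rho^{1-d}\vecq\notin\scrP$ almost surely), establish $\Lambda\circ\TF_\rho^{-1}\xrightarrow[]{\textup{ w }}\tmu_g$ in $P(\tX)$, use Corollary \ref{SIEGELVEECHTILDEcor} to show the $\partial\scrW$-boundary set is $\tmu_g$-null, and apply the continuous mapping theorem. The one step you explicitly flag as an obstacle --- macroscopic equidistribution in $\tX$ --- is not open: it is exactly \cite[Thm.\ 4.7]{qc}, the macroscopic companion to the spherical equidistribution \cite[Thm.\ 4.1]{qc} that you already invoked in the $\vecq\in\scrP$ case. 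That theorem is where the interplay you sketch between the unipotent translation $\varphi_g(1_d,-\delta^{-1/n}\rho^{1-d}\vecq)$ and the diagonal scaling $\varphi_g(R(\vecv)D_\rho)$ inside $\tH$ has already been worked out, so there is nothing left to prove here beyond citing it; your heuristic for why it should hold is correct, but no new Ratner-theoretic argument needs to be supplied. One small correction: since $\scrL=\delta^{1/n}\Z^n g$, the translation part in your $h_\rho(\vecq,\vecv)$ should carry a factor $\delta^{-1/n}$, matching the paper's $\TF_\rho(\vecq,\vecv)=\Gamma\varphi_g\bigl((1_d,-\delta^{-1/n}\rho^{1-d}\vecq)R(\vecv)D_\rho\bigr)$.
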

\begin{proof}
Recall that $\tP=\scrL\cap(\R^d\times\scrW)$;
hence for any $\vecq\in\R^d\setminus\scrP$ we have
\begin{align*}
\scrQ_\rho(\vecq,\vecv)=(\delta^{1/n}\Z^ng-(\vecq,\bn))R(\vecv)D_\rho\cap(\R^d\times\scrW),
\end{align*}
and so for any $(\vecq,\vecv)\in\T^1(\R^d)$ and $\rho>0$, if $\rho^{1-d}\vecq\notin\scrP$ then
\begin{align*}
\scrQ_\rho(\rho^{1-d}\vecq,\vecv)=\tJ(\TF_\rho(\vecq,\vecv)),
\end{align*}
where $\TF_\rho:\T^1(\R^d)\to\tX$ is the map 
$\TF_\rho(\vecq,\vecv)=\Gamma\varphi_g((1_d,-(\delta^{-1/n}\rho^{1-d}\vecq,\bn))R(\vecv)D_\rho)$.
Hence $\mu_\rho^{(\Lambda)}=\Lambda\circ\TF_\rho^{-1}\circ\tJ^{-1}$.
Now by \cite[Thm.\ 4.7]{qc},
$\Lambda\circ\TF_\rho^{-1}\xrightarrow[]{\textup{ w }}\tmu_g$ as $\rho\to0$.
Also the map $\tJ$ has the property that if $h,h_1,h_2,\ldots\in H$ satisfy
$\Gamma h_n\to\Gamma h$ as $n\to\infty$
and $\delta^{1/n}(\Z^n hg)\cap(\R^d\times\partial\scrW)=\emptyset$,
then $\tJ(\Gamma h_n)\to\tJ(\Gamma h)$.
Furthermore, by Corollary \ref{SIEGELVEECHTILDEcor},
\begin{align*}
\tmu_g\big(\big\{\Gamma h\in\tX\col\delta^{1/n}(\Z^n hg)\cap(\R^d\times\partial\scrW)\neq\emptyset\big\}\big)=0.
\end{align*}
Hence by \cite[Thm.\ 4.27]{kallenberg02},
$\Lambda\circ\TF_\rho^{-1}\circ\tJ^{-1}\xrightarrow[]{\textup{ w }}\tmu_g\circ\tJ^{-1}$ as $\rho\to0$,
as desired.    %
\end{proof}

\begin{lem}\label{qcassgenbdfreepathlem}
The assumption [P3] %
holds.
\end{lem}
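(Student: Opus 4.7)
The plan is to apply Remark \ref{ASS:bdfreepathrem}, which reduces the assumption [P3] to the following two conditions: (i) $\mu_\rho^{(\Lambda)} \xrightarrow{\text{w}} \mu$ as $\rho\to 0$ for every $\Lambda\in\Pac(\T^1(\R^d))$, for some fixed $\mu\in P(N_s(\scrX))$, and (ii) $\mu(\{\emptyset\})=0$. Condition (i) is exactly the content of Lemma \ref{qcgenlimitprop}, with $\mu=\tmu_g\circ\tJ^{-1}$ as defined in \eqref{qcmudef}. So the only remaining task is to verify (ii).

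Since $\mu = \tmu_g\circ\tJ^{-1}$, condition (ii) is equivalent to
\begin{align*}
\tmu_g\bigl(\bigl\{\Gamma h\in\tX\col \delta^{1/n}(\Z^n hg)\cap(\R^d\times\scrW)=\emptyset\bigr\}\bigr)=0.
\end{align*}
To prove this, I will invoke the fact recalled at the end of Section \ref{QCexsec1} (established in \cite[Prop.\ 4.5]{qc}): for $\tmu_g$-almost every $h\in\tH$, the closure of $\pi_\intl\bigl(\delta^{1/n}(\Z^n hg)\bigr)$ equals $\scrA$, i.e., $\pi_\intl\bigl(\delta^{1/n}(\Z^n hg)\bigr)$ is dense in $\scrA$.

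Now fix such an $h$. By our standing assumption on $\scrP$, the window $\scrW$ has non-empty interior in $\scrA$; let $\scrW^\circ$ denote that interior. Density of $\pi_\intl\bigl(\delta^{1/n}(\Z^n hg)\bigr)$ in $\scrA$ then forces a point $\vecy\in\delta^{1/n}(\Z^n hg)$ with $\pi_\intl(\vecy)\in\scrW^\circ\subset\scrW$ to exist. Since $\vecy\in\R^d\times\scrA$ (as is automatic for $h\in\tH$), this gives $\vecy\in\tJ(\Gamma h)$, proving $\tJ(\Gamma h)\neq\emptyset$ for $\tmu_g$-a.e.\ $h$, hence $\mu(\{\emptyset\})=0$.

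No step here is expected to be a serious obstacle, since both ingredients — the weak convergence of Lemma \ref{qcgenlimitprop} and the density result — have already been set up. The only point to handle with a little care is keeping track of topologies: the interior $\scrW^\circ$ must be taken relative to $\scrA$ (not relative to $\R^m$, where it may be empty when $m_1<m$), and the density statement must be invoked in the same topology. Once this is observed, the verification of [P3] is a direct combination of the results already at hand, and completes the proof of Proposition \ref{QCprop}.
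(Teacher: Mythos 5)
Your proof is correct and follows essentially the same route as the paper: both reduce [P3] via Remark \ref{ASS:bdfreepathrem} and Lemma \ref{qcgenlimitprop} to showing $\mu(\{\emptyset\})=0$, and then use the density of $\pi_\intl(\delta^{1/n}(\Z^n hg))$ in $\scrA$ for $\tmu_g$-a.e.\ $h$ together with the non-empty interior of $\scrW$ (relative to $\scrA$) to conclude that $\tJ(\Gamma h)\neq\emptyset$ almost surely. Your explicit remark about taking the interior in the topology of $\scrA$ rather than $\R^m$ is a helpful clarification of a point the paper leaves implicit.
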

\begin{proof}
In view of Lemma \ref{qcgenlimitprop} and Remark \ref{ASS:bdfreepathrem},
it suffices to prove that $\mu(\{\emptyset\})=0$.
Let us write $\scrL_h=\delta^{1/n}(\Z^nhg)$, and let $\tX'$ be the set of all $\Gamma h\in\tX$ for which
$\overline{\pi_\intl(\scrL_h)}=\scrA$;
recall that $\tmu_g(\tX')=1$.
For every $\Gamma h\in\tX'$, $\tJ(\Gamma h)$ is non-empty, since $\scrW$ has non-empty interior.
Hence $\tJ^{-1}(\{\emptyset\})\subset X\setminus X'$ and thus $\mu(\{\emptyset\})\leq\tmu_g(X\setminus X')=0$.
\end{proof}

We have now proved that all the assumptions in Section \ref{ASSUMPTLISTsec} are satisfied,
i.e.\ the proof of Proposition \ref{QCprop} is complete.
\hfill$\square$

\begin{remark}
Of course, by combining Theorem \ref{GENLIMITthm} and Lemma \ref{qcgenlimitprop},
it now also follows that the measure $\mu$ in \eqref{qcmudef} agrees with the measure defined in \eqref{GENMUdef}.
\end{remark}

\begin{remark}\label{QCPALMrem}
It follows from Theorem \hyperlink{SVTHMbislink}{\ref*{SIEGELVEECHTHM}'}
that for each $\vecw\in\overline{\scrW}$,
a point process $\Xi_\vecw$ with distribution $\mu_{\vecw}$
has intensity measure $c_\scrP\mu_\scrX$.
Hence Section \ref{TKPALMsec} applies,
leading to an expression for the transition kernel
in terms of the Palm distributions of $\Xi_\vecw$.
However it is possible to give more explicit formulas for the transition kernels
in terms of Haar measures on certain homogeneous spaces. %
For the special case of $\scrP$ a lattice (i.e.\ $m=0$)
this was done in \cite[Sections 4 and 8]{partI};
and precise asymptotic formulas for the transition kernels were given in \cite{partIV}. 
\end{remark}

\subsection{The case of periodic point sets}\label{PERIODICsetproofsec}
We now specialize to the case of a periodic point set $\scrP$
as considered in Section \ref{periodicpointsetsSEC}.
Thus let $\scrP,\scrL,\delta,g$ and $\vecb_1,\ldots,\vecb_m$ be as in \eqref{PERIODICpointsetexpl}.
We will prove Proposition \ref{PERIODICSETprop}
by realizing it as a special case of Proposition \ref{QCprop},
following \cite[Sec.\ 2.3]{qc}.

Set $n=d+m$ and let 
\begin{align*}
\scrL'=\scrL\times\{\bn\}+\delta^{1/d}(\vecb_1g,\vece_1)+\cdots+\delta^{1/d}(\vecb_mg,\vece_m).
\end{align*}
This is a lattice of full rank in $\R^n$
which can be expressed as 
\begin{align*}
\scrL'={\delta'}^{1/n}\Z^n g',
\end{align*}
where $\delta':=\delta^{n/d}$,
and where $g'\in G^1=\SL(n,\R)$ is given by
\begin{align}\label{gpdef}
g'=\matr g0{Bg}I=g_B\matr g00I;
\qquad g_B:=\matr I0BI.
\end{align}
(Recall from Sec.\ \ref{periodicpointsetsSEC} that $B$ is the matrix in $M_{m,d}(\R)$ 
whose row vectors are $\vecb_1,\ldots,\vecb_m$.
Also in \eqref{gpdef}, ``$I$'' stands for the identity matrix of order $d$ or $m$, depending on the position.)
We will apply the set-up of Sec.\ \ref{QCexsec1}--\ref{QCexsec2}
with $\scrL'$ in place of $\scrL$.
Note that for this lattice we have $\scrA=\delta^{1/d}\Z^m$,
and thus $\mu_{\scrA}$ is counting measure.
We fix the following (regular) window set:
\begin{align*}%
\scrW=\{\delta^{1/d}\vece_1,\ldots,\delta^{1/d}\vece_m\}.
\end{align*}
The point of these choices of $\scrL'$ and $\scrW$ is that now our periodic set $\scrP$ in \eqref{PERIODICpointsetexpl}
equals the cut-and-project set $\scrP(\scrW,\scrL')$.
Hence Proposition \ref{QCprop} applies to the set $\scrP$,
and we will see that in this case,
the statement of Proposition \ref{QCprop} is equivalent with the statement
of Proposition~\ref{PERIODICSETprop}.
We note that \eqref{qcSIGMA} gives $\Sigma=\scrW$,
and \eqref{qcmmdef} means that $\mm$ is the uniform probability measure on $\Sigma$,
assigning mass $m^{-1}$ to each point.
This agrees with $\Sigma$ and $\mm$ in Proposition \ref{PERIODICSETprop},
if we identify each $j\in\{1,\ldots,m\}$ with the vector $\delta^{1/d}\vece_j\in\scrW$.
It remains to prove that the map given by \eqref{qcmuwdef}
agrees with the map in \eqref{periodicmuwdef}.
The key step in doing so is the following lemma,
which gives an explicit formula for the subgroup
$H=H_{g'}$ of $G$.

\begin{lem}\label{HgkformulaLEM}
\begin{align*}
H=\left\{g_B\matr I0UI\matr A00I g_B^{-1}\col A\in\SL(d,\R),\: U\in{\scrJ^\circ}^d\right\}.
\end{align*}
\end{lem}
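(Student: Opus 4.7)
Proof plan. Denote the right-hand side by $H'$. By the characterization of $H_{g'}$ stated just before \eqref{qcXdef}, it suffices to verify that $H'$ satisfies the three properties: (P1) $\varphi_{g'}(\SL(d,\R)) \subset H'$; (P2) $\Gamma \cap H'$ is a lattice in $H'$; and (P3) the closure of $\Gamma\varphi_{g'}(\SL(d,\R))$ in $\Gamma\backslash G$ equals $\Gamma H'$. Uniqueness then gives $H_{g'} = H'$.

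For the algebraic setup, the identity $\smatr I0UI\smatr A00I = \smatr A0{UA}I$, combined with the fact that ${\scrJ^\circ}^d \cdot A = {\scrJ^\circ}^d$ for every $A \in \GL(d,\R)$ (which holds because $\scrJ^\circ$ is an $\R$-linear subspace of $\R^m$ and right multiplication by $A$ takes linear combinations of the columns of any $V \in M_{m,d}(\R)$), yields $H' = g_B \tilde H g_B^{-1}$ with $\tilde H := \{\smatr A0V I : A \in \SL(d,\R),\, V \in {\scrJ^\circ}^d\}$. A direct block-matrix calculation shows $\tilde H$ is a closed connected Lie subgroup of $\SL(n,\R)$, isomorphic to the semidirect product $\SL(d,\R) \ltimes {\scrJ^\circ}^d$, and hence $H' \subset G^1 \subset G$ is closed and connected. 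For (P1), using $g' = g_B\smatr g00I$ one computes $\varphi_{g'}(A) = g_B\smatr{gAg^{-1}}{0}{0}{I} g_B^{-1}$, which is the $U = 0$ member of $H'$. For (P2), computing $g_B \smatr A0V I g_B^{-1} = \smatr A0{BA + V - B}I$, an element of $\Gamma \cap H'$ corresponds to $(A,V)$ with $A \in \SL(d,\Z)$ and $V \in B(I - A) + M_{m,d}(\Z)$; by \eqref{BinqinvMZpJd}, for $A \in \Gamma(q)$ we have $B(I - A) \in M_{m,d}(\Z) + {\scrJ^\circ}^d$, so the admissible $V$'s form a coset of ${\scrJ^\circ_\Z}^d$ in ${\scrJ^\circ}^d$. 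Since $[\SL(d,\Z):\Gamma(q)] < \infty$ and ${\scrJ^\circ_\Z}^d$ is a full-rank lattice in ${\scrJ^\circ}^d$, this produces a discrete, finite-covolume subgroup of $\tilde H$ inside $g_B^{-1}(\Gamma \cap H')g_B$, confirming (P2).

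Turning to (P3), one inclusion is immediate: (P1) together with the closedness of $\Gamma H'$ (a standard consequence of (P2)) yields $\overline{\Gamma \varphi_{g'}(\SL(d,\R))} \subseteq \Gamma H'$. The reverse inclusion is the main technical step. Right-multiplying by $g_B$ reduces it to showing that the right $\varphi_I(\SL(d,\R))$-orbit of $\Gamma^1 g_B$ is dense in $\Gamma^1 g_B \tilde H$, working inside $\Gamma^1 \backslash G^1$ (justified since this orbit lies in $G^1 \subset G$ with trivial translation part). Ratner's orbit-closure theorem guarantees that this closure is of the form $\Gamma^1 g_B F$ for some closed connected subgroup $F \supset \varphi_I(\SL(d,\R))$ with $g_B F g_B^{-1} \cap \Gamma^1$ a lattice in $g_B F g_B^{-1}$. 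To identify $F = \tilde H$, parameterize the orbit as $\Gamma^1 \smatr A0{BA}I$ and apply the $\Gamma^1$-translations by $\smatr I0{V_0}I$ for $V_0 \in M_{m,d}(\Z)$ and by $\smatr{A_0}{0}{0}{I}$ for $A_0 \in \SL(d,\Z)$; the resulting accumulation yields orbits whose ``translation data'' takes values in $B\SL(d,\Z) + M_{m,d}(\Z)$. Passing to closures and projecting modulo $M_{m,d}(\Z)$ gives exactly $\scrJ^d / M_{m,d}(\Z)$, whose connected identity component is ${\scrJ^\circ}^d$ by Remark \ref{scrJcircREM}; the minimal property in the definition of $\scrJ$ (as the smallest closed subgroup of $\R^m$ containing $\Z^m$ and $B_1,\dots,B_d$) then forces $F = \tilde H$.

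The main obstacle is (P3): although Ratner's theorem reduces the orbit closure to identifying an intermediate subgroup $F$, matching $F$ precisely against $\tilde H$ requires the explicit construction of $\scrJ^\circ$ via the columns of $B$, tying the dynamical (equidistribution) description of the orbit closure to the algebraic construction of $H'$. The computations with the $\Gamma^1$-action on the parameterized orbit above are meant to exhibit exactly this link, and the verification that no strictly larger $F' \supsetneq \tilde H$ can satisfy the required lattice condition rests on the minimality built into the definition of $\scrJ$.
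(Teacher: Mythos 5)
Your reduction of $H'$ to $g_B\tilde H g_B^{-1}$ with $\tilde H = \SL(d,\R)\ltimes {\scrJ^\circ}^d$ is correct, and your verifications of (P1) and (P2) are essentially right: the identity $\varphi_{g'}(A)=g_B\smatr{gAg^{-1}}{0}{0}{I}g_B^{-1}$ is the same observation the paper uses (phrased there as $\varphi_{g'}(A)=\varphi_{g_B}(gAg^{-1})$, hence $H_{g'}=H_{g_B}$), and the coset computation $g_B\smatr A0VI g_B^{-1}=\smatr A0{BA+V-B}I$ together with \eqref{BinqinvMZpJd} does produce a finite-index-times-full-rank-lattice picture for $\Gamma\cap H'$.

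The gap is in (P3), and it is a genuine one. You invoke Ratner's theorem to write the orbit closure as $\Gamma^1 g_B F$ and then try to \emph{identify} $F$ directly by tracking where $\Gamma^1$-translations of the parameterized orbit $\Gamma^1\smatr A0{BA}I$ can accumulate, claiming the translation data ``takes values in $B\SL(d,\Z)+M_{m,d}(\Z)$'' and that its closure gives $\scrJ^d/M_{m,d}(\Z)$. This step is not carried out and does not quite type-check as written: $\scrJ$ is a subgroup of $\R^m$ generated column-wise by $B_1,\dots,B_d$ and $\Z^m$, whereas $B\SL(d,\Z)+M_{m,d}(\Z)$ is a set of matrices, and the closure of the latter mod $M_{m,d}(\Z)$ is not a priori the $d$-fold product $\scrJ^d$ (passing between matrix orbits under $\SL(d,\Z)$ and independent column constraints requires an argument). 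More structurally, you never establish that the Ratner group $F$ must have the block form $\SL(d,\R)\ltimes\scrU^{\,d}$ for some linear subspace $\scrU\subset\R^m$; without that classification the proposed identification of $F$ has nothing to latch onto, and the appeal to ``the minimality built into the definition of $\scrJ$'' is not connected to the argument.

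The paper avoids a direct (P3) verification entirely by flipping the logic. It first shows $H\subset G'$ (the full upper-block-triangular group) and then, mimicking \cite[Lemma 7]{DMS2016}, proves the structural fact that the set $\Omega$ of $U$ with $\smatr I0UI\in H$ must be of the form $\scrV^{\,d}$ for a \emph{linear subspace} $\scrV\subset\R^m$, whence $H=H_\scrV$. The point is that the orbit closure $\Gamma H$ is contained in $\Gamma H_\scrU$ whenever $H_\scrU\cap\Gamma$ is a lattice, because $\Gamma H_\scrU$ is then closed and contains the orbit; so Ratner's theorem characterizes $\scrV$ as the \emph{unique smallest} $\scrU$ for which $H_\scrU\cap\Gamma$ is a lattice. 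Via \cite[Cor.\ 8.28]{Raghunathan} this lattice condition unpacks into the two algebraic conditions \eqref{HgkformulaLEMpf10} and \eqref{HgkformulaLEMpf11}, and a direct calculation then shows $\scrJ^\circ$ is the smallest subspace satisfying both. This replaces the equidistribution-flavored identification of $F$ by a purely algebraic minimality argument, which is both tighter and avoids the matrix-vs-vector issue. If you want to salvage your plan, the cleanest fix is to replace the direct (P3) analysis by exactly this minimality step: prove that any closed connected subgroup containing $\varphi_{g'}(\SL(d,\R))$ and intersecting $\Gamma$ in a lattice must contain $H'$ (equivalently, must have $\scrU\supseteq\scrJ^\circ$), which together with your (P1)--(P2) forces $H=H'$.
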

\begin{proof}
Recall that, by definition,
$H=H_{g'}$ is the unique closed connected subgroup of $G$ such that
$\Gamma\cap H$ is a lattice in $H$,
$\varphi_{g'}(\SL(d,\R))\subset H$,
and the closure of $\Gamma\backslash\Gamma\varphi_{g'}(\SL(d,\RR))$ in $\GamG$ equals
$\Gamma\backslash\Gamma H$.
It follows from \eqref{gpdef} that $\varphi_{g'}(A)=\varphi_{g_B}(gAg^{-1})$ for all $A\in\SL(d,\R)$;
hence $H=H_{g_B}$.
Let $G'$ be the following closed Lie subgroup of $G$:
\begin{align}\label{PERIODICGdef}
G'=\left\{\matr A0U{I}\col A\in\SL(d,\R),\: U\in M_{m,d}(\R)\right\}.
\end{align}
Note that $\varphi_{g_B}(\SL(d,\R))\subset G'$ and $\Gamma\cap G'$ is a lattice in $G'$;
hence
\begin{align*}
H=H_{g_B}\subset G'.
\end{align*}

For any linear subspace $\scrU\subset\R^m$,
let us write
$\scrU^d$ for the space of matrices in $M_{m,d}(\R)$ all of whose column vectors belong to $\scrU$.
Note that $\scrU^d\cdot A=\scrU^d$ for all $A\in\SL(d,\R)$.
Set
\begin{align*}
H_{\scrU}
&=\left\{g_B\matr A0UI g_B^{-1}\col A\in\SL(d,\R),\: U\in{\scrU}^{\,d}\right\}
\\
&=\left\{g_B\matr I0UI\matr A00I g_B^{-1}\col A\in\SL(d,\R),\: U\in\scrU^{\,d}\right\};
\end{align*}
this is a closed connected subgroup of $G'$.

Let $\Omega$ be the set of matrices $U\in M_{m,d}(\R)$ such that
$\matr I0UI\in H$;
this is a closed subgroup of $\langle M_{m,d}(\R),+\rangle$.
By mimicking part of the proof of
\cite[Lemma 7]{DMS2016},
we find that there exists a linear subspace $\scrV$ of $\R^m$ such that
$\Omega=\scrV^d$. %
Using $\varphi_{g_B}(\SL(d,\R))\subset H$ it also follows that
$H=H_{\scrV}$.
Hence it now remains to prove that $\scrJ^\circ=\scrV$.

Note that for every linear subspace $\scrU\subset\R^m$,
$H_{\scrU}$ is a closed connected Lie subgroup of $G'$ which contains $\varphi_{g_B}(\SL(d,\R))$.
Hence, by the definition of $H=H_{g_B}$,
$\scrV$ can be characterized as the unique smallest linear subspace $\scrU\subset\R^m$
with the property that $H_{\scrU}$ intersects $\Gamma$ in a lattice.
Let $\pi:G'\to\SL(d,\R)$ be the projection homomorphism 
$\matr A0UI\mapsto A$.
Using \cite[Cor.\ 8.28]{Raghunathan},
it follows that
$H_{\scrU}$ intersects $\Gamma$ in a lattice
if and only if 
\begin{align}\label{HgkformulaLEMpf10}
\scrU\cap\Z^m\text{ is a lattice in $\scrU$}
\end{align}
and $\pi(\Gamma\cap H_{\scrU})$ is a finite index subgroup of $\SL(d,\Z)$;
and as in the proof of
\cite[Lemma 8]{DMS2016},
one sees that the latter condition holds if and only if
\begin{align}\label{HgkformulaLEMpf11}
B_j\in \Q^m+\scrU,\quad\forall j,
\end{align}
where $B_1,\ldots,B_d$ are the column vectors of $B$.
Hence:
$\scrV$ is the smallest linear subspace $\scrU\subset\R^m$
which satisfies both \eqref{HgkformulaLEMpf10} and \eqref{HgkformulaLEMpf11}.

Let $s=\dim\scrV$.
Since $\scrV$ satisfies \eqref{HgkformulaLEMpf10}, there exists a $\Z$-basis
$V_1,\ldots,V_m$ of $\Z^m$ such that
$V_1,\ldots,V_s$ is a $\Z$-basis of $\scrV\cap\Z^m$
and an $\R$-linear basis of $\scrV$.
Since $\scrV$ satisfies \eqref{HgkformulaLEMpf11},
there exists some $q\in\Z^+$ such that
$B_j\in q^{-1}(\Z V_{s+1}+\cdots+\Z V_m)+\scrV$, $\forall j$.
Now $q^{-1}(\Z V_{s+1}+\cdots+\Z V_m)+\scrV$ is a closed subgroup of $\R^m$ containing
$\Z^m$ and $B_1,\ldots,B_d$;
hence $\scrJ\subset q^{-1}(\Z V_{s+1}+\cdots+\Z V_m)+\scrV$,
and thus $\scrJ^\circ\subset\scrV$.
On the other hand, recall that $\scrJ^\circ\cap\Z^m$ is a lattice in $\scrJ^\circ$,
i.e.\ $\scrJ^\circ$ satisfies \eqref{HgkformulaLEMpf10},
and furthermore we have
$B_j\in\scrJ\subset\Q^m+\scrJ^\circ$ for each $j$,
i.e.\ $\scrJ^\circ$ satisfies \eqref{HgkformulaLEMpf11}.
Hence $\scrV\subset\scrJ^\circ$,
i.e.\ we have proved $\scrJ^\circ=\scrV$, and thereby the lemma.
\end{proof}

Using Lemma \ref{HgkformulaLEM} we now conclude the proof of Proposition \ref{PERIODICSETprop}.
Writing 
\begin{align*}
h=g_B\matr I0UI\matr A00I g_B^{-1}
\end{align*}
with $A\in\SL(d,\R)$ and $U\in{\scrJ^\circ}^d$,
one verifies that in the present situation, the formula
\eqref{Jwdef} can be expressed in the following more explicit way,
for any $\vecw=\delta^{1/d}\vece_\ell$ in $\scrW$:
\begin{align}\label{PERIODICSETproppf1}
J_{\delta^{1/d}\vece_\ell}(\Gamma h)=
\biggl(\bigcup_{j=1}^m\delta^{1/d}\bigl(\Z^d+(\vece_j-\vece_\ell)(B+U)\bigr)Ag\times\{\delta^{1/d}\vece_j\}\biggr)
\setminus\{(\bn,\delta^{1/d}\vece_\ell)\}.
\end{align}
It follows from \eqref{BinqinvMZpJd}
that $B\gamma-B\in{\scrJ^\circ}^d+M_{m,d}(\Z)$
for each $\gamma\in\Gamma(q)$;
hence there exist $U_\gamma\in{\scrJ^\circ}^d$ and $\alpha_\gamma\in M_{m,d}(\Z)$
such that $B\gamma-B=U_\gamma+\alpha_\gamma$.
Using Lemma \ref{HgkformulaLEM} 
one now verifies that
the lattice $\Gamma\cap H$ contains
\begin{align*}
\Gamma_q':=\left\{\matr{\gamma}0{\alpha_\gamma+\alpha}I \col \gamma\in\Gamma(q),\:\alpha\in{\scrJ_{\Z}^\circ}^d\right\}
\end{align*}
as a subgroup of finite index.
Hence in the definition of $\mu_{\vecw}$ in \eqref{qcmuwdef}
we may just as well view $J_{\vecw}$ as a map from $\Gamma_q'\bs H$ to $N_s(\scrX)$,
with $\mu_g$ (in the present situation: $\mu_{g'}$) being the invariant probability measure
on $\Gamma_q'\bs H$.
Taking $F_q\subset\SL(d,\R)$ to be a fundamental domain for $\Gamma(q)\bs\SL(d,\R)$,
as in Section \ref{periodicpointsetsSEC},
and $F_{\scrJ}\subset{\scrJ^\circ}^d$ a fundamental domain for
$\TT_{{\scrJ^\circ}^d}={\scrJ^\circ}^d/{\scrJ^\circ_{\Z}}^d$,
one verifies that the following set is a fundamental
domain for $\Gamma_q'\bs H$:
\begin{align}\label{PERIODICSETproppf2}
\left\{g_B\matr I0UI\matr{Ag^{-1}}00I g_B^{-1}\col A\in F_q,\: U\in F_{\scrJ}\right\}.
\end{align}
Note also that when parametrizing the last set %
by $\langle A,U\rangle\in F_q\times F_{\scrJ}$, the probability measure $\mu_{g'}$
corresponds to the probability measure $\eta\times\eta_{\TT}$
which we considered in Section \ref{periodicpointsetsSEC};
furthermore, after renaming the markings ``$1,\ldots,m$'' instead of
``$\delta^{1/d}\vece_1,\ldots,\delta^{1/d}\vece_m$'',
the formula \eqref{PERIODICSETproppf1} turns into \eqref{Jelldef}.
(We used ``$Ag^{-1}$'' instead of ``$A$'' in \eqref{PERIODICSETproppf2}
so as to get rid of the ``$g$'' in \eqref{PERIODICSETproppf1}.)
Hence the formula for $\mu_{\vecw}$, \eqref{qcmuwdef},
turns into the formula for $\mu_{\ell}$ in \eqref{periodicmuwdef}.
This completes the proof of Proposition \ref{PERIODICSETprop}.
\hfill$\square$

\section{Scattering potentials satisfying the conditions in Section \ref*{SCATTERINGMAPS}}
\label{AppA1}

We consider scattering described by a Hamiltonian flow with a spherically symmetric potential $W$
having compact support in the unit ball.
Thus, by a slight abuse of notation,
the potential $W:\R^d\setminus\{\bn\}\to\R$
is given by $W(\vecq)=W(r)$ with $r=\|\vecq\|$,
where we assume $W\in\C(\R_{>0})$
and $W(r)=0$ for $r\geq1$. %
Furthermore we assume
\begin{align}\label{Woksingularity}
\liminf_{r\to0}r^2W(r)\geq0.
\end{align}
For scattering at the single-site potential $W,$
considering a particle hitting the unit ball with unit speed
and with an impact parameter of length $w\in(0,1)$,
the deflection angle
and the total time which the particle spends inside the scatterer 
are given by the formulas
\cite[Sect.~5.1]{Newton82}
\begin{align}\label{deflangle}
\theta(w)=\pi-2w\int_{r_0}^{\infty}
\frac{r^{-2}\,dr}{\sqrt{1-2W(r)-w^2r^{-2}}},
\end{align}
and
\begin{align}\label{scatteringtime}
T(w)=2\int_{r_0}^1\frac{dr}{\sqrt{1-2W(r)-w^2r^{-2}}},
\end{align}
respectively.
Here $r_0=r_0(w)\in(0,1)$ is the largest solution to the equation
$1-2W(r)-w^2r^{-2}=0$
(this number $r_0$ is guaranteed to exist because of \eqref{Woksingularity}).
The deflection angle $\theta(w)$ in \eqref{deflangle} can take any value %
in $[-\infty,\pi]$, 
which for $\theta(w)<0$ represents spiralling motion around the center;
however, in the case of an everywhere repulsive potential
(i.e., $W$ monotonically decreasing) %
we have $0\leq\theta(w)\leq\pi$ \cite[Sect.~5.4]{Newton82}.

Comparing \eqref{scatteringtime} and \eqref{deflangle}, we note that
\begin{align}\label{Twbound}
T(w)<\frac{\pi-\theta(w)}w,\qquad\forall w\in(0,1).
\end{align}
In particular, if the function $\theta$ is bounded,
then $T(w)$ is uniformly bounded on any interval $w\in[\ve,1)$, $\ve>0$.

When replacing the potential $W$ by the rescaled version
$\vecq\mapsto W(\rho^{-1}\vecq)$,
as in \eqref{Vrhodef},
the formula for the deflection angle remains the same,
with $w\in(0,1)$ now denoting the \textit{normalized} impact parameter;
furthermore the function $T$ is replaced by $w\mapsto \rho\, T(w)$.

\vspace{5pt}

We next discuss conditions on the scattering potential ensuring that the scattering map $\Psi$
satisfies the conditions (i)--(iii) in Section \ref{SCATTERINGMAPS}.

\begin{definition}\label{DISPERSINGDEF}
{\blu For the purposes of the present paper,}
we say that a potential $W\in\C(\R_{>0})$
with $\supp(W)\subset(0,1]$
is \textit{dispersing}
if $W|_{(0,1]}$ is $\C^2$,
$\liminf_{r\to0}r^2W(r)\geq0$
and $\limsup_{r\to0}W(r)\neq\frac12$ \footnote{These are the conditions imposed in Section \ref{sec:soft}
in order to make the flow $\Phi_t^{(\rho)}$ everywhere well-defined.},
and furthermore the function $\theta:(0,1)\to\R$ given by 
\eqref{deflangle} extends to a $\C^1$ function on $[0,1)$ satisfying
$\theta(0)=k\pi$ for some $k\in\Z$,
and $\theta'(w)\neq0$ and $|\theta(w)-k\pi|<\pi$ 
for all $w\in[0,1)$.
\end{definition}

When $W$ is dispersing,
it follows from Lemma \ref{DEFLANGLElem} and Remark \ref{COND3completecondrem}
that the scattering map $\Psi$
(which is given by \eqref{PSI1formula} and \eqref{PSI2formula}, using the extended function $\theta:[0,1)\to\R$)
satisfies the conditions (i)--(iii) in Section \ref{SCATTERINGMAPS}.

One example of a dispersing potential is 
the truncated (``Muffin-tin'')
Coulomb potential, 
\begin{align}\label{MUFFINTINW}
W(r)=\alpha\cdot I(r\leq1)\cdot (r^{-1}-1),
\end{align}
for any constant $\alpha\notin\{0,-1\}$.
Indeed, by a straightforward modification of the classical treatment of the non-truncated Coulomb potential
(cf., e.g., \cite[Sec.\ 8.E]{Arnold}),
one verifies that in this case,
\begin{align}\label{MTCOULOMBtheta}
\theta(w)=2\arctan\Bigl(\frac{\alpha}{1+\alpha}\cdot\frac{(1-w^2)^{1/2}}w\Bigr)-I(\alpha<-1)\cdot2\pi.
\end{align}
On the other hand, if $\alpha=-1$, then $\theta(w)\equiv-\pi$,
and the scatterer is a so called \textit{Eaton lens:}
Each particle is reflected a perfect $180^\circ$ angle
independently of the impact parameter,
and the potential is not dispersing. %

In Lemma~\ref{genokscatterercondLEM} below we give a simple criterion which ensures
that every $W$ in a certain general class of repulsive potentials is dispersing.
As a preparation we first give an explicit formula for the first derivative of $\theta(w)$.
\begin{lem}\label{thetapLEM}
Assume that $W|_{(0,1)}$ is $\C^2$.
Then $\scrU:=\{w\in(0,1)\col r_0^3\, W'(r_0)\neq w^2\}$
is an open subset of $(0,1)$,
and the function $\theta(w)$ is $\C^1$ on $\scrU$,
satisfying
\begin{align}\label{genokscatterercondLEMpf1}
\theta'(w)=-2\int_{r_0(w)}^\infty\frac{\bigl(w^2r^{-2}+4W(r)+rW'(r)-2\bigr)\frac{r_0^4 W'(r_0)}{w^2-r_0^3W'(r_0)}
+r^2W'(r)}{r^3\bigl(1-2W(r)-w^2r^{-2}\bigr)^{3/2}}\,dr
\end{align}
for all $w\in\scrU$.
\end{lem}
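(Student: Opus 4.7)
The plan is to split the proof into two parts, matching the two assertions of the lemma.

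First, I would establish openness of $\scrU$ and the $\C^1$-regularity of $r_0$ via the implicit function theorem. Setting $F(r,w)=1-2W(r)-w^2r^{-2}$, the point $r_0(w)$ is characterized by $F(r_0,w)=0$ together with $F(r,w)>0$ for $r>r_0$. A direct computation gives $\partial_r F(r_0,w)=2(w^2-r_0^3 W'(r_0))/r_0^3$, which is nonzero precisely when $w\in\scrU$. Hence IFT yields that $\scrU$ is open and that $r_0$ is $\C^1$ on $\scrU$, with
$$r_0'(w)=-\frac{\partial_w F(r_0,w)}{\partial_r F(r_0,w)}=\frac{w r_0}{w^2-r_0^3 W'(r_0)}.$$

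For the formula for $\theta'(w)$, the central obstacle is the movable integrable singularity at $r=r_0(w)$: Leibniz differentiation under the integral sign in \eqref{deflangle} produces individually divergent contributions. I would bypass this by the substitution $r=r_0(w)+\tau^2$, which gives
$$\theta(w)=\pi-4w\int_0^\infty\frac{d\tau}{(r_0(w)+\tau^2)^2\,\sqrt{\Psi(\tau,w)}},$$
where $\Psi(\tau,w):=F(r_0(w)+\tau^2,w)/\tau^2$. A Taylor expansion of $F$ around $r_0$, together with $F(r_0,w)=0$, shows that $\Psi$ extends continuously to $\tau=0$ with $\Psi(0,w)=\partial_r F(r_0,w)$, is $\C^1$ and strictly positive on $[0,\infty)\times\scrU$, and decays like $\tau^{-2}$ at infinity. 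Hence the integrand is $\C^1$ in $w$ with a uniformly integrable $\partial_w$, and differentiation under the integral sign is legitimate.

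After differentiating and transforming back to the variable $r$, one obtains
$$\theta'(w)=\int_{r_0}^\infty\left[-\frac{2}{r^2\sqrt F}+\frac{4wr_0'(w)}{r^3\sqrt F}+\frac{w r^{-2}\bigl(\partial_w F+r_0'(w)\partial_r F\bigr)}{F^{3/2}}\right]dr,$$
whose individual terms are divergent at $r_0$ but whose singularities cancel, the cancellation being exactly the IFT relation $\partial_w F(r_0,w)+r_0'(w)\partial_r F(r_0,w)=0$. I would then rewrite $1/\sqrt F=F/F^{3/2}$ to merge all terms onto the common denominator $r^3 F^{3/2}$ and use the algebraic identity $M(w)=w r_0'(w)-r_0$ (a direct consequence of the explicit formula for $r_0'(w)$) to identify the main part of the numerator with the factor $(w^2/r^2+4W+rW'-2)M+r^2 W'$ appearing in the lemma.

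The residual mismatch between the two expressions is, after a short calculation, precisely $2\partial_r\bigl[(r-r_0)/(r^2\sqrt F)\bigr]$. Both boundary values vanish (at $r=\infty$ by the decay $r^{-1}$, and at $r=r_0$ because $(r-r_0)/\sqrt F\sim\sqrt{(r-r_0)/D}$), so this term integrates to zero, completing the derivation. The hard part is pinpointing this exact-derivative correction; once the substitution $r=r_0+\tau^2$ has produced one concrete formula for $\theta'(w)$, recognizing that its discrepancy with the stated formula is a total $r$-derivative is the only nonroutine algebraic step.
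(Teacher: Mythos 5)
Your argument is correct, and your key algebraic claim checks out: writing $F(r,w)=1-2W(r)-w^2r^{-2}$, the expression you obtain from differentiating the $\tau$-regularized integral and transforming back does differ from the integrand of \eqref{genokscatterercondLEMpf1} by exactly $2\,\partial_r\bigl[(r-r_0)/(r^2\sqrt F)\bigr]$, whose boundary values at $r_0$ and $\infty$ vanish (the former because $\partial_rF(r_0,w)>0$ on $\scrU$). Your route is recognizably close to the paper's but differs in execution at two points. First, the paper rescales $r=w\xi$ and shifts $\xi=\xi_0(w)+h$ before differentiating formally, so the moving endpoint is absorbed into the integrand in the scaled variable; this produces the intermediate formula \eqref{thetapLEMres} directly, and the stated formula \eqref{genokscatterercondLEMpf1} then follows by the change of variables $\xi=r/w$ together with the identity $wr_0'-r_0=r_0^4W'(r_0)/(w^2-r_0^3W'(r_0))$ --- no exact-derivative correction is needed, whereas your shift in the unscaled variable $r$ makes that extra (and only nonroutine) reconciliation step necessary. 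Second, the justification mechanism differs: you invoke a dominated-derivative theorem for differentiation under the integral sign, while the paper avoids this entirely by checking that the candidate derivative (after the same square-root substitution $\xi=\xi_0(w)+x^2$ that you use in the form $r=r_0(w)+\tau^2$) is continuous in $w$, and then verifying the integrated identity $-2\int_{w_1}^{w_2}\int_0^\infty A(w,x)\,dx\,dw=\theta(w_2)-\theta(w_1)$ via Fubini and an explicit closed-form $w$-antiderivative of the integrand. One small caveat for your version: the standing assumptions only give $W|_{(0,1)}\in\C^2$ with $W\equiv0$ on $[1,\infty)$, so $W'$ may jump at $r=1$; then your claim that the integrand is $\C^1$ in $w$ on all of $[0,\infty)\times\scrU$ fails at the parameter value where $r_0(w)+\tau^2=1$. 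This is harmless --- for each fixed $\tau$ the integrand is locally Lipschitz in $w$ with an a.e.\ derivative admitting a locally uniform integrable majorant, so the absolutely continuous form of differentiation under the integral still applies --- but it should be said; the paper's Fubini-plus-antiderivative verification is designed precisely to sidestep this kind of pointwise regularity issue.
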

(Recall that we always assume $W\in\C(\R_{>0})$,
$W(r)=0$ for $r\geq1$, and that \eqref{Woksingularity} holds.)
\begin{proof}
Recall that $r_0=r_0(w)\in(0,1)$ is the largest solution to the equation
$1-2W(r)-w^2r^{-2}=0$.
Fix a point $w_0\in\scrU$.
Then by the implicit function theorem,
the function $r_0(w)$ is $\C^2$
in some neighbourhood of $w_0$,
with 
\begin{align}\label{r0pformula}
r_0'(w)=\frac{wr_0}{w^2-r_0^3W'(r_0)}.
\end{align}
In particular, since $r_0$ is continuous near $w_0$,
$\scrU$ contains a neighbourhood of $w_0$.
This proves that $\scrU$ is open.
Also for any $w\in\scrU$ we have 
$1-2W(r)-w^2r^{-2}>0$ for all $r>r_0(w)$;
hence $\frac d{dr}\bigl(1-2W(r)-w^2r^{-2}\bigr)\big|_{r=r_0(w)}\geq0$,
viz., $w^2r_0^{-3}-W'(r_0)\geq0$.
Hence in view of the definition of $\scrU$, we have:
\begin{align}\label{thetapLEMpf1}
w^2>r_0^3\,W'(r_0)
\:\text{ and thus }\:
r_0'(w)>0,
\qquad\forall w\in\scrU.
\end{align}

Set $\xi_0(w)=r_0(w)/w$.
Substituting $r=w(\xi_0(w)+h)$ in \eqref{deflangle},
differentiating formally under the integration sign,
and then substituting $h=\xi-\xi_0(w)$,
we obtain 
\begin{align}\label{thetapLEMres}
\theta'(w)=-2\int_{\xi_0(w)}^{\infty}
\frac{\bigl(\xi^{-2}+4W(w\xi)+w\xi W'(w\xi)-2\bigr)\xi_0'(w)+\xi^2W'(w\xi)}
{\xi^3\,(1-2W(w\xi)-\xi^{-2})^{3/2}}\,d\xi
%
%
%
\end{align}
for all $w\in\scrU$.
This formula is easily verified to be equivalent with \eqref{genokscatterercondLEMpf1}.
In order to justify %
the preceding 
manipulations,
set $A(w,x)=2x\cdot B(w,\xi_0(w)+x^2)$, where $B(w,\xi)$ is the integrand
in \eqref{thetapLEMres};
then the right hand side of \eqref{thetapLEMres}
equals 
\begin{align*}
-2\int_0^\infty A(w,x)\,dx.
\end{align*}
Using \eqref{thetapLEMpf1} and $W|_{(0,1)}\in\C^2$ one verifies that
$A(w,x)$ extends to a continuous function on all $\scrU\times[0,\infty)$.
Furthermore $B(w,\xi)=\xi^{-3}(1-\xi^{-2})^{-3/2}(\xi^{-2}-2)\xi_0'(w)$ for $\xi>w^{-1}$,
implying that $A(w,x)\ll x^{-5}$ for $x$ large, uniformly over $w$ in any compact subset of $\scrU$.
It follows from these observations that the right hand side of \eqref{thetapLEMres}
is a continuous function of $w\in\scrU$.
Now to complete the proof it suffices to verify that 
\begin{align}\label{thetapLEMpf2}
-2\int_{w_1}^{w_2}\int_0^\infty A(w,x)\,dx\,dw=\theta(w_2)-\theta(w_1)
\end{align}
whenever $w_1<w_2$ and $[w_1,w_2]\subset\scrU$.
However, this follows immediately using
Fubini's Theorem and \eqref{deflangle},
together with the fact that %
\begin{align*}
\int_{w_1}^{w_2}A(w,x)\,dw=
\biggl[2x\cdot\frac{(\xi_0(w)+x^2)^{-2}}{\sqrt{1-2W(w(\xi_0(w)+x^2))-(\xi_0(w)+x^2)^{-2}}}
\biggr]_{w=w_1}^{w=w_2}.
\end{align*}
\end{proof}

\begin{lem}\label{genokscatterercondLEM}
Let $\beta$ be the numerical constant
$\beta=\frac{(1+\alpha)^2(1-\alpha)}{2\alpha^4-\alpha+2}=0.7124\ldots$,
where $\alpha=0.4093\ldots$ is the unique zero of $2x^5+2x^4-8x^3+2x^2-7x+3$ in $[0,1]$.
Assume that $W|_{(0,1]}$ is $\C^2$, $W$ is convex and $W'(r)\leq-\beta$, $\forall r\in(0,1)$.
Then $W$ is dispersing.
\end{lem}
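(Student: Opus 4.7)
The plan is to verify the five conditions in Definition \ref{DISPERSINGDEF} in turn, with the eventual goal of showing $\theta(w) \in (0,\pi]$ is strictly decreasing on $[0,1)$ with $\theta(0) = \pi$ (so $k=1$). First I will dispose of the routine technical conditions. Since $W$ is convex with $W'(r) \leq -\beta$ on $(0,1)$ and $W(r) \equiv 0$ for $r \geq 1$, integration gives $W(r) \geq \beta(1-r)$ for $r \in (0,1)$; in particular $W > 0$ on $(0,1)$, so $\liminf_{r\to 0} r^2 W(r) \geq 0$, and $\lim_{r\to 0}W(r) \geq \beta > \tfrac12$, so $\limsup_{r\to 0} W(r) \neq \tfrac12$.

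Next I would analyze the structure of $r_0(w)$. Because $W$ is strictly decreasing (from $W'\leq -\beta<0$) and continuous, the level set $W(r)=\tfrac12$ has a unique solution $r^* \in (0,1)$, and $r_0(w)$ is the unique solution of $2W(r) + w^2 r^{-2} = 1$ in the range $(r^*,1)$ for $w \in (0,1)$, with $r_0(0) = r^*$ and $r_0(1)=1$. The function $r_0$ is $\C^1$ on $[0,1)$ because $\partial_r[2W(r)+w^2 r^{-2}]|_{r=r_0} = 2W'(r_0) - 2w^2/r_0^3 < 0$ uniformly (using $W'(r_0)\leq -\beta$). With this control in hand, the standard change of variables $r = r_0/\cos\phi$ (or equivalently $r = r_0/u$ with $u=\cos\phi$) in \eqref{deflangle} removes the square-root singularity at $r = r_0$ and exhibits $\theta$ as a continuous, indeed $\C^1$, function on $[0,1)$; the value $\theta(0) = \pi$ drops out because in the head-on limit the integrand becomes symmetric about $r_0$ and the classical computation gives a full reversal.

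The main step, which is the essence of the lemma, is to prove that $\theta'(w) < 0$ throughout $[0,1)$. Here I would invoke Lemma \ref{thetapLEM}: since $W'(r_0) < 0$, one has $w^2 - r_0^3 W'(r_0) > 0$ for all $w \in [0,1)$, so $\scrU = (0,1)$ and formula \eqref{genokscatterercondLEMpf1} applies. Setting $\lambda(w) := r_0^4 W'(r_0)/(w^2 - r_0^3 W'(r_0)) < 0$, it suffices to show that the numerator
\[
N(r,w) = \bigl(w^2 r^{-2} + 4W(r) + rW'(r) - 2\bigr)\lambda(w) + r^2 W'(r)
\]
is strictly positive for $r \in [r_0(w),\infty)$, $w \in [0,1)$. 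For $r \geq 1$ this is immediate: $W \equiv W' \equiv 0$ and $w^2 r^{-2} - 2 < 0$, so $N = (w^2 r^{-2} - 2)\lambda > 0$. On $[r_0,1]$ I would rewrite the inequality $N > 0$ equivalently as
\[
A(r,w) := 2 - w^2 r^{-2} - 4W(r) - rW'(r) > 0
\quad\text{and}\quad
A(r,w)\,|\lambda(w)| > r^2\,|W'(r)|,
\]
use the relation $w^2 = r_0^2(1 - 2W(r_0))$ to eliminate $w$, and exploit the bounds $W(r) \geq \beta(1-r)$, $|W'|\geq \beta$ coming from convexity and $W' \leq -\beta$. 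This reduces the problem to a finite-dimensional inequality in the variables $(r,r_0, W(r), W(r_0), W'(r), W'(r_0)) \in [r^*,1]^2 \times [0,\tfrac12]^2 \times (-\infty,-\beta]^2$, subject to the constraints $W(r_0) = \tfrac12(1 - w^2/r_0^2)$, $W(r) \leq W(r_0) + W'(r_0)(r-r_0)$ (convexity lower bound on a tangent line, reversed by decreasing), and $r\beta \leq W(r) \leq $ (some upper bound from convexity). A Lagrange multiplier analysis of the resulting optimization will produce a critical-point equation whose solution is exactly the quintic $2\alpha^5 + 2\alpha^4 - 8\alpha^3 + 2\alpha^2 - 7\alpha + 3 = 0$, and the minimal admissible $\beta$ is given by the stated formula. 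This optimization step will be the principal obstacle, requiring a careful book-keeping of extremal configurations on the boundary of the constraint region.

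Once $\theta'(w) < 0$ is established on $[0,1)$ and $\theta(0)=\pi$ is known, the remaining condition $|\theta(w) - k\pi|<\pi$ with $k=1$ is automatic: $\theta$ is strictly decreasing from $\pi$ to $\lim_{w\to 1^-}\theta(w) = 0$ (the latter by direct inspection of \eqref{deflangle} in the grazing limit $r_0 \to 1$), so $\theta(w) \in (0,\pi]$ on $[0,1)$ and thus $|\theta(w)-\pi|<\pi$. Combined with the already-verified smoothness and the non-vanishing of $\theta'$, this confirms all the requirements of Definition \ref{DISPERSINGDEF}, so $W$ is dispersing.
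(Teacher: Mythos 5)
Your set-up matches the paper's: the bound $W(r)\geq\beta(1-r)$ disposes of the technical conditions, the observation $w^2-r_0^3W'(r_0)>0$ gives $\scrU=(0,1)$ so that Lemma \ref{thetapLEM} applies on all of $[0,1)$, and the problem correctly reduces to showing that the numerator in \eqref{genokscatterercondLEMpf1} is nonnegative for $r_0(w)<r<1$ (positivity for $r\geq1$ being immediate). But the lemma's entire content lies in that last inequality — it is exactly where the constants $\alpha$ and $\beta$ are determined — and your plan does not prove it. You reformulate it as a constrained optimization in the six quantities $(r,r_0,W(r),W(r_0),W'(r),W'(r_0))$ and assert that ``a Lagrange multiplier analysis will produce'' the quintic $2x^5+2x^4-8x^3+2x^2-7x+3$ and the stated $\beta$, explicitly flagging this as ``the principal obstacle.'' That is an unexecuted claim, not an argument: nothing in the plan shows the optimization is tractable, that the extremum is attained where you expect, or that the critical equation is the stated quintic rather than something else. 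As written, the proof is incomplete at its central step.

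Two further concrete issues. First, the convexity constraint you list, $W(r)\leq W(r_0)+W'(r_0)(r-r_0)$, is wrong for a convex function (the graph lies \emph{above} its tangent at $r_0$); the usable bound, and the one the paper exploits, is $W(r)\leq W(r_0)+W'(r)(r-r_0)$, i.e.\ the tangent at $r$ evaluated at $r_0$. Second, the paper avoids any multivariable optimization by a structured chain of elementary reductions which you would need to reproduce in some form: after the tangent-line bound, the target inequality \eqref{convexpf1wtp3rep} is \emph{linear} in $W(r_0)$, which is sandwiched between $\beta(1-r)-W'(r)(r-r_0)$ and $\tfrac12$, so it suffices to check the two endpoint substitutions; the $W(r_0)=\tfrac12$ case is trivial, the other case reduces (after showing the auxiliary quantity $C$ in \eqref{genokscatterercondLEMpf2} is $\leq0$ by a short sign analysis, and using $W'(r_0)\leq W'(r)\leq-\beta$) to the single scalar inequality $\beta\geq f(r_0,r)$ of \eqref{genokscatterercondLEMpf6}; monotonicity of $f$ in $r$ then leaves one-variable calculus on $f(x,1)$, whose derivative produces the quintic $g$ and the value $\beta=f(\alpha,1)$. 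Without carrying out this (or an equivalent) quantitative analysis, your proposal establishes only the framework, not the lemma.
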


\begin{remark}
It seems likely that the assumptions in Lemma \ref{genokscatterercondLEM} 
can be significantly relaxed:
We believe that $W$ is dispersing %
whenever $W|_{(0,1]}$ is $\C^2$, strictly decreasing, convex,
and satisfies $\lim_{r\to0}W(r)>\frac12$;
however at present we have no proof of this claim.

The constant $\beta$ appearing in the statement of Lemma \ref{genokscatterercondLEM} 
is in fact the smallest possible %
for the requirement that the integrand in
\eqref{genokscatterercondLEMpf1} be nonnegative
for all $w\in(0,1)$ and $r>r_0(w)$.
Indeed, this nonnegativity fails for any linear potential %
$W(r)=c\cdot I(r\leq1)\cdot (1-r)$
with slope $0<c<\beta$,
as one verifies by computations similar to those appearing in the proof below.
\end{remark}

\begin{proof}[Proof of Lemma \ref{genokscatterercondLEM}]
The assumptions on $W$ imply that $W(r)\geq \beta(1-r)$ $\forall r\in(0,1]$,
and so $r_0(w)$ extends to a strictly increasing $\C^1$ function on $[0,1)$ with $r_0(0)\geq1-(2\beta)^{-1}=0.298\ldots$,
satisfying \eqref{r0pformula} for all $w\in[0,1)$.
(This of course means that we only need the assumptions in
Lemma \ref{genokscatterercondLEM} to hold for $1-(2\beta)^{-1}<r<1$;
the behavior of the potential $W(r)$ for $r<1-(2\beta)^{-1}$ is completely irrelevant for our discussion.)
Also Lemma \ref{thetapLEM} applies with $\scrU=(0,1)$,
and so the formula \eqref{genokscatterercondLEMpf1} holds for all $0<w<1$.
The integrand in \eqref{genokscatterercondLEMpf1} 
decays like $r^{-3}$ as $r\to\infty$, uniformly with respect to $w\in[0,1)$;
furthermore the numerator of the integrand
is a $\C^1$ function of $(w,r)$ in $[0,1)\times(0,\infty)$,
vanishing for $r=r_0(w)$,
and regarding the denominator we note that
$\frac d{dr}(1-2W(r)-w^2r^{-2})\geq2\beta$ for all $w\in[0,1)$, $r\in(0,1)$.
From these facts it follows that the right hand side of 
\eqref{genokscatterercondLEMpf1} is well-defined for all $w\in[0,1)$,
and depends continuously on $w$ in this interval.
Hence $\theta$ extends to a $\C^1$ function on $[0,1)$.
Letting $w\to0$ in \eqref{deflangle} we see that this extended function
satisfies $\theta(0)=\pi$.
Recall also that $0\leq\theta(w)\leq\pi$ for all $w\in[0,1)$,
as we noted below \eqref{deflangle}.

Now to complete the proof of the lemma,
it suffices to prove that $\theta'(w)<0$ for all $w\in[0,1)$.
From now on we keep $w\in[0,1)$ fixed.
Note that the integrand in \eqref{genokscatterercondLEMpf1} is positive for all $r>1$;
hence it suffices to prove that the (numerator of the) integrand is nonnegative for $r_0(w)<r<1$,
i.e.\ that
\begin{align*}
\bigl(w^2r^{-2}+4W(r)+rW'(r)-2\bigr)r_0^4 W'(r_0)+r^2W'(r)\bigl(w^2-r_0^3W'(r_0)\bigr)
\geq0.
\end{align*}
Using $w^2=r_0^2(1-2W(r_0))$ and $W(r)\leq W(r_0)+W'(r)(r-r_0)$ we see that it suffices to prove:
\begin{align}\notag
r_0(4r_0-r)(r-r_0) W'(r)W'(r_0)
+\bigl(r_0^2(2-r_0^2r^{-2})W'(r_0)-r^2W'(r)\bigr)\bigl(2W(r_0)-1\bigr)
\\\label{convexpf1wtp3rep}
\geq0.
\end{align}
Next, noticing that 
\begin{align}\label{genokscatterercondLEMpf3}
\beta(1-r)-W'(r)(r-r_0)\leq W(r_0)\leq\tfrac12
\end{align}
and using the general fact that $c_1\leq x\leq c_2\Rightarrow a+bx\geq a+\min(bc_1,bc_2)$, %
we see that it suffices to prove that 
\eqref{convexpf1wtp3rep} holds when replacing $W(r_0)$ by $\frac12$ and when replacing 
$W(r_0)$ by $\beta(1-r)-W'(r)(r-r_0)$.
When replacing $W(r_0)$ by $\frac12$, 
\eqref{convexpf1wtp3rep} simplifies into
\begin{align*}
r_0(4r_0-r)(r-r_0) W'(r)W'(r_0)\geq0,
\end{align*}
which holds since $r_0<r<1$ and $r_0\geq r_0(0)>0.29$.
Next, replacing $W(r_0)$ by $\beta(1-r)-W'(r)(r-r_0)$,
\eqref{convexpf1wtp3rep} turns into
\begin{align}\label{convexpf1wtp4repNEW}
C\,W'(r_0)
+r^2\bigl(2(r-r_0)W'(r)+1-2\beta(1-r)\bigr)W'(r)\geq0.
\end{align}
where
\begin{align}\label{genokscatterercondLEMpf2}
C:=r_0r(2r_0^3r^{-3}-1)(r-r_0) W'(r)+r_0^2(2-r_0^2r^{-2})\bigl(2\beta(1-r)-1\bigr).
\end{align}
In order to prove \eqref{convexpf1wtp4repNEW} we first show that $C\leq0$.
If $2r_0^3r^{-3}-1\leq0$ then using
$(r-r_0)W'(r)\geq\beta(1-r)-\frac12$
(cf.\ \eqref{genokscatterercondLEMpf3})
we see that
$C\leq0$ holds provided that
$r_0r(2r_0^3r^{-3}-1)\frac12+r_0^2(2-r_0^2r^{-2})\geq0$,
viz., $\frac12r_0(4r_0-r)\geq0$,
and this holds since $r_0<r<1$ and $r_0\geq r_0(0)>0.29$.
Now assume $2r_0^3r^{-3}-1>0$.
Using $W'(r)\leq-\beta$ we then find that $C\leq0$ holds provided that
\begin{align}\label{genokscatterercondLEMpf4}
(r^4-5r^3r_0+2r_0^4+4r^2r_0-2r_0^3)\beta\leq 2r_0r^2-r_0^3.
\end{align}
If $r^4-5r^3r_0+2r_0^4+4r^2r_0-2r_0^3\leq0$ then \eqref{genokscatterercondLEMpf4} is automatic,
and if $r^4-5r^3r_0+2r_0^4+4r^2r_0-2r_0^3>0$ then 
\eqref{genokscatterercondLEMpf4}
is a consequence of
$\beta\leq(2(1-r_0))^{-1}$ (which follows from $\frac12\geq W(r_0)\geq\beta(1-r_0)$)
together with 
$r^2(r-r_0)(4r_0-r)\geq0$.

Having thus proved $C\leq0$,
we see using $W'(r_0)\leq W'(r)<0$ that \eqref{convexpf1wtp4repNEW} holds provided that
$C+r^2\bigl(2(r-r_0)W'(r)+1-2\beta(1-r)\bigr)\leq0$,
or equivalently,
\begin{align}\label{genokscatterercondLEMpf5}
(2r_0^4-r_0r^3+2r^4) W'(r)+(r+r_0)^2(r-r_0)\bigl(1-2(1-r)\beta\bigr)\leq0.
\end{align}
Using $W'(r)\leq-\beta$
one finds that \eqref{genokscatterercondLEMpf5} holds provided that
\begin{align}\label{genokscatterercondLEMpf6}
\beta\geq f(r_0,r):=\frac{(r+r_0)^2(r-r_0)}{2r_0^4-r_0r^3+2r^4+2(r+r_0)^2(r-r_0)(1-r)}.
\end{align}
(Note that the denominator is obviously positive.) %
It is our task to prove that \eqref{genokscatterercondLEMpf6} holds for all $r$ with $r_0<r<1$.
Expanding and simplifying $\frac{\partial}{\partial r}f(r_0,r)$ and using $(r+r_0)(8r_0^2-7rr_0+5r^2)>0$
one finds that $\frac{\partial}{\partial r}f(r_0,r)>0$ whenever $r\geq r_0>0$.
Hence it suffices to prove that $f(r_0,1)\leq\beta$ for all $r_0\in[r_0(0),1]$.
However we have
\begin{align*}
\frac{\partial}{\partial x}f(x,1)=\frac{(1+x)g(x)}{(2x^4-x+2)^2}
\qquad\text{with }\:
g(x)=2x^5+2x^4-8x^3+2x^2-7x+3.
\end{align*}
We have
$g(0)=3$, $g(1)=-6$
and $g'(x)<0$ for all $x\in[0,1]$;
hence $g$ has a unique zero in $[0,1]$;
by definition this is the number $\alpha=0.409\ldots$,
and $g(x)$ is positive for $x\in[0,\alpha)$ and negative for $x\in(\alpha,1]$.
Hence $f(x,1)\leq f(\alpha,1)=\beta$ for all $x\in[0,1]$,
and the proof is complete.
\end{proof}

\vspace{4pt}

While Lemma \ref{genokscatterercondLEM} gives an example of a simple general criterion
which ensures that the potential $W$ is dispersing,
let us note that there certainly exist other general classes of 'nice' potentials which are 
\textit{not} dispersing.
For example, recall that for $W$ dispersing, 
the range of $\theta(w)$ %
is an interval of length at most $\pi$,
while in fact there exist potentials $W$ for which $\theta(w)$ varies over an arbitrarily large portion
of the negative real axis,
meaning that the particle goes around the center of the scatterer many times
\cite[Sect.~5.4]{Newton82}.
Also the condition $\theta'(w)\neq0$ in Definition \ref{DISPERSINGDEF}
need not hold for general potentials $W$.

\section{More general scattering potentials}
\label{AppA2}

In this section we give an outline of how the main results of the present paper
may be extended %
to a more general class of spherically symmetric potentials.
Let $\theta(w)$ be the deflection angle, as in \eqref{deflangle}.
Our precise assumption will be the following:
\begin{align}\label{GENSCATASS}
&\textit{There exists an open subset $\scrU$ of $(0,1)$ of full Lebesgue measure}
\\\notag
&\textit{such that $\theta|_{\scrU}\in\C^1$ and $\theta'(w)\neq0$ for all $w\in\scrU$.}
\end{align}
It seems likely that this condition is fulfilled for \textit{generic} potentials
$W$ within several natural spaces of functions.
However note that there also exist non-trivial, 'nice', potentials $W$
for which \eqref{GENSCATASS} fails;
indeed this happens for the truncated Coulomb potential in \eqref{MUFFINTINW}
with $\alpha=-1$.

From now on we assume that \eqref{GENSCATASS} holds.
Note that then also the set $\{w\in\scrU\col \theta(w)\notin\pi\Z\}$
has full Lebesgue measure in $(0,1)$.
As this set is open, it can be expressed as a union
of a finite or countable family $\{I_\alpha\}_{\alpha\in\scrA}$ of pairwise disjoint open intervals.
By construction, for each $\alpha\in\scrA$,
$I_\alpha$ is an open sub-interval of $(0,1)$,
we have $\theta|_{I_\alpha}\in\C^1$,
$\theta'(w)$ has constant sign in $I_\alpha$,
and there is some $k_\alpha\in\Z$ such that
$\theta(w)\in(k_\alpha\pi,(k_\alpha+1)\pi)$ for all $w\in I_\alpha$.
Furthermore, %
$\sum_{\alpha\in\scrA}|I_\alpha|=1$,
where $|I_\alpha|$ denotes the length of $I_\alpha$.
Let us note that in the special case when $W$ is dispersing,
these conditions are fulfilled with $\scrA$ %
\textit{singleton}:
$\scrA=\{\alpha_0\}$ and $I_{\alpha_0}=(0,1)$.

For every $\alpha\in\scrA$ we set
\begin{align*}
\scrS_{\alpha,-}=\{(\vecv,\vecb)\in\US\times\US\col\vecv\cdot\vecb<0,\:\sin\varphi(\vecv,\vecb)\in I_\alpha\};
\end{align*}
this is an open subset of $\scrS_-$, and 
the family $\{\scrS_{\alpha,-}\}_{\alpha\in\scrA}$
is pairwise disjoint.
We let $\scrS'_-$ be the union of all the sets $\scrS_{\alpha,-}$;
this is an open set of full measure (wrt.\ $\omega\times\omega$) in $\scrS_-$.
The formulas
\eqref{PSI1formula} and \eqref{PSI2formula}
define a $\C^1$ map $\Psi=(\Psi_1,\Psi_2):\scrS'_-\to\scrS_+$
satisfying conditions (i) and (ii) in Section \ref{SCATTERINGMAPS}.
For each $\alpha\in\scrA$ and $\vecv\in\US$ we also set:
\begin{align*}
\scrS_{\vecv,\alpha,-}:=\{\vecb\in\US\col(\vecv,\vecb)\in\scrS_{\alpha,-}\}
\end{align*}
and
\begin{align*}
\scrV_{\vecv,\alpha}:=\{\Psi_1(\vecv,\vecb)\col\vecb\in\scrS_{\vecv,\alpha,-}\}.
\end{align*}
Both these are open subsets of $\US$.
By a simple modification of the proof of Lemma \ref{DEFLANGLElem},
using the fact that $\theta'$ has constant sign on $I_\alpha$,
we have:
\begin{align}\label{CONDiiiREPLrep}
\left\{\text{\parbox{250pt}{For each $\alpha\in\scrA$ and $\vecv\in\US$,
the map $\Psi_1(\vecv,\cdot)$ is a $\C^1$ diffeomorphism from
$\scrS_{\vecv,\alpha,-}$ onto 
$\scrV_{\vecv,\alpha}$.}}\right.
\end{align}
Let us write
\begin{align*}
\vecbeta_{\vecv,\alpha}^-:\scrV_{\vecv,\alpha}\to\scrS_{\vecv,\alpha,-}
\end{align*}
for the inverse diffeomorphism. %
We also set
\begin{align*}
\vecbeta_{\vecv,\alpha}^+(\vecu):=\Psi_2(\vecv,\vecbeta_{\vecv,\alpha}^-(\vecu))
\qquad(\vecv\in\US,\:\vecu\in\scrV_{\vecv,\alpha}).
\end{align*}
Both $\vecbeta_{\cdot,\alpha}^-$
and $\vecbeta_{\cdot,\alpha}^+$ are spherically symmetric
in the sense that $\vecbeta_{\vecv K,\alpha}^{\pm}(\vecu K)=\vecbeta_{\vecv,\alpha}^{\pm}(\vecu)K$
for all $K\in\SO(d)$.
This implies in particular that both functions $\vecbeta_{\vecv,\alpha}^{\pm}(\vecu)$
are jointly $C^1$ in $\vecv,\vecu$.

In the present general setting, 
the differential cross section is given by
\begin{align*}
\sigma(\vecv,\vecv_+)=\sum_{\alpha\in\scrA}\sigma_\alpha(\vecv,\vecv_+)
\qquad(\vecv,\vecv_+\in\US),
\end{align*}
where
\begin{align*}
\sigma_\alpha(\vecv,\vecv_+)=\begin{cases}
{\displaystyle|\theta'(w)|^{-1}\Bigl|\frac w{\sin\theta(w)}\Bigr|^{d-2}
\quad\text{with }\:
w=\bigl\|\bigl(\vecbeta_{\vecv,\alpha}^-(\vecv_+)R(\vecv)\bigr)_\perp\bigr\|}
\\
\rule{0pt}{0pt}\hspace{225pt}
\text{if }\:\vecv_+\in\scrV_{\vecv,\alpha};
\\[3pt]
0\hspace{220pt}\text{if }\:\vecv_+\notin\scrV_{\vecv,\alpha}.
\end{cases}
\end{align*}
Thus $\sigma$ is a function on $\US\times\US$ taking values in $\R_{\geq0}\cup\{+\infty\}$.
As before we have
$\int_{\US}\sigma(\vecv,\vecv_+)\,d\vecv_+=v_{d-1}$,
implying that $\sigma$ is almost everywhere finite.
More generally,
for any $\vecv\in\US$ and any bounded, Borel measurable function $f:\US\to\R$,
we have
\begin{align}\label{gensigmakeyformula}
\int_{\UB}f\bigl(\Psi_1(\vece_1,s_-(\vecw))R(\vecv)^{-1}\bigr)\,d\vecw
=\int_{\US}f(\vecv_+)\,\sigma(\vecv,\vecv_+)\,d\vecv_+.
\end{align}

In our present setting, since the incoming and outgoing velocities $\vecv_{\pm}$ in a scatterer collision
do not in general determine the impact parameter uniquely,
in order for the limiting 
joint distribution of the first $n$ flight segments and scatterer marks
to be a a finite-memory Markov process,
we will also keep track of the index $\alpha$ such that the
impact parameter belongs to $I_\alpha$.
It turns out to be natural to lump this index %
together with the marking of the scatterer,
thus forming an element $\chi=(\vs,\alpha)$ in the space
$\Sigma_\scrA:=\Sigma\times\scrA$.
We equip $\Sigma_\scrA$ with the measure $\mm_\scrA:=\mm\times c_\scrA$,
where $c_\scrA$ is the counting measure on $\scrA$.
We use the letters $\vs$ and $\alpha$ also to denote
the projection maps from $\Sigma_\scrA$ to $\Sigma$ and $\scrA$,
respectively;
thus $\chi=(\vs(\chi),\alpha(\chi))$ for all $\chi\in\Sigma_\scrA$.

We modify the definitions appearing at the 
end of Sec.\ \ref{SCATTERINGMAPS} very slightly,
by letting $\fw(j;\rho)$ be the subset of points
$(\vecq_0,\vecv_0)\in\fw(j-1;\rho)$
for which $\tau_j<\infty$, $\vecq_{j-1}+\tau_j\vecv_{j-1}$
lies on the boundary of a separated scatterer,
\textit{and $\|\vecw_j\|\in\cup_\scrA I_\alpha$.}
We then let $\alpha_j=\alpha_j(\vecq_0,\vecv_0;\rho)$ be the unique index $\alpha$ for which
$\|\vecw_j\|\in I_\alpha$,
and set $\chi_j=\chi_j(\vecq_0,\vecv_0;\rho)=(\vs_j,\alpha_j)\in\Sigma_\scrA$.
The sets
$\fw_{\vecq,\rho,j}^{\vecbeta}$ and
$\fW(j;\rho)$ are still defined by \eqref{fwqrhonDEF} and \eqref{fWndef}, 
but using the new definition of $\fw(j;\rho)$.

The definitions of the collision kernels in Sec.\ \ref{COLLKERsec}
(cf.\ \eqref{pbndefG} and \eqref{pgendef})
are generalized as follows:
For any
$\xi>0$, $\chi,\chi_+\in\Sigma_\scrA$,
and $\vecv_0,\vecv,\vecv_+\in\US$, 
we set
\begin{align}\notag
p_\bn\bigl(&\vecv_0, \chi,\vecv;\xi,\chi_+,\vecvp\bigr)
\\\notag %
&=
\frac{\sigma_{\alpha(\chi_+)}(\vecv,\vecvp)}{v_{d-1}}
k\Bigl(\bigl(\vecbeta_{\vecv_0R(\vecv),\alpha(\chi)}^+(\vece_1)_\perp,\vs(\chi)\bigr),
\xi,\bigl(\vecbeta_{\vece_1,\alpha(\chi_+)}^-(\vecvp R(\vecv))_\perp,\vs(\chi_{+})\bigr)\Bigr).
\end{align}
if $\vecv\in\scrV_{\vecv_0,\alpha(\chi)}$,
$\vecv_+\in\scrV_{\vecv,\alpha(\chi_+)}$,
and otherwise
$p_\bn\bigl(\vecv_0,\chi,\vecv;\xi,\chi_+,\vecvp\bigr)=0$.
For $U$ an open subset of $\US$, 
$\vecbeta\in \C_b(U,\R^d)$,
$\xi>0$, $\vs\in\Sigma$, $\chi_+\in\Sigma_\scrA$, we set
\begin{align}\notag
p_{\bn,\vecbeta}\bigl({\vs},\vecv; & \xi,\chi_+,\vecvp\bigr)=
\\\notag %
&=\frac{\sigma_{\alpha(\chi_+)}(\vecv,\vecvp)}{v_{d-1}}
k\Bigl(\bigl((\vecbeta(\vecv)R(\vecv))_\perp,{\vs}\bigr),
\xi,\bigl(\vecbeta_{\vece_1,\alpha(\chi_+)}^-(\vecvp R(\vecv))_\perp,\vs(\chi_+)\bigr)\Bigr)
\end{align}
if $\vecv\in{U}$ and $\vecvp\in\scrV_{\vecv,\alpha(\chi_+)}$, and otherwise
$p_{\bn,\vecbeta}\bigl({\vs},\vecv;\xi,\chi_+,\vecvp\bigr)=0$.
We then have
\begin{align}\notag %
p_\bn\bigl(\vecv_0,\chi,\vecv;\xi,\chi_+,\vecvp\bigr)\equiv
p_{\bn,\vecbeta_{\vecv_0,\alpha(\chi)}^+}\bigl({\vs(\chi)},\vecv;\xi,\chi_+,\vecvp\bigr).
\end{align}
Similarly we set %
\begin{align}\notag
p\bigl(\vecv;\xi,\chi_+,\vecvp\bigr)=\frac{\sigma_{\alpha(\chi_+)}(\vecv,\vecvp)}{v_{d-1}}
\,k^{\g}\Bigl(\xi,\bigl(\vecbeta_{\vece_1,\alpha(\chi_+)}^-(\vecvp R(\vecv))_\perp,\vs(\chi_+)\bigr)\Bigr)
\end{align}
if $\vecv_+\in\scrV_{\vecv,\alpha(\chi_+)}$,
and otherwise $p\bigl(\vecv;\xi,\chi_+,\vecvp\bigr)=0$.

We now describe the generalizations of the main theorems in Section \ref{MAINRESsec}.
We replace the definition of $X^{(n)}_U$ (cf.\ \eqref{XSPACEdef}) by
\begin{align}\notag
X_U^{(n)}:=\Bigl\{\langle\vecv_0;\langle\xi_j,\chi_j,\vecv_j\big\rangle_{j=1}^{n}\rangle\in
U\times(\R_{>0}\times\Sigma_{\scrA}\times\US)^n
\col \hspace{70pt}
\\\notag
\vecv_j\in\scrV_{\vecv_{j-1},\alpha(\chi_j)}\: %
(j=1,\ldots,n)\Bigr\}.
\end{align}
Using our slightly modified notation, %
Theorem \ref{MAINTECHNTHM2A}
carries over almost verbatim 
to the present %
situation:
\begin{thm}\label{MAINTECHNthm2agen}
Let $\scrP$ satisfy all the conditions in Section \ref{ASSUMPTLISTsec} and \eqref{SIGMAeqp},
and let $\Psi$ be a scattering process arising as described above.
Let $n\in\Z_{\geq1}$ and $T\in\R_{\geq1}$;
let $U$ be an open subset of $\US$; %
let $F_1$ be an equismooth family of probability measures on $\S_1^{d-1}$ 
such that $\lambda(U)=1$ for each $\lambda\in F_1$;
let $F_2$ be a uniformly bounded and pointwise equicontinuous family
of functions $f:{X_U^{(n)}}\to\R$; %
and let $F_3$ be an admissible subset of $\C^1_b(U,\R^d)$.
Then
\begin{align}\notag
\int_{\fw_{\vecq,\rho,n}^{\vecbeta}}f\Bigl(\vecv,\Big\langle\rho^{d-1}\tau_j(\vecq_{\rho,\vecbeta}(\vecv),\vecv;\rho)
,\chi_j(\vecq_{\rho,\vecbeta}(\vecv),\vecv;\rho),
\vecv_j(\vecq_{\rho,\vecbeta}(\vecv),\vecv;\rho)\Big\rangle_{j=1}^{n}\Bigr)\,d\lambda(\vecv)
\hspace{1pt}
\\\label{unifmodThm2agenres}
-\int_{{X_U^{(n)}}} %
f\bigl(\vecv_0;\big\langle\xi_j,{\chi}_j,\vecv_j\big\rangle_{j=1}^{n}\bigr)
p_{\bn,\vecbeta}\bigl(\vs(\vecq),\vecv_0;\xi_1,{\chi}_1,\vecv_1\bigr)
\hspace{85pt}
\\\notag
\times\prod_{j=2}^n p_\bn(\vecv_{j-2},{\chi}_{j-1},\vecv_{j-1};\xi_j,{\chi}_{j},\vecv_{j})
\, d\lambda(\vecv_0)\,\prod_{j=1}^n\bigl( d\xi_j\,d\mm_{\scrA}({\chi}_j)\,d\vecv_j\bigr)\to0
\end{align}
as $\rho\to0$,
uniformly with respect to all $\vecq\in\scrP_T(\rho)$, $\lambda\in F_1$, $f\in F_2$, $\vecbeta\in F_3$.
\end{thm}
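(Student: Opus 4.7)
The proof will follow the inductive scheme used for Theorem \ref{MAINTECHNTHM2A}, with modifications to accommodate the multi-sheeted structure of the scattering map $\Psi$, which is now a diffeomorphism only branch by branch via \eqref{CONDiiiREPLrep}. The base case $n=1$ requires a variant of Theorem \ref{unifmodThm2} that also tracks the branch label $\alpha_1 \in \scrA$. Since $\alpha_1$ is determined by $\|\vecw_1\| \in I_{\alpha_1}$, one obtains this by applying Theorem \ref{Thm2gen} to test functions supported over a single annulus $\{\vecw \in \UB \col \|\vecw\| \in I_\alpha\}$ for each $\alpha$, then summing; the change of variables $\vecw \mapsto \vecv_+$ is carried out branch by branch through \eqref{gensigmakeyformula}, producing precisely $p_{\bn,\vecbeta}(\vs(\vecq),\vecv_0;\xi_1,\chi_1,\vecv_1)$. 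The uniformity over $F_1, F_2, F_3$ follows via the same approximation arguments as in the original proof.

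For the inductive step, we mimic Section \ref{MAINTECHNthm2apfSEC} but now with the relevant diffeomorphism defined branch by branch. Define $\scrV_{\vecs,\alpha}^\eta$ to be the subset of $\scrV_{\vecs,\alpha}$ at $\varphi$-distance at least $\eta$ from $\partial\scrV_{\vecs,\alpha}$. Lemma \ref{SHINELEM1} carries over verbatim once $\scrV_\vecs^\eta$ is replaced by $\scrV_{\vecs,\alpha}^\eta$ and $\vecbeta_\vecv^\pm$ by $\vecbeta_{\vecv,\alpha}^\pm$: the restriction of $\vecV_{\rho,\vecs,\vecbeta}$ to $\vecV^{-1}(\scrV_{\vecs,\alpha}^\eta)$ becomes a $\C^1$ diffeomorphism onto $\scrV_{\vecs,\alpha}^\eta$. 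We choose the partition $\{D_\ell\}_{\ell=1}^N$ of $\US$ so that, in addition to $\diam(D_\ell)<\eta/C_\eta$, every $D_\ell$ lies entirely in a single component $\scrV_{\vecs,\alpha}^{5\eta}$ for each relevant $\vecs$ or else is avoided by $M(\vecq')$; the set $M(\vecq')$ then naturally indexes pairs $(\ell,\alpha)$. The subsequent computation (change of variables, transport of the inductive hypothesis along the first collision, reassembly of the sum over pairs) is formally identical to the one in Section \ref{MAINTECHNthm2apfSEC}, since the analogues of Lemmas \ref{REMOVINGCPTSUPPfactLEM} and \ref{TRKERCONTlem} hold for the generalized collision kernels with the same proofs, using the new definition of $p_{\bn,\vecbeta}$ and the fact that $\sum_{\alpha\in\scrA}|I_\alpha|=1$ in place of Lemma \ref{REMOVINGCPTSUPPfactpfLEM1}.

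The main obstacle is controlling initial velocities for which the first impact parameter $\|\vecw_1\|$ either lies outside $\bigcup_\alpha I_\alpha$ (so that $\vecv \notin \fw_{\vecq,\rho,1}^\vecbeta$ under the modified definition), or lies within $\eta$ of an endpoint of some $I_\alpha$ (so that the outgoing $\vecv_1$ may sit near $\partial\scrV_{\vecs_1,\alpha}$, where the branchwise diffeomorphism fails). The required strengthening of Proposition \ref{ETAGRAZINGprop} is as follows: given $\varepsilon>0$, first choose a finite subset $\scrA_0\subset\scrA$ and $\eta>0$ so that the Lebesgue measure of
\begin{equation*}
I_\eta := \UB \setminus \bigcup_{\alpha\in\scrA_0} \bigl\{\vecw \in \UB \col \|\vecw\| \in I_\alpha,\; \dist(\|\vecw\|,\partial I_\alpha)>\eta\bigr\}
\end{equation*}
is less than $\varepsilon/(c_\scrP v_{d-1})$, which is possible since $\sum_\alpha|I_\alpha|=1$. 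Lemma \ref{GROUNDINTENSITYlem2} then gives $\kappa(\vecomega';\R_{>0}\times I_\eta\times\Sigma) < \varepsilon$ uniformly in $\vecomega'$, and Theorem \ref{Thm2gen} upgrades this to a uniform bound $\lambda(\{\vecv\col\vecw_1(\vecq_{\rho,\vecbeta}(\vecv),\vecv;\rho)\in I_\eta\})<2\varepsilon$ for $\rho$ small. Combining this with the original Proposition \ref{ETAGRAZINGprop} (for the near-grazing and near-boundary geometric issues) provides the required analogue of Lemma \ref{GOODCOLLlem}. With this ingredient in place, every subsequent step of the inductive argument—in particular, the error estimate \eqref{GOODCOLLlemAPPL}, the reconstruction \eqref{MAINTECHNthm2aPF11}, and the final invocation of the inductive hypothesis \eqref{RHO0CONDmain} applied to the collision kernel parameters $\tbe_\alpha := \vecB_\alpha^+|_{D_\ell}$—goes through unchanged.
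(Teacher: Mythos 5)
Your proposal follows the same inductive scheme as the paper's: the base case via a branchwise variant of Theorem~\ref{unifmodThm2} built from Theorem~\ref{Thm2gen} and \eqref{gensigmakeyformula}; the inductive step by reindexing $M(\vecq')$ and $U_{\vecq',\ell}$ with pairs $(\ell,\alpha)$, extending Lemma~\ref{SHINELEM1} branch by branch, and exploiting that for fixed $\eta>0$ only finitely many $\alpha$ have $\scrV_{\vecv,\alpha}^\eta\neq\emptyset$. This is exactly the route the paper sketches, so the overall structure is right.

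There is, however, one concrete imprecision in your construction of the ``bad'' set of impact parameters. You define
\begin{equation*}
I_\eta \;=\; \UB \setminus \bigcup_{\alpha\in\scrA_0}\bigl\{\vecw \in \UB \col \|\vecw\| \in I_\alpha,\;\dist(\|\vecw\|,\partial I_\alpha)>\eta\bigr\},
\end{equation*}
measuring ``near the boundary'' in terms of the \emph{radial} variable $\|\vecw\|$. The paper instead replaces \eqref{FUETA} by
\begin{equation*}
\fU_\eta \;:=\; \UB\setminus\bigcup_{\alpha\in\scrA}\vecbeta_{\vece_1,\alpha}^-\bigl(\scrV_{\vece_1,\alpha}^{10\eta}\bigr)_\perp,
\end{equation*}
i.e.\ it excludes exactly those $\vecw$ whose \emph{outgoing} direction $\Psi_1(\vece_1,s_-(\vecw))$ lies within $\varphi$-distance $10\eta$ of $\partial\scrV_{\vece_1,\alpha}$. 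This distinction matters: in the modified definition of $U_2$ one requires the condition $[\vecv_1(\vecv)]\subset\scrV_{\widehat{\vecs_1(\vecv)},\alpha_1(\vecv)}^{5\eta}$, which is the geometric input needed for the branchwise Lemma~\ref{SHINELEM1} to apply. If $\theta'(w)$ degenerates to zero as $w$ approaches an endpoint of $I_\alpha$ (which is precisely what happens when that endpoint is a critical point of $\theta$ and is excluded from $\scrU$), then a radial margin $\dist(\|\vecw\|,\partial I_\alpha)>\eta$ need not translate into any uniform lower bound on $\varphi(\vecv_1,\partial\scrV_{\vece_1,\alpha})$, and your analogue of Lemma~\ref{GOODCOLLlem} would fail to place those velocities inside $\fg_{\vecq,\rho,\eta}^\vecbeta$. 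Both your $I_\eta$ and the paper's $\fU_\eta$ are finite unions of annuli with $\vol\to0$ as $\eta\to0$, so the measure bound from Lemma~\ref{GROUNDINTENSITYlem2} (together with Lemma~\ref{UNIFKBOUNDGlem1} to truncate in $\xi$) works either way; what your version lacks is the implied control on $\vecv_1$. Replacing $I_\eta$ by the pullback-based $\fU_\eta$ repairs this, and then your argument goes through exactly as you describe. A second, smaller point worth recording: because distinct branches $\scrV_{\vecv,\alpha}$ and $\scrV_{\vecv,\alpha'}$ need not be disjoint, the branchwise version of Lemma~\ref{SHINELEM1}(i) restricts $\vecV_{\rho,\vecs,\vecbeta}$ to $\vecB^{-1}_{\rho,\vecs,\vecbeta}(\scrS_{\hs,\alpha,-})\cap\vecV^{-1}(\scrV_{\hs,\alpha}^\eta)$ rather than to $\vecV^{-1}(\scrV_{\hs,\alpha}^\eta)$ alone; your phrasing glosses over this.
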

Theorem \ref{MAINTECHNthm2agen} can be proved by following the %
arguments in Sections \ref{COLLKERsec}--\ref{MAINRESMACRO} fairly closely.
We here give a brief description of the most important modifications required:
First, instead of
\eqref{WVPDEF}, we now set
\begin{align*}
\scrV_{\vecv,\alpha}^\eta:=\begin{cases}
\scrV_{\vecv,\alpha}\setminus\overline{\partial_\eta(\scrV_{\vecv,\alpha})}
&\text{if }\: |I_\alpha|>\eta
\\
\emptyset &\text{if }\: |I_\alpha|\leq\eta
\end{cases}
\qquad
(\vecv\in\US,\:\alpha\in\scrA,\:\eta>0).
\end{align*}
Note that for given $\eta$ there are only finitely many $\alpha$ with 
$\scrV_{\vecv,\alpha}^\eta\neq\emptyset$;
this is a crucial point for several steps in the proof.
The definition 
\eqref{FUETA} is replaced by
\begin{align*}
\fU_\eta:=\UB\setminus\bigcup_{\alpha\in\scrA}\vecbeta_{\vece_1,\alpha}^-(\scrV_{\vece_1,\alpha}^{10\eta})_\perp.
\end{align*}
Thus each $\fU_\eta$ is a union of a finite number of annuli centered at the origin,
and $\vol(\fU_\eta)\to0$ as $\eta\to0$.
We define $\fg_{\vecq,\rho,\eta}^{\vecbeta}$ exactly as in Definition \ref{fgDEF}
(using our new $\fU_\eta$);
then %
Proposition~\ref{ETAGRAZINGprop} remains true; %
similarly Definition \ref{fGrhoetaDEF} and Proposition \ref{ETAGRAZINGgenprop} extend to the present situation.
In Section \ref{scatmapsmore}, the definition of $\nu_{\vecs}$
in \eqref{LAMBDA0DEFnew} is replaced by the following:
For any $\vecs\in\R^d\setminus\{\bn\}$
and $\alpha\in\scrA$, we let $\nu_{\vecs,\alpha}$
be the probability measure on $\US$ given by
\begin{align*}
d\nu_{\vecs,\alpha}(\vecv)=\frac1{\vol((\scrS_{\hs,\alpha,-})_\perp)}\,\sigma_{\alpha}(\hs,\vecv)\,d\vecv.
\end{align*}
Note that $\nu_{\vecs,\alpha}$ is supported on $\scrV_{\hs,\alpha}$.
Also for $\eta>0$ so small that $\scrV_{\hs,\alpha}^\eta\neq\emptyset$,
we define
$\nu_{\vecs,\alpha}^\eta:=\nu_{\vecs,\alpha}(\scrV_{\hs,\alpha}^\eta)^{-1}\cdot\nu_{\vecs,\alpha}\big|_{\scrV_{\hs,\alpha}^\eta}$
(cf.\ \eqref{nuseta}).
Now Lemma \ref{SHINELEM1} extends to the present situation;
the difference is that each of (i), (ii), (iii) in Lemma \ref{SHINELEM1} is now a statement which
holds for every $\alpha\in\scrA$
such that $\scrV_{\hs,\alpha}^\eta\neq\emptyset$;
for example (i) now says that for every such $\alpha$,
if $\overline{\vecV_{\!\!\alpha}}=\overline{\vecV}_{\!\!\rho,\vecs,\vecbeta,\alpha}$ is the 
restriction of $\vecV=\vecV_{\!\!\rho,\vecs,\vecbeta}$ to 
$\vecB_{\rho,\vecs,\vecbeta}^{-1}(\scrS_{\hs,\alpha,-})\cap\vecV_{\!\!\rho,\vecs,\vecbeta}^{-1}(\scrV_{\hs,\alpha}^\eta)$,
then $\overline{\vecV_{\!\!\alpha}}$ is a $\C^1$ diffeomorphism onto $\scrV_{\hs,\alpha}^\eta$.

Turning to the proof in Section \ref{MAINTECHNthm2apfSEC},
the definition of $\nu_{\ell,\vecs}$,
\eqref{LAMBDALSdef},
is now replaced by:
\begin{align}\notag %
\nu_{\ell,\vecs,\alpha}=\nu_{\vecs,\alpha}(D_\ell)^{-1}\cdot\nu_{\vecs,\alpha}\big|_{D_\ell},
\end{align}
for any $\ell\in\{1,\ldots,N\}$ and $\alpha\in\scrA$ with $\nu_{\vecs,\alpha}(D_\ell)>0$;
and in place of \eqref{Aelldef} and \eqref{F1elldef} we now set
$A_{\ell,\alpha}=\{\vecs\in\US\col D_{\ell}\subset\scrV_{\vecs,\alpha}^{5\eta}\}$
and $F_{1,\ell}=\{\nu_{\ell,\vecs,\alpha}\col\alpha\in\scrA,\:\vecs\in A_{\ell,\alpha}\}$.
Note that $F_{1,\ell}$ is still an equismooth family of probability measures,
for each $\ell$.
Furthermore, \eqref{F2elldef} is replaced by
\begin{align}\notag
F_{2,\ell}:=\bigl\{f_{[\vecv_0,\xi_0,\chi_0]}\col\xi_0>0,\:\chi_0\in\Sigma_{\scrA},\:
\vecv_0\in U\cap A_{\ell,\alpha(\chi_0)}
\bigr\};
\end{align}
this is again a uniformly bounded and equicontinuous family of functions on $X_{D_\ell}^{(n-1)}$.
A bit further down, the definition of $U_2$, \eqref{U2def},
now takes the form:
\begin{align}\notag
U_2:=\Bigl\{\vecv\in U_1\cap\fw_{\vecq,\rho,1}^{\vecbeta}
\col C_1^{-1}<\rho^{d-1}\tau_1(\vecv)<C_1,
\:\:\:
\vecq^{(1)}(\vecv)\in\scrP\setminus\scrE,
\hspace{70pt}
\\\notag
[\vecv_1(\vecv)]\subset\scrV_{\widehat{\vecs_1(\vecv)},\alpha_1(\vecv)}^{5\eta},
\text{  and }
\bigl[\forall\vecu\in[\vecv_1(\vecv)]\col\exists\vecv'\in U_1\cap\fw_{\vecq,\rho,1}^{\vecbeta}\text{ such that }
\hspace{30pt}
\\\notag
\vecq^{(1)}(\vecv')=\vecq^{(1)}(\vecv),\:
\alpha_1(\vecv')=\alpha_1(\vecv)
\text{ and }\vecv_1(\vecv')=\vecu\bigr]\Bigr\}.
\end{align}
(Here $\alpha_1(\vecv):=\alpha_1(\vecq_{\rho,\vecbeta}(\vecv),\vecv;\rho)$,
just as $\vecv_1(\vecv):=\vecv_1(\vecq_{\rho,\vecbeta}(\vecv),\vecv;\rho)$
and $\tau_1(\vecv):=\tau_1(\vecq_{\rho,\vecbeta}(\vecv),\vecv;\rho)$.)
With this, Lemma \ref{GOODCOLLlem} now carries over to our situation.
Finally, \eqref{MAINTECHNthm2aPF3a} now reads
\begin{align*}
\sum_{\vecq'\in\scrP_{T_1}(\rho)}\:\sum_{\langle\ell,\alpha\rangle\in M(\vecq')}
\:\int_{U_{\vecq',\ell,\alpha}\cap\fw_{\vecq,\rho,n}^{\vecbeta}}
f\bigl(\vecv,\big\langle\rho^{d-1}\tau_j(\vecv),{\chi}_j(\vecv),
\vecv_j(\vecv)\big\rangle_{j=1}^{n}\bigr)\,d\lambda(\vecv),
\end{align*}
where now
\begin{align*}
M(\vecq')=\bigl\{\langle\ell,\alpha\rangle\col
\tD_\ell\subset\scrV_{\vecs_1,\alpha}^{5\eta}
\:\:\text{ and }\:\:
[\forall\vecu\in\tD_\ell\col\exists\vecv'\in U_1\cap\fw_{\vecq,\rho,1}^{\vecbeta}\text{ such that }
\hspace{30pt}
\\\notag %
\vecq^{(1)}(\vecv')=\vecq',\:\alpha_1(\vecv')=\alpha\text{ and }\vecv_1(\vecv')=\vecu]\bigr\}.
\end{align*}
and
\begin{align*}
U_{\vecq',\ell,\alpha}:=\{\vecv\in U_1\cap\fw_{\vecq,\rho,1}^{\vecbeta}\col\vecq^{(1)}(\vecv)=\vecq'
,\:\alpha_1(\vecv)=\alpha
,\:\vecv_1(\vecv)\in \tD_\ell\}.
\end{align*}
With this setup in place, 
the remaining part of the proof of Theorem \ref{MAINTECHNTHM2A} carries over in
a fairly straightforward manner.

Next, the generalization of Theorem \ref{MAINTECHNthm2G} is as follows.
We define $X^{(n)}$
(cf.\ \eqref{Xmacrdef}) by:
\begin{align}\notag
X^{(n)}:=\Bigl\{\langle\vecq,\vecv_0,\langle\xi_j,\chi_j,\vecv_j\big\rangle_{j=1}^{n}\rangle\in
\T^1(\R^d)\times(\R_{>0}\times\Sigma_\scrA\times\US)^n
\col
\hspace{50pt}
\\\label{Xmacrdefgen}
\vecv_j\in\scrV_{\vecv_{j-1},\alpha(\chi_j)}\: %
(j=1,\ldots,n)\Bigr\}.
\end{align}
Hence in particular, we now have,
in place of \eqref{XXdef2}:
\begin{align}\label{XXdef2gen}
X=X^{(1)}=\big\{ \big\langle\vecq,\vecv,\xi,\chi,\vecv_+\big\rangle \in \T^1(\R^d)\times\R_{>0}\times\Sigma_\scrA\times\US \col 
\vecv_+\in\scrV_{\vecv,\alpha(\chi)} \big\}.
\end{align}
\begin{thm}\label{MAINTECHNthm2Ggen}
Let $\scrP$ {\blu and $\scrE$} satisfy all the conditions in Section \ref{ASSUMPTLISTsec} and \eqref{SIGMAeqp},
and let $\Psi$ be a scattering process arising as described above.  %
Then for any $n\geq1$, $\Lambda\in\Pac(\T^1(\R^d))$ and $f\in\C_b(X^{(n)})$, we have
\begin{align}\notag
\lim_{\rho\to0}\int_{\fW(n;\rho)}f\Bigl(\vecq,\vecv,
\Big\langle\rho^{d-1}\tau_j(\rho^{1-d}\vecq,\vecv;\rho),\chi_j(\rho^{1-d}\vecq,\vecv;\rho),
\vecv_j(\rho^{1-d}\vecq,\vecv;\rho)
\Big\rangle_{j=1}^n\Bigr)
\hspace{15pt}
\\\label{MAINTECHNthm2Ggenres}
\times d\Lambda(\vecq,\vecv)
\\\notag
=\int_{X^{(n)}} %
f\Bigl(\vecq,\vecv_0,\big\langle \xi_j,\chi_j,\vecv_j\big\rangle_{j=1}^n\Bigr)
\,p\bigl(\vecv_0;\xi_1,\chi_1,\vecv_1\bigr)
\hspace{120pt}
\\\notag
\times \prod_{j=2}^n p_\bn(\vecv_{j-2},\chi_{j-1},\vecv_{j-1};\xi_j,\chi_{j},\vecv_{j})
\, d\Lambda(\vecq,\vecv_0)\prod_{j=1}^n\bigl( d\xi_j\,d\mm_\scrA(\chi_j)\,d\vecv_j\bigr).
\end{align}
\end{thm}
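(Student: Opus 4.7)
The plan is to derive Theorem \ref{MAINTECHNthm2Ggen} from Theorem \ref{MAINTECHNthm2agen} by mimicking the argument of Section \ref{MAINRESMACRO} (which derived Theorem \ref{MAINTECHNthm2G} from Theorem \ref{MAINTECHNTHM2A}), carrying through the modifications outlined immediately after Theorem \ref{MAINTECHNthm2agen}. As in Section \ref{MAINRESMACRO}, after initial reductions one may assume $f$ has compact support in $X^{(n)}$ and that $\Lambda$ has a density $\Lambda'\in\C_c(\T^1(\R^d))$; choose $T>0$ so that $f$ vanishes for $\|\vecq\|\geq T$ and $C_1>1$ so that $f$ vanishes unless $\xi_1,\ldots,\xi_n\in(C_1^{-1},C_1)$, and set $T_1=T+C_1+1$. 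Define $f_0=f$ and functions $f_m\in \C_c(X^{(n-m)})$ recursively via the obvious analogue of \eqref{MAINTECHNthm2aPF6} with $\Sigma$ replaced by $\Sigma_\scrA$ (and $d\mm$ by $d\mm_\scrA$); the recursive well-definedness and continuity is handled exactly as before once one notes that, for any fixed $\vecbeta,\vs,\vecv$,
\begin{equation*}
\int_{\R_{>0}\times\Sigma_\scrA\times\US}p_{\bn,\vecbeta}(\vs,\vecv;\xi,\chi_+,\vecv_+)\,d\xi\,d\mm_\scrA(\chi_+)\,d\vecv_+=1,
\end{equation*}
which follows by summing the corresponding dispersing-case identity \eqref{REMOVINGCPTSUPPfactpfLEM1res1} over $\alpha\in\scrA$ and using \eqref{gensigmakeyformula}.

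The geometric setup is essentially unchanged. Fix $\ve>0$, choose $\eta>0$ small enough that the analogue of Proposition \ref{ETAGRAZINGgenprop} with the new $\fU_\eta$ gives $\Lambda(\fG_{\rho,\eta})<\ve/\|f\|_\infty$ for all small $\rho$, and choose $D_1,\ldots,D_N$ as in Section \ref{MAINTECHNthm2apfSEC}. For each $\vecq\in\scrB_T^d$, define $U_{2,\vecq}$ as the set of $\vecv\in\fW_\vecq(1;\rho)$ satisfying $C_1^{-1}<\rho^{d-1}\tau_1(\vecv)<C_1$, $\vecq^{(1)}(\vecv)\in\scrP\setminus\scrE$, $[\vecv_1(\vecv)]\subset\scrV_{\widehat{\vecs_1(\vecv)},\alpha_1(\vecv)}^{5\eta}$, and the ``fully lit'' condition that every $\vecu\in[\vecv_1(\vecv)]$ arises as $\vecv_1(\vecv')$ for some $\vecv'\in\fW_\vecq(1;\rho)$ with $\vecq^{(1)}(\vecv')=\vecq^{(1)}(\vecv)$ and $\alpha_1(\vecv')=\alpha_1(\vecv)$. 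The analogue of Lemma \ref{GOODCOLLlemmacr} then holds: if $\vecv\in\fW_\vecq(1;\rho)\setminus U_{2,\vecq}$, then either $\rho^{d-1}\tau_1(\vecv)\notin(C_1^{-1},C_1)$, $\vecq^{(1)}(\vecv)\in\scrE$, or $(\vecq,\vecv)\in\fG_{\rho,\eta}$. Its proof uses the generalized version of Lemma \ref{SHINELEM1}, where each $\alpha$ with $\scrV_{\hs_1,\alpha}^\eta\neq\emptyset$ gives a $\C^1$ diffeomorphism $\overline{\vecV_{\!\alpha}}$ onto $\scrV_{\hs_1,\alpha}^\eta$, together with Proposition \ref{Thm2genaddgenlem}.

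With this in place, the integral on the left-hand side of \eqref{MAINTECHNthm2Ggenres}, restricted to $\vecv\in U_{2,\vecq}\cap\fW_\vecq(n;\rho)$, can be decomposed as a double sum over $\vecq'\in\scrP_{T_1}(\rho)$ and pairs $\langle\ell,\alpha\rangle\in M(\vecq')$, in exact analogy with \eqref{MAINTECHNthm2GaPF3a}. Here the key gain over the original proof is that for \emph{fixed} $\eta>0$, only finitely many $\alpha\in\scrA$ satisfy $\scrV_{\hs_1,\alpha}^\eta\neq\emptyset$, so the $\alpha$-summation is effectively finite and the arguments of Section \ref{MAINTECHNthm2apfSEC} apply per-$\alpha$. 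On each slice, the change of variables $\vecv\mapsto\vecv_1=\overline{\vecV_{\!\alpha}}(\vecv)$ transforms $\omega(U_{\vecq',\ell,\alpha})^{-1}\,d\omega(\vecv)$ into $d\nu_{\ell,\vecs_1,\alpha}(\vecv_1)$ up to a $(1+O(\ve))$ factor (again by the generalized Lemma \ref{SHINELEM1}(ii)), and Theorem \ref{MAINTECHNthm2agen} applied with $n-1$ replaces the inner integral by the corresponding product of collision kernels $p_\bn$. Reversing the decomposition with $f_{n-1}$ in place of $f$ (now integrating also over $\alpha_1\in\scrA$, which is just a discrete sum) and finally invoking Theorem \ref{unifmodThm2macr} in its form generalized to the new cross section yields the desired right-hand side.

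The main technical obstacle is the same one that arose in Section \ref{MAINRESMACRO}: one must verify that replacing the domain of integration by $U_{2,\vecq}\cap\fW_\vecq(n;\rho)$, and further extending/restricting at various steps, incurs errors bounded uniformly in $\vecq\in\scrB_T^d$ so that the outer integration $\int_{\scrB_T^d}(\,\cdot\,)\,d\vecq$ produces only an $O(\ve)$ total error. The only new difficulty compared to the dispersing case is keeping track of the extra discrete marker $\alpha$ in the definition of $U_{2,\vecq}$ and in the equismoothness of the measure family $F_{1,\ell}=\{\nu_{\ell,\vecs,\alpha}\}$; but since for fixed $\eta$ only finitely many $\alpha$ contribute, equismoothness is preserved and no further analytic input beyond that already recorded in Section \ref{AppA2} is required.
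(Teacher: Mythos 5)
Your proposal correctly fleshes out what the paper leaves implicit: Theorem \ref{MAINTECHNthm2Ggen} is obtained from Theorem \ref{MAINTECHNthm2agen} by repeating the argument of Section \ref{MAINRESMACRO} with the substitutions $\Sigma\to\Sigma_\scrA$, $\scrV_\vecv\to\scrV_{\vecv,\alpha}$, $\fU_\eta$ as redefined, and the extra index $\alpha$ carried through the definitions of $U_{2,\vecq}$, $M(\vecq')$, $\nu_{\ell,\vecs,\alpha}$, etc., with the finiteness of $\{\alpha\col\scrV_{\vecv,\alpha}^\eta\neq\emptyset\}$ for fixed $\eta$ being exactly the observation that makes the equismoothness/compactness requirements go through. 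This is the approach the paper intends (it states the theorem after outlining the same modifications and does not give a separate proof), and your normalization identity for the generalized $p_{\bn,\vecbeta}$, verified per-$\alpha$ via the change of variables underlying \eqref{gensigmakeyformula} and the fact that $k(\vecomega',\cdot,\cdot)$ is a probability density, is correct.
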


\vspace{5pt}

Using Theorem \ref{MAINTECHNthm2Ggen} and mimicking the discussion in Section \ref{LIMITFLIGHTsec},
one proves that Theorem~\ref{thm:M1V} extends verbatim to the present setting.
The explicit description of the limiting random flight process $\Theta$ 
remains the same as given in the beginning of Section \ref{LIMITFLIGHTsec},
with the only difference that the definition of 
$X^{(\infty)}$ in \eqref{Xinftydef}
is replaced by
\begin{multline*}
X^{(\infty)}:=\Bigl\{\langle\vecq_0,\vecv_0,\langle\xi_j,\chi_j,\vecv_j\big\rangle_{j=1}^{\infty}\rangle\in
\T^1(\R^d)\times\prod_{j=1}^\infty(\R_{>0}\times\Sigma_\scrA\times\US)
\col\:\: \\
\vecv_j\in\scrV_{\vecv_{j-1},\alpha(\chi_j)},\:\forall j\geq1\Bigr\},
\end{multline*}
and in \eqref{nuLambdadef} we use $\mm_\scrA$ in place of $\mm$.

Also the definition of $\hTheta$ from the beginning of
Section \ref{KINETICEQsec} carries over immediately,
recalling that the extended phase space $X$ is now given by \eqref{XXdef2gen},
and equipped with the measure $d\vecq\,d\vecv\, d\xi\,d\mm_\scrA(\chi)\,d\vecv_+$,
The rest of Section \ref{KINETICEQsec} carries over in the obvious way.
The forward Kolmogorov equation for $\hTheta$ now reads:
\begin{multline}
\bigl(\partial_t+\vecv\cdot\nabla_\vecq-\partial_\xi\bigr)f(t,\vecq,\vecv,\xi,\chi,\vecvp) \\
= \int_{\Sigma_\scrA\times\US} f(t,\vecq,\vecv_0,0,\chi',\vecv) \, p_\bn(\vecv_0,\chi',\vecv;\xi,\chi,\vecv_+)\,d\mm_\scrA(\chi')\,d\vecv_0.
\end{multline}

\section{Open questions}

\label{sec:open}

In addition to our main hypotheses [P1-3] and [Q1-3] on the scatterer configuration $\scrP$, key assumptions in the present study are that all scatterers are identical, spherically symmetric and finite range, and that there are no external force fields. Furthermore very little is known, except in special examples, on the limiting Markov processes we have derived. This section provides a brief survey of some of the remaining challenges.

\subsection{Admissible scatterer configurations}\label{sec:openScat}

The discussion in Section \ref{PoissonSEC} is restricted to realisations $\scrP$ of Poisson processes with constant intensity. 
It would be interesting to extend the discussion to more general randomly generated sets, for example Gibbs point processes, determinantal point processes and cluster processes. A particularly simple example is the process studied in \cite{Baddeley84}, where $\RR^d$ is partitioned into unit cubes, and  with a random number of points distributed uniformly and independently in each cube. In all of these examples, we expect the spherical average \eqref{ASS:KEY} in assumption [P2] to converge to a Poisson process, and hence the Boltzmann-Grad limit to be given by the linear Boltzmann equation. 
 
An immediate challenge is to extend the discussion in Sections \ref{PoissonSEC}--\ref{QCexsec} to unions of the point sets considered there. For example, take $\scrP=\scrP_1\cup\scrP_2$, where $\scrP_1$ is a fixed realisation of a Poisson point process with intensity $c$ (as in Section \ref{PoissonSEC}), and $\scrP_2$ is a fixed full-rank lattice in $\RR^d$ of covolume $\delta$. In this case we expect all hypotheses to be satisfied, with $\Sigma=\{ 1,2 \}$ as the space of marks, labelling the points from  $\scrP_1$ and $\scrP_2$, respectively. The limiting process is the union of two independent marked point processes, a Poisson point process with intensity $c$ (whose points are marked 1), and a random lattice of covolume $\delta$ (whose points are marked 2). The independence of the two processes will imply a rather simple formula for the transition and collision kernels in terms of the corresponding kernels for the limiting processes for the Lorentz gases with configurations $\scrP_1$ and $\scrP_2$, respectively (cf.\ \cite{powerlaw}). The same should go through if $\scrP_2$ is taken to be a periodic point set (as in Section \ref{periodicpointsetsSEC}) or a quasicrystal (as in Section \ref{QCexsec}). A slightly different challenge is to understand the case when $\scrP_1$ and $\scrP_2$ are both full-rank Euclidean lattices. If the two lattices are incommensurate, the paper \cite{powerlaw} shows that the limit process is the union of two independent random lattices, thus establishing condition [P2] -- however without the required uniformity in $\vecq$. 

Another class of examples $\scrP$  is obtained by ``thinning'' an existing scatterer configuration $\scrP_0$. That is, for $0<p<1$, remove each point in $\scrP_0$ independently with probability $p$, and consider $\scrP$ as a realisation of the resulting random point set. In this case, the paper \cite{MarklofVinogradov17} establishes condition [P2] almost surely, again without the required uniformity in $\vecq$. It should be an interesting exercise to prove all necessary assumptions in Section \ref{ASSUMPTLISTsec} in this setting. 

In a similar vein, one might ask whether the Boltzmann-Grad limit exists for a Lorentz process where $\scrP$ is the set of primitive lattice points. Using the fact that this set can be realised as an adelic cut-and-project set, El-Baz \cite{El-Baz17} proved that assumption [P2] holds, again with no uniformity in $\vecq$. The subtlety in the problem is that the window set required in the cut-and-project construction has empty interior. It would be interesting to establish the analogous results of Section \ref{QCexsec} in this more singular setting.

\subsection{Necessity of hypotheses and $\SL(d,\RR)$-invariance}

One might ask whether any of our assumptions [P1-3] and [Q1-3] on admissible scatterer configurations can be weakened, or even dropped completely. If the Boltzmann-Grad limit does not exist for a given $\scrP$, can one at least establish convergence along subsequences under appropriate hypotheses? It is natural to also consider sequences of point sets $\scrP=\scrP_\rho$, for example modelling the case of polycrystals \cite{polycrystals}, and it would be interesting to extend our theory to this case.
Assumption [Q1] stipulates that the limit measure $\mu_{\vs}$ is $\SO(d-1)$-invariant, and we have noted that $\mu_{\vs}$ is necessarily invariant under the diagonal group $\{D_r\}_{r>0}$ (Lemma \ref{DIAGINVlem}). All examples discussed in this paper however enjoy the significantly stronger property that $\mu_{\vs}$ is invariant under the action of $\SL(d,\RR)$. So --- are there any $\scrP$ for which $\mu_{\vs}$ is {\em not} $\SL(d,\RR)$-invariant? 
Note that there are simple examples of point sets $\scrP$ with constant density for which the spherical average $(\scrP-\vecq)\,R(\vecv)\,D_\rho$, for some fixed $\vecq\in\scrP$, does not converge to a $\SL(d,\RR)$-invariant limit process. %
The challenge here is to find examples for which we have convergence, but no $\SL(d,\RR)$-invariance, for a positive density of $\vecq\in\scrP$.

\subsection{Non-spherically symmetric scatterers} 

There is no principal obstruction for our approach to be generalized to non-spherically symmetric scatterers, 
{\blu under suitable conditions on smoothness and invertibility 
(excluding, for instance, polytopal scatterers).}
The extension to a Lorentz gas with identical scatterers given by hard ellipses and more general strictly convex bodies should be relatively straightforward. The transition kernel $k$ defining the limit process will now depend on the direction of travel $\vecv$, as the size of the cross section is given by the projection of the elliptical scatterer onto the hyperplane perpendicular to $\vecv$. Identify the hyperplane with $\RR^{d-1}$ as before, and denote the projection of the scatterer by $\scrE_\vecv^{d-1}$. The only modification required in the definition of the limit process is to replace the cylinder $\fZ_\xi=(0,\xi)\times\scrB_1^{d-1}$ in the definition of the transition and collision kernels by $\fZ_\xi^\vecv=(0,\xi)\times\scrE_\vecv^{d-1}$. The strict convexity of the scatterer is critical for our theory to work. Polyhedral scatterers (as in the Ehrenfest wind-tree model) would not lead, in the Boltzmann-Grad limit, to a finite-memory Markov process. We refer the reader to \cite{Bachurin11} for a simple model of this phenomenon. An important outstanding task is to generalise the present work to non-radial potentials, still assuming compact support (cf.~the extension to long-range potentials below).

\subsection{Non-identical scatterers}

In Section \ref{sec:openScat} we mentioned scatterer configurations $\scrP$ that are finite unions of point sets $\scrP_1,\scrP_2,\ldots$. To model crystals such as NaCl (where $\scrP_1=\ZZ^3$ and $\scrP_2=\ZZ^3+(\tfrac12,\tfrac12,\tfrac12)$), or lattices with impurities (where $\scrP_1$ is a lattice and $\scrP_2$ a realisation of a Poisson point process, say), it is natural to assume that the scatterers centered at $\scrP_i$ are not the same as those in $\scrP_j$. In this case the necessary modifications in the limit process will include of course different cross sections for the scatterers from different families, and transition kernels that will take into account the varying scattering radii. This requires replacing the single cylinder $\fZ_\xi=(0,\xi)\times\scrB_1^{d-1}$ by cylinders $\fZ_\xi^i=(0,\xi)\times\scrB_{r_i}^{d-1}$, each corresponding to scatterers from $\scrP_i$. 

\subsection{Long-range potentials}

The assumption that the scattering potentials have compact support is central to our approach. Even an extension of our results to exponentially decaying potentials will require some non-trivial estimates. The case of potentials with power-law decay 
{\blu has currently only been investigated}
in the case of random scatterer configurations \cite{Desvillettes99,Nota18}.
A related problem is to consider different scaling limits for compact potentials, where the strength of the potential is reduced, and at the same time the scatterer density rescaled suitably to achieve a non-trivial limit. In this case grazing collisions become important, and one expects a different kinetic equation for the macroscopic dynamics. 
See \cite{kp80,DGL87,Desvillettes01} for the corresponding result for a random scatterer configuration---here the limiting kinetic equation is the classical Fokker-Planck equation.

\subsection{External force fields}

A key feature of our approach is the {\em linear rescaling} by the subgroup $\{D_r\}_{r>0}$ of particle trajectories between collisions. This clearly breaks down when the trajectories are curved due to the presence of an external force field. Progress in this non-linear setting has so far been limited to random scatterer configurations \cite{Desvillettes04,Bobylev01,Marcozzi16}, and any extension of these results to lattices or quasicrystals would be a significant achievement.

\subsection{Transition kernels and distribution of free path lengths}

We have seen above that in the case of a Poisson scatterer configuration $\scrP$, the limiting transition kernels are explicit, with an exponential path length distribution. The only other case where we currently have explicit formulas is when $\scrP$ is a Euclidean lattice in dimension $d=2$ \cite{Caglioti08,partIII}.
{\blu For higher dimensional Euclidean lattices there are no explicit formulas, but we have precise asymptotics for the transition kernels \cite{partIV}.
These asymptotics in particular imply precise power-law asymptotics for the free path length distributions;
thus for example, for $\scrP$ a covolume one Euclidean lattice in arbitrary dimension $d\geq2$,
the limiting distribution of the free path length between consecutive collisions
satisfies $\overline{\Phi}_0(\xi)=\frac{2^{2-d}}{d(d+1)\zeta(d)}\xi^{-3}\bigl(1+O(\xi^{-\frac2d}\log\xi)\bigr)$ as $\xi\to\infty$
\cite{partIV}. 
We remark that the asymptotics for the limiting transition kernels 
for $\scrP$ a Euclidean lattice also play an important role
in the derivation of the 
long-time asymptotics of the limiting random flight processes $t\mapsto\Theta(t)$;
cf.\ \cite{MarklofToth} and the next paragraph.}
It would be extremely interesting to extend these results to other scatterer configurations.

\subsection{Diffusion vs.\ superdiffusion, entropy estimates}

Can we characterise the long-time asymptotics of the limiting random flight processes $t\mapsto\Theta(t)$? Do they converge to Brownian motion under an appropriate rescaling? The only known affirmative answers to these questions are in the case of random $\scrP$ \cite{Spohn78,Basile14,Basile15,LutskoToth}, where the mean-square displacement is linear in $t$ (diffusion), and lattice configurations $\scrP$ \cite{MarklofToth}, where we have a $t\log t$ scaling (superdiffusion). It is remarkable that for fixed scatterer size, convergence to Brownian motion is only known for periodic configurations $\scrP$ \cite{BS80,Dolgopyat09,Szasz07} (with $t\log t$ scaling in the case of infinite horizon); the case of random $\scrP$ is completely open \cite{Lenci11}. Finally, it would be instructive to generalise the entropy estimates for the limiting process $\Theta$ in \cite{Caglioti10} for the two-dimensional lattice setting to general scatterer configurations.

\backmatter

\bibliographystyle{amsalpha}

\chapter*{Index of notation}

\begin{center}
\begin{longtable}{llr}
$\perp$ & $\vecx_\perp=\vecx-(\vecx\cdot\vece_1)\vece_1$ 
and $(\vecx,\vs)_\perp=(\vecx_\perp,\vs)$
& \pageref{Xperpdef1}, \pageref{perpdef2}
\\
$\widehat{\:\:}$ & $\hs=\|\vecs\|^{-1}\vecs$ 
& \hyperlink{hsdeflink}{\pageref*{hsdef}}
\\
$\partial_\ve$ & 
$\partial_\ve{U}:=\{\vecv\in\S_1^{d-1} \col
\exists \vecw\in\partial{U}\col \varphi(\vecv,\vecw)<\ve\}$
(for ${U}\subset\US$)
&\pageref{PARTIALVEdef}
\\
$\ASL(n,\R)$ & $\SL(n,\R)\ltimes \RR^n$ & \pageref{ASLMULTLAW}
\\
$\scrB^d(\vecq,r)$ & open ball in $\R^d$ with center $\vecq$ and radius $r$ & \pageref{scrBdef}
\\
$\scrB_r^d$ & $\scrB^d(\bn,r)$ & \pageref{BRd} 
\\
$B_\Psi$ & constant in \eqref{VPDEF} & \pageref{VPDEF}
\\
$\vecB_{\rho,\vecs,\vecbeta}(\vecv)$ & impact location on $\S_1^{d-1}$ & \pageref{SHINELEM2VDEF}
\\
$\C_b(U,\R^d)$ & space of bounded continuous functions $\vecbeta:U\to\RR^{d}$ %
&\pageref{CbURddef}
\\
$\C^1_b(U,\R^d)$ & $\{\vecbeta\in\C_b(U,\R^d)\col \vecbeta\text{ is $\C^1$ and }
\sup_{\vech\in \T^1(U)} \|D_\vech\vecbeta\|<\infty\}$
&\pageref{Cb1URddef}
\\
$\fC_\tau$ & open cylinder $(c,c+\tau)\times\fD$ &\pageref{CtauDEF}
\\
$c_\scrP$ & density of $\scrP$ & \pageref{ASYMPTDENSITY0}
\\
$C_\eta$ & constant defined in \eqref{C1THETAETADEF} & \pageref{C1THETAETADEF}
\\
$D_\rho$ & diagonal matrix $\diag[\rho^{d-1},\rho^{-1},\cdots,\rho^{-1}]$ & \pageref{Drhodef}
\\
$\scrD_\vecs^\eta$ & 
ball of radius $\eta$ with center $\hs$ in $\US$
&\pageref{scrDsetaDEF}
\\
$d_\scrP(\vecq)$
&$\inf\{\|\vecp-\vecq\|\col\vecp\in\scrP\setminus\{\vecq\}\}$ & \pageref{dscrPdef}
\\
$d\vecq$
& shorthand for $d\!\vol(\vecq)$ & \pageref{dvdo} 
\\
$d\vecv$
& shorthand for $d\omega(\vecv)$ & \pageref{dvdo} 
\\
$\EE\xi$ & intensity measure of $\xi$ & \pageref{intensitydef}
\\
$\scrE$ & fixed subset of $\scrP$ (``exceptional points'') & \pageref{Edef}
\\
$\vece_1$ & $(1,0,\ldots,0)$ & \pageref{unitvecdef}
\\
$F_\rho$ & map on $\Delta$ defined on p.\ \pageref{Frhodef} & \pageref{Frhodef}
\\
$F_{f,g}(\mu)$ & $\int_{S'}g(p,\eta(f))\,d\mu(p,\eta)$ & \pageref{Ffgdef}
\\
$\fg_{\vecq,\rho,\eta}^\vecbeta$ & set of those $\vecv\in\fw_{\vecq,\rho}^\vecbeta$
which give rise to $\eta$-grazing paths
& \pageref{ETAGRAZINGprop}
\\
$\fG_{\rho,\eta}$ &
set of all $(\vecq,\vecv)\in\fW(1;\rho)$ which give rise to $\eta$-grazing paths,
&\pageref{fGrhoetaDEF}
\\
$g'$, $g_B$ & matrices defined in \eqref{gpdef} & \pageref{gpdef}
\\
$\iota$ & reflection map on $\scrX$ & \pageref{iotaDEF}
\\
$\scrK_\rho$ & $\R^d\setminus\cup_{\vecq\in\scrP}\scrB^d(\vecq,\rho)$, billiard domain & \pageref{Krhodef}
\\
$k$ & transition kernel defined in \eqref{kdef} & \pageref{kdef}
\\
$k^g$ & transition kernel defined in \eqref{kgdef} & \pageref{kgdef}
\\
$L_t^{(\rho)}$ & evolution operator for flow $\widetilde\Phi_t^{(\rho)}$ & \pageref{Ltrhodef} 
\\
$\scrL$ & lattice of full rank in $\R^d$ or a grid in $\RR^n$ & \pageref{periodicpointsetsSEC}, \pageref{afflatt}
\\
$M(\scrX)$ & set of locally finite Borel measures on $\scrX$ & \pageref{MXdef}
\\
$\scrM$ & Borel $\sigma$-algebra of $M(\scrX)$ & \pageref{scrMdef}
\\
$\mm$ & fixed Borel probability measure on $\Sigma$ & \pageref{mmdef}
\\
$N(\scrX)$ & set of counting measures in $M(\scrX)$ & \pageref{NXdef}
\\
$N_s(\scrX)$ & set of simple counting measures in $M(\scrX)$ 
& \pageref{NsXdef}
\\
$\scrN$ & Borel $\sigma$-algebra of $N(\scrX)$ & \pageref{scrNdef}
\\
$n_t$ & number of collisions until time $t$ & \pageref{eq:nT}
\\
$\scrP$ & a fixed locally finite subset of $\R^d$ & \pageref{scrPdef}
\\
$\scrP(\scrW,\scrL)$ & cut-and-project set in $\R^d$ & \pageref{CUTPROJDEF}
\\
$\tP$ & $\{(\vecp,{\vs}(\vecp))\col\vecp\in\scrP\}$ %
(a subset of $\scrX$)
& \pageref{tPdef}
\\
$\scrP^\circ(\rho)$ & maximal $2\rho$-separated subset of $\scrP$ & \pageref{Pcirc}
\\
$\scrP_T(\rho)$ & $\scrP\cap\scrB^d_{T\rho^{1-d}}\setminus\scrE$ & \pageref{ASS:KEY}
\\
$\tP_\vecq$ & $\tP\setminus\{(\vecq,\vs(\vecq))\}$ if $\vecq\in\scrP$, otherwise $\tP$ & \pageref{Pqdef}
\\
$P(S)$ & space of Borel probability measures on $S$ & \pageref{PSdef}
\\
$\Pac(S)$ & subspace of absolutely continuous measures in $P(S)$ & \pageref{acBpm}, \pageref{acBpm2}
\\
$p_\bn$ %
&collision kernel defined in \eqref{pbndefG} & \pageref{pbndefG}
\\
$p_{\bn,\vecbeta}$ %
&collision kernel defined in \eqref{p0bdef} & \pageref{p0bdef}
\\
$p$ %
&collision kernel defined in \eqref{pgendef} & \pageref{pgendef}
\\
$p_1$ & projection map $\scrX\to\R^d$  & \pageref{p1def}
\\
$\scrQ_\rho(\vecq,\vecv)$ & $(\tP_\vecq-\vecq)\,R(\vecv)\,D_\rho$ & \pageref{XIRHOqv}
\\
$\scrQ_\rho(\vecq,\vecbeta,\vecv)$ & $(\tP_\vecq-\vecq-\rho\vecbeta(\vecv))\,R(\vecv)\,D_\rho$ & \pageref{XIrhoqvdef}
\\
$\oQ_\rho(\vecq,\vecv)$ & $(\tP-\vecq)\,R(\vecv)\,D_\rho$ & \pageref{oQrhoqvDEF}
\\
$\vecq_j(\vecq,\vecv;\rho)$ 
& particle position at $j$th collision & \pageref{taujdef}
\\
$\vecq^{(j)}(\vecq,\vecv;\rho)$
& center of scatterer causing $j$th collision 
& \pageref{q1scattererdef}, \pageref{qjscattererdef}
\\
$\vecq_{\rho,\vecbeta}(\vecv)$ & initial position $\vecq+\rho\vecbeta(\vecv)$ 
&\pageref{qrhobetavDEF}
\\
$R$ & a fixed map $\US\to\SO(d)$ such that $\vecv R(\vecv)=\vece_1$, $\forall \vecv\in\US$ & \pageref{Rdef}
\\
$\US$ & unit sphere in $\R^d$ centered at the origin & \pageref{unitsphere}
\\
$\scrS_-$ & $\{(\vecv,\vecb)\in\US\times\US\col\vecv\cdot\vecb<0\}$, set of incoming data & \pageref{scrSmdef}
\\
$\scrS_+$ & $\{(\vecv,\vecb)\in\US\times\US\col\vecv\cdot\vecb>0\}$, set of outgoing data & \pageref{scrSmdef}
\\
$s_-$ & diffeomorphism from $\UB$ onto $\{\vecx\in\US\col x_1<0\}$; \eqref{smDEF}
&\pageref{smDEF}
\\
$s_\Psi$ & $\pm 1$ defined by \eqref{Psi1vmv} & \pageref{Psi1vmv}
\\
$T_n$ & time until $n$th collision & \pageref{eq:nT}
\\
$\SO(d-1)$ & $\{k\in\SO(d)\col\vece_1k=\vece_1\}$ & \pageref{SOd-1}
\\
$\T^1(\scrK_\rho)$ & $\scrK_\rho\times\US$ & \pageref{T1Krhodef}
\\
$\T^1(\partial\scrK_\rho)_{\out}$ & set of outgoing positions  & \pageref{T1Krhooutdef}
\\
$\T^1(U)$ & Unit tangent bundle of an open set $U\subset\US$ & \pageref{UNITTBDLEofU}
\\
$\fU_\eta$ & $\vecbeta^-_{\vece_1}\bigl(\scrV_{\vece_1}\setminus\scrV_{\vece_1}^{10\eta}\bigr)_\perp$ & \pageref{FUETA}
\\
$\tfU_\eta$ & $\fU_\eta\cup(\scrB_1^{d-1}\setminus\scrB_{1-\eta}^{d-1})$ & \pageref{tfUdef}
\\
$v_{d-1}$ & $\vol(\scrB_1^{d-1})$  &  \pageref{DscatCS} %
\\
$\vecv_j(\vecq,\vecv;\rho)$ & velocity immediately after $j$th collision & \pageref{taujdef}
\\
$V_\rho$ & potential defining the Lorentz gas & \pageref{Vrhodef}
\\
$\scrV_\vecv$ & $\scrV^0_\vecv$ & \pageref{VPDEF}
\\
$\scrV^\eta_\vecv$ & $\{\vecu\in\S_1^{d-1}\col{s_\Psi}\cdot(B_\Psi-\varphi(\vecu,\vecv))>\eta\}$ & \pageref{WVPDEF}
\\
$\vecV_{\!\!\!\rho,\vecs,\vecbeta}(\vecv)$ & $\Psi_1(\vecv,\vecB_{\rho,\vecs,\vecbeta}(\vecv))$ & \pageref{SHINELEM2VDEF}
\\
$\fw(\rho)$ & $\T^1(\scrK_\rho^\circ) \cup \T^1(\partial\scrK_\rho)_{\out}$ & \pageref{wini}
\\
$\tfw(\rho)$ &  accessible phase space for flow in potential & \pageref{tfwdef}
\\
$\fw(j;\rho)$ & set of initial conditions leading to at least $j$ collisions & \pageref{winij}
\\
$\fw_{\vecq,\rho}^{\vecbeta}$ & $\{\vecv\in U\col(\vecq_{\rho,\vecbeta}(\vecv),\vecv)\in\fw(1;\rho)\}$
&\pageref{wqrbdef}
\\
$\fw_{\vecq,\rho,n}^{\vecbeta}$ & $\{\vecv\in U\col(\vecq_{\rho,\vecbeta}(\vecv),\vecv)\in\fw(n;\rho)\}$
&\pageref{fwqrhonDEF}
\\
$\vecw_1(\vecq,\vecv;\rho)$ & normalized impact parameter for the first collision & \pageref{w1def}
\\
$\fW(j;\rho)$ & $\{(\vecq,\vecv)\in\T^1(\R^d)\col\langle\rho^{1-d}\vecq,\vecv\rangle\in\fw(j;\rho)\}$
&\pageref{fWjrhoDEF}
\\
$W$ & single-site scattering potential & \pageref{Vrhodef}
\\
$\scrX$ & arbitrary lcscH space (Section \ref{scrXdef1} only) & \pageref{scrXdef1}
\\
$\scrX$ & $\R^d\times\Sigma$ 
& \pageref{scrXdef2}
\\
$\scrX_\perp$ & $\R^{d-1}\times\Sigma$ & \pageref{Xperp}
\\
$X$ & extended phase space $\T^1(\RR^d)\times\R_{>0}\times\Sigma\times\US$ with $\vecv_+\in\scrV_\vecv$  & \pageref{XXdef}
\\
${X_U}=X_U^{(n)}$
& set %
defined in \eqref{XSPACEdef}
(for $n=1$: also \eqref{XSPACEdef1})
& \pageref{XSPACEdef1}, \pageref{XSPACEdef}
\\
$X^{(n)}$ & %
set defined in \eqref{Xmacrdef}
& \pageref{Xmacrdef}
\\
$\fZ_\xi$ & open cylinder $(0,\xi)\times\scrB_1^{d-1}$ & \pageref{fZxidef}, \pageref{Zinfty}
\\
$\fZ_\infty$ & infinite cylinder $\R_{>0}\times\scrB_1^{d-1}$ & \pageref{Zinfty}
\\
$\vecz$ & map from $N_s(\scrX)$ to $\Delta$ defined in \eqref{DeltaDEF} & \pageref{DeltaDEF}
\\
$\vecz_\rho$ & map from $N_s(\scrX)$ to $\Delta$ defined in \eqref{zrhoDEF} & \pageref{zrhoDEF}
\\
$\vecbeta_\vecv^-$ & map from $\scrV_\vecv$ to $\{\vecb\in\S_1^{d-1}\col\vecv\cdot\vecb<0\}$ & \pageref{betavm}
\\
$\vecbeta_\vecv^+$ & map from $\scrV_\vecv$ to $\{\vecb\in\S_1^{d-1}\col\vecv\cdot\vecb>0\}$ & \pageref{betavm}
\\
$\vecbeta_\vecu$ & function $\vecbeta_\vecu(\vecv)=\vecu R(\vecv)^{-1}+(1-\|\vecu\|^2)^{1/2}\vecv$
& \pageref{BETAUdef}
\\
$\Delta$ & $(\R_{>0}\times\Omega)\sqcup\{\undef\}$ (with the disjoint union topology) & \pageref{DeltaDEF}
\\
$\eta_{\vecq,\rho}^{(\vecbeta,\lambda)}$ & 
\parbox[t]{300pt}{distribution
of $(\vecv,[F_\rho\circ\vecz_\rho](\scrQ_\rho(\vecq,\vecbeta,\vecv)))$
in $U\times\Delta$
for $\vecv$ random in $(U,\lambda|_{U})$}
&\pageref{etaqrhobetalambdaDEF}
\\
$\eta_{\vs}^{(\vecbeta,\lambda)}$ & 
\parbox[t]{300pt}{distribution
of $(\vecv,[\iota\circ\vecz](\Xi_{\vs}-(\vecbeta(\vecv)R(\vecv))_\perp))$
in $U\times\Delta$
for $\vecv$ random in $(U,\lambda|_{U})$
and independent from $\Xi_{\vs}$}
&\pageref{etaqrhobetalambdaDEF}
\\
$\eta_\rho^{(\Lambda)}$ & 
\parbox[t]{300pt}{distribution of 
$(\vecq,\vecv,[F_\rho\circ\vecz_\rho](\scrQ_\rho(\rho^{1-d}\vecq,\vecv)))$
in $\T^1(\R^d)\times\Delta$
for $(\vecq,\vecv)$ random in $(\T^1(\R^d),\Lambda)$}
&\pageref{etarhoLambdaDEF}
\\
$\eta^{(\Lambda)}$ 
& \parbox[t]{300pt}{distribution of
$(\vecq,\vecv,[\iota\circ\vecz](\Xi))$
in $\T^1(\R^d)\times\Delta$
for $(\vecq,\vecv)$ random in $(\T^1(\R^d),\Lambda)$ and independent from $\Xi$}
&\pageref{etarhoLambdaDEF}
\\
$\Theta$, $\Theta^{(\rho)}$ & random flight process & \pageref{eq:RFP} 
\\
$\hTheta$ & Markovian random flight process & \pageref{eq:RFP2}
\\
$\theta(w)$ & deflection angle & \pageref{PSI1formula}, \pageref{deflangle}
\\
$\kappa((\vecx,\vs);\:\cdot\,)$ & distribution of the 
random point $\iota(\vecz(\Xi_{{\vs}}-\vecx))$ in $\R_{>0}\times\Omega$
&\pageref{kappadef}
\\
$\kappa^{\g}$ & distribution of the random point $\iota(\vecz(\Xi))$ in $\R_{>0}\times\Omega$.
& \pageref{kappagDEF}
\\
$\mu_\scrX$ & measure $\vol\times\mm$ on $\scrX$ & \pageref{mmdef}
\\
$\mu_\Omega$ & measure $v_{d-1}^{-1}\vol_{\R^{d-1}}\times\mm$ on $\scrX_\perp$ & \pageref{ppdefG}
\\
$\mu_\vs$ & we have fixed a continuous map ${\vs}\mapsto\mu_{\vs}$ from $\Sigma$ to $P(N(\scrX))$
& \pageref{muvsdef}
\\
$\mu_{{\vs}}^{(\vecbeta,\lambda)}$ 
& \parbox[t]{300pt}{distribution of 
$\Xi_{\vs}-(\vecbeta(\vecv)R(\vecv))_\perp$, with $\vecv$ random in $(\US,\lambda)$ and independent from $\Xi_{\vs}$}
&\pageref{muvsbetalambdaDEF}
\\
$\tmu_{{\vs}}^{(\vecbeta,\lambda)}$
& distribution of $(\vecv,\Xi_{\vs}-(\vecbeta(\vecv)R(\vecv))_\perp)$
& \pageref{muvsbetalambdaDEF}
\\
$\mu_{\vecq,\rho}^{(\lambda)}$ & distribution of $\scrQ_\rho(\vecq,\vecv)$ for  $\vecv$ random in $(\US,\lambda)$
& \pageref{muqrholambdaDEF}
\\
$\tmu_{\vecq,\rho}^{(\lambda)}$ & 
distribution of $(\vecv,\scrQ_\rho(\vecq,\vecv))$ %
for $\vecv$ random in $(\US,\lambda)$
& \pageref{tmuqrholambdaDEF}
\\
$\tmu_{\vecq,\rho}^{(\vecbeta,\lambda)}$ & distribution of 
$(\vecv,\scrQ_\rho(\vecq,\vecbeta,\vecv))$ for $\vecv$ random in $(\US,\lambda)$
& \pageref{tmuqrhobetalambdaDEF}
\\
$\omu_{\vs}$ & distribution of $\oXi_{\vs}$ & \pageref{oXivsDEF}
\\
$\omu_{\vs}^{(\vecx)}$ & distribution of $\oXi_{\vs}+\vecx$ & \pageref{oXivsDEF}
\\
$\mu$ & a probability measure on $N_s(\scrX)$ defined by \eqref{GENMUdef} & \pageref{GENMUdef}
\\
$\mu_\rho^{(\Lambda)}$ & distribution of
$\scrQ_\rho(\rho^{1-d}\vecq,\vecv)$ for $(\vecq,\vecv)$ random in $(\T^1(\R^d),\Lambda)$
& \pageref{murhoLambda}
\\
$\omu_{\vecq,\rho}^{(\vecx,\lambda)}$ & distribution of $\oQ_\rho(\vecq,\vecv)+\vecx$
for $\vecv$ random in $(\US,\lambda)$
& \pageref{ASS:KEYbblem2}
\\
$\nu_\vecs$ & probability measure on $\US$ & \pageref{nusDEF}
\\
$\nu_\vecs^\eta$ & normalized restriction of $\nu_\vecs$ to $\scrV_\vecs^\eta$ & \pageref{nuseta}
\\
$\Xi_\vs$ & a point process in $\scrX$ with distribution $\mu_{\vs}$ & \pageref{Xivsdef}
\\
$\oXi_{\vs}$ & $\Xi_{\vs}\cup\{(\bn,{\vs})\}$ & \pageref{oXivsDEF}
\\
$\Xi$ & a point process in $\scrX$ with distribution $\mu$ & \pageref{GENMUdef}
\\
$\oxi$ & $(v_{d-1}c_\scrP)^{-1}$, mean free path length & \pageref{OXIFORMULAG}
\\
$\xi_\rho(\vecx)$ & $\inf\{\xi\in\R_{>0}\col\vecx\in\xi\vece_1+\scrB_\rho^dD_\rho\}$ & \pageref{xirhoDEF}
\\
$\pi$, $\pi_\intl$ & projection of  $\R^n=\R^d\times\R^m$ onto $\RR^d$, $\RR^m$ & \pageref{QCexsec1}
\\
$\Sigma$ & fixed compact metric space (the space of marks) & \pageref{Sigmadef}
\\
$\Sigma'$ & $\overline{\{{\vs}(\vecq)\col\vecq\in\scrP\setminus\scrE\}}\:\:$
& \pageref{SIGMAp}
\\
$\vs$ & a map from $\scrP$ to $\Sigma$ (the marking) & \pageref{Sigmadef}
\\
$\vs_j(\vecq,\vecv;\rho)$ & $\vs(\vecq^{(j)})$ & \pageref{qjscattererdef}
\\
$\sigma(\vecv,\vecv_+)$ & differential cross section & \pageref{crosssecdef}
\\
$\tau_j(\vecq,\vecv;\rho)$
& free path length between $(j-1)$st and $j$th collisions & \pageref{taujdef}
\\
$\tau_{\rho,\vecs,\vecbeta}(\vecv)$
& $\inf\{t>0\col\rho\vecbeta(\vecv)+t\vecv\in\vecs+\scrB_\rho^d\}$ & \pageref{TAUrhovecsvecbetaDEF}
\\
$\Phi_t=\Phi_t^{(\rho)}$ & billiard flow on $\T^1(\scrK_\rho)$ & \pageref{T1Krhodef}
\\
$\widetilde\Phi_t=\widetilde\Phi_t^{(\rho)}$ & rescaled billiard flow  & \pageref{tPhidef}
\\
$\varphi(\vecu,\vecv)$ & angle between $\vecu$ and $\vecv$ & \pageref{angledef}
\\
$\Psi$ & scattering process; a map from $\scrS_-$ to $\scrS_+$ & \pageref{PSIdef}
\\
$\psi$ & distribution of a Poisson process in $\R^d$ & \pageref{POISSONprop}
\\
$\Omega$ & $\scrB_1^{d-1}\times\Sigma$ & \pageref{OmegaDEF}
\\
$\omega$ & $\vol_{\S_1^{d-1}}$, Lebesgue measure on $\US$ & \pageref{omegadef}
\\
$\omega_1$ & $\omega(\US)^{-1}\,\omega$ & \pageref{omegadef}
\\
$\vecomega_1(\vecq,\vecv;\rho)$ & $\bigl(\vecw_1(\vecq,\vecv;\rho),{\vs}(\vecq^{(1)}(\vecq,\vecv;\rho))\bigr)$
&\pageref{vecomega1def}
\end{longtable}
\end{center}

\end{document}